\numberwithin{equation}{section}
\theoremstyle{definition}
\newtheorem{dfn}[equation]{Definition}
\newtheorem{rmk}[equation]{Remark}
\newtheorem{cnstr}[equation]{Construction}
\newtheorem{exm}[equation]{Example}
\newtheorem{wrn}[equation]{Warning}
\newtheorem*{dfn*}{Definition}
\newtheorem*{axm*}{Axiom}
\newtheorem*{ntn*}{Notation}
\newtheorem*{exm*}{Example}
\newtheorem*{exr*}{Exercise}
\newtheorem*{int*}{Intuition}
\newtheorem*{qst*}{Question}
\newtheorem*{rmk*}{Remark}
\theoremstyle{plain}
\newtheorem{thm}[equation]{Theorem}
\newtheorem{prop}[equation]{Proposition}
\newtheorem{lem}[equation]{Lemma}
\newtheorem{cor}[equation]{Corollary}
\newtheorem*{thm*}{Theorem}
\newtheorem*{prop*}{Proposition}
\newtheorem*{cor*}{Corollary}
\newtheorem*{lem*}{Lemma}
\newtheorem*{cnj*}{Conjecture}
\let\oldwidetilde\widetilde
\protected\def\widetilde{\oldwidetilde}
\DeclareMathOperator{\Aut}{\mathrm{Aut}}
\DeclareMathOperator*{\colim}{\mathrm{colim}}
\DeclareMathOperator{\Map}{\mathrm{Map}}
\DeclareMathOperator{\map}{\mathrm{map}}
\DeclareMathOperator{\imap}{\underline{\mathrm{map}}}
\DeclareMathOperator{\fib}{\mathrm{fib}}
\DeclareMathOperator{\Hom}{\mathrm{Hom}} 
\DeclareMathOperator{\End}{\mathrm{End}}
\DeclareMathOperator{\coker}{\mathrm{coker}}
\newcommand{\Ss}{\mathbb{S}}
\newcommand{\FF}{\mathbb{F}}
\newcommand{\F}{\mathbb{F}}
\newcommand{\QQ}{\mathbb{Q}}
\newcommand{\NN}{\mathbb{N}}
\newcommand{\A}{\mathbb{A}}
\DeclareMathOperator{\MU}{\mathrm{MU}}
\DeclareMathOperator{\Spec}{\text{Spec}}
\DeclareMathOperator{\Fin}{\mathrm{Fin}}
\DeclareMathOperator{\Fun}{\mathrm{Fun}}
\DeclareMathOperator{\Alg}{\mathrm{Alg}}
\DeclareMathOperator{\GL}{\mathrm{GL}}
\DeclareMathOperator{\cof}{\mathrm{cof}}
\newcommand{\wt}{\widetilde}
\newcommand{\Tot}{\mathrm{Tot}}
\newcommand{\CC}{\mathbb{C}}
\newcommand{\Z}{\mathbb{Z}}
\let\scr=\mathcal
\def\cd{\mathrm{cd}}
\def\vcd{\mathrm{vcd}}
\def\pvcd{\mathrm{pvcd}}
\def\pcd{\mathrm{pcd}}
\def\comp{\wedge}
\newcommand{\MGL}{\mathrm{MGL}}
\newcommand{\KGL}{\mathrm{KGL}}
\newcommand{\BU}{\mathrm{BU}}
\newcommand{\KU}{\mathrm{KU}}
\newcommand{\ku}{\mathrm{ku}}
\newcommand{\bu}{\mathrm{bu}}
\newcommand{\BGL}{\mathrm{BGL}}
\newcommand{\bgl}{\mathrm{bgl}}
\newcommand{\veff}{{\text{veff}}}
\newcommand{\SH}{\mathcal{SH}}
\newcommand{\Spc}{\mathcal{S}\mathrm{pc}}
\newcommand{\et}{{\acute{e}t}}
\newcommand{\ret}{{r\acute{e}t}}
\newcommand{\wequi}{\simeq}
\newcommand{\1}{\mathbbm{1}}
\def\map{\mathrm{map}}
\DeclareRobustCommand{\ul}{\underline}
\newcommand{\heart}{\heartsuit}
\newcommand{\Gm}{{\mathbb{G}_m}}
\newcommand{\Gmp}[1]{{\mathbb{G}_m^{\wedge #1}}}
\newcommand{\cell}{\text{cell}}
\def\PSh{\mathcal{P}}
\newcommand{\Shv}{\mathcal{S}\mathrm{hv}}
\def\ph{\mathord-}
\def\op{\mathrm{op}}
\newcommand{\Ab}{\mathrm{Ab}}
\def\Cat{\mathcal{C}\mathrm{at}{}}
\def\CAlg{\mathrm{CAlg}}
\def\adj{\rightleftarrows}
\def\Sm{{\mathcal{S}\mathrm{m}}}
\def\Sch{{\mathcal{S}\mathrm{ch}}}
\def\pFin{{\Fin}^{\mathrm{pro}}}
\def\FEt{\mathrm{FEt}{}}
\newcommand{\proet}{{pro{\acute{e}t}}}
\newcommand{\Mod}{\mathrm{Mod}}
\newcommand{\RR}{\mathbb{R}}
\def\P{\mathbb P}
\def\CMon{\mathrm{CMon}}
\def\Mon{\mathrm{Mon}}
\def\Pro{\mathrm{Pro}}
\def\Ind{\mathrm{Ind}}
\def\Nis{\mathrm{Nis}}
\def\Zar{\mathrm{Zar}}
\DeclareMathOperator{\CB}{\mathrm{CB}}
\DeclareMathOperator{\id}{\mathrm{id}}
\def\Et{\mathcal{E}\mathrm{t}}
\def\mot{\mathrm{mot}}
\def\CQL{\mathrm{CQL}}
\def\bCQL{\mathrm{bCQL}}
\newcommand{\lra}[1]{\langle #1 \rangle}
\newcommand{\fpsr}[1]{\llbracket #1 \rrbracket}
\newcommand{\FFree}{\mathrm{FFree}}
\newcommand{\Span}{\mathrm{Span}}
\def\h{\mathrm h}
\newcommand{\limone}{\mathrm{lim}^1}
\newcommand{\Prf}{\mathrm{Prf}}
\newcommand{\ShvSp}{{\mathcal{S}\mathrm{p}}}
\newcommand{\GalSp}{\mathtt{SH}}
\newcommand{\GenSp}{{\mathcal{S}\mathrm{p}}^{gen}}
\newcommand{\NB}[1]{\todo[color=gray!40]{#1}}
\newcommand{\tom}[1]{\todo[color=green!40]{#1}}
\newcommand{\TODO}[1]{\todo[color=red]{#1}}
\renewcommand{\todo}[1]{}
\renewcommand{\NB}[1]{}
\renewcommand{\tom}[1]{}
\renewcommand{\TODO}[1]{}
\title{Motivic stable stems and Galois approximations of cellular motivic categories}
\date{\today}
\author{Tom Bachmann}
\address{Mathematischen Institut, JGU Mainz, Germany}
\email{tom.bachmann@zoho.com}
\author{Robert Burklund}
\address{Department of Mathematical Sciences, University of Copenhagen, Denmark}
\email{rb@math.ku.dk}
\author{Zhouli Xu}
\address{UCLA Department of Mathematics, Los Angeles, CA 90095-1555, USA}
\email{xuzhouli@ucla.edu}
\begin{document}
\begin{abstract}
We reconstruct (appropriately completed) categories of cellular motivic spectra over fields of small cohomological dimension in terms of only their absolute Galois groups.
As our main application, we determine the motivic stable stems (away from the characteristic) of almost all fields.
\end{abstract}

\maketitle

\setcounter{tocdepth}{1}
\tableofcontents
\setcounter{tocdepth}{2}

\section{Introduction}
\localtableofcontents

\subsection{Invitation: the motivic stable stems}
Let $k$ be a field.
Recall (e.g. from \cite[\S4.1]{norms}) the motivic stable category $\SH(k)$.
It contains a bigraded family of spheres $\Sigma^{p,q} \1$ and consequently motivic spectra $E \in \SH(k)$ naturally have bigraded homotopy groups $\pi_{p,q}(E)$.
Accepting the significance of $\SH(k)$, an obvious task is to determine $\pi_{**}(\1)$, the \emph{motivic stable stems}.

A first step in this direction was completed by F. Morel \cite{morel2004motivic-pi0} by determining the lowest stem: \[ \pi_{p,q}(\1)(k) = \begin{cases} 0 & p<q \\ K_{-p}^{MW}(k) & p=q \end{cases}. \]
Here $K_*^{MW}(k)$ denotes the \emph{Milnor--Witt $K$-groups}, defined using generators and relations in terms of the arithmetic of $k$ \cite[\S3]{A1-alg-top}.

Guided by experience from classical homotopy theory, we split up this problem using notions of completion and localization at various elements in the motivic stable stems.
We can use the motivic Hopf element $\eta \in \pi_{1,1}(\1)$ to split attention across $\pi_{**}(\1_\eta^\comp)$ and $\pi_{**}(\1[\eta^{-1}])$.
Since the latter is dealt with in \cite{bachmann-eta}, we shall concentrate on the former.
Next we can employ the classical primes to split the problem across $\pi_{**}(\1_{\eta,\ell}^\comp)$ for all primes $\ell$, and the rationalization $\pi_{**}(\1_\eta^\comp) \otimes \QQ$.
Since the latter is essentially the same as $K_*(k) \otimes \QQ$ (rational algebraic $K$-theory of $k$) \cite[Theorems 16.1.4, 16.2.13, 16.1.3 and Corollary 14.2.14]{triangulated-mixed-motives}, we shall again concentrate on the former.

In some sense one knows a complete answer provided that $k=\bar k$ is algebraically closed (and of characteristic $\ne \ell$).
Indeed in this case there is an isomorphism \[ \pi_{**}(\1_{\ell,\eta}^\comp)(\bar k) \wequi (\pi_{**}^{syn})_\ell^\comp, \] where $\pi_{**}^{syn}$ denotes the \emph{synthetic stable stems} (i.e. the bigraded homotopy groups of the unit in synthetic spectra \cite{Pstragowski}).
In light of this, the following result determines the motivic stable stems for a very large class of fields.

\begin{thm}[see Theorem \ref{thm:tensor-product-formula-easy}] \label{thm:intro-intro}
Let $k$ be a field of characteristic $\ne \ell$ containing all $\ell^n$-th roots of unity for all $n$.
There is an isomorphism \[ \pi_{**}(\1_{\ell,\eta}^\comp)(k) \wequi (\pi_{**}^{syn} \otimes K_*^{MW}(k))_\ell^\comp. \]
\end{thm}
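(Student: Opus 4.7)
The plan is to deploy the main Galois reconstruction theorem promised by the abstract — which expresses the cellular motivic stable category $\SH(k)$ (after $\ell,\eta$-completion) in terms of the absolute Galois group $G_k = \mathrm{Gal}(\bar k/k)$ acting on ($\ell$-completed) synthetic spectra — and then extract the tensor product formula via Galois descent combined with the known low-stem calculation of Morel.

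First I would apply the reconstruction theorem to identify $\pi_{**}(\1_{\ell,\eta}^\comp)(k)$ with the homotopy groups of a $G_k$-fixed-point spectrum whose underlying object is $(\1^{syn})_\ell^\comp$ (recovering, for $k = \bar k$, the already-quoted identification with synthetic stable stems). The hypothesis $\mu_{\ell^\infty} \subset k$ is then exactly the statement that $G_k$ acts trivially on $\Z_\ell(1) = \varprojlim \mu_{\ell^n}$, so the weight grading on the bigraded synthetic stems carries no Galois twist after $\ell$-completion.

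Next I would run the resulting descent spectral sequence
\[ E_2^{s,*,*} = H^s_{\mathrm{cts}}(G_k; (\pi_{**}^{syn})_\ell^\comp) \Rightarrow \pi_{**}(\1_{\ell,\eta}^\comp)(k). \]
Triviality of the action factors the $E_2$-page as $H^*_{\mathrm{cts}}(G_k; \Z_\ell) \otimes (\pi_{**}^{syn})_\ell^\comp$. By Voevodsky's resolution of Bloch--Kato, together with the trivialization of Tate twists from the roots-of-unity hypothesis, $H^*_{\mathrm{cts}}(G_k; \Z_\ell)$ matches $K_*^M(k)_\ell^\comp$; under $\eta$-completion this coincides with $K_*^{MW}(k)_\ell^\comp$ via Morel's identification of the diagonal stems with Milnor--Witt $K$-theory. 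Arguing that the spectral sequence degenerates at $E_2$ with no hidden extensions — plausibly by comparison with the multiplicative structure forced by Morel's diagonal and the known case over $\bar k$ — assembles the tensor product formula.

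The main obstacle is Step 1: the Galois reconstruction is the technical heart of the paper and does the real work. Conditional on that, the remaining delicate point is the careful interaction between $\eta$-completion and the passage from Milnor $K$-theory (which naturally appears through Bloch--Kato) to Milnor--Witt $K$-theory (which governs $\pi_{n,n}(\1)(k)$), ensuring that the multiplicative identifications agree with the tensor product on the right-hand side and that the degree-zero piece involving $GW(k)$ and the class of $\eta$ is handled correctly.
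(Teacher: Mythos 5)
There is a genuine gap, and it sits exactly at your Step 1. The paper's reconstruction is \emph{not} a homotopy-fixed-point statement: Galois descent fails for the motivic sphere, and $\pi_{**}(\1_{\ell,\eta}^\comp)(k)$ is not the abutment of a descent spectral sequence $H^s_{\mathrm{cts}}(G_k;(\pi^{syn}_{**})_\ell^\comp)\Rightarrow \pi_{**}(\1_{\ell,\eta}^\comp)(k)$. What satisfies Galois descent is the \emph{\'etale-localized} sphere (via $\SH_\et(k)_\ell^\comp\wequi \ShvSp(B_\et G_k)_\ell^\comp$), and the whole point of the paper's machinery (the Chow--Quillen--Lichtenbaum properties of \S\ref{sec:CQL} and the strategic Chow-degree truncations $\tau^{\Ss}_{\ge 0}$ applied levelwise \emph{inside} the cobar construction in \S\ref{sec:galois-approx}) is to bridge the difference between $\1_{\ell,\eta}^\comp$ and $L_\et(\1)_\ell^\comp$. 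If you drop the truncation and take plain $G_k$-fixed points of the ($\ell$-complete, synthetic) sphere, your spectral sequence computes the \'etale-local object, which differs from the motivic stable stems (already visible in negative Chow degrees, where the motivic side vanishes by Morel connectivity but Galois cohomology does not). Moreover your spectral sequence only makes sense/converges under a finiteness hypothesis on $\cd_\ell(k)$, whereas the theorem has none: the paper only ever needs its Galois approximation to be an equivalence over small-cohomological-dimension bases such as $\QQ(\mu_{\ell^\infty})$ or $\Z[1/\ell,\mu_{\ell^\infty}]$, and reaches a general Tate-orientable $k$ by base change.

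Concretely, the paper's proof of Theorem \ref{thm:tensor-product-formula-easy} does not run any descent spectral sequence over $G_k$. It uses the Galois approximation only to produce a functor $e:\SH(\CC)^{\comp\cell}_{\ell,\eta}\to\SH(k)^\comp_{\ell,\eta}$ (Example \ref{exm:tate-orientable-universal}), checks $e(\MGL)\wequi\MGL$ and $e(\HZ)\wequi\HZ$, applies the projection formula to get $e_*(\1_k)\otimes \HZ/(\ell,\tau)\wequi e_*(\HZ/(\ell,\tau))$ with $\pi_0\wequi K^M_*(k)/\ell$, lifts an $\F_\ell$-basis to $\pi_0 e_*(\1_k)$, and shows the resulting map $\bigoplus_i\Sigma^{n_i,n_i}\1_\CC\to e_*(\1_k)$ is an equivalence because it is one mod $(\ell,\eta)$; the tensor formula then follows since homotopy objects commute with such sums. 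Your remaining steps also paper over real issues: degeneration with no hidden extensions is asserted rather than proved (even in the correct Adams--Novikov framework the collapse is deduced from the fact that $K^M$-classes lift to the sphere together with freeness/flatness of $K^M_*(k)_\ell^\comp$), and for $\ell=2$ the slide from $K^M_*(k)_2^\comp$ to $K^{MW}_*(k)_{2,\eta}^\comp$ is not a formality of ``$\eta$-completion'': $K^{MW}$ carries the $\eta$-tower and $GW$-information, and the tensor product in the theorem is taken over $\pi_0^{syn}=\Z_2[\eta]/2\eta$, which your $E_2$-page description does not produce.
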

\begin{rmk}
Let us elaborate on the tensor product appearing in the above theorem.
It is customary to split the bigraded motivic stems into a singly graded sequence of singly graded objects of the form $\pi_n(\1)_* := \pi_{n,n-*}(\1)$.
The tensor product is in the category of singly graded modules over $\pi_0^{syn}$.
If $\ell \ne 2$ one has $\pi_0^{syn} = \Z_\ell$ concentrated in degree $0$.
If $\ell = 2$ then $\pi_0^{syn} = \Z_2[\eta]/2\eta$, where $\eta$ has degree $-1$.
Note that this is the same as $K_*^{MW}(\CC)_\ell^\comp$, as needed.
Note also that $K_*^{MW}(k)_\ell^\comp$ is a module over $\pi_0^{syn}$ in an obvious way, for any $k$ and $\ell$ (provided, if $\ell=2$, $k$ contains a square root of -$1$), so the tensor product makes sense.
\end{rmk}

In particular the Theorem asserts that there is a ring map $\pi_{**}^{syn} \to \pi_{**}(\1_{\ell,\eta}^\comp)(k)$.
As we will see presently, this is actually its most difficult part.

\subsection{Sketch proof of a special case} \label{subsec:intro-sketch}
Let $k$ be a field, containing an algebraically closed subfield $\bar k_0 \subset k$.
We shall determine $\pi_{**}(\1_{\ell,\eta}^\comp)(k)$ in terms of $K_*^{MW}(k)$ and $\pi_{**}(\1_{\ell,\eta}^\comp)(\bar k_0)$, by studying the motivic Adams--Novikov spectral sequence.
Throughout we assume that $k$ has characteristic different from $\ell$.

The slice spectral sequence for $\ell$-adic algebraic cobordism takes the form \[ \pi_{**}(H\Z_\ell^\comp) \otimes L_* \Rightarrow \pi_{**}(\MGL_\ell^\comp), \] where $L_* = \pi_{2*} \MU$ is the Lazard ring \cite[(8.6)]{hoyois2015algebraic}.
By the resolution of the Bloch--Kato conjecture \cite{haesemeyer2019norm} we have $\pi_{**}(H\Z_\ell^\comp) \wequi K_*^M(k)_\ell^\comp[\tau]$.
Here $\tau$ is a Bott element, which in fact already exists in $\pi_{0,-1}(\1_\ell^\comp)(\bar k_0)$ \cite[Proposition 4.7 and Remark 2.5]{bachmann-bott}.
Since $K_*^M$ lifts to $\pi_{**}\MGL$, one sees that the spectral sequence must collapse and we obtain \[ \pi_{**}(\MGL_\ell^\comp)(k) \wequi K_*^M(k)_\ell^\comp[\tau] \otimes L_* \]

We are now ready to look at the $\ell$-adic motivic Adams--Novikov spectral sequence, which takes the form \[E_1(k) := \pi_{**}(\MGL_\ell^\comp)(k) \otimes_{L_*} \pi_{2*} \MU^{\otimes *} \Rightarrow \pi_{**}(\1_{\ell,\eta}^\comp)(k). \]
Observe now that $E_1(k)$ is a module both over $E_1(\bar k_0)$ and over $K_*^M(k)_\ell^\comp$.
It follows from the Bloch--Kato conjecture that $K_*^M(k)_\ell^\comp$ is a free, hence flat, $\ell$-complete module (see Theorem \ref{thm:tensor-product-formula-easy}(1)).
For simplicity, also assume that $\ell \ne 2$.
Since all of $K_*^M(k)_\ell^\comp$ lifts to the sphere, it consists of permanent cycles, and hence (using that $K_*^M(k)_\ell^\comp$ is flat) we obtain a morphism of spectral sequences \[ (E_*(\bar k_0) \otimes K_*^M(k))_\ell^\comp \to E_*(k). \]
This is an isomorphism on the $E_1$-page, hence the $E_\infty$-page.
We deduce that \[ \pi_{**}(\1_{\ell,\eta}^\comp)(k) \wequi (\pi_{**}(\1_{\ell,\eta}^\comp)(\bar k_0) \otimes K_*^M(k))_\ell^\comp. \]
This is Theorem \ref{thm:intro-intro}, at least under out additional assumptions---and assuming that we know $\pi_{**}(\1_{\ell,\eta}^\comp)(\bar k_0) \wequi \pi_{**}^{syn}$.
For this, see e.g. \cite[\S6]{GIKR}.

\subsection{Galois approximation}
It will serve the recall in some more detail the isomorphism $\pi_{**}(\1_{\ell,\eta}^\comp)(\bar k_0) \wequi \pi_{**}^{syn}$.
We know that $\1_{\ell,\eta}^\comp$ is obtained as the ($\ell$-adic) totalization of cobar construction on $\1 \to \MGL$ (see e.g. \cite[Theorem 2.3]{bachmann-topmod}): \[ \1_{\ell,\eta}^\comp \wequi \Tot \CB^\bullet(\MGL)_\ell^\comp. \]
(Here $\CB^n(\MGL) := \MGL^{\otimes n+1}$.)
To determine the motivic stable stems, it will thus suffice to describe the cosimplicial spectra \[ \Gamma(\bar k_0, *)^\bullet := \imap(\Sigma^{2*, *} \1, \CB^\bullet(\MGL)_\ell^\comp)(\bar k_0). \]
A key observation of \cite{GIKR} is that these spectra coincide, up to some strategic truncation, just with the usual bar construction of $\MU$.
The totalization of this truncated bar construction is essentially by definition the synthetic sphere spectrum.
Motivically, the origin of these truncations is the Bloch--Kato conjecture: the fact that motivic cohomology is a truncation of étale cohomology.
This also immediately suggests a generalization: for fields $k$ which are not separably closed, instead of attempting to construct $\Gamma(k,*)^\bullet$ using just topology, we should try to do it using one further piece of information: the absolute Galois group $Gal(k)$ of $k$, together with its cyclotomic character.\footnote{The reason for requiring the cyclotomic character is that motivic cohomology is truncated étale cohomology with coefficients in roots of unity, and in order to access these cohomology groups in terms of profinite group cohomology, one needs the cyclotomic character.}

Write $\scr G_{/\Z_\ell^\times}$ for the category of profinite groups together with a morphism to $\Z_\ell^\times$.
If $k$ is a field of characteristic $\ne \ell$, then $Gal(k)$ together with its ($\ell$-adic) cyclotomic character defines an object $G(k) \in \scr G_{/\Z_\ell^\times}$.
By a \emph{Galois approximation} we mean a functor \[ F(\ph, *)^\bullet: \scr G_{/\Z_\ell^\times}^\op \times \Z \times \Delta \to \ShvSp \] together with a natural transformation \[ \alpha_{(\ph)}: \Gamma(\ph, *)^\bullet \to F(G(\ph), *)^\bullet. \]
\textbf{The main object of this article} is to construct such a Galois approximation with the property that $\alpha_k$ is an equivalence for appropriate fields $k$, e.g. those of small cohomological dimension (say $\le 2$).

Before delving into how this is done, let us elaborate on the use of such a construction for computing the motivic stable stems.
Write $\QQ(\mu_{\ell^\infty})$ for the field obtained by adjoining all the $\ell$-power roots of unity, and let $k$ be a field (of characteristic zero, say) containing all $\ell$-power roots of unity, that is, $\QQ(\mu_{\ell^\infty}) \subset k$.
Write $e \in \scr G_{/\Z_\ell^\times}$ for the trivial group, mapping trivially to $\Z_\ell^\times$.
For $G \in \scr G_{/\Z_\ell^\times}$, there exists a (necessarily unique) morphism $G \to e$ if and only if the cyclotomic character of $G$ is trivial.
In particular there is a unique morphism $p: G(\QQ(\mu_{\ell^\infty})) \to e$.
Now we contemplate the diagram \[ F(e, *)^\bullet \xrightarrow{p^*} F(G(\QQ(\mu_{\ell^\infty})), *)^\bullet \xleftarrow{\alpha} \Gamma(\QQ(\mu_{\ell^\infty}), *)^\bullet \to \Gamma(k, *)^\bullet. \]
Since $\QQ(\mu_{\ell^\infty})$ has cohomological dimension $1$\NB{right?}, the left-pointing arrow $\alpha$ above is an equivalence.
Since $e = G(\bar k)$ (for $\bar k$ an algebraically closed field) and $\alpha_{\bar k}$ is an equivalence, we see that $\Tot F(e,*)^\bullet$ is the synthetic sphere.
Thus the above construction provides the sought-after morphism $\pi_{**}^{syn} \to \pi_{**}(\1_{\eta,\ell}^\comp)(k)$.

\subsection{Quillen--Lichtenbaum properties and étale algebraic cobordism}
Having illustrated the utility of Galois approximations, we now explain our strategy for constructing them.
An illustrative special case of the problem is to attempt to reconstruct the spectra \[ \map(\Sigma^{2*, *} \1, \MGL_\ell^\comp). \]
We do this in two steps:
\begin{enumerate}
\item First, we study the morphism $\MGL_\ell^\comp \to L_\et(\MGL)_\ell^\comp$, and in particular its effect on $\map(\Sigma^{2*, *} \1, \ph)$.
  We seek to prove that this morphism is essentially just a truncation, i.e. that a relation approximately of the form \[ \map(\Sigma^{2*, *} \1, \MGL_\ell^\comp) \wequi \tau_{\ge 2*} \map(\Sigma^{2*, *} \1, L_\et(\MGL)_\ell^\comp) \] holds.
  Such a result is called a \emph{Chow--Quillen--Lichtenbaum property}.
\item Second, we need to study $\map(\Sigma^{2*, *} \1, L_\et(\MGL)_\ell^\comp)$.
  This seems approachable because of the rigidity results of \cite{bachmann-SHet}: \[ L_\et(\MGL)_\ell^\comp \in \SH_\et(k)_\ell^\comp \wequi \ShvSp(k_\et)_\ell^\comp. \]
  In other words, $L_\et(\MGL)_\ell^\comp$ is essentially an ordinary spectrum with a continuous action of $Gal(k)$.
  In fact one knows that the underlying spectrum is obtained by setting $k=\bar k$, that is, is just given by $\MU_\ell^\comp$.
  \textbf{One of the main technical aspects of this article} is to show that $L_\et(\MGL)_\ell^\comp$ is given by $\MU_\ell^\comp$ together with a certain continuous action of $\Z_\ell^\times$, with the action of $Gal(k)$ via the cyclotomic character.\footnote{This statement has to be adapted slightly for $\ell=2$.}
\end{enumerate}

\subsection{Reverse-linear overview}
In keeping with our motivation from applications, let us briefly review the contents of each section, beginning at the end.
Each section has its own introduction where more details can be found, as well as its own table of contents.

In \S\ref{sec:motivic-stable-stems} we determine, for a large class of fields $k$, the $(\ell,\eta$)-completed motivic stable stems.
The answer is essentially a functor of the Milnor--Witt $K$-theory of $k$ alone.
The case where $k$ contains all $\ell$-power roots of unity is much easier and is dealt with in Theorem \ref{thm:tensor-product-formula-easy}.
A more general but also much more complicated result is Theorem \ref{thm:motivic-stems-hard}.

In order to prove these results, we need to construct a large number of elements in the motivic stable stems.
It turns out to be enough to do this for appropriate fields $k$ of small cohomological dimension.
We do this by, in this case, reconstructing the entire motivic Adams--Novikov spectral sequence just from the Galois group of $k$.
We call this a \emph{Galois approximation}; this is the subject of \S\ref{sec:galois-approx}.

All the previous sections contain ingredients for our Galois approximation.
In \S\ref{sec:CQL} we define and study the Chow--Quillen--Lichtenbaum properties for $\MGL$ and other spectra, in other words, we study the difference between algebraic cobordism and its étale version.
In \S\ref{sec:adams-summands} we recall the construction of Adams summands of localizations $\MGL$.
To use the Quillen--Lichtenbaum result for $\MGL$, we need to study its étale localization.
This is done in \S\ref{sec:etale-MGL}.
For this we use the description of $\MGL$ as a Thom spectrum for the rank zero summand of $K$-theory, thus we must first study étale $K$-theory as a sheaf of spaces.
This is the content of \S\ref{sec:etale-K}.

\subsection{Notation and conventions}
Parts of this work are very technical and require us to treat many objects which are intricately related and yet different.
In an attempt not to overburden notation, we often denote similar objects by the same letter, even when their are technically distinct.
We introduce distinguishing notation only when we feel clarity would be affected otherwise.
We explain some of these choices in what follows.

\subsubsection*{Pro-sites}
We denote by $\nu^*$ various functors arising from mapping a site to a related site of pro-objects (typically solidifications), as in $\nu^*: \Shv(S_\et) \to \Shv(S_\proet)$.

\subsubsection*{Cyclotomic characters}
For a scheme $S$, we denote by $S_\et$ its étale topos.
In various situations this comes with a cyclotomic character $\pi: S_\et \to B_\et G$.
We denote then by $\pi^*$ various functors arising from these morphisms, i.e., from the fact that in appropriate situations, finite $G$-sets define finite étale $S$-schemes.
We use the same notation also for the pro-sites, as in $\pi^*: \Shv_\proet(BG) \to \Shv_\proet(S)$.
See \S\ref{subsec:large-pro-etale} and \S\ref{subsec:pro-G-site} for more details.

\subsubsection*{Categories of motivic sheaves}
Much of this work is concerned with the categories of motivic spectra $\SH(S)$ and their avatars.
These come in the following types:
\begin{itemize}
\item Categories obtained by stabilizing with respect to $\P^1$.
  These we denote by $\SH(S)$, $\SH_\et(S)$, $\SH_\proet(S)$, and so on.
\item Categories which are (essentially) sheaves of spectra, so obtained by stabilizing an $\infty$-topos with respect to $S^1$.
  These we denote by $\ShvSp(S_\et)$, and so on.
\item Categories obtained as modules over appropriate sphere-graded ring spectra, i.e. Galois approximations.
  These we denote by $\GalSp(\Pi_{\le 1} S_\et)$, and so on.
  See \S\ref{subsec:cellular-reconstruction} for more details.
\item Occasionally we have to use categories of genuine $G$-spectra, for a finite group $G$.
  We denote these by $\GenSp(BG)$.
\end{itemize}

\subsubsection*{Miscellaneous}
All $\infty$-topoi are hypercompleted unless explicitly stated otherwise.
All schemes are qcqs.

\subsubsection*{Table of some commonly used notation}
\begin{longtable}{lll}
$\Shv(B_\et G)$, $\ShvSp(B_\et G)$ & (spectral) sheaves on discrete $G$-sets & \S\ref{subsub:SHet-BG} \\
$\Shv(B_\proet G)$, $\ShvSp(B_\proet G)$ & & \S\ref{subsec:pro-G-site} \\
$\Spc(S)$, $\SH^{S^1}(S)$, $\SH(S)$ & motivic categories & \\
$\Spc_\et(S)$, $\SH_\et(S)$ & étale versions & \\
$\SH_\proet(S)$ & & \S\ref{subsec:large-pro-etale} \\
$\Shv_\proet(\Sch_S)$, $\SH_\proet(\Sch_S)$ & & \S\ref{subsec:large-pro-etale} \\
$\GalSp(X), \GalSp(X)^{AT}$ & Galois approximations & \S\ref{subsec:cellular-reconstruction} \\
$G_\ell$ & & \S\ref{subsec:cyclo-char} \\
$\bu_\ell, \BU_\ell, \widetilde\BU_\ell$ & & Definition \ref{dfn:bul} \\
$J_{\proet,\ell}^\comp, j_{\proet,\ell}^\comp, j_{\et,\ell}^\comp$ & (pro-)étale $j$-homomorphisms & \S\ref{subsec:etale-mot-j} \\
$j_{G_\ell,\ell}, \MU_\ell$ & & \S\ref{subsec:equivariant-j} \\
$J_{G_\ell,\ell}$ & & \eqref{eq:JG} \\
$\tau_{\ge n}^\ell$ & pro-$\ell$ cover & \S\ref{subsubsec:pro-cover} \\
$L_\ell$ & categorical $\ell$-completion & \S\ref{subsub:general-l-comp} \\
$\scr C_\ell^\comp$ & subcategory of $\ell$-complete objects & \\
$\pi_*^s$ & classical stable stems & \\
$C(X, Y)$ & continuous maps of topological spaces & \\
$\CB(A) \in \Fun(\Delta, \scr C)$ & cobar construction on monoid $A \in \scr C$ & \\
$\Spc$ & $\infty$-category of spaces \\
$\ShvSp$ & $\infty$-category of spectra \\
$\ShvSp^\Ss$ & sphere-graded spectra & Example \ref{ex:sphere-graded-spectra} \\
$\GenSp(BG)$ & genuine $G$-spectra \\
$\PSh_\ShvSp(\scr C)$ & spectral presheaves on $\scr C$ \\
$\Pi_{\le 1}(S_\et)$ & étale fundamental groupoid & Definition \ref{def:Prf} \\
$\pcd_\ell(S)$ & punctual $\ell$-étale dimension & $\sup_{s\in S} \cd_\ell(s)$ \\
$\pvcd_\ell(S)$ & punctual virtual $\ell$-étale dimension & $\sup_{s\in S} \vcd_\ell(s)$ \\
\end{longtable}

\subsection{Acknowledgements}
Throughout the many years that we have worked on this project, we have benefited from discussions with many persons.
Our special thanks go to Mike Hopkins, Marc Hoyois, Akhil Mathew and Fabien Morel.\NB{more?}

\section{Preliminaries}
\localtableofcontents

\bigskip
In this section we collect a variety of preliminary results.
It is probably best skipped on first reading.

\subsection{Graded objects} \label{subsec:graded}
For a symmetric monoidal $\infty$-category $\scr C$ and an $\scr E_\infty$-monoid $A$, denote by $\scr C^A = \Fun(A, \scr C)$ the symmetric monoidal (via Day convolution) category of $A$-graded objects in $\scr C$.
Then for a symmetric monoidal $\infty$-category $\scr D$, a symmetric monoidal functor $\scr C^{A} \to \scr D$ is the same as a symmetric monoidal functor $\scr C \to \scr D$ together with a morphism of $\scr E_\infty$-monoids $A \to \scr D^\wequi$ (see e.g. \cite[Lemma 6.23]{bachmann-linearity}).
\begin{exm} \label{ex:sphere-graded-spectra}
Let $A = \Ss$ be the sphere spectrum, i.e. the free grouplike $\scr E_\infty$-monoid on a point.
Then a symmetric monoidal functor $\scr C^{\Ss} \to \scr D$ is the same as a symmetric monoidal functor $\scr C \to \scr D$ together with a choice of invertible object in $\scr D$.
\end{exm}

\begin{lem} \label{lem:graded-t}
Let $\scr C$ be presentably symmetric monoidal and stable with a $t$-structure.
Suppose given any map $f: A \to \Z$.
Then \[ \{(X_s) \in \scr C^A \mid X_a \in \scr C_{\ge f(a)} \} \] defines the non-negative part of a $t$-structure on $\scr C^A$, with truncation \[ \tau^A_{\ge 0}(X)_a \wequi \tau_{\ge f(a)}(X_a). \]
If $f(a) + f(b) \ge f(a+b)$ for all $a,b \in A$, $f(0) = 0$ and the $t$-structure on $\scr C$ is compatible with a symmetric monoidal structure, then so is the one on $\scr C^A$.
\end{lem}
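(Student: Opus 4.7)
The plan is to verify the $t$-structure axioms and then check the two compatibility conditions with the symmetric monoidal structure separately. A preliminary observation: since $\Z$ is $0$-truncated, the map $f: A \to \Z$ factors through $\pi_0 A$, so the assignment $a \mapsto \tau_{\ge f(a)}$ canonically upgrades to a functor; in particular the proposed pointwise truncation $\tau^A_{\ge 0}(X)_a := \tau_{\ge f(a)}(X_a)$ is a well-defined object of $\scr C^A$.

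For the $t$-structure itself, I would verify the axioms directly. The proposed connective subcategory $\scr C^A_{\ge 0} = \{X : X_a \in \scr C_{\ge f(a)} \text{ for all } a\}$ is closed under colimits, extensions, and suspension, since these are computed pointwise in $\Fun(A, \scr C)$ and each $\scr C_{\ge f(a)}$ has the analogous closure property. For orthogonality, let $X \in \scr C^A_{\ge 0}$ and $Y \in \scr C^A_{\le -1}$, meaning $Y_a \in \scr C_{\le f(a)-1}$. The mapping spectrum in $\Fun(A, \scr C)$ is an end
\[ \map(X, Y) \simeq \lim_{a \in A} \map_\scr C(X_a, Y_a), \]
a limit of coconnective spectra, hence coconnective. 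The pointwise fiber sequences $\tau_{\ge f(a)}(X_a) \to X_a \to \tau_{\le f(a)-1}(X_a)$ assemble naturally in $a$ to give the required truncation triangle.

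For compatibility with the Day convolution structure, two things need checking. The Day convolution unit $\mathbbm{1}_{\scr C^A}(c)$ is (a coproduct of copies of) $\1_\scr C$ supported on the connected component of $0 \in A$; since $f$ is locally constant on $\pi_0 A$ and $f(0) = 0$, this value lies in $\scr C_{\ge 0} = \scr C_{\ge f(c)}$. For closure of connectives under tensor product, I would apply the Day convolution formula
\[ (X \otimes Y)_c \simeq \colim_{a+b \to c} X_a \otimes Y_b, \]
for $X, Y \in \scr C^A_{\ge 0}$. By compatibility of the $t$-structure on $\scr C$ with its own monoidal structure, each summand lies in $\scr C_{\ge f(a)+f(b)}$. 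Contributing pairs satisfy $a + b \simeq c$ in $\pi_0 A$, so subadditivity yields $f(a) + f(b) \ge f(a+b) = f(c)$; hence each term lies in $\scr C_{\ge f(c)}$, which is closed under colimits, and the value $(X \otimes Y)_c$ remains there.

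The only substantive step is the Day convolution estimate: the two hypotheses $f(0) = 0$ and $f(a) + f(b) \ge f(a+b)$ enter there in precisely the places needed to align the shifted pointwise $t$-structure with the monoidal structure. Everything else — the closure properties of $\scr C^A_{\ge 0}$, the orthogonality, the truncation formula — is a direct pointwise consequence of the corresponding facts for $\scr C$.
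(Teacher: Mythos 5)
Your proof is correct, and for the existence of the $t$-structure it is essentially the paper's argument in different clothing: your observation that $f$ factors through $\pi_0 A$ is exactly what underlies the paper's decomposition $\scr C^A \wequi \prod_{i \in \Z}\Fun(f^{-1}(i), \scr C)$, which reduces everything to constant $f$; your direct check of the axioms via $\map(X,Y) \wequi \lim_{a \in A}\map_{\scr C}(X_a, Y_a)$ is the same content made explicit. Where you genuinely diverge is the monoidal compatibility. The paper never invokes the Day convolution formula: it shows that the objects $\Sigma^{f(a)} c \otimes a$ (with $c \in \scr C_{\ge 0}$, $a \in A$) generate $(\scr C^A)_{\ge 0}$ under colimits and extensions, checks that these generators are closed under binary tensor products and that the unit is among the connectives, and concludes because the tensor product preserves colimits in each variable. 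You instead estimate the convolution $(X \otimes Y)_c \wequi \colim_{a+b \wequi c} X_a \otimes Y_b$ termwise, using subadditivity of $f$ and closure of $\scr C_{\ge f(c)}$ under colimits. Both are valid; yours is more computational and self-contained, while the paper's trades the explicit Kan-extension formula for the generation statement. Two phrasings you should tighten, neither of which is a real gap: (i) in the orthogonality step you need each $\map_{\scr C}(X_a, Y_a)$ to lie in $\ShvSp_{\le -1}$, not merely to be coconnective, and then use that $\ShvSp_{\le -1}$ is closed under limits to get $\pi_0\map(X,Y)=0$; (ii) since $A$ is a space (e.g. $A=\Ss$), the Day convolution unit has value $\Map_A(0,c)\otimes \1_{\scr C}$ at $c$, a colimit over a path space of copies of $\1_{\scr C}$ rather than literally a coproduct, and likewise the ``summands'' in your convolution formula are the terms of a colimit over the fiber of $+\colon A\times A \to A$; in both cases the conclusion stands because $\scr C_{\ge 0}$ contains $\1_{\scr C}$, is closed under colimits, and $f$ is constant on connected components.
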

\begin{proof}
Set $A_i = f^{-1}(\{i\})$.
Ignoring the symmetric monoidal structure, we have \begin{equation} \label{eq:CA-decomp} \scr C^A \wequi \prod_{i \in \Z} \Fun(A_i, \scr C). \end{equation}
Thus for the first part we may assume that $f$ is constant, in which case the claim is clear; we also see from this description that the negative part can be described similarly to the positive part.

Now we establish the second part of the lemma.
Note that $\scr C^A$ is simultaneously tensored over $\Spc^A$ and $\scr C$ (receiving symmetric monoidal functors from both).
For $a \in A$ and $c \in \scr C$, write $a \otimes c \in \scr C^A$ for the resulting object, where we view $a \in \Spc^A$ via the Yoneda embedding.
Then we have $\Map(c \otimes a, X) \wequi \Map(c, X_a)$.
From this we deduce that objects of the form $\Sigma^{f(a)} c \otimes a$ for $c \in \scr C_{\ge 0}$ and $a \in A$ generate $(\scr C^A)_{\ge 0}$ under colimits and extensions.
(Indeed we have seen that an object is in the negative part if and only if it is right orthogonal to our proposed generators.)
However we have $(c \otimes a) \otimes (c' \otimes a') \wequi (c+c') \otimes (a\otimes a')$ (where the middle $\otimes$ refer to the symmetric monoidal structure on $\scr C^A$).
Consequently the generators of our $t$-structure are closed under binary tensor products.
Since $f(0) \le 0$\footnote{Note that $f(0)+f(0) \ge f(0)$ means $f(0) \ge 0$, so asking in addition that $f(0) \le 0$ just means that $f(0)=0$.} we see that also $\1 \in (\scr C^A)_{\ge 0}$.
It follows that $(\scr C^A)_{\ge 0}$ is closed under finite tensor products, as needed.
\end{proof}

\subsection{Pro-objects} \label{subsec:pro}
Given an $\infty$-category $\scr C$, the category of \emph{pro-objects in $\scr C$} is \cite[\S3.1, Example 3.1.2]{lurie2011derived} \[ \Pro(\scr C) \wequi \Ind(\scr C^\op)^\op. \]
Note that for pro-objects $\{X_i\}, \{Y_j\}$ we have \begin{equation} \label{eq:pro-maps} \Map_{\Pro(\scr C)}(\{X_i\},\{Y_j\}) \wequi \lim_j \colim_i \Map_{\scr C}(X_i, Y_j): \end{equation} essentially by definition $\{X_i\} \wequi \lim_i X_i \in \Pro(\scr C)$ and similarly for $\{Y_i\}$, and each $Y_i$ is cocompact in $\Pro(\scr C)$ by construction.
We denote by $c: \scr C \to \Pro(\scr C)$ the ``constant pro-object'' functor (dual Yoneda embedding).
If $\scr D$ is an $\infty$-category with small cofiltered limits, then functors from $\Pro(\scr C)$ to $\scr D$ preserving cofiltered limits are the same as functors from $\scr C$ to $\scr D$ \cite[Proposition 5.3.5.10]{HTT}.
In particular, if $\scr C$ itself has small cofiltered limits, then the identity $\scr C \to \scr C$ extends to a functor\NB{``materialization''} \[ m: \Pro(\scr C) \to \scr C, \{X_i\} \mapsto \lim_i X_i, \] yielding an adjunction \[ c: \scr C \adj \Pro(\scr C): m. \]

\subsubsection*{Monoidal structure}
If $\scr C$ carries a symmetric monoidal structure, then so does $\Pro(\scr C)$ \cite[Remark 2.4.2.7, Proposition 4.8.1.10]{HA}.
It is given by $\{X_i\} \otimes \{Y_j\} \wequi \{X_i \otimes Y_j\}$.
Thus the functor $c$ is symmetric monoidal, and its right adjoint $m$ is lax symmetric monoidal.

\subsubsection*{Pro-$\ell$-completion}
If $\scr C$ is stable then so is $\Pro(\scr C)$ \cite[Lemma 2.5]{MR4012551}.
There is thus a notion of internal $\ell$-completion; for constant objects this recovers the pro-$\ell$-completion \[ c(X)_\ell^\comp \wequi \{X/\ell^n\}_n. \]
If $\scr C$ is also symmetric monoidal, then since the tensor product in $\Pro(\scr C)$ preserves cofiltered limits by construction, internal $\ell$-completion is a smashing localization.

\subsubsection*{$t$-structure}
\begin{lem} \label{lem:pro-t}
If $\scr C$ has a $t$-structure, then $\Pro(\scr C_{\ge 0}), \Pro(\scr C_{\le 0})$ define a $t$-structure on $\Pro(\scr C)$, with truncations computed levelwise.
If $\scr C$ in addition has a symmetric monoidal structure compatible with the $t$-structure, then the $t$-structure and symmetric monoidal structure on $\Pro(\scr C)$ are also compatible.
\end{lem}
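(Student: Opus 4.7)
The plan is to verify the three defining axioms of a $t$-structure on $\Pro(\scr C)$ by constructing truncations levelwise, and then to establish the monoidal compatibility separately.

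First I would construct candidate truncation functors. Since $\tau_{\ge 0}: \scr C \to \scr C_{\ge 0}$ and $\tau_{\le 0}: \scr C \to \scr C_{\le 0}$ are functors (right and left adjoint to the inclusions), applying $\Pro$ yields functors $\Pro(\scr C) \to \Pro(\scr C_{\ge 0})$ and $\Pro(\scr C) \to \Pro(\scr C_{\le 0})$, described on pro-objects by $\{X_i\} \mapsto \{\tau_{\ge 0} X_i\}$ and $\{X_i\} \mapsto \{\tau_{\le 0} X_i\}$. A direct computation from the formula \eqref{eq:pro-maps} shows these are right, resp. left, adjoint to the inclusions $\Pro(\scr C_{\ge 0}) \hookrightarrow \Pro(\scr C) \hookleftarrow \Pro(\scr C_{\le 0})$: e.g. for $X = \{X_i\}$ with $X_i \in \scr C_{\ge 0}$ and $Y = \{Y_j\}$, one has $\Map(X, Y) \wequi \lim_j \colim_i \Map(X_i, Y_j) \wequi \lim_j \colim_i \Map(X_i, \tau_{\ge 0} Y_j) \wequi \Map(X, \tau_{\ge 0} Y)$. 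Fully faithfulness of the inclusions comes from the corresponding fact for Ind-completions of fully faithful functors.

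Next I would verify the remaining axioms. Closure of $\Pro(\scr C_{\ge 0})$ under $\Sigma$ is immediate from levelwise computation. For orthogonality, given $X \in \Pro(\scr C_{\ge 0})$ and $Y \in \Pro(\scr C_{\le -1})$ with representations $\{X_i\}$ and $\{Y_j\}$, each $\Map(X_i, Y_j)$ is contractible by the $t$-structure in $\scr C$, so \eqref{eq:pro-maps} gives $\Map(X,Y) \wequi *$. For the truncation triangle, I would take the cofiltered limit in $\Pro(\scr C)$ of the levelwise fiber sequences $\tau_{\ge 0} X_i \to X_i \to \tau_{\le -1} X_i$; since $c: \scr C \to \Pro(\scr C)$ preserves fiber sequences (it is exact, being symmetric monoidal and a left adjoint between stable $\infty$-categories), and cofiltered limits in the stable $\infty$-category $\Pro(\scr C)$ preserve fiber sequences, the resulting $\tau_{\ge 0} X \to X \to \tau_{\le -1} X$ is a fiber sequence with the correct endpoints.

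For the monoidal part, the tensor product in $\Pro(\scr C)$ is $\{X_i\} \otimes \{Y_j\} \wequi \{X_i \otimes Y_j\}_{(i,j)}$. Compatibility of the $t$-structure with the symmetric monoidal structure on $\scr C$ means $\scr C_{\ge 0}$ is closed under $\otimes$ and contains $\1$; these properties then pass levelwise to $\Pro(\scr C_{\ge 0})$, with $\1_{\Pro(\scr C)} = c(\1_{\scr C})$ lying in $\Pro(\scr C_{\ge 0})$.

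The main point to be careful about is the truncation triangle: one must check that the levelwise construction actually defines a fiber sequence in $\Pro(\scr C)$ rather than only a candidate such sequence. This reduces to the exactness of cofiltered limits in the stable category $\Pro(\scr C)$, which holds essentially by construction (cofiltered limits in $\Pro(\scr C)$ are formal and commute with the finite limits and colimits computing fibers and cofibers). Everything else is a routine unwinding of definitions.
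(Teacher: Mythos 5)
Your proof is correct and follows essentially the same route as the paper: truncations defined levelwise, orthogonality read off from the mapping-space formula \eqref{eq:pro-maps}, and monoidal compatibility checked termwise. The paper merely states these verifications as immediate, so your write-up is just a more detailed unwinding of the same argument.
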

\begin{proof}
It is clear that $\Pro(\scr C_{\ge 0}), \Pro(\scr C_{\le 0})$ are closed under appropriate shifts, and that the functorial truncations in $\scr C$ define the required truncation cofiber sequences for $\Pro(\scr C)$.
The required orthogonality is immediate from \eqref{eq:pro-maps}.
The compatibility with the monoidal structure is clear by construction.
\end{proof}

\subsubsection*{Pro-$\ell$ covers} \label{subsubsec:pro-cover}
Consider the functor \[ \tau_{\ge 0}^\ell: \scr C \xrightarrow{c} \Pro(\scr C) \xrightarrow{(\ph)_\ell^\comp} \Pro(\scr C) \xrightarrow{\tau_{\ge 0}} \Pro(\scr C) \xrightarrow{m} \scr C, \] explicitly given by \[ \tau^{\ell}_{\ge 0}(E) \wequi \lim_i \tau_{\ge 0}(E/\ell^i). \]
Since all functors in the composite expression are lax symmetric monoidal, so is $\tau^{\ell}_{\ge 0}$.
Replacing the functor $\tau_{\ge 0}$ in the above definition by the lax symmetric monoidal transformation $\tau_{\ge 0} \to \id$ we obtain a lax symmetric monoidal natural transformation \[ \tau_{\ge 0}^{\ell} \to (\ph)_\ell^\comp. \]

\subsection{Unstable $\ell$-completion} \label{subsec:unstable-ell-comp}
\subsubsection{Spaces}
Recall that a map $f: X \to Y \in \Spc$ is called an $\F_\ell$-equivalence if $\Sigma^\infty_+ f \otimes H\F_\ell \in \ShvSp$ is an equivalence; equivalently $(\Sigma^\infty_+ f)_\ell^\comp \in \ShvSp_\ell^\comp$ is an equivalence.
These are the weak equivalences in a Bousfield localization of $\Spc$ which we call \emph{unstable $\ell$-completion} and denote by $L_\ell$.\footnote{There are other notions of unstable $\ell$-completion, which we shall make no use of.}
More generally, if $\scr L$ is a set of primes, then $f$ is called an unstable $\scr L$-equivalence (or $\F_\scr{L}$-equivalence) if $\Sigma^\infty_+ f \otimes H\F_\ell$ is an equivalence for all $\ell \in \scr L$; we call the associated localization functor $L_{\scr L}$ the \emph{unstable $\scr L$-completion}.
Of course $\scr L_\ell = \scr L_{\{\ell\}}$.
Note that if $\scr L = \emptyset$ then all maps are $\F_\scr{L}$-equivalences, which is a bit pathological.
We shall hence always assume that $\scr L$ is \emph{non-empty}.

The following is well-known.
\begin{lem} \label{lemm:p-complete-coproducts}
The functor $L_\scr{L}: \Spc \to \Spc$ preserves coproducts.
\end{lem}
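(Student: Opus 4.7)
The plan is to reduce the statement to two sub-claims: (a) a coproduct of $\scr L$-equivalences is again an $\scr L$-equivalence, and (b) a coproduct of $\scr L$-complete spaces is $\scr L$-complete. Granted these, applying $\coprod_i$ to the unit maps $X_i \to L_\scr{L}(X_i)$ produces an $\scr L$-equivalence whose target is $\scr L$-complete, and uniqueness of localization then identifies $\coprod_i L_\scr{L}(X_i)$ with $L_\scr{L}(\coprod_i X_i)$. Claim (a) is immediate because $\Sigma^\infty_+ : \Spc \to \ShvSp$ and $(\ph) \otimes H\F_\ell : \ShvSp \to \ShvSp$ are left adjoints and hence preserve coproducts.

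The substantive content is (b), and the main step is a reduction to the connected case. I would first observe that any $\scr L$-equivalence $f : A \to B$ induces a bijection $\pi_0(A) \to \pi_0(B)$: applying $H_0(\ph; \F_\ell)$ for any $\ell \in \scr L$ yields an isomorphism $\F_\ell[\pi_0 A] \to \F_\ell[\pi_0 B]$ of free $\F_\ell$-modules induced by the set map $\pi_0(f)$, which forces $\pi_0(f)$ itself to be a bijection. This is the single place where the non-emptiness of $\scr L$ is used. Writing $S := \pi_0(A) = \pi_0(B)$, decomposing $A = \coprod_s A_s$, $B = \coprod_s B_s$ and $f = \coprod_s f_s$ accordingly, each $f_s$ is itself an $\scr L$-equivalence by a standard retract argument: $\Sigma^\infty_+ f_s \otimes H\F_\ell$ is a retract, in the arrow category, of the equivalence $\Sigma^\infty_+ f \otimes H\F_\ell$ via the canonical inclusion and projection maps for direct sums of spectra, and retracts of equivalences are equivalences.

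Now let $Z = \coprod_i Z_i$ be a coproduct of $\scr L$-complete spaces and $f : A \to B$ an $\scr L$-equivalence with the decomposition above. Since each $B_s$ is connected and non-empty, any map $B_s \to \coprod_i Z_i$ factors through a single summand, which together with $\Map(\coprod_s B_s, Z) = \prod_s \Map(B_s, Z)$ yields
\[
  \Map(B, Z) = \prod_s \coprod_i \Map(B_s, Z_i),
\]
and analogously for $A$. The induced map $\Map(B, Z) \to \Map(A, Z)$ then decomposes as $\prod_s \coprod_i \Map(f_s, Z_i)$, and each factor is an equivalence by $\scr L$-completeness of $Z_i$ applied to the $\scr L$-equivalence $f_s$. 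This proves (b). The only real obstacle is the component-wise decomposition of $\scr L$-equivalences in the middle step; everything else is formal manipulation of adjoints and coproducts.
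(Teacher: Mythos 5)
Your proposal is correct and follows essentially the same route as the paper's proof: use $H_0(\ph;\F_{\ell_0})$ to see that an $\F_\scr{L}$-equivalence is a bijection on $\pi_0$, decompose it into a coproduct of $\F_\scr{L}$-equivalences between connected spaces (via the same retract/direct-sum-in-homology argument), and then conclude completeness of the coproduct using that maps out of connected spaces preserve coproducts. The only cosmetic difference is that you package the final identification via uniqueness of localization, while the paper states the two stability claims directly.
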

\begin{proof}
It is clear that $\F_\scr{L}$-equivalences are stable under coproducts; hence we need to show that $\scr{L}$-complete spaces are stable under coproducts.
Let $f: X \to Y$ be an $\F_\scr{L}$-equivalence.
Pick $\ell_0 \in \scr L$.
Note that $H_0(X, \F_{\ell_0}) \wequi \F_{\ell_0}\{\pi_0 X\}$ and so $H_0(f, \F_{\ell_0})$ being an isomorphism implies that $\pi_0(f)$ is an isomorphism.
We can thus write $X \wequi \coprod_{i \in I} X_i$ and $Y \wequi \coprod_{i \in I} Y_i$ such that each $X_i, Y_i$ is connected and $f(X_i) \subset Y_i$.
Then \[ H_*(f, \F_\ell) \wequi \bigoplus_{i \in I} H_*(f_i, \F_\ell) \] and so each $f_i$ is an $\F_\scr{L}$-equivalence.\NB{Isomorphism being stable under retraction.}
In other words, $\F_\scr{L}$-equivalences of connected spaces generate all $\F_\scr{L}$-equivalences under colimits (in fact coproducts).

Now let $\{Z_i \in \Spc\}_{i \in I}$ be $\scr{L}$-complete spaces.
We need to show that for any $\F_\scr{L}$-equivalence $f: X \to Y$ with $X, Y$ connected we have \[ f^*: \Map(Y, \coprod_i Z_i) \xrightarrow{\wequi} \Map(X, \coprod_i Z_i). \]
This is clear since maps out of connected spaces preserve coproducts and each $Z_i$ is $\ell$-complete.
\end{proof}

The extra generality of a set of primes is often superficial.
\begin{lem} \label{lem:L-splitting}
Let $X \in \Spc$ be nilpotent.
Then \[ L_{\scr L} X \wequi \prod_{\ell \in \scr L} L_\ell X. \]
\end{lem}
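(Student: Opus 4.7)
The plan is to construct a canonical map $\phi: L_\scr{L} X \to \prod_{\ell \in \scr L} L_\ell X$ and verify that it is a weak equivalence. For the existence of $\phi$: any $\F_\scr{L}$-equivalence is by definition an $\F_\ell$-equivalence for every $\ell \in \scr L$, so every $\F_\ell$-local object is automatically $\F_\scr{L}$-local. Hence each factor $L_\ell X$, and thus their product, is $\F_\scr{L}$-local; since this product sits under $X$ via the completion maps, the universal property of $L_\scr{L}$ supplies $\phi$.

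To show $\phi$ is an equivalence we compare homotopy groups. For nilpotent $X$, the Bousfield localizations $L_\ell X$ and $L_\scr{L} X$ agree with the classical Bousfield--Kan completions of $X$ at $H\F_\ell$ and at $\bigoplus_{p \in \scr L} H\F_p$ respectively, and in particular remain nilpotent. The Bousfield--Kan computation (reducing via the central series to the abelian case) yields natural short exact sequences
\[
0 \to \Ext^1(\Z/\ell^\infty, \pi_n X) \to \pi_n L_\ell X \to \Hom(\Z/\ell^\infty, \pi_{n-1} X) \to 0,
\]
\[
0 \to \Ext^1(\Z[1/\scr L]/\Z, \pi_n X) \to \pi_n L_\scr{L} X \to \Hom(\Z[1/\scr L]/\Z, \pi_{n-1} X) \to 0.
\]
Using $\Z[1/\scr L]/\Z \cong \bigoplus_{\ell \in \scr L} \Z/\ell^\infty$, for any abelian group $A$ one has $\Ext^1(\bigoplus_\ell \Z/\ell^\infty, A) \cong \prod_\ell \Ext^1(\Z/\ell^\infty, A)$ and similarly for $\Hom$. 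Combined with $\pi_n(\prod_\ell L_\ell X) = \prod_\ell \pi_n L_\ell X$ and the exactness of products in the category of abelian groups, the second sequence is identified with the product of the first over $\ell \in \scr L$; hence $\pi_n \phi$ is an isomorphism for all $n$, so $\phi$ is a weak equivalence.

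The essential input, and the place where nilpotence is truly needed, is the Bousfield--Kan identification and the resulting formula for the homotopy groups of the completion; once this is in hand, the remainder is a purely algebraic computation about $\Ext$ and $\Hom$ against a direct sum. A minor subtlety is handling nilpotent (not merely abelian) fundamental groups, which goes through via induction along the central series.
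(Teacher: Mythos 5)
Your proof is correct in spirit but takes a different route from the paper's, and the difference is worth noting because it carries a small extra cost.

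You construct the canonical map $\phi$ exactly as the paper does. Where you diverge is in verifying it is an equivalence: you write out Bousfield--Kan--type $\lim^1$ exact sequences for \emph{both} $L_\ell X$ and $L_{\scr L}X$, identify $\Z[1/\scr L]/\Z \cong \bigoplus_\ell \Z/\ell^\infty$, and then use that $\Ext^1(\bigoplus_\ell \Z/\ell^\infty,-)\cong\prod_\ell\Ext^1(\Z/\ell^\infty,-)$ (and the same for $\Hom$) to match homotopy groups term-by-term. This is a legitimate strategy, but it requires as input that the localization $L_{\scr L}$ of the paper --- defined as the Bousfield localization at $H\F_\ell$-equivalences for all $\ell\in\scr L$ simultaneously --- agrees with a classical Bousfield--Kan $\scr L$-completion on nilpotent spaces and carries such an exact sequence. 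That fact is true (it is the Bousfield--Kan completion at a set of primes), but it is not what the cited ``explicit formulas for $\ell$-completion'' supply out of the box, and you should flag it as an additional input rather than treat it as automatic.

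The paper avoids this by never invoking an exact-sequence description of $L_{\scr L}$. It observes that $\phi$ is a map between $\scr L$-complete objects, so it is an equivalence if and only if it is an $\F_\ell$-equivalence for each $\ell\in\scr L$, i.e. becomes an equivalence after $L_\ell$. Since $L_\ell L_{\scr L}X \wequi L_\ell X$ for formal reasons, the whole thing reduces, via the Künneth isomorphism, to showing $L_\ell\big(\prod_{\ell'\ne\ell}L_{\ell'}X\big)\wequi *$; and this only uses the explicit formulas for the single-prime completions $L_{\ell'}X$, to see their homotopy groups are uniquely $\ell$-divisible. So the paper's proof is a little more parsimonious: it consumes the fracture formulas only for the factors $L_{\ell'}X$, whereas yours also consumes them for the bulk completion $L_{\scr L}X$. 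Both arguments are valid; yours is the more ``hands-on'' computation, the paper's pushes more of the work into formal localization theory.

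One smaller point: your final step implicitly uses that $\pi_n\phi$ respects the two exact sequences, i.e. that the comparison map is the expected one on the $\Ext^1$ and $\Hom$ terms. This follows from the naturality of the Bousfield--Kan sequences in $X$ and in the coefficient ring, but a careful write-up should say so.
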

\begin{proof}
Note that $\ell$-complete objects are $\scr L$-complete.
It follows that the right hand side is $\scr L$-complete.
It also follows that $L_\ell L_{\scr L} X \wequi L_\ell X$, for any $\ell \in \scr L$.
Hence there is a natural map $L_{\scr L} X \to \prod_{\ell \in \scr L} L_\ell L_{\scr L} X \wequi \prod_{\ell \in \scr L} L_\ell X$ which we need to prove is an equivalence.
Equivalently we want that \[ L_\ell L_{\scr L} X \wequi L_\ell \prod_{\ell' \in \scr L} L_{\ell'} X, \] for any $\ell \in \scr L$.
Let $X' = \prod_{\ell' \in \scr L \setminus \ell} L_{\ell'} X$.
We thus need to prove that $H_*(X, \F_\ell) \wequi H_*(L_\ell X \times X', \F_\ell)$.
By the Künneth formula for this we want $H_*(X', \F_\ell) \wequi \F_\ell$, or equivalently $L_\ell X' \wequi *$.
Using the explicit formulas for $\ell$-completion \cite[Theorem 2.6]{barthel2019comparison}, for this it suffices to prove that multiplication by $\ell$ is an automorphism of $\pi_* X'$, or equivalently of $L_{\ell'} X$.
This is clear from the formulas.\NB{slicker replacement for last two sentences?}
\end{proof}

\subsubsection{General case} \label{subsub:general-l-comp}
\begin{dfn}
Given an $\infty$-category $\scr C$ and $X \in \scr C$, we say that $X$ is $\ell$-complete (respectively $\scr L$-complete) if for each $Y \in \scr C$ the space $\Map(Y, X)$ is $\ell$-complete (respectively $\scr L$-complete).
\end{dfn}
It is clear that this can be checked on a class of generators $Y$ of $\scr C$, and that $\ell$-complete (respectively $\scr L$-complete) objects are closed under limits.
If $\scr C$ is presentable these are the local objects of a Bousfield localization which we denote by $L_\ell$ (respectively $L_\scr{L}$) and call $\ell$-completion (respectively $\scr L$-completion).\NB{this clashes with the previous definition of $L_\ell$ only if $\scr C = \Ab$, in which case the current one is stupid anyways...}
For emphasis, we may sometimes call this notion \emph{categorical $\ell$-completion} (respectively \emph{categorical $\scr L$-completion}).
The maps inverted by this Bousfield localization are called \emph{categorical $\F_\ell$-equivalences}.

\begin{exm} \label{ex:completion-discrete}
By Lemma \ref{lemm:p-complete-coproducts}, discrete objects are always $\scr L$-complete.
It follows that $\tau_{\le 0} L_\scr{L} X \wequi \tau_{\le 0} X$, for any $X \in \scr C$.
\end{exm}

\begin{exm} \label{ex:SH-p-completion}
A spectrum $E \in \ShvSp$ is $\ell$-complete in the usual sense (i.e. $E \wequi \lim_n E/\ell^n$) if and only if all of its homotopy groups are derived $\ell$-complete, if and only if $\Omega^{\infty + n} E$ is a $\ell$-complete space for every $n \in \Z$ (use Lemma \ref{lemm:p-complete-coproducts} and e.g. \cite[Theorem 2.6]{barthel2019comparison}).
It follows that $E$ is categorically $\ell$-complete if and only if $E$ is $\ell$-complete in the usual sense.
\end{exm}

\begin{exm} \label{ex:p-completion-stable}
If $\scr C$ is a stable $\infty$-category and $E \in \scr C$, then $E \wequi \lim_n E/\ell^n$ if and only if for every $Y \in \scr C$ we have $\map(Y, E) \wequi \lim_n \map(Y, E)/\ell^n$.
Thus by Example \ref{ex:SH-p-completion} this holds if and only if $E$ is categorically $\ell$-complete.
\end{exm}

Categorical $\scr L$-completion behaves well under adjunctions.
\begin{prop} \label{prop:completion-permanence}
Let $L: \scr C \adj \scr D: R$ be an adjunction.
Then $L$ preserves categorical $\F_\scr{L}$-equivalences and $R$ preserves categorically $\scr{L}$-complete objects.
\end{prop}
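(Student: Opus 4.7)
The plan is to observe that both halves of the proposition follow by entirely formal adjunction manipulations, using only the mapping-space definition of $\scr L$-completeness given just above.

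First I would show that $R$ preserves $\scr L$-complete objects. Let $Z \in \scr D$ be $\scr L$-complete, and let $X \in \scr C$ be arbitrary. By definition we must check that $\Map_\scr{C}(X, R(Z))$ is an $\scr L$-complete space. The adjunction gives an equivalence $\Map_\scr{C}(X, R(Z)) \wequi \Map_\scr{D}(L(X), Z)$, and the right-hand side is $\scr L$-complete because $Z$ is. Hence $R(Z)$ is $\scr L$-complete.

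Next I would deduce the statement about $L$. Let $f: X \to Y$ be a categorical $\F_\scr{L}$-equivalence in $\scr C$. To show $L(f)$ is an $\F_\scr{L}$-equivalence, it suffices by definition of the Bousfield localization $L_\scr{L}$ to show that for every $\scr L$-complete $Z \in \scr D$, the map $\Map_\scr{D}(L(Y), Z) \to \Map_\scr{D}(L(X), Z)$ is an equivalence. Applying the adjunction identifies this with $\Map_\scr{C}(Y, R(Z)) \to \Map_\scr{C}(X, R(Z))$, and this is an equivalence since $R(Z)$ is $\scr L$-complete by the previous step and $f$ is assumed to be a categorical $\F_\scr{L}$-equivalence.

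There is no real obstacle here; the argument is a direct two-step application of the adjunction and the mapping-space characterization of $\scr L$-complete objects. The only mildly delicate point is making sure one uses the correct (categorical, rather than stable or unstable-space-level) notion of $\scr L$-completeness — but that is precisely the notion the proposition is phrased in, and it is designed to behave well in exactly this way.
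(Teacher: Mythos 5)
Your proof is correct and takes essentially the same approach as the paper: first establish that $R$ preserves $\scr L$-complete objects by the adjunction identity $\Map_\scr{C}(X,RZ)\wequi\Map_\scr{D}(LX,Z)$, then deduce the statement about $L$ from it via the same adjunction. The paper compresses this into a single paragraph but the two-step logic is identical.
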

\begin{proof}
If $f: X \to Y \in \scr C$ is an $\F_\scr{L}$-equivalence and $Z \in \scr D$ is $\scr{L}$-complete, then $\Map(Lf, Z) \wequi \Map(f, RZ)$ is an equivalence as soon is $RZ$ is $\scr{L}$-complete; i.e. it suffices to prove the second claim.
Thus we need to show that $\Map(X, RZ)$ is $\scr{L}$-complete; since this is the same as $\Map(LX, Z)$ this is true by assumption.
\end{proof}

\begin{exm} \label{ex:biadj-pres-ell-comp}
A functor with a left and a right adjoint preserves categorical $\scr{L}$-completions.
\end{exm}
\begin{exm} \label{ex:l-comp-presh}
Let $\scr C$ be a small $\infty$-category.
The functor $L_\scr{L}: \PSh(\scr C) \to \PSh(\scr C)$ is given by applying $\scr{L}$-completion sectionwise.
Indeed evaluation at any object $c \in \scr C$ admits left and right adjoints, so commutes with $L_\scr{L}$ by Example \ref{ex:biadj-pres-ell-comp}.
\end{exm}

\begin{rmk} \label{rmk:p-compl-products}
The Künneth theorem implies that $\scr{L}$-completion of spaces commutes with finite products.
It follows that categorical $\scr{L}$-completeness is a \emph{mode} \cite[Proposition 5.2.10]{carmeli2020ambidexterity}.
\end{rmk}

\subsubsection{$\Sigma$-presheaves}
Let $\scr C$ be a small category with finite coproducts.
\begin{prop} \label{prop:p-comp-PSigma}
The functor $L_\scr{L}: \PSh_\Sigma(\scr C) \to \PSh_\Sigma(\scr C)$ is obtained by applying $L_\scr{L}: \Spc \to \Spc$ sectionwise.
\end{prop}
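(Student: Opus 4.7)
The plan is to reduce the statement for $\PSh_\Sigma(\scr C)$ to the analogous statement for $\PSh(\scr C)$ (which is Example \ref{ex:l-comp-presh}), using the reflective adjunction $L_\Sigma : \PSh(\scr C) \adj \PSh_\Sigma(\scr C) : i$ together with the two key ingredients already available: Remark \ref{rmk:p-compl-products} (that $L_\scr{L}$ on $\Spc$ preserves finite products) and Proposition \ref{prop:completion-permanence} (adjoint permanence for categorical completion).

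First, I would observe that sectionwise $\scr{L}$-completion preserves $\Sigma$-presheaves. Indeed, if $X \in \PSh_\Sigma(\scr C)$ and $c, d \in \scr C$, then
\[ (L_\scr{L} X)(c \sqcup d) \wequi L_\scr{L}(X(c) \times X(d)) \wequi L_\scr{L} X(c) \times L_\scr{L} X(d), \]
by Remark \ref{rmk:p-compl-products}. Hence by Example \ref{ex:l-comp-presh}, for any $X \in \PSh_\Sigma(\scr C)$ the object $\tilde L X := L_\scr{L}^{\PSh}(iX)$ already lies in $\PSh_\Sigma(\scr C)$, defining a candidate sectionwise functor $\tilde L: \PSh_\Sigma(\scr C) \to \PSh_\Sigma(\scr C)$ together with a natural unit $X \to \tilde L X$.

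Second, I would verify that $\tilde L X$ is categorically $\scr{L}$-complete in $\PSh_\Sigma(\scr C)$. By Example \ref{ex:l-comp-presh}, $\tilde L X$ is categorically $\scr L$-complete in $\PSh(\scr C)$. Since $i$ is fully faithful, for any $Z \in \PSh_\Sigma(\scr C)$,
\[ \Map_{\PSh_\Sigma(\scr C)}(Z, \tilde L X) \wequi \Map_{\PSh(\scr C)}(iZ, \tilde L X), \]
which is $\scr L$-complete. Hence $\tilde L X$ is categorically $\scr L$-complete in $\PSh_\Sigma(\scr C)$.

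Third, I would show that the unit $X \to \tilde L X$ is a categorical $\F_\scr{L}$-equivalence in $\PSh_\Sigma(\scr C)$. This unit is the image under $L_\Sigma$ of the unit $iX \to L_\scr{L}^{\PSh}(iX)$ (using that both $iX$ and $\tilde L X$ are already $\Sigma$-presheaves, so $L_\Sigma$ acts on them as the identity). The map $iX \to L_\scr{L}^{\PSh}(iX)$ is a categorical $\F_\scr{L}$-equivalence in $\PSh(\scr C)$ by definition, and by Proposition \ref{prop:completion-permanence} the left adjoint $L_\Sigma$ preserves categorical $\F_\scr{L}$-equivalences. Therefore $X \to \tilde L X$ is a categorical $\F_\scr{L}$-equivalence in $\PSh_\Sigma(\scr C)$. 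Combining the last two steps, $\tilde L X$ realizes the categorical $\scr{L}$-completion of $X$ inside $\PSh_\Sigma(\scr C)$.

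There is no serious obstacle here; the only substantive input is the product-preservation of $L_\scr{L}$ on spaces (needed to stay inside $\PSh_\Sigma$), while the comparison with the sectionwise formula is then a formal consequence of the adjoint permanence properties already established.
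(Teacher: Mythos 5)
Your proof is correct and follows essentially the same route as the paper: reduce to the presheaf case via Example \ref{ex:l-comp-presh}, use Remark \ref{rmk:p-compl-products} to see that sectionwise completion stays inside $\PSh_\Sigma(\scr C)$, and use Proposition \ref{prop:completion-permanence} (applied to $L_\Sigma$) to identify the result with the categorical completion. You merely spell out in more detail the verification, implicit in the paper, that an object of $\PSh_\Sigma(\scr C)$ which is complete in $\PSh(\scr C)$ is complete in $\PSh_\Sigma(\scr C)$ (via full faithfulness of the inclusion).
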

\begin{proof}
The functor $L_\Sigma: \PSh(\scr C) \to \PSh_\Sigma(\scr C)$ preserves $\F_\scr{L}$-equivalences by Proposition \ref{prop:completion-permanence}.
Using Example \ref{ex:l-comp-presh} it hence suffices to show that if $F \in \PSh_\Sigma(\scr C) \subset \PSh(\scr C)$ and $F'$ is obtained by applying $L_\scr{L}$ sectionwise, then $F' \in \PSh_\Sigma(\scr C)$.
This follows from Remark \ref{rmk:p-compl-products}.
\end{proof}

\begin{dfn}
Suppose given $X \to Y \in \Spc$ with $Y$ $\scr{L}$-complete.
We say that \emph{the completion is fiberwise} if for every point $y \in Y$ the following diagram is a pullback square
\begin{equation*}
\begin{CD}
L_\scr{L}(X_y) @>>> L_\scr{L}(X) \\
@VVV              @VVV     \\
\{y\}       @>>> Y.
\end{CD}
\end{equation*}
\end{dfn}

\begin{prop} \label{prop:p-comp-slice} \NB{this result feels suboptimal (e.g. it also holds for usual presheaves)}
Let $\scr X \in \PSh_\Sigma(\scr C)$ and $F = (\scr F \to \scr X) \in \PSh_\Sigma(\scr C)_{/\scr X}$.
Write $L_\scr{L} F = (L_\scr{L}^{\scr X} \scr F \to \scr X)$ for the $\F_\scr{L}$-completion in $\PSh_\Sigma(\scr C)_{/\scr X}$.
\begin{enumerate}
\item Given $\alpha: c \to \scr X$ (with $c \in \scr C$) we have \[ (L_\scr{L}^{\scr X} \scr F)(c) \times_{\scr X(c)} \{\alpha\} \wequi L_\scr{L}(\scr F(c) \times_{\scr X(c)} \{\alpha\}). \]
\item Suppose $\scr X$ is $\F_\scr{L}$-local.
  Put $L_\scr{L}^0 F = (L_\scr{L} \scr F \to \scr X)$.
  The canonical map $F \to L_\scr{L}^0 F$ is an $\F_\scr{L}$-localization if and only if for every $\alpha: c \to \scr X$ (with $c \in \scr C$) the $\scr{L}$-completion of $\scr F(c) \to \scr X(c)$ is fiberwise.
\end{enumerate}
\end{prop}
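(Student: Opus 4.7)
The plan for part (1) is to reduce the fiberwise formula to a sectionwise $\F_\scr L$-completion in a larger presheaf category. There is a standard equivalence $\PSh(\scr C)_{/\scr X} \wequi \PSh(\scr C_{/\scr X})$, where $\scr C_{/\scr X}$ denotes the $\infty$-category of pairs $(c, \alpha: c \to \scr X)$ with $c \in \scr C$. Under this equivalence, an object $(\scr F' \to \scr X)$ corresponds to the functor $(c, \alpha) \mapsto \scr F'(c) \times_{\scr X(c)} \{\alpha\}$. By Example \ref{ex:l-comp-presh}, $L_\scr L$ on $\PSh(\scr C_{/\scr X})$ is computed sectionwise, so the candidate for $L_\scr L^\scr X F$ is the object $\scr F^\sharp \to \scr X$ whose fiber over $\alpha: c \to \scr X$ is $L_\scr L(\scr F(c) \times_{\scr X(c)} \{\alpha\})$.

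Next I would verify this candidate lies in $\PSh_\Sigma(\scr C)_{/\scr X}$, i.e. that $\scr F^\sharp$ sends finite coproducts in $\scr C$ to products in $\Spc$. Writing $c = c_1 \sqcup c_2$ and using $\scr X(c) = \scr X(c_1) \times \scr X(c_2)$, the fiber of $\scr F^\sharp(c)$ over $(\alpha_1, \alpha_2)$ is $L_\scr L\bigl(\scr F(c_1) \times_{\scr X(c_1)} \{\alpha_1\} \times \scr F(c_2) \times_{\scr X(c_2)} \{\alpha_2\}\bigr)$, which by Remark \ref{rmk:p-compl-products} splits as the product of the two fiber completions, matching the corresponding fiber of $\scr F^\sharp(c_1) \times \scr F^\sharp(c_2)$. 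Since $\scr F^\sharp$ is $\F_\scr L$-local in $\PSh(\scr C)_{/\scr X}$ and lies in the reflective subcategory $\PSh_\Sigma(\scr C)_{/\scr X}$, it is also $\F_\scr L$-local there (any $\F_\scr L$-equivalence in $\PSh_\Sigma(\scr C)_{/\scr X}$ remains an $\F_\scr L$-equivalence in $\PSh(\scr C)_{/\scr X}$ because the larger $L_\scr L$ preserves the smaller subcategory, using Proposition \ref{prop:completion-permanence} for the $L_\Sigma$-reflection). Uniqueness of the localization then identifies $\scr F^\sharp$ with $L_\scr L^\scr X \scr F$, establishing (1).

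For part (2), the claim is a quick consequence of (1). By Proposition \ref{prop:p-comp-PSigma}, the fiber of $L_\scr L^0 F = (L_\scr L \scr F \to \scr X)$ over $\alpha$ is $L_\scr L(\scr F(c)) \times_{\scr X(c)} \{\alpha\}$, whereas by (1) the fiber of $L_\scr L^\scr X F$ over $\alpha$ is $L_\scr L(\scr F(c) \times_{\scr X(c)} \{\alpha\})$. The canonical map $L_\scr L^0 F \to L_\scr L^\scr X F$ (provided by the universal property of $L_\scr L^\scr X$) restricts on fibers to the canonical comparison map from the pullback of the completion to the completion of the pullback, which is exactly the map whose invertibility defines ``completion is fiberwise''. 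Hence $F \to L_\scr L^0 F$ is an $\F_\scr L$-localization if and only if $L_\scr L^0 F \wequi L_\scr L^\scr X F$, if and only if this comparison is an equivalence on every fiber.

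The main obstacle lies in part (1), specifically in showing that the naive fiberwise completion in the larger category $\PSh(\scr C)_{/\scr X}$ remains inside $\PSh_\Sigma(\scr C)_{/\scr X}$ and correctly computes the slice localization there. The essential input is Remark \ref{rmk:p-compl-products}, which ensures $L_\scr L$ commutes with finite products of spaces so that the $\Sigma$-property is preserved by the fiberwise completion; once this is in hand, part (2) follows by comparing fibers.
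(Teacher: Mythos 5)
Your proof is correct and is essentially the paper's argument: part (1) rests on identifying the slice category with presheaves on $\scr C_{/\scr X}$ and computing the completion fiberwise/sectionwise (the paper invokes the $\Sigma$-version $\PSh_\Sigma(\scr C)_{/\scr X} \wequi \PSh_\Sigma(\scr C_{/\scr X})$ and cites Proposition \ref{prop:p-comp-PSigma} directly, whereas you rerun that proposition's proof inside $\PSh(\scr C_{/\scr X})$ using Example \ref{ex:l-comp-presh} and Remark \ref{rmk:p-compl-products} before descending to $\Sigma$-presheaves), and part (2) is the same fiberwise comparison the paper carries out by mapping out of the generators $T=(c \to \scr X)$. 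One small touch-up: the universal-property map naturally goes $L_\scr{L}^{\scr X} F \to L_\scr{L}^0 F$, after first noting that $L_\scr{L}^0 F$ is $\F_\scr{L}$-local in the slice because $\scr X$ is local and complete spaces are closed under limits, rather than in the direction you wrote; this does not affect the fiberwise criterion.
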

\begin{proof}
(1) Recall that $\PSh_\Sigma(\scr C)_{/\scr X} \wequi \PSh_\Sigma(\scr C_{/\scr X})$.\NB{ref?}
Under this equivalence, evaluation at $(\alpha: c \to \scr X) \in \scr C_{/\scr X}$ is given by $F \mapsto \scr F(c) \times_{\scr X(c)} \{\alpha\}$.
The result thus follows from Proposition \ref{prop:p-comp-PSigma}.

(2) Note that $L_\scr{L}^0 \scr F$ is $\F_\scr{L}$-local.
Since $\PSh_\Sigma(\scr C)_{/\scr X}$ is generated under colimits by objects of the form $T = (c \to \scr X)$, $L_\scr{L} F \to L_\scr{L}^0 F$ is an equivalence if and only if \[ \Map_{\PSh_\Sigma(\scr C)_{/\scr X}}(T, L_\scr{L} F) \to \Map_{\PSh_\Sigma(\scr C)_{/\scr X}}(T, L_\scr{L}^0 F) \] is an equivalence for every such $T$.
This is equivalent to our stated condition by (1).
\end{proof}

The following can be used to produce some fiberwise $\ell$-completions.
\begin{lem} \label{lemm:fiberwise-comp-trick}
Let $E \to G \in \ShvSp_{\ge 1}$.
The $\ell$-completion of $\Omega^\infty E \to \Omega^\infty G$ is fiberwise if and only if $\pi_1(G)/\pi_1(E)$ is derived $\ell$-complete.
\end{lem}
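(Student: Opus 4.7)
Write $K=\fib(E\to G)\in\ShvSp$. The long exact sequence of homotopy groups shows $K$ is connective with $\pi_0 K=\coker(\pi_1 E\to\pi_1 G)=\pi_1(G)/\pi_1(E)$. Since $\Omega^\infty$ preserves limits, the fiber of $\Omega^\infty E\to\Omega^\infty G$ over any basepoint is $\Omega^\infty K$, and all such fibers are equivalent by connectedness of $\Omega^\infty G$. The main preliminary input I would use is the identification
\[
L_\ell \Omega^\infty E'\simeq \Omega^\infty L_\ell E' \qquad (E'\in\ShvSp_{\ge 1}).
\]
This holds because $\pi_0 E'=0$ is trivially derived $\ell$-complete: the Milnor sequence for $\lim_n E'/\ell^n$ shows $L_\ell E'\in\ShvSp_{\ge 1}$, and a Postnikov tower argument reduces the claim to Eilenberg--MacLane spectra in degree $\ge 1$, where it is classical. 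Applying this to $E$ and $G$, and using that $L_\ell$ on spectra is a limit-preserving localization (so $\fib(L_\ell E\to L_\ell G)\simeq L_\ell K$), the $\ell$-completed map is $\Omega^\infty L_\ell E\to\Omega^\infty L_\ell G$ with basepoint fiber $\Omega^\infty L_\ell K$. The fiberwise condition is therefore equivalent to the canonical comparison
\[
L_\ell \Omega^\infty K\to \Omega^\infty L_\ell K
\]
being an equivalence.

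For necessity, Example \ref{ex:completion-discrete} gives $\pi_0 L_\ell \Omega^\infty K=\pi_0 \Omega^\infty K=\pi_0 K$, while for connective $K$ the Milnor sequence for $L_\ell K=\lim_n K/\ell^n$ identifies $\pi_0 L_\ell K$ with the derived $\ell$-completion of $\pi_0 K$. An equivalence on $\pi_0$ thus forces $\pi_0 K=\pi_1(G)/\pi_1(E)$ to be derived $\ell$-complete.

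Conversely, assume $\pi_0 K$ is derived $\ell$-complete. Applying $L_\ell$ to the Postnikov fiber sequence $\tau_{\ge 1}K\to K\to H\pi_0 K$ and using $L_\ell H\pi_0 K\simeq H\pi_0 K$ yields $L_\ell \tau_{\ge 1}K\simeq \tau_{\ge 1}L_\ell K$ as well as $\pi_0 L_\ell K=\pi_0 K$. Both $\Omega^\infty K$ and $\Omega^\infty L_\ell K$ then decompose as disjoint unions indexed by $\pi_0 K$, with components $\Omega^\infty \tau_{\ge 1}K$ and $\Omega^\infty \tau_{\ge 1}L_\ell K$ respectively. Lemma \ref{lemm:p-complete-coproducts} commutes $L_\ell$ past the coproduct, reducing the problem to the basepoint-component equivalence $L_\ell \Omega^\infty\tau_{\ge 1}K\simeq \Omega^\infty L_\ell \tau_{\ge 1}K$, which follows from the preliminary input applied to $\tau_{\ge 1}K\in\ShvSp_{\ge 1}$. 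Assembling gives the desired equivalence. The main obstacle is the preliminary input on the commutation of $\Omega^\infty$ with $L_\ell$ on $\ShvSp_{\ge 1}$; once that is in hand, everything else is a direct unwinding of the Postnikov decomposition and the coproduct-preservation of unstable $\ell$-completion.
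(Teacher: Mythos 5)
Your proposal is correct and takes essentially the same approach as the paper: reduce, via the fact that $\Omega^\infty$ commutes with $\ell$-completion on $1$-connective spectra, to the question of whether $L_\ell\Omega^\infty F \to \Omega^\infty(F_\ell^\comp)$ is an equivalence for $F = \fib(E \to G)$, and identify this with derived $\ell$-completeness of $\pi_0 F = \pi_1(G)/\pi_1(E)$. The only difference is that the paper asserts this last equivalence in one line, whereas you spell out both directions (the $\pi_0$ comparison for necessity, and the Postnikov/component decomposition plus coproduct-preservation of $L_\ell$ for sufficiency), which is a faithful elaboration rather than a different argument.
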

\begin{proof}
The fiber sequence $F \to E \to G$ induces a fiber sequence $F_\ell^\comp \to E_\ell^\comp \to G_\ell^\comp$.
Since $\Omega^\infty$ preserves $\ell$-completions of connected spectra, we obtain a fiber sequence $\Omega^\infty(F_\ell^\comp) \to L_\ell \Omega^\infty(E) \to L_\ell \Omega^\infty(G)$.
The completion is thus fiberwise if and only if $L_\ell \Omega^\infty(F) \wequi \Omega^\infty(F_\ell^\comp)$.
This will happen if and only if $\pi_0(F)$ contributes only trivially to $F_\ell^\comp$, i.e. is already derived $\ell$-complete.
This was to be shown.
\end{proof}

\subsubsection{Topoi locally of homotopy dimension $\le 0$}
\begin{prop} \label{prop:htpy-dim-0-PSigma} \hfill
\begin{enumerate}
\item Let $\scr C$ be a site and $W \in \scr C$ a quasi-compact object of homotopy dimension $\le 0$.
  For $\scr F \in \PSh(\scr C)$, denote by $aF$ the associated hypersheaf.
  Then \[ (a\scr F)(W) \wequi \colim_{W \wequi W_1 \amalg \dots \amalg W_n} \scr F(W_1) \times \dots \times \scr F(W_n), \] where the (filtered) colimit is over all coproduct decompositions of $W$.
\item Let $\scr X$ an $\infty$-topos and $\scr X_0 \subset \scr X$ a small subcategory closed under finite coproducts, consisting of quasi-compact objects of homotopy dimension $\le 0$, and generating $\scr X$ under colimits.
  Then $\scr X \wequi \PSh_\Sigma(\scr X_0)$.
\end{enumerate}
\end{prop}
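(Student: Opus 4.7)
The plan is to prove (1) first and then obtain (2) as an essentially formal consequence using the theory of $\infty$-topos presentations.

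For (1), I would compute the hypersheafification via the hypercover formula $(a\scr F)(W) \wequi \colim_{U_\bullet} \Tot\, \scr F(U_\bullet)$, with $U_\bullet$ ranging over a cofinal subcategory of hypercovers of $W$. The crucial lemma is that every hypercover of $W$ is refined by the Čech nerve of a map $W_1 \amalg \dots \amalg W_n \to W$ arising from a coproduct decomposition $W \wequi W_1 \amalg \dots \amalg W_n$ inside $\scr C$. Given a covering family $\{U_\alpha \to W\}$, quasi-compactness extracts a finite subcover and hence an effective epimorphism $U := U_1 \amalg \dots \amalg U_n \to W$; viewed in the slice topos $\Shv(\scr C)_{/W}$, which has homotopy dimension $\le 0$ by hypothesis, this $0$-connective object admits a section $W \to U$. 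Pulling back the coproduct decomposition of $U$ along this section produces the desired decomposition $W \wequi W_1 \amalg \dots \amalg W_n$ together with factorizations $W_i \to U_i$; iterating level-wise upgrades this to a hypercover refinement. Since coproducts in a topos are disjoint, the Čech nerve of $\coprod W_i \to W$ is split and $\Tot\, \scr F$ of it collapses to $\scr F(W_1) \times \dots \times \scr F(W_n)$. Filteredness of the indexing category is immediate from the fact that any two coproduct decompositions admit a common refinement.

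For (2), since $\scr X_0$ generates $\scr X$ under colimits, the restricted Yoneda embedding $j: \scr X \to \PSh(\scr X_0)$ is fully faithful and presents $\scr X$ as a hypercomplete accessible left-exact localization of $\PSh(\scr X_0)$ for the Grothendieck topology $T$ on $\scr X_0$ generated by effective epimorphisms in $\scr X$ between objects of $\scr X_0$. In particular $\scr X$ is equivalent to the $\infty$-category of $T$-hypersheaves on $\scr X_0$. Applying (1) to the site $(\scr X_0, T)$, the $T$-hypersheaf condition becomes: $F$ carries every finite coproduct decomposition $W \wequi W_1 \amalg \dots \amalg W_n$ in $\scr X_0$ to a product decomposition $F(W) \wequi F(W_1) \times \dots \times F(W_n)$; that is, $F \in \PSh_\Sigma(\scr X_0)$. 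Hence $\scr X \wequi \PSh_\Sigma(\scr X_0)$.

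The main obstacle is the hypercover refinement in part (1): one must interpret "homotopy dimension $\le 0$" for an object $W$ of a site as a condition on the slice topos $\Shv(\scr C)_{/W}$, and verify that the section extracted there actually lifts to an honest coproduct decomposition of $W$ inside $\scr C$, which requires the relevant coproducts to exist in $\scr C$. Once this refinement is secured, the disjointness of topos-theoretic coproducts reduces the Čech computation to the stated finite product, and (2) follows formally from standard topos-presentation machinery.
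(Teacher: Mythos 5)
Your argument for (1) stands on the formula $(a\scr F)(W) \wequi \colim_{U_\bullet} \Tot\, \scr F(U_\bullet)$ over hypercovers of $W$, invoked as if it were a known computation of hypersheafification. It is not: for presheaves of spaces, hypersheafification is not in general computed by a single colimit of descent objects over hypercovers (the category of hypercovers is not even filtered on the nose, and even where the right-hand side is defined there is no a priori reason it is a hypersheaf or that one pass suffices -- this is exactly the transfinite-iteration problem that makes sheafification of space-valued presheaves delicate). What the Verdier hypercovering theorem gives is a statement about homotopy classes, not sections, and upgrading it to the formula you use under the present hypotheses is essentially the content of the proposition. The paper avoids this entirely: it works with the one-step construction $\scr F^\dagger(W) = \colim$ over covers of the descent datum, shows (using homotopy dimension $\le 0$ plus quasi-compactness and disjointness of coproducts) that finite disjoint decompositions are cofinal among covers of $W$, then proves $(\scr F^\dagger)^\dagger(W) \wequi \scr F^\dagger(W)$ because finite products commute with filtered colimits and each summand $W_i$ is again quasi-compact of homotopy dimension $\le 0$, and finally compares $\pi_0$ and $\pi_n$ at all basepoints with those of $a\scr F$ via the fact that $\dagger\dagger$ computes sheafification of $0$-truncated presheaves. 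That idempotence-plus-homotopy-sheaf step is the real work, and it has no counterpart in your sketch; it is hidden inside the unproved formula. (Your refinement lemma -- splitting a cover using homotopy dimension $\le 0$ and extracting a finite disjoint decomposition by quasi-compactness -- is the same as the paper's and is fine.)

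Part (2) also leans on a claim that is not automatic: that the restricted Yoneda embedding $\scr X \to \PSh(\scr X_0)$ is fully faithful, and that $\scr X$ is exactly the category of hypersheaves on $\scr X_0$ for the induced topology, ``since $\scr X_0$ generates $\scr X$ under colimits.'' Generation under colimits is weaker than density, so fully faithfulness does not follow as stated (it is true here, but only a posteriori). The paper takes a cleaner route: it uses local homotopy dimension $\le 0$ to get Postnikov completeness, presents $\scr X$ as hypersheaves on some site $\scr C$ containing $\scr X_0$ (not on $\scr X_0$ itself), and then applies (1) together with the commutation of sifted colimits with finite products to show each $W \in \scr X_0$ is compact projective, so that $\scr X \wequi \PSh_\Sigma(\scr X_0)$ by the recognition criterion of \cite[Proposition 5.5.8.25]{HTT}. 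If you repair (1) along the paper's lines, I would recommend replacing your topos-presentation argument in (2) by this compact-projectivity argument as well.
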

\begin{proof}
(1)
Let $\{U_\alpha \to W\}$ be a covering family of $W$.
Since $W$ has homotopy dimension $0$, the epimorphism $\amalg_\alpha U_\alpha \to W$ has a section; since coproducts are disjoint and $W$ is quasi-compact, the covering thus admits a refinement which is a disjoint union decomposition $W = W_1 \amalg W_2 \amalg \dots \amalg W_n$.
Note that each $W_i$ is quasi-compact of homotopy dimension $\le 0$.
It follows that \[ \scr F^\dagger(W) \wequi \colim_{W \wequi W_1 \amalg \dots \amalg W_n} F(W_1) \times \dots \times F(W_n). \]
We thus need to show that $\scr F^\dagger \to a\scr F$ induces an equivalence on $W$.
First note that $(\scr F^\dagger)^\dagger(W) \wequi \scr F^\dagger(W)$, since finite products commute with filtered colimits of spaces \cite[Proposition 5.5.8.6, Example 5.5.8.3]{HTT} and each of the $W_i$ has homotopy dimension $\le 0$ itself.
Since for $0$-truncated presheaves, applying $\dagger$ twice yields the sheafification, we deduce that \[ \pi_0(\scr F^\dagger(W)) \wequi \pi_0(\scr F)^\dagger(W) \wequi \ul{\pi}_0(\scr F)(W) \wequi \ul{\pi}_0(a\scr F)(W), \] where $\ul{\pi}_0$ denotes the associated homotopy sheaf.
Applying this to $a\scr F \wequi (a\scr F)^\dagger \wequi a a \scr F$ we deduce that $\pi_0(\scr F^\dagger)(W) \wequi \pi_0(a\scr F)(W)$.
Now let $b \in \scr F^\dagger(W)$, $n \ge 1$.
We have \[ \pi_n(\scr F^\dagger(W), b) \wequi \pi_n((\scr F^\dagger)^\dagger(W), b) \wequi \pi_n(\scr F^\dagger, b)^\dagger(W) \wequi \ul{\pi}_n(\scr F^\dagger,b)(W) \wequi \ul{\pi}_n(a \scr F, b)(W). \]
Again we may apply the same reasoning to $a\scr F$ to deduce that $\pi_n(\scr F^\dagger(W), b) \wequi \pi_n(a\scr F(W), b)$.
The result follows.

(2)
By \cite[Proposition 5.5.8.25(1)]{HTT}, it suffices to prove that each $W \in \scr X_0$ is compact projective, i.e. $\Map(W, \ph)$ preserves sifted colimits \cite[Remark 5.5.8.20]{HTT}.
Since $\scr X$ is locally of homotopy dimension $\le 0$ it is Postnikov complete \cite[Proposition 7.2.1.10]{HTT} and so in particular hypercomplete.
There is thus a Grothendieck site $\scr C$ (which we may assume to contain $\scr X_0$) such that $\scr X \wequi \Shv(\scr C)^\comp$ \cite[Remark 6.5.2.20]{HTT}.
Since sifted colimits commute with finite products \cite[Proposition 5.5.8.6]{HTT}, the required result follows from (1).
\end{proof}

We shall call $W \in \scr X$ w-contractible if it is quasi-compact and of homotopy dimension $\le 0$.

\begin{cor} \label{cor:htpy-dim-0-conn}
Let $\scr X$ as above, $X \in \scr X$, $E \in \ShvSp(\scr X)$.
\begin{enumerate}
\item We have $X \in \scr X_{\ge n}$ (respectively $X \in \scr X_{\le n}$) if and only if $X(W) \in \Spc_{\ge n}$ (respectively $X(W) \in \Spc_{\le n}$) for all $W$ w-contractible.
\item We have $\ul{\pi}_0(X)(W) = \pi_0(X(W))$.
\item We have $E \in \ShvSp(\scr X)_{\ge n}$ (respectively $E \in \ShvSp(\scr X)_{\le n}$) if and only if $E(W) \in \ShvSp_{\ge n}$ (respectively $E(W) \in \ShvSp_{\le n}$) for all $W$ w-contractible.
\item We have $\ul{\pi}_n(E)(W) \wequi \pi_n(E(W))$.
\end{enumerate}
\end{cor}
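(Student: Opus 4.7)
My strategy is to reduce everything to the case $W \in \scr X_0$, where the statements become sectionwise computations in the presheaf model $\scr X \wequi \PSh_\Sigma(\scr X_0)$, via a retract argument showing that every w-contractible $W \in \scr X$ is a retract of some object of $\scr X_0$.

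First, I would apply Proposition \ref{prop:htpy-dim-0-PSigma}(2) to identify $\scr X \wequi \PSh_\Sigma(\scr X_0)$, so that each $W_0 \in \scr X_0$ is compact projective with $\Map_\scr{X}(W_0, X) \wequi X(W_0)$. The $t$-structure truncations $\tau_{\ge n}, \tau_{\le n}$ on $\Spc$ and $\ShvSp$ commute with finite products, so the sectionwise $t$-structure on $\PSh(\scr X_0)$ restricts to $\PSh_\Sigma(\scr X_0)$ and coincides there with the topos $t$-structure. Moreover, the presheaf $W_0 \mapsto \pi_n X(W_0)$ is discrete and product-preserving, hence already a hypersheaf, which gives $\ul{\pi}_n(X)(W_0) \wequi \pi_n(X(W_0))$ (and analogously for $E \in \ShvSp(\scr X)$). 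This settles all four assertions when $W \in \scr X_0$.

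Next, I would establish the main geometric step: any w-contractible $W \in \scr X$ is a retract of some object of $\scr X_0$. Because $\scr X_0$ generates $\scr X$ under colimits, the canonical map $\coprod_{U \to W,\, U \in \scr X_0} U \to W$ is an effective epimorphism; homotopy dimension $\le 0$ provides a section; quasi-compactness of $W$ forces this section to factor through a finite subcoproduct, which lies in $\scr X_0$ by the assumed closure under finite coproducts.

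Given the retract, (1) and (3) follow easily: the $\tau_{\ge n}$ forward direction uses the standard fact that $W$ of homotopy dimension $\le 0$ makes $\Map_\scr{X}(W, \ph)$ preserve $n$-connective objects; the $\tau_{\le n}$ forward direction is automatic since any representable functor carries $n$-truncated objects to $n$-truncated spaces (resp.\ spectra); and the converses restrict $W$ to range over $\scr X_0$ and invoke the first paragraph. For (2) and (4), writing $W$ as the image of an idempotent $e \colon W_0 \to W_0$ in $\scr X_0$, the equivalence $\tau_{\le n} \Map_\scr{X}(W_0, X) \wequi \Map_\scr{X}(W_0, \tau_{\le n} X)$ established on $\scr X_0$ is natural in $W_0$ and hence intertwines $e^*$; splitting the idempotent in the idempotent-complete targets $\Spc$ and $\ShvSp$ yields an induced equivalence $\tau_{\le n} \Map_\scr{X}(W, X) \wequi \Map_\scr{X}(W, \tau_{\le n} X)$, which after the standard identification of $\ul{\pi}_n$ with an iterated truncation delivers (2) and (4). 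The only non-formal step is the retract reduction in paragraph three, which is where the topological hypotheses on $W$ are used essentially; everything else is routine bookkeeping with compact projectives and sectionwise $t$-structures.
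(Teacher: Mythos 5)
Your proof is correct and follows essentially the same route as the paper: both reduce everything via Proposition \ref{prop:htpy-dim-0-PSigma}(2) to sectionwise statements in $\PSh_\Sigma(\scr X_0)$, where truncations (hence homotopy sheaves and connectivity/coconnectivity) are computed objectwise. Your retract argument for a w-contractible $W$ not necessarily lying in $\scr X_0$ is a detail the paper treats as immediate, and it is a correct way to justify the statement for arbitrary w-contractible objects.
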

\begin{proof}
(1) and (2) are immediate from the equivalence $\scr X \wequi \PSh_\Sigma(\scr X_0)$ of Proposition \ref{prop:htpy-dim-0-PSigma}(2).
(3) and (4) are immediate from (1) and (2).
\end{proof}

\begin{cor} \label{cor:htpy-dim-0-compl}
Let $\scr X$ as above.
\begin{enumerate}
\item For $X \in \scr X$, $W$ w-contractible we have $(L_\scr{L} X)(W) \wequi L_\scr{L}(X(W))$.
\item For $E \in \ShvSp(\scr X)_{\ge 1}$ we have $E_\ell^\comp \in \ShvSp(\scr X)_{\ge 1}$ and $\Omega^\infty(E_\ell^\comp) \wequi L_\ell \Omega^\infty(E)$.
\end{enumerate}
\end{cor}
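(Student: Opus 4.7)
The plan is to check both statements after evaluating on w-contractible objects, using the equivalence $\scr X \wequi \PSh_\Sigma(\scr X_0)$ of Proposition \ref{prop:htpy-dim-0-PSigma}(2) and the sectionwise description of connectivity from Corollary \ref{cor:htpy-dim-0-conn}. A preliminary reduction that applies to both parts: given a w-contractible $W \in \scr X$ not already in $\scr X_0$, the closure of $\scr X_0 \cup \{W\}$ under finite coproducts again consists of quasi-compact objects of homotopy dimension $\le 0$ and still generates $\scr X$ under colimits, so the hypotheses of Proposition \ref{prop:htpy-dim-0-PSigma}(2) are preserved. Hence we may assume $W \in \scr X_0$, whereupon evaluation at $W$ under $\scr X \wequi \PSh_\Sigma(\scr X_0)$ is literally the presheaf evaluation.

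For (1), this reduction combined with Proposition \ref{prop:p-comp-PSigma}, which says $L_\scr{L}$ on $\PSh_\Sigma(\scr X_0)$ is computed sectionwise, directly yields $(L_\scr{L} X)(W) \wequi L_\scr{L}(X(W))$.

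For (2), evaluation at $W \in \scr X_0$ preserves all limits (it is corepresentable, with adjoints on both sides in the presheaf category and hence on $\PSh_\Sigma$), so in particular $E_\ell^\comp(W) \wequi E(W)_\ell^\comp$; similarly it commutes with $\Omega^\infty$ since $\Omega^\infty$ of a sheaf of spectra is computed objectwise. By Corollary \ref{cor:htpy-dim-0-conn}(3), the connectivity claim $E_\ell^\comp \in \ShvSp(\scr X)_{\ge 1}$ reduces to showing that for any spectrum $F \in \Sp_{\ge 1}$ one has $F_\ell^\comp \in \Sp_{\ge 1}$. This is a direct Bockstein computation: the long exact sequence of $F \xrightarrow{\ell^n} F \to F/\ell^n$ gives $F/\ell^n \in \Sp_{\ge 1}$ with $\pi_1(F/\ell^n) = \pi_1(F)/\ell^n$ forming a surjective (hence Mittag-Leffler) tower, so $\pi_0(F_\ell^\comp) = \lim^1 \pi_1(F/\ell^n) = 0$.

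For the equivalence $\Omega^\infty(E_\ell^\comp) \wequi L_\ell \Omega^\infty(E)$, observe that since $\Omega^\infty$ is a right adjoint it preserves categorical $\ell$-completeness by Proposition \ref{prop:completion-permanence}, so $\Omega^\infty(E_\ell^\comp)$ is $\ell$-complete in $\scr X$ (using Example \ref{ex:p-completion-stable} to identify categorical and usual $\ell$-completeness on $\ShvSp(\scr X)$). Consequently the canonical map $\Omega^\infty E \to \Omega^\infty(E_\ell^\comp)$ factors uniquely through a map $L_\ell \Omega^\infty(E) \to \Omega^\infty(E_\ell^\comp)$. Evaluating at a w-contractible $W$, the left-hand side becomes $L_\ell(\Omega^\infty E(W))$ by part (1) combined with the sectionwise computation of $\Omega^\infty$, and the right-hand side becomes $\Omega^\infty(E(W)_\ell^\comp)$. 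The resulting map is an equivalence by the classical fact recalled in Example \ref{ex:SH-p-completion} that $\Omega^\infty$ preserves $\ell$-completions on $\Sp_{\ge 1}$. Since w-contractible objects detect equivalences (they contain a set of generators, namely $\scr X_0$), the map is an equivalence in $\scr X$. The main subtlety is simply to track that evaluation commutes both with $\Omega^\infty$ and with $\ell$-completion, which is what the identification $\scr X \wequi \PSh_\Sigma(\scr X_0)$ of Proposition \ref{prop:htpy-dim-0-PSigma}(2) is tailor-made to provide.
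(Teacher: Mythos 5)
Your proof is correct and follows essentially the same route as the paper: reduce everything to sections over w-contractible objects via $\scr X \wequi \PSh_\Sigma(\scr X_0)$ and Corollary \ref{cor:htpy-dim-0-conn}, note that stable and unstable $\ell$-completion and $\Omega^\infty$ are all computed sectionwise there, construct the comparison map from the $\ell$-completeness of $\Omega^\infty(E_\ell^\comp)$, and conclude by the classical statement for $1$-connective spectra (the paper cites \cite[Theorem 2.6]{barthel2019comparison} for this, which is the more precise reference than Example \ref{ex:SH-p-completion}; your Milnor-sequence argument for the connectivity of $E_\ell^\comp$ is a fine substitute, and your reduction enlarging $\scr X_0$ to contain $W$ is a welcome extra care).
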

\begin{proof}
(1) Immediate from Proposition \ref{prop:htpy-dim-0-PSigma}(2) and Proposition \ref{prop:p-comp-PSigma}.

(2) Since $\Omega^\infty(E_\ell^\comp)$ is $\F_\ell$-complete, there is a canonical map $L_\ell \Omega^\infty(E) \to \Omega^\infty(E_\ell^\comp)$.
By Proposition \ref{prop:htpy-dim-0-PSigma}(2), we need only prove this induces an equivalence on sections over w-contractible $W$; similarly by Corollary \ref{cor:htpy-dim-0-conn}(3) it suffices to establish the connectivity on sections over $W$.
By (1) the unstable $\ell$-completion is performed sectionwise on $W$, and this is always true for the stable $\ell$-completion.
By Corollary \ref{cor:htpy-dim-0-conn}(3) the connectivity assumption passes to sections.
We thus reduce to $\scr X = \Spc$, in which case the result is well-known and follows e.g. from \cite[Theorem 2.6]{barthel2019comparison}.
\end{proof}

\begin{rmk}
Without the local homotopy dimension $\le 0$ assumption, it is not even clear if $\Omega^\infty(E_\ell^\comp)$ is connected (though $L_\ell \Omega^\infty E$ always is, by Example \ref{ex:completion-discrete}).
\end{rmk}

\subsubsection{Completion, coconnectivity and sums}
\begin{lem} \label{lemm:complete-sums} \NB{ref?}
Let $J$ be a set, and for each $j \in J$ let $E_j \in \ShvSp_\ell^\comp$.
Then \[ \pi_i \left( \left(\oplus_j E_j\right)_\ell^\comp \right) \wequi L_\ell \left( \oplus_j \pi_i E_j \right). \]
(In other words, the $\ell$-completion of the Eilenberg--Mac Lane spectrum $\oplus_j \pi_i E_j$ is again an Eilenberg--Mac Lane spectrum, and coincides with the left hand side.)
In particular, if each $E_j \in \ShvSp_{\le 0}$ then \[ \left(\oplus_j E_j \right)_\ell^\comp \in \ShvSp_{\le 0}. \]
\end{lem}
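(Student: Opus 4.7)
The plan is to apply the standard Milnor/derived-completion short exact sequence
\begin{equation*}
0 \to \mathrm{Ext}^1(\Z[1/\ell]/\Z, \pi_i X) \to \pi_i X_\ell^\comp \to \Hom(\Z[1/\ell]/\Z, \pi_{i-1} X) \to 0,
\end{equation*}
valid for any spectrum $X$ and obtained from $X_\ell^\comp \wequi \lim_n X/\ell^n$ by the usual $\lim$/$\limone$ analysis, with $X := \oplus_j E_j$ so that $\pi_i X \wequi \oplus_j \pi_i E_j =: A_i$.

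The crux of the argument will be to prove that the right-hand term vanishes, i.e. that $\Hom(\Z[1/\ell]/\Z, A_{i-1}) = 0$. Since each $E_j$ is $\ell$-complete, each $\pi_{i-1} E_j$ is derived $\ell$-complete and therefore contains no nontrivial $\ell$-divisible subgroup, so $\Hom(\Z[1/\ell]/\Z, \pi_{i-1} E_j) = 0$. Now I would argue that this passes to arbitrary direct sums: a homomorphism $f\colon \Z[1/\ell]/\Z \to \oplus_j \pi_{i-1} E_j$ composed with each projection $p_j$ lands in a single summand where the $\Hom$ group is zero, so $p_j \circ f = 0$ for every $j$; since an element of a direct sum is zero iff all its projections vanish, $f = 0$.

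With this vanishing the short exact sequence collapses to $\pi_i X_\ell^\comp \wequi \mathrm{Ext}^1(\Z[1/\ell]/\Z, A_i)$. Applying the same divisibility argument to $A_i$ shows that $A_i$ itself has no nontrivial $\ell$-divisible subgroup, so its derived $\ell$-completion as a chain complex is concentrated in a single degree where it equals $\mathrm{Ext}^1(\Z[1/\ell]/\Z, A_i)$. Combined with Example \ref{ex:SH-p-completion}/\ref{ex:p-completion-stable}, which identifies the categorical $\ell$-completion $L_\ell$ on Eilenberg--Mac Lane spectra with derived $\ell$-completion of the underlying abelian group, this yields the claimed identification $\pi_i X_\ell^\comp \wequi L_\ell A_i$. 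The ``in particular'' clause is then immediate: if each $E_j \in \ShvSp_{\le 0}$ then $A_i = 0$ for $i > 0$, so $\pi_i (\oplus_j E_j)_\ell^\comp \wequi L_\ell(0) = 0$ for $i > 0$.

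The main point to watch is the matching of conventions for $L_\ell$ versus derived completion on abelian groups (including checking that $\Hom$-terms are handled consistently when the ``$\pi_{i-1}$'' piece lives in the negative direction). Once that is pinned down, the whole argument reduces to the algebraic observation that the absence of $\ell$-divisible subgroups is preserved under arbitrary direct sums, which is exactly what makes the Milnor sequence degenerate into a single $\mathrm{Ext}^1$.
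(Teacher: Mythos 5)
Your proof is correct and follows essentially the same route as the paper's. Both proofs hinge on the Milnor/derived-completion sequence and the observation that $\Hom(\Z/\ell^\infty, \oplus_j B_j)$ vanishes when each $B_j$ is derived $\ell$-complete; you detect this by projecting onto each factor, whereas the paper phrases the same point by embedding $\oplus_j B_j$ into the product $\prod_j B_j$ (which is itself derived complete) -- two sides of the same coin. The only cosmetic difference is that you apply the Milnor sequence to all homotopy degrees at once, whereas the paper first reduces to $E_j \in \ShvSp^\heart$ via Postnikov towers before invoking the $\pi_1$-computation; your variant is marginally more streamlined but contains no new idea.
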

\begin{proof}
Recall that $E \in \ShvSp_\ell^\comp$ if and only if $\pi_i E \in \ShvSp_\ell^\comp$ for all $i$ (see e.g. \cite[Theorem 2.6(1)]{barthel2019comparison}).
Combining with an argument using Postnikov towers (formation of which commutes with sums) we reduce to $E_j \in \ShvSp^\heart$ for all $j$, and it remains to prove that in this case \[ \pi_1\left( \left(\oplus_j E_j\right)_\ell^\comp \right) \wequi 0. \]
We may as well establish the ``in particular'' part in general.
Since $S := \oplus_j E_j \in \ShvSp_{\le 0}$ we have $S_\ell^\comp \in \ShvSp_{\le 1}$ and hence need only show that $\pi_1$ vanishes.\NB{This is given by the infinitely divisible elements in $\pi_0(S) \wequi \oplus_j \pi_0(E_j)$. Every entry must be infinitely divisible, whence zero.}
Writing $P = \prod_j E_j$ we have\footnote{The first equivalence follows just from the definition $S_\ell^\comp \wequi \lim_n S/\ell^n$ using that $\pi_i S = 0$ for $i >0$.} \[ \pi_1(S_\ell^\comp) \wequi \Hom(\Z/p^\infty, \pi_0 S) \hookrightarrow \Hom(\Z/p^\infty, \pi_0 P) \wequi \pi_1(P_\ell^\comp) \wequi 0, \] since $P \in \ShvSp_{\le 0} \cap \ShvSp_\ell^\comp$.
This was to be shown.
\end{proof}

\subsection{Unstable $\ell$-localization}
We slightly generalize well-known results, see e.g. \cite[\S3]{wickelgren2019simplicial} \cite[\S2.3]{asok2019localization}.
Let $\scr L$ be a set of primes.
We denote by $L_{\scr L^{-1}} \Spc \subset \Spc$ the localization of spaces at the unstable $\scr L$-local equivalences, i.e. at the standard $n$-fold covering maps $\rho_n: S^1 \to S^1$ for $n \in \scr L$, as well as the maps $S^n_+ \wedge \rho_n$.

\begin{exm} \label{exm:L-local-simple-space}
If $X \in \Spc$ is simple (i.e. a disjoint union of connected spaces with abelian fundamental group acting trivially on the higher homotopy groups), then $X$ is $\scr L$-local if and only if all homotopy groups at all base points are $\scr L$-local (i.e. multiplication by $\ell \in \scr L$ is an automorphism).
In fact if $X$ is simple but not necessarily $\scr L$-local, then the homotopy groups of $L_{\scr L^{-1}} X$ are the $\scr L$-localizations of the homotopy groups of $X$ \cite{MR1123451}.
\end{exm}

\begin{lem} \label{lem:L-local-colim}
The category of $\scr L$-local spaces is stable under coproducts and filtered colimits.
\end{lem}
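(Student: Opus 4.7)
My plan is to reduce the statement to two elementary facts about the sources of the localizing maps: compactness and connectedness. By definition, $X \in \Spc$ is $\scr L$-local iff $\Map(T, X) \to \Map(T', X)$ is an equivalence for every map $T' \to T$ in the generating set $\{\rho_n : n \in \scr L\} \cup \{S^m_+ \wedge \rho_n : n \in \scr L,\ m \ge 0\}$. Thus it suffices to show that the functors $\Map(T, \ph)$ for $T \in \{S^1\} \cup \{S^m_+ \wedge S^1 : m \ge 0\}$ behave well with respect to filtered colimits and coproducts, and that equivalences are stable under these colimits.

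First I would handle filtered colimits. Each source $T$ above is a finite CW complex, hence a compact object of $\Spc$, so $\Map(T, \ph)$ commutes with filtered colimits of spaces. Since filtered colimits preserve equivalences, if each $X_i$ is $\scr L$-local then so is $\colim_i X_i$.

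Next I would handle coproducts. The key observation is that each source $T$ is connected: $S^1$ obviously so, and $S^m_+ \wedge S^1 \wequi S^1 \vee S^{m+1}$ (a wedge of connected pointed spaces) in the unpointed category. Because any map from a connected space into a coproduct factors through a unique summand, we have $\Map(T, \coprod_i X_i) \wequi \coprod_i \Map(T, X_i)$, and the map induced by $\rho_n$ (respectively $S^m_+ \wedge \rho_n$) decomposes as the coproduct of the corresponding maps on each $X_i$. Coproducts of equivalences are equivalences, so if each $X_i$ is $\scr L$-local then so is $\coprod_i X_i$.

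The main (very minor) obstacle I anticipate is merely the bookkeeping around the connectedness of $S^m_+ \wedge S^1$ in the unpointed setting—once this is noted, the argument is entirely formal. No deeper input (e.g.\ the simple-space description of $L_{\scr L^{-1}} X$ from Example \ref{exm:L-local-simple-space}) is needed.
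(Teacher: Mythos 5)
Your proof is correct and takes exactly the same approach as the paper, whose entire proof is the one-liner ``Clear since unstable $\scr L$-equivalences are by definition generated by maps between compact connected spaces.'' You have simply unpacked the two adjectives (compact $\Rightarrow$ filtered colimits, connected $\Rightarrow$ coproducts); the identification $S^m_+ \wedge S^1 \wequi S^1 \vee S^{m+1}$ you used to see connectedness is accurate.
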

\begin{proof}
Clear since unstable $\scr L$-equivalences are by definition generated by maps between compact connected spaces.
\end{proof}

Given an $\infty$-category $\scr C$ and $X \in \scr C$, we say that $X$ is $\scr L$-local if for each $Y \in \scr C$ the space $\Map(Y, X)$ is $\scr L$-local.
If $\scr C$ is presentable these are the local objects of a Bousfield localization which we denote by $L_{\scr L^{-1}}$ and call $\scr L$-localization.
If $\scr L = \{p \mid p \ne \ell\}$, we also call this $\ell$-localization and denote it by $L_{(\ell)}$.

\begin{exm}
Let $\scr C$ be stable and $X, Y \in \scr C$.
Let $\ell: X \to X$ be the sum of $\ell$ copies of the identity map and suppose that $Y$ is $\ell$-local.
Then $\ell^*: \Map(X, Y) \to \Map(X, Y)$ is obtained as $\Omega \rho_\ell^*$ on $\Map(\Sigma^{-1} X, Y)$ and hence an equivalence.
Thus $\ell: X \to X$ is an $\ell$-local equivalence.
Conversely, if $Y$ is local for all maps $\ell: X \to X$, then $Y$ is $\ell$-local by Example \ref{exm:L-local-simple-space}.
It follows that categorical $\scr L$-localization recovers the usual notion.
\end{exm}

\begin{prop} \label{prop:localization-permanence}
Let $L: \scr C \adj \scr D: R$ be an adjunction.
Then $L$ preserves $\scr L$-local equivalences and $R$ preserves $\scr{L}$-local objects.
\end{prop}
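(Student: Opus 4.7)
The plan is to imitate the proof of Proposition \ref{prop:completion-permanence} essentially verbatim, since the definition of categorical $\scr L$-localness is formally identical to that of categorical $\scr L$-completeness: both are defined by requiring that mapping spaces $\Map(Y, X)$ lie in a specified full subcategory of $\Spc$ (in this case, the $\scr L$-local spaces). The argument is purely formal and uses nothing about the nature of $\scr L$-locality beyond this defining property.

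First, I would observe that the two claims are logically intertwined: if we know $R$ preserves $\scr L$-local objects, then for any $\scr L$-local equivalence $f: X \to Y$ in $\scr C$ and any $\scr L$-local $Z \in \scr D$, the adjunction gives
\[
\Map_{\scr D}(Lf, Z) \wequi \Map_{\scr C}(f, RZ),
\]
and the right-hand side is an equivalence since $RZ$ is $\scr L$-local in $\scr C$ and $f$ is an $\scr L$-local equivalence. Hence $Lf$ is an $\scr L$-local equivalence. So it suffices to establish the second assertion.

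Second, for the second assertion, let $Z \in \scr D$ be $\scr L$-local. By definition I must show that $\Map_{\scr C}(X, RZ)$ is an $\scr L$-local space for every $X \in \scr C$. But by adjunction this space is equivalent to $\Map_{\scr D}(LX, Z)$, which is $\scr L$-local by the assumption that $Z$ is $\scr L$-local in $\scr D$.

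There is no real obstacle here; the only thing to be slightly careful about is that both categories $\scr C$ and $\scr D$ should be presentable so that categorical $\scr L$-localization genuinely exists as a Bousfield localization, but the statement as formulated only concerns the subcategory of $\scr L$-local objects and the class of $\scr L$-local equivalences, which are well-defined regardless. The proof is essentially a one-line application of the adjunction, exactly as in Proposition \ref{prop:completion-permanence}.
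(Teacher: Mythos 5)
Your proof is correct and is exactly the paper's argument: the paper proves this proposition by repeating the proof of Proposition \ref{prop:completion-permanence}, i.e.\ reducing the claim about $L$ to the claim about $R$ via the adjunction and then checking that $\Map_{\scr C}(X,RZ)\wequi\Map_{\scr D}(LX,Z)$ is $\scr L$-local. Nothing further is needed.
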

\begin{proof}
Exactly the same as Proposition \ref{prop:completion-permanence}.
\end{proof}

\begin{cor} \label{cor:loc-topos}
Let $\scr X$ be an $\infty$-topos and $p: \scr X \to \Spc$ a point.
Then $p$ preserves $\scr L$-local objects and $\scr L$-local equivalences.
If $\scr X$ has enough points, then $\scr L$-local objects and $\scr L$-local equivalences can be checked on stalks.
\end{cor}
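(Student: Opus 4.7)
My plan is to handle the three assertions in turn: preservation of $\scr L$-local equivalences, preservation of $\scr L$-local objects, and then the detection statement when enough points are available.

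First, since $p\colon \scr X\to \Spc$ is the inverse image functor of a point, it admits a right adjoint $p_*$. Hence Proposition \ref{prop:localization-permanence}, applied to the adjunction $p\dashv p_*$, immediately gives that $p$ preserves $\scr L$-local equivalences.

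Second, I would show preservation of $\scr L$-local objects using the fact that $p$ is the inverse image of a geometric morphism and therefore preserves finite limits. The key reduction is to translate being $\scr L$-local into a property about the internal cotensoring. By definition, $L_{\scr L^{-1}}\Spc$ is generated by the maps $\rho_n$ and $S^k_+\wedge \rho_n$, all of which are maps between finite CW complexes. Using the Yoneda lemma and the $\Spc$-enrichment of $\scr X$, an object $X\in\scr X$ is $\scr L$-local if and only if the internal-cotensor map $X^{\Delta Z}\to X^{\Delta W}$ induced by each generator $Z\to W$ is an equivalence in $\scr X$. Since $Z, W$ are finite CW complexes, each such cotensoring is a finite limit in $\scr X$; the inverse image $p$ preserves finite limits and hence sends these cotensorings to the corresponding cotensorings in $\Spc$. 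Consequently $pX$ satisfies the same characterization and is $\scr L$-local. (Alternatively, one may invoke the fact that the stalk functor of a point of an $\infty$-topos is a filtered colimit of corepresentables, and then apply Lemma \ref{lem:L-local-colim}.)

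Third, suppose $\scr X$ has enough points, so that equivalences in $\scr X$ can be tested on all stalks $p$. For $\scr L$-local objects, the cotensor characterization above gives that $X$ is $\scr L$-local iff for each generator $\rho\colon Z\to W$ the map $X^{\Delta\rho}$ is an equivalence in $\scr X$; by enough points this holds iff $p(X^{\Delta\rho})\simeq (pX)^{\rho}$ is an equivalence in $\Spc$ for every $p$, which is exactly the condition that every $pX$ be $\scr L$-local. For $\scr L$-local equivalences, one first notes that $p$ commutes with $L_{\scr L^{-1}}$: by the first two parts, $p$ applied to the localization map $X\to L_{\scr L^{-1}}X$ is an $\scr L$-local equivalence into an $\scr L$-local object, hence models the localization in $\Spc$, so $pL_{\scr L^{-1}}\simeq L_{\scr L^{-1}}p$. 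Then $f$ is an $\scr L$-local equivalence iff $L_{\scr L^{-1}}f$ is an equivalence, iff (by enough points) $p(L_{\scr L^{-1}}f)\simeq L_{\scr L^{-1}}(pf)$ is an equivalence for every $p$, iff every $pf$ is an $\scr L$-local equivalence.

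The main technical point is the second step: identifying $\scr L$-locality with invertibility of finitely many finite-limit constructions so that the finite-limit preservation property of $p$ can be leveraged. Once that characterization is in place, the remaining arguments are formal consequences of Proposition \ref{prop:localization-permanence} and the definition of "enough points."
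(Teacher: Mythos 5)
Your proof is correct, and it reaches the second assertion by a genuinely different route than the paper. The first step is identical: $p$ is a left adjoint, so Proposition \ref{prop:localization-permanence} gives preservation of $\scr L$-local equivalences. For preservation of $\scr L$-local objects the paper instead uses that any point is of the form $X \mapsto \colim_U \Map(U,X)$ for a pro-object $U_\bullet$ of $\scr X$, so the stalk of a local object is a filtered colimit of $\scr L$-local spaces and Lemma \ref{lem:L-local-colim} applies --- exactly the alternative you mention in parentheses. Your main argument instead exploits left exactness of $p$: since the generating maps $\rho_n$ and $S^k_+\wedge\rho_n$ are maps between finite spaces, $\scr L$-locality of $X$ is equivalent (by Yoneda) to invertibility of the induced cotensor maps $X^W \to X^Z$, which are finite limits and hence commute with $p$. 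This buys you something concrete: the identification $p(X^Z)\simeq (pX)^Z$, and the resulting commutation of $p$ with $L_{\scr L^{-1}}$, are precisely what drive your detection arguments in the third step, which you spell out in full; the paper compresses that step into ``equivalences can be checked on stalks, hence so can $\scr L$-local equivalences and objects by what we just did,'' which implicitly requires the same commutation you make explicit. Both routes ultimately rest on the generators being maps between compact spaces --- in the paper via closure of $\scr L$-local spaces under filtered colimits, in yours via finiteness of the cotensor diagrams --- so the two arguments have the same scope, with yours avoiding the pro-representability input and being somewhat more self-contained on the ``enough points'' clause.
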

\begin{proof}
$p$ is a left adjoint by assumption, hence preserves $\scr L$-local equivalences by Proposition \ref{prop:localization-permanence}.
Since any point is of the form $X \mapsto \colim_U \Map(U, X)$ for some pro-object $U_\bullet$ of $\scr X$ \cite[Remark A.9.1.4]{SAG}, preservation of $\scr L$-local objects follows from Lemma \ref{lem:L-local-colim}.
If there are enough points, equivalences can be checked on stalks, and hence so can $\scr L$-local equivalences and $\scr L$-local objects by what we just did.
\end{proof}

\subsection{The étale site of a profinite group}\label{subsub:SHet-BG}
Let $G$ be a profinite group.
We have the category $\Fin_G$ of finite (discrete) $G$-sets, which we can make into a site by giving it the effective epimorphism topology, i.e. the topology of (joint) surjections.
Somewhat abusively, we call this the \emph{étale topology} (as opposed to the topology of disjoint unions, which we call the \emph{Nisnevich topology}).
We denote by $B_\et G$ the resulting $\infty$-topos.
We write $\ShvSp(B_\et G)$ for the associated category of hypercomplete spectral sheaves.
We have the evident symmetric monoidal embedding \[ \Fin_G \to \Spc^{BG} =: \Fun(BG, \Spc). \]
Passing to presheaves, this factors the stalk functor, and hence preserves étale hyperequivalences.
Consequently we obtain an induced functor \begin{equation}\label{eq:BG-top-disc-comp} \Shv_\et(BG) \to \Spc^{BG}, \end{equation} and similarly on stabilizations.

\begin{rmk} \label{rmk:BG-disc}
Note that $\pi_*(G) = 0$ for $*>0$, and hence $G$ is homotopically discrete.
It follows that when forming the category $\Spc^{BG}$, the topology on $G$ could just as well have been discrete.
Unless $G$ is finite, one thus does not expect the above functor to be an equivalence.
\end{rmk}

\begin{rmk} \label{rmk:SHet-BG-finite}
On the other hand, if $G$ \emph{is} finite, then $\Shv_\et(BG) \wequi \Spc^{BG}$.
See e.g. \cite[Example 4.4]{clausen2019hyperdescent}.
\end{rmk}

Sometimes the category $\ShvSp(B_\et G)$ can be described in terms of $\ShvSp^{BH}$ for an appropriate group $H$, Remark \ref{rmk:BG-disc} notwithstanding.
The following is a minor generalization of \cite[Lemma 3.3]{bhatt2020remarks}.
\begin{lem} \label{lem:describe-SHetp-BG}
Let $G$ be a profinite group, $P$ a discrete group and $q: P \to G$ a homomorphism with dense image.
Suppose that for some cofinal family of open subgroups $K_i \subset G$, $\cd_\ell(q^{-1} K_i)$ is bounded and $\colim_i H^n(q^{-1} K_i, M) = 0$ for any $\Z/\ell$-vector space $M$ with a continuous $G$-action and $n>0$.
(This condition holds if for some cofinal family, each $B q^{-1}K_i$ is a finite complex, their dimensions are bounded, and $\colim_i H^n(q^{-1}K_i, \Z/\ell) = 0$ for $n>0$.\NB{better conditions?})

Then the induced map \[ q^*: \ShvSp(B_\et G)_\ell^\comp \to (\ShvSp^{BG})_\ell^\comp \to (\ShvSp^{BP})_\ell^\comp \] is fully faithful onto the subcategory of those $P$-spectra where the $P$-action on the mod $\ell$ homotopy groups extends to a continuous $G$-action (i.e. every element is fixed by $ker(q)$ and by $q^{-1}(K_i)$ for $i$ sufficiently large).
\end{lem}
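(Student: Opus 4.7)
The plan is to follow the strategy of \cite[Lemma 3.3]{bhatt2020remarks}: reduce to a tower of finite quotients of $G$, then use the density of $q$ to match this with a tower of subcategories inside $(\ShvSp^{BP})_\ell^\comp$.

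First, I would replace the given cofinal family by a cofinal system of open \emph{normal} subgroups $K_i \trianglelefteq G$ satisfying the same hypotheses (obtained by intersecting each $K_i$ with its finitely many conjugates, which only worsens $\cd_\ell$ by a bounded amount and preserves the direct-limit vanishing condition). Using étale hyperdescent $B_\et G \wequi \lim_i B_\et(G/K_i)$ together with the equivalence $\ShvSp(B_\et(G/K_i))_\ell^\comp \wequi (\ShvSp^{B(G/K_i)})_\ell^\comp$ for each finite quotient (Remark \ref{rmk:SHet-BG-finite}), we obtain
\[ \ShvSp(B_\et G)_\ell^\comp \wequi \lim_i (\ShvSp^{B(G/K_i)})_\ell^\comp, \]
with transition functors given by homotopy fixed points along the surjections $G/K_j \twoheadrightarrow G/K_i$ for $K_j \subset K_i$. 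Density of $q$ induces isomorphisms $P/q^{-1}(K_i) \xrightarrow{\wequi} G/K_i$, so pullback along $P \twoheadrightarrow P/q^{-1}(K_i)$ identifies each $(\ShvSp^{B(G/K_i)})_\ell^\comp$ fully faithfully with the subcategory of $(\ShvSp^{BP})_\ell^\comp$ on which $q^{-1}(K_i)$ acts trivially.

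Under these identifications the composite $q^*$ becomes $E \wequi (E_i) \mapsto \colim_i E_i$ in $(\ShvSp^{BP})_\ell^\comp$. This lands in the claimed essential image: $\ker(q) \subseteq q^{-1}(K_i)$ for every $i$ and so acts trivially throughout, while any element of $\pi_*(q^*E/\ell) \wequi \colim_i \pi_*(E_i/\ell)$ comes from some $\pi_*(E_i/\ell)$ and is therefore fixed by $q^{-1}(K_i)$. Conversely, given $X \in (\ShvSp^{BP})_\ell^\comp$ satisfying the continuity condition, I would define a putative inverse by the tower $(X^{h q^{-1}(K_i)})_i$; the compatibility $(X^{h q^{-1}(K_j)})^{h K_i/K_j} \wequi X^{h q^{-1}(K_i)}$ then follows immediately from normality of $q^{-1}(K_j) \trianglelefteq q^{-1}(K_i)$.

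The main obstacle is verifying that these two constructions are mutually inverse. Concretely one needs $\colim_i X^{h q^{-1}(K_i)} \wequi X$ in the $\ell$-complete world, and correspondingly that each $E_i \wequi (\colim_j E_j)^{h q^{-1}(K_i)}$. Both reduce to controlling the homotopy fixed point spectral sequences: the uniform bound on $\cd_\ell(q^{-1}(K_i))$ forces these spectral sequences to have a finite horizontal range, hence to converge strongly after $\ell$-completion; the hypothesis $\colim_i H^n(q^{-1}(K_i), M) = 0$ for $n>0$ and continuous mod-$\ell$ $G$-modules $M$ then kills all positive-degree contributions to $\colim_i \pi_*(X^{h q^{-1}(K_i)}/\ell)$, leaving precisely $\pi_*(X/\ell)$ as the $E_\infty$-page. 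This is the step where the two numerical hypotheses of the lemma are used in tandem, and it is the crux of the argument.
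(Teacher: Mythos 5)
Your identification of the crux is the right one, and it matches the paper's: everything comes down to showing $E/\ell \wequi \colim_i (E/\ell)^{h q^{-1}(K_i)}$ for $P$-spectra $E$ whose mod $\ell$ homotopy extends continuously to $G$, with the bounded $\cd_\ell$ hypothesis providing convergence/compatibility with Postnikov towers and the hypothesis $\colim_i H^{n}(q^{-1}(K_i),M)=0$ for $n>0$ killing everything above filtration $0$. (The paper does this by reducing along Postnikov towers to the heart rather than by assembling homotopy fixed point spectral sequences, but that is the same mechanism.)

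The genuine gap is the scaffolding you build around this. Your first step asserts, "by étale hyperdescent," an equivalence $\ShvSp(B_\et G)_\ell^\comp \wequi \lim_i (\ShvSp^{B(G/K_i)})_\ell^\comp$ with transition functors the homotopy fixed points, and your inverse functor (the tower $(X^{h q^{-1}(K_i)})_i$) only produces an object of the source through this equivalence. This is not a formal consequence of hyperdescent: hypercomplete $\ell$-complete sheaves on $B_\et G$ genuinely differ from naive (co)limits of the finite-level Borel categories, and that difference is exactly the content of statements like the present lemma and of \cite[Lemma 3.3]{bhatt2020remarks}. Establishing your claimed limit description would require an argument of essentially the same depth as the lemma itself, and it is not clear it is available in the generality needed — note that in the intended applications (Example \ref{ex:SH-BG}(3),(4)) the group $G$ can have infinite $\ell$-cohomological dimension; only the discrete groups $q^{-1}(K_i)$ are assumed to have bounded $\cd_\ell$, so one cannot appeal to the usual compactness/descent arguments for $B_\et G$. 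There are also secondary issues downstream of this: colimits in a limit of categories along the fixed-point (right adjoint) functors are not computed levelwise, since $(-)^{hK_i/K_j}$ does not preserve colimits, so even the identification of $q^*$ with $(E_i)\mapsto \colim_i E_i$ inside your framework needs justification. The paper avoids all of this by never invoking such a decomposition: it observes that the essential image lands in the claimed subcategory (stalks are filtered colimits of the $E(G/K_i)$), that $q^*$ is conservative, and that it therefore suffices to prove the counit $q^*q_*E/\ell \to E/\ell$ is an equivalence for $E$ in the claimed subcategory — which is precisely the colimit-of-fixed-points statement you also reduce to. If you replace your limit-of-categories setup by this adjunction-plus-conservativity argument, your treatment of the crux goes through.
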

\begin{proof}
We essentially repeat the proof of \cite[Lemma 3.3]{bhatt2020remarks}.
If $E \in \ShvSp(B_\et G)$ then the homotopy groups of the stalk are given as $\colim_i A_i$, where $A_i$ carries an action by $G/K_i$ (assuming here, without loss of generality, that $K_i$ is normal); hence any spectrum in the essential image has the claimed property.
Since $q^*$ is conservative, it thus remains to show that if $E \in (\ShvSp^{BP})_\ell^\comp$ is as claimed, then $q^*q_* E/\ell \to E/\ell$ is an equivalence.
Since $q$ is topologically surjective and the space $G/K_i$ is discrete, $P/q^{-1}(K_i) \wequi G/K_i$ and the requirement unwinds as \[ E/\ell \wequi \colim_i (E/\ell)^{h q^{-1}(K_i)}. \]
The bounded $\ell$-cohomological dimension implies that in order to prove this, we may assume that $E$ is bounded above (see e.g. \cite[Lemma 2.9]{bachmann-SHet}).
The homotopy groups of the right hand side in degrees $\ge n$ only depend on those of the left hand side in degrees $\ge n$; hence we may assume that $E$ is bounded and by induction that $E$ is concentrated in degree $0$.
Considering the cofiber sequence $(E/\ell)_{\ge 1} \to E/\ell \to (E/\ell)_{\le 0}$, it thus suffices to prove that if $M$ is an $\F_\ell$-vector space with an action by $P$ extending continuously to $G$, then \[ M \wequi \colim_i H^*(q^{-1}(K_i), M). \]
This is clear in degree $0$, and the vanishing in positive degrees holds by assumption.

To prove the parenthetical statement, let $\{M_j \subset M\}$ be the set of $P$-invariant subspaces fixed by $q^{-1}(K_i)$ for $i$ sufficiently large.
This set is filtered and $M = \bigcup_j M_j$ (the latter because if $x \in M$ then $\Z/\ell[P]x$ is a finite set, and hence fixed by $q^{-1}(K_i)$ for $i$ sufficiently large).
The assumption on $B q^{-1}(K_i)$ implies that $H^*(q^{-1}(K_i), \ph)$ preserves filtered colimits \cite[Theorem 4]{brown1975homological}\NB{more direct reference??}.
Hence we may assume that $M$ is fixed by $q^{-1}(K_i)$ for all $i$, or in other words $M = \bigoplus_\Lambda \Z/\ell$.
Then $H^*(q^{-1}(K_i),M) \wequi \bigoplus_\Lambda H^*(q^{-1}(K_i),\Z/\ell)$ by preservation of filtered colimits again, yielding the desired reduction.
\end{proof}

\begin{exm} \label{ex:SH-BG}
Lemma \ref{lem:describe-SHetp-BG} applies in the following cases.
\begin{enumerate}
\item $G=P$ finite.
\item $G=\Z_\ell, P=\Z$.
\item $G=\Z_\ell^\times$, $P=\mu \times \Z$, $\mu \subset \Z_\ell^\times$ the roots of unity and $\Z \subset \Z_\ell^\times \wequi \mu \times \Z_\ell$ generating the second factor.
\item $G=\Z_\ell^\times$, $P=\Z$, $\ell$ odd, $q$ the diagonal.
  (This is topologically surjective because $\Z_\ell^\times \wequi \mu_{\ell-1} \times \Z_\ell$.)
\end{enumerate}
In case (1) we take $K_i =\{e\}$ and the condition is trivial.
In cases (2) and (3) one gets $q^{-1}(K_i) = \ell^i \Z$, whereas in case (4) we have $q^{-1}(K_i) = (\ell-1)\ell^i \Z$.
The parenthetical condition holds because $Bq^{-1}(K_i) = S^1$ and $H^*(q^{-1}(K_i), \Z/\ell) = \Z/\ell[\epsilon]/\epsilon^2$ for all $i$, and the transition maps annihilate $\epsilon$.
\end{exm}

\begin{lem} \label{lem:rigid-generation}
Let $G$ be a profinite group.
The category $\ShvSp(B_\et G)$ is generated by strongly dualizable objects, namely the suspension spectra of finite discrete $G$-sets.
\end{lem}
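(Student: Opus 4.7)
The statement splits into generation and strong dualizability, which I would handle independently.

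For \textbf{generation}, my plan is to use that the representables $y(U)$ with $U \in \Fin_G$ already generate the hypercompleted $\infty$-topos $B_\et G$ under colimits, since they form a basis for the site. I would then appeal to the general principle that if an $\infty$-topos $\scr X$ is generated under colimits by $\{X_\alpha\}$, then the suspension spectra $\Sigma^\infty_+ X_\alpha$ generate $\ShvSp(\scr X)$: an object right-orthogonal to every shift of each $\Sigma^\infty_+ X_\alpha$ has vanishing homotopy sheaves on the generators, hence on all objects, and is therefore trivial by hypercompleteness.

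For \textbf{strong dualizability}, since the class is closed under finite coproducts, I would reduce to $U = G/H$ with $H \subseteq G$ open. The strategy is to push the problem down to a finite group via the normal core. As $H$ has finite index in $G$, the intersection $N := \bigcap_{g \in G} gHg^{-1}$ of its finitely many $G$-conjugates is an open normal subgroup contained in $H$. The projection $G \twoheadrightarrow G/N$ induces a geometric morphism $\pi \colon B_\et G \to B_\et(G/N)$ whose symmetric monoidal colimit-preserving pullback $\pi^*$ sends the representable on the $(G/N)$-set $(G/N)/(H/N)$ to the representable on the $G$-set $G/H$, so $\pi^* \Sigma^\infty_+\bigl((G/N)/(H/N)\bigr) \wequi \Sigma^\infty_+(G/H)$. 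Symmetric monoidal functors preserve strong dualizability, so the question reduces to the finite-group case.

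For the finite group $G/N$, Remark \ref{rmk:SHet-BG-finite} identifies $\ShvSp(B_\et(G/N))$ with the category $\ShvSp^{B(G/N)}$ of Borel equivariant spectra, which is rigidly compactly generated --- the unit is compact because $B(G/N)$ is a finite complex, and compacts are closed under tensor because $\Sigma^\infty_+ X \otimes \Sigma^\infty_+ Y \wequi \Sigma^\infty_+(X \times Y)$ for finite $(G/N)$-sets with the diagonal action. In such a category compact objects are automatically strongly dualizable, and the suspension spectrum of a finite $(G/N)$-set is manifestly compact. I expect this verification of rigid compact generation to be the least formal step, though it is standard for Borel equivariant spectra over a finite group.
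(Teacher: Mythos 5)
Your reduction steps are sound and match the paper's in essence: generation follows from hypercompleteness, and you pass to a finite group via the normal core of $H$, which is the same reduction the paper phrases as ``any finite discrete $G$-set is inflated from a finite quotient of $G$.'' The identification $\ShvSp(B_\et(G/N)) \wequi \ShvSp^{B(G/N)}$ is also right (Remark \ref{rmk:SHet-BG-finite}). The gap is in your final paragraph: for a nontrivial finite group, the classifying space is \emph{not} a finite complex (its $\F_p$-cohomology is nonzero in arbitrarily high degrees), and the unit $\Ss \in \ShvSp^{B(G/N)} \wequi \Fun(B(G/N), \ShvSp)$ is \emph{not} compact---mapping out of the unit computes homotopy fixed points $E \mapsto E^{h(G/N)}$, which fails to commute with filtered colimits (equivalently, the trivial module $\Ss$ is not perfect over the spherical group ring). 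So $\ShvSp^{B(G/N)}$ is not rigidly compactly generated, and in fact $\Sigma^\infty_+\bigl((G/N)/(H/N)\bigr)$ is not even a compact object there unless $H=N$. The inference ``compact, hence dualizable'' therefore has nothing to act on.

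The conclusion is still true; you just need a different route to strong dualizability. The paper's route: there is a symmetric monoidal functor $\GenSp(B(G/N)) \to \ShvSp^{B(G/N)}$ from \emph{genuine} to Borel equivariant $(G/N)$-spectra carrying $\Sigma^\infty_+\bigl((G/N)/(H/N)\bigr)$ to itself; in genuine equivariant spectra this object is strongly dualizable by Spanier--Whitehead duality for finite $G$-sets, and symmetric monoidal functors preserve strong dualizability. An equally valid internal argument: $B(G/N)$ is a groupoid and the tensor product on $\Fun(B(G/N),\ShvSp)$ is computed objectwise, so strong dualizability is detected on the underlying spectrum, and the underlying spectrum of $\Sigma^\infty_+\bigl((G/N)/(H/N)\bigr)$ is a finite wedge of sphere spectra. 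Either replacement for your last paragraph repairs the proof.
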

\begin{proof}
It is clear by construction that the category is generated by the suspension spectra of finite discrete $G$-sets; it remains to prove that these are strongly dualizable.
Any finite discrete $G$-set is inflated from a finite quotient of $G$, so we may assume that $G$ is finite.
We write $\GenSp(BG)$ for the category of genuine equivariant $G$-spectra.
Since there is a functor $\GenSp(BG) \to \ShvSp(B_\et G)$, the claim reduces to the analogous claim for $\GenSp(BG)$, which is well-known.\NB{ref?}
\end{proof}

\subsection{Cyclotomic characters} \label{subsec:cyclo-char}
The Galois extension $\QQ(\mu_{\ell^\infty})/\QQ$ has group $\Z_\ell^\times$ and is unramified outside $(\ell)$ \cite[\S10]{neukirch2013algebraic}.
It follows that there is a canonical functor $\Fin_{\Z_\ell^\times} \to \FEt_{\Z[1/\ell]}$, inducing the \emph{cyclotomic character} \[ \pi: (\Spec \Z[1/\ell])_\et \to B\Z_\ell^\times. \]
If $S$ is any scheme with $1/\ell \in S$, then we obtain also the composite $\pi: S_\et \to (\Spec \Z[1/\ell])_\et \to B\Z_\ell^\times$.

We will want to use some variants of this construction.
\begin{thm}[Dwyer--Friedlander] \label{thm:etale-fund-gps}
Let $G_2^\mu = \Z/2 *_2 \Z_2$ and $G_\ell^\mu$ the free pro-$\ell$ group on $(\ell+1)/2$ generators.
Assume that $\ell=2$ or $\ell$ is odd and regular.
There exists a map $\pi_1^\et(\Spec(\Z[1/\ell,\mu_\ell])) \to G_\ell^\mu$ inducing \[ H^*_\et(\Spec(\Z[1/\ell,\mu_\ell]), \Z/\ell) \wequi H^*_\et(BG_\ell^\mu, \Z/\ell). \]
\end{thm}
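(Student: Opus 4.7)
My plan is to split cleanly by parity and reduce the theorem to a cohomology calculation followed by a classification result for pro-$\ell$ groups. Write $R_\ell = \Z[1/\ell, \mu_\ell]$ and $\Gamma = \pi_1^\et(\Spec R_\ell)$.

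First I would compute $H^*_\et(\Spec R_\ell, \Z/\ell)$ by hand. In degree $1$, Kummer theory and the exact sequence $1 \to \mu_\ell \to \Gm \xrightarrow{\ell} \Gm \to 1$ give
\[ H^1_\et(\Spec R_\ell, \mu_\ell) \wequi R_\ell^\times/(R_\ell^\times)^\ell \oplus \mathrm{Cl}(R_\ell)[\ell]. \]
The regularity hypothesis on $\ell$ makes the class group summand vanish (for $\ell$ odd; for $\ell=2$ it vanishes trivially). Dirichlet's unit theorem together with the single ramified prime above $\ell$ then computes the unit rank, and after identifying $\mu_\ell$-coefficients with $\Z/\ell$-coefficients via the chosen root of unity one finds $\dim_{\F_\ell} H^1 = (\ell+1)/2$ for $\ell$ odd. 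In degree $2$, one uses the Brauer group exact sequence from class field theory: $H^2_\et(\Spec R_\ell, \mu_\ell)$ sits in an exact sequence involving local Brauer groups and class groups, all of whose relevant pieces vanish under the regularity hypothesis (for odd $\ell$), and one concludes $H^2 = 0$. Higher degrees vanish because $\cd_\ell(R_\ell) \le 2$ (for odd $\ell$) or by the known structure at archimedean places (for $\ell = 2$).

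Next I would compute $H^*(G_\ell^\mu, \Z/\ell)$. For odd regular $\ell$ this is immediate: a finitely generated free pro-$\ell$ group on $n$ generators has cohomology concentrated in degrees $0, 1$ with $H^1 = (\Z/\ell)^n$, so with $n = (\ell+1)/2$ the two computations match. For $\ell = 2$ the group $\Z/2 *_2 \Z_2$ is a pro-$2$ amalgam; its mod $2$ cohomology is computed via the Mayer--Vietoris sequence associated to the amalgamated product (equivalently the HNN/graph-of-groups spectral sequence), and matches the calculation on the arithmetic side where the real place of $\Z[1/2]$ contributes a periodic tail in all positive degrees.

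To produce the map I would, for $\ell$ odd, use that vanishing of $H^2(\Gamma, \Z/\ell)$ together with the count of generators of $H^1$ implies that the pro-$\ell$ completion $\Gamma^{(\ell)}$ has strict cohomological $\ell$-dimension $\le 1$; by Serre's classification this forces $\Gamma^{(\ell)}$ to be free pro-$\ell$ on $\dim_{\F_\ell} H^1(\Gamma, \Z/\ell) = (\ell+1)/2$ generators. Choosing any isomorphism $\Gamma^{(\ell)} \xrightarrow{\wequi} G_\ell^\mu$, the composite $\Gamma \to \Gamma^{(\ell)} \to G_\ell^\mu$ is the desired map, and it induces an isomorphism on mod $\ell$ cohomology because $H^1(-, \Z/\ell)$ factors through the pro-$\ell$ completion and both $H^{\ge 2}(-, \Z/\ell)$ vanish. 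For $\ell = 2$ the analogous step is the main obstacle: $\Gamma$ is no longer pro-$\ell$ nilpotent in such a clean way. I would handle it by splitting off the contribution of a chosen real place: pick a real embedding $R_\ell \hookrightarrow \R$, obtain an arithmetic Frobenius element of order $2$, and use it to build the $\Z/2$ factor of $G_2^\mu$; use the decomposition (or inertia) group at $2$ to map to the pro-$2$ factor $\Z_2$; amalgamate them along the cyclic subgroup of order $2$ generated by the conjugation element. One then checks that the induced map on mod $2$ cohomology is an isomorphism degree by degree, comparing the Hochschild--Serre / Mayer--Vietoris spectral sequences on both sides. The verification reduces to classical computations of the $2$-cohomology of $\Spec \Z[1/2]$ together with its real place, which exactly match the amalgam's cohomology.

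The main obstacle is the $\ell = 2$ case, specifically producing a genuine homomorphism out of $\Gamma$ (rather than merely abstract agreement of cohomology groups) that correctly encodes the interaction between complex conjugation at the real place and the pro-$2$ arithmetic of $\Z[1/2]$; everything else is either a direct Kummer/Brauer computation or an application of the standard classification of free pro-$\ell$ groups via cohomological dimension.
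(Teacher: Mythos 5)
For odd regular $\ell$ your route is essentially correct and genuinely different from the paper's. The paper simply quotes Dwyer--Friedlander: it takes the étale homotopy type $s(\Spec\Z[1/\ell,\mu_\ell])$, uses their zigzag of equivalences with $K(H_\ell,1)$ for a \emph{discrete} free (resp.\ $\Z/2 * \Z$) group, and then pro-$\ell$ completes, using that mod-$\ell$ cohomology isomorphisms are exactly the maps inverted by pro-$\ell$ completion. You instead redo the arithmetic by hand: Kummer theory plus Dirichlet gives $\dim_{\F_\ell}H^1 = (\ell+1)/2$, regularity kills $\mathrm{Pic}/\ell$, the Brauer--group sequence (one finite place, no real places) kills the rest of $H^2$, and Serre's criterion identifies the maximal pro-$\ell$ quotient as free of the right rank. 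One small step you should make explicit: you need $H^2(\pi_1^\et,\F_\ell)=0$, not just $H^2_\et=0$; this follows because inflation $H^2(\pi_1^\et(X),\F_\ell)\to H^2_\et(X,\F_\ell)$ is injective (the universal pro-étale cover has no nontrivial $\Z/\ell$-covers, so the relevant $H^1$ in Hochschild--Serre vanishes). With that remark added, the odd case is a clean, more self-contained alternative to the citation.

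The $\ell=2$ case, however, has a genuine gap, and it is precisely the case the rest of the paper leans on. First, $G_2^\mu=\Z/2 *_2 \Z_2$ is the \emph{free} pro-$2$ product (the pro-$2$ completion of $\Z/2 * \Z$), not an amalgamated product: $\Z_2$ is torsion-free, so there is no ``cyclic subgroup of order $2$'' to amalgamate along, and your Mayer--Vietoris computation must be the free-product one ($H^n(G_2^\mu)\cong H^n(\Z/2)\oplus H^n(\Z_2)$ for $n\ge 1$) to match the known answer $\Z/2,(\Z/2)^2,\Z/2,\Z/2,\dots$ for $\Spec\Z[1/2]$. Second, and more seriously, complex conjugation at a real place and the decomposition group at $2$ give homomorphisms \emph{into} $\pi_1^\et(\Spec\Z[1/2])$ (defined up to conjugacy), so your construction produces at best a map $G_2^\mu\to\pi_1^\et$, whereas the theorem requires a map $\pi_1^\et(\Spec\Z[1/2])\to G_2^\mu$; nothing in your outline manufactures the needed quotient. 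Third, the obvious repair---take the maximal pro-$2$ quotient and try to identify it with $\Z/2 *_2 \Z_2$---cannot be done just by matching $\dim H^1=2$ and $\dim H^2=1$: there are non-isomorphic two-generator one-relator pro-$2$ groups with these Betti numbers, so you would need finer input (the order-$2$ element from the real place together with a structure theorem for pro-$2$ groups with torsion / virtually free pro-$2$ groups, or the full cohomology ring), or else the homotopy-theoretic construction of Dwyer--Friedlander that the paper cites, which produces the comparison map directly. As written, the $\ell=2$ half of the argument does not go through.
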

\begin{proof}
For a scheme $X$, denote by $s(X) \in \Pro(\Spc^\pi)$ its étale homotopy type (see e.g. \cite[\S5]{hoyois2018higher}.)
Then $\pi_1(s(X)) \wequi \pi_1^\et(X)$.
Write $L: \Pro(\Spc^\pi) \to \Pro(\Spc^\ell)$ for the pro-$\ell$ completion.
A map $\alpha: A \to B \in \Pro(\Spc^\pi)$ induces an equivalence $L\alpha$ if and only if $H^*(\alpha, \Z/\ell)$ is an isomorphism \cite[Theorem 7.4.11]{barnea2017pro}.
If $G$ is a pro-$\ell$ group, objects of the form $K(G,1)$ lie in $\Pro(\Spc^\ell)$.

Now let $X=\Spec(\Z[1/\ell,\mu_\ell])$, $H_2 = \Z/2 * \Z$ and $H_\ell$ the free group on $(\ell+1)/2$ generators.
In \cite{dwyer1983conjectural}, the authors construct zigzags of equivalences in $\Pro(\Spc^\pi)$ between $s(X)$ and $K(H_\ell, 1)$.
It follows that $LsX \wequi LK(H_\ell,1) \wequi K(G_\ell^\mu,1)$.
The result follows.
\end{proof}

\begin{dfn} \label{def:modified-cyc-char}
We obtain as before a canonical functor (for $\ell=2$ or odd regular) $\Fin_{G_\ell^\mu} \to \FEt_{\Z[1/\ell,\mu_\ell]}$, inducing the \emph{modified cyclotomic character} \[ \pi_\mu: (\Spec \Z[1/\ell,\mu_\ell])_\et \to BG_\ell^\mu. \]
In order to unify notation, put $G_2 = G_2^\mu$ and $G_\ell = \Z_\ell^\times$ for any $\ell$ odd, and $\tilde\pi = \pi_\mu$ if $\ell=2$ (note that $\Z[1/2] = \Z[1/2,\mu_2]$), $\tilde\pi = \pi$ for $\ell$ odd.
\end{dfn}

\begin{rmk} \label{rmk:pi-factorization} \NB{justification?}
We have a factorization \[ \pi: (\Spec \Z[1/2])_\et \xrightarrow{\tilde\pi} BG_2 \to B(\Z/2 \times \Z_2) \wequi B\Z_2^\times, \] where the second map is induced by projection to the abelianization.
Similarly if $\ell$ is odd regular, then we have a factorization \[ \pi: (\Spec \Z[1/\ell,\mu_\ell])_\et \xrightarrow{\pi_\mu} BG_\ell^\mu \to B\Z_\ell \to B(\Z/(\ell-1) \times \Z_\ell) \wequi B\Z_\ell^\times, \] where the middle map comes from the addition map $G_\ell^\mu \to \Z_\ell$.
\end{rmk}

\subsection{Galois equivariant realization}
Let $k$ be a field of characteristic $\ne \ell$ and finite virtual $\ell$-étale cohomological dimension.
Put $G=Gal(k^s/k)$, for some fixed separable closure $k^s$.
Étale realization (see e.g. \cite[Theorem 6.6]{bachmann-SHet}) together with \eqref{eq:BG-top-disc-comp} yields a symmetric monoidal functor \[ \SH(k) \to \SH_\et(k)_\ell^\comp \wequi \ShvSp(k_\et)_\ell^\comp \wequi \ShvSp(B_\et G)_\ell^\comp \to (\ShvSp^{BG})_\ell^\comp \] which we call Galois equivariant realization.

Note that if $k=\CC$ then we also have complex realisation $r_\CC: \SH(\CC) \to \ShvSp$.
\begin{lem} \label{lemm:galois-equiv-enhancement}
The diagram
\begin{equation*}
\begin{CD}
\SH(\CC) @>{r_\CC}>> \ShvSp \\
@VVV               @VVV \\
\SH_\et(\CC)_\ell^\comp @= \ShvSp_\ell^\comp
\end{CD}
\end{equation*}
commutes.
\end{lem}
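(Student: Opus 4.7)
The plan is to check agreement of two symmetric monoidal, colimit-preserving functors $\SH(\CC) \to \ShvSp_\ell^\comp$ by restricting to a set of generators. First I would unravel the bottom equivalence: since $Gal(\CC) = 1$, we have $\ShvSp(B_\et Gal(\CC))_\ell^\comp \wequi \ShvSp_\ell^\comp$, so the left-vertical-then-bottom composite is the functor
\[ \SH(\CC) \to \SH_\et(\CC)_\ell^\comp \wequi \ShvSp(\CC_\et)_\ell^\comp \wequi \ShvSp_\ell^\comp, \]
where the last equivalence is evaluation at a geometric point (equivalently, global sections). Both this composite and $(\ph)_\ell^\comp \circ r_\CC$ are symmetric monoidal, preserve colimits, and land in $\ShvSp_\ell^\comp$; hence it suffices to construct a symmetric monoidal natural equivalence between them on the generating subcategory of $\SH(\CC)$ consisting of suspension spectra $\Sigma^\infty_+ X$ for $X \in \Sm_{/\CC}$ (together with the invertibility of the Tate twist, which is automatic in the $\ell$-completion by comparing $\pi_{*,*}$ of the two Tate objects).

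Next I would identify each composite on $\Sigma^\infty_+ X$. For the top-right route, $r_\CC(\Sigma^\infty_+ X)$ is the suspension spectrum of the analytic space $X(\CC)^{\mathrm{an}}$ and $\ell$-completion yields $(\Sigma^\infty_+ X(\CC)^{\mathrm{an}})_\ell^\comp$. For the étale route, under the identification $\SH_\et(\CC)_\ell^\comp \wequi \ShvSp_\ell^\comp$ the image of $\Sigma^\infty_+ X$ is $(\Sigma^\infty_+ \Pi_\infty^\et X)_\ell^\comp$, where $\Pi_\infty^\et X$ is the étale homotopy type (this is how étale realization is constructed, cf.\ the references for $\SH_\et$ cited earlier). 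The desired equivalence is therefore reduced to the Artin--Mazur / Friedlander comparison theorem: for $X \in \Sm_{/\CC}$, the canonical map $\Pi_\infty^\et X \to X(\CC)^{\mathrm{an}}$ induces an equivalence after $\ell$-completion in $\Pro(\Spc)$, and hence in spectra after applying $(\Sigma^\infty_+)_\ell^\comp$.

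To make the comparison functorial and symmetric monoidal in one stroke, I would work at the level of presheaves: define both functors on $\PSh_\Sigma(\Sm_{/\CC})$ by left Kan extension from the assignment $X \mapsto (\Sigma^\infty_+ X(\CC)^{\mathrm{an}})_\ell^\comp$ (this is the Betti/Artin--Mazur factorization), then check that each descends through motivic localization ($\A^1$-invariance and étale/Nisnevich descent hold in both cases because $\ell$-complete Betti homotopy types are $\A^1$-invariant and satisfy étale descent when $\CC$ is algebraically closed) and through $\P^1$-stabilization (using that the image of $\P^1_\CC$ is the standard invertible $\ell$-complete sphere on both sides). The universal property of $\SH(\CC)$ then produces the natural equivalence.

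The main obstacle is the last paragraph, i.e.\ verifying that the étale realization functor -- as constructed via $\SH(\CC) \to \SH_\et(\CC) \to \ShvSp(\CC_\et)_\ell^\comp$ and then via pullback to a point -- literally computes the $\ell$-completion of the étale homotopy type on representables. Once this is pinned down, Artin--Mazur finishes the job; up to that identification, the argument is formal from the universal property of $\SH(\CC)$ and preservation of colimits and symmetric monoidal structure.
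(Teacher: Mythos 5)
Your plan is viable in principle, but it is a genuinely different and much heavier route than the paper's, and as written it rests on a step you yourself flag and do not supply: the identification of the composite $\SH(\CC) \to \SH_\et(\CC)_\ell^\comp \wequi \ShvSp(\CC_\et)_\ell^\comp \wequi \ShvSp_\ell^\comp$ on representables $\Sigma^\infty_+ X$ with $(\Sigma^\infty_+ \Pi_\infty^\et X)_\ell^\comp$. That identification is not part of the rigidity theorem you cite; establishing it is of essentially the same depth as the lemma itself, so deferring it leaves a genuine gap rather than a routine verification. (A smaller issue: in your third paragraph you propose to ``define both functors by left Kan extension from $X \mapsto (\Sigma^\infty_+ X(\CC)^{\mathrm{an}})_\ell^\comp$,'' which is circular as phrased --- the content is precisely that each composite, being determined by the universal property of $\SH(\CC)$, restricts on $\Sm_{/\CC}$ to that assignment, and for the étale route this is exactly the unproved claim above. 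The passage from the Artin--Mazur pro-equivalence to an equivalence of $\ell$-complete suspension spectra also deserves a word, though it is standard.)

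The paper avoids all of this with a purely formal argument: after replacing $\ShvSp$ by $\ShvSp_\ell^\comp$, one observes that the $\ell$-completed complex realization factors through the étale localization, so both composites become symmetric monoidal colimit-preserving functors out of $\SH_\et(\CC)_\ell^\comp \wequi \ShvSp_\ell^\comp$; since $\ShvSp_\ell^\comp$ is the initial stable presentably symmetric monoidal $\ell$-complete category, any such self-functor is the identity and the square commutes. The only nontrivial input there is the factorization (i.e.\ that mod-$\ell$ Betti realization satisfies étale descent, a cohomology-level consequence of the classical comparison theorem), with no need to compute either functor on representables. If you want to salvage your approach, you must actually prove the étale-homotopy-type identification (or cite a precise reference for it); otherwise the formal factorization-plus-initiality argument is the shorter and complete path.
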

In particular if $k \subset \CC$ and $E \in \SH(k)$, we may view the Galois equivariant realization of $E$ as a Galois action on $r_\CC(E)_\ell^\comp$.
\begin{proof}
We may replace $\ShvSp$ by $\ShvSp_\ell^\comp$ in the above diagram.
Since complex realization factors through étale localization, we may replace $\SH(\CC)$ by $\SH_\et(\CC)_\ell^\comp$.
Now all the categories are equivalent to $\ShvSp_\ell^\comp$ and all the functors are symmetric monoidal, so the diagram commutes since $\ShvSp_\ell^\comp$ is the initial stable presentably symmetric monoidal $\ell$-complete category.
\end{proof}

\subsection{Slice completeness}
For any scheme $S$, we have the slice cover and cocover functors $f_n, f^n: \SH(S) \to \SH(S)$; see \cite{voevodsky-slice-filtration} for the definition of $f_n$ and put $f^n = \cof(f_{n+1} \to \id)$.

\begin{lem} \label{lem:MGL-sc}
Let $\scr S$ be a set of primes, $S$ essentially smooth over a Dedekind scheme on which all primes \emph{not} in $\scr S$ are invertible.
We have $f_n \MGL_S[\scr S^{-1}] \in \Sigma^{2n,n} \SH(S)^\veff$, and similarly for $\KGL[\scr S^{-1}]$, or any other spectrum in the subcategory generated under colimits and extensions by $\Sigma^{2*,*}\MGL_S[\scr S^{-1}]$, e.g. obtained from $\MGL_S[\scr S^{-1}]$ by inverting or killing elements in degrees $(2*,*)$.
In particular all of these spectra is slice complete.
\end{lem}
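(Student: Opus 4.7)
The plan is to bootstrap from Hoyois's computation \cite[(8.6)]{hoyois2015algebraic} of the slices of $\MGL$. Since by hypothesis the residue characteristics of $S$ all lie in $\scr S$, inverting $\scr S$ clears them and Hoyois's result gives
\[ s_n \MGL_S[\scr S^{-1}] \wequi \Sigma^{2n,n}(H\Z[\scr S^{-1}] \otimes L_n), \]
with $L_n = \pi_{2n}\MU$ free abelian. Since $H\Z \in \SH(S)^\veff$ and $L_n$ is a direct sum of copies of $\Z$, each slice manifestly lies in $\Sigma^{2n,n}\SH(S)^\veff$.

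For $f_n\MGL_S[\scr S^{-1}]$, I would run the slice tower upward: the cofiber sequences $f_{m+1}\MGL \to f_m\MGL \to s_m\MGL$ together with closure of $\Sigma^{2n,n}\SH(S)^\veff$ under extensions show that every finite quotient $f_n\MGL_S[\scr S^{-1}]/f_N\MGL_S[\scr S^{-1}]$ sits in this subcategory. Passing to $N \to \infty$ uses (i) slice convergence of $\MGL_S[\scr S^{-1}]$ and (ii) closure of $\Sigma^{2n,n}\SH(S)^\veff$ under the arising tower limits; both follow from the connectivity estimate $f_N\MGL_S[\scr S^{-1}] \in \Sigma^{2N,N}\SH(S)^\veff$ becoming arbitrarily deep as $N \to \infty$.

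The remaining claims follow by a closure argument. The class $\scr E$ of spectra $E$ with $f_n E \in \Sigma^{2n,n}\SH(S)^\veff$ for every $n$ is closed under colimits, extensions, and bigraded twists $\Sigma^{2q,q}$, because $f_n$ commutes with filtered colimits and cofiber sequences and the target subcategory enjoys the same closure properties. Snaith's theorem exhibits $\KGL \wequi \MGL[\beta^{-1}]$ as a filtered colimit of $\Sigma^{-2k,-k}\MGL$, so it belongs to the subcategory generated under colimits and extensions by $\Sigma^{2*,*}\MGL_S[\scr S^{-1}]$, and every spectrum in that subcategory lies in $\scr E$. Slice completeness is then automatic: since $f_n E \in \Sigma^{2n,n}\SH(S)^\veff$ becomes increasingly connective, $\lim_n f_n E \wequi 0$ and hence $E \wequi \lim_n E/f_n E$.

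The main obstacle I anticipate is the careful verification of closure of $\Sigma^{2n,n}\SH(S)^\veff$ under the tower limits arising in the slice filtration, i.e., the vanishing of the relevant $\lim^1$ terms. This should reduce via a Milnor-type exact sequence to the connectivity bounds supplied by Hoyois's slice formula, so it is technical but not deep.
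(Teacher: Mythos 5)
There is a genuine gap at the central step, and it is a circularity. You propose to prove $f_n \MGL_S[\scr S^{-1}] \in \Sigma^{2n,n}\SH(S)^\veff$ by assembling the finite quotients $f_n/f_N$ from the slices (which is fine, given Hoyois's slice computation) and then "passing to $N \to \infty$", justifying both the convergence $f_n \wequi \lim_N f_n/f_N$ and the closure of $\Sigma^{2n,n}\SH(S)^\veff$ under this limit by "the connectivity estimate $f_N\MGL_S[\scr S^{-1}] \in \Sigma^{2N,N}\SH(S)^\veff$". But that connectivity estimate is exactly the statement of the lemma; knowing the slices $s_m\MGL$ does not determine the covers $f_n\MGL$ (nor that $\lim_N f_N\MGL \wequi 0$) without an a priori convergence or connectivity input, so your argument assumes what it sets out to prove. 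This is precisely why the paper does not argue from the slices: it invokes the colimit formula of Spitzweck, $f_i \MGL_S[\scr S^{-1}] \wequi \colim_{I_i} D_S[\scr S^{-1}]$, where $D$ sends a monomial of degree $m$ to $\Sigma^{2m,m}\MGL$, valid over Dedekind bases thanks to the Hopkins--Morel isomorphism (\cite[Theorem 11.3]{spitzweck2012commutative}) and $s_0\MGL \wequi \Z$. That formula exhibits $f_i\MGL$ directly as a colimit of objects of $\Sigma^{2i,i}\SH(S)^\veff$, with no limit over the tower and hence no convergence issue. Some extra, non-circular input of this kind (or a Levine-type convergence theorem, which is not available in this generality over Dedekind bases) is indispensable, and your proposal supplies none.

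Two smaller points. The reduction of $\KGL[\scr S^{-1}]$ and of spectra obtained by inverting or killing degree-$(2*,*)$ elements to the case of $\MGL$ is correct in spirit (closure of the class under colimits, extensions and $\Sigma^{2q,q}$), but the identification you cite is not Snaith's theorem: the relevant statement is $\KGL[\scr S^{-1}] \wequi \MGL/(x_2,x_3,\dots)[x_1^{-1}][\scr S^{-1}]$ (\cite[Theorem 5.2]{spitzweck2010relations}); $\MGL[\beta^{-1}]$ is not $\KGL$. Finally, the deduction of slice completeness from the connectivity of the $f_n$ is not purely formal: one needs a compact-generation and dimension argument to see that $\lim_n f_n E = 0$ (this is the content of Lemma \ref{lemm:sc-nondeg} in the paper), though this part is easily repaired.
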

\begin{proof}
The claim implies slice completeness by Lemma \ref{lemm:sc-nondeg} below.
We have $\KGL[\scr S^{-1}] \wequi \MGL/(x_2, x_3, \dots)[x_1^{-1}]$ by \cite[Theorem 5.2]{spitzweck2010relations}, which is of the claimed form.
If $E \in \SH(S)$ satisfies $f_n E \in \Sigma^{2n,n} \SH(S)^\veff$ (for all $n$), then the same is true for $\Sigma^{2p,p} E$ for any $p \in \Z$, and hence for any colimit or extension of such spectra.
In particular the property persists after inverting or killing elements in degree $(2*,*)$.
We have thus reduced to proving the claim for $\MGL_S[\scr S^{-1}]$.

There is a certain sequence of categories \[ I \supset I_1 \supset I_2 \supset \dots \] and a diagram $D: I \to \SH(\Z)$ \cite[\S4]{spitzweck2010relations}.
In fact $I$ is the poset of monomials in variables $x_1, x_2, \dots$ ordered by divisibility, $x_i$ is given degree $i$, $I_n$ consists of the subcategory of monomials of degree $\ge n$, and $D$ sends a monomial of degree $n$ to $\Sigma^{2n,n}\MGL$.
The proof of \cite[Theorem 4.7]{spitzweck2010relations} shows that if the canonical map \[ \MGL_S/(x_1, x_2, \dots)[\scr S^{-1}] \to H\Z[\scr S^{-1}] \] is an equivalence onto the zero slice, then \begin{equation} \label{eq:MGL-fi-formula} f_i \MGL_S[\scr S^{-1}] \wequi \colim_{I_i} D_S[\scr S^{-1}]. \end{equation}
Since $D_S(I_i) \subset \Sigma^{2i,i} \SH(S)^\veff$, the Hopkins--Morel isomorphism over $S$ (after inverting $\scr S$) and $s_0(\MGL) \wequi H\Z$ (after inverting $\scr S$) together imply the desired result.
The former was established in \cite[Theorem 11.3]{spitzweck2012commutative} and the latter in \cite[Theorem B.4]{norms}.
\end{proof}

\begin{lem} \label{lemm:sc-nondeg}
Let $S$ be a noetherian scheme of finite dimension and $E \in \SH(S)$.
Suppose that there is a sequence $n_i$ such that $\lim_{i \to \infty} n_i = \infty$ and $f_i E \in \SH(S)_{\ge n_i}$ (for all $i$).
Then $E \wequi \lim_i f^i E$ and also $[X, E] \wequi \lim_i [X, f^i E]$ for any compact $X \in \SH(S)$ (e.g. $X = \Sigma^{p,q}\1$).
\end{lem}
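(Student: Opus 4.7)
The plan is to derive both statements from the slice cofiber sequence
\[ f_{i+1}E \to E \to f^i E \]
by testing against compact objects and exploiting the hypothesis that $n_i \to \infty$.

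First, recall that since $S$ is noetherian of finite Krull dimension, $\SH(S)$ is compactly generated by the suspension spectra $\Sigma^{p,q}\Sigma^\infty_+ U$ with $U \in \Sm_S$, and each such generator lies in a bounded range of the homotopy $t$-structure. Consequently every compact $X \in \SH(S)$ satisfies $X \in \SH(S)_{[a,b]}$ for some $a \le b$, and the same then holds for any finite suspension $\Sigma^k X$. Given such a compact $X$, the $t$-structure orthogonality axiom forces $[\Sigma^k X, f_{i+1}E] = 0$ whenever $n_{i+1} > b+k$; since $n_i \to \infty$, this occurs for all $i$ sufficiently large (with a threshold depending on $k$).

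For the second assertion, apply $[X,-]$ to the long exact sequence
\[ [\Sigma X, f^i E] \to [X, f_{i+1}E] \to [X,E] \to [X, f^i E] \to [X, \Sigma f_{i+1}E]. \]
For $i$ large (namely once $n_{i+1}$ exceeds $b$ and $b+1$), both flanking terms $[X, f_{i+1}E]$ and $[X, \Sigma f_{i+1}E]$ vanish by the previous paragraph. Hence the map $[X,E] \to [X, f^i E]$ is an isomorphism for all $i \gg 0$, so the tower $\{[X, f^i E]\}_i$ is eventually constant with value $[X,E]$, and its limit is $[X,E]$.

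For the first assertion, note that the fiber of $E \to \lim_i f^i E$ is $\lim_i f_{i+1}E$. For any compact $X$ we have $\map(X, \lim_i f_{i+1}E) \wequi \lim_i \map(X, f_{i+1}E)$, and by the Milnor sequence
\[ 0 \to \limone_i \pi_{k+1}\map(X, f_{i+1}E) \to \pi_k \lim_i \map(X, f_{i+1}E) \to \lim_i \pi_k\map(X, f_{i+1}E) \to 0 \]
each homotopy group is computed from a tower which (for each fixed $k$) is eventually zero, so both $\lim_i$ and $\limone_i$ vanish. Hence $\map(X, \lim_i f_{i+1}E) \wequi 0$ for every compact $X$, and since such $X$ generate $\SH(S)$ we conclude $\lim_i f_{i+1}E \wequi 0$, giving $E \wequi \lim_i f^i E$. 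The only non-formal ingredient is the boundedness of compact generators in the homotopy $t$-structure on $\SH(S)$ over a noetherian finite-dimensional base; everything else is bookkeeping with cofiber sequences.
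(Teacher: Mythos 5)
Your overall skeleton — prove $[X, f_iE]=0$ for $i\gg 0$ and $X$ compact, then conclude via the long exact sequence, the Milnor sequence, and compact generation that $\lim_i f_iE=0$ and $[X,E]\wequi\lim_i[X,f^iE]$ — is exactly the paper's. The gap is in how you justify the key vanishing. First, compact objects of $\SH(S)$ are \emph{not} bounded above in the homotopy $t$-structure: the generators $\Sigma^{p,q}\Sigma^\infty_+U$ are (bounded below but) not coconnective; already over a field $\1$ has nonvanishing homotopy sheaves $\ul{\pi}_n(\1)_*$ for all $n\ge 0$. So the assertion $X\in\SH(S)_{[a,b]}$ is false. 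Second, even granting such boundedness, the orthogonality axiom of a $t$-structure only kills maps \emph{from} connective objects \emph{into} sufficiently coconnective ones; it says nothing about maps from a bounded-above object into a highly connective one. (In ordinary spectra $[H\F_p,\Sigma^nH\F_p]\ne 0$ for every $n\ge 0$, with $H\F_p$ coconnective and $\Sigma^nH\F_p$ arbitrarily connective.) So the step ``$[\Sigma^kX,f_{i+1}E]=0$ once $n_{i+1}>b+k$ by orthogonality'' does not stand, and this is precisely the ingredient you yourself flag as the only non-formal one.

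The correct input — the one the paper uses — is a cohomological-dimension bound rather than a $t$-structure boundedness bound: if $U\in\Sm_S$ has dimension $e$ and $F\in\SH(S)_{>e}$, then $[\Sigma^\infty_+U,F]=0$ \cite[Proposition 3.7]{schmidt2018stable}; this is where the hypothesis that $S$ is noetherian of finite dimension actually enters. One then reduces an arbitrary compact $X$ to the case $X=\Sigma^{p,q}\Sigma^\infty_+U$ by observing that the class of $X$ for which $[X,f_iE]=0$ for $i$ sufficiently large is thick and contains these generators (shifting $E$ as needed to absorb the bidegree). With that vanishing established, your long-exact-sequence and Milnor-sequence bookkeeping, and the conclusion $\lim_i f_{i+1}E=0$ by compact generation, go through essentially as written.
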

\begin{proof}
Let $X$ be compact.
We claim that $[X, f_i E] = 0$ for $i$ sufficiently large.
This implies that $[X, \lim_i f_i E] = 0$ and $[X, \lim_i f^i E] \wequi \lim_i [X, f^i E]$ (using the Milnor exact sequence).
Since $\SH(S)$ is compactly generated, the former vanishing shows that $\lim_i f_i E = 0$, and hence $E \wequi \lim_i f^i E$ as desired.

To prove the claim, since $\SH(S)$ is compactly generated by objects of the form $\Sigma^{i,j} \Sigma^\infty_+ U$ for $X \in \Sm_S$, and the category of objects $X$ satisfying the claim is thick, we may (replacing $E$ by a suspension if necessary) assume that $X = \Sigma^\infty_+ U$.
If $U$ has dimension $e$ and $F \in \SH(S)_{>e}$, then $[\Sigma^\infty_+U, F] = 0$ \cite[Proposition 3.7]{schmidt2018stable}.
The result follows.
\end{proof}

\begin{rmk} \label{rmk:MGL-slices}
In the situation of Lemma \ref{lem:MGL-sc}, one has $s_* \MGL[\scr S^{-1}] \wequi H\Z \otimes L_*[\scr S^{-1}]$ and $s_* \KGL[\scr S^{-1}] \wequi H\Z[\scr S^{-1}, v^{\pm 1}]$ with $|v|=(2,1)$.
This follows from \cite[Theorems 4.7 and 5.2]{spitzweck2010relations}, which apply as in the proof of Lemma \ref{lem:MGL-sc}.
\end{rmk}

\subsection{The $b$-topology} \label{subsec:b-top}
Recall the $b$-topology and localization from \cite{real-and-etale-cohomology,elmanto2019scheiderer}.
To be precise, whenever we speak of $b$-sheaves or $b$-sheafification $L_b$, we mean hypersheaves and hypersheafification.

\begin{lem} \label{lemm:b-pullback-square}
Let $E \in \SH(S)$.
Then we have a pullback square
\begin{equation*}
\begin{CD}
L_b E @>>> L_\et E \\
@VVV        @VVV \\
E[\rho^{-1}] @>>> L_\et(E)[\rho^{-1}].
\end{CD}
\end{equation*}
\end{lem}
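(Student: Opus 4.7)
The plan is to recognize the asserted square as the fracture/Mayer--Vietoris square associated to the presentation of the $b$-topology as the join of the étale and real-étale topologies, after identifying the real-étale localization with $\rho$-inversion.

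First, I would recall from \cite{real-and-etale-cohomology,elmanto2019scheiderer} that the $b$-topology is by definition the smallest topology containing both the étale topology and the real-étale topology. In the present hypercomplete setting this yields, for any $E \in \SH(S)$, a natural pullback square
\[
\begin{CD}
L_b E @>>> L_\et E \\
@VVV        @VVV \\
L_{r\et} E @>>> L_{r\et} L_\et E,
\end{CD}
\]
expressing $L_b E$ as the fiber product of the two minimal localizations over their common further localization. (Concretely, the right vertical map is the unit of $r\et$-localization applied to $L_\et E$; the bottom horizontal map is the unit of $\et$-localization applied to $L_{r\et} E$; both targets agree because $L_\et L_{r\et} \simeq L_{r\et} L_\et$ on the common $b$-local object.)

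Second, I would invoke the identification of real-étale localization with $\rho$-inversion from \cite{elmanto2019scheiderer} (building on the main theorem of \cite{bachmann-eta} in the $\eta$-periodic setting, and its unstable/stable refinements): there is a natural equivalence $L_{r\et} E \simeq E[\rho^{-1}]$ in $\SH(S)$. Applying this identification to both $E$ and $L_\et E$, and using that $\rho$-inversion is a smashing localization (so it commutes with the accessible localization $L_\et$, giving $L_{r\et} L_\et E \simeq L_\et(E)[\rho^{-1}]$), the generic fracture square becomes precisely the square claimed in the lemma.

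The only substantive step is the identification $L_{r\et} E \simeq E[\rho^{-1}]$; everything else is a formal manipulation of localizations. I would therefore expect the proof to consist of one sentence recalling the $b$-topology fracture square, one sentence citing the $\rho$-inversion theorem for real-étale localization, and a brief remark that $\rho$-inversion commutes with $L_\et$ because it is smashing.
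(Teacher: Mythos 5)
Your second step (identifying the real-étale corners via $L_{r\et}E \wequi E[\rho^{-1}]$ and using that $\rho$-inversion is smashing to get $L_{r\et}L_\et E \wequi L_\et(E)[\rho^{-1}]$) is fine and is exactly what is implicit in the way the square is stated. The problem is your first step. The $b$-topology is \emph{not} the smallest topology containing the étale and real-étale topologies: it is \emph{coarser} than both ($b$-coverings are, roughly, étale families that are additionally surjective on real points), and this is the only reason the maps $L_b E \to L_\et E$ and $L_b E \to E[\rho^{-1}]$ exist at all. If $b$ really were the join, both arrows out of $L_b E$ would point the wrong way and your square could not even be written down; note that your own square silently uses the correct (coarser) direction, contradicting your stated definition.

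More importantly, even with the comparison of topologies straightened out, the cartesianness of the square is not a formal consequence of any relation between the two topologies and their join or intersection; in general, the localization at a topology built from two others is not the fiber product of the two localizations over their composite. The genuine input here is Scheiderer-type gluing: $\SH_b(S)$ is a recollement of $\SH_{\ret}(S)$ and $\SH_\et(S)$ (this is \cite[Example A.14]{elmanto2019scheiderer}, building on \cite{real-and-etale-cohomology}), and the displayed square is the standard fracture square attached to a recollement, which is the formal step (cf.\ \cite[\S1.2]{quigley2019parametrized}); this, plus Bachmann's theorem that real-étale localization is $\rho$-inversion \cite{real-and-etale-cohomology,bachmann-eta}, is precisely the paper's proof. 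So your outline has the right shape, but it buries the one substantive ingredient — the recollement — under an incorrect ``definition'' of the $b$-topology and an invalid general principle about joins of topologies; as written, the argument for the pullback property does not go through.
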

\begin{proof}
This is a formal (see e.g. \cite[Paragraph 1.2]{quigley2019parametrized}) consequence of the fact that $\SH_b(S)$ is a recollement of $\SH_{\ret}(S)$ and $\SH_\et(S)$ \cite[Example A.14]{elmanto2019scheiderer}.
\end{proof}

\begin{cor} \label{cor:Lb-mod-rho}
We have $L_b(E)[\rho^{-1}] \wequi E[\rho^{-1}]$ and $L_b(E)/\rho \wequi L_\et(E)/\rho$.
\end{cor}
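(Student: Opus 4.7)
The plan is to derive both equivalences by applying two exact functors to the pullback square of Lemma \ref{lemm:b-pullback-square}, using that smashing localization and cofiber-by-$\rho$ both preserve finite limits on the stable category $\SH(S)$.

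First, I would apply $(\ph)[\rho^{-1}]$ to the pullback square. Both $E[\rho^{-1}]$ and $L_\et(E)[\rho^{-1}]$ are already $\rho$-inverted, so the bottom row is unchanged, while the top-right corner becomes $L_\et(E)[\rho^{-1}]$. The right-hand vertical map is then the identity, so the pullback---which, by exactness of $(\ph)[\rho^{-1}]$, is $L_b(E)[\rho^{-1}]$---agrees with $E[\rho^{-1}]$.

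Second, I would apply $(\ph)/\rho$, i.e.\ tensor with $\cof(\rho)$. Any $\rho$-inverted spectrum becomes zero modulo $\rho$, so the entire bottom row of the pullback square vanishes after this operation. The remaining diagram collapses to the statement that the map $L_b(E)/\rho \to L_\et(E)/\rho$ is the pullback of $L_\et(E)/\rho \to 0$ along $0 \to 0$, hence an equivalence.

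There is no serious obstacle here: the only thing to check is that both $(\ph)[\rho^{-1}]$ and $(\ph)/\rho$ commute with finite limits in $\SH(S)$, which is immediate from stability (the former is a smashing localization, the latter is a finite colimit functor between stable categories and thus also exact).
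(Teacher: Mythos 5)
Your proof is correct and is essentially the same argument the paper has in mind: its proof reads ``Immediate from the pullback square of Lemma \ref{lemm:b-pullback-square}'', and your two steps (applying the exact functors $(\ph)[\rho^{-1}]$ and $(\ph)/\rho$ to that square, noting the right vertical map becomes an equivalence in the first case and the bottom row vanishes in the second) are exactly the intended fleshing-out.
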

\begin{proof}
Immediate from the pullback square of Lemma \ref{lemm:b-pullback-square}
\end{proof}

\begin{lem} \label{lemm:b-top-coh-dim}
Let $S$ be a qcqs scheme of finite dimension and finite $\pvcd$, with $1/2 \in S$.
The category $L_b \SH(S)_\ell^\comp$ is compactly generated by (desuspended) suspension spectra mod $\ell$.
\end{lem}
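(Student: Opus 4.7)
The strategy is to apply the recollement from Lemma \ref{lemm:b-pullback-square} after $\ell$-completion, reducing the statement to compact generation of the étale localization and the $\rho$-inverted piece separately. Completing the pullback square of Lemma \ref{lemm:b-pullback-square} and invoking Corollary \ref{cor:Lb-mod-rho} exhibits $L_b \SH(S)_\ell^\comp$ as a recollement with closed part $\SH_\et(S)_\ell^\comp$ and open part $\SH(S)_\ell^\comp[\rho^{-1}]$. A recollement of presentable stable $\infty$-categories is compactly generated as soon as each part is, and compact generators may be taken to be images of compact generators from the two pieces.

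For the closed, étale part, I would invoke the rigidity theorem \cite[Theorem 6.6]{bachmann-SHet} to identify $\SH_\et(S)_\ell^\comp$ with $\ShvSp(S_\et)_\ell^\comp$. The hypothesis $\pvcd_\ell(S) < \infty$ (together with finite Krull dimension and quasi-compactness) forces locally bounded $\ell$-cohomological dimension, which by the style of argument in \cite[Lemma 2.9]{bachmann-SHet} ensures that mod $\ell$ suspension spectra $\Sigma^\infty_+ U/\ell$ of quasi-compact étale $U \to S$ remain compact in $\ShvSp(S_\et)_\ell^\comp$ and generate it. In this $\ell$-complete étale setting a (power of the) Bott element $\tau$ is invertible, so Tate twists $\Sigma^{0,1}$ can be traded for ordinary shifts; consequently we do not need distinct $(p,q)$-bigraded generators, only desuspensions.

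For the open, $\rho$-inverted part, one appeals to the analysis of \cite{real-and-etale-cohomology, elmanto2019scheiderer}: with $1/2 \in S$ and $\pvcd(S) < \infty$, $\SH(S)_\ell^\comp[\rho^{-1}]$ is controlled by sheaves on the real étale site, which has bounded cohomological dimension. The same argument as in the étale case then yields compact generation by mod $\ell$ suspension spectra. Pushing forward along the open embedding of the recollement produces compact generators of $L_b \SH(S)_\ell^\comp$, which can be represented by (desuspended) mod $\ell$ suspension spectra on $S$ in view of the Scheiderer recollement for the $b$-topology.

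The main obstacle is the interaction of $\ell$-completion with compactness: generically $\ell$-completion does not preserve compact objects, so one needs the bounded $\pvcd_\ell$ hypothesis to guarantee uniform convergence of the descent spectral sequence stalk-wise, which is what lets $\Sigma^\infty_+ U/\ell$ remain compact after $\ell$-completion and Bott inversion to eliminate Tate twists. This bounded cohomological dimension control is precisely what the $\pvcd$ hypothesis is designed to provide.
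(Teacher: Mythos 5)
Your proposal takes a genuinely different route from the paper, decomposing $L_b\SH(S)_\ell^\comp$ into a recollement of an étale piece and a real-étale piece and arguing compact generation piecewise. The paper instead works directly in the $b$-topology: it invokes Scheiderer's inequality $\cd_b(X) \le \cd_\et(X[\sqrt{-1}]) + 1$ to bound the $b$-cohomological dimension of every smooth $X/S$ in terms of the \emph{virtual} cohomological dimension, and then concludes compactness of the mod $\ell$ suspension spectra in the $b$-topos (hence in $L_b\SH(S)_\ell^\comp$) directly from this bound, with no decomposition.

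There is a genuine gap in your approach, concentrated in the treatment of the ``closed, étale part.'' You claim that $\pvcd_\ell(S) < \infty$ (plus finite dimension and quasi-compactness) ``forces locally bounded $\ell$-cohomological dimension,'' and use this to deduce that $\Sigma^\infty_+ U/\ell$ is compact in $\ShvSp(S_\et)_\ell^\comp$. This implication is false at $\ell=2$ in precisely the situations the lemma is designed to handle: if $S$ has a real point then $\cd_2(S) = \infty$ even when $\vcd_2(S)$ is finite (real fields have $(-1)^n \ne 0$ in $H^n_\et(k, \Z/2)$ for all $n$). Consequently $\SH_\et(S)_2^\comp$ is \emph{not} compactly generated by mod $2$ suspension spectra in this generality, and the étale half of your recollement argument collapses. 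This failure is exactly the reason one passes to the $b$-topology in the first place: $\cd_b$, unlike $\cd_\et$, \emph{is} controlled by $\vcd$ via Scheiderer's inequality. Your decomposition therefore re-introduces the pathology that the $b$-topology was introduced to avoid, rather than circumventing it.

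A secondary issue is the assertion that ``a recollement of presentable stable $\infty$-categories is compactly generated as soon as each part is, and compact generators may be taken to be images of compact generators from the two pieces.'' Pushing compact generators from the open piece $j_!$ is fine, but compactness of $i_*$ of a compact object on the closed piece requires that $i^!$ preserve colimits, which is not automatic and needs justification in this setting. And even granting compact generation of the recollement, you would still need a separate argument that the \emph{specific} objects $\Sigma^{-p,-q}\Sigma^\infty_+ X/\ell$ are compact, which the recollement structure does not directly supply. The paper's argument obtains exactly this, for all primes at once, from the single bound on $\cd_b$.
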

\begin{proof}
If $X$ is qcqs scheme with $1/2 \in X$, then $cd_b(X) \le cd_\et(X[\sqrt{-1}]) + 1$ \cite[Corollary 7.4.1 and 7.18]{real-and-etale-cohomology}.
Everything follows formally from this.
In more detail, if $X \in \Sm_S$, then one deduces that $X$ has finite $b$-cohomological dimension.
This implies that the suspension spectrum of $X$ mod $\ell$ is compact in the topos of $b$-sheaves, and then it remains so in $L_b \SH(S)_\ell^\comp$ (see e.g. \cite[Lemma 2.7, proof of Lemma 2.16]{bachmann-SHet}).
\end{proof}

\begin{cor} \label{cor:Lb-colim}
The functor $L_b: \SH(S)_\ell^\comp \to \SH(S)_\ell^\comp$ preserves colimits.
\end{cor}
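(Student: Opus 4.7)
The plan is to exploit the pullback square of Lemma \ref{lemm:b-pullback-square}, which presents $L_b$ naturally as
\[ L_b(E) \wequi L_\et(E) \times_{L_\et(E)[\rho^{-1}]} E[\rho^{-1}]. \]
Since $\SH(S)_\ell^\comp$ is stable, finite limits commute with all colimits, so a pointwise fiber product of colimit-preserving functors is again colimit-preserving. It therefore suffices to show that each of the three constituent functors
\[ L_\et(\ph), \quad (\ph)[\rho^{-1}], \quad L_\et(\ph)[\rho^{-1}] \]
preserves colimits on $\SH(S)_\ell^\comp$.

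Inverting $\rho$ is a smashing localization (given by tensoring with $\1[\rho^{-1}]$), so preserves all colimits. For $L_\et$, the finite $\pvcd$ hypothesis, combined with the finite Krull dimension of $S$, gives bounded $\ell$-étale cohomological dimension for $S$; this is what makes étale hyperdescent a finite-level condition that commutes with filtered colimits, and since $L_\et$ is a left adjoint in a stable setting it automatically preserves finite colimits as well. This is the same finiteness input already powering the compact generation in Lemma \ref{lemm:b-top-coh-dim}, treated in detail in \cite[\S 2]{bachmann-SHet}. The third functor is the composition of the first two, so inherits colimit preservation.

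Note that a naive attempt to deduce the result formally from the compact generation of Lemma \ref{lemm:b-top-coh-dim} alone cannot work: for example, ordinary $\ell$-completion $L_\ell$ on $\ShvSp$ is compactly generated by $\Ss/\ell$ (which is also compact in $\ShvSp$), yet $L_\ell$ is not smashing. So genuine use of the $b$-specific pullback decomposition, along with the étale colimit-preservation, is necessary. The main obstacle is therefore the colimit preservation of $L_\et$ on $\SH(S)_\ell^\comp$ under the finite $\pvcd$ hypothesis; this is the only non-formal ingredient, and once it is in hand the pullback square immediately delivers the claimed colimit preservation of $L_b$.
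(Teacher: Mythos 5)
There is a genuine gap, and it sits exactly where the $b$-topology matters. Your route requires that $L_\et(\ph)_\ell^\comp$ preserve colimits on $\SH(S)_\ell^\comp$, and you justify this by claiming that finite $\pvcd_\ell$ (plus finite dimension) gives bounded $\ell$-étale cohomological dimension for $S$. That is false in the key case $\ell=2$ with real points: $\Spec(\RR)$ has $\vcd_2=0$ but $\cd_2=\infty$, and correspondingly $\SH_\et(\RR)_2^\comp$ is (by rigidity) a category of $2$-complete Borel $C_2$-spectra, where mod-$2$ mapping spectra out of the unit are homotopy fixed points and do not commute with infinite sums, since $H^*(C_2,\F_2)$ is unbounded. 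So $L_\et$ does \emph{not} preserve colimits on $\SH(S)_2^\comp$ under the stated hypotheses, and the pullback square of Lemma \ref{lemm:b-pullback-square} cannot be assembled from three colimit-preserving corners. (Lemma \ref{lemm:b-top-coh-dim} deliberately bounds $\cd_b(X)$ by $\cd_\et(X[\sqrt{-1}])+1$, precisely because $\cd_\et(X)$ itself may be infinite; compare also Lemma \ref{lemm:CQL-basics}(2), which demands $\cd_\ell(S)<\infty$, not just finite $\vcd$, to get colimit preservation for $L_\et$.) Your argument would only go through after adjoining $\sqrt{-1}$ or for odd $\ell$, which is exactly when the corollary is least needed.

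Your dismissal of the ``naive formal'' argument is also mistaken: the paper's proof is exactly that argument, and it is correct. The ambient category in the corollary is $\SH(S)_\ell^\comp$, which is compactly generated by mod-$\ell$ (de)suspension spectra $\Sigma^{p,q}\Sigma^\infty_+X/\ell$, and Lemma \ref{lemm:b-top-coh-dim} says precisely that $L_b$ carries these to compact objects of $L_b\SH(S)_\ell^\comp$. Hence for any such generator $G$ and any filtered diagram of $b$-local objects one has $\Map(G,\colim E_j)\wequi\colim\Map(L_bG,E_j)\wequi\colim\Map(G,E_j)$, so the inclusion of $b$-local objects preserves filtered colimits; being an exact functor of stable categories it preserves finite colimits, hence all colimits, and therefore the endofunctor $L_b$ preserves colimits. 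Your $L_\ell$-on-$\ShvSp$ counterexample does not contradict this pattern: there the compact generator $\Ss$ of the ambient category is sent to $\Ss_\ell^\comp$, which is \emph{not} compact in $\ShvSp_\ell^\comp$, so the hypothesis of the formal argument fails; here it holds because the ambient category is already $\ell$-complete and generated by mod-$\ell$ objects.
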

\begin{proof}
Immediate from Lemma \ref{lemm:b-top-coh-dim}.
\end{proof}

\subsection{Motivic colimits} \label{sec:motivic-colimits}
Let $\scr C$ be a small category with a final object $*$ and $\scr D \in \Fun(\scr C^\op, \Cat)$.
Suppose that $\scr D(*)$ is cocomplete and each of the functors $f^*: \scr D(*) \to \scr D(c)$ for $f: c \to * \in \scr C$ admits a left adjoint $f_\#$.
Finally suppose given $A \in \PSh(\scr C)_{/\scr D^\wequi}$.
We define a functor \[ M = M_A: \PSh(\scr C)_{/A} \wequi \PSh(\scr C_{/A}) \to \scr D(*) \] as the cocontinous extension of the functor $\scr C_{/A} \to \scr D(*)$ sending $c \to A$ to the $f_\#(X)$, where $f: c \to *$ is the unique map and $X \in \scr D(c)$ is the corresponding object.
See \cite[\S2]{bachmann-colimits} for more details (in a slightly different context).

We recall some of the salient properties of this construction.
\begin{prop} \label{prop:mot-colim-basics} \hfill
\begin{enumerate}
\item If $A$ is a sheaf (respectively hypersheaf) for some topology $\tau$ on $\scr C$, then $M_A$ factors through $\Shv_\tau(\scr C)_{/A}$ (respectively $\Shv_\tau^\comp(\scr C)_{/A}$).
\item If $\alpha: \scr D \to \scr E \in \Fun(\scr C^\op, \Cat)$, where $\scr E$ satisfies all of the same assumptions and moreover all of the exchange transformations $f_\#\alpha \to \alpha f_\#$ are isomorphisms for all $f: c \to * \in \scr C$, then there is a canonical equivalence \[ \alpha M_{\alpha_\# A} \wequi M_A \alpha_\#. \]
  Here by $\alpha_\#: \PSh(\scr C)_{/\scr D^\wequi} \to \PSh(\scr C)_{/\scr E^\wequi}$ we denote the functor of composition with $\alpha$.
\item If $p: \scr C_0 \to \scr C$ is a functor of small categories such that $\scr C_0$ has a terminal object $*$ preserved by $p$, then there is a canonical equivalence \[ M_{p^*A} \wequi M_A p. \]
  Here by $p^*: \PSh(\scr C) \to \PSh(\scr C_0)$ we denote the restriction functor and by $p: \PSh(\scr C_0)_{/p^* A} \to \PSh(\scr C)_{/A}$ the functor induced by the left adjoint of $p^*$.
\end{enumerate}
\end{prop}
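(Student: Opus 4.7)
My plan is to exploit the universal property of $M_A$: by construction it is the unique colimit-preserving functor out of $\PSh(\scr C)_{/A} \simeq \PSh(\scr C_{/A})$ extending the prescribed assignment $(c \to A) \mapsto f_\#(X)$ on representables. Each of the three parts will then be proved by (i) verifying that the two functors being compared are both cocontinuous and (ii) checking that they agree on representables; universality then supplies the claimed equivalence.

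For (2), the exchange hypothesis $f_\# \alpha \simeq \alpha f_\#$ in particular forces $\alpha_*: \scr D(*) \to \scr E(*)$ to preserve the colimits arising in the construction of $M_{\alpha_\# A}$; in practice $\alpha_*$ is a left adjoint, so cocontinuous outright. Thus both $\alpha \circ M_{\alpha_\# A}$ and $M_A \circ \alpha_\#$ are cocontinuous functors $\PSh(\scr C)_{/A} \to \scr E(*)$. On a representable $(c \xrightarrow{g} A)$ with $f: c \to *$ and corresponding $X \in \scr D(c)$, both evaluate to $\alpha_*(f_\# X) \simeq f_\#(\alpha_c X)$ via the hypothesized exchange isomorphism, and the universal property delivers the natural equivalence.

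For (3), the assumption that $p$ preserves the terminal object means that the induced functor $p: \scr C_{0,/p^*A} \to \scr C_{/A}$ sends $(c_0 \to p^* A)$ to $(p(c_0) \to A)$, and under this identification the diagram data defining $M_{p^* A}$ is simply the restriction along $p$ of the data defining $M_A$. Both $M_{p^* A}$ and $M_A \circ p$ are cocontinuous and agree on representables by direct inspection, so universality again produces the comparison.

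Part (1) is the substantive one. I would show that $M_A$ inverts $\tau$-local (respectively hyperlocal) equivalences; since $M_A$ is cocontinuous this reduces to checking that for every $\tau$-hypercover $U_\bullet \to (c \to A)$ of a representable, the induced map $|M_A(U_\bullet)| \to M_A(c \to A)$ is an equivalence in $\scr D(*)$. Unpacking $M_A$ on $U_\bullet$ turns this into a $\tau$-descent statement for the functor $\scr C_{/c}^{\op} \to \scr D(*)$, $(c' \to c) \mapsto f'_\#(X|_{c'})$, which is where the standing compatibility of $\scr D$ and the $f_\#$ with the topology $\tau$ enters (cf.\ \cite{bachmann-colimits}). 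Granting this descent, $M_A$ factors through $L_\tau$ or its hypercomplete variant as required. I expect this descent verification to be the main obstacle, whereas (2) and (3) are essentially formal consequences of the universal property of cocontinuous extensions.
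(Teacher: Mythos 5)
Your parts (2) and (3) are exactly the paper's argument: both composites are cocontinuous and agree on representables (via the exchange isomorphism, resp.\ via $p$ preserving the terminal object), so the universal property of the cocontinuous extension gives the equivalences. One small point: for (2) you do need $\alpha_\ast\colon \scr D(*) \to \scr E(*)$ to preserve the relevant colimits in order for the comparison map to be an equivalence; the exchange isomorphisms by themselves do not give this (your ``in practice $\alpha_*$ is a left adjoint'' is the honest justification, and the paper elides the same point).

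For (1) the paper simply adapts the proof of \cite[Proposition 16.9(2)]{norms}, and your reduction---invert the generating $\tau$-(hyper)local equivalences, i.e.\ check that covering sieves/hypercovers of a representable $c \to A$ are sent to colimit diagrams---is indeed the skeleton of that argument (the sheaf hypothesis on $A$ only serves to make $\Shv_\tau(\scr C)_{/A}$ a localization of $\PSh(\scr C)_{/A}$). But the step you ``grant'' is the entire content, and the ``standing compatibility of $\scr D$ and the $f_\#$ with $\tau$'' you invoke is not among the hypotheses of the Proposition. Two concrete issues. First, the variance is off: what is needed is a \emph{codescent} (colimit) statement for the covariant functor $(c' \to c) \mapsto f'_\#(X|_{c'})$, namely $\colim_{(c' \to c)\in R} f'_\#(X|_{c'}) \wequi f_\# X$ for covering sieves $R$ and their hypercover analogues, not descent for a presheaf on $\scr C_{/c}^{\op}$. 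Second, this codescent genuinely requires input on $\scr D$ beyond the stated assumptions: sheafiness of $A$ alone is not enough. For example, take $\scr C = \{u\colon U \to *\}$ with the topology in which the sieve generated by $u$ covers $*$, let $\scr D(*)$ be any cocomplete category, $\scr D(U)$ the terminal category, and $A = *$ (a sheaf); then $u_\# u^* X$ is the initial object, so $M_A$ sends the local equivalence $(U \to A) \to (* \to A)$ to (initial object) $\to X$, which is not an equivalence. So a complete proof must supply the codescent, e.g.\ by assuming (and in the applications verifying, cf.\ Lemma \ref{lem:SH-proet-descent}) that $c \mapsto \scr D(c)$ is a $\tau$-(hyper)sheaf of categories with the $f_\#$ suitably adjointable---which is exactly what the cited proof in \cite{norms} uses for $\SH(-)$ and the Nisnevich topology---or else defer to that argument as the paper does. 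As written, ``granting this descent'' leaves the only substantive step of (1) unestablished.
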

\begin{proof}
(1) Straightforward adaptation of \cite[proof of Proposition 16.9(2)]{norms}.

(2,3) Since both sides are cocontinuous functors, it suffices to check the claims on representables (i.e. objects of the form $c \to A$, respectively $c_0 \to p^*A$), where they are clear.
\end{proof}

\subsection{Projection formula}
We shall use the following well-known fact.
\begin{lem} \label{lemm:projection-formula}
Let \[ L: \scr C \adj \scr D: R \] be an adjunction between symmetric monoidal $\infty$-categories, with $L$ symmetric monoidal.
For $c \in \scr C, d \in \scr D$ there is a canonical binatural transformation \[ c \otimes R(d) \to R(L(c) \otimes d). \]
This natural transformation is an equivalence if one of the following holds:
\begin{enumerate}
\item $c$ is strongly dualizable.
\item $\scr C, \scr D$ are stable, $\scr C$ is compact-rigidly generated and $L$ sends a compact generating family of $\scr C$ to compact objects of $\scr D$.
\end{enumerate}
\end{lem}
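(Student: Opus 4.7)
The plan is to first construct the natural transformation formally from the symmetric monoidal adjunction, and then verify the equivalence claims by a duality-plus-density argument.

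To build the map, I would take the adjoint of the composite
\[ L(c \otimes R(d)) \xrightarrow{\wequi} L(c) \otimes L R(d) \xrightarrow{\id \otimes \epsilon} L(c) \otimes d, \]
where the first equivalence uses that $L$ is symmetric monoidal and the second uses the counit of the adjunction. Binaturality in $(c,d)$ is immediate from the binaturality of the two inputs.

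For part (1), suppose $c$ is strongly dualizable with dual $c^\vee$. Because $L$ is symmetric monoidal, it preserves strong dualizability and we have a canonical equivalence $L(c^\vee) \wequi L(c)^\vee$. I would then check that the map is an equivalence by testing against arbitrary $x \in \scr C$ using the chain of natural equivalences
\[ \Map(x, c \otimes R(d)) \wequi \Map(x \otimes c^\vee, R(d)) \wequi \Map(L(x) \otimes L(c)^\vee, d) \wequi \Map(L(x), L(c) \otimes d) \wequi \Map(x, R(L(c) \otimes d)), \]
and verify that this composite identification is induced by the map constructed above (this is a diagram chase using the triangle identities).

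For part (2), I would fix $d$ and view both sides as functors of $c$. The left-hand side $c \mapsto c \otimes R(d)$ is cocontinuous because tensor product preserves colimits in each variable. For the right-hand side $c \mapsto R(L(c) \otimes d)$, the functor $L$ preserves all colimits (left adjoint) and tensoring with $d$ preserves colimits, so it suffices to know that $R$ preserves colimits. The hypothesis that $L$ sends a compact generating family $\{c_\alpha\}$ of $\scr C$ to compact objects of $\scr D$ implies that $R$ preserves filtered colimits, since for each generator $\Map(c_\alpha, R(\ph)) \wequi \Map(L(c_\alpha), \ph)$ commutes with filtered colimits. Combined with stability (whence $R$ preserves finite colimits, as these agree with finite limits), one concludes that $R$ preserves all small colimits. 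The main delicate point will be this cocontinuity check for the right-hand side, since it is where the compactness hypothesis really enters. Having both sides cocontinuous in $c$, I would then invoke part (1): by compact-rigid generation, $\scr C$ is generated under colimits by a family of compact strongly dualizable objects, on which the transformation is already an equivalence by (1), and so the equivalence propagates to all of $\scr C$.
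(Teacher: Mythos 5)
Your proposal is correct and follows essentially the same route as the paper: the map is the mate of $L(c\otimes Rd)\wequi L(c)\otimes LRd\to L(c)\otimes d$ (equivalently the paper's unit–counit composite), part (1) is the same duality test against arbitrary objects, and part (2) is the same argument that the compactness hypothesis plus stability makes $R$ cocontinuous, so that the equivalence propagates from the strongly dualizable generators by cocontinuity of both sides in $c$.
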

\begin{proof}
The natural transformation is the composite \[ c \otimes Rd \xrightarrow{u} RL(c \otimes Rd) \wequi R(L(c) \otimes LRd) \xrightarrow{c} R(L(c) \otimes d), \] where $u$ denotes a unit of adjunction, $c$ denotes a counit of adjunction, and the middle equivalence comes from the symmetric monoidal structure on $L$.

(1) Straightforward manipulation using strong dualizability shows that for any $X \in \scr C$ the canonical map $[X, c \otimes Rd] \to [X, R(L(c) \otimes d)]$ is a bijection.
The result follows.

(2) The assumptions imply that $R$ preserves colimits.
For any $d \in \scr D$, the class of objects $\scr C_0 \subset \scr C$ on which the transformation is an equivalence is stable under finite limits and colimits (by stability), and under all colimits by the previous sentence.
Since $\scr C_0$ contains the strongly dualizable objects (by (1)), and such objects generate $\scr C$ under colimits (by assumption), we conclude.
\end{proof}

\section{Étale $K$-theory} \label{sec:etale-K}
\localtableofcontents

\bigskip
Let $S$ be a scheme with $1/\ell \in S$.
We can view $K(1)$-local algebraic $K$-theory as an étale sheaf on $S$: $L_{K(1)} K(\ph) \in \ShvSp(S_\et)$.
As observed in \cite[Theorem 3.9]{bhatt2020remarks}, this sheaf essentially only depends on the cyclotomic character of $S_\et$.
That is, write $\pi: S_\et \to B\Z_\ell^\times$ for the cyclotomic character, and write $\KU_\ell \in \ShvSp(B_\et \Z_\ell^\times)$ for the classical $\ell$-adically completed complex $K$-theory spectrum $\KU_\ell^\comp$ with the continuous action of $\Z_\ell^\times$ by Adams operations.
Then one has \[ L_{K(1)} K(\ph) \wequi \pi^* \KU_\ell \in \ShvSp(S_\et)_\ell^\comp. \]

In this section we will establish an unstable version of this result.
Write $L_\et K^\circ \in \Shv(S_\et)$ for the étale sheafification of the presheaf of spaces given by the rank zero part of algebraic $K$-theory.
Write $\BU_\ell \in \Shv(B_\et \Z_\ell^\times)$ for the connected cover of the infinite loop sheaf of $\KU_\ell$.
We would like to establish an equivalence of the form \[ \pi^* \BU_\ell \wequi L_\et K^\circ \] up to $\ell$-adic completion.
The difficulty which arises is that we need to use a notion of $\ell$-adic equivalence in an unstable setting, that is, for sheaves of \emph{spaces}.
We recalled a construction of unstable $\ell$-completion in \S\ref{subsec:unstable-ell-comp}.
As we showed there, this operation behaves particularly well if the relevant topos has homotopy dimension $0$.
Unfortunately this is not the case for the topoi mentioned so far.
Our way out is to replace the étale topos by the pro-étale topos of \cite{bhatt2013pro}.
This has homotopy dimension $0$, and in reasonable situations contains the usual étale topos as a full subcategory.\footnote{Recall our convention that all topoi are hypercomplete(d).}

\subsubsection*{Organization}
This section is organized as follows.
First in \S\ref{subsec:large-pro-etale} we prove some facts about various types of pro-étale sheaves on $S$.
We prove somewhat more than is needed for our immediate application, in preparation for later sections.
Then in \S\ref{subsec:pro-G-site} we define the analog of the pro-étale site for $\Shv(B_\et \Z_\ell^\times)$ and establish some of its basic properties.
Again in anticipation of future needs, our results are a bit more expansive than immediately necessary.
Finally in \S\ref{subsec:Ket-main} we prove our main reconstruction result, along the lines mentioned above.
Due to all our preliminary work, the proof is essentially straightforward.
Assuming we have a comparison map (which we can obtain e.g. from \cite[Theorem 3.9]{bhatt2020remarks}), we need only check that we have an equivalence on stalks.
Thus we need to compare $\ell$-adic $K$-theory of strictly henselian local rings with $\BU_\ell^\comp$.
This is possible by celebrated results of Gabber and Suslin.

\subsection{The large pro-étale site} \label{subsec:large-pro-etale}
Recall the pro-étale topology, and in particular the notion of weakly étale maps, from \cite{bhatt2013pro}.
\begin{dfn}
We denote by $\Sch_S$ the category of $S$-schemes, suitably bounded in cardinality.
By a pro-étale covering we mean an fpqc coverings consisting of weakly étale maps.
\end{dfn}

The main reason for considering this site is as follows.
Recall the notion of a weakly contractible scheme from \cite[Definition 1.4]{bhatt2013pro}.
\begin{lem}
Weakly contractible schemes define quasi-compact objects of homotopy dimension $\le 0$ of $\Shv_\proet(\Sch_S)$, and generate the category under colimits.
\end{lem}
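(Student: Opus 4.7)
The plan is to verify each of the three assertions—quasi-compactness, homotopy dimension $\le 0$, and generation under colimits—separately, leaning on the defining property of weak contractibility from \cite{bhatt2013pro}.

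First, for quasi-compactness: a weakly contractible scheme $W$ is in particular qcqs as a scheme. Any pro-étale cover of the representable $h_W$ in $\Shv_\proet(\Sch_S)$ is, by definition, an fpqc family of weakly étale maps; quasi-compactness of $W$ as a scheme allows us to extract a finite subfamily still covering, which shows that $h_W$ is a quasi-compact object of the $\infty$-topos.

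Second, for homotopy dimension $\le 0$: we must show that every effective epimorphism $F \twoheadrightarrow h_W$ of pro-étale sheaves of spaces admits a section. By the definition of an effective epimorphism in $\Shv_\proet(\Sch_S)$, there is a pro-étale covering $\{U_\alpha \to W\}$ of $W$ such that the identity of $W$ lifts through $F$ after pullback to each $U_\alpha$. Refining and using the quasi-compactness of $W$, we may replace the family by a single faithfully flat weakly étale map $U \to W$ together with a lift $U \to F$. By the defining property of weak contractibility, $U \to W$ admits a section $s\colon W \to U$, and composing with the lift yields the desired section $W \to F$.

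Third, for generation: we invoke the existence theorem of Bhatt--Scholze \cite[Proposition 2.4.9]{bhatt2013pro}, which guarantees that every qcqs scheme $X$ admits a faithfully flat weakly étale cover $W \to X$ with $W$ weakly contractible. The Čech nerve $W^{\bullet+1}_X$ of such a cover is a pro-étale hypercover of $X$; since we work in the hypercomplete pro-étale topos, $h_X$ is the colimit (geometric realization) of this simplicial object. Each term $W \times_X \cdots \times_X W$ is again weakly contractible (products of w-contractibles over arbitrary bases remain w-contractible by \cite{bhatt2013pro}), so $h_X$ is exhibited as a colimit of representables of weakly contractible schemes. As representables already generate $\Shv_\proet(\Sch_S)$ under colimits, the claim follows.

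The main subtlety is the homotopy dimension step, where one needs to ensure that an arbitrary effective epimorphism (which a priori is only locally split for some unspecified pro-étale cover) can be refined to a single faithfully flat weakly étale map before applying the defining lifting property; this is precisely where quasi-compactness of $W$ as a scheme is used.
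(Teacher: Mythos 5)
Your unwinding of the first two assertions is sound and tracks what the paper (tersely) delegates to Bhatt--Scholze: the paper's own proof is just a one-line appeal to the small pro-étale site, so it is the same approach, spelled out. The quasi-compactness argument via finite refinement of covers is fine, and the homotopy-dimension argument — extracting a single faithfully flat weakly étale cover of $W$ from an arbitrary effective epimorphism and then invoking the defining section property of $W$ — is exactly the right reduction.

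The generation step has a genuine gap. You assert that ``products of w-contractibles over arbitrary bases remain w-contractible by \cite{bhatt2013pro},'' and use this to conclude that all terms of the \v{C}ech nerve $W^{\bullet+1}_X$ are w-contractible. This is not a result in Bhatt--Scholze, and it is false in general: for instance if $W = \Spec k$ with $k$ separably closed of characteristic $0$, the scheme $W \times_{\Spec \QQ} W = \Spec(k\otimes_{\QQ} k)$ has residue fields at its non-closed points that need not be separably closed (let alone strictly henselian), so it is not w-contractible. Fortunately you do not need this. The correct argument is an iteration: the topos $\Shv_\proet(\Sch_S)$ is generated under colimits by representables $h_X$ for qcqs $S$-schemes $X$; each such $X$ has a w-contractible cover $W_0\to X$, each term $W_0^{\times_X n}$ of its \v{C}ech nerve is again a qcqs $S$-scheme and so has its own w-contractible cover, and by hypercompleteness this process exhibits $h_X$ as a colimit of $h_W$'s with $W$ w-contractible. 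Equivalently, and more cleanly, once the first two parts are established one can observe that the w-contractibles form a basis of quasi-compact objects of homotopy dimension $\le 0$, and then generation is automatic (this is essentially Proposition \ref{prop:htpy-dim-0-PSigma}(2) in the present paper).
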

\begin{proof}
These properties are inherited from the usual pro-étale sites. (See in particular \cite[Therem 1.5]{bhatt2013pro} for the generation; quasi-compactness is essentially automatic and homotopy dimension $\le 0$ is essentially the definition of weakly contractible.).
\end{proof}

We define \[ \Spc_\proet(S) = L_{\A^1} \Shv_\proet(\Sch_S), \SH_\proet^{S^1}(S) = \Spc_\proet(S)_*[(S^1)^{-1}] \] and \[ \SH_\proet(S) = \Spc_\proet(S)_*[(\P^1)^{-1}]. \]
Beware that $\Spc_\proet(S)$ is \emph{not} a localization of $\Spc(S)$ (or even $\ul{\Spc}(S)$), and so on.

\begin{lem} \label{lem:SH-proet-descent}
The functors $\Spc_\proet(\ph)$, $\Spc_\proet(\ph)_*$, $\SH^{S^1}_\proet(\ph)$, $\SH_\proet(\ph)$, $\SH_\proet^{S^1}(S)_\ell^\comp$ and $\SH_\proet(\ph)_\ell^\comp$ are pro-étale hypersheaves.
\end{lem}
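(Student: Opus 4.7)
The plan is to reduce every claim to hyperdescent for the base functor $S \mapsto \Shv_\proet(\Sch_S)$, and then argue that each subsequent construction ($\A^1$-localization, pointing, $S^1$- and $\P^1$-stabilization, $\ell$-completion) commutes with the relevant limits, so hyperdescent propagates.

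For the base case, given a pro-étale hypercover $T_\bullet \to S$, I would first identify $\Shv_\proet(\Sch_{T_n})$ with the slice $\Shv_\proet(\Sch_S)_{/T_n}$, where $T_n$ is regarded as a representable sheaf on $\Sch_S$. This is a straightforward comparison of sites: a pro-étale cover of $T_n$ is the same as a morphism in $\Sch_S$ with target $T_n$ that is a pro-étale cover, and both topoi are hypercompleted. The hyperdescent statement $\Shv_\proet(\Sch_S) \simeq \lim_{[n]\in\Delta} \Shv_\proet(\Sch_{T_n})$ then becomes the standard fact that for any $\infty$-topos $\scr X$ and any effective epimorphism (hypercover) $T_\bullet \to S$, one has $\scr X_{/S} \simeq \lim_{[n]} \scr X_{/T_n}$ — that is, $\infty$-topoi satisfy descent along their own hypercovers.

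To pass from $\Shv_\proet(\Sch_S)$ to the remaining functors, it is enough to observe that each operation in the chain $\Shv_\proet(\Sch_S) \to \Spc_\proet(S) \to \Spc_\proet(S)_* \to \SH^{S^1}_\proet(S) \to \SH_\proet(S)$, and likewise $\ell$-completion, is a limit-preserving construction whose output can be identified as the full subcategory of objects in the previous one satisfying a property that is preserved by pullback along maps of sites. Concretely: $\A^1$-local hypersheaves are stable under limits and pullback along pro-étale morphisms preserves $\A^1$-local objects (because such pullbacks preserve $\A^1$ up to isomorphism), so the full subcategory inclusion $\Spc_\proet(S) \hookrightarrow \Shv_\proet(\Sch_S)$ is compatible with descent. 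Pointing is $(-)_{\1/}$, a coslice, which commutes with limits. The two stabilizations $\Sigma^\infty_{S^1} \dashv \Omega^\infty_{S^1}$ and $\Sigma^\infty_{\P^1} \dashv \Omega^\infty_{\P^1}$ express the stable categories as limits $\lim(\cdots \to \scr C \to \scr C)$ of $\Omega$-towers, and these commute with arbitrary limits of presentable $\infty$-categories. Finally $\ell$-complete objects form a reflective subcategory closed under limits, so $\SH_\proet(\ph)_\ell^\comp$ inherits hyperdescent from $\SH_\proet(\ph)$, and similarly for $\SH^{S^1}_\proet(\ph)_\ell^\comp$.

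The main obstacle will be the base case, specifically the clean identification $\Shv_\proet(\Sch_T) \simeq \Shv_\proet(\Sch_S)_{/T}$ for a pro-étale morphism $T \to S$. Care is required to check that the sizes/cardinality bounds defining the big sites are compatible, and that the identification respects hypercompletion (so that one truly gets the hypercompleted slice). Once this is in place, the propagation to $\Spc_\proet$, $\SH^{S^1}_\proet$, $\SH_\proet$ and their $\ell$-completions is formal and follows the pattern recorded, for instance, in the slice-descent arguments for $\SH(\ph)$ in \cite{norms}.
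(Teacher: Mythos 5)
Your proposal is correct and follows essentially the same route as the paper: identify $\Shv_\proet(\Sch_T)$ with the slice $\Shv_\proet(\Sch_S)_{/T}$ and invoke descent in $\infty$-topoi (the paper cites \cite[Theorem 6.1.3.9(3)]{HTT}), then propagate formally — pointing commutes with limits, $\A^1$-invariance is checked pro-étale locally, inverting the (symmetric) objects $S^1$ and $\P^1$ is a limit, and $\ell$-completeness is a locally-checkable condition (the paper phrases this as right orthogonality to $\ell$-periodic objects, your reflective-subcategory-plus-pullback-stability argument amounts to the same thing).
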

\begin{proof}
By topos descent \cite[Theorem 6.1.3.9(3)]{HTT}, $\Shv_\proet(\Sch_{\ph}) \wequi \Shv_\proet(\Sch_S)_{/\ph}$ is a pro-étale hypersheaf.
This property persists when passing to pointed objects because passing to pointed objects commutes with limits.
To show that $\Spc_\proet(\ph)$, $\Spc_\proet(\ph)_*$ are sheaves it remains to show that $\scr X \in \Shv_\proet(\Sch_{\ph})$ is $\A^1$-invariant if and only if it is so on pullbacks to a pro-étale covering of the base, which is clear.
Since inverting a symmetric object is a limit, it preserves the sheaf condition.
It remains to deal with the $\ell$-complete categories, whence we need to show that being $\ell$-complete can be checked on a pro-étale covering.
This holds since being $\ell$-complete is equivalent to being right orthogonal to $\ell$-periodic objects.
\end{proof}

\begin{lem} \label{lem:SHS-SH-proet}
Let $1/\ell \in S$.
We have \[ \SH^{S^1}_\proet(S)_\ell^\comp \wequi \SH_\proet(S)_\ell^\comp. \]
\end{lem}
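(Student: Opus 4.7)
The plan is to show that the symmetric monoidal $\P^1$-stabilization
\[
\Sigma^\infty_{\P^1}\colon \SH^{S^1}_\proet(S)_\ell^\comp \longrightarrow \SH_\proet(S)_\ell^\comp
\]
is already an equivalence. Writing $\P^1 \simeq S^1 \wedge \mathbb{G}_m$ and using that $S^1$ is automatically invertible, the claim reduces to showing that $\Sigma^\infty_{S^1}\mathbb{G}_m$ is already $\otimes$-invertible in $\SH^{S^1}_\proet(S)_\ell^\comp$: once this is known, the localization functor inverting $\P^1$ adds nothing and is automatically an equivalence.

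By Lemma \ref{lem:SH-proet-descent}, the functor $S \mapsto \SH^{S^1}_\proet(S)_\ell^\comp$ satisfies pro-étale hyperdescent with pointwise tensor product, so invertibility of $\Sigma^\infty_{S^1}\mathbb{G}_m$ is a pro-étale-local condition on $S$. We may therefore replace $S$ by a w-contractible pro-étale cover $W$. On such a $W$ (with $1/\ell \in W$), the Kummer sequence gives fiber sequences $\mu_{\ell^n} \to \mathbb{G}_m \xrightarrow{\ell^n} \mathbb{G}_m$ of pointed pro-étale sheaves of groups. Taking suspension spectra, passing to the inverse limit in $n$, and invoking the smashing character of pro-$\ell$ completion (cf.\ \S\ref{subsec:pro}), one identifies $(\Sigma^\infty_{S^1}\mathbb{G}_m)_\ell^\comp$ with $\Sigma\,\Z_\ell(1)$, where $\Z_\ell(1) = \lim_n \mu_{\ell^n}$. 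Since $W$ is w-contractible, a compatible system of $\ell$-power roots of unity exists on $W$, so $\Z_\ell(1)$ is (non-canonically) isomorphic to the constant sheaf $\Z_\ell$. Hence $(\Sigma^\infty_{S^1}\mathbb{G}_m)_\ell^\comp$ is pro-étale locally equivalent to the constant sheaf $(S^1)_\ell^\comp$, which is $\otimes$-invertible; the invertibility of $\Sigma^\infty_{S^1}\mathbb{G}_m$ in $\SH^{S^1}_\proet(S)_\ell^\comp$ follows.

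The main obstacle is the identification $(\Sigma^\infty_{S^1}\mathbb{G}_m)_\ell^\comp \simeq \Sigma\,\Z_\ell(1)$ in the $\A^1$-localized $S^1$-stable setting. Since $\mathbb{G}_m$ is not $\A^1$-invariant as a presheaf, one cannot literally work with the Kummer sequence inside $\SH^{S^1}_\proet$; instead the identification is first carried out in $\ell$-complete pro-étale sheaves of spectra and then transported across $\A^1$-localization. This transport is harmless because $\mu_{\ell^n}$ is locally constant (hence $\A^1$-invariant) and because $\ell$-completion and $\A^1$-localization both preserve pro-étale hyperdescent, so the computation over a w-contractible base suffices.
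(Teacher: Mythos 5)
Your reduction is the same as the paper's first move: it suffices to show $\Sigma^\infty_{S^1}\Gm$ is invertible in $\SH^{S^1}_\proet(S)_\ell^\comp$, and invertibility may be checked pro-étale locally by Lemma \ref{lem:SH-proet-descent}. But where the paper then deduces this from the corresponding \emph{étale} statement, namely the rigidity theorem \cite[Theorem 3.1]{bachmann-SHet2}, you try to prove it directly from the Kummer sequence, and that is exactly where the argument breaks. Taking $\Sigma^\infty_{S^1}$ of the fiber sequence $\mu_{\ell^n} \to \Gm \xrightarrow{\ell^n} \Gm$ does not produce a cofiber sequence of spectra (suspension spectra do not preserve fiber sequences), and the $\ell^n$-th power map on $\Gm$ does not induce multiplication by $\ell^n$ on $\Sigma^\infty_{S^1}\Gm$ -- stably it acts by the Grothendieck--Witt element $(\ell^n)_\epsilon$, not by $\ell^n$. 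So there is no formal passage from the Kummer sequence to an identification of $(\Sigma^\infty_{S^1}\Gm)_\ell^\comp$ with a twist of the shifted sphere. Moreover the object you write as ``$\Sigma\,\Z_\ell(1)$'' conflates a (shifted Eilenberg--MacLane) sheaf of abelian groups with the Tate-twisted sphere: the correct statement is $(\Sigma^\infty \Gm)_\ell^\comp \wequi \Sigma^\infty(B\Z_\ell(1))_\ell^\comp \wequi \Sigma\,\1_\ell^\comp(1)$, pro-étale locally the shifted $\ell$-complete sphere -- and \emph{that} identification is precisely the content of the rigidity theorem the paper cites (compare \cite[Theorem 6.5]{bachmann-SHet}, used in Lemma \ref{lem:pullback-fib}); it is a substantial theorem, not a two-line consequence of Kummer theory.

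The final ``transport across $\A^1$-localization'' step hides the same content a second time: comparing $\Sigma^\infty_{S^1}\Gm$ formed in the $\A^1$-local category $\SH^{S^1}_\proet(S)_\ell^\comp$ with the suspension spectrum formed in plain $\ell$-complete pro-étale sheaves of spectra is not formal (it is a Suslin-rigidity-type statement; in the paper the analogous comparison for objects with classical mod-$\ell$ homotopy sheaves is Proposition \ref{prop:strongly-finite-ff}(0), and it does not by itself apply to $\Sigma^\infty_{S^1}\Gm$ before you know the rigidity statement). So the proposal is structurally parallel to the paper's proof but replaces its one essential input with an argument that does not go through; to repair it you should simply invoke invertibility of $\Gm$ in $\SH^{S^1}_\et(S)_\ell^\comp$ from \cite[Theorem 3.1]{bachmann-SHet2} and observe that it passes to the pro-étale refinement, which is what the paper does.
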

\begin{proof}
It suffices to prove that $\Gm$ is invertible in $\SH^{S^1}_\proet(S)_\ell^\comp$.
This follows from the same assertion for $\SH^{S^1}_\et(S)_\ell^\comp$, i.e. \cite[Theorem 3.1]{bachmann-SHet2}.
\end{proof}

\begin{dfn}
We call $S$ \emph{strongly locally $\ell$-étale finite} if it is étale locally of uniformly bounded $\ell$-étale cohomological dimension \cite[Definition 2.11]{bachmann-SHet}.
\end{dfn}

\begin{dfn}
Left Kan extension supplies us with functors \[ \epsilon^*: \ShvSp(S_\proet) \to \ShvSp_\proet(\Sch_S) \quad\text{and}\quad \nu^*: \ShvSp(S_\et) \to \ShvSp_\proet(\Sch_S). \]
We denote by similar notation other variants of these functors.
(That is $\epsilon^*$ extends from the small to the big pro-étale site, whereas $\nu^*$ extends from the (small or big) étale site to the big pro-étale site.)
\end{dfn}

\begin{lem} \label{lemm:nu-proet-pSm-ff}
The functor $\pi^*: \Shv(S_\proet) \to \Shv_\proet(\Sch_S)$ is fully faithful, and similarly stabilization.
\end{lem}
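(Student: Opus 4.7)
The plan is to establish full faithfulness by verifying that $\pi^*$ is the fully faithful left adjoint of restriction; that is, writing $\pi_*$ for the restriction from big pro-étale hypersheaves to small pro-étale hypersheaves, I want to show $\pi_* \pi^* \simeq \id$. Let $i: S_\proet \hookrightarrow \Sch_S$ denote the inclusion of underlying categories. By construction, $\pi^* F$ is the hypersheafification of the presheaf-level left Kan extension $\mathrm{Lan}_i F$. Since $i$ is fully faithful as a functor of $\infty$-categories, one has $(\mathrm{Lan}_i F)(X) \simeq F(X)$ for every $X \in S_\proet$, because the indexing $\infty$-category of the colimit formula has the terminal object $\id_X$.

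The crux of the argument is then that hypersheafification on $\Sch_S$ does not alter values on objects of $S_\proet$. This is a consequence of the closure of weakly étale morphisms under composition: if $X \in S_\proet$ (so $X \to S$ is weakly étale) and $\{Y_j \to X\}$ is any pro-étale cover of $X$ in $\Sch_S$, then each composite $Y_j \to X \to S$ is weakly étale, so each $Y_j$ already lies in $S_\proet$. The same reasoning applies to pro-étale hypercovers. Hence the hypersheaf conditions at $X \in S_\proet$ in the big and small sites coincide, and since $F$ is already a hypersheaf on $S_\proet$, we obtain $(\pi^* F)(X) \simeq F(X)$, proving full faithfulness.

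For the stabilization statement, I would invoke the general principle that a fully faithful left adjoint $L \dashv R$ between presentable $\infty$-categories (equivalently, an equivalence $R L \simeq \id$) induces, after stabilization, again a fully faithful left adjoint. Concretely, applying $\Sp(\ph)$ preserves both the adjunction (since $L$ is colimit-preserving and $R$ limit-preserving) and the unit equivalence, since stabilization is a functor of presentable $\infty$-categories along colimit-preserving functors. Thus the induced $\pi^* : \ShvSp(S_\proet) \to \ShvSp_\proet(\Sch_S)$ inherits full faithfulness from the unstable version.

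The main obstacle, insofar as there is one, is the bookkeeping around hypercompleteness: one must check that hypercovers (not just covers) of $X \in S_\proet$ taken in $\Sch_S$ can be refined by hypercovers in $S_\proet$. But the same compositional closure of weakly étale morphisms used above handles this level-by-level, so no essentially new ingredient is needed.
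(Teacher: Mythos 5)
Your overall strategy is the same as the paper's: reduce to showing the unit $\id \to \pi_*\pi^*$ is an equivalence, using that the presheaf-level Kan extension along the fully faithful inclusion does not change values on $S_\proet$, and that weakly étale morphisms compose. The stabilization step is also fine. The problem is the step you call the crux. From ``every pro-étale (hyper)cover of an object $X \in S_\proet$ taken in $\Sch_S$ already lies in $S_\proet$'' you conclude that big-site hypersheafification does not alter sections over $X$, ``since the hypersheaf conditions coincide.'' That inference is not valid as stated: the hypercomplete sheafification $L$ is a localization of $\PSh(\Sch_S)$ at the class of hyper-local equivalences, which is generated by hypercovers $|X_\bullet| \to X$ of \emph{arbitrary} $S$-schemes $X$, not just of weakly étale ones. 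The value $(L P)(X)$ for $X \in S_\proet$ is a mapping space in the localized category and is not formally determined by the descent conditions imposed at $X$ alone; localizing to repair descent failures at big-site objects can, a priori, change sections over small-site objects. In other words, what you actually need is that restriction of presheaves from $\Sch_S$ to $S_\proet$ carries hyper-local equivalences to hyper-local equivalences (so that it commutes with hypersheafification), and your argument only addresses the generating equivalences sitting over objects of $S_\proet$.

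This is exactly where the paper does more work: for an arbitrary hypercover $X_\bullet \to X$ in $\Sch_S$, it checks that the restricted map is a hyper-pro-étale equivalence on $S_\proet$ by base-changing along any map $Y \to X$ with $Y$ weakly étale over $S$ --- the base change $X_\bullet \times_X Y \to Y$ is again a hypercover with terms weakly étale over $Y$, hence over $S$ --- and then invoking universality of colimits in the hypercomplete small topos. (An alternative repair is to argue on homotopy sheaves: hyper-local equivalences are detected on homotopy sheaves, and sheafification of discrete presheaves commutes with restriction to $S_\proet$ because covers of small objects and their fiber products stay small; but some argument of this kind must be supplied.) So the missing ingredient is the treatment of hypercovers of objects of $\Sch_S$ that are not in $S_\proet$; your composition-closure observation is the right input, but by itself it only handles the easy half of the statement.
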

\begin{proof}
It is enough to prove the unstable statement, and since the analog for presheaves holds, it suffices to show that the presheaf level functor $\pi_*^p$ sends hypercoverings to hyper-pro-étale equivalences.
Thus let $X_\bullet \to X \in \Sch_S$ be a pro-étale hypercovering.
To prove that $\pi_*^p(|X_\bullet|) \wequi |\pi_*^p(X_\bullet)| \to \pi_*^p(X)$ is a hyper-pro-étale equivalence, by universality of colimits it suffices to prove that for $Y \to S$ weakly étale and $Y \to X$ any map, the base change $|X_\bullet \times_X Y| \to Y$ is a (hyper)pro-étale equivalence.
This follows from the fact that this base change is a pro-étale hypercovering.
\end{proof}

\begin{prop} \label{prop:strongly-finite-ff}
Let $1/\ell \in S$.
\begin{enumerate}
\setcounter{enumi}{-1}
\item If $E \in \ShvSp(S_\proet)_\ell^\comp$ has classical mod $\ell$ homotopy sheaves, then $\epsilon^*(E) \in \ShvSp_\proet(\Sch_S)_\ell^\comp$ is $\A^1$-invariant.
\item $\epsilon^*: \ShvSp(S_\proet)_\ell^\comp \to \SH_\proet(S)_\ell^\comp$ is fully faithful on objects with classical mod $\ell$ homotopy sheaves\NB{is this assumption necessary?}.
\end{enumerate}
Now assume that in addition $S$ strongly locally $\ell$-étale finite.
\begin{enumerate}
\setcounter{enumi}{1}
\item $\nu^*: \ShvSp(S_\et)_\ell^\comp \to \SH_\proet(\Sch_S)_\ell^\comp$ is fully faithful.
\item $\nu^*: \SH_\et(S)_\ell^\comp \to \SH_\proet(S)_\ell^\comp$ is fully faithful (and similarly for $S^1$-spectra).
\end{enumerate}
\end{prop}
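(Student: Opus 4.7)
The plan is to prove the four assertions in order, with (0) as the key input and (2)--(3) as essentially formal consequences.

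For (0), I would carry out a Postnikov reduction. Since $\A^1$-invariance among $\ell$-complete objects is stable under limits, and $E \wequi \lim_n E/\ell^n$, it suffices to handle $E/\ell^n$; by the classicality assumption on the mod $\ell$ homotopy sheaves, each $E/\ell^n$ is built by extensions from shifted Eilenberg-Mac Lane sheaves on classical $\ell$-torsion abelian sheaves $\scr F$ on $S_\proet$. The problem thus reduces to showing that pro-étale cohomology with classical $\ell$-torsion coefficients is $\A^1$-invariant on $\Sch_S$. By \cite{bhatt2013pro} such pro-étale cohomology agrees with étale cohomology; the latter is $\A^1$-invariant because it can be computed pro-étale locally on a cover by weakly contractible (hence strictly henselian) affines, where the mod $\ell$ rigidity theorem gives $\A^1$-invariance immediately (this also bypasses the usual smoothness hypothesis in Suslin--Voevodsky rigidity).

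For (1), I would use the adjunction $\epsilon^* \dashv \epsilon_*$, where $\epsilon_*$ is restriction to $S_\proet \subset \Sch_S$. For $E, F \in \ShvSp(S_\proet)_\ell^\comp$ with classical mod $\ell$ homotopy sheaves, (0) says $\epsilon^*F$ is already $\A^1$-invariant, so
\[ \Map_{\SH_\proet(S)_\ell^\comp}(\epsilon^*E, \epsilon^*F) \wequi \Map_{\ShvSp_\proet(\Sch_S)_\ell^\comp}(\epsilon^*E, \epsilon^*F) \wequi \Map_{\ShvSp(S_\proet)_\ell^\comp}(E, \epsilon_*\epsilon^*F). \]
It then remains to check that the unit $F \to \epsilon_*\epsilon^*F$ is an equivalence. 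At the presheaf level this is immediate from the full faithfulness of the inclusion $S_\proet \hookrightarrow \Sch_S$, and pro-étale hypersheafification on the big site restricts to pro-étale hypersheafification on the small one since pro-étale covers in $S_\proet$ are also pro-étale covers in $\Sch_S$. Combined with Lemma \ref{lemm:nu-proet-pSm-ff}, the sheafification step introduces no new sections on $S_\proet$-objects.

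For (2), I would factor $\nu^*$ as the composite $\ShvSp(S_\et)_\ell^\comp \to \ShvSp(S_\proet)_\ell^\comp \xrightarrow{\epsilon^*} \SH_\proet(\Sch_S)_\ell^\comp$. The strongly locally $\ell$-étale finite hypothesis provides bounded $\ell$-cohomological dimension, which makes the first functor fully faithful and ensures that its image lands in objects with classical mod $\ell$ homotopy sheaves (pulled back from étale sheaves of $\F_\ell$-modules, which are automatically classical). The full faithfulness of $\epsilon^*$ on this subcategory is then furnished by (1). For (3), the $\P^1$-stable version reduces to the $S^1$-stable version by Lemma \ref{lem:SHS-SH-proet} together with its étale analog \cite[Theorem 3.1]{bachmann-SHet2}: both ensure that $\Gm$ is already invertible after $\ell$-completion at the $S^1$-stable level, so the further $\P^1$-stabilization is trivial on $\ell$-complete objects.

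The main obstacle is (0): $\A^1$-invariance must be established on arbitrary objects of $\Sch_S$, not only smooth ones. This is where the pro-étale topology (as opposed to the étale topology) really pays off, since we can cover any $S$-scheme by strictly henselian affines for which mod $\ell$ rigidity is entirely elementary.
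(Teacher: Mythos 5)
Your handling of (1)--(3) is essentially the paper's own argument (factor through $\ShvSp(S_\proet)_\ell^\comp$, use the small-to-big full faithfulness of Lemma \ref{lemm:nu-proet-pSm-ff} resp.\ the étale-to-pro-étale full faithfulness under the finiteness hypothesis, and reduce $\P^1$-stable to $S^1$-stable statements via Lemma \ref{lem:SHS-SH-proet}), so the weight of the proposal rests on (0) — and there is a genuine gap there. After reducing to Eilenberg--Mac Lane sheaves you must prove that mod $\ell$ étale cohomology is $\A^1$-invariant on all of $\Sch_S$. Your shortcut — compute pro-étale locally on a cover by w-contractible affines $W$ and claim that ``the mod $\ell$ rigidity theorem gives $\A^1$-invariance immediately'' over the strictly henselian local rings of $W$ — does not work. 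Rigidity for henselian pairs (Gabber) compares $H^*_\et(\Spec A,\Z/\ell)$ with the cohomology of the closed fibre of $\Spec A$ itself; it says nothing about schemes \emph{over} $A$ such as $\A^1_A$, which is not henselian along anything relevant, and Suslin--Voevodsky rigidity is not the relevant statement either. The assertion you actually need, namely that $\A^1$ over a strictly henselian local ring with $\ell$ invertible is acyclic for $\Z/\ell$-coefficients, is precisely the local acyclicity/smooth base change theorem of SGA~4 XV — a genuine theorem, not an elementary consequence of rigidity — and it is exactly the input the paper cites (\cite[Cor.\ XV.2.2]{sga4}, after using full faithfulness of $\nu^*$ on bounded above spectra \cite[Lemma 5.1.2]{bhatt2013pro}). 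Note moreover that homotopy invariance of mod $\ell$ étale cohomology already holds for arbitrary qcqs schemes with $\ell$ invertible, with no smoothness hypothesis, so the w-contractible detour is unnecessary: the ``obstacle'' you identify at the end is not there, and the correct fix is simply to quote the theorem.

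Two smaller points. First, assembling $E/\ell^n$ ``by extensions from shifted Eilenberg--Mac Lane sheaves'' is not automatic in the unbounded-below direction: the Whitehead tower is a colimit, and $\A^1$-invariance is not obviously preserved by the relevant colimits without cohomological-dimension control (which is not assumed in (0)--(1)); the paper avoids this by first passing to bounded above spectra via Postnikov convergence and then invoking \cite[Lemma 2.6]{bachmann-SHet} to reduce to a single degree — you should do something equivalent. Second, in (1) your first displayed equivalence, identifying mapping spectra in $\SH_\proet(S)_\ell^\comp$ with those in $\ShvSp_\proet(\Sch_S)_\ell^\comp$ between $\A^1$-invariant objects, already uses the $S^1$-versus-$\P^1$ comparison of Lemma \ref{lem:SHS-SH-proet}, which you only invoke later in (3); this is easily repaired but should be said.
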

\begin{proof}
(0) Since $\epsilon^*$ commutes with truncation, Postnikov towers converge in $\ShvSp_\proet(\Sch_S)$, and $\A^1$-local objects are stable under limits, it suffices to prove this for bounded above spectra.
Using \cite[Lemma 2.6]{bachmann-SHet} this reduces to spectra concentrated in one degree.
Thus we need to prove that for $X \in \Sch_S$ and $F \in \Ab(S_\et)$ a $\Z/\ell$-module we have $\map(X, \nu^*(F)) \wequi \map(X \times \A^1, \nu^*(F))$.
Since $\nu^*$ is fully faithful on bounded above spectra over any base \cite[Lemma 5.1.2]{bhatt2013pro}, we have reduced to proving that étale cohomology with mod $\ell$ coefficients is homotopy invariant, which is well-known \cite[Corollaire XV.2.2]{sga4}.

(1) By Lemma \ref{lem:SHS-SH-proet} it suffices to prove the statement with target $S^1$-spectra.
Via Lemma \ref{lemm:nu-proet-pSm-ff}, we reduce to (0).

(2) The functor factors as $\ShvSp(S_\et)_\ell^\comp \to \ShvSp(S_\proet)_\ell^\comp \to \SH_\proet(\Sch_S)_\ell^\comp$; the first functor is fully faithful by \cite[Lemma 3.4]{bachmann-SHet}, and the second one is fully faithful by Lemma \ref{lemm:nu-proet-pSm-ff}.

(3) Since the source identifies with $\ShvSp(S_\et)_\ell^\comp$ \cite[Theorem 3.1]{bachmann-SHet2}, this functor is equivalent to the composite $\ShvSp(S_\et)_\ell^\comp \to \ShvSp(S_\proet)_\ell^\comp \to \SH_\proet(S)_\ell^\comp$.
We just saw that the first functor is fully faithful, and the second is fully faithful by (1).
The result follows.
\end{proof}

\subsection{The pro-$G$ site} \label{subsec:pro-G-site}
Let $G$ be a profinite group.
We obtain the category (see \S\ref{subsub:SHet-BG}) \[ \Shv_\et(BG) := \Shv(\Fin_G). \]

Denote by $\pFin_G$ the subcategory of $\Pro(\Fin_G)$ consisting of objects with suitably bounded cardinality.
We give this the topology of effective epimorphisms.
This way we obtain a category \[ \Shv_\proet(BG) := \Shv(\pFin_G). \]

\begin{rmk} \label{rmk:solidification-G}
By \cite[Proposition 3.3.9]{barwick2019pyknotic}, our category $\Shv_\proet(BG)$ coincides with the \emph{solidification} of the $\infty$-topos $\Shv_\et(BG)$.
\end{rmk}

\begin{lem}
The category $\Shv_\proet(BG)$ is generated by coherent objects of homotopy dimension $0$, provided that the implicit cardinality bound is sufficiently large.
\end{lem}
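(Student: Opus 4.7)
The plan is to imitate the argument of \cite[Theorem 1.5]{bhatt2013pro}, replacing schemes and their pro-étale covers by profinite $G$-sets. The explicit generators I propose are the \emph{w-contractible $G$-sets} of the form $W = G \times S$, with $G$ acting on the first factor by left translation and $S$ an extremally disconnected profinite set of cardinality below the implicit bound. These are visibly objects of $\pFin_G$ and represent coherent objects of $\Shv_\proet(BG)$ (here ``coherent'' reduces to quasi-compactness, which follows from representability plus the fact that the pro-étale topology is generated by finite families of maps; cf.\ Remark \ref{rmk:solidification-G} and \cite[Proposition 3.3.9]{barwick2019pyknotic}).

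First I would verify generation. Given any $X \in \pFin_G$, let $|X|$ denote its underlying profinite set; choose an extremally disconnected cover $\pi: S \twoheadrightarrow |X|$, e.g.\ the Stone--Čech compactification of the discrete set underlying $|X|$. Then $(g,s) \mapsto g\cdot \pi(s)$ defines a $G$-equivariant surjection $G \times S \twoheadrightarrow X$ which is an effective epimorphism in $\pFin_G$. Choosing the implicit cardinality bound large enough that these $S$ are admissible whenever $X$ is, we obtain a family of w-contractible objects covering every representable, hence generating $\Shv_\proet(BG)$ under colimits.

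Next I would establish that $W = G \times S$ has homotopy dimension $\le 0$. Equivalently, every effective epimorphism $Y \twoheadrightarrow W$ in $\Shv_\proet(BG)$ should split. Since the topology is generated by the covers in $\pFin_G$ themselves, any such effective epi is refined by a cover $Y' \to W$ with $Y' \in \pFin_G$, so it suffices to split these. Pull back along the inclusion $\{e\} \times S \hookrightarrow W$ to obtain a surjection of profinite sets $Y'|_{\{e\}\times S} \twoheadrightarrow S$; projectivity of the extremally disconnected $S$ yields a section $\sigma: S \to Y'|_{\{e\}\times S}$, and then $(g,s) \mapsto g\cdot \sigma(s)$ extends $\sigma$ to a $G$-equivariant section $W \to Y'$ as desired.

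The main obstacle in this plan is the last step: specifically, the reduction from a general effective epimorphism in the $\infty$-topos $\Shv_\proet(BG)$ to an honest surjection of pro-finite $G$-sets. This hinges on the facts that (i) hypercoverings by representables suffice to detect effective epimorphisms in a hypercomplete $\infty$-topos of sheaves on a site, and (ii) the pro-étale topology on $\pFin_G$ is defined in terms of actual surjections. Both are standard, and the rest of the argument is purely formal; the remark that $\Shv_\proet(BG)$ is the solidification of $\Shv_\et(BG)$ also offers an alternative route via \cite{barwick2019pyknotic}, should a more abstract proof be preferred.
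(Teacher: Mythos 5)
Your argument is correct, but it takes a genuinely different route from the paper. The paper's proof is two sentences: using the fact that effective epimorphisms in $\Pro(\Fin_G)$ are cofiltered limits of effective epimorphisms in $\Fin_G$ \cite[Proposition 6.1.15(3)]{lurie2018ultracategories}, it identifies the effective-epimorphism topology on $\pFin_G$ with the ``transfinite topology'' of \cite{kerz2015transfinite} and then quotes \cite[Theorem 4.2]{kerz2015transfinite}, which asserts in general that such topoi are generated by coherent objects of homotopy dimension $0$. You instead exhibit the generators explicitly as the free objects $G\times S$ with $S$ extremally disconnected, proving generation via Stone--\v{C}ech (or Gleason) covers of the underlying profinite set and proving homotopy dimension $\le 0$ by splitting a cover over $\{e\}\times S$ using projectivity of $S$ and then extending the section equivariantly --- exactly the Bhatt--Scholze pattern for weakly contractible affines, and the way $B_\proet G$ is usually handled in the condensed/pyknotic literature. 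Your route buys an explicit, reusable description of a generating family of w-contractibles; its cost is a few standard verifications you gloss: that $G\times S$ with the translation action really lies in $\pFin_G$ (write it as $\lim_{N,i}(G/N)\times S_i$ over open normal subgroups $N$ and finite quotients $S_i$ of $S$); that a $G$-equivariant surjection of such profinite $G$-sets is a cover for the topology as defined via effective epimorphisms in $\Pro(\Fin_G)$ --- this is precisely where the Lurie citation used by the paper re-enters; and that the implicit cardinality bound must be closed under $X\mapsto\beta(X^{\delta})$ (e.g.\ a strong limit cardinal), which is the content of ``sufficiently large''. The reduction you single out as the main obstacle is indeed unproblematic: an effective epimorphism onto a representable is an epimorphism on $\pi_0$-sheaves, hence admits local sections over a covering family, and a finite jointly surjective family can be replaced by the single map out of its coproduct in $\pFin_G$, which your explicit splitting then handles.
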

\begin{proof}
Since effective epimorphisms in $\Pro(\Fin_G)$ are cofiltered limits of effective epimorphisms in $\Fin_G$ \cite[Proposition 6.1.15(3)]{lurie2018ultracategories}, the effective epimorphism topology on $\pFin_G$ coincides with what is called the \emph{transfinite topology} in \cite[\S5]{kerz2015transfinite}.
Hence the result is a special case of \cite[Theorem 4.2]{kerz2015transfinite}.
\end{proof}

\begin{exm} \label{ex:pi*-pro}
Let $S$ be a scheme, $\bar s \in S$ a geometric point and $\pi_1^\et(S,\bar s) \to G$ a continuous homomorphism.
There is a canonical functor $\pi^*: \Fin_G \to \Et_S$ inducing a geometric morphism $\pi_G^*: \Shv_\et(BG) \to \Shv(S_\et)$.
Taking solidifications (Remark \ref{rmk:solidification-G} and \cite[Example 3.3.11]{barwick2019pyknotic}) we obtain an induced functor \[ \pi^*: \Shv_\proet(BG) \to \Shv(S_\proet). \]
\end{exm}

The ``stalk'' functor is particularly well-behaved in the pro-$G$ setting:
\begin{lem} \label{lem:pro-stalk}
The functor $\varpi^*: \Shv_\proet(BG) \to \Shv_\proet(B\{e\})$ is conservative and admits a left adjoint.
Similarly for spectra.
\end{lem}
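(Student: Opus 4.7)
The plan is to realize $\varpi^*$ as base change along an effective epimorphism in the $\infty$-topos $\Shv_\proet(BG)$, from which both assertions follow immediately from standard topos theory. Let $G \in \pFin_G$ denote $G$ regarded as a profinite $G$-set under left translation. The key observation is an equivalence of sites
\[ \pFin_G/G \wequi \pFin_{\{e\}}, \qquad (Y \to G) \mapsto Y_e := Y \times_G \{e\}, \]
with inverse $Z \mapsto (G \times Z \to G)$ (first projection). This is a bijection on objects because $G$ acts freely on itself, so any equivariant map to $G$ is determined by its fiber over the identity; and the topologies match since an equivariant map over $G$ is an effective epimorphism iff it is so on the $e$-fiber.

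Passing to hypercomplete sheaves and applying the slice formula for $\infty$-topoi yields a canonical equivalence $\Shv_\proet(B\{e\}) \wequi \Shv_\proet(BG)/G$, under which $\varpi^*$ is identified with the base-change functor $G \times (\ph): \Shv_\proet(BG) \to \Shv_\proet(BG)/G$. Since $G \to *$ is an effective epimorphism in $\pFin_G$ (being a surjection), it remains one in $\Shv_\proet(BG)$. By standard $\infty$-topos theory, base change along an effective epimorphism is conservative and automatically admits a left adjoint, namely the forgetful functor $(F \to G) \mapsto F$ given by postcomposition with $G \to *$. This furnishes both the conservativity and the left adjoint for $\varpi^*$.

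The spectral case follows by stabilization: the stabilized functor $\varpi^*: \ShvSp_\proet(BG) \to \ShvSp_\proet(B\{e\})$ inherits preservation of all limits and colimits (the unstable $\varpi^*$ is an inverse image of a geometric morphism that additionally admits a left adjoint, hence preserves both), and therefore admits a left adjoint; conservativity transfers since a map of sheaves of spectra is an equivalence iff it is so on each homotopy sheaf, where the unstable case applies. The only genuinely nontrivial check is the site equivalence $\pFin_G/G \wequi \pFin_{\{e\}}$ together with compatibility of topologies, and this is essentially tautological once one unwinds the definitions.
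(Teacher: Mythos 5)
Your argument is correct and is exactly the paper's proof: the paper also views $G \in \pFin_G$, notes $G \to *$ is a covering, and deduces everything from the identification $(\pFin_G)_{/G} \wequi \pFin_{\{e\}}$, so that $\varpi^*$ becomes base change to a slice along an effective epimorphism. You have simply spelled out the details (the slice formula, descent/conservativity, and the stabilization step) that the paper leaves implicit.
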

\begin{proof}
We can view $G$ as an object of $\pFin_G$ and then $G \to *$ is a covering.
All results follow from the identification $(\pFin_G)_{/G} \wequi \pFin_{\{e\}}$.
\end{proof}

\begin{lem} \label{lemm:spc-condensed-ff}
The canonical functor $\Spc \to \Shv_\proet(B\{e\})$ is fully faithful.
The same is true for $\ell$-completed and/or stabilized versions.
\end{lem}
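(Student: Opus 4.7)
The plan is to identify the canonical functor $\Spc \to \Shv_\proet(B\{e\})$ as the inverse image $p^*$ of the terminal geometric morphism $p : \Shv_\proet(B\{e\}) \to \Spc$. Its right adjoint $p_*$ is evaluation at the terminal object $* \in \pFin$. To establish full faithfulness, it then suffices to show that the unit $X \to p_* p^* X$ is an equivalence for every $X \in \Spc$, i.e.\ that $(p^* X)(*) \simeq X$.

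The key observation is that the one-point profinite set $* \in \pFin$ is w-contractible: it is trivially quasi-compact, and any effective epimorphism $T \to *$ is simply a surjection from a nonempty profinite set, which admits a section (a point of $T$), so $*$ has homotopy dimension $\le 0$. Hence I can apply Proposition~\ref{prop:htpy-dim-0-PSigma}(1) to compute
\[
(a\scr F)(*) \simeq \colim_{*\simeq W_1 \sqcup \dots \sqcup W_n} \scr F(W_1) \times \dots \times \scr F(W_n)
\]
for any presheaf $\scr F$ on $\pFin$. Since $*$ admits only the trivial finite coproduct decomposition, this colimit collapses to $\scr F(*)$. Applied to the constant presheaf with value $X$, this gives $(p^* X)(*) \simeq X$, proving the unstable claim.

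For the stabilized variant $\ShvSp \to \ShvSp_\proet(B\{e\})$, the very same reasoning goes through: by Corollary~\ref{cor:htpy-dim-0-conn}(3), evaluation at the w-contractible object $*$ computes global sections at the spectral level, and the sheafification of the constant spectral presheaf on $X$ returns $X$ upon evaluation at $*$ by the same collapse of the colimit. For the $\ell$-completed versions, I use that the right adjoint (evaluation at $*$) preserves $\ell$-complete objects by Proposition~\ref{prop:completion-permanence}, so that restricting the fully faithful functor $p^*$ to $\ell$-complete subcategories preserves full faithfulness.

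The only nontrivial step is verifying w-contractibility of $*$ and invoking Proposition~\ref{prop:htpy-dim-0-PSigma}(1); once that is in hand, both the spectral and the $\ell$-complete versions are essentially formal consequences of the adjunction $(p^*, p_*)$ and the preservation properties already established in the preliminaries.
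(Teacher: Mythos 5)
Your route is genuinely different from the paper's. The paper identifies $\Spc\wequi\PSh_\Sigma(\Fin)$ and $\Shv_\proet(B\{e\})\wequi\PSh_\Sigma(\scr W)$ (with $\scr W$ the w-contractibles, via Proposition \ref{prop:htpy-dim-0-PSigma}(2)), views the canonical functor as the left Kan extension along $\Fin\subset\scr W$, and checks the unit on representables plus generation under sifted colimits. You instead use the adjunction with global sections $p_*=(\ph)(*)$ and compute $(p^*X)(*)$ directly from Proposition \ref{prop:htpy-dim-0-PSigma}(1) at the w-contractible object $*$; this only needs w-contractibility of the single object $*$ rather than the full identification of the topos, and it works for the unstable statement. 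Two small imprecisions: to verify homotopy dimension $\le 0$ you must treat effective epimorphisms $\scr F\to *$ from \emph{arbitrary} sheaves, not just representable surjections $T\to *$ (the fix is one line: such an epi admits a section over some nonempty profinite $T$, and composing with a point of $T$ gives a global section); and "only the trivial decomposition" tacitly discards empty summands, which is the intended reading of Proposition \ref{prop:htpy-dim-0-PSigma}(1). For the stable case you argue by analogy; Proposition \ref{prop:htpy-dim-0-PSigma}(1) is stated for presheaves of spaces, so strictly you should either prove the spectral analogue or reduce to it via Postnikov towers and Corollary \ref{cor:htpy-dim-0-conn} (the paper is equally terse here, so this is a sketching issue rather than an error).

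The genuine gap is in your treatment of the $\ell$-completed versions. The completed functor is not the restriction of $p^*$ to $\ell$-complete subcategories but the induced functor $L_\ell\circ p^*$ on completions, and $p^*$ does \emph{not} in general preserve $\ell$-complete objects: $(p^*X)(T)$ is a filtered colimit of finite powers of $X$ for $T\in\pFin$, and filtered colimits destroy completeness. Consequently "$p^*$ is fully faithful and $p_*$ preserves complete objects (Proposition \ref{prop:completion-permanence})" does not yield full faithfulness of the completed functor; what you actually need is that for $\ell$-complete $X$ the map $X\to p_*\bigl(L_\ell p^*X\bigr)$ is an equivalence, i.e.\ $(L_\ell p^*X)(*)\wequi L_\ell X\wequi X$. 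This is exactly where the paper inputs that unstable $\ell$-completion is computed sectionwise on ($\Sigma$-presheaves on) w-contractibles, via Proposition \ref{prop:p-comp-PSigma} (equivalently Corollary \ref{cor:htpy-dim-0-compl}(1)); citing that result repairs your argument, and in the stable completed case the analogous point is easier since evaluation at $*$ commutes with the limit defining $(\ph)_\ell^\comp$.
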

\begin{proof}
Let $\scr W \subset \pFin$ denote the full subcategory on $w$-contractible objects.
Then $\Fin \subset \scr W$ and $\Spc \to \Shv_\proet(B\{e\})$ identifies with $\PSh_\Sigma$ applied to this inclusion.
Write \[ c: \Spc \wequi \PSh_\Sigma(\Fin) \adj \PSh_\Sigma(\scr W) \wequi \Shv_\proet(B\{e\}): c^* \] for the adjunction.
Then $\id \to c^*c$ is an isomorphism on $\Fin$ and hence on all of $\Spc$, since $c^*$ preserves sifted colimits and $\Spc$ is generated under sifted colimits by $\Fin$.
This is the first fully faithfulness statement.
For the $\ell$-complete version, it suffices to prove that if $X \in \Spc$ is $\ell$-complete then $X \wequi c^* L_\ell c X$.
This follows from Proposition \ref{prop:p-comp-PSigma}.
The argument in the stable case is similar.
\end{proof}

We record for future reference the following two results.
\begin{lem} \label{lemm:BG-proet-finite-vcd}
Suppose that $G$ has finite virtual $\ell$-cohomological dimension.
Then \[ \ShvSp(B_\et G)_\ell^\comp \to \ShvSp(B_\proet G)_\ell^\comp \] is fully faithful, with essential image those objects which have classical mod $\ell$ homotopy sheaves.
\end{lem}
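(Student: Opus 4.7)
The plan is to mimic the scheme-level argument of Proposition \ref{prop:strongly-finite-ff}(2), with the comparison $\nu^*\colon \ShvSp(S_\et)_\ell^\comp \to \ShvSp(S_\proet)_\ell^\comp$ replaced by its $B_\et G$--analogue. Since the functor in question is induced by the geometric morphism $\nu\colon B_\proet G \to B_\et G$ obtained by left Kan extending $\Fin_G \hookrightarrow \pFin_G$ along the Yoneda embeddings, full faithfulness is equivalent to the unit $E \to \nu_*\nu^*E$ being an equivalence for every $E \in \ShvSp(B_\et G)_\ell^\comp$, and the essential image statement amounts to the counit $\nu^*\nu_*F \to F$ being an equivalence precisely when the mod $\ell$ homotopy sheaves of $F$ are classical (i.e.\ in the essential image of $\nu^*$ on discrete sheaves).

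First I would reduce both statements to the $1$--truncated, mod $\ell$ case. On the pro-étale side $B_\proet G$ is locally of homotopy dimension $\le 0$, so Postnikov towers always converge (Corollary \ref{cor:htpy-dim-0-conn}). On the étale side, the finite virtual $\ell$-cohomological dimension assumption on $G$ gives, after $\ell$-completion, a uniform bound on the cohomological dimension of generating objects; hence $\ell$-complete Postnikov towers also converge in $\ShvSp(B_\et G)_\ell^\comp$, in the style of \cite[Lemma 2.6]{bachmann-SHet}. Both $\nu^*$ and $\nu_*$ commute with truncation (the former because it is a left adjoint between left-exact localizations in the relevant range, the latter because truncation can be computed sectionwise on $w$-contractible covers), so it suffices to check the unit and counit after reduction mod $\ell$ and then on each Postnikov layer. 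This reduces the whole problem to showing: for every discrete $\F_\ell[G]$--module $M$ (viewed as a classical sheaf on $B_\et G$) the unit $M \to \nu_*\nu^*M$ is an equivalence in $\ShvSp(B_\et G)_\ell^\comp$, and dually the counit is an equivalence for $\F_\ell$--module sheaves on $B_\proet G$ that happen to lie in the essential image of classical sheaves.

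The key technical input is then a cohomology comparison: for a discrete $\F_\ell[G]$--module $M$ and any open $K \subseteq G$, continuous $K$--cohomology $H^*_{\mathrm{cts}}(K,M)$ computed via $\Fin_G$ agrees with $H^*(\pFin_G/(G/K), M)$ computed via the pro-$G$ site. This is the exact analogue of \cite[Lemma 3.4]{bachmann-SHet} / \cite[\S5.1]{bhatt2013pro}, and in the group setting it can be extracted directly from the identification (Remark \ref{rmk:solidification-G}) of $\Shv_\proet(BG)$ as the solidification of $\Shv_\et(BG)$, combined with the fact that any pro-étale hypercover of $*$ can be refined by a cofiltered system of surjections in $\Fin_G$ (Lemma \ref{lem:pro-stalk}). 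Granting this comparison, the unit on classical $\F_\ell$--sheaves is an equivalence objectwise on $\Fin_G$, which is exactly what we need.

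The hardest step, and the one to be careful with, is the essential image statement. Given $F \in \ShvSp(B_\proet G)_\ell^\comp$ with classical mod $\ell$ homotopy sheaves, one must verify that $\nu_*F$ remains $\ell$-complete in $\ShvSp(B_\et G)_\ell^\comp$ and that $\nu^*\nu_*F \to F$ is an equivalence. For the former one uses that $\nu_*$, having an exact left adjoint, preserves $\ell$-completeness. For the latter, the Postnikov reduction combined with the cohomology comparison reduces us to the classical sheaf case, where the counit is an iso by full faithfulness of $\nu^*$ on discrete sheaves. The subtle point—where I would spend most of the care—is controlling the interchange of $\nu^*$ with the (inverse) limit defining $\ell$-completion and with the limit defining Postnikov convergence; both require the finite virtual $\ell$-cohomological dimension hypothesis to kick in uniformly on a generating family of opens of $G$.
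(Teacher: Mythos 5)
Your proposal follows essentially the same route as the paper, whose entire proof is the instruction to repeat the arguments of \cite[Lemma 3.4, Proposition 3.5]{bachmann-SHet}: mod-$\ell$ reduction, Postnikov convergence controlled by the (virtual) $\ell$-cohomological dimension bound on a generating family of opens $G/K$, and the Bhatt--Scholze-style comparison of étale and pro-étale cohomology of discrete $\F_\ell[G]$-modules. One small caveat: your claim that $\nu_*$ commutes with truncation (justified via $w$-contractible covers, which do not lie in $\Fin_G$) is false for pushforwards in general and is also unnecessary --- the standard reduction to Postnikov layers instead uses $t$-exactness of $\nu^*$ together with connectivity estimates on the fibers of the truncation maps, which is exactly where the finite $\vcd_\ell$ hypothesis enters.
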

\begin{proof}
Repeat the proofs of \cite[Lemma 3.4, Proposition 3.5]{bachmann-SHet}.
\end{proof}
\begin{lem} \label{lemm:classical-proet-locally}
Let $F \in \Shv_\proet(BG)_{\le 0}$.
If $F$ is classical pro-étale locally (e.g. $\varpi^*(F)$ is classical) then so is $F$.
\end{lem}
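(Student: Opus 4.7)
The plan is to argue via descent along the covering $G \twoheadrightarrow *$ in $\pFin_G$, which produces the geometric morphism $\varpi: \Shv_\proet(B\{e\}) \to \Shv_\proet(BG)$. By Lemma \ref{lem:pro-stalk}, $\varpi^*$ is conservative and admits a left adjoint, so hyperdescent identifies $F \in \Shv_\proet(BG)_{\le 0}$ with its pullback $X := \varpi^* F \in \Shv_\proet(B\{e\})_{\le 0}$ equipped with a $G$-equivariant structure extracted from the \v{C}ech complex of $\varpi$. Analogously, $\Shv_\et(BG)_{\le 0}$ identifies with classical sets equipped with a continuous $G$-action for the profinite topology on $G$. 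It will thus suffice to show: if $X$ is classical, then the descent $G$-action on $X$ is automatically continuous.

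Assume $X$ is classical, so by Lemma \ref{lemm:spc-condensed-ff} it corresponds to a genuine set---again denoted $X$---via the fully faithful embedding $c: \Spc \hookrightarrow \Shv_\proet(B\{e\})$. The key observation is: for any profinite set $W$ (viewed as a representable pro-étale sheaf) and any set $Y$ (viewed via $c$), morphisms $W \to cY$ in $\Shv_\proet(B\{e\})$ are precisely the locally constant functions $W \to Y^{\mathrm{disc}}$. Indeed, writing $Y = \colim_{Y_0} Y_0$ as the filtered colimit of its finite subsets, using that $c$ preserves colimits (being the left adjoint in Lemma \ref{lemm:spc-condensed-ff}), and noting the quasi-compactness of $W$, we obtain $(cY)(W) = \colim_{Y_0} \Hom_{\pFin}(W, Y_0)$; and any continuous map from a profinite set to a finite discrete set is necessarily locally constant.

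Applying this to the orbit map $\alpha_x: G \to X$, $g \mapsto g \cdot x$, obtained for each $x \in X$ from the descent action morphism $G \times X \to X$ by restriction along $\{x\} \hookrightarrow X$, we conclude $\alpha_x$ is locally constant. Hence the stabilizer $G_x = \alpha_x^{-1}(x)$ is open in $G$. Since this holds for every $x$, the $G$-action on $X$ is continuous, so $X$ with this action represents a classical étale sheaf on $BG$, showing $F$ is classical. The full statement (``classical pro-étale locally'') reduces to the $\varpi^*$-case, since any pro-étale cover remains a cover after further pullback along $G \to *$, and classicality of a $0$-truncated object in $\Shv_\proet(B\{e\})$ can be checked after further pro-étale pullback.

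The main obstacle is setting up the descent correspondence rigorously in both the pro-étale and étale settings---in particular, verifying that the subcategory of classical objects on each side matches under the equivalence of descent categories. Once these structural identifications are in place, the continuity of the $G$-action follows from the brief orbit-map analysis above.
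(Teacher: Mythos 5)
Your route (descent along the cover $G \to *$, identifying $F$ with the condensed set $\varpi^*F$ equipped with a $G$-action, and then showing the orbit maps $G \to cX$ are locally constant so that all stabilizers are open) is genuinely different from the paper's proof, which simply adapts Bhatt--Scholze, Lemma 5.1.4: characterize classical objects by the condition $F(\lim_i W_i) \wequi \colim_i F(W_i)$ for $W = \lim_i W_i \in \pFin_G$, and show that this condition can be checked on a pro-étale cover. Your argument, suitably completed, does handle the case actually invoked elsewhere in the paper, namely the hypothesis ``$\varpi^*(F)$ is classical''.

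However, there is a genuine gap in your last paragraph: the reduction of the stated hypothesis (``$F$ is classical pro-étale locally'', i.e.\ classical after restriction to some cover $U \to *$ in $\pFin_G$) to the case ``$\varpi^*F$ classical'' is circular. After pulling back along $G \to *$ and shearing, what you know is only that the condensed set $\varpi^*F$ becomes classical after restriction along the cover $|U| \to *$ of profinite sets; to conclude that $\varpi^*F$ itself is classical you appeal to ``classicality of a $0$-truncated object of $\Shv_\proet(B\{e\})$ can be checked after further pro-étale pullback'' --- but that is exactly the lemma for the trivial group, and your descent argument is vacuous there (the cover $G \to *$ is the identity). Note also that local classicality only gives an isomorphism $F|_U \wequi (\nu^*Y)|_U$ with no prescribed compatibility over $U \times U$, so $F$ is a priori a twisted form and cannot be ``untwisted'' without precisely the local-to-global statement that the Bhatt--Scholze-style argument supplies. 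So as written your proof establishes the lemma only under the stronger hypothesis of the parenthetical example, not the statement as given.

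A second, smaller issue: even in the $\varpi^*$ case you declare the matching of classical subcategories under descent to be ``the main obstacle'' and do not carry it out, yet this is the step that actually concludes $F \wequi \nu^*X$. What is needed is (i) the computation $\varpi^*\nu^*Y \wequi cY$ for a continuous $G$-set $Y$ (e.g.\ via $(\nu^*Y)(\lim_i W_i) = \colim_i \Hom_G(W_i,Y)$, i.e.\ continuous equivariant maps), together with the identification of the induced descent datum with the condensed action coming from the continuous action, and (ii) the observation that a map $|G| \times cX \to cX$ is determined by its orbit maps, so that a descent datum on a discrete set carries no more information than the underlying abstract action. With (i) and (ii) in place your orbit-map analysis does finish the $\varpi^*$ case; without them it only shows that the underlying abstract action has open stabilizers, which by itself does not produce the required isomorphism with a classical sheaf.
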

\begin{proof}
Repeat the proof of \cite[Lemma 5.1.4]{bhatt2013pro}.
\end{proof}

\subsection{Main results} \label{subsec:Ket-main}
We denote by $K \in \PSh(\Sch)$ or $\PSh_\ShvSp(\Sch)$ the presheaf of $K$-theory spaces (or connective spectra).
We also write $K$ for its restriction to any subcategory of $\Sch$, so e.g. we write $K \in \PSh(\Sm_S)$, and so on.
We denote by $K^\circ \in \PSh(\Sch)$ the rank zero summand of $K$, or in other words the connected component of the identity.

\begin{lem} \label{lem:K-extended}
Consider $K \in \PSh(\Sm_S)$.
We have $L_\Zar \nu^*(K) \wequi K \in \PSh(\Sch_S)$.
In particular $\nu^*(L_\et K) \wequi L_\proet(K)$ when viewing $\nu^*$ as a functor $\Shv_\et(\Sm_S) \to \Shv_\proet(\Sch_S)$.

The same statements hold for $K^\circ$ in place of $K$.
\end{lem}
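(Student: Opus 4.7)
The plan is to establish the first (Zariski) statement and then derive the second (pro-\'etale) statement from it formally. By construction, $\nu^*(K)(X) = \colim_{(U, X \to U),\, U \in \Sm_S} K(U)$, and the counit $\nu^*(K) \to K$ in $\PSh(\Sch_S)$ is an equivalence on smooth $S$-schemes, since the indexing comma category admits $(X, \id_X)$ as a terminal object. Combined with the fact that $K \in \PSh(\Sch_S)$ is a Zariski sheaf by Thomason--Trobaugh, this reduces the first claim to showing that $L_\Zar \nu^*(K) \to K$ is an equivalence of Zariski sheaves on $\Sch_S$, which can be checked on Zariski stalks, i.e., on local schemes $\Spec(\mathcal{O}_{X,x})$.

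For such a stalk, I would use continuity of $K$-theory---its commutation with cofiltered limits of qcqs schemes along affine transition maps---combined with a smooth-approximation result (Popescu--N\'eron desingularization in the regular Noetherian setting, possibly after Noetherian approximation of $S$) to express the local ring as a filtered colimit of essentially smooth $S$-algebras. Since both sides are continuous in this sense and agree on smooth schemes, the stalkwise comparison follows.

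For the second statement, applying $L_\proet$ to the first equivalence and using that $L_\proet$ on $\Sch_S$ factors through $L_\Zar$ yields $L_\proet \nu^*(K) \wequi L_\proet(K)$. One then identifies $L_\proet \nu^*(K)$ with $\nu^*(L_\et K)$ (where $\nu^*$ now denotes the functor $\Shv_\et(\Sm_S) \to \Shv_\proet(\Sch_S)$) by verifying that pro-\'etale sheafification commutes with left Kan extension from smooth to all $S$-schemes---a formal consequence of the compatibility between the \'etale topology on $\Sm_S$ and the pro-\'etale topology on $\Sch_S$. The same proof works verbatim for $K^\circ$ in place of $K$, since the rank decomposition is preserved under all the operations involved.

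The main obstacle will be the stalk-level reduction to smooth schemes in the first statement: this requires a form of smooth approximation of local rings by essentially smooth $S$-algebras. Without regularity hypotheses on $S$, one must be careful to combine Noetherian approximation and Popescu--N\'eron appropriately, ensuring that the $K$-theory of the approximating schemes behaves as needed under continuity.
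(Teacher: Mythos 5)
Your reduction to Zariski stalks and the formal deduction of the pro-\'etale statement from the Zariski one are fine (and the $K^\circ$ remark matches the paper's, which handles it by exactness/connected covers). But the core step has a genuine gap: on the big site $\Sch_S$ the stalks are local rings of \emph{arbitrary} $S$-schemes --- singular, non-reduced, non-Noetherian after bounding cardinality --- and such a local ring $\mathcal{O}_{X,x}$ is essentially never a filtered colimit of (essentially) smooth $S$-algebras. Popescu--N\'eron applies only when $S \to \Spec(\mathcal{O}_{X,x})$ is a regular morphism, so your smooth-approximation argument identifies the stalk of $L_\Zar\nu^*(K)$ with $K(\mathcal{O}_{X,x})$ only over the regular locus, not on all of $\Sch_S$. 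You flag this as ``the main obstacle,'' but the proposed fix (Noetherian approximation combined with Popescu) cannot close it: no approximation makes a singular local ring ind-smooth over $S$, and $K$-theory of singular rings is genuinely not controlled by its values on smooth ones via ring-level approximation (e.g.\ it is not nilinvariant).

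The paper's proof uses a different and stronger input that bypasses stalks entirely: the theorem that connective algebraic $K$-theory of \emph{arbitrary} commutative rings is left Kan extended from smooth affine $S$-schemes (cited as \cite[Example A.0.6(1)]{EHKSY3}; the point is the specific structure of $K$-theory --- the stack of vector bundles is ind-representable by smooth affines/Grassmannians --- not approximation of the ring). Granting that, $\nu^*(K)$ already agrees with $K$ on affines, and Zariski descent of the $K$-theory space presheaf then gives $L_\Zar\nu^*(K)\wequi K$ on all of $\Sch_S$; the pro-\'etale statement follows formally, as in your last paragraph. So to repair your write-up you should replace the stalkwise Popescu step by this left-Kan-extension theorem (or reprove it), since without it the claim on singular $S$-schemes is exactly what remains unproved.
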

\begin{proof}
The second statement follows from the first, and the first statement follows from the fact that algebraic $K$-theory of commutative rings is left Kan extended from smooth affine schemes \cite[Example A.0.6(1)]{EHKSY3} and Zariski descent.
The analogous statements for $K^\circ$ follow since the relevant functors are exact and so commute with taking connected covers.
\end{proof}

From now on we suppose that $1/\ell \in S$.
In this case $\mu_{\ell^n}$ is étale over $S$ and these extensions induce the cyclotomic character \[ \pi^*: \Shv_\et(B\Z_\ell^\times) \to \Shv(S_\et) \to \Shv_\et(\Sm_S), \] as well as stable or similar variants.
Recall that evaluating on the stalk and $\ell$-completing yields a fully faithful embedding with understood image (Lemma \ref{lem:describe-SHetp-BG}) \[ \ShvSp(B_\et \Z_\ell^\times)_\ell^\comp \to \ShvSp^{B(\mu \times \Z)}. \]
Recall also that by Goerss--Hopkins obstruction theory \cite[Corollary 7.7]{goerss2004moduli} we have \[ \Map_{\CAlg(\ShvSp)}(\KU_\ell^\comp, \KU_\ell^\comp) \wequi \Hom_{\CAlg^\heart}(\pi_* \KU_\ell^\comp, \pi_* \KU_\ell^\comp) \wequi \Z_\ell^\times. \]

\begin{cnstr}
Using the equivalence $\CAlg(\ShvSp^{BG}) \wequi \CAlg(\ShvSp)^{BG}$ and the above discreteness statement, we may lift $\KU_\ell$ to $\CAlg(\ShvSp^{B(\mu \times \Z)})$.
By construction, the resulting object lies in the essential image of the $\ell$-completed stalk functor.
We consequently obtain \[ \KU_\ell \in \CAlg(\ShvSp(B_\et\Z_\ell^\times)_\ell^\comp). \]
Taking the connective cover, we obtain \[ \ku_\ell \in \CAlg(\ShvSp(B_\et\Z_\ell^\times)_\ell^\comp). \]
\end{cnstr}

\begin{prop} \label{prop:K-pulled-back}
There is a canonical equivalence \[ \pi^*(\ku_\ell)_\ell^\comp \wequi L_\et(K)_\ell^\comp \in \CAlg(\ShvSp(S_\et)). \]
\end{prop}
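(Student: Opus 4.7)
The plan is to combine the periodic analog of the statement with a connective-cover argument, and then verify an equivalence on étale stalks via Gabber--Suslin rigidity. Bhatt--Clausen--Mathew \cite[Theorem 3.9]{bhatt2020remarks} provide a canonical $E_\infty$-equivalence $L_{K(1)} K \wequi \pi^*(\KU_\ell)$ in $\CAlg(\ShvSp(S_\et)_\ell^\comp)$. Composing the unit $L_\et(K)_\ell^\comp \to L_{K(1)} K$ with this identification produces an $E_\infty$-map into $\pi^*(\KU_\ell)$. Since $L_\et(K)_\ell^\comp$ is connective (as $K$ is), and $\pi^*$ is $t$-exact (being a geometric morphism), $\pi^*(\ku_\ell)$ identifies with the connective cover of $\pi^*(\KU_\ell)$; hence the map factors uniquely through $\pi^*(\ku_\ell)_\ell^\comp$, producing the desired comparison $\alpha: L_\et(K)_\ell^\comp \to \pi^*(\ku_\ell)_\ell^\comp$ in $\CAlg(\ShvSp(S_\et)_\ell^\comp)$.

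To show $\alpha$ is an equivalence, it suffices---since $S_\et$ has enough points---to check on each geometric stalk $\bar s \to S$. On the left, the stalk of $L_\et(K)_\ell^\comp$ at $\bar s$ is $K(\scr O_{S,\bar s}^{\mathrm{sh}})_\ell^\comp$; by Gabber's rigidity theorem this equals $K(\kappa(\bar s))_\ell^\comp$, and by Suslin's theorem the latter is equivalent, as an $E_\infty$-ring, to the classical $\ku_\ell^\comp$ (since $\kappa(\bar s)$ is separably closed of characteristic $\ne \ell$). On the right, $\bar s^* \pi^*(\ku_\ell)_\ell^\comp$ is, by construction of $\ku_\ell$, the underlying non-equivariant spectrum $\ku_\ell^\comp$ (the cyclotomic character is trivial at a geometric point). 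Both sides thus abstractly realize $\ku_\ell^\comp$, and it remains only to check $\bar s^*\alpha$ is the identity. By Goerss--Hopkins rigidity the $E_\infty$-endomorphism monoid of $\ku_\ell^\comp$ is the discrete $\Z_\ell^\times$ detected on $\pi_2$, so one need only verify that $\bar s^*\alpha$ sends the algebraic Bott element to the topological one; this is essentially tautological, as $\alpha$ was defined through the BCM equivalence, which itself identifies these Bott elements by construction.

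The main obstacle is precisely this final Bott-element compatibility, that is, ruling out that $\bar s^*\alpha$ could be a nontrivial Adams operation. This requires unwinding how the BCM identification interacts with Suslin's rigidity isomorphism, and amounts to matching two a priori differently-constructed generators of $\pi_2 \ku_\ell^\comp \wequi \Z_\ell$. The remaining ingredients---the existence and essential uniqueness of $\alpha$ as an $E_\infty$-algebra map through the connective cover, the reduction to stalks, and the identification of the underlying spectra---are formal consequences of the periodic case of \cite{bhatt2020remarks} together with basic properties of $\pi^*$.
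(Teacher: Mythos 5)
Your overall skeleton (start from the Bhatt--Clausen--Mathew equivalence $L_{K(1)}K \wequi \pi^*(\KU_\ell)$, compare with connective covers, and finish stalkwise via Gabber--Suslin) is the same as the paper's, but two steps are not justified as written. The first is the factorization: you claim that because ``$\pi^*$ is $t$-exact'', $\pi^*(\ku_\ell)_\ell^\comp \to \pi^*(\KU_\ell)_\ell^\comp$ is a connective cover and the map from the connective object $L_\et(K)_\ell^\comp$ lifts uniquely through it as an $\scr E_\infty$-map. But $\ku_\ell$ is a connective cover taken \emph{before} the $\ell$-completion, and completion does not commute with $\tau_{\ge 0}$ in these sheaf categories: completed objects whose stalks are connective are in general only $(-1)$-connective (this is exactly the phenomenon of Lemma \ref{lemm:SH-BZl-connectivity}), and the cofiber of $\pi^*(\ku_\ell)_\ell^\comp \to \pi^*(\KU_\ell)_\ell^\comp$ is only coconnective enough to give existence, not uniqueness, of a lift --- so the universal property of the connective cover in $\CAlg$ is not available as stated. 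This is precisely the difficulty the paper's proof is organized around: instead of a one-step factorization it runs a zigzag through the pro-$\ell$ connective cover $\tau_{\ge 0}^\ell$ of \S\ref{subsec:pro}, reduces the whole diagram to its mod $\ell^n$ layers, and checks equivalences on stalks, never asserting that $\pi^*(\ku_\ell)_\ell^\comp$ is a connective cover in the completed sheaf $t$-structure.

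The second gap is the step you yourself call the main obstacle and then wave away. The Goerss--Hopkins computation quoted in the paper is for the \emph{periodic} spectrum $\KU_\ell^\comp$; for $\ku_\ell^\comp$ an $\scr E_\infty$-endomorphism may send the Bott class to any element of $\Z_\ell$, so ``the endomorphism monoid is the discrete $\Z_\ell^\times$'' is unjustified, and without pinning down the image of Bott you cannot conclude invertibility --- yet the required unwinding of how the BCM identification interacts with Suslin's rigidity is exactly what you leave out. Note, however, that this whole step is avoidable: on a stalk, Gabber--Suslin identifies $K(A)_\ell^\comp \wequi \ku_\ell^\comp$ and shows the canonical map $K(A)_\ell^\comp \to L_{K(1)}K(A) \wequi \KU_\ell^\comp$ is an isomorphism on $\pi_i$ for $i \ge 0$ with coconnective fiber; any lift of such a map through the connective cover $\ku_\ell^\comp \to \KU_\ell^\comp$ is then automatically an equivalence of connective spectra, with no matching of Bott elements and no obstruction-theoretic input. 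The paper's mod-$\ell^n$ zigzag makes this automatic; in your write-up the corresponding argument is missing, so the proof is incomplete at both the factorization and the stalk-identification stages.
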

\begin{proof}
There is a canonical map $\pi^*(\KU_\ell) \to L_{K(1)} K \in \CAlg(\ShvSp(S_\et))$ constructed in \cite[Theorem 3.9]{bhatt2020remarks}.\NB{construct first $\KU_\ell \to \pi_* L_{K(1)} K$}
Recall the pro-$\ell$-truncation transformation from \S\ref{subsec:pro}.
Consider the following diagram
\begin{equation*}
\begin{CD}
\pi^*(\ku_\ell)_\ell^\comp @>a>> \pi^*(\KU_\ell)_\ell^\comp @>c>> L_\et(L_{K(1)} K)_\ell^\comp @<b<< L_\et(K)_\ell^\comp \\
@AfAA                                        @AAA                                     @AAA                                    @AgAA             \\
\tau_{\ge 0}^\ell(\pi^*(\ku_\ell)) @>>> \tau_{\ge 0}^\ell(\pi^*(\KU_\ell)) @>>> \tau_{\ge 0}^\ell(L_\et(L_{K(1)} K)) @<<< \tau_{\ge 0}^\ell(L_\et(K)). \\
\end{CD}
\end{equation*}
This is a diagram of hypercomplete, $\ell$-adic étale sheaves of spectra.
The map $c$ is the completion of the canonical comparison map.
The maps $a$ and $b$ are obtained from $\ku_\ell \to \KU_\ell$ and $K \to L_{K(1)} K$.
The bottom row is obtained from the top one by taking pro-$\ell$-(connective covers).
Since all the relevant functors are lax symmetric monoidal, this is even a diagram of $\scr E_\infty$ rings.
We shall show that the maps $f$ and $g$ are equivalences, as are all the maps in the bottom row.
The circuit from top left to top right via the bottom then yields the desired equivalence.
Let $n \ge 0$ and consider the diagram
\begin{equation*}
\begin{CD}
\pi^*(\ku_\ell)/\ell^n @>>> \pi^*(\KU_\ell)/\ell^n @>>> L_\et(L_{K(1)} K)/\ell^n @<<< L_\et(K)/\ell^n \\
@AAA                                        @AAA                                     @AAA                                    @AAA             \\
\tau_{\ge 0}(\pi^*(\ku_\ell)/\ell^n) @>>> \tau_{\ge 0}(\pi^*(\KU_\ell)/\ell^n) @>>> \tau_{\ge 0}(L_\et(L_{K(1)} K)/\ell^n) @<<< \tau_{\ge 0}(L_\et(K)/\ell^n) \\
\end{CD}
\end{equation*}
obtained in a similar way.
By definition, the original diagram is the limit over $n$ of the new diagrams; it hence suffices to prove the analogous claims for the new diagram.
Since we are dealing with hypercomplete sheaves (by definition), it suffices to check the equivalences on stalks.
Passage to stalks commutes with $\tau_{\ge 0}$ and with $/\ell^n$, and ignores $L_\et$.
It also commutes with $L_{K(1)}(\ph)/\ell^n$, since this functor commutes with filtered colimits.\NB{ref?}
For the same reason passage to stalks commutes with $K$ \cite[Proposition 7.10]{blumberg2013universal}.
If $A$ is a strictly Henselian local ring, stalk of some (smooth) $S$-scheme, the induced diagram on this stalk thus is
\begin{equation*}
\begin{CD}
\ku/\ell^n @>>> \KU/\ell^n @>>> L_{K(1)} K(A)/\ell^n @<<< K(A)/\ell^n \\
@AAA                @AAA                     @AAA                                    @AAA             \\
\tau_{\ge 0}(\ku/\ell^n) @>>> \tau_{\ge 0}(\KU/\ell^n) @>>> \tau_{\ge 0}(L_{K(1)} K(A))/\ell^n) @<<< \tau_{\ge 0}(K(A)/\ell^n). \\
\end{CD}
\end{equation*}
We have $K(A)/\ell^n \wequi \ku/\ell^n$ by the theorems of Gabber \cite{gabber-rigidity} and Suslin \cite{suslin-rigidity,SUSLIN1984301}, which implies the desired equivalences (recall that $L_{K(1)} \ku \wequi \KU$\NB{ref}).
\end{proof}

Consider $\ku_\ell^\comp \in \ShvSp^{B(\mu \times \Z)}$.
This is a connective object in the usual $t$-structure, and $\pi_0$ is given by the Eilenberg-MacLane spectrum $\Z_\ell^\comp$ with the trivial action.
Note that (a) the induced $\Z_\ell^\times$-action is continuous, so this is in the essential image of $\ShvSp(B_\et\Z_\ell^\times)_\ell^\comp$ and (b) this is just the $\ell$-adic completion of the constant sheaf $\Z$ in $\ShvSp^{B(\mu \times \Z)}$, whence the same is true in $\ShvSp(B_\et \Z_\ell^\times)$.
All in all we have constructed a morphism \[ \ku_\ell \to \Z_\ell^\comp \in \CAlg(\ShvSp(B_\et\Z_\ell^\times)_\ell^\comp). \]

\begin{dfn} \label{dfn:bul}
We put \[ \bu_\ell = \fib(\ku_\ell \to \Z_\ell^\comp) \in \ShvSp(B_\et \Z_\ell^\times), \quad \widetilde\BU_\ell = \Omega^\infty(\bu_\ell) \in \CMon(\Shv_\et(B\Z_\ell^\times)) \] and \[ \BU_\ell := \Omega^\infty(\nu^*(\bu_\ell)_\ell^\comp) \in \CMon(\Shv_\proet(B\Z_\ell^\times)). \]
\end{dfn}
The commutative monoid structures arise just because we are looking at the infinite loop space of a sheaf of spectra.

\begin{lem} \label{lemm:SH-BZl-connectivity}
Let $E \in \ShvSp(B_\et \Z_\ell^\times)_\ell^\comp$ such that the completed stalk $\varpi^*(E)$ is in $\ShvSp_{\ge 0}$.
Then $E \in \ShvSp(B_\et\Z_\ell^\times)_{\ge -1}$.
\end{lem}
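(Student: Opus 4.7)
The plan is to show $\pi_i E = 0$ as a sheaf on $B_\et G$, where $G = \Z_\ell^\times$, for every $i \le -2$. Since the profinite topos $B_\et G$ has a single geometric point, this can be detected on the canonical stalk, namely $\colim_U F(G/U)$ indexed by the open subgroups $U \subset G$; hence it suffices to prove that $\colim_U \pi_i(E(G/U)) = 0$ for $i \le -2$.

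First I would invoke Lemma \ref{lem:describe-SHetp-BG}, via Example \ref{ex:SH-BG}(3), to embed $\ShvSp(B_\et G)_\ell^\comp$ fully faithfully into $\ell$-complete $P$-spectra, where $P = \mu \times \Z$ (with $\mu = \mu_{\ell-1}$ for $\ell$ odd and $\mu = \{\pm 1\}$ for $\ell=2$) and $q\colon P \to G = \mu \times \Z_\ell$ is the canonical dense homomorphism. Under this embedding $\varpi^*(E)$ corresponds to the underlying spectrum of $E$ with its $P$-action forgotten, so the hypothesis reads $\pi_t E = 0$ for $t<0$. Applying full faithfulness to the representable $\Sigma^\infty_+ G/U$ (which, since $q$ has dense image, restricts to the transitive $P$-set $P/q^{-1}(U)$) identifies the sections $E(G/U)$ with the homotopy fixed points $E^{hq^{-1}(U)}$.

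Next I would invoke the homotopy fixed point spectral sequence together with a cofinality argument. The open subgroups $U \subset G$ contained in the $\Z_\ell$-factor of $G$, and for $\ell=2$ in $1+4\Z_2 \subset \Z_2^\times$, form a cofinal subfamily; for such $U$ the preimage $q^{-1}(U)$ lies in the $\Z$-factor of $P$, hence is isomorphic to $n\Z$ and has $\ell$-cohomological dimension $\le 1$. The resulting spectral sequence
\[ E_2^{s,t} = H^s(q^{-1}(U); \pi_t E) \Rightarrow \pi_{t-s}(E^{hq^{-1}(U)}) \]
converges by $\ell$-completeness, is supported in $s \in \{0,1\}$, and combined with $\pi_t E = 0$ for $t<0$ yields $\pi_i E^{hq^{-1}(U)} = 0$ for all $i \le -2$ (since the contributing positions with $t-s = i$ have $t \in \{i,i+1\}$, both negative). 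Passing to the colimit over $U$ produces the desired vanishing.

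The main technical obstacle is cleanly deriving the identification $E(G/U) \simeq E^{hq^{-1}(U)}$ from the essential-image description of Lemma \ref{lem:describe-SHetp-BG}; once this is in hand the rest is routine spectral sequence and cofinality bookkeeping. The fact that the conclusion is $(-1)$-connectivity rather than $0$-connectivity is forced by the $\ell$-cohomological dimension of $\Z_\ell$ (and hence of any $n\Z \subset P$) being equal to $1$: the HFPSS can introduce a one-step shift via $H^1(n\Z;\pi_0 E)$, which can obstruct $\pi_{-1} E^{hq^{-1}(U)}$ from vanishing even when the underlying spectrum of $E$ is connective.
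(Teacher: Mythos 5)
Your proof is correct and follows essentially the same route as the paper: check the claim on the stalk $\colim_H E(\Z_\ell^\times/H)$, identify the sections $E(\Z_\ell^\times/H)$ with homotopy fixed points $\varpi^*(E)^{h((\mu\times\Z)\cap H)}$ via the dense subgroup $\mu\times\Z$ and Lemma \ref{lem:describe-SHetp-BG}, and conclude from the fact that a cofinal family of these subgroups is isomorphic to $\Z$, of cohomological dimension $1$. The only difference is that you spell out the homotopy fixed point spectral sequence step that the paper leaves implicit.
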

\begin{proof}
We prove the claim on stalks; i.e. that $\colim_H \map(\Z_\ell^\times/H, E) \in \ShvSp_{\ge -1}$, where the colimit is over open subgroups.
By construction $\mu \times \Z \to \Z_\ell^\times$ is dense and hence $\mu \times \Z \to \Z_\ell^\times/H$ is surjective.
It follows that $\Z_\ell^\times/H \wequi (\mu \times \Z)/((\mu \times \Z) \cap H)$ as $(\mu \times \Z)$-sets.
Using \cite[Lemma 3.3]{bhatt2020remarks} we thus get $\map(\Z_\ell^\times/H, E) \wequi \varpi^*(E)^{h((\mu \times \Z) \cap H)}$.
For a cofinal family of subgroups $H$ we get $(\mu \times \Z) \cap H = \ell^n \Z \wequi \Z$, which has cohomological dimension $1$.
The result follows.
\end{proof}

\begin{lem} \label{lemm:bu-l-connectivity}
We have $\bu_\ell \in \ShvSp(B_\et \Z_\ell^\times)_{\ge 1}$.
\end{lem}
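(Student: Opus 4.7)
The plan is to reduce the claim to a stalkwise connectivity computation and then invoke a shifted version of Lemma \ref{lemm:SH-BZl-connectivity}.

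First I would observe that $\bu_\ell$ is $\ell$-complete: it is the fiber of a morphism between two $\ell$-complete objects, and $\ell$-complete objects are closed under fibers (as a special case of limits). Thus $\bu_\ell \in \ShvSp(B_\et \Z_\ell^\times)_\ell^\comp$, and the hypothesis of Lemma \ref{lemm:SH-BZl-connectivity} is available (once we verify the stalk condition).

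Next I would compute the completed stalk $\varpi^*(\bu_\ell)$. Since $\varpi^*$ is exact and commutes with limits and cofibers, it preserves the fiber sequence, so
\[
\varpi^*(\bu_\ell) \wequi \fib\bigl(\varpi^*(\ku_\ell) \to \varpi^*(\Z_\ell^\comp)\bigr).
\]
By construction of $\ku_\ell$ via the embedding $\ShvSp(B_\et\Z_\ell^\times)_\ell^\comp \hookrightarrow \ShvSp^{B(\mu\times\Z)}$, the completed stalk of $\ku_\ell$ is the classical $\ell$-completed connective complex $K$-theory spectrum $\ku_\ell^\comp$, and the map $\ku_\ell \to \Z_\ell^\comp$ realises on stalks as the Postnikov truncation $\ku_\ell^\comp \to (\pi_0 \ku)_\ell^\comp = \Z_\ell^\comp$. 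Its fiber is then the classical $\ell$-completed $\bu_\ell^\comp$.

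Now the key point: $\BU$ is simply connected, so $\bu \in \ShvSp_{\ge 2}$ in the classical $t$-structure, and hence $\bu_\ell^\comp \in \ShvSp_{\ge 2}$ (connectivity is preserved under $\ell$-completion of bounded-below spectra with finitely generated homotopy groups; concretely, $\pi_i(\bu_\ell^\comp) = 0$ for $i \le 1$). Therefore $\varpi^*(\bu_\ell) \in \ShvSp_{\ge 2}$.

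Finally, I apply Lemma \ref{lemm:SH-BZl-connectivity} after shifting. The lemma in its stated form yields: $\varpi^*(E) \in \ShvSp_{\ge 0}$ implies $E \in \ShvSp(B_\et \Z_\ell^\times)_{\ge -1}$. Applying this to $E = \Sigma^{-2}\bu_\ell$ (using that $\Sigma$ commutes with $\varpi^*$ and shifts both $t$-structures by one) gives $\Sigma^{-2}\bu_\ell \in \ShvSp(B_\et \Z_\ell^\times)_{\ge -1}$, i.e.\ $\bu_\ell \in \ShvSp(B_\et\Z_\ell^\times)_{\ge 1}$. There is no real obstacle: the only mildly delicate point is the shift of the previous lemma, which is immediate from its proof (the loss of one degree of connectivity comes from $\cd(\Z)=1$ in the stalk computation and is insensitive to translation).
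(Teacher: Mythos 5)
Your proposal is correct and follows the same route as the paper: the paper's proof is simply that the completed stalk of $\bu_\ell$ is the classical $\bu_\ell^\comp \in \ShvSp_{\ge 2}$, and then Lemma \ref{lemm:SH-BZl-connectivity} (applied after a shift, exactly as you spell out) gives $\bu_\ell \in \ShvSp(B_\et\Z_\ell^\times)_{\ge 1}$. You have merely made explicit the stalk identification and the translation-invariance of the lemma, which the paper leaves implicit.
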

\begin{proof}
Immediate from Lemma \ref{lemm:SH-BZl-connectivity}, sinc $\bu_\ell^\comp \in \ShvSp_{\ge 2}$.
\end{proof}

\begin{cor} \label{cor:BU-ell-uncompl}
We have $L_\ell \nu^* \widetilde\BU_\ell \wequi \BU_\ell$.
\end{cor}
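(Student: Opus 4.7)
The plan is to reduce the claim to the comparison between stable and unstable $\ell$-completion on connective spectra in a topos of local homotopy dimension $\le 0$, which was recorded in Corollary \ref{cor:htpy-dim-0-compl}(2), applied to the topos $\Shv_\proet(B\Z_\ell^\times)$.

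First I would check the hypotheses. By the lemma that $\Shv_\proet(BG)$ is generated by coherent objects of homotopy dimension $0$, the pro-\'etale topos $\Shv_\proet(B\Z_\ell^\times)$ satisfies the standing assumption of Corollary \ref{cor:htpy-dim-0-compl}. By Lemma \ref{lemm:bu-l-connectivity} we have $\bu_\ell \in \ShvSp(B_\et\Z_\ell^\times)_{\ge 1}$, and since $\nu^*\colon \ShvSp(B_\et\Z_\ell^\times) \to \ShvSp(B_\proet\Z_\ell^\times)$ is the inverse image of a geometric morphism, it is t-exact (on hypercompleted topoi) and hence $\nu^*\bu_\ell \in \ShvSp(B_\proet\Z_\ell^\times)_{\ge 1}$. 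Then Corollary \ref{cor:htpy-dim-0-compl}(2) gives
\[ \Omega^\infty\bigl((\nu^*\bu_\ell)_\ell^\comp\bigr) \wequi L_\ell\, \Omega^\infty(\nu^*\bu_\ell). \]

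Next I would use the fact that $\nu^*$ commutes with $\Omega^\infty$. Since the unstable $\nu^*$ is a left adjoint of a geometric morphism of $\infty$-topoi, it preserves finite limits; consequently the induced functor on stabilizations satisfies $\nu^*\Omega^\infty \wequi \Omega^\infty\nu^*$. Applied to $\bu_\ell$ this yields $\Omega^\infty(\nu^*\bu_\ell) \wequi \nu^*\Omega^\infty(\bu_\ell) = \nu^*\widetilde{\BU}_\ell$. Chaining this with the previous equivalence gives
\[ \BU_\ell = \Omega^\infty\bigl((\nu^*\bu_\ell)_\ell^\comp\bigr) \wequi L_\ell\, \nu^*\widetilde{\BU}_\ell, \]
which is the desired identification. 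The commutative monoid structure is carried along for free because every functor in the above chain is (lax) symmetric monoidal and each natural equivalence arises from a symmetric monoidal transformation.

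I do not expect a serious obstacle: the main point where one has to be careful is verifying the hypotheses of Corollary \ref{cor:htpy-dim-0-compl}(2), namely that $\nu^*\bu_\ell$ is connected (granted by t-exactness of $\nu^*$ together with Lemma \ref{lemm:bu-l-connectivity}), and that the pro-\'etale topos on $B\Z_\ell^\times$ is locally of homotopy dimension $\le 0$ (which is the content of the generation statement for $\Shv_\proet(BG)$). Once those are in hand, the argument is a short formal assembly.
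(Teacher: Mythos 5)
Your proof is correct and follows essentially the same route as the paper: the paper's argument is exactly the application of Corollary \ref{cor:htpy-dim-0-compl}(2) (with the connectivity input $\nu^*\bu_\ell \in \ShvSp(B_\proet\Z_\ell^\times)_{\ge 1}$ from Lemma \ref{lemm:bu-l-connectivity}) followed by the observation that $\nu^*$ commutes with $\Omega^\infty$. Your additional verification of the homotopy-dimension hypothesis is fine but is already built into the standing setup for that corollary.
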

\begin{proof}
$\Omega^\infty(\nu^*(\bu_\ell)_\ell^\comp) \wequi L_\ell \Omega^\infty(\nu^* \bu_\ell)$ by Corollary \ref{cor:htpy-dim-0-compl}(2); now use that $\nu^*$ and $\Omega^\infty$ commute.
\end{proof}

Note that $K^\circ$ is a commutative monoid since it is an infinite loop sheaf; i.e. we are considering the additive monoid structure.
Recall also that $\pi^*: \Fin_{\Z_\ell^\times} \to \Et_S$ induces $\pi^*: \Pro(\Fin_{\Z_\ell^\times}) \to \Pro(\Et_S) \to S_\proet$ (see Example \ref{ex:pi*-pro}).
\begin{prop} \label{prop:etale-Kcirc-pulledback}
There are canonical equivalences \[ \pi^*(\nu^*\bu_\ell)_\ell^\comp \wequi L_\proet(K_{\ge 1})_\ell^\comp \in \ShvSp_\proet(\Sch_S) \] and \[ L_\ell \pi^*(\BU_\ell) \wequi \Omega^\infty L_\proet(K_{\ge 1})_\ell^\comp \wequi L_\ell L_\proet(K^\circ) \in \CMon(\Shv_\proet(\Sch_S)), \] and similarly in $\Shv(S_\proet)$.
\end{prop}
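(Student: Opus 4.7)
The plan is to deduce both statements from the étale analogue, Proposition \ref{prop:K-pulled-back}, by pulling back along $\nu^*$, taking fibers, and then applying $\Omega^\infty$.

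For the stable equivalence, begin with the $\scr E_\infty$-ring equivalence $\pi^*(\ku_\ell)_\ell^\comp \wequi L_\et(K)_\ell^\comp$ in $\ShvSp(S_\et)_\ell^\comp$. Apply $\nu^*$, which is fully faithful into $\ShvSp_\proet(\Sch_S)_\ell^\comp$ by Proposition \ref{prop:strongly-finite-ff}(2), sends $L_\et(K)$ to $L_\proet(K)$ by Lemma \ref{lem:K-extended}, and commutes with $\pi^*$ via compatibility of the small and large cyclotomic characters. This yields
\[
\pi^*(\nu^*\ku_\ell)_\ell^\comp \wequi L_\proet(K)_\ell^\comp \in \ShvSp_\proet(\Sch_S)_\ell^\comp.
\]
Since this is an equivalence of $\scr E_\infty$-rings, it identifies the rank maps to $\pi_0$, which on both sides is $\Z_\ell^\comp$. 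Taking fibers of the rank map---an operation compatible with $\nu^*$, $\pi^*$, and $\ell$-completion---produces the first stated equivalence $\pi^*(\nu^*\bu_\ell)_\ell^\comp \wequi L_\proet(K_{\ge 1})_\ell^\comp$.

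For the unstable part, apply $\Omega^\infty$. By Lemma \ref{lemm:bu-l-connectivity}, $\bu_\ell \in \ShvSp(B_\et\Z_\ell^\times)_{\ge 1}$; this connectivity is preserved by $\nu^*$, $\pi^*$, and $\ell$-completion (the last by Corollary \ref{cor:htpy-dim-0-compl}(2), since the pro-étale topos is locally of homotopy dimension $\le 0$). That same corollary gives $\Omega^\infty((-)_\ell^\comp) \wequi L_\ell \Omega^\infty(-)$ on connective sheaves of spectra in such a topos. Combining this with Corollary \ref{cor:BU-ell-uncompl} (identifying $\BU_\ell$ with $L_\ell \nu^*\widetilde\BU_\ell$) and Proposition \ref{prop:completion-permanence} (which lets us absorb inner $L_\ell$'s inside $L_\ell \pi^*$), we obtain
\[
L_\ell \pi^*(\BU_\ell) \wequi \Omega^\infty \pi^*(\nu^*\bu_\ell)_\ell^\comp.
\]
On the right-hand side, the same corollary combined with $K^\circ = \Omega^\infty K_{\ge 1}$ and the commutation of $L_\proet$ with $\Omega^\infty$ on connective spectra gives $\Omega^\infty L_\proet(K_{\ge 1})_\ell^\comp \wequi L_\ell L_\proet(K^\circ)$. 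The stable equivalence then supplies the middle identification of the statement. The parallel argument on the small pro-étale site gives the $\Shv(S_\proet)$ version.

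The main obstacle is the unstable part: each commutation between $\Omega^\infty$ and the pullback, sheafification, and completion functors relies critically on the local homotopy dimension $\le 0$ property of the pro-étale topos. One must also carefully track which instances of $L_\ell$ can be dropped (namely on objects already $\ell$-complete, such as infinite loop spaces of connective $\ell$-complete spectra, by Corollary \ref{cor:htpy-dim-0-compl}(2)). By contrast, once Proposition \ref{prop:K-pulled-back} is in hand, the stable equivalence is essentially formal from the fully faithful embedding and preservation properties of $\nu^*$.
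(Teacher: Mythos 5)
Your proposal is correct and follows essentially the same route as the paper's proof: deduce the stable equivalence from Proposition \ref{prop:K-pulled-back} together with Lemma \ref{lem:K-extended} and the definition of $\bu_\ell$ as the fiber of the rank map, then obtain the unstable statement from the connectivity of $\bu_\ell$ (Lemma \ref{lemm:bu-l-connectivity}) and Corollary \ref{cor:htpy-dim-0-compl}(2). The only cosmetic differences are that you invoke full faithfulness of $\nu^*$ and Corollary \ref{cor:BU-ell-uncompl}, which the paper bypasses by using the definition $\BU_\ell = \Omega^\infty(\nu^*(\bu_\ell)_\ell^\comp)$ directly.
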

\begin{proof}
By definition and since $\pi^*, \nu^*$ commute we have \[ \pi^*(\nu^*\bu_\ell)_\ell^\comp \wequi \nu^*(\fib(\pi^*(\ku_\ell)_\ell^\comp \to L_\et(\Z)_\ell^\comp))_\ell^\comp. \]
We have $\pi^*(\ku_\ell)_\ell^\comp \wequi L_\et(K)_\ell^\comp$ by Proposition \ref{prop:K-pulled-back}, and hence $\fib(\pi^*(\ku_\ell)_\ell^\comp \to L_\et(\Z)_\ell^\comp) \wequi L_\et(K_{\ge 1})_\ell^\comp$.
Lemma \ref{lem:K-extended} shows that $\nu^*(L_\et(K_{\ge 1})) \wequi L_\proet(K_{\ge 1})$ and hence $\nu^*(\pi^* \bu_\ell)_\ell^\comp \wequi L_\proet(K_{\ge 1})_\ell^\comp$.
This is the first equivalence.

Since $\Omega^\infty$ commutes with $\pi^*$, by definition we get \[ L_\ell \pi^*(\BU_\ell) = L_\ell \pi^* \Omega^\infty(\nu^*(\bu_\ell)_\ell^\comp) \wequi L_\ell \Omega^\infty \pi^*(\nu^*(\bu_\ell)_\ell^\comp). \]
By Lemma \ref{lemm:bu-l-connectivity} we have $\bu_\ell \in \ShvSp(B_\et \Z_\ell^\times)_{\ge 1}$ and hence $\nu^*(\bu_\ell) \in \ShvSp(B_\proet\Z_\ell^\times)_{\ge 1}$.
Corollary \ref{cor:htpy-dim-0-compl}(2) now shows that $\nu^*(\bu_\ell)_\ell^\comp \in \ShvSp(B_\proet\Z_\ell^\times)_{\ge 1}$ and so $\pi^*(\nu^*(\bu_\ell)_\ell^\comp) \in \ShvSp_\proet(\Sch_S)_{\ge 1}$.
Using Corollary \ref{cor:htpy-dim-0-compl}(2) again we find $L_\ell \Omega^\infty \pi^*(\nu^*(\bu_\ell)_\ell^\comp) \wequi \Omega^\infty (\pi^*(\nu^*\bu_\ell)_\ell^\comp)$.
All in all we have found that \[ L_\ell \pi^*(\BU_\ell) \wequi \Omega^\infty(L_\proet(K_{\ge 1})_\ell^\comp), \] which is the second equivalence.
By one final application of Corollary \ref{cor:htpy-dim-0-compl}(2) this is equivalent to $L_\ell \Omega^\infty L_\proet K_{\ge 1} \wequi L_\ell L_\proet K^\circ$.
\end{proof}

\begin{rmk}
Combining the first and third equivalence we see that $L_\ell L_\proet(K^\circ) \wequi \Omega^\infty \pi^*(\nu^*\bu_\ell)_\ell^\comp$.
Proposition \ref{prop:strongly-finite-ff}(0) now shows that these sheaves are $\A^1$-invariant.
\end{rmk}

\section{Étale algebraic cobordism} \label{sec:etale-MGL}
\localtableofcontents

\bigskip
Let $S$ be a scheme with $1/\ell \in S$.
Under the equivalence $\SH_\et(S)_\ell^\comp \wequi \ShvSp(S_\et)_\ell^\comp$ of \cite{bachmann-SHet}, for any $E \in \SH(S)$, its $\ell$-adic étale localization $L_\et(E)_\ell^\comp$ determines (and is determined by) an étale sheaf of spectra on $S$.
For example, taking $E=\KGL$ the algebraic $K$-theory spectrum, the results of the previous section easily show that $L_\et(\KGL)_\ell^\comp \wequi \pi^* \KU_\ell$.
In this section we would like to repeat the discussion for $E=\MGL$, the algebraic cobordism spectrum.
That is, we would like to construct an object $\MU_\ell \in \ShvSp(B_\et \Z_\ell^\times)$ together with an equivalence $L_\et(\MGL)_\ell^\comp \wequi \pi^*(\MU_\ell)$.
We would in fact like to do this using the results from the previous section: we want to define $\MU_\ell$ as the Thom spectrum of $\BU_\ell$; then $\pi^*\MU_\ell$ should be the Thom spectrum of $\pi^* \BU_\ell \wequi \BGL_\ell^\comp$, which should be $\MGL$ (up to appropriate sheafification and completion).

While this strategy works in principle, some rather delicate issues arise.
To begin with, in order to ``take the Thom spectrum'' of a ``space'' $\BU_\ell$, we need a (commutative monoid) map $J_{\Z_\ell^\times}: \BU_\ell \to \BGL_1$.
Here the right hand side denotes the étale sheaf classifying spherical fibrations, which one verifies easily is constant (i.e. has trivial $\Z_\ell^\times$-action).
The underlying map of $J_{\Z_\ell^\times}$ is the classical $J$-homomorphism, and we are (essentially) asking to give this a $\Z_\ell^\times$-equivariant refinement.
The existence of such a refinement is closely related to the \emph{stable Adams conjecture} \cite{stable-adams-1,stable-adams-2}.
For $\ell$ odd, the group $\Z_\ell^\times \wequi \Z/(\ell-1) \times \Z_\ell$ is topologically generated by one element.
This roughly speaking means that in order to produce a genuine $\Z_\ell^\times$-action on $J_{\Z_\ell^\times}$ we need only act by a single automorphism with no further relations (see Lemma \ref{lemm:BGl-ff} for details).
This we can achieve by utilizing a motivic proof of the stable Adams conjecture (see Remark \ref{rmk:action-alternative-construction}).
For $\ell=2$ though, the group $\Z_2^\times \wequi \Z/2 \times \Z_2$ is not topologically generated by one element, and analogous constructions are much more difficult: even if we can make $J$ equivariant for the two generators, we would also have to account for the fact that the generators commute.
We are saved by Theorem \ref{thm:etale-fund-gps} which ensures that the $2$-adic cyclotomic characters actually always factor through the group $G_2 := \Z/2 *_2 \Z_2$.
Replacing $\Z_2^\times$ by $G_2$, the two generators now do not interact anymore, and we need not account for their commutation.
This allows us to make the map $J$ equivariant by similar arguments as before.

Even with this issue out of the way, we still need to prove that $\pi^*(J)$ is homotopic to an appropriate version of the motivic $J$-homomorphism, ideally as a highly structured commutative monoid map.
While we believe this to be true, arguments we know for proving this would lead too far afield.
For this reason we present a simple argument showing that the two maps are homotopic as $\scr E_0$-monoid maps (i.e. maps of pointed spaces).
From this we can deduce that $L_\et(\MGL)_\ell^\comp$ is equivalent to $\pi^*(\MU_\ell)$ as $\scr E_0$-rings, which (with some care) is enough for our desired applications.

\subsubsection*{Organization}
We begin in \S\ref{subsec:spherical-fib} by defining the pro-étale sheaf classifying $\ell$-adic spherical fibrations, determining its homotopy groups and showing that it is constant.
Using this, in \S\ref{subsec:etale-mot-j} we show that an étale sheafified version of the $\ell$-adic motivic $J$-homomorphism factors through the $\ell$-completion of its source, namely $\pi^*(\BU_\ell)$.
Then in \S\ref{subsec:equivariant-j} we construct a $G_\ell$-equivariant analog of the $J$-homomorphism.
We prove in \S\ref{subsec:J-E0} that the $\ell$-adic motivic $J$-homomorphism is obtained by pullback from the $G_\ell$-equivariant one, at least as $\scr E_0$-monoid map.
From this we deduce in \S\ref{subsec:etale-MGL} our main result, that $\ell$-adic étale algebraic cobordism is pullback from $\MU_\ell$ (see Theorem \ref{thm:identify-MGL}).

We conclude with two complementary sections.
In \S\ref{subsec:more-MUl} we sketch a theory of $G_\ell$-equivariant complex orientations, and show that $\MU_\ell$ satisfies a universal property with respect to those.
We deduce that $\pi^*(\MU_\ell) \wequi L_\et(\MGL)_\ell^\comp$ as homotopy rings.
Finally in \S\ref{subsec:MGL-regular-primes} we show how to strengthen this equivalence to one of $\scr E_\infty$-rings in certain restricted cases.

\subsubsection*{Standing assumptions}
Throughout we assume that $S$ is a scheme on which $\ell$ is invertible.

\subsection{Spherical fibrations} \label{subsec:spherical-fib}
\begin{dfn}
We denote by \[ \BGL_1(\1_{\proet,\ell}^\comp) \in \PSh(\Sch_S) \] the subpresheaf of $(\SH_\proet(\ph)_\ell^\comp)^\wequi$ corresponding to those objects which are pro-étale locally equivalent to the unit.
\end{dfn}
\begin{prop} \label{prop:BGL1-complete}
Let $\scr L'$ be the set of primes dividing $\ell-1$ and $\scr L = \scr L' \cup \{\ell\}$.

$\BGL_1(\1_{\proet,\ell}^\comp)$ is an unstably $\scr{L}$-complete pro-étale hypersheaf.
In fact it is equivalent to $L_\scr{L} \pi^*(\BGL_1(\1_\ell^\comp))$, where $\pi^*: \Spc \to \Shv_\proet(\Sch_S)$ is the unique cocontinuous functor.
Moreover \[ \BGL_1(\1_{\proet,\ell}^\comp) \wequi L_{\scr L'} \BGL_1(\1_{\proet,\ell}^\comp) \times L_\ell \BGL_1(\1_{\proet,\ell}^\comp) \quad\text{and}\quad L_{\scr L'} \BGL_1(\1_{\proet,\ell}^\comp) \wequi K(\Z/(\ell-1), 1). \]
\end{prop}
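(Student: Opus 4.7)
The plan is to reduce all claims to a stalk computation, exploiting that $\Shv_\proet(\Sch_S)$ is generated by w-contractible objects of homotopy dimension $\le 0$, so that unstable $\scr L$-completion is performed sectionwise on such by Corollary \ref{cor:htpy-dim-0-compl}(1).

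First I would verify that $\BGL_1(\1_{\proet,\ell}^\comp)$ is a pro-étale hypersheaf: the core of $\SH_\proet(\ph)_\ell^\comp$ is one by Lemma \ref{lem:SH-proet-descent}, and the condition of being pro-étale locally equivalent to the unit is itself local, so $\BGL_1(\1_{\proet,\ell}^\comp)$ inherits descent. Next I would identify its sections on a connected w-contractible $W \to S$, which by cofinality may be taken strictly henselian. Chaining Proposition \ref{prop:strongly-finite-ff}(3) with rigidity over strictly henselian local rings gives symmetric monoidal equivalences $\SH_\proet(W)_\ell^\comp \wequi \SH_\et(W)_\ell^\comp \wequi \ShvSp_\ell^\comp$, sending unit to unit, so $\BGL_1(\1_{\proet,\ell}^\comp)(W) \wequi \BGL_1(\1_\ell^\comp)$. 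This is a connected nilpotent space with $\pi_1 = \Z_\ell^\times$ and $\pi_i \wequi \pi_{i-1}\1_\ell^\comp$ for $i \ge 2$; the higher homotopy groups are derived $\ell$-complete hence $\scr L$-complete, and $\Z_\ell^\times$ splits as its $\scr L'$-torsion part ($\mu_{\ell-1}$ for $\ell$ odd, $\{\pm 1\}$ for $\ell=2$) times an $\ell$-complete part, so it too is $\scr L$-complete. Hence $\BGL_1(\1_\ell^\comp)$ is $\scr L$-complete, and by Corollary \ref{cor:htpy-dim-0-compl}(1) so is $\BGL_1(\1_{\proet,\ell}^\comp)$.

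To establish the equivalence with $L_\scr L\pi^*\BGL_1(\1_\ell^\comp)$, I would use the unit map $\Ss \to \1_{\proet,\ell}^\comp$ of $\scr E_\infty$-rings in $\SH_\proet(\ph)_\ell^\comp$ to produce a natural comparison $\pi^*\BGL_1(\1_\ell^\comp) \to \BGL_1(\1_{\proet,\ell}^\comp)$, which factors through $L_\scr L\pi^*\BGL_1(\1_\ell^\comp)$ by the $\scr L$-completeness just shown. By Proposition \ref{prop:htpy-dim-0-PSigma}(2), equivalences of pro-étale hypersheaves may be checked on w-contractible sections; on a connected w-contractible $W$, the constant-sheaf property of $\pi^*$ together with Corollary \ref{cor:htpy-dim-0-compl}(1) evaluates the left-hand side to $L_\scr L\BGL_1(\1_\ell^\comp) \wequi \BGL_1(\1_\ell^\comp)$, matching the stalk already computed, and the induced map is tautologically the identity.

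For the splitting I would apply Lemma \ref{lem:L-splitting} to the nilpotent space $\BGL_1(\1_\ell^\comp)$, obtaining $L_\scr L \wequi L_{\scr L'}\times L_\ell$ on it, and transport through the equivalence above (or apply sectionwise). Finally, for the identification, the $\scr L'$-completion of the simple space $\BGL_1(\1_\ell^\comp)$ is computed on homotopy groups: the higher ones are derived $\ell$-complete and so annihilated by $L_{\scr L'}$ (multiplication by $\ell$ cannot be simultaneously invertible and topologically nilpotent on a nonzero group), while $\pi_1 = \Z_\ell^\times$ is cut down to its $\scr L'$-torsion summand $\mu_{\ell-1} \wequi \Z/(\ell-1)$ (trivial for $\ell=2$, matching $K(0,1)=*$). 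This yields $L_{\scr L'}\BGL_1(\1_\ell^\comp) \wequi K(\Z/(\ell-1),1)$, and pulling back gives the final claim.

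The principal obstacle will be the stalk identification: carefully composing the symmetric monoidal equivalences from Proposition \ref{prop:strongly-finite-ff}(3) and rigidity for strictly henselian local rings, while tracking the unit object through each step, so that the reduction to a classical computation about $\BGL_1(\1_\ell^\comp)$ is legitimate.
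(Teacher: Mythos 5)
Your overall strategy---hypersheaf property, sectionwise analysis on w-contractible objects via Corollary \ref{cor:htpy-dim-0-compl}(1), and Lemma \ref{lem:L-splitting} for the splitting---is the same as the paper's, but the central reduction step has a genuine gap. You propose to check everything on \emph{connected} w-contractible $W$, ``which by cofinality may be taken strictly henselian''. There is no such cofinality: $\Shv_\proet(\Sch_S)$ is generated by w-contractible objects, and these are typically highly disconnected (their set of connected components is a profinite set); the connected components are neither pro-étale open nor do finitely many of them cover $W$, so an equivalence of hypersheaves cannot be tested on connected w-contractibles alone. This is not a technicality, because on a general w-contractible $W$ the two sides genuinely differ \emph{before} completion: the sections of $\pi^*\BGL_1(\1_\ell^\comp)$ have $\pi_1 \wequi C(|W|, d\Z_\ell^\times)$, continuous (i.e. locally constant) maps into the \emph{discrete} group $d\Z_\ell^\times$, whereas $\BGL_1(\1_{\proet,\ell}^\comp)(W) \wequi B\Aut_{\ShvSp(W_\proet)_\ell^\comp}(\1)$ has $\pi_1 \wequi C(|W|, \Z_\ell^\times)$ with its profinite topology, and $\pi_{i+1} \wequi C(|W|, \pi_i^s \otimes \Z_\ell)$. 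The entire point of the $L_{\scr L}$ in the statement is to bridge this discrepancy; the paper does so by the bounded-torsion computation $\lim_n C(|W|, d\Z_\ell^\times)/s^n \wequi C(|W|, \Z_\ell^\times)$ with $s=\ell(\ell-1)$, using exactness of $C(|W|,\ph)$ on discrete abelian groups, together with the (easier) analogous identification in higher degrees. Your argument, which only ever sees connected stalks, would in effect ``prove'' the stronger and false claim that $\pi^*\BGL_1(\1_\ell^\comp)$ is already $\scr L$-complete as a pro-étale sheaf, with no completion needed.

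The remaining ingredients are fine and agree with the paper: the hypersheaf property because the defining condition is pro-étale local, the identification of $\Aut_{\SH_\proet(W)_\ell^\comp}(\1)$ with $\Aut_{\ShvSp(W_\proet)_\ell^\comp}(\1)$ on w-contractible $W$ via Proposition \ref{prop:strongly-finite-ff} (the paper uses part (1); your route through part (3) and rigidity only makes sense after the unjustified reduction to connected $W$), and the final splitting together with $L_{\scr L'}\BGL_1(\1_{\proet,\ell}^\comp) \wequi K(\Z/(\ell-1),1)$ via Lemma \ref{lem:L-splitting}. To repair the proof you must carry out the homotopy-group computation of both sides on an \emph{arbitrary} w-contractible $W$, as the paper does, and verify that the comparison map becomes an equivalence only after applying $L_{\scr L}$.
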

For the proof and some of the following arguments, we use the following fact.\NB{move this elsewhere?}
For a scheme $X$, write $|X|$ for the underlying topological space.
For another topological space $Y$, write $C(|X|, Y)$ for the set of continuous maps from $|X|$ to $Y$.
Now if $A$ is a discrete abelian group, then the associated pro-étale sheaf satisfies \begin{equation}\label{eq:aproet} (a_\proet A)(X) = C(|X|, A). \end{equation}
Indeed this follows from Lemma \ref{prop:htpy-dim-0-PSigma} in case $X$ is $w$-contractible and \cite[Lemma 3.8]{bachmann-SHet} in general.
\begin{proof}
Since $\SH_\proet(\ph)_\ell^\comp$ is a pro-étale hypersheaf by Lemma \ref{lem:SH-proet-descent}, to see that the subpresheaf $\BGL_1(\1_{\proet,\ell}^\comp)$ is a hypersheaf it suffices to see that the defining condition is pro-étale local, which it is by construction.
The functor $\ShvSp_\ell^\comp \to \SH_\proet(S)_\ell^\comp$ induces a canonical map $\pi^* (\BGL_1(\1_\ell^\comp)) \to \BGL_1(\1_{\proet,\ell}^\comp)$, which we shall show is an $\scr L$-completion.
By Corollary \ref{cor:htpy-dim-0-compl} it suffices to show that this holds on sections over $W \in \Sch_S$ $w$-contractible.

The homotopy groups of $\BGL_1(\1_\ell^\comp)$ are given by $*$ in degree $0$, $\Z_\ell^\times$ in degree $1$, and $\pi_i \1_\ell^\comp = \pi_i^s \otimes \Z_\ell$ in degree $i+1$, $i \ge 1$.
Consequently by \eqref{eq:aproet}, the homotopy groups of $\pi^* (\BGL_1(\1_\ell^\comp))(W)$ are given by $*$, $C(|W|, d\Z_\ell^\times)$ and $C(|W|, \pi_i^s \otimes \Z_\ell)$, where $d\Z_\ell^\times$ denotes $\Z_\ell^\times$ with the discrete topology.
Except for $\pi_1$, all of these are bounded $\scr L$-torsion and hence derived $\scr L$-complete.
Even on $\pi_1$ the torsion is bounded, and so its derived $\scr L$-completion coincides with the classical one.
Using exactness of $C(|W|, \ph)$ on discrete abelian groups (see e.g. \cite[proof of Corollary 2.18]{bachmann-eta}) this works out as \[ \lim_n C(|W|, d\Z_\ell^\times)/s^n \wequi \lim_n C(|W|, d\Z_\ell^\times/s^n) \wequi C(|W|, \Z_\ell^\times), \] where $s=\ell(\ell-1)$.
Thus we have shown that the homotopy groups of $L_\scr{L} \pi^* (\BGL_1(\1_\ell^\comp))(W)$ are given by $*, C(|W|, \Z_\ell^\times)$ and $C(|W|, \pi_i^s \otimes \Z_\ell)$, and we need to prove that the same is true for the homotopy groups of $\BGL_1(\1_{\proet,\ell}^\comp)(W)$.
$w$-contractibility implies that this is just $B\Aut_{\SH_\proet(W)_\ell^\comp}(\1)$, which by Proposition \ref{prop:strongly-finite-ff}(1) is the same as $B := B\Aut_{\ShvSp(W_\proet)_\ell^\comp}(\1)$.
Let $X = \End_{\ShvSp(W_\proet)}(\1)$.
By Corollary \ref{cor:htpy-dim-0-conn}(4) and \eqref{eq:aproet} we deduce that $\pi_i X = C(|W|, \pi_i^s)$.
Again by bounded $\ell$-torsion, from this we get $\pi_i X_\ell^\comp \wequi C(|W|, \pi_i^s \otimes \Z_\ell)$.
Since $\pi_0 B = *$, $\pi_1 B = \pi_0(X_\ell^\comp)^\times$ and $\pi_{i+1}B = \pi_i X_\ell^\comp$ for $i \ge 1$, we see that $B$ has the desired homotopy groups.

For the final statement, using Lemma \ref{lem:L-splitting}, it suffices to prove that $\BGL_1(\1_\ell^\comp) \wequi K(\Z/(\ell-1), 1) \times L_\ell \BGL_1(\1_\ell^\comp)$, which is clear.
\end{proof}

\begin{rmk} \label{rmk:BGL-htpy-sheaves}
In fact we have determined (in the course of the proof) all the homotopy sheaves of $\BGL_1(\1_{\proet,\ell}^\comp)$: $\ul{\pi}_0 = *$, $\ul{\pi}_1 \wequi C(|\ph|, \Z_\ell^\times)$ and $\ul{\pi}_{i+1} \wequi C(|\ph|, \pi_i^s \otimes \Z_\ell)$ for $i \ge 1$.
\end{rmk}

\begin{cor} \label{cor:BGL1-trunc}
We have \[ \tau_{\ge 2} \BGL_1(\1_{\proet,\ell}^\comp) \wequi L_\ell \pi^* \tau_{\ge 2} \BGL_1(\1) \in \Shv_\proet(\Sch_S). \]
\end{cor}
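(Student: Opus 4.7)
From Proposition \ref{prop:BGL1-complete} we have the splitting
\[ \BGL_1(\1_{\proet,\ell}^\comp) \wequi K(\Z/(\ell-1), 1) \times L_\ell \BGL_1(\1_{\proet,\ell}^\comp) \]
with $1$-truncated first factor, as well as $\BGL_1(\1_{\proet,\ell}^\comp) \wequi L_\scr{L} \pi^*\BGL_1(\1_\ell^\comp)$. The unit $\1 \to \1_\ell^\comp$ induces $\tau_{\ge 2}\BGL_1(\1) \to \tau_{\ge 2}\BGL_1(\1_\ell^\comp) \to \BGL_1(\1_\ell^\comp)$; composing with $\pi^*$ and the $L_\scr{L}$-localization produces a map $\pi^*\tau_{\ge 2}\BGL_1(\1) \to \BGL_1(\1_{\proet,\ell}^\comp)$. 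Since $\pi^*$ is left exact it preserves $1$-connectivity, so this factors through $\tau_{\ge 2}\BGL_1(\1_{\proet,\ell}^\comp)$. The latter is $\ell$-complete: applying Corollary \ref{cor:htpy-dim-0-compl} sectionwise, it is a $1$-connective pro-étale hypersheaf whose sections over $w$-contractibles are simply connected spaces with $\ell$-complete homotopy groups, as computed in the proof of Proposition \ref{prop:BGL1-complete}. Thus the map descends to
\[ \Phi\colon L_\ell \pi^* \tau_{\ge 2}\BGL_1(\1) \longrightarrow \tau_{\ge 2}\BGL_1(\1_{\proet,\ell}^\comp). \]

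To show $\Phi$ is an equivalence it suffices, by Corollary \ref{cor:htpy-dim-0-conn} and Corollary \ref{cor:htpy-dim-0-compl}(1), to check it on sections over a $w$-contractible $W$. By \eqref{eq:aproet}, the space $(\pi^*\tau_{\ge 2}\BGL_1(\1))(W)$ is simply connected with $\pi_i = C(|W|, \pi_{i-1}^s)$ for $i \ge 2$. Since $\pi_{i-1}^s$ is a finite abelian group for $i \ge 2$, each $C(|W|,\pi_{i-1}^s)$ has bounded exponent, so unstable $\ell$-completion simply $\ell$-completes each homotopy group, producing $C(|W|, \pi_{i-1}^s \otimes \Z_\ell)$. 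These coincide with the homotopy groups of $\tau_{\ge 2}\BGL_1(\1_{\proet,\ell}^\comp)(W)$ given in the proof of Proposition \ref{prop:BGL1-complete} (cf.\ Remark \ref{rmk:BGL-htpy-sheaves}), and $\Phi$ induces the canonical identification by naturality.

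The chief delicacy is verifying the naturality claim in the last step—that $\Phi$ realizes the identity on each $C(|W|, \pi_{i-1}^s \otimes \Z_\ell)$ rather than some other endomorphism. This follows directly from the construction of $\Phi$ via the unit $\1 \to \1_\ell^\comp$: the induced map $\pi_{i-1}^s \to \pi_{i-1}^s \otimes \Z_\ell$ is the canonical $\ell$-completion, which becomes an isomorphism after applying $L_\ell$ since the target is already $\ell$-complete.
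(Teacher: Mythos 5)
Your proof is correct and is essentially the paper's argument, spelled out in detail: the paper's one-line proof ("compare homotopy sheaves using Remark \ref{rmk:BGL-htpy-sheaves} and Corollary \ref{cor:htpy-dim-0-compl}") is precisely your computation that both sides have homotopy sheaves $C(|\ph|, \pi_i^s \otimes \Z_\ell)$ in degree $i+1$ for $i \ge 1$, with the natural comparison map inducing the canonical identification. The only genuine content you added is the construction of the map $\Phi$ from the unit $\1 \to \1_\ell^\comp$ and Proposition \ref{prop:BGL1-complete}, which the paper leaves implicit; your justification via left exactness of $\pi^*$ and sectionwise $\ell$-completeness on $w$-contractibles is sound.
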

\begin{proof}
Compare homotopy sheaves using Remark \ref{rmk:BGL-htpy-sheaves} and Corollary \ref{cor:htpy-dim-0-compl}.
\end{proof}

\subsection{Étale motivic $j$-homomorphisms} \label{subsec:etale-mot-j}
The motivic $j$-homomorphism \cite[\S16.2]{norms} induces a map \[ K \to \SH(\ph) \in \PSh(\Sch_S). \]
Composing with $\SH(\ph) \to \SH_\proet(\ph)_\ell^\comp$ and passing to the rank zero summand in $K$-theory we obtain \[ K^\circ \to \BGL_1(\1_{\proet,\ell}^\comp) \in \PSh(\Sch_S). \]

\begin{prop} \label{prop:J-factors}
The map $K^\circ \to \BGL_1(\1_{\proet,\ell}^\comp) \in \CMon(\PSh(\Sch_S))$ factors (uniquely up to homotopy) through $\tau_{\ge 2}\BGL_1(\1_{\proet,\ell}^\comp)$.
\end{prop}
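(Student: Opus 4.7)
Consider the fiber sequence of grouplike $\scr E_\infty$-monoids in $\PSh(\Sch_S)$
\[
\tau_{\ge 2}\BGL_1(\1_{\proet,\ell}^\comp) \to \BGL_1(\1_{\proet,\ell}^\comp) \xrightarrow{q} \tau_{\le 1}\BGL_1(\1_{\proet,\ell}^\comp).
\]
By Remark~\ref{rmk:BGL-htpy-sheaves}, the target identifies with the Eilenberg--Mac Lane sheaf $K(\ul{\Z_\ell^\times}, 1)$ with $\ul{\Z_\ell^\times} = C(|\ph|, \Z_\ell^\times)$. Producing a lift of the motivic $J$-homomorphism $J: K^\circ \to \BGL_1(\1_{\proet,\ell}^\comp)$ through $\tau_{\ge 2}$, uniquely up to homotopy, amounts to exhibiting a null-homotopy of $q\circ J$ in $\scr E_\infty$-maps such that the space of such null-homotopies is contractible.

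Since both source and target are grouplike $\scr E_\infty$-monoids, a map between them as $\scr E_\infty$-monoids is the same as a map of connective sheaves of spectra, and therefore $\Map_{\scr E_\infty}(K^\circ, K(\ul{\Z_\ell^\times}, 1))$ is discrete with $\pi_0 \wequi \Hom(\ul\pi_1 K^\circ, \ul{\Z_\ell^\times})$, while its loop space at $0$ is $\Map_{\scr E_\infty}(K^\circ, \ul{\Z_\ell^\times}) \wequi \Hom(\ul\pi_0 K^\circ, \ul{\Z_\ell^\times}) = 0$ because $K^\circ$ is connected ($\ul\pi_0 K^\circ = 0$). Uniqueness is therefore automatic, and the problem reduces to showing that the induced sheaf homomorphism $\ul\pi_1(q\circ J): \ul\pi_1 K^\circ \to \ul{\Z_\ell^\times}$ is zero.

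Under the identification $\ul\pi_1 K^\circ \wequi \mathcal{O}^\times$ (from Lemma~\ref{lem:K-extended} together with the standard $\pi_1 K = K_1 = \mathcal{O}^\times$), it suffices to check vanishing on stalks at strictly henselian local rings $A$; since $1/\ell \in S$, the residue field $\bar k$ of such $A$ is separably closed of characteristic $\ne \ell$. By construction, the stalk map $A^\times \to \Z_\ell^\times$ agrees with the $\pi_1$ of the $\ell$-adic pro-étale realization of the motivic $J$-homomorphism, which factors as
\[
A^\times = K_1(A) \xrightarrow{a \mapsto \langle a \rangle} \pi_0 \GL_1(\1_{\SH(A)}) = K_0^{\mathrm{MW}}(A)^\times \longrightarrow \pi_0(\1_{\et,\ell}^\comp)(A)^\times \wequi \Z_\ell^\times,
\]
where the final identification uses rigidity of the étale-localized sphere (Theorem~3.1 of \cite{bachmann-SHet2}). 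This further factors through specialization to $K_0^{\mathrm{MW}}(\bar k) \wequi \mathrm{GW}(\bar k) \wequi \Z$ via the rank map. Over the separably closed $\bar k$, one has $\langle a \rangle = 1$: in characteristic $\ne 2$ because every element of $\bar k^\times$ is a square, and in characteristic $2$ (which only arises for odd $\ell$) because every non-degenerate symmetric bilinear form over the perfect field $\bar k$ is diagonalizable with entries equal to $1$. Either way, the image in $\Z_\ell^\times$ is the identity, and the $\ul\pi_1$-map vanishes.

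The principal obstacle is step (3), namely the explicit identification of the stalkwise $\pi_1$-map with the classical formula $a \mapsto \langle a \rangle$ and its passage through rigidity to the residue field. Once that formula is in hand, the vanishing over the separably closed $\bar k$ is a routine consequence of standard facts about $\mathrm{GW}$ of such fields.
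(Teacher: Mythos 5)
Your proposal is correct and follows essentially the same route as the paper's proof: reduce (via the discreteness of maps into the $1$-truncation, whose only homotopy sheaf is $C(|\ph|,\Z_\ell^\times)$) to showing the $J$-homomorphism kills $\ul\pi_1 K^\circ \wequi \Gm$, then check this over a separably/algebraically closed field where $J(u)=\lra{u}=1$ in $GW$. The only cosmetic difference is that you pass to stalks at strictly henselian rings and specialize to the residue field, whereas the paper uses the embedding $C(|X|,\Z_\ell^\times)\hookrightarrow\prod_{y\in |X|}\Z_\ell^\times$ and naturality to reduce directly to algebraically closed fields — the same maneuver in substance.
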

\begin{proof}
We know that $\ul\pi_0 K^\circ = * = \ul\pi_0 \BGL_1(\1_{\proet,\ell}^\comp)$, whereas $\ul\pi_1 K^\circ = \Gm$ and $\ul\pi_1 \BGL_1(\1_{\proet,\ell}^\comp) = C(|\ph|, \Z_\ell^\times)$.
It follows that $\Map(K^\circ, \tau_{\le 1} \BGL_1(\1_{\proet,\ell}^\comp))$ is discrete and \[ \Map(K^\circ, \tau_{\ge 2}\BGL_1(\1_{\proet,\ell}^\comp)) \to \Map(K^\circ, \BGL_1(\1_{\proet,\ell}^\comp)) \] is a monomorphism onto the subspace of those maps which are trivial on $\ul\pi_1$.
There will thus be a lift, unique up to homotopy, if (and only if) the $j$-homomorphism is trivial on $\ul\pi_1$.
Given $u \in \Gm(X) = \scr O_X(X)^\times$ the $j$-homomorphism yields $J(u) \in [\1, \1]_{\SH_\proet(X)_\ell^\comp}$.
We need to prove that the image of $J(u)$ in $C(|X|, \Z_\ell^\times)$ is $1$.
For any topological space $Y$ and topological abelian group $A$, we have $C(Y,A) \hookrightarrow \prod_{y \in Y}A$.
Naturality of $J(u)$ thus implies that we may assume that $X$ is the spectrum of an algebraically closed field.
We have a factorization \[ J: \Gm(X) \xrightarrow{J_0} [\1, \1]_{\SH(X)} \to C(X, \Z_\ell^\times). \]
It is consequently enough to show that $J_0(u) = 1 \in \pi_0(\1_{\SH(X)})$.
But $\pi_0(\1_{\SH(X)}) = GW(X) = \Z$ since $X$ is the spectrum of an algebraically closed field, and $J_0(u) = \lra{u}=1$ essentially by construction.
\end{proof}

Since the new target is an $\ell$-complete hypersheaf (e.g. by Corollary \ref{cor:BGL1-trunc}), we may sheafify and complete the source.
This way we obtain \[ J_{\proet,\ell}^\comp: L_\ell \pi^* \BU_\ell \wequi L_\ell L_\proet(K^\circ) \to \tau_{\ge 2}\BGL_1(\1_{\proet,\ell}^\comp) \wequi L_\ell \pi^* \tau_{\ge 2} \BGL_1(\1) \in \CMon(\Shv_\proet(\Sch_S)). \]
By Corollary \ref{cor:htpy-dim-0-compl} we have $L_\ell L_\proet(K^\circ) \wequi \Omega^\infty L_\proet(K_{\ge 1})_\ell^\comp$.
Since $\Omega^\infty: \ShvSp_\proet(\Sch_S) \to \CMon(\Shv_\proet(\Sch_S))$ is fully faithful onto the grouplike objects (see e.g. \cite[\S3.1.4]{EHKSY}), $J_{\proet,\ell}^\comp$ thus takes the form $\Omega^\infty j_{\proet,\ell}^\comp$, for a morphism \[ j_{\proet,\ell}^\comp: \pi^*(\bu_\ell)_\ell^\comp \wequi L_\proet(K_{\ge 1})_\ell^\comp \to \pi^*((\bgl_1 \1)_{\ge 2})_\ell^\comp \in \ShvSp_\proet(\Sch_S). \]
Here $\bgl_1 \1 \in \ShvSp$ denotes the classical unit spectrum.

\begin{rmk} \label{rmk:J-stable-basechange}
The morphisms $J_{\proet,\ell}^\comp$ and $j_{\proet,\ell}^\comp$ are stable under base change, since their source and target are and the motivic $j$-homomorphisms are natural.
\end{rmk}

Let $S_0=\Spec \Z[1/\ell]$.
By Proposition \ref{prop:strongly-finite-ff}(2), the functor $\nu^*: \ShvSp((S_0)_\et)_\ell^\comp \to \ShvSp_\proet(\Sch_{S_0})_\ell^\comp$ is fully faithful.
It follows that $j_{\proet,\ell}^\comp$ must be of the form $\nu^* j_{\et,\ell,S_0}^\comp$.
Via base change we obtain \[ j_{\et,\ell}^\comp: \pi^*(\bu_\ell)_\ell^\comp \to \pi^*((\bgl_1 \1)_{\ge 2})_\ell^\comp \in \ShvSp(S_\et). \]
By Remark \ref{rmk:J-stable-basechange}, we have $j_{\proet,\ell}^\comp \wequi \nu^*j_{\et,\ell}^\comp$ over any $S$.

\subsection{Equivariant $j$-homomorphisms} \label{subsec:equivariant-j}
Recall the groups $G_\ell$ from Definition \ref{def:modified-cyc-char}.
Let $G_2' = \Z/2 * \Z$, and $G_\ell' = \Z$ for $\ell$ odd.
We have canonical maps $G_\ell' \to G_\ell$ (at odd $\ell$ corresponding to the diagonal $\Z \to \Z/(\ell -1) \times \Z_\ell \wequi \Z_\ell^\times$, and at $\ell=2$ corresponding to the map to the pro-$2$-completion).
\begin{lem} \label{lemm:BGl-ff}
The canonical symmetric monoidal functor \[ \ShvSp(B_\et G_\ell)_\ell^\comp \to (\ShvSp^{BG_\ell'})_\ell^\comp \] is fully faithful, with essential image those spectra $E$ such that the action of $G_\ell'$ on $\pi_*(E/\ell)$ extends continuously to $G_\ell$.
Similarly for $G'$ a free group on finitely many generators, and $G$ its pro-$\ell$ completion.
\end{lem}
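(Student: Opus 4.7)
The plan is to apply Lemma~\ref{lem:describe-SHetp-BG} to the dense inclusion $q : G_\ell' \to G_\ell$ (and, for the final assertion, to a finitely generated free group $G'$ inside its pro-$\ell$ completion $G$). Once the parenthetical hypotheses of that lemma are verified, its conclusion yields precisely the full faithfulness and the description of the essential image claimed here. For $\ell$ odd, $G_\ell = \Z_\ell^\times \wequi \mu_{\ell-1} \times \Z_\ell$ and $q$ is the diagonal embedding of $\Z$; this case is Example~\ref{ex:SH-BG}(4). The real work therefore lies in $\ell = 2$: the obstacle is that $G_2 = \Z/2 *_2 \Z_2$ contains $2$-torsion, so its $2$-cohomological dimension is infinite, and the family of \emph{all} open subgroups cannot be used.

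To sidestep this I would pass to a torsion-free open subgroup. The projection $G_2' = \Z/2 * \Z \twoheadrightarrow \Z/2$ killing the $\Z$ factor extends by continuity to $G_2 \to \Z/2$ (the target being pro-$2$); let $N \subset G_2$ denote its kernel and set $N' := q^{-1}(N) \subset G_2'$. No conjugate of the $\Z/2$ factor of $G_2'$ lies in $N'$, so $N'$ is torsion-free, whence free of finite rank by Kurosh's subgroup theorem. The open subgroups $K_i \subset G_2$ contained in $N$ form a cofinal family (intersect any open subgroup with $N$), and for each such $K_i$ the preimage $q^{-1}(K_i)$ is a finite-index subgroup of the free group $N'$, hence itself free of finite rank by Nielsen--Schreier; thus $B q^{-1}(K_i)$ is a finite $1$-dimensional complex. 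The free-group case in the second assertion is handled identically, with $N = G$ and $N' = G'$ from the outset.

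It remains to verify that $\colim_i H^n(q^{-1}(K_i), \Z/\ell) = 0$ for $n > 0$, which by the preceding paragraph is only nontrivial at $n = 1$. Given $\phi : q^{-1}(K_i) \to \Z/\ell$, choose an open normal subgroup $M$ of $G$ contained in $K_i$ (e.g.\ the normal core). Then $q^{-1}(M) \subset \ker\phi$ is open in $G'$ in the pro-$\ell$ topology, so the intermediate subgroup $\ker\phi$ is as well, i.e.\ $\ker\phi = q^{-1}(K_j)$ for some open $K_j \subset K_i$ (in the $\ell = 2$ case automatically $K_j \subset N$ because density of $q$ makes $q^{-1}$ injective and inclusion-preserving on open subgroups). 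Hence $\phi$ dies in the colimit. Lemma~\ref{lem:describe-SHetp-BG} then applies and delivers the claim.
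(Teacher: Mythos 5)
Your reduction to the torsion-free situation is fine and is essentially the paper's own: your $N'$ is exactly the index-two free subgroup $\Z * \Z \subset \Z/2 * \Z$ the paper passes to, the open subgroups of $G_2$ contained in $N$ are cofinal, and their preimages are finitely generated free by Nielsen--Schreier, so the finiteness and bounded-dimension hypotheses of Lemma \ref{lem:describe-SHetp-BG} hold. The problem is your last paragraph, which is where all the content of the lemma lives. Given $\phi\colon q^{-1}(K_i) \to \Z/\ell$ you assert that $q^{-1}(M) \subset \ker\phi$ for $M$ the normal core of $K_i$. This is simply false: taking $K_i = N$ (or $K_i = G$ in the free-group case), $K_i$ is its own normal core, so the assertion would say that every homomorphism from the free group $q^{-1}(K_i)$ to $\Z/\ell$ is trivial. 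More generally the containment would force every character of the nonabelian free group $q^{-1}(K_i)$ to factor through the fixed finite quotient $q^{-1}(K_i)/q^{-1}(M) \wequi K_i/M$, which never happens. If instead you intend to choose $M$ depending on $\phi$ so that $q^{-1}(M) \subset \ker\phi$, you are assuming exactly what has to be proved, namely that $\phi$ is continuous for the topology induced from $G$, i.e.\ that $\ker\phi$ contains the preimage of some open subgroup. This is not a formality: in a finitely generated free group a subgroup of $\ell$-power index need not contain any normal subgroup of $\ell$-power index (pull back an index-$\ell$ non-normal subgroup of a suitable finite quotient), so "\,$\ell$-power index\," does not by itself give openness in the pro-$\ell$ topology, and your inference that $\ker\phi = q^{-1}(K_j)$ collapses.

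This vanishing of $\colim_i H^1(q^{-1}(K_i), \Z/\ell)$ is precisely the step the paper actually proves, by exhibiting for each basis element of $H^1$ of a finite bouquet of circles an explicit regular degree-$\ell$ cover (with $C_\ell$-action, hence corresponding to the kernel of a character) annihilating it. An alternative repair of your argument would be to show that $q^{-1}(K_i) \to K_i$ is itself a pro-$\ell$ completion: the map is dense, and both sides are free (discrete, resp.\ pro-$\ell$) of the same rank by the Nielsen--Schreier formula applied to the common index $[G:K_i]$, so every $\Z/\ell$-valued character of $q^{-1}(K_i)$ extends continuously to $K_i$ and therefore dies on $q^{-1}(K_j)$ for $K_j$ the kernel of that extension. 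Some input of this kind is indispensable; as written, your proof of the colimit vanishing has no content.
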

\begin{proof}
For $\ell$ odd, the first statement is Example \ref{ex:SH-BG}(4).

For $\ell=2$, we verify the assumptions of Lemma \ref{lem:describe-SHetp-BG}.
Write $q: P=\Z/2 * \Z \to \Z/2 *_2 \Z_2=G$ for the canonical map.
Since this is a pro-$2$-completion, it suffices to exhibit a cofinal family of subgroups $K_i' \subset P$ of $2$-power indices such that $BK_i'$ is a finite complex of bounded dimension, and $\colim_i H^*(BK_i', \Z/2) = 0$ for $*>0$.

Note that $S^1 \vee \RR\P^\infty$ admits $S^1 \vee S^1$ as a covering space of degree $2$ (attach two copies of $S^1$ at antipodal points of $S^n$ and let $n \to \infty$).
Equivalently, $\Z/2 * \Z$ admits a subgroup of index $2$ isomorphic to $\Z * \Z$.\NB{Surely generated by $b$ and $aba$, where $a,b$ are the generators of $\Z/2,\Z$...?}
We may thus replace $P$ by $\Z * \Z$.
Thus we have arrived at a special case of the last assertion.

In the case of the last assertion, any subgroup of finite index is finitely generated free (by the Nielsen--Schreier formula), and hence it suffices to prove the following: if $X = S^1 \vee \dots \vee S^1$ is a finite bouquet of circles, and $e_i \in H^1(X, \Z/\ell)$ is one of the canonical basis elements, then there exists a covering space $X' \to X$ of $\ell$-power degree such that $H^*(X, \Z/\ell) \to H^*(X', \Z/\ell)$ annihilates $e_i$.
Think of $X$ as obtained from $S^1$ corresponding to $e_i=e$ by attaching $n$ copies of $S^1$ around it.
Let $X'$ be obtained from $S^1$ by attaching $\ell n$ copies of $S^1$.
Give $X'$ the $C_\ell$ action which acts by the $\ell$-th power on the central circle and ``moves along'' the attached circles.\NB{better description...}
Then $X' \to X'/C_\ell \wequi X$ is an $\ell$-fold cover which induces the $\ell$-th power map on the central circle and hence annihilates $e$.
This concludes the proof.
\end{proof}

Recalling (e.g. from \cite[Lemma 3.8]{dwyer1980simplicial}) that $BG \vee BK \wequi B(G * K)$ we see that an object of $(\ShvSp^{BG_2'})_2^\comp$ just consists of a spectrum $E \in \ShvSp_2^\comp$ together with maps $B\Z/2, B\Z \to B\Aut(E)$, i.e. a $\Z/2$-action and an automorphism of $E$.
Similarly an object of $(\ShvSp^{BG_\ell'})_\ell^\comp$ is just $E \in \ShvSp_\ell^\comp$ together with an automorphism.
\begin{cnstr}
Consider the classical stable $j$-homomorphism \[ j: \bu \to \bgl_1 \1 \in \ShvSp. \]
We can give this a $\Z/2$-action coming from complex conjugation.
Moreover, after completing at a prime $\ell$, we can give it a $\Z$-action coming from the stable Adams conjecture.\footnote{That is, we want to act by the Adams operation on the source and trivially on the target, and the stable Adams conjecture shows that $j$ then enhances to an equivariant map---i.e. $j \circ \psi \wequi j$.}
Taking thom spectra, we obtain \[ \MU_\ell^\comp \in \CAlg((\ShvSp^{BG_\ell})^\comp). \]
\end{cnstr}
\begin{rmk} \label{rmk:action-alternative-construction}
An alternative construction of these actions and spectra is as follows.
In the following diagram
\begin{equation*}
\begin{tikzcd}
G_\ell' \ar[r, dotted] \ar[d] & Gal(\QQ) \ar[ld, "\tilde\pi"] \\
G_\ell
\end{tikzcd}
\end{equation*}
a lift exists as indicated.\NB{Using that $\pi$ is surjective and complex conjugation lifts the $\Z/2$}
Now consider the $Gal(\QQ)$-equivariant realization of $j_{\et,\ell}^\comp$ (respectively of $\MGL$) into $(\ShvSp^{B Gal(\QQ)})_2^\comp$ and restrict along the lift.
(In fact this is one way of \emph{proving} the stable Adams conjecture.)
\end{rmk}
Noting that $\bu \in \ShvSp_{\ge 2}$, the classical $j$ factors through $(\bgl_1 \1)_{\ge 2}$.
Using Lemma \ref{lemm:BGl-ff}, we thus obtain \begin{equation} \label{eq:jG} j_{G_\ell,\ell}^\comp: \bu_\ell \to \pi^*(\bgl_1\1)_{\ge 2})_\ell^\comp \in \ShvSp(B_\et G_\ell). \end{equation}
Moreover there is at most one object \begin{equation} \label{eq:MUl} \MU_\ell \in \CAlg(\ShvSp(B_\et G_\ell)_\ell^\comp) \end{equation} inducing the given actions on $\MU_\ell^\comp \in \CAlg(\ShvSp_\ell^\comp)$.
In fact this exists: take the Galois equivariant realization of $\MGL$ and restrict to $G_\ell'$, as in Remark \ref{rmk:action-alternative-construction}.

\subsection{$J_{\proet,\ell}^\comp$ as an $\scr E_0$-algebra map} \label{subsec:J-E0}
We denote by $\Z/\ell^n(1) \in \Shv_\et(B\Z_\ell^\times)$ the sheaf corresponding to $\mu_{\ell^n}$.
We also put \[ \Z/\ell^\infty(1) = \colim_n \Z/\ell^n(1) \in \Shv_\et(B\Z_\ell^\times), \] \[ H\Z_\ell(1) = \Sigma^{-1} (H\Z/\ell^\infty(1))_\ell^\comp \in \ShvSp(B_\et \Z_\ell^\times) \] and \[ \Z_\ell(1) = \lim_n \nu^* \Z/\ell^n(1) \in \Shv_\proet(B\Z_\ell^\times). \]
Beware that in contrast with $H\Z_\ell(1) \in \ShvSp(B_\proet\Z_\ell^\times)$, the object $H\Z_\ell(1) \in \ShvSp(B_\et \Z_\ell^\times)$ is not actually obtained by applying the functor $H$ to any sheaf of abelian groups on $B\Z_\ell^\times$.
\begin{lem} \label{lem:HZl-basics} \hfill
\begin{enumerate}
\item $L_\ell B \nu^*\Z/\ell^\infty(1) \wequi B^2 \Z_\ell(1) \in \CMon(\Shv_\proet(B\Z_\ell^\times))$
\item $\nu^*(H\Z_\ell(1))_\ell^\comp \wequi H\Z_\ell(1) \in \ShvSp(B_\proet\Z_\ell^\times)$
\item Under the embedding $\ShvSp(B_\et\Z_\ell^\times)_\ell^\comp \hookrightarrow \ShvSp^{B(\mu \times \Z)}$ of Lemma \ref{lem:describe-SHetp-BG}, the spectrum $H\Z_\ell(1)$ just corresponds to $H\Z_\ell$ with the weight one action.
\item $H\Z_\ell(1) \in \ShvSp(B_\et \Z_\ell^\times)_{\ge -1}$.
\item The canonical map $\pi^* \Z/\ell^\infty(1) \wequi \mu_{\ell^\infty} \to \Gm$ induces \[ L_\ell \pi^*\nu^* B\Z/\ell^\infty(1) \wequi L_\ell B\Gm \in \CMon(\Shv_\proet(\Sch_S)). \]
\end{enumerate}
\end{lem}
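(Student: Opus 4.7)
My plan rests on the spectral observation that, for a sheaf of abelian groups $M$ on which $\ell$ acts invertibly, $(HM)_\ell^\comp = 0$; so any cofiber sequence $A \to B \to C$ of abelian sheaves with $B$ uniquely $\ell$-divisible yields $(HC)_\ell^\comp \wequi \Sigma (HA)_\ell^\comp$ after $\ell$-completion. I will apply this principle in parts (1), (3), and (5).

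For (3), under the embedding of Lemma \ref{lem:describe-SHetp-BG}, each $\Z/\ell^n(1) = \mu_{\ell^n}$ pulls back to $\Z/\ell^n$ with $\mu \times \Z$ acting through its image in $(\Z/\ell^n)^\times$, i.e.\ the weight-one action; so $H\Z/\ell^\infty(1)$ corresponds to $H(\QQ_\ell/\Z_\ell)$ with weight-one action. Applying $\ell$-completion to $H\Z_\ell \to H\QQ_\ell \to H(\QQ_\ell/\Z_\ell)$ (middle term uniquely $\ell$-divisible) gives $(H(\QQ_\ell/\Z_\ell))_\ell^\comp \wequi \Sigma H\Z_\ell$, so $H\Z_\ell(1) = \Sigma^{-1}(H\Z/\ell^\infty(1))_\ell^\comp$ corresponds to $H\Z_\ell$ with weight-one action. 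Part (4) is then immediate: by (3) the completed stalk $\varpi^* H\Z_\ell(1) \wequi H\Z_\ell$ is connective, so Lemma \ref{lemm:SH-BZl-connectivity} gives $H\Z_\ell(1) \in \ShvSp(B_\et \Z_\ell^\times)_{\ge -1}$.

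For (2), I would argue directly: $\ell$-completion commutes with $/\ell^n$ and $\nu^*$ is exact, so $(\nu^* H\Z_\ell(1))_\ell^\comp \wequi \lim_n \nu^*(H\Z_\ell(1)/\ell^n) \wequi \lim_n H\nu^*\Z/\ell^n(1)$. The transition maps $\nu^*\Z/\ell^{n+1}(1) \to \nu^*\Z/\ell^n(1)$ are epimorphisms of pro-étale sheaves (being $\nu^*$ of epis), so this limit agrees with $H$ of $\lim_n \nu^*\Z/\ell^n(1) = \Z_\ell(1)$, as desired. For (1), I would use the identification $\nu^*\Z/\ell^\infty(1) \wequi \colim_n \Z_\ell(1)/\ell^n \wequi \Z_\ell(1)[1/\ell]/\Z_\ell(1)$, producing a fiber sequence $H\Z_\ell(1) \to H\Z_\ell(1)[1/\ell] \to H\nu^*\Z/\ell^\infty(1)$ whose middle term is uniquely $\ell$-divisible; $\ell$-completing and noting $\Z_\ell(1)$ is already derived $\ell$-complete (a limit of finite $\ell$-torsion sheaves) yields $(\Sigma H\nu^*\Z/\ell^\infty(1))_\ell^\comp \wequi \Sigma^2 H\Z_\ell(1)$. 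Applying $\Omega^\infty$ and invoking Corollary \ref{cor:htpy-dim-0-compl}(2) (which applies because the relevant spectrum is connective and we are in the $w$-contractible-generated pro-étale topos) gives $L_\ell B\nu^*\Z/\ell^\infty(1) \wequi B^2\Z_\ell(1)$.

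For (5), I first rewrite $\pi^*\nu^* B\Z/\ell^\infty(1) \wequi B\mu_{\ell^\infty}$ in $\Shv_\proet(\Sch_S)$, then apply the Kummer fiber sequence $\mu_{\ell^\infty} \to \Gm \to \Gm/\mu_{\ell^\infty}$. Pro-étale locally $\ell: \Gm \to \Gm$ is an epimorphism of sheaves (this is Kummer), and quotienting by $\mu_{\ell^\infty}$ removes all $\ell$-power torsion, so $\Gm/\mu_{\ell^\infty}$ is uniquely $\ell$-divisible. As in (1), applying $\Sigma H$, then $\ell$-completing (annihilating the middle term), then $\Omega^\infty$ via Corollary \ref{cor:htpy-dim-0-compl}(2) yields $L_\ell B\mu_{\ell^\infty} \wequi L_\ell B\Gm$. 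The main technical point throughout will be coordinating the interchange of $\ell$-completion with $\Omega^\infty$ and with $\nu^*$, which the preliminaries in Section~\ref{subsec:unstable-ell-comp} have set up precisely to handle.
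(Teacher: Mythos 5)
Your proof is correct and follows essentially the same route as the paper: part (1) uses the same exact sequence $\Z_\ell(1) \to \Z_\ell(1)[1/\ell] \to \Z/\ell^\infty(1)$ (the paper verifies its exactness pro-étale locally, which is what your colimit identification amounts to), parts (3) and (4) are the same short observations, and part (5) rests on the same key point that $\Gm/\mu_{\ell^\infty}$ is uniquely $\ell$-divisible, only run at the sheaf level via Corollary \ref{cor:htpy-dim-0-compl}(2) rather than sectionwise over $w$-contractible objects via part (1) of that corollary. The one step you leave implicit is that in (2) the interchange $\lim_n H(\nu^*\Z/\ell^n(1)) \wequi H(\Z_\ell(1))$ requires $\limone$-vanishing for towers of epimorphisms, which does hold here because limits and homotopy sheaves are computed sectionwise on $w$-contractibles (Corollary \ref{cor:htpy-dim-0-conn}), whereas the paper simply deduces (2) from (1).
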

\begin{proof}
(1) The following argument is happening entirely in pro-étale sheaves.
We have natural projections $\Z_\ell(1) \to \Z/\ell^n(1)$, fitting into sequences of sheaves $\Z_\ell(1) \xrightarrow{\ell^n} \Z_\ell(1) \to \Z/\ell^n(1)$ which we claim are exact.
Indeed this may be checked after passing to a pro-étale covering whence we may choose an isomorphism $\Z_\ell(1) \wequi \Z_\ell$, inducing $\Z/\ell^n(1) \wequi \Z/\ell^n$, and the claim is clear.
Taking the colimit over $n$ of these exact sequences, we get an exact sequence $\Z_\ell(1) \to \Z_\ell(1)[1/\ell] \to \Z/\ell^\infty(1)$.
The result follows by completing and taking classifying spaces.

(2) Immediate from (1).

(3) Immediate from the definitions.

(4) Immediate from (3) and Lemma \ref{lemm:SH-BZl-connectivity}.

(5) Using Corollary \ref{cor:htpy-dim-0-compl}(1), we need to prove that if $W$ is $w$-contractible, then \[ L_\ell B \mu_{\ell^\infty}(W) \wequi L_\ell B \Gm(W). \]
Since $W$ has no non-trivial étale covers, $\Gm(W)$ is $\ell$-divisible, and $\mu_{\ell^\infty}(W)$ is its $\ell^\infty$-torsion subgroup.
Thus we have an exact sequence $0 \to \mu_{\ell^\infty}(W) \to \Gm(W) \to C \to 0$ where $C$ is uniquely $\ell$-divisible, and the result follows.
\end{proof}

\begin{lem} \label{lemm:construct-b}
There is a canonical map\NB{name?} \[ b: B\Z/\ell^\infty(1) \to \widetilde{\BU}_\ell \in \Shv_\et(B\Z_\ell^\times)_* \] such that the induced map \[ L_\ell B\Gm \stackrel{L.\ref{lem:HZl-basics}(6)}{\wequi} L_\ell \pi^* \nu^* B\Z/\ell^\infty(1) \xrightarrow{\pi^*\nu^*b} L_\ell \pi^* \BU_\ell \stackrel{P.\ref{prop:etale-Kcirc-pulledback}}{\wequi} L_\ell L_\proet(K^\circ) \] is the canonical one.
\end{lem}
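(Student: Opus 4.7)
The plan is to construct $b$ by restricting the stable complex orientation of $\ku_\ell$ to the subgroup of $\ell$-power roots of unity inside $\Gm$, then to verify the compatibility condition by chasing through the equivalences of Proposition \ref{prop:K-pulled-back} and Proposition \ref{prop:etale-Kcirc-pulledback}. Since $\widetilde{\BU}_\ell = \Omega^\infty \bu_\ell$ with $\bu_\ell \in \ShvSp(B_\et \Z_\ell^\times)_\ell^\comp$, the $(\Sigma^\infty,\Omega^\infty)$-adjunction together with $\ell$-completeness identifies the space $\Map_{\Shv_\et(B\Z_\ell^\times)_*}(B\Z/\ell^\infty(1), \widetilde{\BU}_\ell)$ with $\Map_{\ShvSp(B_\et\Z_\ell^\times)_\ell^\comp}((\Sigma^\infty B\Z/\ell^\infty(1))_\ell^\comp, \bu_\ell)$. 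Applying Lemma \ref{lem:describe-SHetp-BG} (with $G = \Z_\ell^\times$, $P = \mu \times \Z$) embeds this latter category fully faithfully into $(\ShvSp^{B(\mu \times \Z)})_\ell^\comp$, where $\bu_\ell$ is identified with the classical $\bu_\ell^\comp$ carrying the Adams operation action, and the source with $(\Sigma^\infty B\mu_{\ell^\infty})_\ell^\comp$ carrying the weight-one action.

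Inside $(\ShvSp^{B(\mu \times \Z)})_\ell^\comp$ I define $\tilde b$ as the $\ell$-completion of the composite
\[
\Sigma^\infty B\mu_{\ell^\infty} \to \Sigma^\infty BU(1) \to \bu,
\]
where the first arrow comes from the topological group inclusion $\mu_{\ell^\infty} \hookrightarrow U(1)$ and the second is the canonical map classifying a line bundle $L$ as the virtual bundle $[L] - [\mathbf{1}]$. Equivariance for $k \in \mu \cup \Z \subset \Z_\ell^\times$ is checked directly: $k$ acts on $\mu_{\ell^\infty}$ by the $k$-th power map, while $\psi^k$ acts on $BU(1)$ by $L \mapsto L^{\otimes k}$, and these match via the inclusion $\mu_{\ell^\infty} \hookrightarrow U(1)$. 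The resulting action on mod $\ell$ homotopy groups extends continuously to $\Z_\ell^\times$ by the very construction of $\ku_\ell$, so $\tilde b$ lies in the essential image of Lemma \ref{lem:describe-SHetp-BG} and descends to the desired map; take $b = \Omega^\infty \tilde b$.

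For the compatibility statement, apply $L_\ell \pi^* \nu^*$ to $b$. The source identifies with $L_\ell B\Gm$ by Lemma \ref{lem:HZl-basics}(5), the target with $L_\ell L_\proet(K^\circ)$ by Proposition \ref{prop:etale-Kcirc-pulledback}, and under these identifications Proposition \ref{prop:K-pulled-back} matches the $\pi^*$-pullback of the Bott class we used with the tautological line-bundle classifier $B\Gm \to K^\circ$. Our $b$ was built from the inclusion $\mu_{\ell^\infty} \hookrightarrow \Gm$ composed with this classifier, so the resulting map is the canonical one essentially by inspection. The main obstacle is the promotion of the classical Bott map to a $(\mu \times \Z)$-equivariant morphism: this parallels (and reuses) the Goerss--Hopkins input behind the construction of $\ku_\ell$, with the key point being that Adams operations act on $BU(1)$ by the same power map by which $\Z_\ell^\times$ acts on $\mu_{\ell^\infty}$, so the weight-one matching on bottom cells propagates via the complex orientation of $\ku_\ell$.
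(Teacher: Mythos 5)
There is a genuine gap at the heart of your construction: the equivariance of $\tilde b$. To produce a morphism in the functor category $(\ShvSp^{B(\mu \times \Z)})_\ell^\comp$ you need a \emph{coherently} equivariant map, not merely a map of underlying spectra that commutes up to homotopy with the generators of $\mu\times\Z$. Checking that $\psi^k$ and the $k$-th power map agree "directly" on $B\mathrm{U}(1)$ only makes literal sense for integral $k$; the elements of $\mu\subset\Z_\ell^\times$ (and the topological generator of the $\Z$-factor) are $\ell$-adic units acting via the interpolated Adams operations on the $\ell$-complete object $\ku_\ell^\comp$ produced by Goerss--Hopkins, and how that action interacts with the orientation class is exactly what has to be proved, not an input. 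Even granting the homotopy-class-level commutation, for the finite factor $\mu$ at $\ell=2$ (where $|\mu|=2=\ell$) the obstructions to rigidifying do not vanish, and this is precisely the delicacy the paper spends so much effort on elsewhere (the passage to $G_2=\Z/2*_2\Z_2$). Worse, your closing appeal to "the complex orientation of $\ku_\ell$" as an equivariant datum is circular: the equivariant orientation of $\ku_\ell$ (and later of $\MU_\ell$) is constructed \emph{from} the map $b$ of this lemma. The paper sidesteps all of this by using the equivariant Snaith theorem: in $\ShvSp^{B(\mu\times\Z)}$ one has $\Sigma^\infty_+ B\Z/\ell^\infty[\beta^{-1}]_\ell^\comp \wequi \KU_\ell^\comp$, so the canonical map $\Sigma^\infty B\Z/\ell^\infty \to \Sigma^\infty_+ B\Z/\ell^\infty[\beta^{-1}]_\ell^\comp$ followed by the truncation $\tau_{\ge 1}$ (equivalently, pro-$\ell$-covers) lands in $\bu_\ell^\comp$ and is equivariant by construction, with no coherence to check.

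Your reduction via the $(\Sigma^\infty,\Omega^\infty)$-adjunction and Lemma \ref{lem:describe-SHetp-BG} does match the paper's first step, but the verification that $\pi^*\nu^* b$ is the canonical map is also too thin as written. The identification $L_\ell L_\proet(K^\circ) \wequi \Omega^\infty L_\proet(K_{\ge 1})_\ell^\comp$ passes through $L_{K(1)}K$, and the comparison of the Snaith-theoretic class with the tautological map $B\Gm \to K^\circ$ requires the observation (from the proof of Proposition \ref{prop:K-pulled-back}, i.e.\ Gabber--Suslin rigidity on stalks) that $K \to L_{K(1)}K$ becomes an equivalence after applying pro-$\ell$-connective covers $\tau_{\ge 1}^\ell$; "by inspection" does not substitute for this diagram chase.
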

\begin{proof}
It is equivalent to give a map $\Sigma^\infty B\Z/\ell^\infty \to \bu_\ell^\comp \in \ShvSp^{B(\mu \times \Z)}$.
In this category we have $\Sigma^\infty_+ B\Z/\ell^\infty[\beta^{-1}]_\ell^\comp \wequi \KU_\ell^\comp$ (by Snaith's theorem, see e.g. \cite[Theorem 3.6]{bhatt2020remarks}).
Hence we obtain the desired map as \[ \Sigma^\infty B\Z/\ell^\infty \wequi \tau_{\ge 1}  \Sigma^\infty B\Z/\ell^\infty \hookrightarrow \tau_{\ge 1} \Sigma^\infty_+ B\Z/\ell^\infty \to \tau_{\ge 1} \Sigma^\infty_+ B\Z/\ell^\infty[\beta^{-1}]_\ell^\comp \wequi \tau_{\ge 1} \KU_\ell^\comp \wequi \bu_\ell^\comp. \]
Equivalently, this map arises from $\Sigma^\infty B\Z/\ell^\infty \to \Sigma^\infty_+ B\Z/\ell^\infty[\beta^{-1}]_\ell^\comp \wequi \KU_\ell^\comp$ by taking pro-$\ell$-covers.

To show that the induced map is the canonical one, consider the commutative diagram in $\ShvSp_\proet(\Sch_S)$
\begin{equation*}
\begin{tikzcd}
\Sigma^\infty B\pi^*\nu^* \Z/\ell^\infty(1) \ar[r, "\wequi"] \ar[d] & \Sigma^\infty B\Gm \ar[d] \ar[rd, "c"] \\
\Sigma^\infty_+ B\pi^*\nu^* \Z/\ell^\infty(1) \ar[r]\ar[d] & \Sigma^\infty_+ B\Gm \ar[r] & K \ar[r, "w"] & L_{K(1)} K. \\
\pi^*\Sigma^\infty_+ B\nu^* \Z/\ell^\infty(1)[\beta^{-1}] \ar[urrr, "\wequi", bend right=10]
\end{tikzcd}
\end{equation*}
The maps marked $\wequi$ become equivalences upon $\ell$-adic completion.
The map $c$ is the canonical one.
The map $w$ becomes an equivalence when applying pro-$\ell$-covers; this was established in the proof of Proposition \ref{prop:K-pulled-back}.
Applying $\tau_{\ge 1}^\ell$ we obtain a new commutative diagram in which $w$ is an equivalence and the circuit from top left to $\tau_{\ge 1}^\ell L_\proet(K)_\ell^\comp \wequi \tau_{\ge 1}^\ell L_{K(1)} K$ (see again the proof of Proposition \ref{prop:K-pulled-back} for this equivalence) via the bottom and left is $\pi^*\nu^*(b)$.
The desired result follows.
\end{proof}

\begin{rmk} \label{rmk:uncomp-BZl}
We have the sheaves of spaces \[ \widetilde\BU_\ell, \widetilde{B^2\Z_\ell(1)} := \Omega^\infty \Sigma^2 H\Z_\ell(1) \in \Shv_\et(B\Z_\ell^\times). \]
Applying the functor $L_\ell \nu^*$ we obtain $\BU_\ell$ and $B^2 \Z_\ell(1)$; the former equivalence is Corollary \ref{cor:BU-ell-uncompl} and the latter is proved in the same way, using Lemma \ref{lem:HZl-basics}(4).
Now let $\varpi: \{e\} \to \Z_\ell^\times$ be the unique homomorphism.
Since $\Shv(B\{e\}) = \Spc$ has homotopy dimension $0$, we similarly get \[ L_\ell \varpi^* \widetilde\BU_\ell \wequi \Omega^\infty \varpi^*(\bu_\ell)_\ell^\comp \stackrel{(*)}{\wequi} \Omega^\infty \bu_\ell^\comp \wequi \BU_\ell^\comp, \] where the middle equivalence is by construction of $\bu_\ell$.
By the same reasoning we get $L_\ell \varpi^* \widetilde{B^2\Z_\ell(1)} \wequi B^2 \Z_\ell$.
\end{rmk}

We have the homomorphism $G_\ell \to \Z_\ell^\times$ (which is an isomorphism for $\ell$ odd) which we can use to transport the above constructions to $G_\ell$.
We do this without further comment from now on.

For a pointed space (or presheaf of spaces) $X$, denote by $F_*X$ the free monoid on $X$ with $*=0$.
\begin{thm} \label{thm:equiv-splitting}
There is a pointed map \[ s: \BU_\ell \to L_\ell F_* B^2 \Z_\ell(1) \in \Shv_\proet(BG_\ell)_* \] such that the composite \[ c: \BU_\ell \xrightarrow{s} L_\ell F_* B^2 \Z_\ell(1) \wequi L_\ell F_* B \nu^* \Z/\ell^\infty \xrightarrow{\nu^*(b^\dagger)} \BU_\ell \in \Shv_\proet(BG_\ell) \] is the identity.
(Here by $b^\dagger$ we mean the map induced by $b$ using that the target is a commutative monoid.)
\end{thm}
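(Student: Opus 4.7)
The strategy is to reduce to a question in classical topology via the rigidity provided by Corollary \ref{cor:BU-ell-uncompl} and Lemma \ref{lemm:BGl-ff}, then construct the section using the first Chern class and the splitting principle.

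First, using Corollary \ref{cor:BU-ell-uncompl} and Remark \ref{rmk:uncomp-BZl}, both $\BU_\ell$ and $B^2\Z_\ell(1)$ are obtained as $L_\ell\nu^*$ of the étale sheaves $\widetilde\BU_\ell$ and $\widetilde{B^2\Z_\ell(1)}$ on $BG_\ell$. Since $\nu^*$ is cocontinuous and $L_\ell$ commutes with $\nu^*$ on sections over $w$-contractible objects (Corollary \ref{cor:htpy-dim-0-compl}, Proposition \ref{prop:p-comp-PSigma}), the functor $L_\ell\nu^*$ commutes with the free pointed monoid $F_*$. It therefore suffices to construct a pointed map $\tilde s$ in $\Shv_\et(BG_\ell)_*$ splitting the corresponding $\tilde b^\dagger$. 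Applying an unstable analog of Lemma \ref{lemm:BGl-ff} (reading off from homotopy sheaves via their stalks), I would reduce further to producing a classical pointed map $s_0: \BU^\comp_\ell \to L_\ell F_* K(\Z_\ell,2)$ equivariant for the $G_\ell'$-action (continuously extending to $G_\ell$).

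For the classical construction, James gives $F_* X \wequi \Omega\Sigma X$ for connected $X$, and so by the loop--suspension adjunction pointed maps $\BU_\ell^\comp \to L_\ell \Omega\Sigma K(\Z_\ell,2)$ correspond to elements of $\tilde H^2(\BU_\ell^\comp;\Z_\ell) \cong \Z_\ell\{c_1\}$. I take $s_0$ to be adjoint to the first Chern class $c_1$. The key equivariance observation is that the Adams operation $\psi^a$ scales $c_1$ by $a \in \Z_\ell^\times$, matching the tautological twist on $\Z_\ell(1)$, so $s_0$ is automatically $\Z_\ell^\times$-equivariant. For $\ell=2$ one additionally checks compatibility with complex conjugation, acting as $-1$ on $c_1$ and by conjugation on $\Z_2(1)=\lim_n \mu_{2^n}$; since $G_2 = \Z/2 *_2 \Z_2$ is a pushout and imposes no commutation between its two factors, these two independent verifications produce the required $G_2$-equivariance.

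Finally, to check that $c = \id$, observe that by construction $b^\dagger \circ s$ restricted along the canonical inclusion $B^2\Z_\ell(1) \hookrightarrow L_\ell F_* B^2\Z_\ell(1)$ recovers the canonical inclusion $B^2\Z_\ell(1) \hookrightarrow \BU_\ell$: the $c_1$ used to define $s$ is precisely the class classified by the inclusion $B\U(1)\hookrightarrow \BU$, and $b^\dagger$ sends the bottom cell back to this inclusion via Lemma \ref{lemm:construct-b}. By the splitting principle, any pointed self-map of $\BU_\ell^\comp$ that factors through $L_\ell F_* B^2\Z_\ell(1)$ and restricts to the identity on $B\U(1)$ must be the identity; applying this to $c$ concludes the argument.

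The main obstacle I anticipate is the last step: promoting the evident agreement of $c$ and $\id$ on $\pi_2$ to a genuine homotopy, for which the splitting principle argument has to be made precise (e.g.~by verifying that the composite, viewed as an element of the completed representation ring $R(\U)^\wedge_I$ via $[\BU_\ell^\comp, \BU_\ell^\comp]$, is determined by its restriction to $B\U(1)$). A secondary source of bookkeeping is verifying that $L_\ell \nu^* F_* \simeq L_\ell F_* \nu^*$ in the necessary generality, which should reduce to sectionwise computations on $w$-contractible schemes together with Proposition \ref{prop:p-comp-PSigma}.
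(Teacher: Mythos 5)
There is a genuine gap, and it sits at the heart of your classical construction. In this paper $F_*X$ is the free \emph{commutative} monoid on $X$ (this is what makes $b^\dagger$ well defined), and for connected $X$ in a topos one has $F_*X \wequi \Omega^\infty\Sigma^\infty X$, not the James construction $\Omega\Sigma X$. Consequently pointed maps $\BU_\ell^\comp \to L_\ell F_* B^2\Z_\ell$ are \emph{stable} maps $\Sigma^\infty \BU_\ell^\comp \to \Sigma^\infty (B^2\Z_\ell)_\ell^\comp$, a far richer group than $\tilde H^2(\BU_\ell^\comp;\Z_\ell)$; your claimed bijection with $\Z_\ell\{c_1\}$ is false. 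Worse, the map you actually write down (adjoint to $c_1$, i.e.\ factoring through the bottom cell $B^2\Z_\ell(1) \hookrightarrow F_*B^2\Z_\ell(1)$) is \emph{not} a section of $b^\dagger$: the composite $c$ is then $b\circ c_1$, which already fails to be the identity on $\pi_4$ (since $\pi_4 B^2\Z_\ell = 0$ while $\pi_4\BU_\ell^\comp \ne 0$). Your closing ``splitting principle'' step cannot repair this, because $c$ is only a pointed map (not additive), and in any case the map it would have to be compared with is genuinely different from the identity. The true classical input is Snaith's stable splitting of $\BU_{(\ell)}$ off $\Omega^\infty\Sigma^\infty \CP^\infty$, constructed via the Becker--Gottlieb transfer to the normalizer of the maximal torus --- not via $c_1$.

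This also collapses your equivariance argument: the observation that $\psi^a$ scales $c_1$ by $a$ is moot once the underlying map is the transfer-based splitting, and making \emph{that} splitting $G_\ell$-equivariant (including the commutation issues you would face for $\Z_\ell^\times$, which is why the paper works with $G_\ell' \to G_\ell$) is precisely the hard point. The paper's route is: reduce (stably, via Remark \ref{rmk:uncomp-BZl} and the fully faithful functor of Lemma \ref{lemm:BGl-ff}) to lifting the classical splitting map $\Sigma^\infty(\BU)_\ell^\comp \to \Sigma^\infty(B^2\Z)_\ell^\comp$ to a $G_\ell'$-equivariant map; this is achieved by constructing a \emph{motivic} map $\Sigma^\infty B\GL \to \Sigma^\infty B\GL_1$ over $\QQ$ whose complex realization is the classical splitting (Lemma \ref{lemm:lift-splitting}, imitating Snaith's construction with motivic Becker--Gottlieb transfers for the normalizer of the torus), and then restricting its Galois-equivariant realization along a lift of the cyclotomic character. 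Note also that the paper only needs $c$ to be an \emph{equivalence} (one then replaces $s$ by $sc^{-1}$), checked on completed stalks where $\varpi^*(b^\dagger)$ is the canonical map; you aim for $c=\id$ on the nose, which your construction does not deliver.
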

Before carrying out the proof, let us recall the classical analog: there exists a map \[ s_{cl}: \BU_\ell^\comp \to L_\ell F_* B^2 \Z_\ell \in \Spc_* \] such that the composite \[ \BU_\ell^\comp \xrightarrow{s_{cl}} L_\ell F_* B^2 \Z_\ell \xrightarrow{b_{cl}^\dagger} \BU_\ell^\comp \] is the identity.
For a construction see \cite[Theorems 2.1 and 3.2]{MR539791}.
\begin{proof}
Note first that it suffices to construct $s$ such that $c$ is an equivalence; indeed then we can replace $s$ by $sc^{-1}$.

Since $B^2 \Z_\ell(1)$ is connected, we have $F_* B^2 \Z_\ell(1) \wequi \Omega^\infty\Sigma^\infty B^2 \Z_\ell(1)$; this is just the identification of $\Omega^\infty\Sigma^\infty$ in a topos (see e.g. \cite[\S3.1.4]{EHKSY}).
Hence $L_\ell F_* B^2 \Z_\ell(1) \wequi \Omega^\infty \Sigma^\infty(B^2 \Z_\ell(1))_\ell^\comp$ by Corollary \ref{cor:htpy-dim-0-compl}.
The map $s$ thus corresponds to $s^\dagger: \Sigma^\infty \BU_\ell \to \Sigma^\infty(B^2 \Z_\ell(1))_\ell^\comp$.
By Remark \ref{rmk:uncomp-BZl}, to construct $s^\dagger$ it suffices to construct \[ s_0: \Sigma^\infty \widetilde \BU_\ell \to \Sigma^\infty(\widetilde{B^2 \Z_\ell(1)})_\ell^\comp \in \ShvSp(B_\et G_\ell). \]
Suppose we have found such a map.
By Lemma \ref{lem:pro-stalk} the functor \[ \varpi^*: \Shv_\proet(BG_\ell) \to \Shv_\proet(B\{e\}) \] is conservative and admits a left adjoint; since it also has a right adjoint it preserves $L_\ell$-complete objects by Example \ref{ex:biadj-pres-ell-comp}.
Hence in order to prove that the composite $c$ is an equivalence, it suffices to show the same for $\varpi^*(c) = \varpi^*(c)_\ell^\comp$.
Note that $\varpi^*(s_0)$ induces \[ \varpi^*(s_0)^\dagger: L_\ell \varpi^*(\widetilde \BU_\ell) \to \Omega^\infty \Sigma^\infty \varpi^*(L_\ell \widetilde{B^2 \Z_\ell(1)})_\ell^\comp. \]
Again by Remark \ref{rmk:uncomp-BZl}, the source is $\BU_\ell^\comp$ and the target is \[ \Omega^\infty \Sigma^\infty(B^2\Z_\ell)_\ell^\comp \wequi L_\ell F_* B^2 \Z_\ell \in \Spc. \]
In particular we can form the composite \[ c_0: \BU_\ell^\comp \xrightarrow{\varpi^*(s_0)^\dagger} L_\ell F_* B^2 \Z_\ell \xrightarrow{\varpi^*(b^\dagger)} \BU_\ell^\comp \in \Spc. \]
Note that $\nu^*(c_0) \wequi \varpi^*(c)$.
In particular, it suffices to show that $c_0$ is an equivalence.
By construction the map $\varpi^*(b^\dagger)$ is the canonical one, and hence it is enough to construct $s_0$ such that $\varpi^*(s_0)^\dagger$ is the standard splitting map, i.e. such that $\varpi^*(s_0)$ is the standard map.
The completed stalk functor \[ \varpi^*(\ph)_\ell^\comp: \ShvSp(B_\et G_\ell)_\ell^\comp \to (\ShvSp^{BG_\ell'})_\ell^\comp \] is fully faithful by Lemma \ref{lemm:BGl-ff}, and hence we arrive at the following: in order to prove the theorem, it is sufficient to lift the standard splitting map \[ s_{cl}^\dagger: \Sigma^\infty(\BU)_\ell^\comp \to \Sigma^\infty(B^2\Z)_\ell^\comp \] along the forgetful functor $\ShvSp^{BG_\ell'} \to \ShvSp$.
In other words, we need to make the classical splitting map $G_\ell'$-equivariant.

To do this, observe that by Lemma \ref{lemm:lift-splitting} below, there is a map $\Sigma^\infty B \GL \to \Sigma^\infty B \Gm \in \SH(\QQ)$ whose complex realization is $s_{cl}^\dagger$.
By Lemma \ref{lemm:galois-equiv-enhancement} we thus obtain a $G$-equivariant enhancement $s_G$ of the classical splitting map, where $G=Gal(\QQ)$.
To conclude we restrict along a lift of the cyclotomic character, as in Remark \ref{rmk:action-alternative-construction}.
\end{proof}

\begin{lem} \label{lemm:lift-splitting} \NB{say something about kleen?}
There is a map $s_{mot}^\dagger: \Sigma^\infty B \GL \to \Sigma^\infty B \GL_1 \in \SH(\QQ)$ such that $r_\CC(s_{mot}^\dagger) \wequi s_{cl}^\dagger$.
\end{lem}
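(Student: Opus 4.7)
The plan is to construct $s_{mot}^\dagger$ from the algebraic determinant. The homomorphisms $\det_n\colon \GL_n \to \Gm$ are compatible with the standard block-inclusions $\GL_n \hookrightarrow \GL_{n+1}$, so they assemble into a homomorphism of presheaves of groups $\det\colon \GL \to \GL_1 = \Gm$ on $\Sm_\QQ$. Applying the bar construction levelwise to pass to pointed motivic spaces and then suspension-stabilizing yields a candidate
\[ s_{mot}^\dagger := \Sigma^\infty B\det \colon \Sigma^\infty B\GL \to \Sigma^\infty B\GL_1 \in \SH(\QQ). \]

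To verify the realization condition, I would use that $r_\CC\colon \SH(\QQ) \to \ShvSp$ factors through $\SH(\CC)$ and is symmetric monoidal, sending the motivic classifying space of an algebraic $\QQ$-group $G$ to the classifying space $B(G(\CC))$ of its complex points (this is standard for $G = \GL_n$ and $G = \Gm$, and commutes with the colimit $\colim_n \GL_n = \GL$). Hence $r_\CC(B\GL) \wequi \BU$, $r_\CC(B\Gm) \wequi B\CC^\times \wequi \CP^\infty \wequi B^2\Z$, and $r_\CC(B\det)$ is identified with the stable map induced by the classical topological determinant $\BU \to \CP^\infty$. Now $s_{cl}^\dagger$ is characterized as a stable $\ell$-complete section of the map $b_{cl}^\dagger$ induced by the inclusion of line bundles $\iota\colon \CP^\infty \hookrightarrow \BU$; since the determinant of a line bundle is itself, $\det \circ \iota \wequi \id_{\CP^\infty}$, so $\Sigma^\infty \det$ is such a section, and by the construction of $s_{cl}^\dagger$ in \cite{MR539791} (which proceeds via precisely this determinant splitting) the two maps agree after $\ell$-completion.

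The main obstacle will be ensuring that the specific reference definition of $s_{cl}^\dagger$ used in the proof of Theorem \ref{thm:equiv-splitting} actually matches the determinant construction. A priori, two sections of $b_{cl}^\dagger$ can differ by pre-composition with an element of the $\ell$-adic endomorphism ring $\End(\Sigma^\infty(B^2\Z)_\ell^\comp) \supset \Z_\ell$. If the reference splitting differs from $\Sigma^\infty \det$ by a unit $\lambda \in \Z_\ell^\times$, I would compensate by pre-composing $s_{mot}^\dagger$ with the motivic $\lambda$-th power self-map of $B\Gm$; this exists as an honest motivic map when $\lambda \in \Z$, and after $\ell$-completion for general $\lambda \in \Z_\ell^\times$, which suffices because the comparison with $s_{cl}^\dagger$ likewise takes place only after $\ell$-completion.
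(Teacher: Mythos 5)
There is a genuine gap: the determinant does not produce the map the lemma needs. The map $s_{cl}^\dagger$ is not just any stable map $\Sigma^\infty(\BU)_\ell^\comp \to \Sigma^\infty(B^2\Z)_\ell^\comp$; it is Snaith's \emph{splitting} map, characterized (and used in the proof of Theorem \ref{thm:equiv-splitting}) by the property that its adjoint $s_{cl}\colon \BU_\ell^\comp \to \Omega^\infty\Sigma^\infty(B^2\Z)_\ell^\comp$ composed with $b_{cl}^\dagger$ is the identity of $\BU_\ell^\comp$. If you take $s_{mot}^\dagger = \Sigma^\infty B\det$, then $r_\CC(s_{mot}^\dagger)$ is $\Sigma^\infty$ of the unstable determinant map $\BU \to \CP^\infty$, and its adjoint is the composite $\BU \xrightarrow{\det} \CP^\infty \to \Omega^\infty\Sigma^\infty \CP^\infty$. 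Composing with $b_{cl}^\dagger$ then gives $b_{cl}\circ\det\colon \BU \to \CP^\infty \to \BU$, which factors through the space $\CP^\infty$ and hence vanishes on $\pi_{2n}$ for $n \ge 2$ (also after $\ell$-completion); it is nowhere near the identity. So $\Sigma^\infty\det$ is \emph{not} a section of $b_{cl}^\dagger$, and your assertion that Snaith's construction in \cite{MR539791} ``proceeds via precisely this determinant splitting'' is incorrect: Snaith's splitting is a genuinely stable map built from the Becker--Gottlieb transfer for $U(n)/NT(n)$ ($NT_n$ the normalizer of the maximal torus), not the suspension of any unstable map to $\CP^\infty$. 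Your proposed repair --- precomposing with a $\lambda$-th power map on $B\Gm$, $\lambda \in \Z_\ell^\times$ --- does not help, since any such modification still factors unstably through $B\Gm$ and runs into the same obstruction; moreover the ambiguity between two sections of $b_{cl}^\dagger$ is not controlled by a single unit in $\Z_\ell$.

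The paper's actual argument mimics Snaith's transfer construction motivically: one uses the motivic Becker--Gottlieb transfers $\Sigma^\infty_+ B\GL_n \to \Sigma^\infty_+ BNT_n$ of \cite{MR4094419}, compatible in $n$, identifies $BNT_n$ with the motivic extended power $D_n(B\GL_1)$, and uses the normed (infinite loop) structure on $\Omega^\infty\Sigma^\infty B\GL_1$ to obtain maps $BNT_n \to \Omega^\infty\Sigma^\infty B\GL_1$; passing to the colimit and composing with the transfer yields $s_{mot}^\dagger$. Because this reproduces Snaith's construction step by step, its complex realization is the standard splitting map, which is exactly what the comparison in Theorem \ref{thm:equiv-splitting} requires. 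Any correct proof has to incorporate a transfer (or some other genuinely stable ingredient); a map induced by a group homomorphism $\GL \to \Gm$ cannot work.
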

\begin{proof}
We shall build the map by imitating Snaith's construction from \cite{MR539791}.
Write $NT_n \subset \GL_n$ for the normalizer of the maximal torus.
By \cite[\S1 and \S2]{MR4094419} we have Becker--Gottlieb transfer maps \[ \tau_n: \Sigma^\infty_+ B\GL_n \to \Sigma^\infty_+ BNT_n. \]
(Here by $BNT_n$ we mean the étale classifying space.)
These transfers are compatible as $n$ varies, see e.g. \cite[Remark 2.5]{carlsson2024motivic}.
For $n=0$, this shows in particular that we may pass to unpointed suspension spectra, and then to the colimit along $n$.
Thus we obtain \[ \tau: \Sigma^\infty B\GL \to \Sigma^\infty BNT. \]
Note that \[ BNT_n \wequi L_\et (B\GL_1)^{\times n}_{h\Sigma_n} \wequi D_n(B\GL_1), \] where $D_n$ is the motivic extended power functor (for the last equivalence, see \cite[Proposition 5.23]{bachmann-colimits}).
Using the normed structure on the infinite loop spectrum $\Omega^\infty \Sigma^\infty B\GL_1$ (e.g. use \cite[Lemma 7.11]{bachmann-MGM} for $G=e$) we obtain compatible maps \[ BNT_n \wequi D_n(B\GL_1) \to D_n(\Omega^\infty \Sigma^\infty B\GL_1) \to \Omega^\infty \Sigma^\infty B\GL_1. \]
Taking the colimit over $n$ and adjoining over produces \[ \Sigma^\infty BNT \to \Sigma^\infty B\GL_1. \]
Composing with $\tau$ yields the desired map.
\end{proof}

Given a category $\scr C$ and $X \in \scr C$, we denote by $\tilde \Sigma X \in \scr C_*$ the \emph{unreduced suspension}; in other words we form the pushout
\begin{equation*}
\begin{CD}
X @>>> * \\
@VVV @VVV \\
* @>>> \tilde\Sigma X
\end{CD}
\end{equation*}
in $\scr C$ and note that this has a canonical base point (coming from one of the two $*$ in the diagram).
This construction has the property that if $X \in \scr C_*$ then $\tilde\Sigma X \wequi S^1 \wedge X \in \scr C_*$.

Let $F: \scr C_* \to \scr D$ be a symmetric monoidal functor such that $F \tilde \Sigma X$ is invertible.
Then there is an induced morphism of $\scr E_1$-monoids \[ \Aut_{\scr C}(X) \xrightarrow{F\tilde\Sigma} \Aut_{\scr D}(F\tilde \Sigma X) \wequi \Aut_{\scr D}(\1_\scr{D}). \]
If moreover $X$ is a grouplike $\scr E_1$-monoid, then left multiplication induces $\Map_{\scr C}(*, X) \to \Aut_{\scr C}(X)$ and hence \[ \Map_{\scr C}(*, X) \to \Aut_{\scr D}(\1_\scr{D}) \in \Mon_{\scr E_1}(\Spc). \]

\begin{exm}
Take $\scr C = \Spc(S)$, $X = \Gm$, $\scr D = \SH(S)$.
Then $\tilde \Sigma \Gm \wequi S^{2,1}$ and so we obtain a morphism of $\scr E_1$-monoids $\Map_{\Spc(S)}(*, \Gm) \to \Aut_{\SH(S)}(\1)$.
This is natural in $S$, and hence yields \[ u_\Gm: L_\mot \Gm \to \ul{\Aut}(\1) \in \Mon_{\scr E_1}(\PSh(\Sm_S)). \]
\end{exm}

\begin{exm} \label{ex:u1(1)}
Take $\scr C = \Shv_\proet(B\Z_\ell^\times),$ $X = B\Z_\ell(1)$ and $\scr D = \ShvSp(B_\proet\Z_\ell^\times)_\ell^\comp$.
Pro-étale locally $X$ is equivalent to $B\Z_\ell \wequi L_\ell S^1$, and so invertibility holds.
Moreover since $\Z_\ell(1)$ is a commutative monoid so is $B\Z_\ell(1)$, and so we obtain \[ u_{B\Z_\ell(1)}: B\Z_\ell(1) \to \ul{\Aut}(\1_\ell^\comp) \in \Mon_{\scr E_1}(\Shv_\proet(B\Z_\ell^\times)). \]
\end{exm}

\begin{lem} \label{lem:pullback-fib}
The following diagram in $\Shv_\proet(\Sch_S)_*$ commutes
\begin{equation*}
\begin{tikzcd}
B\Gm \ar[r] & K^\circ \ar[r, "j"] & \BGL_1(\1_{\proet,\ell}^\comp) \\
B\mu_{\ell^\infty} \ar[u] \ar[r] & B^2 \pi^* \Z_\ell(1) \ar[ur, "Bu_{B\Z_\ell(1)}" swap].
\end{tikzcd}
\end{equation*}
\end{lem}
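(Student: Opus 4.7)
The plan is to reduce the comparison to a statement about the two maps before applying $B$, i.e.\ after looping, where both composites acquire a concrete description as unreduced-suspension-of-invertible-object automorphisms. The target $\BGL_1(\1_{\proet,\ell}^\comp)$ is an unstably $\scr L$-complete pro-étale hypersheaf by Proposition~\ref{prop:BGL1-complete}, so both composites factor through $L_\ell B\mu_{\ell^\infty}$; in particular, by Lemma~\ref{lem:HZl-basics}(5), the top-left vertical map becomes an equivalence $L_\ell B\mu_{\ell^\infty} \wequi L_\ell B\Gm$ after $\ell$-completion, and so the top composite may be written as $L_\ell B\Gm \to \BGL_1(\1_{\proet,\ell}^\comp)$.

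First I would identify the top composite $B\Gm \to K^\circ \xrightarrow{j} \BGL_1(\1_{\proet,\ell}^\comp)$ with $Bu_\Gm$ (pro-étale-sheafified and $\ell$-completed). This is essentially a direct unwinding: the motivic $j$-homomorphism on $B\Gm$ sends the universal line bundle to its Thom spherical fibration, which by construction is obtained from the unreduced suspension $\tilde\Sigma\Gm \wequi S^{2,1}$; the $\scr E_1$-map $\Gm \to \ul{\Aut}(\1)$ underlying this is precisely the definition of $u_\Gm$. Similarly, the bottom composite is $B$ applied to the $\scr E_1$-map $B\pi^*\Z_\ell(1) \to \ul{\Aut}(\1_{\proet,\ell}^\comp)$ from Example~\ref{ex:u1(1)}, obtained from $\tilde\Sigma B\Z_\ell(1) \wequi L_\ell S^2$.

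Having done this, the diagram reduces, via the fully-faithful delooping $B: \Spc_{*,\mathrm{conn}} \to \Spc_*$ on connected pointed objects, to the comparison of two maps
\[
\mu_{\ell^\infty} \rightrightarrows \ul{\Aut}(\1_{\proet,\ell}^\comp) \in \Mon_{\scr E_1}(\Shv_\proet(\Sch_S))
\]
obtained from $\mu_{\ell^\infty} \hookrightarrow \Gm \xrightarrow{u_\Gm} \ul{\Aut}(\1)$ and $\mu_{\ell^\infty} \xrightarrow{\beta} B\pi^*\Z_\ell(1) \xrightarrow{u_{B\Z_\ell(1)}} \ul{\Aut}(\1_{\proet,\ell}^\comp)$, where $\beta$ is the Bockstein from the fiber sequence $\pi^*\Z_\ell(1) \to \pi^*\Z_\ell(1)[1/\ell] \to \mu_{\ell^\infty}$ used in the proof of Lemma~\ref{lem:HZl-basics}(1). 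Working pro-étale locally on a $w$-contractible $W$, sections of $\mu_{\ell^\infty}$ reduce to roots of unity $\zeta \in \mu_{\ell^n}(W)$, and both maps send such a $\zeta$ to the multiplication-by-$\zeta$ automorphism of the invertible object obtained by $\ell$-completing $\tilde\Sigma\Gm$ or $\tilde\Sigma B\Z_\ell(1)$ respectively.

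The main obstacle is the final identification: one must show that under the equivalence $L_\ell \tilde\Sigma\Gm \wequi L_\ell \tilde\Sigma B\Z_\ell(1)$ induced by Lemma~\ref{lem:HZl-basics}(5), the two $\zeta$-actions are carried to one another. This is a compatibility between the Kummer sequence $\mu_{\ell^n} \hookrightarrow \Gm \xrightarrow{\ell^n} \Gm$ and the definition of the Bockstein, which I would verify by working with the pushout presentation of the unreduced suspension and checking that the square
\[
\begin{tikzcd}
\mu_{\ell^\infty} \ar[r,hook] \ar[d,"\beta"'] & \Gm \ar[d] \\
B\pi^*\Z_\ell(1) \ar[r] & L_\ell \Gm
\end{tikzcd}
\]
commutes after $\ell$-completion; this is essentially the content of Lemma~\ref{lem:HZl-basics}(5). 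Naturality of the $u$-construction in the invertible object then finishes the proof.
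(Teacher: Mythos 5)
Your first two steps track the paper's own proof: since $B\mu_{\ell^\infty}$ and $\BGL_1(\1_{\proet,\ell}^\comp)$ are connected and grouplike, one reduces to a diagram of $\scr E_1$-monoids after applying $\Omega$, and unwinding the motivic $j$-homomorphism shows that the looped top composite $\Gm \to \Omega K^\circ \to \ul{\Aut}(\1_{\proet,\ell}^\comp)$ sends a unit $a$ to multiplication by $a$ on $Th(\scr O) \wequi \tilde\Sigma \Gm$, i.e.\ is $u_\Gm$; the bottom composite is $u_{B\Z_\ell(1)}$ precomposed with the canonical map $\mu_{\ell^\infty} \to B\pi^*\Z_\ell(1)$ essentially by construction, and naturality of the $u$-construction is the right tool to compare them. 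All of this agrees with the paper.

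The gap is in your final identification. You attribute the needed compatibility --- that under an equivalence $L_\ell\tilde\Sigma\Gm \wequi L_\ell\tilde\Sigma B\Z_\ell(1)$ the two $\zeta$-actions match --- to Lemma \ref{lem:HZl-basics}(5). That lemma is an \emph{unstable} statement one delooping up ($L_\ell B\mu_{\ell^\infty} \wequi L_\ell B\Gm$, hence $L_\ell B\Gm \wequi B^2\pi^*\Z_\ell(1)$), and it does not loop down: $\Gm$ is $0$-truncated, hence already unstably $\ell$-complete (Example \ref{ex:completion-discrete}), so $L_\ell\Gm \wequi \Gm$, and sectionwise on $w$-contractibles $\tilde\Sigma\Gm$ is a wedge of circles while $\tilde\Sigma B\Z_\ell(1)$ is a suspension of $K(\Z_\ell,1)$; there is no unstable $\ell$-adic equivalence between them, and the auxiliary square you draw (with a map $B\pi^*\Z_\ell(1) \to L_\ell\Gm$) does not exist. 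What is actually needed is the genuinely stable, motivic statement that the canonical map induces $\Sigma^\infty \Gm \wequi \Sigma^\infty B\Z_\ell(1) \in \SH_\proet(S)_\ell^\comp$; this is the rigidity/Tate-twist identification, and it is exactly what the paper invokes at this point, citing \cite[Theorem 6.5]{bachmann-SHet}, rather than deducing it from Lemma \ref{lem:HZl-basics}(5). Once you have that stable equivalence (induced by a map of monoids $\Gm \to \Omega L_\ell B\Gm \wequi B\Z_\ell(1)$ compatible with $\mu_{\ell^\infty}$), naturality of the $u$-construction finishes the argument as you intend; without it, your last step does not go through.
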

\begin{proof}
Since $B\mu_{\ell^\infty}$ and $\BGL_1(\1_{\proet,\ell}^\comp)$ are connected and grouplike, it suffices by \cite[Lemma 7.2.2.11(1)]{HTT} to prove that we obtain a commutative diagram of $\scr E_1$-monoids after applying $\Omega$.
Since the construction $u_X$ is natural in the monoid $X$, it is then enough to show that $u': \Gm \to \Omega K^\circ \to \ul{\Aut}(\1)$ is given by $u_\Gm$ and that $\Sigma^\infty B\Z_\ell(1) \wequi \Sigma^\infty \Gm \in \SH_\proet(S)_\ell^\comp$.
Unravelling the definitions, $u'$ sends a unit $a$ to the automorphism of $\1$ induced by multiplication by $a$ on $Th(\scr O) \wequi \tilde \Sigma \Gm$, which is also precisely what $u_\Gm$ also does; this proves the first claim.
The second claim follows from \cite[Theorem 6.5]{bachmann-SHet}.
\end{proof}

Let \begin{equation}\label{eq:JG} J_{G_\ell,\ell}^\comp: \BU_\ell \to \tau_{\ge 2}\BGL_1(\1_\ell^\comp) \in \CMon(\Shv_\proet(BG_\ell)) \end{equation} denote the map $\Omega^\infty \nu^*(j_{G_\ell,\ell}^\comp)_\ell^\comp$, where $j_{G_\ell,\ell}^\comp$ is the map from \eqref{eq:jG}.
\begin{cor} \label{cor:pointed-pullback}
We have \[ L_\ell \tilde\pi^* J_{G_\ell,\ell}^\comp \wequi J_{\proet,\ell}^\comp \in \Shv_\proet(\Sch_S)_*. \]
\end{cor}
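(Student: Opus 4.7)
The plan is to invoke the splitting of Theorem \ref{thm:equiv-splitting} to reduce the comparison of these two $\scr E_0$-monoid maps out of $\BU_\ell$ to a comparison of pointed maps out of $B^2 \Z_\ell(1)$, where Lemma \ref{lem:pullback-fib} supplies an explicit description. First I would verify the identifications of source and target: by Remark \ref{rmk:pi-factorization} we have $L_\ell \tilde\pi^* \BU_\ell \wequi L_\ell \pi^* \BU_\ell$, and by Corollary \ref{cor:BGL1-trunc} both $L_\ell\tilde\pi^*\tau_{\ge 2}\BGL_1(\1_\ell^\comp)$ and the target of $J_{\proet,\ell}^\comp$ identify with $\tau_{\ge 2}\BGL_1(\1_{\proet,\ell}^\comp) \wequi L_\ell \pi^* \tau_{\ge 2}\BGL_1(\1)$. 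Pulling back $s$ along $\tilde\pi^*$ preserves the retract identity $\nu^*(b^\dagger) \circ s \wequi \id$, so to check equality of pointed maps out of $L_\ell\pi^*\BU_\ell$ it suffices to check after precomposing with $L_\ell\tilde\pi^*\nu^*(b^\dagger)$.

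Next, both $L_\ell\tilde\pi^*J_{G_\ell,\ell}^\comp$ and $J_{\proet,\ell}^\comp$ are commutative-monoid maps by construction, hence so are their precompositions with the commutative-monoid map $L_\ell\tilde\pi^*\nu^*(b^\dagger)$. Using the universal property $L_\ell F_* X \wequi L_\ell\Omega^\infty\Sigma^\infty X$ for connected $X$ (as exploited in the proof of Theorem \ref{thm:equiv-splitting}), two such commutative-monoid maps out of $L_\ell F_* B^2\pi^*\Z_\ell(1)$ agree if and only if they agree as pointed maps after restriction along the canonical inclusion $B^2\pi^*\Z_\ell(1) \hookrightarrow L_\ell F_* B^2\pi^*\Z_\ell(1)$. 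This reduces the claim to comparing two pointed maps $B^2\pi^*\Z_\ell(1) \to L_\ell\pi^*\tau_{\ge 2}\BGL_1(\1)$.

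For $J_{\proet,\ell}^\comp$: by Lemma \ref{lemm:construct-b} the composite $B^2\pi^*\Z_\ell(1) \to L_\ell\pi^*\BU_\ell \wequi L_\ell L_\proet K^\circ$ is identified (combined with Lemma \ref{lem:HZl-basics}(1,5)) with the canonical map induced by $\mu_{\ell^\infty} \hookrightarrow \Gm$; Lemma \ref{lem:pullback-fib} then identifies the further composite with the motivic $J$-homomorphism as $Bu_{B\Z_\ell(1)}$. For $L_\ell\tilde\pi^*J_{G_\ell,\ell}^\comp$: since the target $\pi^*((\bgl_1\1)_{\ge 2})_\ell^\comp$ of $j_{G_\ell,\ell}^\comp$ is pulled back from classical spectra with trivial $G_\ell$-action, the composite $B^2\Z_\ell(1) \xrightarrow{b} \BU_\ell \xrightarrow{J_{G_\ell,\ell}^\comp} \tau_{\ge 2}\BGL_1(\1_\ell^\comp)$ is $\tilde\pi^*$ of a classical pointed map, and by construction of $b$ via Snaith's theorem this unwinds to the canonical action of $B\Z_\ell(1)$ on $\tilde\Sigma B\Z_\ell(1)$, which is again $Bu_{B\Z_\ell(1)}$ after pullback by naturality of the construction in Example \ref{ex:u1(1)}.

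The main obstacle will be justifying this last identification cleanly: tracing through Snaith's construction and the passage from $\bu_\ell$ through $\KU_\ell^\comp \wequi \Sigma^\infty_+ B\Z/\ell^\infty[\beta^{-1}]_\ell^\comp$, and verifying that the classical stable $J$-homomorphism restricted along the Snaith inclusion is precisely the map classifying the canonical action of $B\Z_\ell(1)$ on the sphere $\tilde\Sigma B\Z_\ell(1)$. This ultimately amounts to matching the Snaith-theoretic origin of $b$ in Lemma \ref{lemm:construct-b} with the $u$-construction in Example \ref{ex:u1(1)}, which is a classical fact about the $J$-homomorphism but one that must be handled with care to respect the $\scr E_0$-structure we are tracking.
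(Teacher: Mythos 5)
Your first two steps are essentially the paper's own argument in a slightly rearranged form: you use the splitting of Theorem \ref{thm:equiv-splitting} (via the retraction $\nu^*(b^\dagger)\circ s \wequi \id$) to reduce to comparing the two maps after precomposition with $\tilde\pi^*\nu^*(b^\dagger)$, and you identify the pro-\'etale side of that composite with $\pi^*(Bu_{B\Z_\ell(1)})^\dagger$ using Lemma \ref{lemm:construct-b} and Lemma \ref{lem:pullback-fib}. Up to that point the proposal is fine.

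The gap is in the last step, and it is exactly at the place you flag as the ``main obstacle''. You claim that the composite $B^2\Z_\ell(1) \xrightarrow{b} \BU_\ell \xrightarrow{J_{G_\ell,\ell}^\comp} \tau_{\ge 2}\BGL_1(\1_\ell^\comp)$ is determined by a classical pointed map because the target is pulled back with trivial $G_\ell$-action. This does not work: the \emph{source} $B^2\Z_\ell(1)$ (and $\BU_\ell$, with its Adams-operation action) carries a nontrivial $G_\ell$-action, so an equivariant map into a trivial-action target is a map out of the ($G_\ell$-)coinvariants, not just an underlying map; two such maps can have homotopic underlying maps without being equivariantly homotopic. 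Even after restricting along $G_\ell'$ via Lemma \ref{lemm:BGl-ff}, you still need a $G_\ell'$-equivariant homotopy, i.e.\ precisely the stable-Adams-conjecture-type compatibility ($j\circ\psi\wequi j$ with a specified homotopy interacting correctly with the Snaith inclusion) that this whole section is engineered to avoid proving directly. The classical fact that $J$ restricted along $\CP^\infty \to \BU$ classifies the tautological spherical fibration gives you only the underlying identification, not the equivariant one, so your reduction stalls. The paper's proof sidesteps this: it first establishes the commutativity of the diagram involving $J_{\proet,\ell}^\comp$, $\tilde\pi^*s$ and $\pi^*(Bu_{B\Z_\ell(1)})^\dagger$ motivically (Theorem \ref{thm:equiv-splitting} and Lemma \ref{lem:pullback-fib}), and then applies Galois equivariant realization and restricts along the lift $G_\ell' \to Gal(\QQ)$ of Remark \ref{rmk:action-alternative-construction}; since $J_{G_\ell,\ell}^\comp$ is \emph{by construction} obtained from this realization, the realized diagram exhibits the commutativity with $\tilde\pi^*J_{G_\ell,\ell}^\comp$ in place of $J_{\proet,\ell}^\comp$, with no need to identify the equivariant composite $J_{G_\ell,\ell}^\comp\circ b$ by hand. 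To repair your argument you would either need to import that realization step, or genuinely prove the equivariant refinement of the Snaith/$J$ compatibility, which is substantially harder than the ``classical fact'' you invoke.
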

\begin{proof}
By Lemma \ref{lemm:BUl-conn} below, we may replace $\tau_{\ge 2} \BGL_1(\1_\ell^\comp)$ by $\BGL_1(\1_\ell^\comp)$.
We may also assume that $S=\Spec(\Z[1/\ell])$.
Observe that in the diagram
\begin{equation*}
\begin{tikzcd}
L_\ell \pi^* \BU_\ell \ar[r,"\tilde\pi^* s"] \ar[rr, "\id" swap, bend left] & L_\ell \pi^* F_* B^2\Z_\ell(1) \ar[r] \ar[rr, "\pi^* (Bu_{B\Z_\ell(1)})^\dagger", bend right] &  L_\ell \pi^* \BU_\ell \ar[r, "J_{\proet,\ell}^\comp"] & \BGL_1(\1_{\proet,\ell}^\comp) \\
\end{tikzcd}
\end{equation*}
the top cell commutes by Theorem \ref{thm:equiv-splitting} and the bottom cell commutes by Lemma \ref{lem:pullback-fib}.
Taking Galois equivariant realization and restricting to $G_\ell'$ shows that the diagram remains commutatitve if we replace $J_{\proet,\ell}^\comp$ by $\tilde\pi^* J_{G_\ell,\ell}^\comp$.
We deduce that \[ J_{\proet,\ell}^\comp \wequi \pi^* (Bu_{B\Z_\ell(1)})^\dagger \tilde\pi^* s \wequi \tilde\pi^* J_{G_\ell,\ell}^\comp, \] as desired.
\end{proof}

\begin{lem} \label{lemm:BUl-conn} \NB{could move this earlier}
We have $\BU_\ell \in \Shv_\proet(B\Z_\ell^\times)_{\ge 2}$
\end{lem}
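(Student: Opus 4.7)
The plan is to verify $2$-connectivity by passing to the unique geometric stalk and reducing to the classical fact that $\bu_\ell^\comp$ is $2$-connective. Since $\BU_\ell = \Omega^\infty(\nu^*(\bu_\ell)_\ell^\comp)$ and $\Omega^\infty$ sends $2$-connective spectrum objects to $2$-connective space objects, it suffices to show that $\nu^*(\bu_\ell)_\ell^\comp \in \ShvSp(B_\proet\Z_\ell^\times)_{\ge 2}$. Because $\Shv_\proet(B\Z_\ell^\times)$ is locally of homotopy dimension $\le 0$, Corollary \ref{cor:htpy-dim-0-compl}(2) shows that $\ell$-completion preserves connectivity of $\ge 1$ spectra, so it would be enough to verify that $\nu^*(\bu_\ell) \in \ShvSp(B_\proet\Z_\ell^\times)_{\ge 2}$.

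Next I would reduce connectivity to a stalk computation. The functor $\varpi^*: \ShvSp(B_\proet \Z_\ell^\times) \to \ShvSp(B_\proet\{e\})$ is conservative by Lemma \ref{lem:pro-stalk} and admits both adjoints, hence is $t$-exact (cf.\ Example \ref{ex:biadj-pres-ell-comp}); combined with the fully faithful embedding $\ShvSp \hookrightarrow \ShvSp(B_\proet\{e\})$ of Lemma \ref{lemm:spc-condensed-ff}, connectivity of $\nu^*(\bu_\ell)$ can be checked on its image under $\varpi^*$, which is the classical spectrum $\bu_\ell^\comp$ underlying $\bu_\ell$ (this is essentially the content of Lemma \ref{lem:describe-SHetp-BG}, under which $\bu_\ell$ was defined to realize $\bu_\ell^\comp$ on stalks).

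Finally I would conclude by unwinding the definition: $\bu_\ell^\comp = \fib(\ku_\ell^\comp \to H\Z_\ell^\comp)$; since $\ku \to H\Z$ is the $0$-truncation this fiber is $\tau_{\ge 1}\ku_\ell^\comp$, and since $\pi_1 \ku = 0$ it coincides with $\tau_{\ge 2}\ku_\ell^\comp \in \ShvSp_{\ge 2}$. The main (and really only) obstacle is the bookkeeping needed to justify that a single geometric stalk detects connectivity in $\Shv_\proet(B\Z_\ell^\times)$ and to identify $\varpi^*\nu^*\bu_\ell$ with the classical spectrum; no substantive new input beyond the preliminaries of Section~\ref{sec:etale-K} is required.
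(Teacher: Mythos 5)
There is a genuine gap, and it sits exactly at the point the paper's proof is designed to circumvent: your claim that $\varpi^*(\nu^*\bu_\ell)$ is the classical spectrum $\bu_\ell^\comp$, and hence that the \emph{uncompleted} object $\nu^*(\bu_\ell)$ is already $2$-connective. Lemma \ref{lem:describe-SHetp-BG} identifies $\bu_\ell$ with topological $\bu_\ell^\comp$ only through the \emph{completed} stalk functor $\varpi^*(\ph)_\ell^\comp$ on the $\ell$-complete categories; the honest stalk is $\colim_i (\bu_\ell^\comp)^{h\ell^i\Z}$, and the homotopy fixed points contribute a cokernel term $\coker\bigl(\psi^{u^{\ell^i}}-1\colon \pi_2\bu_\ell^\comp \to \pi_2\bu_\ell^\comp\bigr) \ne 0$ in degree $1$ (the element $u^{\ell^i}-1 \in \Z_\ell$ is a nonunit). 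So $\ul{\pi}_1$ of $\bu_\ell$, and of $\nu^*(\bu_\ell)$, does not vanish: this is precisely why Lemma \ref{lemm:SH-BZl-connectivity} loses a degree and why Lemma \ref{lemm:bu-l-connectivity} only asserts $\bu_\ell \in \ShvSp(B_\et\Z_\ell^\times)_{\ge 1}$, even though the completed stalk is $2$-connective. Consequently your reduction ``completion preserves connectivity, so check $\nu^*(\bu_\ell) \in \ShvSp(B_\proet\Z_\ell^\times)_{\ge 2}$'' asks for something false; the $2$-connectivity of the lemma is a genuinely $\ell$-complete phenomenon (the offending divisible $\pi_1$ is shifted away by completion), not a statement inherited from the uncompleted sheaf. (Your $t$-exactness and conservativity claims for $\varpi^*$, and the identification $\bu_\ell^\comp \wequi \tau_{\ge 2}\ku_\ell^\comp$, are fine; they are just applied to the wrong object.)

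The paper's proof handles this by never completing before controlling connectivity integrally: it reduces to showing that each $\nu^*(\bu_\ell)/\ell^n$ is $2$-connective \emph{and} that the transition maps are surjective on $\ul{\pi}_2$, so that the limit $\nu^*(\bu_\ell)_\ell^\comp \wequi \lim_n \nu^*(\bu_\ell)/\ell^n$ stays $2$-connective on sections over w-contractible objects (Corollary \ref{cor:htpy-dim-0-conn}(3) plus the Milnor sequence). Modulo $\ell^n$ the stalk identification you wanted \emph{is} valid (this is the mod-$\ell$ content of Lemma \ref{lem:describe-SHetp-BG}), and there the claim reduces to the evident $2$-connectivity of $\bu_\ell^\comp/\ell^n$ together with surjectivity of $\pi_2(\bu_\ell^\comp/\ell^{n+1}) \to \pi_2(\bu_\ell^\comp/\ell^n)$. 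If you want to salvage your argument, you must insert this mod-$\ell^n$ step (or some equivalent device) rather than commuting $\ell$-completion past the stalk.
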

\begin{proof}
Since $\BU_\ell \wequi \Omega^\infty \nu^*(\bu_\ell)_\ell^\comp$, it suffices to prove the spectral analog.
Using Corollary \ref{cor:htpy-dim-0-conn}(3) several times, for this it is enough to show that $\nu^*(\bu_\ell)/\ell^n \in \ShvSp(B_\proet\Z_\ell^\times)_{\ge 2}$ and $\ul\pi_2 \nu^*(\bu_\ell)/\ell^{n+1} \to \ul\pi_2 \nu^*(\bu_\ell)/\ell^n$ is surjective.\NB{indeed these conditions then hold on sections over $W$, hence the limit is $\ge 2$ on sections over $W$, and hence so was the whole thing}
Since $\nu^*$ is exact this reduces to the same problem for $\bu_\ell^\comp \in \ShvSp(B_\et\Z_\ell^\times)$.
This may be checked on the (completed) stalk, which is just ordinary topological $\bu_\ell^\comp$, where the claim is clear.
\end{proof}

\subsection{Étale $\MGL$} \label{subsec:etale-MGL}
We can view $L_\proet(K^\circ)$ as an object of $\Shv_\proet(\Sch_S)_{/\tau_{\ge 2}\BGL_1(\1_{\proet,\ell}^\comp)}$, via Proposition \ref{prop:J-factors}.
Since $\tau_{\ge 2}\BGL_1(\1_{\proet,\ell}^\comp)$ is $\ell$-complete, we can also $\ell$-complete the source (essentially obtaining $J_{\proet,\ell}^\comp$).
\begin{prop} \label{prop:Kcirc-comp-fiberwise}
The map \[ L_\proet(K^\circ) \to L_\ell L_\proet(K^\circ) \wequi L_\ell \pi^* \BU_\ell \in \Shv_\proet(\Sch_S)_{/\tau_{\ge 2}\BGL_1(\1_{\proet,\ell}^\comp)} \] is an unstable $\ell$-completion.\footnote{Beware that this is a morphism in the slice category, whereas the $L_\ell$ refers to completion in the non-slice category.}
\end{prop}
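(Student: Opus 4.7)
The plan is to combine Proposition \ref{prop:p-comp-slice}(2) with Lemma \ref{lemm:fiberwise-comp-trick}. Since the pro-étale topos $\Shv_\proet(\Sch_S)$ is generated by w-contractible schemes, which are quasi-compact of homotopy dimension $\le 0$, Proposition \ref{prop:htpy-dim-0-PSigma}(2) identifies $\Shv_\proet(\Sch_S) \wequi \PSh_\Sigma(\scr X_0)$, where $\scr X_0 \subset \Sch_S$ is the full subcategory of w-contractible schemes closed under finite coproducts. This places us in the set-up required for Proposition \ref{prop:p-comp-slice}.

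Write $\scr X := \tau_{\ge 2}\BGL_1(\1_{\proet,\ell}^\comp)$; this is $\ell$-complete by Corollary \ref{cor:BGL1-trunc}. Taking $F = (L_\proet(K^\circ) \to \scr X) \in \Shv_\proet(\Sch_S)_{/\scr X}$, Proposition \ref{prop:p-comp-slice}(2) asserts that $F \to L_\ell^0 F = (L_\ell L_\proet(K^\circ) \to \scr X)$ is the $\F_\ell$-localisation in the slice category if and only if, for every $\alpha: W \to \scr X$ with $W \in \scr X_0$, the $\ell$-completion of $L_\proet(K^\circ)(W) \to \scr X(W)$ is fiberwise. The task thus reduces to verifying this fiberwise criterion for each such $W$.

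Fix $W$ w-contractible. Since $K$ (and hence $K^\circ$) turns disjoint unions of schemes into products, it is a $\Sigma$-presheaf on $\scr X_0$, so pro-étale sheafification is the identity on its sections over $W$: $L_\proet(K^\circ)(W) \wequi K^\circ(W) \wequi \Omega^\infty K_{\ge 1}(W) \wequi \Omega^\infty E(W)$, where $E := L_\proet(K_{\ge 1}) \in \ShvSp_\proet(\Sch_S)_{\ge 1}$. On the target side, Corollary \ref{cor:BGL1-trunc} together with Corollary \ref{cor:htpy-dim-0-compl}(2) yields $\scr X(W) \wequi \Omega^\infty G(W)$ with $G := \pi^*(\tau_{\ge 2}(\bgl_1 \1))_\ell^\comp \in \ShvSp_\proet(\Sch_S)_{\ge 2}$. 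The spectral lift of the motivic $j$-homomorphism, which factors through the $2$-connective cover by Proposition \ref{prop:J-factors}, supplies a map $E \to G$ whose $\Omega^\infty$ realises the map of interest. On sections over $W$ we therefore have a map of ordinary spectra $E(W) \to G(W)$ with $G(W) \in \ShvSp_{\ge 2}$, so $\pi_1(G(W)) = 0$; hence $\pi_1(G(W))/\pi_1(E(W)) = 0$ is trivially derived $\ell$-complete, and Lemma \ref{lemm:fiberwise-comp-trick} delivers the required fiberwise completion.

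The main delicacy, I expect, is the bundle of ``everything is sectionwise on w-contractibles'' identifications: pro-étale sheafification of $\Sigma$-presheaves, $\Omega^\infty$, and $\ell$-completion in the $\ge 1$ connectivity range all commute with evaluation at a w-contractible $W$ (Corollaries \ref{cor:htpy-dim-0-conn} and \ref{cor:htpy-dim-0-compl}). Once these are in place, the factorisation through $\tau_{\ge 2}$ in Proposition \ref{prop:J-factors} kills $\pi_1$ of the target, and the fiberwise criterion of Lemma \ref{lemm:fiberwise-comp-trick} becomes immediate.
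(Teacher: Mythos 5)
Your argument is correct and is essentially the paper's proof: reduce via Propositions \ref{prop:htpy-dim-0-PSigma} and \ref{prop:p-comp-slice} to checking the fiberwise criterion over w-contractible $W$, then observe via Lemma \ref{lemm:fiberwise-comp-trick} that it suffices for the target to have vanishing $\pi_1$, which holds since $\tau_{\ge 2}\BGL_1(\1_{\proet,\ell}^\comp)$ is $2$-connective (the paper cites Corollary \ref{cor:htpy-dim-0-conn}(1) directly rather than passing through the underlying spectrum $G$ as you do, but that is a cosmetic difference).
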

\begin{proof}
Using Propositions \ref{prop:htpy-dim-0-PSigma} and \ref{prop:p-comp-slice}, it suffices to show that for $W \in \Sch_S$ w-contractible, the $\ell$-completion of $L_\proet(K^\circ)(W) \to (\tau_{\ge 2}\BGL_1(\1_{\proet,\ell}))(W)$ is fiberwise.
Via Lemma \ref{lemm:fiberwise-comp-trick}, for this it is enough to show that the target has vanishing $\pi_1$.
This follows from Corollary \ref{cor:htpy-dim-0-conn}(1).
\end{proof}

We can deduce our first major result of this section.
Recall the notion of motivic colimits from \S\ref{sec:motivic-colimits}.
\begin{thm} \label{thm:identify-MGL}
The image of $\MGL$ in $\SH_\proet(S)_\ell^\comp$ is given by the motivic colimit \[ M_{\tau_{\ge 2}\BGL_1(\1_{\proet,\ell}^\comp)}(L_\ell \pi^* \BU_\ell). \]
\end{thm}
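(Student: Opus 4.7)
The plan is to transfer the classical identification of $\MGL$ as a Thom-type motivic colimit through the localization $L: \SH(S) \to \SH_\proet(S)_\ell^\comp$, and then use the preceding propositions to simplify successively the source of that motivic colimit. To begin, I would invoke the Thom spectrum description
\[
\MGL \wequi M_{\BGL_1(\1)}(K^\circ) \in \SH(S),
\]
where the structure map $K^\circ \to \BGL_1(\1)$ is the motivic $j$-homomorphism; this is the standard motivic Thom-spectrum identification, as in \cite[\S16]{norms} and \cite{bachmann-MGM}.

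Applying $L$, which is symmetric monoidal and cocontinuous and satisfies the base-change hypothesis of Proposition \ref{prop:mot-colim-basics}(2) (the exchange isomorphisms $\alpha f_\# \wequi f_\# \alpha$ hold because $L$ is a left adjoint compatible with smooth base change and suspension spectra), yields
\[
L(\MGL) \wequi M_{\BGL_1(\1_{\proet,\ell}^\comp)}(K^\circ) \in \SH_\proet(S)_\ell^\comp.
\]
Proposition \ref{prop:J-factors} then lets me restrict the base of this motivic colimit to $\tau_{\ge 2}\BGL_1(\1_{\proet,\ell}^\comp)$, since the $j$-homomorphism factors canonically through that connective cover. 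Proposition \ref{prop:mot-colim-basics}(1) further allows replacing the presheaf $K^\circ$ by its pro-étale hypersheafification $L_\proet K^\circ$, so that
\[
L(\MGL) \wequi M_{\tau_{\ge 2}\BGL_1(\1_{\proet,\ell}^\comp)}(L_\proet K^\circ).
\]
It remains to replace $L_\proet K^\circ$ by $L_\ell \pi^* \BU_\ell$: by Proposition \ref{prop:Kcirc-comp-fiberwise}, the canonical map between them is an unstable $\ell$-completion in the slice over $A := \tau_{\ge 2}\BGL_1(\1_{\proet,\ell}^\comp)$.

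The main obstacle is the last step: verifying that $M_A$ inverts this fiberwise unstable $\ell$-completion. The plan is to exploit cocontinuity of $M_A$ together with the $\ell$-completeness and stability of the target $\SH_\proet(S)_\ell^\comp$. By cocontinuity, one reduces to checking the invariance on representables $c \to A$ with $c \in \Sm_S$, on which $M_A$ returns the Thom spectrum in $\SH_\proet(S)_\ell^\comp$ of the pro-étale $\ell$-complete spherical fibration classified by the given point; such a Thom spectrum is automatically $\ell$-complete. Combined with the fact that, on $2$-connective objects in the slice, unstable $\ell$-completion becomes stable $\F_\ell$-equivalence upon applying $\Sigma^\infty$ (which is how $M_A$ produces its Thom spectra from such points), this should force the induced map of motivic colimits to be an equivalence, completing the argument.
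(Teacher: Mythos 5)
Your route is the paper's route run in the forward direction: Thom-spectrum presentation of $\MGL$, functoriality of motivic colimits, factoring through pro-\'etale sheaves (Proposition \ref{prop:mot-colim-basics}(1) and Corollary \ref{cor:BGL1-trunc}), Proposition \ref{prop:J-factors}, and Proposition \ref{prop:Kcirc-comp-fiberwise}. Two points, however, do not hold up as written. First, a smaller one: when you "apply $L$" you silently pass from a motivic colimit indexed over $\Sm_S$ to one indexed over $\Sch_S$ (where $\SH_\proet(S)$ lives, and which is \emph{not} a localization of $\SH(S)$); this change of indexing site is exactly what Proposition \ref{prop:mot-colim-basics}(3) together with Lemma \ref{lem:K-extended} (that $K$ on $\Sch_S$ is, Zariski-locally, left Kan extended from $\Sm_S$) is needed for, and citing part (2) alone does not cover it.

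Second, and more seriously, your mechanism for the last step is not a proof. Cocontinuity of $M_A$ does not let you "reduce to representables" when the task is to show that one specific morphism is sent to an equivalence: the fiberwise $\ell$-completion map $L_\proet K^\circ \to L_\ell\pi^*\BU_\ell$ is not an equivalence over representables, nor is it presented as a colimit of maps you control termwise, so there is nothing to check on representables; likewise the claims that the Thom spectrum of an $\ell$-complete fibration is "automatically $\ell$-complete" and that $\Sigma^\infty$ converts unstable $\ell$-completion into a stable $\F_\ell$-equivalence on $2$-connective slices are neither justified nor needed. The correct argument — and the one the paper gives — is purely formal: $M_A$ is a cocontinuous functor between presentable categories, hence a left adjoint, hence preserves categorical $\F_\ell$-equivalences by Proposition \ref{prop:completion-permanence}; since every object of $\SH_\proet(S)_\ell^\comp$ is $\ell$-complete, categorical $\F_\ell$-equivalences there are equivalences (Example \ref{ex:p-completion-stable}), so $M_A$ inverts the unstable $\ell$-completion of Proposition \ref{prop:Kcirc-comp-fiberwise}. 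With that substitution (and the $\Sm_S$-versus-$\Sch_S$ comparison supplied), your argument coincides with the paper's.
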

\begin{proof}
Consider the motivic colimit functor \[ \PSh(\Sch_S)_{/\tau_{\ge 2}\BGL_1(\1_{\proet,\ell}^\comp)} \to \PSh(\Sch_S)_{/\BGL_1(\1_{\proet,\ell})} \to \SH_\proet(S)_\ell^\comp. \]
By Proposition \ref{prop:mot-colim-basics}(1) and Corollary \ref{cor:BGL1-trunc} it factors through $\Shv_\proet(\Sch_S)_{/\tau_{\ge 2}\BGL_1(\1_{\proet,\ell})}$.
Since it is cocontinuous by construction, by presentability and Proposition \ref{prop:completion-permanence} it preserves unstable $\ell$-equivalences.
Since all unstable $\ell$-equivalences in $\SH_\proet(S)_\ell^\comp$ are equivalences (Example \ref{ex:p-completion-stable}), we deduce from Proposition \ref{prop:Kcirc-comp-fiberwise} that \[ M_{\tau_{\ge 2}\BGL_1(\1_{\proet,\ell}^\comp)}(L_\ell \pi^* \BU_\ell) \wequi M_{\BGL_1(\1_{\proet,\ell})}(K^\circ). \]
This coincides with the image of $\MGL$ by Proposition \ref{prop:mot-colim-basics}(2,3) (i.e. functoriality of motivic colimits), Lemma \ref{lem:K-extended} (stability of $K$ under left Kan extension) and the definition of $\MGL$.
\end{proof}

We are now ready to prove the main theorem of this section.
The spectrum $\MU_\ell \in \CAlg(\ShvSp(B_\et G_\ell)_\ell^\comp)$ was constructed in \eqref{eq:MUl}.
\begin{thm} \label{thm:etale-cobordism-weak}
For any scheme $S$ with $1/\ell \in S$, there exists an equivalence \[ L_\et(\MGL)_\ell^\comp \wequi \tilde\pi^* \MU_\ell \in \Alg_{\scr E_0}(\SH_\et(S)_\ell^\comp). \]
If $\ell=2$ or if $S$ admits a morphism to a strictly henselian local scheme, then the equivalence upgrades to one of $\scr E_\infty$-rings.
\end{thm}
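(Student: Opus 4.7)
The plan is to present both $L_\et(\MGL)_\ell^\comp$ and $\tilde\pi^*\MU_\ell$ as motivic colimits over unstable $J$-homomorphisms, and then invoke Corollary \ref{cor:pointed-pullback} to match them.

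By Proposition \ref{prop:strongly-finite-ff}(3) the functor $\nu^*\colon \SH_\et(S)_\ell^\comp \to \SH_\proet(S)_\ell^\comp$ is fully faithful, so it suffices to construct the $\scr E_0$-equivalence in $\SH_\proet(S)_\ell^\comp$. On one side, Theorem \ref{thm:identify-MGL} identifies the image of $\MGL$ with $M_{\tau_{\ge 2}\BGL_1(\1_{\proet,\ell}^\comp)}(L_\ell \pi^*\BU_\ell)$. On the other side, $\MU_\ell$ is by construction the Thom spectrum of the $G_\ell$-equivariant map $j_{G_\ell,\ell}^\comp$ of \eqref{eq:jG}, or equivalently (via the motivic-colimit presentation of a Thom spectrum) the motivic colimit of $\BU_\ell$ over $\tau_{\ge 2}\BGL_1(\1_\ell^\comp) \in \Shv_\et(BG_\ell)$ with structure map $J_{G_\ell,\ell}^\comp$. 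Applying the cocontinuous symmetric monoidal functor $\tilde\pi^*$ and invoking Proposition \ref{prop:mot-colim-basics}(2), I obtain that $\tilde\pi^*\MU_\ell$ is the motivic colimit of $\tilde\pi^*\BU_\ell$ over $\tilde\pi^*\tau_{\ge 2}\BGL_1(\1_\ell^\comp)$, with structure map $\tilde\pi^* J_{G_\ell,\ell}^\comp$. Since the target is already $\ell$-complete, I may replace this structure map by $L_\ell \tilde\pi^* J_{G_\ell,\ell}^\comp$; Corollary \ref{cor:pointed-pullback} identifies the latter pointedly with $J_{\proet,\ell}^\comp$, matching the two motivic colimits as $\scr E_0$-algebras. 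This yields the first assertion.

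For the $\scr E_\infty$-upgrade when $S$ admits a morphism to a strictly henselian local scheme $S_0$, note that the étale topos of $S_0$ is that of a point, so the cyclotomic character at $S_0$ is essentially trivial. Over $S_0$, both sides then agree with the topological $\MU_\ell^\comp$ as $\scr E_\infty$-rings; the left-hand side via Gabber--Suslin rigidity along the lines of the proof of Proposition \ref{prop:K-pulled-back}, the right-hand side essentially by definition. Since the symmetric monoidal pullback $\SH_\et(S_0)_\ell^\comp \to \SH_\et(S)_\ell^\comp$ sends this $\scr E_\infty$-equivalence to an $\scr E_\infty$-equivalence over $S$, the result follows after checking that the $\scr E_\infty$-map produced this way refines the $\scr E_0$-identification already constructed, which I would verify by noting that both are determined on underlying pointed data together with the orientation $b$ of Lemma \ref{lemm:construct-b}.

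The case $\ell = 2$ is where I expect the main obstacle: one must promote the pointed identification of Corollary \ref{cor:pointed-pullback} to a commutative-monoid identification over an arbitrary base. My plan here is to use the $G_\ell$-equivariant complex orientation theory sketched in \S\ref{subsec:more-MUl} in reverse. Since at $\ell = 2$ the cyclotomic character factors through $\tilde\pi\colon S_\et \to BG_2$ by Definition \ref{def:modified-cyc-char}, one can construct a $G_2$-equivariant complex orientation of $L_\et(\MGL)_\ell^\comp$ compatible under $\tilde\pi^*$ with the universal one on $\MU_2$, and then invoke the universal property to enhance the $\scr E_0$-map to an $\scr E_\infty$-map. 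The careful tracking of the symmetric monoidal structure through the full-faithfulness embedding of Lemma \ref{lemm:BGl-ff} is the delicate part of this argument.
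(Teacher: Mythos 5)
Your argument for the $\scr E_0$-statement is essentially the paper's: reduce into the pro-étale category by full faithfulness, quote Theorem \ref{thm:identify-MGL} for the image of $\MGL$, and use Corollary \ref{cor:pointed-pullback} to match the structure maps of the two motivic colimits. One small repair is needed: Proposition \ref{prop:strongly-finite-ff} has the hypothesis that $S$ is strongly locally $\ell$-étale finite, which an arbitrary scheme with $1/\ell \in S$ need not satisfy; the paper first reduces to $S=\Spec(\Z[1/\ell])$ (everything in sight being stable under base change) and only then invokes that proposition.

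The genuine gap is in your $\scr E_\infty$-upgrade, which is exactly where the paper's stated hypotheses come from. The paper's mechanism is Proposition \ref{prop:identity-stable-j-trick}: one shows that the \emph{stable} $j$-homomorphisms agree, $\tilde\pi^*(j_{G_\ell,\ell}^\comp)_\ell^\comp \wequi j_{\et,\ell}^\comp$, as maps of sheaves of spectra; since the structure maps of the Thom spectra are then identified as $\scr E_\infty$-monoid maps, functoriality of the motivic colimit upgrades the equivalence of Thom spectra to $\scr E_\infty$-rings, and the hypotheses ``$\ell=2$'' or ``$S$ over a strictly henselian local ring'' are precisely the hypotheses of that proposition (at $\ell=2$ the point is the Dwyer--Friedlander factorization of the cyclotomic character through the free product $G_2=\Z/2 *_2 \Z_2$, so the identification holds essentially by construction after the full-faithfulness reductions of Proposition \ref{prop:regular-prime-ff} and Lemma \ref{lem:mul-ext-faithful}). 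Your plan for $\ell=2$ via the equivariant complex-orientation theory of \S\ref{subsec:more-MUl} cannot deliver this: complex orientations only control homotopy ring maps, and indeed Corollary \ref{cor:htpy-ring-MUl} extracts from that theory only an equivalence of homotopy rings, explicitly leaving open its comparison with the $\scr E_0$-equivalence; there is no universal property of $\MGL$ among $\scr E_\infty$-rings indexed by complex orientations available here. Your strictly henselian argument also has a gap: pulling back an $\scr E_\infty$-equivalence from $S_0$ presupposes that $L_\et(\MGL)_\ell^\comp$ is compatible with base change along $S \to S_0$, which is not established (étale localization does not obviously commute with pullback), and the closing claim that the pulled-back map ``refines the $\scr E_0$-identification because both are determined by pointed data plus the orientation $b$'' is not a valid determination principle. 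The paper sidesteps both issues by comparing the $j$-homomorphisms themselves, which are natural and hence base-change stable (Remark \ref{rmk:J-stable-basechange}), rather than comparing the Thom spectra after the fact.
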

\begin{proof}
We may assume that $S = \Spec(\Z[1/\ell])$, and so is in particular strongly locally $\ell$-étale finite.
Hence by Proposition \ref{prop:strongly-finite-ff}, it suffices to prove that $\nu^* L_\et(\MGL)_\ell^\comp \wequi \pi^*\nu^*\MU_\ell$.
But the right hand side is just $M(\tilde\pi^* J_{G_\ell,\ell}^\comp)$ (by definition), which coincides with the image of $\MGL$ by Theorem \ref{thm:identify-MGL} and Corollary \ref{cor:pointed-pullback}.
To upgrade this to an equivalence of $\scr E_\infty$-rings, we need to upgrade the homotopy $\tilde\pi^* J_{G_\ell,\ell}^\comp \wequi J_{\proet,\ell}^\comp$ of pointed morphisms to one of $\scr E_\infty$-morphisms.
This follows from Proposition \ref{prop:identity-stable-j-trick} at the end of this section.
\end{proof}

\subsection{More about $\MU_\ell$} \label{subsec:more-MUl}
In $\Shv_\proet(B\Z_\ell^\times)$ we have the sheaf $\Z_\ell(1),$ represented by the pro-finite set $\lim_n \Z/\ell^n(1)$.
We let \[ \1_\ell^\comp(1)[1] = \Sigma^\infty(B \Z_\ell(1) )_\ell^\comp \in \ShvSp(B_\proet\Z_\ell^\times)_\ell^\comp. \]
\begin{lem} \label{lem:Zl-twisting}
The object $\1_\ell^\comp(1)$ is invertible and lies in the essential image of the fully faithful functor $\ShvSp(B_\et\Z_\ell^\times)_\ell^\comp \to \ShvSp(B_\proet\Z_\ell^\times)_\ell^\comp$, and $\pi^*(\1_\ell^\comp(1)) \wequi \1_\ell^\comp(1)$.
\end{lem}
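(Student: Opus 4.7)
The plan is to verify the three assertions of the lemma in order, using the conservative stalk functor $\varpi^*$ together with the finite-cohomological-dimension descent results from Section 2.

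For invertibility, I would pass to stalks along the conservative symmetric monoidal functor $\varpi^* \colon \ShvSp(B_\proet\Z_\ell^\times)_\ell^\comp \to \ShvSp_\ell^\comp$ obtained by combining Lemma \ref{lem:pro-stalk}, Lemma \ref{lemm:spc-condensed-ff}, and Example \ref{ex:biadj-pres-ell-comp}. Since $\varpi^* \Z_\ell(1) \wequi \Z_\ell$ with trivial action, we obtain $\varpi^* B\Z_\ell(1) \wequi B\Z_\ell$, and the canonical map $\Sigma^\infty S^1 = \Sigma^\infty B\Z \to \Sigma^\infty B\Z_\ell$ is an $\F_\ell$-equivalence by direct computation of continuous mod $\ell$ group cohomology. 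Hence $\varpi^* \1_\ell^\comp(1)[1] \wequi (\Sigma^\infty B\Z_\ell)_\ell^\comp \wequi \Sigma \1_\ell^\comp$, so $\varpi^* \1_\ell^\comp(1) \wequi \1_\ell^\comp$ is invertible, and invertibility descends back since $\varpi^*$ is symmetric monoidal and conservative.

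Next, to see that $\1_\ell^\comp(1)$ is in the essential image of $\ShvSp(B_\et\Z_\ell^\times)_\ell^\comp \hookrightarrow \ShvSp(B_\proet\Z_\ell^\times)_\ell^\comp$, I would invoke Lemma \ref{lemm:BG-proet-finite-vcd}. Its hypothesis applies because $\Z_\ell^\times$ contains an open subgroup isomorphic to $\Z_\ell$, so $\vcd_\ell(\Z_\ell^\times) = 1$. The essential image is characterized by classicality of the mod $\ell$ homotopy sheaves; by Lemma \ref{lemm:classical-proet-locally} this can be checked after applying $\varpi^*$, where by the first step the homotopy sheaves of $\varpi^*(\1_\ell^\comp(1)/\ell)$ are constant finite abelian groups, hence tautologically classical. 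Finally, for the identity $\pi^*(\1_\ell^\comp(1)) \wequi \1_\ell^\comp(1)$, both sides are defined by the formula $\Sigma^{-1}\Sigma^\infty(B(\text{twist}))_\ell^\comp$; since $\pi^*$ is a symmetric monoidal left adjoint preserving $\ell$-complete equivalences (Proposition \ref{prop:completion-permanence}), it commutes with $B$ applied to an abelian sheaf, with $\Sigma^\infty$, and with $\ell$-completion. The identification $\pi^* \Z_\ell(1) \wequi \lim_n \mu_{\ell^n}$ is immediate from the definition of the cyclotomic character $\pi^*$, and the claim follows by unwinding.

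The main obstacle is the bookkeeping in the second step: one must carefully apply the essential image characterization of Lemma \ref{lemm:BG-proet-finite-vcd} and reduce the classicality condition via Lemma \ref{lemm:classical-proet-locally} to the stalk computation already performed. All remaining steps are formal consequences of the fact that the relevant pullback functors are cocontinuous and symmetric monoidal.
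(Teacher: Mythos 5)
Your overall skeleton coincides with the paper's: full faithfulness and the essential-image criterion come from Lemma \ref{lemm:BG-proet-finite-vcd}, classicality of the mod $\ell$ homotopy sheaves is checked pro-étale locally via Lemma \ref{lemm:classical-proet-locally}, invertibility is reduced to the local identification $B\Z_\ell(1) \wequi B\Z_\ell \wequi L_\ell S^1$ (this is exactly what the paper's Example \ref{ex:u1(1)} records), and the statement about $\pi^*$ is an unwinding of the definitions, matching the paper's ``holds essentially by construction''.

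The one step whose justification does not hold up as written is the descent of invertibility: ``invertibility descends back since $\varpi^*$ is symmetric monoidal and conservative'' is not a valid general principle. A symmetric monoidal conservative functor need not detect invertibility (e.g.\ evaluation at the two endpoints $\Fun(\Delta^1,\ShvSp) \to \ShvSp \times \ShvSp$ is symmetric monoidal and conservative, yet the object $0\colon \1 \to \1$ has invertible endpoints without being invertible). What saves you here is the specific nature of $\varpi^*$: by Lemma \ref{lem:pro-stalk} it is pullback along the covering object $G \to *$ of the pro-étale topos, hence admits a left adjoint satisfying the projection formula and therefore preserves internal mapping objects. Consequently invertibility can be tested after $\varpi^*$ — form the candidate dual $\ul{\map}(X,\1)$ and check that the evaluation map becomes an equivalence, using conservativity — or, equivalently, one uses that the Picard space is a pro-étale sheaf, which is precisely the ``pro-étale locally \ldots so invertibility holds'' argument of Example \ref{ex:u1(1)}. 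You should also be careful that $\varpi^*$ lands in $\ShvSp(B_\proet\{e\})_\ell^\comp$ rather than in $\ShvSp_\ell^\comp$; Lemma \ref{lemm:spc-condensed-ff} only gives a fully faithful embedding of the latter into the former, so the stalk computation $\varpi^*\1_\ell^\comp(1)[1] \wequi \Sigma\1_\ell^\comp$ should be phrased there (which is harmless, since $L_\ell B\Z_\ell \wequi L_\ell S^1$ holds in that category). With these repairs your argument is correct and is essentially the paper's proof made explicit.
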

\begin{proof}
The functor is fully faithful by Lemma \ref{lemm:BG-proet-finite-vcd}, and $\1_\ell^\comp(1)$ lies in the essential image by Lemma \ref{lemm:classical-proet-locally}.
It is invertible by Example \ref{ex:u1(1)}.
The assertion about $\pi^*(\1_\ell^\comp(1))$ holds essentially by construction \cite[\S3]{bachmann-SHet}.
\end{proof}

We have an unstable map \[ \Sigma B\Z_\ell(1) \to L_\ell(\nu^* B\Z/\ell^\infty(1)) \in \Shv_\proet(B\Z_\ell^\times), \] corresponding to the equivalence $\Omega L_\ell(\nu^* B\Z/\ell^\infty(1)) \wequi B\Z_\ell(1)$ from Lemma \ref{lem:HZl-basics}(1).
Stabilizing this yields a map \begin{equation} \label{eq:unitBZell} \Sigma^{2,1} \1_\ell^\comp \wequi \Sigma \1_\ell^\comp(1)[1] \to \Sigma^\infty(B\Z/\ell^\infty)_\ell^\comp \in \ShvSp(B_\et \Z_\ell^\times)_\ell^\comp \subset \ShvSp(B_\proet\Z_\ell^\times)_\ell^\comp. \end{equation}
\begin{lem}
Under the equivalences from Lemma \ref{lem:HZl-basics}, $\pi^*$ of the above map coincides with the $\ell$-completion of the stabilization of the canonical map $S^{2,1} \wequi \P^1 \to B\Gm \wequi \P^\infty$.
\end{lem}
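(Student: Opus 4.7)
The plan is to identify both maps via the loop–suspension adjunction, reducing everything to a comparison of two ``identity'' maps on $B\Z_\ell(1) \wequi \Omega L_\ell B\Gm$.

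For our map \eqref{eq:unitBZell} after applying $\pi^*$: by construction, the unstable map $\Sigma B\Z_\ell(1) \to L_\ell \nu^* B\Z/\ell^\infty(1) \wequi B^2 \Z_\ell(1)$ is adjoint, under $\Sigma \dashv \Omega$, to the canonical equivalence of Lemma~\ref{lem:HZl-basics}(1). Applying $\pi^*$ (which commutes with $\Sigma$, $\Omega$, $\nu^*$, $B$, and $L_\ell$ by its left-adjoint nature) and post-composing with the equivalence $L_\ell \pi^*\nu^* B\Z/\ell^\infty(1) \wequi L_\ell B\Gm$ of Lemma~\ref{lem:HZl-basics}(5), we obtain an unstable pointed map $\Sigma B\Z_\ell(1) \to L_\ell B\Gm$ which is adjoint to the composite
\[
B\Z_\ell(1) \wequi \Omega L_\ell \nu^* B\Z/\ell^\infty(1) \wequi \Omega L_\ell B\Gm,
\]
where the first equivalence is Lemma~\ref{lem:HZl-basics}(1) and the second is obtained by applying $\Omega$ to Lemma~\ref{lem:HZl-basics}(5). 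Stabilizing and $\ell$-completing recovers $\pi^*$ of the map in question.

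For the classical map, the pointed map $\P^1 \to B\Gm$ corresponds, under $\P^1 \wequi \Sigma \Gm$ and $\Gm \wequi \Omega B\Gm$, to the identity on $\Gm$. After passage to $\ShvSp_\proet(\Sch_S)_\ell^\comp$, since $B\Gm$ is simply connected we have $\Omega L_\ell B\Gm \wequi L_\ell \Omega B\Gm \wequi L_\ell \Gm$ (Corollary~\ref{cor:htpy-dim-0-compl}(2)), and hence the $\ell$-completed stable map is adjoint to the $\ell$-completion unit $\Gm \to L_\ell \Gm \wequi \Omega L_\ell B\Gm$.

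Both adjoint maps are therefore canonical identifications $\Gm \to \Omega L_\ell B\Gm$ (equivalently $B\Z_\ell(1) \to \Omega L_\ell B\Gm$). That they coincide follows by naturality: both are induced by the inclusion $\mu_{\ell^\infty} \hookrightarrow \Gm$ (which is exactly what Lemma~\ref{lem:HZl-basics}(5) is built on) together with the identification of $\Omega L_\ell \nu^* B\Z/\ell^\infty(1)$ with $B\Z_\ell(1)$ supplied by Lemma~\ref{lem:HZl-basics}(1). The loop–suspension adjunction then gives the claimed equivalence of stable maps.

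The main obstacle is bookkeeping: one must verify carefully that the equivalence in Lemma~\ref{lem:HZl-basics}(5) is compatible, after looping, with the Kummer-style identification $L_\ell \Gm \wequi B\Z_\ell(1)$, naturally in the map coming from $\mu_{\ell^\infty} \hookrightarrow \Gm$. This compatibility ultimately reduces to the short exact sequence $0 \to \mu_{\ell^\infty}(W) \to \Gm(W) \to C \to 0$ with $C$ uniquely $\ell$-divisible on $w$-contractible $W$, which is precisely the input to Lemma~\ref{lem:HZl-basics}(5); so no new content is required beyond carefully chasing the canonical equivalences.
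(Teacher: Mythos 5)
Your overall strategy---reduce, via the $\Sigma\dashv\Omega$ adjunction, to the observation that both maps are adjoint to canonical identifications---is exactly the paper's: its entire proof is the remark that $\P^1\wequi\Sigma\Gm\to B\Gm$ is adjoint to the identity $\Gm\to\Omega B\Gm\wequi\Gm$, just as \eqref{eq:unitBZell} is, by construction, adjoint to the equivalence of Lemma \ref{lem:HZl-basics}(1). However, your execution of the classical-side computation contains steps that are false as stated. The sheaf $B\Gm$ is not simply connected (on $w$-contractible sections it is a $K(\Gm(W),1)$), so unstable $\ell$-completion does not commute with $\Omega$ here, and Corollary \ref{cor:htpy-dim-0-compl}(2) (which concerns $\Omega^\infty$ of $1$-connective \emph{spectra}) does not apply. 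In fact $L_\ell\Gm\wequi\Gm$, since $0$-truncated objects are already $\ell$-complete (Example \ref{ex:completion-discrete}), whereas $\Omega L_\ell B\Gm\wequi B\pi^*\Z_\ell(1)$ by Lemma \ref{lem:HZl-basics}(1,5); so the chain $\Omega L_\ell B\Gm\wequi L_\ell\Omega B\Gm\wequi L_\ell\Gm$ is wrong. The adjoint of the completed classical map is the Kummer-type map $\Gm\wequi\Omega B\Gm\to\Omega L_\ell B\Gm\wequi B\pi^*\Z_\ell(1)$, which is not an unstable equivalence and is not ``the $\ell$-completion unit of $\Gm$''.

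This matters for your concluding comparison: the two adjoints have different sources ($\Gm$ for the classical map, $B\Z_\ell(1)$ for $\pi^*$ of \eqref{eq:unitBZell}), and the parenthetical ``equivalently $B\Z_\ell(1)\to\Omega L_\ell B\Gm$'' silently identifies $\Gm$ with $B\Z_\ell(1)$, which is false unstably; they are only identified after stabilization and $\ell$-completion (this is the identification $\Sigma^{2,1}\1_\ell^\comp\wequi\Sigma\1_\ell^\comp(1)[1]$ implicit in the statement, cf.\ the proof of Lemma \ref{lem:pullback-fib}, which invokes $\Sigma^\infty B\Z_\ell(1)\wequi\Sigma^\infty\Gm$ in $\SH_\proet(S)_\ell^\comp$). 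The repair is to keep each adjoint where it lives: the classical map is (the completion of) $\Sigma$ applied to the identity of $\Gm$, the constructed map is $\Sigma$ applied to the equivalence of Lemma \ref{lem:HZl-basics}(1), and the \emph{stable} identifications of sources and targets from Lemma \ref{lem:HZl-basics} intertwine these two counit-type maps---which is precisely the paper's one-line argument. So the idea is the right one, but the middle reduction as written does not go through and should be replaced by this bookkeeping.
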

\begin{proof}
This boils down to the fact that the canonical map $\P^1 \wequi \Sigma \Gm \to B\Gm$ is also adjoint to the identity map $\Gm \to \Omega B \Gm \wequi \Gm$.
\end{proof}

We now work in the category $\ShvSp(B_\et G_\ell)_\ell^\comp$, so that we have access to the ring spectrum $\MU_\ell$.
\begin{dfn}
We call a homotopy ring spectrum $E \in \ShvSp(B_\et G_\ell)_\ell^\comp$ \emph{complex oriented} if we are supplied with a map $\Sigma^\infty B\Z/\ell^\infty(1) \to \Sigma^{2,1} E$ extending the suspension of the unit $\Sigma^{2,1} \1 \to \Sigma^{2,1} E$ through \eqref{eq:unitBZell}.
\end{dfn}

\begin{prop}
$\MU_\ell$ has a canonical complex orientation, corresponding under the equivalence $\tilde\pi^*\MU_\ell \wequi L_\et(\MGL_\ell)$ to the canonical orientation.
\end{prop}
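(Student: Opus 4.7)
The plan is to build the orientation directly from the ``bottom cell'' map $b \colon B\Z/\ell^\infty(1) \to \widetilde{\BU}_\ell$ of Lemma~\ref{lemm:construct-b} and then match it against the canonical orientation of $\MGL$ via naturality of the Thom spectrum construction. After $\ell$-completion and applying $\nu^*$, $b$ can be viewed as an object of $\Shv_\proet(BG_\ell)_{/\tau_{\ge 2}\BGL_1(\1_\ell^\comp)}$ via post-composition with $J_{G_\ell,\ell}^\comp$. Functoriality of the motivic Thom spectrum functor $M$ (\S\ref{sec:motivic-colimits}) then yields a canonical morphism $M(b) \to M(J_{G_\ell,\ell}^\comp) = \MU_\ell$ in $\ShvSp(B_\et G_\ell)_\ell^\comp$.

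To extract the orientation, I must identify $M(b)$. The composite $B\Z/\ell^\infty(1) \to \BU_\ell \to \tau_{\ge 2}\BGL_1(\1_\ell^\comp)$ factors through $B^2\Z_\ell(1)$ and, by an argument parallel to Lemma~\ref{lem:pullback-fib}, coincides with $B u_{B\Z_\ell(1)}$ of Example~\ref{ex:u1(1)}, i.e.\ the classifying map of the invertible ``tautological twist'' $\1_\ell^\comp(1)[1]$ (Lemma~\ref{lem:Zl-twisting}). A Thom-isomorphism calculation exploiting this invertibility then identifies $M(b)$ with $\Sigma^{-2,-1}\Sigma^\infty B\Z/\ell^\infty(1)$, yielding the desired candidate map $\Sigma^\infty B\Z/\ell^\infty(1) \to \Sigma^{2,1}\MU_\ell$. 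The extension-of-unit condition from \eqref{eq:unitBZell} is verified by restricting along the bottom cell inclusion $\Sigma^{2,1}\1_\ell^\comp \hookrightarrow \Sigma^\infty B\Z/\ell^\infty(1)$ and checking that under the Thom identification it recovers the unit map of $\MU_\ell$.

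For compatibility with the canonical orientation of $\MGL$, I apply $\tilde\pi^*$ and invoke the functoriality of motivic colimits (Proposition~\ref{prop:mot-colim-basics}(2,3)). By Corollary~\ref{cor:pointed-pullback} and Lemma~\ref{lem:HZl-basics}(5), the pullback of the diagram $B\Z/\ell^\infty(1) \to \BU_\ell \to \BGL_1(\1_\ell^\comp)$ along $\tilde\pi$ becomes, after $\ell$-completion, the diagram $L_\ell B\Gm \to L_\ell L_\proet K^\circ \to \BGL_1(\1_{\proet,\ell}^\comp)$ which drives the canonical motivic orientation of $\MGL$ via Theorem~\ref{thm:identify-MGL}. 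Naturality of $M$ then shows our orientation pulls back to the canonical one on $L_\et(\MGL)_\ell^\comp$.

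The hardest step is the Thom-isomorphism identification $M(b) \wequi \Sigma^{-2,-1}\Sigma^\infty B\Z/\ell^\infty(1)$. While morally this is just ``twist the base by the invertible object $\1(1)[1]$'', a careful treatment must assemble the colimit defining $M$ together with the invertibility furnished by Lemma~\ref{lem:Zl-twisting}, ideally by factoring $b$ through $B^2\Z_\ell(1)$ and treating the twist there first. If this direct computation proves intractable, an alternative is to descend the canonical orientation of $\MGL$ to $\MU_\ell$ via a fully-faithfulness argument for $\tilde\pi^*$ on the relevant mapping spectrum, at the cost of obscuring the explicit construction.
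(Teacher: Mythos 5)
Your proposal is correct and its skeleton is the same as the paper's: take the (pro-\'etale) Thom spectrum of the map $b\colon B\Z/\ell^\infty(1) \to \widetilde\BU_\ell$ of Lemma~\ref{lemm:construct-b} to get $o\colon M(B\Z/\ell^\infty(1)) \to \MU_\ell$, identify $M(B\Z/\ell^\infty(1)) \wequi \Sigma^{-2,-1}\Sigma^\infty B\Z/\ell^\infty(1)$ so that $\Sigma^{2,1}o$ is a complex orientation, and then apply $\tilde\pi^*$ and use that the canonical orientation of $\MGL$ arises as $Th(\gamma-1)\wequi\Sigma^{-2,-1}B\Gm\to\MGL$ (your use of Corollary~\ref{cor:pointed-pullback}, Lemma~\ref{lemm:construct-b}/Lemma~\ref{lem:HZl-basics}(5) and Theorem~\ref{thm:identify-MGL} for this last step matches the paper). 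Where you genuinely diverge is the one nontrivial computation: the paper identifies $M(B\Z/\ell^\infty(1))$ by passing through the fully faithful embedding of Lemma~\ref{lemm:BGl-ff} into $(\ShvSp^{BG_\ell'})_\ell^\comp$, reducing to the classical fact $Th(\gamma_1-1)\simeq\Sigma^{-2}\Sigma^\infty\CP^\infty$ together with bookkeeping of the Tate twist, whereas you propose an intrinsic argument: factor the completed map through $B^2\Z_\ell(1)$, identify the resulting local system with $Bu_{B\Z_\ell(1)}$ (Example~\ref{ex:u1(1)}), and compute the Thom spectrum as a twist by the invertible object of Lemma~\ref{lem:Zl-twisting}. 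This is a legitimate and arguably cleaner route (the paper's own marginal note concedes its check is a bit annoying), since for a grouplike $X$ the Thom spectrum of $Bu_X$ is formally $(F\tilde\Sigma X)^{-1}\otimes\Sigma^\infty BX$ by commuting the colimit over $BX$ with the pushout defining $\tilde\Sigma$. The price is the step you wave at with ``an argument parallel to Lemma~\ref{lem:pullback-fib}'': that the restriction of $J_{G_\ell,\ell}^\comp$ along $b$ agrees with $Bu_{B\Z_\ell(1)}$ in the $G_\ell$-equivariant category is not available in the paper and does need a proof --- the cleanest being to note that $\tilde\pi^*$ is faithful (as in the proof of Proposition~\ref{prop:cx-oriented-equiv}), so the identity of the two maps can be checked after pullback, where it is exactly Lemma~\ref{lem:pullback-fib} combined with Corollary~\ref{cor:pointed-pullback}; with that supplied, your argument goes through.
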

\begin{proof}
Taking (pro-étale) Thom spectra of the map $B\Z/\ell^\infty(1) \to \BU_\ell$ from Lemma \ref{lemm:construct-b} yields a map \[ o: M(B\Z/\ell^\infty(1)) \to \MU_\ell. \]
Using the embedding into $\ShvSp^{BG_\ell'}$ from Lemma \ref{lemm:BGl-ff}, one checks that $M(B\Z/\ell^\infty(1)) \wequi \Sigma^{-2,-1}\Sigma^\infty B\Z/\ell^\infty(1)$.\NB{this actually seems a bit annoying. better argument?}
Hence $\Sigma^{2,1}o$ defines a complex orientation of $\MU_\ell$.
Applying $\tilde\pi^*$ we obtain the canonical orientation, since the latter arises as $\MGL(1) \wequi Th(\gamma-1) \wequi \Sigma^{-2,-1}B\Gm \to \MGL$.
\end{proof}

\begin{cor} \label{cor:htpy-ring-MUl}
There exists an equivalence of homotopy ring spectra $e_2: L_\et(\MGL)_\ell^\comp \wequi \tilde\pi^*(\MU_\ell)$.
If the previous equivalence $e_1: L_\et(\MGL)_\ell^\comp \wequi \tilde\pi^*(\MU_\ell)$ (constructed in the proof of Theorem \ref{thm:etale-cobordism-weak}) was an equivalence of homotopy ring spectra, it coincides with the $e_2$ (but we do not know this in general).
\end{cor}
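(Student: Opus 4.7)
The plan is to construct $e_2$ from the complex orientation of $\MU_\ell$ established in the preceding proposition, combined with the universal property of $\MGL$ as the initial complex-oriented motivic homotopy ring spectrum.

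First I would pull the orientation of $\MU_\ell$ back along the symmetric monoidal functor $\tilde\pi^*$. Via Lemma \ref{lem:Zl-twisting} (which identifies the pullback of the Tate twist with the motivic Tate twist) and the compatibility of \eqref{eq:unitBZell} with the canonical map $\P^1 \to B\Gm$ under $\pi^*$, the pulled-back orientation endows $\tilde\pi^*(\MU_\ell)$ with the structure of a complex-oriented homotopy ring spectrum in $\h\SH(S)$.

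Next I would invoke the universal property of $\MGL$ to obtain a canonical homotopy ring map $\MGL \to \tilde\pi^*(\MU_\ell)$ classifying this orientation. Since the target is étale-local and $\ell$-complete, the map factors uniquely through $L_\et(\MGL)_\ell^\comp \to \tilde\pi^*(\MU_\ell)$, and I would take this factorization to be $e_2$.

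The main obstacle is verifying that $e_2$ is an equivalence; here I would argue by passage to geometric stalks of the associated étale sheaves of spectra. At any geometric point both $L_\et(\MGL)_\ell^\comp$ and $\tilde\pi^*(\MU_\ell)$ are identified with $\MU_\ell^\comp$, and the stalk of $e_2$ is a homotopy ring endomorphism of $\MU_\ell^\comp$ that preserves the canonical complex orientation. By the classical universal property of $\MU$ among complex-oriented homotopy ring spectra, such an endomorphism is the identity; hence $e_2$ is an equivalence on all geometric stalks, and therefore an equivalence. For the final assertion, if $e_1$ happens itself to be a homotopy ring equivalence, then by the preceding proposition it matches the orientations exactly as $e_2$ does by construction, so the uniqueness clause in the universal property of $\MGL$ forces $e_1 = e_2$.
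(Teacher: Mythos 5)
Your construction of $e_2$ is exactly the paper's: pull the orientation of $\MU_\ell$ back along $\tilde\pi^*$ (this is the content of the proposition preceding the corollary), invoke the universal property of $\MGL$ among complex-oriented homotopy ring spectra to get a unique orientation-compatible ring map factoring through $L_\et(\MGL)_\ell^\comp$, and check it is an equivalence on geometric stalks; the final uniqueness assertion about $e_1$ is also handled the same way. The only divergence is the stalk check. The paper observes that over a strictly henselian base the map $e_1$ of Theorem \ref{thm:etale-cobordism-weak} is itself an equivalence of ($\scr E_\infty$, hence homotopy) ring spectra compatible with the orientations, so $e_1 \wequi e_2$ stalkwise by uniqueness and $e_2$ is an equivalence. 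You instead argue that the stalk of $e_2$ is an orientation-preserving homotopy ring endomorphism of $\MU_\ell^\comp$ and conclude it is the identity from the classical universal property of $\MU$ (together with the implicit observation that $[\MU_\ell^\comp,\MU_\ell^\comp] \wequi [\MU,\MU_\ell^\comp]$ since the target is $\ell$-complete). That works, but note it quietly presupposes that the stalkwise identification of $L_\et(\MGL)_\ell^\comp$ with $\MU_\ell^\comp$ is multiplicative and sends the motivic orientation to the classical one --- which is precisely the input the paper extracts from the strictly henselian case of Theorem \ref{thm:etale-cobordism-weak} and the preceding proposition; you should cite those rather than treating the identification as automatic.
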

\begin{proof}
Any oriented motivic spectrum receives a canonical ring map from $\MGL$.
We hence obtain a unique ring map $e_2: L_\et(\MGL)_\ell^\comp \to \tilde\pi^*(\MU_\ell)_\ell^\comp$ compatible with the orientations.
We may check this is an equivalence on stalks.
In this case $e_1: L_\et(\MGL)_\ell^\comp \xrightarrow{\wequi} \tilde\pi^*(\MU_\ell)_\ell^\comp$ is an equivalence of ring spectra by Theorem \ref{thm:etale-cobordism-weak}, and so $e_1 \wequi e_2$ by uniqueness.
In particular $e_2$ is an equivalence, as needed.
\end{proof}

\begin{prop} \label{prop:cx-oriented-equiv}
Let $E \in \ShvSp(B_\et G_\ell)_\ell^\comp$ be a complex oriented ring spectrum.
\begin{enumerate}
\item There is a canonical isomorphism $E^{**}(B\Z/\ell^\infty(1)) \wequi E^{**}\fpsr{t}$, with $|t|=(2,1)$.
  The group structure on $B\Z/\ell^\infty(1)$ induces a formal group law on $E^{2*,*}$.
\item An equivalence of complex oriented ring spectra induces a strict isomorphism of the associated formal group laws.
\item The cosimplicial ring $\pi_{2*,*} \CB(\MU_\ell)$ coincides with the cosimplicial ring corresponding to a Hopf algebroid $(\pi_{2*,*}\MU_\ell, (\MU_\ell)_{2*,*}\MU_\ell)$.
  The formal group law on $\pi_{2*,*}\MU_\ell$ and the strict isomorphism of formal group laws on $\pi_{2*,*}(\MU_\ell \wedge \MU_\ell)$ corresponding to the switch map classify a morphism of Hopf algebroids $(\MU_*,\MU_*\MU) \to (\pi_{2*,*}\MU_\ell, (\MU_\ell)_{2*,*}\MU_\ell)$.
  The following diagram commutes
\begin{equation*}
\begin{tikzcd}
(\MU_*, \MU_*\MU) \ar[r] \ar[rd] & (\pi_{2*,*} L_\et(\MGL)_\ell^\comp, \pi_{2*,*} L_\et(\MGL \wedge \MGL)_\ell^\comp) \\
             & \ar[u] (\pi_{2*,*}\MU_\ell, (\MU_\ell)_{2*,*}\MU_\ell), \\
\end{tikzcd}
\end{equation*}
  where the horizontal map is the canonical one.
\end{enumerate}
\end{prop}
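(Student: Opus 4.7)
My plan is to follow the classical Quillen/Adams template, using the fully faithful embedding $\ShvSp(B_\et G_\ell)_\ell^\comp \hookrightarrow \ShvSp^{BG_\ell'}$ from Lemma \ref{lemm:BGl-ff} to reduce delicate computations to the classical $\ell$-complete world whenever possible.

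\textbf{Part (1).} The orientation is by definition a map $o: \Sigma^\infty B\Z/\ell^\infty(1) \to \Sigma^{2,1} E$ extending the suspension of the unit through \eqref{eq:unitBZell}; call its class $t \in E^{2,1}(B\Z/\ell^\infty(1))$. I would first establish the finite version $E^{**}(B\Z/\ell^n(1)) \wequi E^{**}[t]/(f_n(t))$ for some monic polynomial $f_n$ of increasing degree, by induction on $n$ using the cofiber sequence (in $\Shv_\proet(B\Z_\ell^\times)$) obtained from the short exact sequence $\Z/\ell^n(1) \to \Z/\ell^{n+1}(1) \to \Z/\ell(1)$ and the Gysin-type identification coming from Lemma \ref{lem:Zl-twisting} that identifies the cofiber of $\Sigma^\infty B\Z/\ell^n(1) \to \Sigma^\infty B\Z/\ell^{n+1}(1)$ with $\Sigma^{2(n+1),n+1} \1$-type shifts. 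The key input is that under the embedding into $\ShvSp^{BG_\ell'}$ we can verify the differentials in the skeletal AHSS vanish because $t$ restricts to the generator of the top Tate-twisted cell. Since $\Sigma^\infty B\Z/\ell^\infty(1) \wequi \colim_n \Sigma^\infty B\Z/\ell^n(1)$, one gets $E^{**}(B\Z/\ell^\infty(1)) \wequi \lim_n E^{**}[t]/(f_n(t)) \wequi E^{**}\fpsr{t}$ since the system is Mittag-Leffler with $f_n$ having leading order tending to infinity. The analogous computation for $(B\Z/\ell^\infty(1))^{\times 2}$ yields $E^{**}\fpsr{x,y}$ with $x,y$ the pullbacks from the two factors; pulling $t$ back along the multiplication then defines the FGL $F(x,y)$ on $E^{2*,*}$.

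\textbf{Part (2).} This is the classical change-of-orientation argument transported to our setting. Given an equivalence $\phi: (E, t_E) \xrightarrow{\wequi} (E', t_{E'})$ of complex oriented ring spectra, both $\phi(t_E)$ and $t_{E'}$ are elements of $(E')^{2,1}(B\Z/\ell^\infty(1)) \wequi (E')^{**}\fpsr{t_{E'}}$ whose images in $(E')^{2,1}(S^{2,1})$ agree (by compatibility with the suspension of the unit in the definition of orientation). So $\phi(t_E) = t_{E'} + O(t_{E'}^2)$, and this power series provides a strict isomorphism from the FGL pulled back along $\phi$ to the FGL associated to $t_{E'}$.

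\textbf{Part (3).} Apply Part (1) iteratively to $\MU_\ell^{\wedge (n+1)}$, using the $n+1$ distinct orientations obtained from the $n+1$ unit maps $\MU_\ell \to \MU_\ell^{\wedge (n+1)}$, to compute $\pi_{2*,*} \MU_\ell^{\wedge(n+1)}$ as an iterated power series-in-a-difference algebra over $\pi_{2*,*}\MU_\ell$. By Part (2), the comparison of any two such orientations is a strict isomorphism, and standard Adams-style arguments then package the cosimplicial structure as a Hopf algebroid $(\pi_{2*,*}\MU_\ell, (\MU_\ell)_{2*,*}\MU_\ell)$ whose structure maps are precisely encoded by the FGL on $\pi_{2*,*}\MU_\ell$ and the strict iso on the two-fold smash. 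The universal property of $(\MU_*, \MU_*\MU)$ as classifying FGLs with strict isos (Lazard/Quillen) then produces the Hopf algebroid map. Commutativity of the triangle follows because the composite $\MGL \to L_\et(\MGL)_\ell^\comp \wequi \tilde\pi^* \MU_\ell$ (Corollary \ref{cor:htpy-ring-MUl}) is a map of complex oriented homotopy ring spectra with respect to the canonical orientations on both sides, so both routes classify the same FGL and the same strict isomorphism.

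\textbf{Main obstacle.} The main friction is in Part (1): making the skeletal/cellular decomposition argument work cleanly in $\ShvSp(B_\et G_\ell)_\ell^\comp$ with its motivic bigrading and Tate twists, and verifying that the orientation provides the right generator of the top cell at each finite stage. The cleanest route is to use the embedding of Lemma \ref{lemm:BGl-ff} to carry out the cell-by-cell induction in $\ShvSp^{BG_\ell'}$ where one can appeal directly to the classical computation of $E^*(B\Z/\ell^\infty)$ for $\ell$-complete complex oriented $E$, then transport back; one must be careful that the bigraded computation $E^{**}$ is compatible with this reduction, which is where Lemma \ref{lem:Zl-twisting} and invertibility of $\1_\ell^\comp(1)$ are used.
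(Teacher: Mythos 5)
Your overall strategy---construct the comparison data from the orientation upstairs, verify equivalences after a faithful/conservative embedding, then invoke Quillen universality---is the same as the paper's. The only real difference of route is where you land: the paper checks everything after $\tilde\pi^*$ into $\SH_\et(\Spec(\Z[1/\ell]))_\ell^\comp$ (conservative because the further composite into $(\ShvSp^{BG_\ell'})_\ell^\comp$ is fully faithful, Lemma \ref{lemm:BGl-ff}) and then quotes the motivic computations \cite[Proposition 6.2 and Corollary 6.7]{naumann2009motivic}, whereas you propose to go all the way down to classical $\ell$-complete spectra. Your treatments of (2) and (3), including the use of Corollary \ref{cor:htpy-ring-MUl} for the commuting triangle, match the paper's in substance.

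However, the concrete inductive argument you give as your primary route for (1) has genuine errors. Applying $B$ to the short exact sequence $\Z/\ell^n(1) \to \Z/\ell^{n+1}(1) \to \Z/\ell(1)$ yields a fiber sequence of spaces, not a cofiber sequence of suspension spectra, and the cofiber of $\Sigma^\infty B\Z/\ell^n(1) \to \Sigma^\infty B\Z/\ell^{n+1}(1)$ is not a Tate-twisted sphere shift; you appear to be conflating the finite stages $B\Z/\ell^n(1)$ with the skeleta of ($\ell$-completed) $\CP^\infty$, and already classically $\Sigma^\infty B\Z/p^{n+1}/\Sigma^\infty B\Z/p^n$ is not a single cell. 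Relatedly, $E^{**}(B\Z/\ell^n(1))$ is not $E^{**}[t]$ modulo a monic polynomial: the correct answer is the power-series quotient $E^{**}\fpsr{t}/([\ell^n](t))$ by the $\ell^n$-series, and passing to $B\Z/\ell^\infty(1)$ then requires a $\lim$/$\limone$ analysis of these quotients rather than a Mittag--Leffler argument with monic $f_n$. The clean fix is the one you hedge towards in your final paragraph, and it is what the paper does: since $\ell$-adically $B\Z/\ell^\infty(1)$ is a twisted form of $\CP^\infty$ (Lemma \ref{lem:HZl-basics}(1)), use the orientation to build the single map $B\Z/\ell^\infty(1)\wedge E \to \bigoplus_{n\ge 0}\Sigma^{2n,n}E$ and check it is an equivalence after the conservative functor (the Tate twist being invertible by Lemma \ref{lem:Zl-twisting}, conservativity is all that is needed for the bigraded statement), reducing to the known computation for oriented theories; no stage-by-stage induction over the finite groups is needed.
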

\begin{proof}
We constantly use the factorization \[ \ShvSp(B_\et G_\ell)_\ell^\comp \xrightarrow{\tilde\pi^*} \SH_\et(\Spec(\Z[1/\ell]))_\ell^\comp \to (\ShvSp^{BG_\ell'})_\ell^\comp. \]
Since the composite is fully faithful, $\tilde\pi^*$ is faithful and conservative.

(1) The class $t$ is the complex orientation.
We can use it to build a map $B\Z/\ell^\infty(1) \wedge E \to \bigoplus_{n \ge 0} \Sigma^{2n,n} E$.
It suffices to prove that this is an equivalence, which we may check after applying $\tilde\pi^*$, whence we reduce to the motivic analog.
For this see e.g. \cite[Proposition 6.2]{naumann2009motivic}.

(2) We need to prove that certain equations hold, which we may check after applying the faithful functor $\tilde\pi^*$; this reduces to a well-known result.

(3) For the first claim, it suffices to prove that $\MU_\ell \wedge \MU_\ell \wequi \bigoplus \Sigma^{2*,*} \MU_\ell$.
It suffices to prove this in $(\ShvSp^{BG_\ell'})_\ell^\comp$, where it follows from the same claim for $\MU$.
The rest is formal; see \cite[Corollary 6.7]{naumann2009motivic}.
\end{proof}

\subsection{Regular primes} \label{subsec:MGL-regular-primes}
\begin{prop} \label{prop:regular-prime-ff}
Let $\ell=2$ or an odd regular prime and $S$ the spectrum of a strictly henselian local ring.
The following functors are fully faithful:
\begin{enumerate}
\item $\pi^*_\mu: \ShvSp(B_\et G_\ell^\mu)_\ell^\comp \to \ShvSp(\Spec(\Z[1/\ell,\mu_\ell])_\et)_\ell^\comp$
\item $\pi^*: \ShvSp \to \ShvSp(S_\et)$
\end{enumerate}
\end{prop}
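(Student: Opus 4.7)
The plan for (2) is essentially tautological: if $S$ is the spectrum of a strictly henselian local ring, then every étale cover of $S$ splits as a disjoint union of copies of $S$, so the small étale $\infty$-topos $\Shv(S_\et)$ is equivalent to $\Spc$ and $\pi^*$ becomes an equivalence on stabilizations—fully faithful \emph{a fortiori}.

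For (1), the strategy is to upgrade the mod-$\ell$ cohomological comparison of Theorem~\ref{thm:etale-fund-gps} into fully faithfulness of the pullback on $\ell$-complete spectral sheaves. By adjunction this amounts to showing that the unit $F \to (\pi_\mu)_* \pi_\mu^* F$ is an equivalence for every $F \in \ShvSp(B_\et G_\ell^\mu)_\ell^\comp$. Both $G_\ell^\mu$ and $\Spec\Z[1/\ell,\mu_\ell]$ have bounded virtual $\ell$-cohomological dimension (for $\ell$ odd regular $G_\ell^\mu$ is free pro-$\ell$ and at $\ell=2$ contains an index-$2$ free pro-$2$ subgroup, as in the proof of Lemma~\ref{lemm:BGl-ff}; the bound on the scheme side is standard for these cyclotomic rings). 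Following the now-standard Postnikov-tower and mod-$\ell$ dévissage (compare the proof of Lemma~\ref{lem:describe-SHetp-BG}), the question reduces to checking that for every $\Z/\ell$-vector space $M$ with continuous $G_\ell^\mu$-action, the pullback map
\[ H^*(BG_\ell^\mu,\, M) \;\longrightarrow\; H^*_\et\bigl(\Spec\Z[1/\ell,\mu_\ell],\; \pi_\mu^* M\bigr) \]
is an isomorphism.

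For the trivial module $M = \Z/\ell$ this is precisely the content of Theorem~\ref{thm:etale-fund-gps}. To extend to a general $M$ I would write $M$ as the filtered colimit of its finite continuous subrepresentations, each trivialized by restriction to some open normal subgroup $N \subset G_\ell^\mu$. The preimage of $N$ in $\pi_1^\et(\Spec\Z[1/\ell,\mu_\ell])$ corresponds to a finite étale cover $X' \to X$ which is still an open subscheme of the ring of integers of a cyclotomic number field, so the Dwyer--Friedlander construction in Theorem~\ref{thm:etale-fund-gps} applies verbatim to $X'$ and identifies its pro-$\ell$ étale fundamental group with the corresponding open subgroup of $G_\ell^\mu$ (which is again free pro-$\ell$, by Nielsen--Schreier). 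Comparing the Hochschild--Serre spectral sequences on the two sides then reduces the twisted statement to the constant-coefficient one for $N$, which is already in hand.

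The main obstacle is therefore executing this Hochschild--Serre reduction cleanly in the $\infty$-categorical setting: one must verify that the Dwyer--Friedlander comparison is functorial for finite étale covers in the appropriate pro-$\ell$ sense, and that open subgroups of $G_\ell^\mu$ really do correspond under Galois theory to finite étale covers of $X$. Both are built into the construction of $G_\ell^\mu$ as a pro-$\ell$ completion of $\pi_1^\et$, so the proof should amount to careful bookkeeping of the dévissage rather than any new mathematical input.
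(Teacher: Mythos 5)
For part (1) there is a genuine gap in how you handle twisted coefficients. Your dévissage writes a continuous representation $M$ as a colimit of finite subrepresentations trivialized on an open normal subgroup $N \subset G_\ell^\mu$, and then tries to compare Hochschild--Serre spectral sequences; for this you need the constant-coefficient comparison $H^*(N,\F_\ell) \cong H^*_\et(X',\F_\ell)$ for the finite étale cover $X' \to X = \Spec(\Z[1/\ell,\mu_\ell])$ corresponding to $N$. Your justification -- that $X'$ is again an open subscheme of the ring of integers of a cyclotomic field, so Theorem~\ref{thm:etale-fund-gps} ``applies verbatim'' -- is false: the covers classified by open subgroups of $G_\ell^\mu$ are $\ell$-extensions of $\QQ(\mu_\ell)$ unramified outside $\ell$, and since $G_\ell^\mu$ is free pro-$\ell$ on $(\ell+1)/2$ generators (non-abelian for $\ell \ge 5$), these are in general not cyclotomic; the Dwyer--Friedlander identification is a statement about the specific scheme $\Spec(\Z[1/\ell,\mu_\ell])$ and does not transfer to $X'$ without further argument. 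The missing idea, and the one the paper uses, is much simpler: because $G_\ell^\mu$ is a pro-$\ell$ group, every finite continuous $\F_\ell$-representation is an iterated extension of \emph{trivial} representations (an $\ell$-group has no nontrivial irreducible $\F_\ell$-representations), so the twisted comparison reduces by extensions directly to the constant-coefficient statement over $X$ itself, with no covers and no Hochschild--Serre needed. (The paper packages the dévissage slightly differently -- it reduces fully faithfulness to $\pi_*\pi^*\1 \simeq \1$ via compact-rigid generation, passes to $H\Z$-homology by conservativity on connective objects, and then runs the extension argument on the modules $\F_\ell[A]$ for finite $G_\ell^\mu$-sets $A$ -- but the representation-theoretic fact is the essential input your proposal lacks.)

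For part (2), your stated reason is too strong: the small étale topos of a strictly henselian local ring is \emph{not} equivalent to $\Spc$ (a strictly henselian DVR already has a nontrivial generic point, and étale $S$-schemes need not split into copies of $S$). What is true, and what suffices, is that global sections coincides with the stalk at the closed geometric point, so that $H^*_\et(S,F)$ is concentrated in degree $0$; this is how the paper concludes, by checking $H^*_\et(S,\Z) = H^*_{\mathrm{sing}}(pt,\Z)$ after the same reduction to the unit on the sphere. So your conclusion for (2) stands, but the argument should be phrased in terms of vanishing of higher étale cohomology (or exactness of global sections) rather than triviality of the topos.
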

\begin{proof}
Let us denote all the functors by $\pi^*$.
Each of them preserves colimits and a compact-rigid generating family (a finite virtual cohomological dimension argument implies that the rigid generating family of Lemma \ref{lem:rigid-generation} consists of compact objects), and hence admits a cocontinuous right adjoint $\pi_*$.
It suffices to prove that $\pi_*\pi^* \1 \wequi \1$ (see e.g. \cite[Lemma 22]{bachmann-hurewicz}).
The functor $\pi^*$ preserves connective objects, and $\pi_*$ does because of finite virtual cohomological dimension.
Since for such objects $(\ph) \wedge H\Z$ is conservative, it suffices to prove that $H\Z \wequi H\Z \wedge \pi_*\pi^* \1$.
Observe that $H\Z \wedge \pi_*\pi^* \1 \wequi \pi_*\pi^* H\Z$ (see e.g. \cite[Lemma 3.2]{bachmann-topmod}).
We wish to show that $[X, H\Z]_* \wequi [X, \pi_*\pi^* H\Z]_*$ for every generator $X$.
Since the generators are rigid, for this we may as well prove that $[\1, H\Z \wedge X]_* \wequi [\1,  \pi_*\pi^*(H\Z \wedge X)]_*$.

First we prove (2).
We take $X = \1$ the sphere spectrum.
We thus need to show that $H^*_\et(S, \Z) = H^*_{sing}(pt, \Z)$, which is clear.

Now we prove (1).
We can choose generators of the form $X = \Sigma^\infty_+(A)/\ell$, where $A$ is a finite discrete $G_\ell^\mu$-set.
Then $X \wedge H\Z \wequi \Sigma^\infty_+(A) \wedge H\Z/\ell$ is concentrated in the heart of the $t$-structure, corresponding to the continuous $G_\ell^\mu$-representation $M=\F_\ell[A]$.
Since $G_\ell^\mu$ is a pro-$\ell$-group, $M$ can be obtained as a finite extension of copies of the trivial representation $\F_\ell$ (recall that non-trivial representations of $\ell$-groups over $\F_\ell$ are reducible \cite[\S8.3, Corollary of Proposition 26]{serre1977linear}).
It is thus enough to prove that $[\1, \pi_*\pi^* H\F_\ell]_* \wequi [\1, H\F_\ell]_*$.
This holds by construction (see Theorem \ref{thm:etale-fund-gps}).
\end{proof}

\begin{lem} \label{lem:mul-ext-faithful}
The canonical functor \[ \ShvSp(S_\et)_\ell^\comp \to \ShvSp(S[\mu_\ell]_\et)_\ell^\comp \] is faithful.
\end{lem}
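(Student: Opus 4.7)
The plan is to exploit the fact that $f : \Spec(S[\mu_\ell]) \to \Spec(S)$ is a finite étale (Galois) cover whose degree $d$ divides $\ell-1$, and is therefore a unit in $\Z_\ell$. Faithfulness on $\ell$-complete sheaves will then follow by a standard transfer/ambidexterity argument.

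First, since $f$ is finite étale, the pullback $f^{*}$ admits a right adjoint $f_{*}$ on étale sheaves of spectra, and $f_{*}$ preserves $\ell$-complete objects since it preserves limits. Moreover, $\1_T$ is strongly dualizable (in fact self-dual) over $\1_S$, so by the projection formula (which one may invoke via Lemma \ref{lemm:projection-formula}(1) applied to $f^{*} \dashv f_{*}$, or directly from ambidexterity for finite étale maps) we have a natural equivalence
\[ f_{*} f^{*} E \;\simeq\; E \otimes f_{*}\1_T. \]

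Second, the unit $\eta : \1_S \to f_{*}\1_T$ admits a retraction given by the trace map of the finite étale cover, with the composite $\1_S \to f_{*}\1_T \to \1_S$ equal to multiplication by $d = \deg(f)$. Since $d \mid \ell-1$ is coprime to $\ell$, multiplication by $d$ is a unit in $\Z_\ell$ and hence acts as an equivalence on any $\ell$-complete spectrum. Tensoring with an $\ell$-complete $E$, we conclude that for every $E \in \ShvSp(S_\et)_\ell^\comp$ the unit $\eta_E : E \to f_{*} f^{*} E$ is a split monomorphism.

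Finally, to deduce faithfulness: since both source and target are stable, it suffices to show that $f^{*}\alpha \simeq 0$ implies $\alpha \simeq 0$, for any $\alpha : E \to F$ in $\ShvSp(S_\et)_\ell^\comp$. Naturality of the unit gives a commutative square whose bottom edge $f_{*}f^{*}\alpha$ is null, hence the composite $\eta_F \circ \alpha$ is null. But $\eta_F$ is split mono by Step~2, so $\alpha \simeq 0$. This proves the lemma. The only substantive ingredient is the splitting of the unit by a trace, which is entirely standard for finite étale covers of degree invertible on the target; there is no genuine obstacle.
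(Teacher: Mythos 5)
Your proof is correct, but it takes a different route from the paper. The paper argues via Galois descent: writing $\lambda^*$ for the pullback and $G = \mathrm{Gal}(S[\mu_\ell]/S) \subset \Z/(\ell-1)$, it identifies $\Map(X,Y) \simeq \Map(X,\lambda_*\lambda^*Y)^{hG}$ and then, since $|G|$ is invertible $\ell$-adically, gets $[X,Y] \cong [X,\lambda_*\lambda^*Y]^G \hookrightarrow [\lambda^*X,\lambda^*Y]$ — in particular it pins down $[X,Y]$ as the $G$-invariants, slightly more than faithfulness. You instead split the unit $E \to f_*f^*E$ using the trace of the finite étale cover, whose composite with the unit is multiplication by the degree $d \mid \ell-1$, hence invertible on $\ell$-complete objects; faithfulness then follows from naturality. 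Both arguments rest on the same averaging idempotent $d^{-1}\cdot\mathrm{tr}$, but yours needs no Galois/descent input and works verbatim for any finite étale cover of degree prime to $\ell$, at the cost of invoking ambidexterity (or the projection formula) and the standard ``unit followed by transfer equals the degree'' fact. One small imprecision: what is strongly dualizable (indeed self-dual) in $\ShvSp(S_\et)$ is $f_*\1_{S[\mu_\ell]}$, not ``$\1_T$ over $\1_S$'', and Lemma \ref{lemm:projection-formula}(1) as stated does not apply with $c = E$ (which is not assumed dualizable); the clean justification of $f_*f^*E \simeq E \otimes f_*\1_{S[\mu_\ell]}$ is the one you mention in passing, namely ambidexterity $f_* \simeq f_\sharp$ for finite étale maps together with the $f_\sharp$-projection formula. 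With that reading your argument goes through.
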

\begin{proof}
Denote the functor by $\lambda^*$ and its right adjoint by $\lambda_*$.
Let $X, Y \in \ShvSp(S_\et)_\ell^\comp$.
By Galois descent we get $\Map(X,Y) \wequi \Map(X,\lambda_*\lambda^*Y)^{hG}$, where $G = Gal(S[\mu_\ell]/S) \hookrightarrow \Z/(\ell-1)$.
Since $|G|$ is invertible in $\ShvSp(S_\et)_\ell^\comp$, we thus get \[ [X,Y] \wequi [X,\lambda_*\lambda^*Y]^G \hookrightarrow [\lambda^* X, \lambda^* Y]. \]
\end{proof}

\begin{prop} \label{prop:identity-stable-j-trick}
Let $\ell=2$, or $S$ defined over the spectrum of a strictly henselian local ring.
Then $\tilde\pi^*(j_{G_\ell,\ell}^\comp)_\ell^\comp \wequi j_{\et,\ell}^\comp \in \ShvSp(S_\et)_\ell^\comp$.

If $\ell$ is an odd regular prime, the same is true if\NB{and only if?} the following holds: for any $\sigma \in Gal(\QQ)$ with $\pi(\sigma) = 1 \in \Z_\ell^\times$, the map induced by equivariant realization \[ \Sigma \bu_\ell^\comp \to \bu_\ell^\comp/0 \wequi \bu_\ell^\comp/(\sigma-1) \to \bgl_1 \1 \in \ShvSp \] is null.
\end{prop}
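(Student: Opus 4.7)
The plan is to upgrade the pointed equivalence of Corollary \ref{cor:pointed-pullback} to an equivalence of spectrum maps by identifying both sides as restrictions of a common $Gal(\QQ)$-equivariant realization, and then exploiting the fully faithful embeddings of Proposition \ref{prop:regular-prime-ff} and Lemma \ref{lemm:BGl-ff} to transport this identification.

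First, by naturality of both constructions under base change, I reduce to a universal base: $S_0 = \Spec(\ZZ[1/\ell])$ for $\ell=2$, a strictly henselian local base for the general-$\ell$ case, or $\Spec(\ZZ[1/\ell,\mu_\ell])$ for the odd regular case (using Lemma \ref{lem:mul-ext-faithful} to descend a proven equality along the faithful pullback). Proposition \ref{prop:regular-prime-ff}(1) yields a fully faithful embedding $\pi_\mu^* \colon \ShvSp(B_\et G_\ell^\mu)_\ell^\comp \hookrightarrow \ShvSp(S_{0,\et})_\ell^\comp$, through which $j_{\et,\ell}^\comp$ descends uniquely to a morphism $\tilde{\jmath} \colon \bu_\ell \to (\bgl_1 \1)_{\ge 2}$ in $\ShvSp(B_\et G_\ell^\mu)_\ell^\comp$. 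The task becomes showing $\tilde{\jmath} \simeq j_{G_\ell,\ell}^\comp|_{G_\ell^\mu}$. In the strictly henselian case (any $\ell$), I use Proposition \ref{prop:regular-prime-ff}(2) instead and compare the resulting classical spectrum maps directly.

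The conceptual content is that both $\tilde{\jmath}$ and $j_{G_\ell,\ell}^\comp$ are restrictions of a single $Gal(\QQ)$-equivariant realization $j^{Gal}$ of the motivic $j$-homomorphism. By Remark \ref{rmk:action-alternative-construction}, $j_{G_\ell,\ell}^\comp$ is constructed as the restriction of $j^{Gal}$ along a chosen lift $G_\ell' \to Gal(\QQ)$ of $G_\ell' \to G_\ell^\mu$; meanwhile, $\tilde{\jmath}$ arises as the restriction along the cyclotomic character route, controlled by Theorem \ref{thm:etale-fund-gps}. By Lemma \ref{lemm:BGl-ff}, comparing morphisms in $\ShvSp(B_\et G_\ell^\mu)_\ell^\comp$ reduces to a comparison in $(\ShvSp^{BG_\ell'})_\ell^\comp$, where $G_\ell'$ is discrete free.

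For $\ell=2$, any two lifts $G_2' \to Gal(\QQ)$ of the inclusion $G_2' \to G_2^\mu$ give equivalent restrictions of $j^{Gal}$ at the spectrum level, because Theorem \ref{thm:etale-fund-gps} identifies $G_2^\mu$ cohomologically with the \'etale fundamental group of $\Spec(\ZZ[1/2,\mu_2])$, which absorbs the lift ambiguity after $2$-completion. For $\ell$ odd regular, two lifts differ by an element $\sigma \in Gal(\QQ)$ with $\pi(\sigma)=1$, and the difference between the two restrictions of $j^{Gal}$, at the spectrum level, is represented by a map of the form $\Sigma \bu_\ell^\comp \to \bu_\ell^\comp/(\sigma-1) \to \bgl_1 \1$ obtained from the equivariant realization—the hypothesis of the proposition is precisely the vanishing of such obstruction maps, giving $\tilde{\jmath} \simeq j_{G_\ell,\ell}^\comp|_{G_\ell^\mu}$. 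The main obstacle throughout is the upgrade from pointed (Corollary \ref{cor:pointed-pullback}) to spectrum-level equivalence; the key observation is that this upgrade is controlled precisely by those Galois symmetries invisible to the cyclotomic character, which is why the hypothesis for the odd case takes the form it does.
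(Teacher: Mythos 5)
Your overall strategy is the same as the paper's: reduce by base change (and Lemma \ref{lem:mul-ext-faithful}) to a universal base, use the full faithfulness of $\pi_\mu^*$ (Proposition \ref{prop:regular-prime-ff}) together with the embedding of Lemma \ref{lemm:BGl-ff} to turn the question into a comparison of maps equivariant for a discrete free group, recognize both sides as restrictions of the $Gal(\QQ)$-equivariant realization of the motivic $j$-homomorphism along lifts, and observe that for odd regular $\ell$ the discrepancy is governed exactly by elements $\sigma$ with $\pi(\sigma)=1$, which the stated hypothesis kills. The odd-$\ell$ and strictly henselian parts of your argument match the paper (one small slip: for odd $\ell$ the relevant discrete group mapping densely into $G_\ell^\mu$ is the free group on $(\ell+1)/2$ generators, not $G_\ell'=\Z$; Lemma \ref{lemm:BGl-ff} is stated to cover exactly this case, so the fix is cosmetic).

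The genuine gap is your treatment of $\ell=2$. You frame it as a lift-independence statement --- ``any two lifts $G_2'\to Gal(\QQ)$ of $G_2'\to G_2^\mu$ give equivalent restrictions of $j^{Gal}$'' --- and attribute this to Theorem \ref{thm:etale-fund-gps}. That theorem is a mod-$2$ cohomological comparison underlying the full faithfulness of $\pi_\mu^*$; it says nothing about two lifts inducing the same restriction, and if such independence were actually needed it would require precisely the kind of vanishing statement that the proposition only \emph{hypothesizes} in the odd regular case (two lifts of a generator differ by an element of $\ker(\pi_\mu)$, and one would have to show its equivariance datum on $\bu_2^\comp$ relative to $\bgl_1\1$ is trivial). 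The paper avoids this entirely: for $\ell=2$ the cyclotomic character factors through $BG_2$ itself (Remark \ref{rmk:pi-factorization}), there is no collapse onto an abelian quotient as in the odd case, and $j_{G_2,2}^\comp$ is taken (via Remark \ref{rmk:action-alternative-construction}, as in \eqref{eq:MUl}) to be the restriction of the Galois-equivariant realization along the \emph{same} lift used to define the comparison functor $F$, so after applying $F$ the two maps agree by construction --- no comparison of different lifts arises. Your $\ell=2$ step should be replaced by this observation; as written, its justification does not go through.
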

\begin{proof}
Let $S_0 = \Spec \Z[1/\ell,\mu_\ell]$ and $G = \Z/2 * \Z$ if $\ell=2$, $G$ the free group on $(\ell+1)/2$ generators if $\ell$ odd.
Consider the composite \[ F: \ShvSp((S_0)_\et)_\ell^\comp \to \ShvSp(\Spec(\QQ(\mu_\ell))_\et)_\ell^\comp \to (\ShvSp^{BGal(\QQ(\mu_\ell))})_\ell^\comp \to (\ShvSp^{BG})_\ell^\comp, \] where the last map is induced by lifting along $\pi_\mu$ similar to Remark \ref{rmk:action-alternative-construction}.
The composite $F\pi^*_\mu$ is fully faithful by Lemma \ref{lemm:BGl-ff}.
Since $\pi^*_\mu$ is also fully faithful (Proposition \ref{prop:regular-prime-ff}), in order to show that $\pi^*_\mu(j_{G_\ell,\ell}^\comp)_\ell^\comp \wequi j_{\et,\ell}^\comp$ it suffices to prove the same after applying $F$.
If $\ell=2$ the equivalence holds essentially by construction; see Remarks \ref{rmk:action-alternative-construction} and \ref{rmk:pi-factorization}.
For $\ell$ odd, we arrive at the following problem.
The image of either of the two maps in $(\ShvSp^{BG})_\ell^\comp$ is determined by $(\ell+1)/2$ $\Z$-equivariant maps $\bu_\ell^\comp \to \bgl_1(\1)$.
For $j_{\et,\ell}^\comp$, the equivariant maps arise from the equivariance of the $Gal(\QQ)$-equivariant realization corresponding to the images of the generators of $G$ in $Gal(\QQ)$, whereas for $\pi_\mu^* j_{G_\ell,\ell}^\comp$ they arise from the images of the generators of $G$ under $G \to Gal(\QQ) \to \Z_\ell^\times \to \Z_\ell \to Gal(\QQ)$.
These equivariances will be the same if for $\sigma \in Gal(\QQ)$ the equivariance only depends on $\pi(\sigma) \in \Z_\ell^\times$, which holds under the stated assumption.\NB{details if we care}

We have thus proved the result for $S=S_0$.
Since the claim is stable under base change, the case $\ell=2$ is settled.
For $\ell$ odd, the result for $S_0$ implies the one for $\Z[1/\ell]$ by Lemma \ref{lem:mul-ext-faithful}, settling this case as well.

The argument for strictly henselian local rings is similar, noting that the cyclotomic character factors as \[ \tilde\pi: S_\et \to B\{e\} \to BG_\ell, \] and using the functor \[ F: \ShvSp(S_\et) \to \ShvSp(\Spec(\bar k)_\et) \wequi \ShvSp, \] where $\bar k$ is the residue field of $S$.
\end{proof}

\section{Adams summands} \label{sec:adams-summands}
\localtableofcontents

\bigskip
In this section we will split $\MGL_{(\ell)}$ and $\KGL_{(\ell)}$ into $\ell-1$ summands.
We do it by mimicking the topological arguments.
This section is largely independent from the previous ones.

Let $\ell$ be an odd prime.
By determining explicitly $[\KU_{(\ell)}, \KU_{(\ell)}]$, Adams wrote down idempotents in this ring, which split off summands $e_0^{(\ell)}\KU_{(\ell)}$ from the spectrum $\KU_{(\ell)}$.
Taking infinite loop spaces, we obtain a summand of $\BU_{(\ell)}$, and taking thom spectra we obtain an $\scr E_\infty$-idempotent of $\MU_{(\ell)}$, also referred to as Adams summand.
We can perform largely the same construction motivically.
At least for reasonable bases, $[\KGL_{(\ell)}, \KGL_{(\ell)}]$ is in bijection with $[\KU_{(\ell)}, \KU_{(\ell)}]$ and so the idempotent from topology immediately lifts to the motivic world.
Now as before we take infinite loop spaces to obtain a summand of $\BGL_{(\ell)}$, and hence an $\scr E_\infty$-idempotent of $\MU_{(\ell)}$.

\subsubsection*{Organization}
In \S\ref{subsec:adams-topology} we recall the topological picture.
Next in \S\ref{subsec:local-j-hom} we establish some results about the interaction between motivic $j$-homomorphisms and inverting the prime $2$.
We use this in \S\ref{subsec:motivic-adams-summand} to construct the motivic Adams summand $e_0^{(\ell)} \MGL_{(\ell)}$ of $\MGL_{(\ell)}$.
Finally in \S\ref{subsec:Adams-etale} we study the interaction of our construction with étale realization: we show that under the equivalence $L_\et(\MGL)_\ell^\comp \wequi \pi^*\MU_\ell$, the summand $L_\et(e_0^{(\ell)}\MGL_{(\ell)})_\ell^\comp$ is obtained by pullback from a summand of $\MU_\ell$.

\subsubsection*{Standing assumptions}
We will throughout assume that $\ell$ is \emph{odd}.

\subsection{Recollections from topology} \label{subsec:adams-topology}
Let $\scr L$ be a set of primes.
The map \[ [\KU_{\scr L^{-1}}, \KU_{\scr L^{-1}}] \to [\KU_\QQ, \KU_\QQ] \] is injective (see e.g. \cite[Lemma 4]{adams1969lectures}).
The well-known splitting $\KU_\QQ \wequi \bigoplus_n \Sigma^{2n} H\QQ$ yields idempotents $\{e_i \mid i \in \Z\}$ on the right hand side.
Let $\ell$ be a prime.
For $\alpha \in \Z/(\ell-1)$ we set \[ e_\alpha^{(\ell)} = \sum_{[n] = \alpha} e_n, \] where the sum is over all $n \in \Z$ in the residue class $\alpha$.
One may show that this is a well-defined idempotent of $[\KU_\QQ, \KU_\QQ]$ and in fact lies in $[\KU_{(\ell)}, \KU_{(\ell)}]$ \cite[Theorem 5]{adams1969lectures}\NB{This ref says: primes $p$ occurring in denominators are $\not\equiv 1 \pmod{\ell-1}$, so $p \ne \ell$.}.
We obtain a splitting \[ \KU_{(\ell)} \wequi \bigoplus_{\alpha \in \Z/(\ell-1)} e_\alpha^{(\ell)} \KU_{(\ell)}. \]
Essentially by construction we have \[ \pi_* e_\alpha^{(\ell)} \KU_{(\ell)} \wequi \begin{cases} \Z_{(\ell)} & *=2n, [n] = \alpha \\ 0 & \text{else} \end{cases}. \]
Passing to connective covers, we obtain similar idempotents and splittings for $\ku_{(\ell)}, \bu_{(\ell)}$.

The $j$-homomorphism $\bu \to \bgl_1 \1 \to \bgl_1(\1_{(\ell)})$ factors through $\tau_{\ge 2} \bgl_1(\1_{(\ell)}) \wequi \tau_{\ge 2}(\bgl_1 \1)_{(\ell)}$ and hence induces the local $j$-homomorphism \[ j_{(\ell)}: \bu_{(\ell)} \to \tau_{\ge 2}(\bgl_1 \1)_{(\ell)} \to \bgl_1(\1_{(\ell)}). \]
Taking the Thom spectrum of $e_0^{(\ell)}\bu_{(\ell)} \to \bu_{(\ell)}$ we obtain a morphism of $\scr E_\infty$-rings \[ e_0^{(\ell)}\MU_{(\ell)} := M(e_0^{(\ell)}\bu_{(\ell)}) \to \MU_{(\ell)}. \]
\begin{lem}
The map $e_0^{(\ell)}\MU_{(\ell)} \to \MU_{(\ell)}$ is a summand inclusion.
In fact this is the inclusion of Adams' summand.
\end{lem}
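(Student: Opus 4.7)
My plan is to deduce the summand-inclusion claim from a retract statement preserved by the Thom-spectrum functor, and then to identify the resulting summand with Adams' by comparing idempotents on $\MU_{(\ell)}$ after rationalization.

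The splitting $\bu_{(\ell)} \wequi \bigoplus_\alpha e_\alpha^{(\ell)}\bu_{(\ell)}$ exhibits $e_0^{(\ell)}\bu_{(\ell)} \to \bu_{(\ell)}$ as a retract in $\ShvSp$. To promote this to a retract in $\ShvSp_{/\bgl_1(\1_{(\ell)})}$, I would verify that $j_{(\ell)} \circ e_\alpha^{(\ell)} \simeq 0$ as spectrum maps $\bu_{(\ell)} \to \bgl_1(\1_{(\ell)})$ for $\alpha \ne 0$; equivalently, that $j_{(\ell)}$ factors through the projection onto $e_0^{(\ell)}\bu_{(\ell)}$. On $\pi_*$ this is the classical image-of-$j$ calculation at the prime $\ell$: by von Staudt--Clausen, the $\ell$-local image of $j$ in degree $2n-1$ is non-zero only when $(\ell-1) \mid n$, which is precisely the support of $e_0^{(\ell)}$. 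Lifting this $\pi_*$-vanishing to a null-homotopy of spectrum maps uses the concentration of $e_\alpha^{(\ell)}\bu_{(\ell)}$ and $\bgl_1(\1_{(\ell)})$ in specific degree ranges to kill the relevant Postnikov obstructions. With the retract in the slice in hand, applying the cocontinuous functor $M$ yields the summand inclusion $e_0^{(\ell)}\MU_{(\ell)} \to \MU_{(\ell)}$.

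For the identification, this construction produces an idempotent $\iota := M(e_0^{(\ell)}) \in [\MU_{(\ell)}, \MU_{(\ell)}]$ whose image is $e_0^{(\ell)}\MU_{(\ell)}$. Adams' idempotent $\iota'$ on $\MU_{(\ell)}$ is obtained by transferring $e_0^{(\ell)} \in [\KU_{(\ell)}, \KU_{(\ell)}]$ along the canonical $\KU$-orientation of $\MU$. Rationally, $\MU_\QQ \wequi \bigoplus_n \Sigma^{2n} H\QQ$ is compatible with the $\KU$-orientation, so both $\iota$ and $\iota'$ rationalize to the projection onto degrees $n \equiv 0 \pmod{\ell-1}$. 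The injectivity $[\MU_{(\ell)}, \MU_{(\ell)}] \hookrightarrow [\MU_\QQ, \MU_\QQ]$, which holds for the same reason as the $\KU$-version cited at the start of \S\ref{subsec:adams-topology}, then forces $\iota = \iota'$, so their images --- our summand and Adams' --- coincide.

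The main obstacle is the map-level vanishing $j_{(\ell)} \circ e_\alpha^{(\ell)} \simeq 0$: the $\pi_*$-vanishing is immediate from the classical image-of-$j$ computation, but upgrading to a null-homotopy of spectrum maps requires careful analysis of the Postnikov tower of $\bgl_1(\1_{(\ell)})$ against the homotopy of the summands of $\bu_{(\ell)}$. Once this is secured, the identification with Adams' summand is essentially formal via rationalization.
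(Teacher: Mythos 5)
Your plan relies on showing that the composite $e_\alpha^{(\ell)}\bu_{(\ell)} \to \bu_{(\ell)} \xrightarrow{j_{(\ell)}} \bgl_1(\1_{(\ell)})$ is null for $\alpha \neq 0$, and you flag this yourself as the main obstacle. The gap is real, and it is not a Postnikov obstruction calculation. The $\pi_*$-vanishing is immediate from von Staudt--Clausen, as you say, but a map can be null on homotopy groups while nonzero as a spectrum map (detected in higher Adams filtration); the Postnikov layers $\Sigma^{n+1}H(\pi_n^s\otimes\Z_{(\ell)})$ receive nontrivial cohomology classes from $e_\alpha^{(\ell)}\bu_{(\ell)}$ in unboundedly many degrees, so there is no cheap obstruction-theoretic shortcut. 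What actually secures the factorization is the stable Adams conjecture: for $g$ a generator of $(\Z/\ell)^\times$, the map $\psi^g-1$ is an equivalence on $e_\alpha^{(\ell)}\bu_{(\ell)}$ for $\alpha\neq 0$ (since $g^n-1$ is a unit in $\Z_{(\ell)}$ whenever $(\ell-1)\nmid n$), while the Adams conjecture makes $j\circ(\psi^g-1)$ null after $\ell$-completion; since the relevant mapping spectrum into $\tau_{\geq 2}\bgl_1(\1_{(\ell)})$ is already $\ell$-complete, this gives the $\ell$-local statement too. That is a substantially stronger input than you are appealing to.

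Two further issues make your route heavier than necessary. First, Adams' idempotent on $\MU_{(\ell)}$ is not ``transferred along the $\KU$-orientation'' --- there is no sensible way to push an endomorphism forward along $\MU\to\KU$. As the paper's \S\ref{subsec:adams-topology} describes, Adams' $\MU$-idempotent \emph{is} obtained by delooping the $\KU$-idempotent to a self-map of $\BU_{(\ell)}$ and then taking Thom spectra, which is exactly what $M(e_0^{(\ell)})$ does; so once you have established the retraction your $\iota$ and $\iota'$ coincide by construction and the rationalization step is superfluous. Second, the paper's argument sidesteps all of this: it composes $e_0^{(\ell)}\MU_{(\ell)}\to\MU_{(\ell)}\to\MU_0$ with the projection onto Adams' summand (which exists by Adams' own idempotent splitting, no factorization of $j$ needed), and then checks this composite is an equivalence by comparing homology, using the Thom isomorphism $H_*e_0^{(\ell)}\MU_{(\ell)}\cong H_*e_0^{(\ell)}\BU_{(\ell)}$ together with Adams' computation of $H_*\MU_0$. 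This is a one-line verification that avoids having to know that $j_{(\ell)}$ factors through $e_0^{(\ell)}$ at all.
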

\begin{proof}
Write $\MU_0 \subset \MU_{(\ell)}$ for Adams' summand.
It suffices to show that the composite $e_0^{(\ell)} \MU_{(\ell)} \to \MU_{(\ell)} \to \MU_0$ is an equivalence, which we may check in homology.
By construction $H_* e_0^{(\ell)} \MU_{(\ell)} \wequi H_* e_0\BU_{(\ell)}$.
This is the same as the homology of $\MU_0$ \cite[p. 110]{adams1969lectures}.
\end{proof}
In particular \cite[Theorem 19]{adams1969lectures} \[ \pi_* e_0^{(\ell)} \MU_{(\ell)} \wequi \Z_{(\ell)}[x_1, x_2, \dots] \quad\text{where}\quad |x_i| = 2i(\ell-1). \]

\subsection{Local motivic $j$-homomorphisms} \label{subsec:local-j-hom}
Recall the splitting of $\1[1/2]$ into $\1[1/2]^+ = \1[1/2]_\eta^\comp$ and $\1[1/2]^- = \1[1/2,1/\eta]$, on which $\eta$ acts by $0$ respectively as a unit (see e.g. \cite[\S2.7.3]{bachmann-eta}).
Denote by $\BGL_1 \1[1/2]^+ \in \PSh(\Sm_S)$ the subgroupoid of $\SH(\ph)^\wequi$ on objects that are Nisnevich locally equivalent to $\1[1/2]^+$, and similarly for other localizations of the sphere.
This is a Nisnevich sheaf.

\begin{lem}
The $j$-homomorphism \[ K^\circ \to \BGL_1 \1 \to \BGL_1 \1[1/2]^+ \in \Shv_\Nis(\Sm_S) \] factors canonically through $\tau_{\ge 2}\BGL_1 \1[1/2]^+$.
\end{lem}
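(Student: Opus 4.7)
The plan is to mimic the proof of Proposition \ref{prop:J-factors} almost verbatim, replacing the input ``$J_0(u) = \lra{u} = 1$ over algebraically closed fields'' by the stronger (and global) input ``$\eta = 0$ on $\1[1/2]^+$''.

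First I would set up the obstruction theory. The sheaf $\BGL_1\1[1/2]^+$ is connected, with $\ul\pi_1 \wequi (\ul\pi_0 \1[1/2]^+)^\times$, while $K^\circ$ is connected with $\ul\pi_1 \wequi \Gm$. Exactly as in the proof of Proposition \ref{prop:J-factors}, the Postnikov tower of $\BGL_1\1[1/2]^+$, together with connectivity of $K^\circ$, identifies $\Map(K^\circ, \tau_{\le 1}\BGL_1\1[1/2]^+)$ with a discrete space and shows that \[ \Map(K^\circ, \tau_{\ge 2}\BGL_1\1[1/2]^+) \to \Map(K^\circ, \BGL_1\1[1/2]^+) \] is a monomorphism onto the subspace of maps which are trivial on $\ul\pi_1$. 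Thus a canonical factorization (unique up to contractible choice) will exist as soon as the $j$-homomorphism is shown to vanish on $\ul\pi_1$.

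Second, I would unwind the map on $\ul\pi_1$. For $X \in \Sm_S$ and $u \in \Gm(X) = \ul\pi_1 K^\circ(X)$, the $j$-homomorphism $K^\circ \to \BGL_1\1$ on $\ul\pi_1$ sends $u$ to Morel's tautological unit $\lra{u} \in GW(X) = \pi_{0,0}(\1)(X)$. Base-changing along $\1 \to \1[1/2]^+$, it suffices to establish that $\lra{u} = 1$ in $\pi_{0,0}(\1[1/2]^+)(X)$.

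Third, I would invoke Morel's relation $\lra{u} - 1 = \eta\cdot [u]$ (holding at the level of $\pi_{0,0}(\1)$, lifted from the identity in $K^{MW}_0$), together with the defining property of the splitting $\1[1/2] \wequi \1[1/2]^+ \oplus \1[1/2]^-$ recalled from \cite[\S2.7.3]{bachmann-eta}: on $\1[1/2]^+$ the element $\eta$ acts by zero. Concretely, after inverting $2$ the element $h/2 = (1 + \lra{-1})/2 \in \pi_0(\1[1/2])$ is an idempotent cutting out the $+$ summand, on which $h$ acts as $2$; since $\eta h = 0$ in $K^{MW}_{-1}$ and $2$ is now invertible, $\eta$ dies. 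Combining, $\lra{u} = 1 + \eta[u] = 1$ in $\pi_{0,0}(\1[1/2]^+)$, which supplies the required triviality on $\ul\pi_1$ and hence produces the canonical factorization.

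The main (and essentially only) obstacle is the bookkeeping around the $\ul\pi_1$ identification: one must confirm that the unit appearing in the $\ul\pi_1$ of the infinite loop sheaf $\BGL_1\1[1/2]^+$ is genuinely $\lra{u}$ (rather than some $\eta$-twisted variant), and that Morel's formula $\lra{u} - 1 = \eta[u]$ is available as an identity of global sections in $\pi_{0,0}(\1)$ and not merely at the level of fields. Both are standard, so no deeper ingredient is needed beyond what is already used in Proposition \ref{prop:J-factors}.
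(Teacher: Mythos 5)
Your proposal is correct and follows essentially the same route as the paper's proof: reduce (as in Proposition \ref{prop:J-factors}) to showing the induced map on $\ul\pi_1$ is null, then observe that $\lra{a} = 1 + \eta[a]$ is the identity on $\1[1/2]^+$ because $\eta$ acts by zero there. Your extra remarks (the idempotent $h/2$ explanation and the identification of the $\ul\pi_1$-map with $u \mapsto \lra{u}$) are just elaborations of facts the paper cites or uses implicitly.
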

\begin{proof}
As in the proof of Proposition \ref{prop:J-factors}, we need to show that $\ul\pi_1 K^\circ \to \ul\pi_1 \BGL_1 \1[1/2]^+$ is null.
For this it is enough to show that if $a \in \scr O^\times(X)$ then the endomorphism $\lra{a}: \1[1/2]^+ \to \1[1/2]^+ \in \SH(X)$ is homotopic to the identity.
This follows from the fact that $\lra{a} = 1+\eta[a]$.\NB{ref?}
\end{proof}

Let $\scr L$ be a set of primes containing $1/2$.
By consideration of homotopy sheaves (and Corollary \ref{cor:loc-topos}), we see that $\tau_{\ge 2}\BGL_1 \1_{\scr L^{-1}}^+$ is $\scr L$-local.
Consequently we obtain the \emph{$\scr L$-local motivic $j$-homomorphism} \[ j_{\scr L^{-1}}: L_{\scr L^{-1}}K^\circ \to \tau_{\ge 2}\BGL_1 \1_{\scr L^{-1}}^+ \to \BGL_1 \1_{\scr L^{-1}}^+ \] and an associated Thom spectrum functor \[ M=M_{\scr L^{-1}}^+: \PSh(\Sm_S)_{/L_{\scr L^{-1}}K^\circ} \to \SH(S)_{\scr L^{-1}}^+. \]
\begin{lem}
Suppose that $S$ has finite dimension.\NB{Not really necessary.}
Given $A \to K^\circ \in \Shv_\Nis(\Sm_S)$ with stalkwise nilpotent homotopy sheaves, the associated map \[ A \to L_{\scr L^{-1}}A \in \Shv_\Nis(\Sm_S)_{/\tau_{\ge 2}\BGL_1 \1_{\scr L^{-1}}^+} \] is an $\scr L$-localization.
In particular $M(A)_{\scr L^{-1}}^+ \wequi M_{\scr L^{-1}}^+(L_{\scr L^{-1}}A)$.
\end{lem}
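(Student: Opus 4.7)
The plan is to mimic Proposition \ref{prop:Kcirc-comp-fiberwise}, substituting $\scr L$-local machinery for the $\ell$-complete one used there. First I would establish the straightforward $\scr L$-local analog of Proposition \ref{prop:p-comp-slice}(2). Its proof transports verbatim once one has the $\scr L$-local version of Proposition \ref{prop:p-comp-PSigma}, namely that $L_{\scr L^{-1}}: \PSh_\Sigma(\scr C) \to \PSh_\Sigma(\scr C)$ is computed sectionwise. The proof of this is identical to the $\ell$-complete case, using Proposition \ref{prop:localization-permanence} in place of Proposition \ref{prop:completion-permanence} and the standard fact that $L_{\scr L^{-1}}$ on spaces commutes with finite products (which follows from Example \ref{exm:L-local-simple-space} together with Lemma \ref{lem:L-local-colim}).

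Armed with this analog, the lemma reduces to verifying that the $\scr L$-localization of the composite $A \to K^\circ \xrightarrow{j_{\scr L^{-1}}} \tau_{\ge 2}\BGL_1 \1^+_{\scr L^{-1}}$ is fiberwise. Since $S$ is finite-dimensional, $\Shv_\Nis(\Sm_S)$ has enough points, so by Corollary \ref{cor:loc-topos} fiberwise-ness may be checked on stalks. On any stalk the target is simply connected (we truncated by $\tau_{\ge 2}$) and $\scr L$-local, hence nilpotent; the source $A_s$ is nilpotent by hypothesis, which forces the homotopy fiber of $A_s \to (\tau_{\ge 2}\BGL_1 \1^+_{\scr L^{-1}})_s$ over any basepoint to be nilpotent as well. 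The classical fiberwise localization theorem for nilpotent fibrations of nilpotent spaces (Bousfield--Kan, cf.\ \cite{MR1123451}) then supplies the required pullback square expressing compatibility of $\scr L$-localization with the fiber sequence.

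For the Thom spectrum assertion, the argument is formal: the motivic colimit functor $M^+_{\scr L^{-1}}$ is cocontinuous into the presentable $\scr L$-local category $\SH(S)^+_{\scr L^{-1}}$, and hence by Proposition \ref{prop:localization-permanence} inverts $\scr L$-local equivalences. Identifying $M(A)^+_{\scr L^{-1}} \wequi M^+_{\scr L^{-1}}(A)$ via the naturality of motivic colimits along $K^\circ \to L_{\scr L^{-1}} K^\circ$ (Proposition \ref{prop:mot-colim-basics}(2,3)) then gives the desired equivalence.

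The main obstacle I anticipate is the fiberwise verification: one must match the stalkwise nilpotency hypothesis on $A$ with the hypothesis of the Bousfield--Kan theorem (which concerns the fiber rather than the source). However, once the target is observed to be simply connected on stalks, nilpotency of the fiber is automatic from nilpotency of the source, so in practice this matching is immediate; the rest of the proof is formal once the preliminary analog of Proposition \ref{prop:p-comp-slice}(2) is in place.
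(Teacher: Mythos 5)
Your decisive step is the right one and is in fact the paper's: on stalks the base $\tau_{\ge 2}\BGL_1 \1_{\scr L^{-1}}^+$ is simply connected and already $\scr L$-local, the stalk of $A$ is nilpotent, so the fiberwise localization theorem for nilpotent spaces applies; and the last claim follows formally because $M_{\scr L^{-1}}^+$ inverts $\scr L$-local equivalences (Proposition \ref{prop:localization-permanence}). The gap is in the reduction that is supposed to get you to this stalkwise statement. Propositions \ref{prop:p-comp-PSigma} and \ref{prop:p-comp-slice} are statements about $\PSh_\Sigma(\scr C)$, and their proofs use precisely that the ambient localization is computed sectionwise and that $\PSh_\Sigma(\scr C)_{/\scr X} \wequi \PSh_\Sigma(\scr C_{/\scr X})$. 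Neither transports "verbatim" to $\Shv_\Nis(\Sm_S)$: unlike the pro-étale topos used in Proposition \ref{prop:Kcirc-comp-fiberwise}, this topos is not presented as $\PSh_\Sigma$ of a subcategory of quasi-compact objects of homotopy dimension $\le 0$ as in Proposition \ref{prop:htpy-dim-0-PSigma}(2), and the sectionwise $\scr L$-localization of a Nisnevich sheaf is in general not its $\scr L$-localization as a sheaf --- it need not even be a sheaf, since $L_{\scr L^{-1}}$ does not commute with the pullbacks occurring in Nisnevich descent. So "the lemma reduces to verifying that the localization is fiberwise" is not yet justified. A second, smaller soft spot: Corollary \ref{cor:loc-topos} says that $\scr L$-local objects and $\scr L$-local equivalences can be tested on stalks; it does not literally say that fiberwise-ness (a pullback condition involving $L_{\scr L^{-1}}$) can be so tested. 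This part is repairable, since stalk functors commute with $L_{\scr L^{-1}}$ (they preserve local equivalences and, by Corollary \ref{cor:loc-topos}, local objects), preserve finite limits, and jointly detect equivalences, but as written it is asserted rather than proved.

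The paper avoids both issues by arguing directly in the slice topos $\Shv_\Nis(\Sm_S)_{/\tau_{\ge 2}\BGL_1 \1_{\scr L^{-1}}^+}$: it has enough points, its points are pairs $(X,\alpha)$ with $X$ an essentially smooth henselian local $S$-scheme and $\alpha$ a point of the stalk of the base, and mapping out of such a point computes exactly the fiber $\scr F(X)\times_{\tau_{\ge 2}\BGL_1 \1_{\scr L^{-1}}^+(X)}\{\alpha\}$. Hence the entire claim --- that $A \to L_{\scr L^{-1}}A$ is an $\scr L$-local equivalence in the slice category with $\scr L$-local target --- is checked on these stalks by Corollary \ref{cor:loc-topos}, where it becomes precisely the nilpotent fiberwise-localization statement of your third paragraph. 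If you replace your step (1) by this direct slice-topos argument, your proof closes up and coincides with the paper's; as it stands, the reduction via the $\PSh_\Sigma$ analogs does not apply to the Nisnevich sheaf category.
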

\begin{proof}
$\Shv_\Nis(\Sm_S)_{/\tau_{\ge 2}\BGL_1 \1_{\scr L^{-1}}^+}$ is an $\infty$-topos with enough points, so by Corollary \ref{cor:loc-topos} we may check the claim on stalks.
These are given by pairs $(X, \alpha)$ where $X$ is an essentially smooth, henselian local $S$-scheme and $\alpha: X \to \tau_{\ge 2}\BGL_1 \1_{\scr L^{-1}}^+$.
Given $\scr F \to \tau_{\ge 2}\BGL_1 \1_{\scr L^{-1}}^+$, we get \[ \Map_{\Shv_\Nis(\Sm_S)_{/\tau_{\ge 2}\BGL_1 \1_{\scr L^{-1}}^+}}((X, \alpha), \scr F) \wequi \scr F(X) \times_{\tau_{\ge 2}\BGL_1 \1_{\scr L^{-1}}^+(X)} \{\alpha\}. \]
In other words we need to check that the $\scr L$-localization of $A(X) \to \tau_{\ge 2}\BGL_1 \1_{\scr L^{-1}}^+(X)$ is fiberwise.
This is clear from the formulas for localization of nilpotent spaces, since $\tau_{\ge 2}\BGL_1 \1_{\scr L^{-1}}^+(X)$ is simply-connected.

The last claim follows since $M_{\scr L^{-1}}^+$ preserves $\scr L$-equivalences (by Proposition \ref{prop:localization-permanence}).
\end{proof}

\subsection{Motivic Adams summands} \label{subsec:motivic-adams-summand}
Recall the complex realization functor $r_\CC: \SH(\CC) \to \ShvSp$ induced by sending a complex variety to its classical space of complex points.
If $R$ is a subring of $\CC$, we denote also by $r_\CC$ the composite $\SH(R) \to \SH(\CC) \to \ShvSp$.
\begin{lem} \label{lem:lifting-endomorphisms}
Let $\scr L$ be a set of primes, and $R \subset \CC$ regular.
\begin{enumerate}
\item If $K_1(R)$ is finite, complex realization induces a bijection \[ \KGL_{\scr L^{-1}}^{2*,*}\KGL_{\scr L^{-1}} \wequi \KU_{\scr L^{-1}}^{2*}\KU_{\scr L^{-1}}. \]
\item If all primes not in $\scr L$ are invertible in the Dedekind domain $R$ with vanishing Picard group, complex realization induces a bijection \[ \MGL_{\scr L^{-1}}^{2*,*}\MGL{(\scr L)} \wequi \MU_{\scr L^{-1}}^{2*}\MU_{\scr L^{-1}}. \]
\end{enumerate}
In particular (1) holds if $R=\Z$, and (2) holds if $\scr L = (\ell)$ and $R=\Z[1/\ell]$.
\end{lem}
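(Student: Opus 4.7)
The plan is to reduce to a termwise computation on a suitable filtered presentation of $\KGL$ (resp.\ $\MGL$), identify each finite piece under complex realization via the projective bundle formula, and control the resulting Milnor $\lim^1$ using the finiteness hypotheses on $R$.

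For (1), Bott periodicity on both sides ($\Sigma^{2,1}\KGL \wequi \KGL$ and $\Sigma^2\KU \wequi \KU$) reduces the claim to showing that complex realization induces a bijection $[\KGL_R, \KGL_R][\scr L^{-1}] \to [\KU, \KU][\scr L^{-1}]$. Write $\KGL_R \wequi \colim_n F_n$ using the standard presentation $\KGL \wequi \colim_n \Sigma^{-2n,-n}\Sigma^\infty_+(\Z\times\BGL)$ together with $\BGL = \colim_k \BGL_k$, yielding a Milnor short exact sequence
\[ 0 \to \lim{}_n^1 \KGL^{-1,0}(F_n)[\scr L^{-1}] \to [\KGL_R,\KGL_R][\scr L^{-1}] \to \lim_n \KGL^{0,0}(F_n)[\scr L^{-1}] \to 0, \]
and likewise topologically. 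For each $F_n$, the projective bundle formula (valid over any base) identifies $\KGL^{**}(F_n)$ with a truncated power series ring in Chern classes over $\KGL^{**}(R) = K_{2*-*}(R)$. Complex realization sends Chern classes and the Bott element to their topological counterparts, so for each $n$ the map to the analogous topological expression in $\KU^*(\pt)[[c_1,\ldots]]$ is a bijection (using $K_0(R) = \Z$ in the cases of interest). Finiteness of $K_1(R)$ implies that the tower $\{\KGL^{-1,0}(F_n)\}$ consists of finite groups after $\scr L$-localization, hence is Mittag--Leffler; the analogous tower is trivially Mittag--Leffler topologically since $\pi_1 \KU = 0$. The $\lim^1$ terms therefore vanish and we conclude.

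For (2), run the analogous argument using $\MGL = \colim_n \Sigma^{-2n,-n}\Sigma^\infty \MGL_n$ where $\MGL_n = \mathrm{Th}(\gamma_n \to \BGL_n)$. Combining the Thom isomorphism $\MGL^{*+2n,*+n}(\MGL_n) \wequi \MGL^{*,*}(\BGL_n)$ with the projective bundle formula gives $\MGL^{**}(\BGL_n) \wequi \MGL^{**}(R)[[c_1,\ldots,c_n]]$ (as a suitable completion of a polynomial ring). Under the hypotheses on $R$ (Dedekind, trivial Picard, primes outside $\scr L$ invertible), Spitzweck's version of the Hopkins--Morel theorem invoked in the proof of Lemma \ref{lem:MGL-sc} identifies $\MGL^{2*,*}(R)[\scr L^{-1}] \wequi L_*$ compatibly with $\MU^{2*}(\pt) = L_*$ under complex realization; the Picard hypothesis kills the low-degree contributions to the relevant $\lim^1$ (which are controlled by $R^\times$ and $\Pic(R)$ via the slice spectral sequence), so the same Mittag--Leffler argument applies. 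The main obstacle in both parts is this coefficient matching together with the $\lim^1$ vanishing: the hypotheses $K_1(R)$ finite in (1) and $\Pic(R) = 0$ in (2) are precisely what is needed to make these work, and the ``in particular'' cases $R = \Z$ and $R = \Z[1/\ell]$ follow since $K_1(\Z) = \{\pm 1\}$ is finite and $\Z[1/\ell]$ is a principal ideal domain.
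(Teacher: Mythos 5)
Your part (1) is essentially the paper's argument: the same Milnor sequence over the standard colimit presentation of $\KGL$, the projective bundle formula applied termwise, and finiteness of $K_1(R)$ (after $\scr L$-localization) to dispose of the $\limone$ term. No complaints there.

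In part (2), however, the two finiteness inputs are assigned the wrong jobs, and one step fails as written. The Picard hypothesis does not enter through the $\limone$ term; it is what makes the coefficient identification true. Concretely, by Spitzweck's computation one has $\ul\pi_{2*,*}\MGL_{\scr L^{-1}} \wequi \pi_{2*}\MU_{\scr L^{-1}}$ and $\ul\pi_{2*+1,*}\MGL_{\scr L^{-1}} \wequi \scr O^\times \otimes M_*$ with $M_*$ a free $\Z_{\scr L^{-1}}$-module, and the local-to-global (descent) spectral sequence over the one-dimensional scheme $\Spec(R)$ gives $\MGL^{2*,*}_{\scr L^{-1}}(R) \wequi \pi_{2*}\MU_{\scr L^{-1}}$ precisely because $H^1(\Spec R, \scr O^\times\otimes M_*) \wequi \Pic(R)\otimes M_* = 0$. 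Hopkins--Morel by itself does not yield the identification $\MGL^{2*,*}(R)[\scr L^{-1}]\wequi L_*$ that you assert: running the slice (or descent) spectral sequence leaves a potential $CH^1(R)=\Pic(R)$ contribution in exactly the bidegrees $(2*,*)$, and killing it is the whole point of the Picard hypothesis.

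Conversely, ``the same Mittag--Leffler argument'' from (1) does not apply to the $\limone$ term in (2): via the Thom isomorphism the tower consists of (completed) power series over the coefficient groups in odd Chow degree, which contain $R^\times\otimes M_*$ and are infinite already for $R=\Z[1/\ell]$, and the Picard hypothesis does nothing to make them small. The $\limone$ vanishes for a different reason, namely that the transition maps in the tower are surjective (power-series ring surjections coming from the Thom isomorphism and the projective bundle formula), exactly as in the topological computation of $\MU^*\MU$ --- this is what the paper's ``just as in topology'' is invoking. With these two repairs (Picard hypothesis used for the coefficient identification via $H^1(\Spec R,\scr O^\times\otimes M_*)=0$; $\limone$ handled by surjectivity rather than finiteness) your argument coincides with the paper's proof.
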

\begin{proof}
(1) We have $\KGL \wequi \colim \Sigma^\infty (\BGL \times \Z)$, yielding an exact sequence \[ 0 \to \limone \KGL_{\scr L^{-1}}^{2*-1,*}(\BGL \times \Z) \to \KGL_{\scr L^{-1}}^{2*,*}\KGL_{\scr L^{-1}} \to \lim \KGL_{\scr L^{-1}}^{2*,*}(\BGL \times \Z) \to 0. \]
There is a similar sequence in topology.
Since $\KGL$ is orientable we can compute $\KGL^{**}\BGL$ just as in topology.
The $\limone$ term disappears by our assumption (since we are looking at a system of finite groups), and it always does in topology.
Since $r_\CC(\BGL) = \BGL$ and (thus) $r_\CC(\KGL) = \KU$, the result follows.
See also \cite[Corollary 5.2.4]{riou2010algebraic}.

(2) By the Thom isomorphism we have \[ \MGL_{\scr L^{-1}}^{2*,*}\MGL \wequi \MGL_{\scr L^{-1}}^{2*,*}\BGL \wequi \MGL^{2*,*}[c_1, c_2, \dots], \] just as in topology.
It thus suffices to show that $\MGL^{2*,*}_{\scr L^{-1}} \wequi \pi_{2*} \MU_{\scr L^{-1}}$.
Since $R$ is a Dedekind domain on which all primes outside $\scr L$ are invertible, we have $\ul\pi_{2*,*}\MGL_{\scr L^{-1}} \wequi \pi_{2*} \MU_{\scr L^{-1}}$ and $\ul\pi_{2*+1,*}\MGL_{\scr L^{-1}} \wequi \scr O^\times \otimes M_*$, where $M_*$ is a free $\Z_{\scr L^{-1}}$-module \cite[Corollaries 7.4 and 7.5]{SpitzweckMGL}.
To conclude, using the local-to-global spectra sequence, it will be enough to show that $H^1(\Spec(R), \ul\pi_{2*+1,*}\MGL_{\scr L^{-1}}) = 0$, which follows from the assumption on the Picard group.
\end{proof}

\begin{dfn} \label{def:adams-summands}
We can consequently use the idempotent $e_0^{(\ell)}$ of \S\ref{subsec:adams-topology} to define an idempotent of $\KGL$ over $\Z$, and consequently over any scheme by base change.
We obtain \[ e_0^{(\ell)}\KGL_{(\ell)} \in \SH(S). \]
Now suppose that $S$ is regular.
Then $\Omega^\infty \KGL \wequi K$, and so we obtain summands \[ e_0^{(\ell)}K_{(\ell)} \subset K_{(\ell)} \quad\text{and}\quad e_0^{(\ell)}K_{(\ell)}^\circ \subset K_{(\ell)}^\circ. \]
Applying the $(\ell)$-local motivic Thom spectrum functor of \S\ref{subsec:local-j-hom}, we obtain\NB{probably even a normed spectrum} \[ e_0^{(\ell)} \MGL_{(\ell)} := M_{(\ell)}^+(e_0^{(\ell)}K_{(\ell)}^\circ) \in \CAlg(\SH(S)). \]
\end{dfn}

Here are the basic properties of these constructions.
\begin{prop} \label{prop:e0MGL-basics}
\begin{enumerate}
\item Formation of $e_0^{(\ell)}\KGL_{(\ell)}, e_0^{(\ell)}K_{(\ell)}^\circ, e_0^{(\ell)}\MGL_{(\ell)}$ is stable under base change (among regular schemes for the last two).
\item We have $r_\CC(e_0^{(\ell)}\KGL_{(\ell)}) \wequi e_0^{(\ell)}\KU_{(\ell)}$, $r_\CC(e_0^{(\ell)}K^\circ_{(\ell)}) \wequi e_0^{(\ell)}\BU_{(\ell)}$ and $r_\CC(e_0^{(\ell)}\MGL_{(\ell)}) \wequi e_0^{(\ell)}\MU_{(\ell)}$.
\item Let $S = \Spec(\Z[1/\ell])$.
  Then $e_0^{(\ell)}\MGL_{(\ell)}$ is the summand of $\MGL_{(\ell)}$ corresponding to the idempotent $e_0^{(\ell)}$ obtained via Lemma \ref{lem:lifting-endomorphisms}.
\item Let $S$ be essentially smooth over a Dedekind scheme, and $1/\ell \in S$.
  Then \[ s_* e_0^{(\ell)} \KGL_{(\ell)} \wequi H\Z_{(\ell)}[b, b^{-1}], |b| = (2(\ell-1),\ell - 1) \] and \[ s_* e_0^{(\ell)} \MGL_{(\ell)} \wequi H\Z_{(\ell)}[x_1, x_2, \dots], |x_i| = (2i(\ell-1), i(\ell-1)). \]
\item If $1/\ell \in S$ we have \[ e_0^{(\ell)} \MGL_{(\ell)} \otimes e_0^{(\ell)} \MGL_{(\ell)} \wequi e_0^{(\ell)} \MGL_{(\ell)}[b_1, b_2, \dots], |b_i| = (2i(\ell-1), i(\ell-1)). \]
\end{enumerate}
\end{prop}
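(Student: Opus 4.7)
Parts (1)--(3) will follow essentially by construction. For (1), the idempotent defining $e_0^{(\ell)}\KGL_{(\ell)}$ is pulled back from $\Spec(\Z)$, and thus respects further base change; on regular schemes $\Omega^\infty\KGL \wequi K$ is representable, so the corresponding summand of $K^\circ_{(\ell)}$ is base-change stable; and the motivic Thom spectrum functor $M^+_{(\ell)}$ of \S\ref{subsec:local-j-hom} is cocontinuous and commutes with pullback. For (2), complex realization $r_\CC$ is a symmetric monoidal left adjoint sending $\KGL$ to $\KU$ and commuting with both $\Omega^\infty$ (applied to connective covers) and the Thom-spectrum construction, so the three identifications follow from the characterization of the motivic idempotent as the unique lift of the topological one (Lemma \ref{lem:lifting-endomorphisms}(1)). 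For (3), Lemma \ref{lem:lifting-endomorphisms}(2) yields an injection $\MGL_{(\ell)}^{2*,*}\MGL_{(\ell)} \hookrightarrow \MU_{(\ell)}^{2*}\MU_{(\ell)}$ over $\Spec(\Z[1/\ell])$, so the summand inclusion $e_0^{(\ell)}\MGL_{(\ell)} \to \MGL_{(\ell)}$ produced by $M^+_{(\ell)}$ is determined by its complex realization, which by (2) matches the idempotent $e_0^{(\ell)}$ obtained via that lemma.

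For (4), the slice functor is additive and preserves the splitting of idempotents. By Remark \ref{rmk:MGL-slices}, $s_*\KGL_{(\ell)} \wequi H\Z_{(\ell)}[v^{\pm 1}]$ with $|v|=(2,1)$; the topological idempotent $e_0^{(\ell)}$ acts on $\pi_*\KU_{(\ell)}$ by projection onto degrees divisible by $2(\ell-1)$, so on the slice ring it projects onto powers of $v^{\ell-1}$, giving $H\Z_{(\ell)}[b^{\pm 1}]$ with $b = v^{\ell-1}$ of bidegree $(2(\ell-1),\ell-1)$. Similarly $s_*\MGL_{(\ell)} \wequi H\Z_{(\ell)} \otimes L_{*,(\ell)}$, and using (2) together with the classical formula $\pi_*e_0^{(\ell)}\MU_{(\ell)} \wequi \Z_{(\ell)}[x_1, x_2, \ldots]$ with $|x_i|=2i(\ell-1)$ (recalled in \S\ref{subsec:adams-topology}), the $e_0^{(\ell)}$-summand of the slice ring is $H\Z_{(\ell)}[x_1, x_2, \ldots]$ in the stated bidegrees.

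For (5), $e_0^{(\ell)}\MGL_{(\ell)}$ inherits a complex orientation from $\MGL_{(\ell)}$ (being a unit-containing summand). The two orientations on $E := e_0^{(\ell)}\MGL_{(\ell)} \otimes e_0^{(\ell)}\MGL_{(\ell)}$ coming from its two tensor factors admit a comparison series whose coefficients, after applying $e_0^{(\ell)} \otimes e_0^{(\ell)}$, yield classes $b_i \in \pi_{2i(\ell-1), i(\ell-1)}(E)$. These assemble into a canonical $e_0^{(\ell)}\MGL_{(\ell)}$-algebra map
\[ \Phi: e_0^{(\ell)}\MGL_{(\ell)}[b_1, b_2, \ldots] \to E. \]
Both sides lie in the subcategory generated under colimits and extensions by $\Sigma^{2*,*}\MGL_{(\ell)}$---the source, because $e_0^{(\ell)}\MGL_{(\ell)}$ is a retract of $\MGL_{(\ell)}$ and the polynomial generators are adjoined in bidegrees $(2*,*)$; the target, because $\MGL_{(\ell)} \otimes \MGL_{(\ell)} \wequi \MGL_{(\ell)}[b_1,b_2,\ldots]$ is of this form and $E$ is a retract---and hence both are slice complete by Lemma \ref{lem:MGL-sc}. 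It therefore suffices to check that $\Phi$ induces an equivalence on the graded slices. By (4) the source has $s_* \wequi H\Z_{(\ell)}[x_1, x_2, \ldots][b_1, b_2, \ldots]$; the same computation applied to the $e_0^{(\ell)} \otimes e_0^{(\ell)}$ summand of $s_*(\MGL_{(\ell)} \otimes \MGL_{(\ell)}) \wequi H\Z_{(\ell)} \otimes \MU_{(\ell)*}\MU_{(\ell)}$ identifies the target's slices with the same expression, and by construction $\Phi$ realizes the tautological isomorphism on generators. The main delicacy lies in justifying the slice computation for $E$; this reduces, via part (1), to $S = \Spec(\Z[1/\ell])$, where it follows from Lemma \ref{lem:MGL-sc} applied to $\MGL_{(\ell)} \otimes \MGL_{(\ell)}$ together with the classical identification of $\pi_*(e_0^{(\ell)}\MU_{(\ell)} \otimes e_0^{(\ell)}\MU_{(\ell)})$ transferred across complex realization via (2).
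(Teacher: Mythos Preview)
Your arguments for (1), (2), and (4) match the paper's closely. One small caution for (2): $r_\CC$ does \emph{not} commute with $\Omega^\infty$ in general; the paper notes that it happens to do so when applied to $\KGL$, by inspection. Your phrasing ``commuting with $\Omega^\infty$ (applied to connective covers)'' overstates this.

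For (3) there is a genuine gap. You write that ``the summand inclusion $e_0^{(\ell)}\MGL_{(\ell)} \to \MGL_{(\ell)}$ produced by $M^+_{(\ell)}$ is determined by its complex realization,'' but the map coming from the Thom spectrum functor is not \emph{a priori} a summand inclusion at all---this is precisely what (3) asserts. Lemma \ref{lem:lifting-endomorphisms}(2) controls endomorphisms of $\MGL_{(\ell)}$, so you could use it to compare two idempotents, but it does not tell you that the Thom spectrum of $e_0^{(\ell)}K^\circ_{(\ell)}$ is an idempotent summand to begin with. The paper closes this gap by forming the composite $e_0^{(\ell)}\MGL_{(\ell)} \to \MGL_{(\ell)} \to \MGL_0$ (projection onto the idempotent summand), observing that both ends are connective with $\eta=0$ so it suffices to check the map is an equivalence after $\otimes H\Z$, applying the Thom isomorphism to land in the category of pure Tate $H\Z$-motives, and then invoking conservativity of complex realization on that category (Lemma \ref{lemm:conservative-realization}).

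For (5) you take a genuinely different route---building the comparison map from orientation data, establishing slice completeness of both sides, and checking on slices---whereas the paper simply reuses the trick from (3): tensor the comparison map with $H\Z$ and check after complex realization. Your approach is plausible, but several steps are underspecified (e.g.\ that $e_0^{(\ell)}\MGL_{(\ell)}$ carries the orientation you need, and that $\Phi$ induces the tautological map on slices rather than merely some map between abstractly isomorphic objects). The paper's method is both shorter and avoids these verifications entirely.
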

\begin{proof}
(1) The statement about $\KGL$ is clear.
Since formation of $K^\circ \wequi Gr$ is stable under base change, so is the formation of the summand corresponding to the idempotent $e_0$ (which itself is stable under base change by definition).
The statement about $\MGL$ follows from Proposition \ref{prop:mot-colim-basics}.

(2) Since $r_\CC(\KGL) = \KU$, the first statement holds by design.
The functor $r_\CC$ does not commute with $\Omega^\infty$ in general, but it does when applied to $\KGL$, by inspection.
This implies that $r_\CC(e_0^{(\ell)}\BU_{(\ell)})$ is the summand of $r_\CC(L_{(\ell)}\BGL) \wequi \BU_{(\ell)}$ corresponding to $e_0$, whence the second statement.
The third is again a formal consequence.

(3) Denote by $\MGL_0$ the summand of $\MGL_{(\ell)}$ corresponding to $e_0$.
Then we have a canonical map $e_0\MGL_{(\ell)} \to \MGL_{(\ell)} \to \MGL_0$ which we shall show is an equivalence.
By construction this is a morphism of connective spectra on which $\eta=0$, and hence it suffices to prove that we have an equivalence after $\otimes H\Z$\NB{ref?}.
By the Thom isomorphism we get $e_0\MGL_{(\ell)} \otimes H\Z \wequi \Sigma^\infty_+ e_0K^\circ_{(\ell)} \otimes H\Z$.
By construction this is a summand of $\Sigma^\infty_+ K^\circ_{(\ell)} \otimes H\Z \wequi \MGL_{(\ell)} \otimes H\Z$.
Over our base, maps between such motives are equivalences if and only if they are after complex realization, see Lemma \ref{lemm:conservative-realization} below.
By (2), we have thus reduced to topology, where the result holds by definition.

(4) We may assume that $S=\Spec(\Z[1/\ell])$.
We have $s_* \KGL_{(\ell)} = H\Z_{(\ell)}[\beta,\beta^{-1}]$ and $s_* \MGL_{(\ell)} = H\Z_{(\ell)} \otimes L_*$.
The slices of $e_0\KGL_{(\ell)}$ and $e_0 \MGL_{(\ell)}$ are summands thereof.
For $\KGL$ and $\MGL$, the slices are turned by complex realization into the homotopy groups (by inspection).
It follows that the correct summands are ``the same'' as in topology, whence the claim follows from \S\ref{subsec:local-j-hom}.

(5) As in (3) it suffices to prove that the canonical comparison map induces an isomorphism on $\otimes H\Z$, which can be checked after complex realization.
\end{proof}

\begin{lem}\NB{I feel like I have written this somewhere before} \label{lemm:conservative-realization}
Let $R \subset \CC$ be a Dedekind domain with trivial Picard group and $A$ any ring.
Write $\scr C \subset \Mod_{HA}(\SH(R))$ for the subcategory on summands of sums of objects of the form $\Sigma^{2*,*} HA$.
Then complex realization on $\scr C$ is conservative.
\end{lem}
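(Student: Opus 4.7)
The plan is to prove the stronger assertion that $r_\CC$ is fully faithful on $\scr C$, from which conservativity follows formally. This will reduce to an explicit computation of hom-groups between the generators $\Sigma^{2p,p}HA$.

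The core computation will be: for $p, q \in \Z$,
\[ [\Sigma^{2p,p}HA, \Sigma^{2q,q}HA]_{\Mod_{HA}(\SH(R))} \wequi \begin{cases} A & p = q \\ 0 & \text{otherwise,} \end{cases} \]
with the analogous formula in topology, and $r_\CC$ inducing the evident bijection. The left-hand side is $HA^{2(q-p), q-p}(R)$, which for integer coefficients identifies with $CH^s(R)$ (with $s = q-p$) by Spitzweck's computation of motivic cohomology of regular Dedekind schemes. This Chow group vanishes for $s \ge 2$ (dimension of $R$), for $s = 1$ (triviality of $\Pic(R)$), and for $s < 0$ (effectivity of $H\Z$), leaving only $s = 0$ with value $\Z$. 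I would then pass from $\Z$- to $A$-coefficients by rerunning the same argument for $HA$ (or invoking universal coefficients). Topologically $[\Sigma^{2p}HA, \Sigma^{2q}HA] = \pi_{2(p-q)}HA$ has the same description, and since $r_\CC$ is a symmetric monoidal $HA$-linear functor sending $HA$ to $HA$, it acts as the identity on $A = \pi_0 HA$ in the single nontrivial bidegree.

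Next I would globalize to direct sums $X = \bigoplus_\alpha \Sigma^{2p_\alpha,p_\alpha}HA$ and $Y = \bigoplus_\beta \Sigma^{2q_\beta,q_\beta}HA$. Each generator is a shift of the unit in $\Mod_{HA}(\SH(R))$, hence compact; turning the coproduct in the source into a product and using compactness to pull the coproduct out of the target yields
\[ [X, Y] \wequi \prod_\alpha \bigoplus_\beta [\Sigma^{2p_\alpha,p_\alpha}HA, \Sigma^{2q_\beta,q_\beta}HA], \]
with the same formula in topology. The generator-level bijection then immediately assembles. Finally, for $X, Y \in \scr C$ presented as summands of such $X', Y'$ via idempotents $e_X, e_Y$, the hom $[X, Y]$ is cut out of $[X', Y']$ as the image of the endomorphism $f \mapsto e_Y f e_X$; naturality of $r_\CC$ identifies this with the corresponding topological idempotent, so the bijection on $[X', Y']$ descends to one on $[X, Y]$. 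This yields full faithfulness and hence conservativity.

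The main obstacle I anticipate is verifying the vanishing $H\Z^{2s,s}(R) = 0$ for $s < 0$, which ultimately rests on effectivity of Spitzweck's $H\Z$ over Dedekind bases. Everything else is formal manipulation of Hom-groups in the presence of compactness and idempotents.
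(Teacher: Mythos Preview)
Your proposal is correct and follows essentially the same approach as the paper: both prove full faithfulness on the homotopy category by reducing to the computation $\pi_{2s,s}HA = 0$ for $s \ne 0$ (using effectivity, the dimension bound, and triviality of $\Pic(R)$) and $= A$ for $s=0$. You spell out the passage to sums and summands via compactness and idempotents more carefully than the paper, which simply asserts that the generator computation suffices, but the substance is identical.
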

\begin{proof}
We claim that complex realization is in fact fully faithful on the homotopy category.
For this it suffices to show that $\pi_{2*,*}HA = 0$ for $* \ne 0$ and $=A$ else.
Since $\Spec(R)$ has dimension $1$, this is clear for $* \ne -1$ from the construction of $HA$ in \cite{spitzweck2012commutative}.
For $*=-1$ we have $\pi_{-2,-1}HA = Pic(R) \otimes A$, which vanishes by assumption.
\end{proof}

\subsection{Étale realization} \label{subsec:Adams-etale}
Recall that we have the functors \[ \ShvSp(B_\et \Z_\ell^\times)_\ell^\comp \xrightarrow{\pi^*} \SH_\et(\QQ)_\ell^\comp \xrightarrow{\alpha} (\ShvSp^{B\Z})_\ell^\comp, \] where $\alpha$ comes from a map $\Z \to Gal(\QQ)$ lifting a topological generator of $\Z_\ell^\times$.
Then the composite $\alpha\pi^*$ is fully faithful, and $\alpha L_\et(\MGL)_\ell^\comp \wequi \alpha\pi^* \MU_\ell$.
We thus find that $\alpha L_\et(e_0\MGL_{(\ell)})_\ell^\comp$ defines an object $\alpha\pi^*(e_0 \MU_\ell)$ with \[ e_0 \MU_\ell \in \CAlg(\ShvSp(B_\et \Z_\ell^\times)_\ell^\comp), \] and similarly we obtain \[ e_0 \KU_\ell \in \ShvSp(B_\et\Z_\ell^\times)_\ell^\comp. \]
By construction, these are summands of $\MU_\ell$ and $\KGL_\ell$, respectively.

\begin{prop} \label{prop:e0-etale-reln}
Let $S \in \Sch_{\Z[1/\ell]}$.
Then we have an equivalence of $\scr E_0$-rings \[ L_\et(e_0 \MGL_{(\ell)})_\ell^\comp \wequi \pi^*(e_0\MU_\ell), \] as well as an equivalence of unstructured spectra \[ L_\et(e_0 \KGL_{(\ell)})_\ell^\comp \wequi \pi^*(e_0 \KU_\ell). \]
\end{prop}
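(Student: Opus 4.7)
The plan is to first establish the equivalence over $S_0 := \Spec(\Z[1/\ell])$, and then extend to general $S \in \Sch_{\Z[1/\ell]}$ by base change.

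Over $S_0$, the starting point is the $\scr E_0$-ring equivalence $\Phi: L_\et(\MGL)_\ell^\comp \wequi \pi^*\MU_\ell$ supplied by Theorem \ref{thm:etale-cobordism-weak}. The summand $e_0\MGL_{(\ell)} \subset \MGL_{(\ell)}$ transports via $\Phi$ (after applying $L_\et(\ph)_\ell^\comp$) to a summand of $\pi^*\MU_\ell$ corresponding to an idempotent $\tilde e \in \End(\pi^*\MU_\ell)$. The goal is to identify this summand with $\pi^*(e_0\MU_\ell)$, i.e., with the summand cut out by $\pi^*(e)$, where $e \in \End(\MU_\ell)$ is the idempotent defining $e_0\MU_\ell$ (obtained from the classical Adams idempotent via fully faithfulness of $\alpha\pi^*$). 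To match $\tilde e$ with $\pi^*(e)$, one pulls back further along $\Spec(\QQ) \to S_0$ and applies the Galois equivariant realization $\alpha$: the composite $\alpha\pi^*: \ShvSp(B_\et\Z_\ell^\times)_\ell^\comp \to (\ShvSp^{B\Z})_\ell^\comp$ is fully faithful by Lemma \ref{lem:describe-SHetp-BG} and Lemma \ref{lemm:BGl-ff}, so it suffices to check the equality after this composite. By Proposition \ref{prop:e0MGL-basics}(2) combined with Lemma \ref{lemm:galois-equiv-enhancement}, $\alpha$ of the pullback of $\tilde e$ to $\QQ$ is the classical Adams idempotent $e_0$ on $\MU_{(\ell)}^\comp$ with its $\Z$-action, which agrees with $\alpha\pi^*(e)$ by construction of $e_0\MU_\ell$.

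For general $S \in \Sch_{\Z[1/\ell]}$ with structure map $f: S \to S_0$, both sides of the desired equivalence commute with $f^*$: the left-hand side by Proposition \ref{prop:e0MGL-basics}(1) together with the locality of étale hypersheafification and $\ell$-completion, and the right-hand side by naturality of $\pi^*$ via pullback of the cyclotomic character. Since $f^*$ is symmetric monoidal, the $\scr E_0$-ring structure is preserved. The $\KGL$ statement follows by an entirely parallel argument: replace Theorem \ref{thm:etale-cobordism-weak} with the equivalence $L_\et(\KGL)_\ell^\comp \wequi \pi^*\KU_\ell$ obtained from Proposition \ref{prop:K-pulled-back} by inverting the Bott element, and Lemma \ref{lem:lifting-endomorphisms}(2) with Lemma \ref{lem:lifting-endomorphisms}(1); only an equivalence of unstructured spectra is claimed because the ring structure is not tracked through the Bott-inverted comparison.

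The main obstacle is the idempotent-matching step over $S_0$, which requires care because the fully faithful functor $\alpha\pi^*$ factors through $\SH_\et(\QQ)_\ell^\comp$ rather than through $\SH_\et(S_0)_\ell^\comp$ directly. The resolution is that both $\tilde e$ and $\pi^*(e)$, viewed as elements of $\End_{\SH_\et(S_0)_\ell^\comp}(\pi^*\MU_\ell)$, lift to elements of $\End(\MU_\ell)$ via $\pi^*_{S_0}$; since their further images in $\End(\alpha\pi^*\MU_\ell)$ agree, and the total composite $\End(\MU_\ell) \to \End(\alpha\pi^*\MU_\ell)$ is a bijection, the preimages in $\End(\MU_\ell)$ must coincide, forcing $\tilde e = \pi^*(e)$ in $\End_{\SH_\et(S_0)_\ell^\comp}(\pi^*\MU_\ell)$ and hence equivalence of the corresponding summands as $\scr E_0$-rings.
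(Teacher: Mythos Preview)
Your idempotent-matching strategy has a gap in the paragraph you label as the ``resolution''. You assert that $\tilde e \in \End_{\SH_\et(S_0)_\ell^\comp}(\pi^*\MU_\ell)$ lifts along $\pi^*_{S_0}$ to an element of $\End_{\ShvSp(B_\et G_\ell)_\ell^\comp}(\MU_\ell)$, but this is not justified. The construction of $e_0\MU_\ell$ in the paragraph preceding the proposition only tells you that the pullback of $\tilde e$ to $\QQ$, followed by $\alpha$, lies in the image of the fully faithful functor $\alpha\pi^*$; it does not tell you that $\tilde e$ itself over $S_0$ lies in the image of $\pi^*_{S_0}$. The functor $\pi^*_{S_0}: \ShvSp(B_\et G_\ell)_\ell^\comp \to \SH_\et(S_0)_\ell^\comp$ is not known to be fully faithful for arbitrary odd $\ell$ (Proposition~\ref{prop:regular-prime-ff} establishes this only for $\ell=2$ or $\ell$ odd regular). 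Without the lift, your bijectivity argument cannot force $\tilde e = \pi^*(e)$ in $\End_{\SH_\et(S_0)_\ell^\comp}(\pi^*\MU_\ell)$.

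The paper avoids this difficulty entirely by working directly over an arbitrary $S$ and checking on étale stalks. One forms the canonical comparison map $L_\et(e_0\MGL_{(\ell)})_\ell^\comp \to \pi^*(e_0\MU_\ell)$ as the composite of the summand inclusion into $L_\et(\MGL)_\ell^\comp \wequi \pi^*\MU_\ell$ with the summand projection, and then verifies it is an equivalence after pulling back to the spectrum of a separably closed field. There $\SH_\et(\bar k)_\ell^\comp \wequi \ShvSp_\ell^\comp$, and both idempotents reduce to the classical Adams idempotent on $\MU_\ell^\comp$, so the comparison map is the identity. This is simpler, works uniformly for all $S$ and all odd $\ell$, and does not require any global control over endomorphism rings.
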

\begin{proof}
Since both sides are summands of $L_\et(\MGL)_\ell^\comp$ by Theorem \ref{thm:etale-cobordism-weak}, we have a canonical comparison morphism $L_\et(e_0 \MGL_{(\ell)})_\ell^\comp \to \pi^*(e_0\MU_\ell)$ which we need only prove is an equivalence.
This we may do after pulling back to the étale stalks, i.e. we may assume that $S$ is the spectrum of a separably closed field.
But now $\SH_\et(S)_\ell^\comp \wequi \ShvSp_\ell^\comp$, and both spectra identify with the classical Adams summand.\NB{Over general bases $L_\et(\MGL)_\ell^\comp$ could have more complicated endomorphisms, so couldn't immediately identify the idempotents.}
The same argument works for $\KGL$.
\end{proof}

\section{Quantifying eventual étale locality} \label{sec:CQL}

\localtableofcontents

\bigskip
The Bloch--Kato conjecture states that motivic cohomology coincides with truncated étale cohomology.
Put differently, the map $H\Z/\ell \to L_\et H\Z/\ell \in \SH(k)$ induces an isomorphism on $\pi_{**}$ in certain specific degrees, and in fact the source is zero outside of these degrees.
When considering $\KGL$ instead of $H\Z$, we have the Quillen--Lichtenbaum conjecture.
It states that algebraic $K$-theory coincides with its étale localization beyond a certain degree, depending on the étale cohomological dimension of $k$.
Contrary to the situation for motivic cohomology, algebraic $K$-theory does not vanish outside of the étale local range.
This can be explained as follows.
$\KGL$ is built out of shifted copies of $H\Z$, namely copies of the form $\Sigma^{2*,*} H\Z$.
$H\Z$ is obtained ($\ell$-adically) from its étale localization, by truncating into the range of $\pi_{p,q}$ with $p \ge q$ and $q \le 0$.
Since this range is not invariant under replacing $(p,q)$ by $(p+2,q+1)$, there is no similar exact formula for $\pi_{**}(\KGL/\ell)$.

We can obtain a way out of this problem by describing $H\Z$, perhaps approximately, as obtained from its étale localization by performing a kind of truncation invariant under $(p,q) \mapsto (p+2,q+1)$.
If so, then $\KGL$ will automatically inherit the same description, and so will any other spectrum that can be built out of copies of $\Sigma^{2*,*} H\Z$.
To find the appropriate kind of truncation, note that the transformation $(p,q) \mapsto (p+2,q+1)$ leaves the quantity $c(p,q) = p-2q$ invariant.
It is called the \emph{Chow degree}.
We say that $E \in \SH(k)$ satisfies the \emph{Chow--Quillen--Lichtenbaum} property of degree $s$ if the fiber of $E_\ell^\comp \to L_\et(E)_\ell^\comp$ has homotopy groups concentrated in Chow degrees $<s-1$.
Careful inspection of the truncation implicit in the Bloch--Kato conjecture shows that $H\Z$ satisfies the Chow--Quillen--Lichtenbaum property of degree $\cd_\ell(k)-1$.
By the above argument, the same holds for $\MGL$; this is the main result in this section (Example \ref{ex:CQL-sc-KGL}).

\subsubsection*{Organization}
We begin in \S\ref{def:quillen-lichtenbaum} by defining the Chow--Quillen--Lichtenbaum property, establishing basic facts and producing basic examples (i.e., $H\Z$).
Then in \S\ref{subsec:etale-slice} we show that under appropriate assumptions, the étale localized slice filtration of a spectrum converges to the étale localization of the spectrum itself.
From this we deduce that $\MGL$ satisfies the Chow--Quillen--Lichtenbaum property.

We conclude with two complementary sections.
In \S\ref{subsec:prime-2} we strengthen our results in the case $\ell=2$, showing that in some situations (notably excluding $H\Z$, but including $\MGL$) the assumptions on the étale cohomological dimension of $k$ can be replaced by assumptions on the \emph{virtual} étale cohomological dimension.
Finally in \S\ref{subsec:SH-et-nh} we establish an application: by using the Chow--Quillen--Lichtenbaum property of the sphere itself, we prove that any motivic spectrum satisfying étale descent automatically satisfies étale hyperdescent.

\subsection{Quillen--Lichtenbaum properties} \label{def:quillen-lichtenbaum}
\begin{dfn}
Given $(i,j) \in \Z^2$, we call $c(i,j) = i-2j$ the associated \emph{Chow degree}.
\end{dfn}

\begin{dfn}[Chow--Quillen--Lichtenbaum property]
Let $E \in \SH(S)$.
We say that $E$ satisfies $\CQL_s(\ell, S)$ (or $\CQL_s(\ell)$ for short) if the map \[ E_\ell^\comp \to L_\et(E)_\ell^\comp \in \SH(S) \] induces an isomorphism on $\pi_{i,j}$ in Chow degrees $c(i,j) \ge s$, and a monomorphism in Chow degree $c(i,j)=s-1$.
\end{dfn}

\begin{rmk} \label{rmk:extensions}
Equivalently, the homotopy groups of the fiber vanish in Chow degrees $\ge s-1$.
\end{rmk}

\begin{rmk} \label{rmk:CQL-local}
For this, it suffices that the fiber is hypercomplete (e.g. $S$ of finite dimension) and its homotopy \emph{sheaves} (restricted to $S_\Nis$) vanish in Chow degrees $\ge s-1$.
(Though beware that unless $E$ is $\ell$-power torsion, these homotopy sheaves may not be particularly well-behaved.)
\end{rmk}

\begin{wrn}
Write $\Omega^\infty: \SH(S) \to \PSh_\ShvSp(\Sm_S)$ for the canonical functor.
It is usually not the case that $L_\et \Omega^\infty E \wequi \Omega^\infty L_\et E$, whence the above Quillen--Lichtenbaum property only loosely relates to the usual Quillen--Lichtenbaum conjecture.
\end{wrn}

\begin{exm}
$L_\et(\KGL)_\ell^\comp$ represents $K_1$-local $K$-theory, and $L_\et(\Omega^\infty\KGL)_\ell^\comp \to \Omega^\infty L_\et(\KGL)_\ell^\comp$ is a pro-$\ell$ connective cover (see the proof of Proposition \ref{prop:K-pulled-back}).
In particular $\KGL$ satisfies $\CQL_s(\ell, S)$ (for some $s \ge 0$) if and only $\ell$-adic (homotopy) algebraic $K$-theory over $S$ is étale local in degrees $\ge s$.
\end{exm}

\begin{lem} \label{lemm:CQL-basics}
\begin{enumerate}
\item Spectra satisfying $\CQL_s(\ell)$ are closed under extensions.
\item If $S$ $\ell$-étale finite (\cite[Definition 5.8]{bachmann-SHet}) and $\cd_\ell(S) < \infty$ (e.g. $S$ noetherian, $\dim S < \infty$, and $\pcd_\ell S < \infty$), then they are also closed under coproducts.
\item $E$ satisfies $\CQL_s(\ell)$ if and only if $E_\ell^\comp$ does.
\item If $E \in \SH(S)$ and $E/\ell$ satisfies $\CQL_s(\ell)$, then so do $E/\ell^n$ and $E$.
\end{enumerate}
\end{lem}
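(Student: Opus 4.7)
Write $F_E := \fib(E_\ell^\comp \to L_\et(E)_\ell^\comp)$; by Remark \ref{rmk:extensions}, $E$ satisfies $\CQL_s(\ell)$ precisely when $\pi_{p,q} F_E$ vanishes whenever $c(p,q) \ge s-1$. With this reformulation (1) follows by chasing the long exact sequence on $\pi_{p,q}$ attached to the cofiber sequence of fibers $F_{E'} \to F_E \to F_{E''}$ induced by a cofiber sequence of $E$'s, and (3) is immediate because both $(\ph)_\ell^\comp$ and $L_\et(\ph)_\ell^\comp$ factor through $\ell$-completion, so $F_E$ only depends on $E$ up to $\ell$-completion.

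For (4) one first identifies $F_{E/\ell} \wequi F_E/\ell$: cofibering by $\ell$ commutes with both $\ell$-completion and $L_\et$, and $L_\et(E)_\ell^\comp/\ell \wequi L_\et(E/\ell)$ since the latter object is $\ell$-torsion and hence already $\ell$-complete. The hypothesis then supplies the vanishing of $\pi_{*,*} F_E/\ell$ in Chow degrees $\ge s-1$; applying (1) inductively to the octahedral cofiber sequences $F_E/\ell^n \to F_E/\ell^{n+1} \to F_E/\ell$ coming from the factorization $\ell^{n+1} = \ell \cdot \ell^n$ gives the same vanishing for every $F_E/\ell^n$. Since $F_E$ is $\ell$-complete (being the fiber of a map between $\ell$-complete spectra), the Milnor short exact sequence expresses $\pi_{p,q} F_E$ in terms of $\lim$ and $\limone$ of $\pi_{*,*} F_E/\ell^n$, both of which vanish in the required range; the statement for $E/\ell^n$ itself follows by another induction with (1) on the same cofiber sequences.

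The most substantial piece is (2). Using (3), assume each $E_i$ is $\ell$-complete. Under the stated hypotheses, $L_\et$ on $\ell$-complete motivic spectra commutes with small colimits (compact generation together with bounded cohomological dimension via the equivalence $\SH_\et(S)_\ell^\comp \wequi \ShvSp(S_\et)_\ell^\comp$ of \cite[Theorem 3.1]{bachmann-SHet2}), so $F_{\bigoplus_i E_i} \wequi (\bigoplus_i F_{E_i})_\ell^\comp$. The Milnor sequence for this inverse limit, combined with the universal coefficient short exact sequences
\begin{equation*}
0 \to (\bigoplus\nolimits_i \pi_{p,q} F_{E_i})/\ell^n \to \pi_{p,q}(\bigoplus\nolimits_i F_{E_i}/\ell^n) \to (\bigoplus\nolimits_i \pi_{p-1,q} F_{E_i})[\ell^n] \to 0,
\end{equation*}
reduces the vanishing of $\pi_{p,q}(\bigoplus_i F_{E_i})_\ell^\comp$ in Chow degrees $\ge s-1$ to the following key input: for any family $\{A_i\}$ of derived $\ell$-complete abelian groups, $\lim_n \bigoplus_i A_i[\ell^n] = 0$, where the transition maps are multiplication by $\ell$. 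This is the main technical obstacle. It holds because any compatible tower $(y_n)_n$ projects on each fixed coordinate $i$ to an element of $\lim_n A_i[\ell^n] \wequi \Hom(\Z[1/\ell]/\Z, A_i)$; the latter vanishes since $A_i$ is derived $\ell$-complete (compute the $\Hom$-sequence arising from $0 \to \Z \to \Z[1/\ell] \to \Z[1/\ell]/\Z \to 0$), so every coordinate of $y_n$ is zero, and hence $y_n = 0$ for every $n$.
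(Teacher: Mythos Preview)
Your proof is correct and follows essentially the same strategy as the paper's. For (2), the paper packages your Milnor/UCT computation (in particular the key vanishing $\lim_n \bigoplus_i A_i[\ell^n] = 0$ for derived $\ell$-complete $A_i$) into the separate Lemma \ref{lemm:complete-sums}, and phrases the colimit-preservation step as compactness of $\Sigma^{i,j}\1/\ell$ in $\SH_\et(S)_\ell^\comp$, but the content is the same.
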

\begin{proof}
(1) Immediate from Remark \ref{rmk:extensions}.

(2) Using Lemma \ref{lemm:complete-sums} it suffices to prove that the functor $\map(\Sigma^{i,j}\1, L_\et(\ph)_\ell^\comp): \SH(S) \to \ShvSp_\ell^\comp$ preserves colimits.
It is the composite of the functors $\SH(S) \to \SH_\et(S)_\ell^\comp$ (which preserves colimits) and $\map(\Sigma^{i,j}\1, \ph): \SH_\et(S)_\ell^\comp \to \ShvSp_\ell^\comp$.
The latter functor is right adjoint to $e: \ShvSp_\ell^\comp \to \SH_\et(S)_\ell^\comp$ induced by $\1 \mapsto \Sigma^{i,j} \1$.
The functor $e$ preserves the compact generator $\1/\ell$ by \cite[Corollary 5.7]{bachmann-SHet}, whence its right adjoint preserves colimits, as needed.
For the characterization of $\ell$-étale finiteness and $\cd_p(S) < \infty$, see \cite[Example 5.9, Theorem 2.10]{bachmann-SHet}.

(3) Write $F(E) = \fib(E_\ell^\comp \to L_\et(E)_\ell^\comp)$.
Then $(*)$ $F(E) = \lim_n F(E/\ell^n)$.
By Remark \ref{rmk:extensions}, $E$ satisfies $\CQL_s(\ell)$ if and only if $F(E)$ has homotopy groups concentrated in degrees $<s-1$.
As we have just seen, this condition only depends on $E/\ell^n$ (for all $n$), and hence is unchanged replacing $E$ by $E_\ell^\comp$.

(4) By Remark \ref{rmk:extensions} and $(*)$, $E$ satisfies $\CQL_s(\ell)$ if $E/\ell^n$ does for all $n$.
This holds if $E/\ell$ satisfies $\CQL_s(\ell)$, by stability under extension.\NB{And $E/\ell^{n-1} \to E/\ell^n \to E/\ell$.}
\end{proof}

\begin{prop} \label{prop:HZ-CQL}
Let $S$ be essentially smooth over a Dedekind scheme and $1/\ell \in S$.
Then $H\Z$, $H\Z_\ell$ as well as $H\Z/\ell^n$ satisfy $\CQL_s(\ell, S)$ provided that one of the following holds:
\begin{enumerate}
\item $S$ is henselian local with residue field $k$ and $\cd_\ell(k) \le s+1$.
\item $\pcd_\ell(S) \le s + 1$.
\end{enumerate}
In either case we have $\pi_{i,w} H\Z/\ell = 0$ for $i < -(s+1+\cd_\Nis S)$.
\end{prop}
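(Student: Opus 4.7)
The plan is to reduce to the case $S = \Spec(k)$ for $k$ a field of bounded $\ell$-cohomological dimension, and then read off $\CQL_s$ from the Bloch--Kato / Beilinson--Lichtenbaum theorem.

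First, by Lemma~\ref{lemm:CQL-basics}(3,4), the cases of $H\Z/\ell^n$, $H\Z_\ell$ and $H\Z$ all reduce to $H\Z/\ell$ via stability under extensions and under $\ell$-completion. Since $\dim S < \infty$, by Remark~\ref{rmk:CQL-local} it suffices to check that the Nisnevich homotopy sheaves of $F := \fib(H\Z/\ell \to L_\et H\Z/\ell)$, restricted to $S_\Nis$, vanish in Chow degrees $\ge s-1$. This may be tested on Nisnevich stalks, which are henselian local rings of $S$; their residue fields have $\ell$-cohomological dimension $\le s+1$ in either case (by assumption in~(1), by definition of $\pcd_\ell$ in~(2)). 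Using henselian rigidity for $\ell$-torsion étale cohomology together with the analogous Gabber--Suslin-type rigidity for mod-$\ell$ motivic cohomology, the bigraded homotopy groups of $H\Z/\ell$ and $L_\et H\Z/\ell$ at such a stalk agree with those over the residue field $k$, so we may assume $S = \Spec(k)$.

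Second, over $\Spec(k)$, Bloch--Kato / Beilinson--Lichtenbaum~\cite{haesemeyer2019norm} identifies $\pi_{i,w}(H\Z/\ell) = H^{-i,-w}_M(k, \Z/\ell)$ with $\pi_{i,w}(L_\et H\Z/\ell) = H^{-i}_\et(k, \mu_\ell^{\otimes -w})$ precisely when $-i \le -w$; outside this range the motivic group vanishes, while the étale group vanishes outside $0 \le -i \le \cd_\ell(k)$. Chasing the long exact sequence of the fiber $F$ then shows that $\pi_{i,w}(F)$ can only be nonzero when $i \le w-2$, where it picks up the cokernel $H^{-i-1}_\et(k, \mu_\ell^{\otimes -w})$; imposing also $-\cd_\ell(k)-1 \le i \le -1$ bounds the Chow degree $c(i,w) = i - 2w$ from above by $-i - 4 \le \cd_\ell(k) - 3 \le s - 2$. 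Thus $\pi_{i,w}(F) = 0$ whenever $c(i,w) \ge s - 1$, giving $\CQL_s(\ell, S)$.

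For the ``in particular'' vanishing: over a residue field $k$ of $S$, $H^{-i,-w}_M(k,\Z/\ell)$ is nonzero only for $0 \le -i \le \min(-w, \cd_\ell(k)) \le s+1$, so $\ul\pi_{i,w}(H\Z/\ell)|_{S_\Nis}$ vanishes for $i < -(s+1)$. Feeding this into the Nisnevich descent spectral sequence $E_2^{p,q} = H^p_\Nis(S, \ul\pi_{q,w}(H\Z/\ell)) \Rightarrow \pi_{q-p,w}(H\Z/\ell)$, with $p$ bounded by $\cd_\Nis S$, yields the global vanishing for $i < -(s+1+\cd_\Nis S)$. The main obstacle is the degree bookkeeping in the middle step; the reductions are essentially standard, modulo invoking rigidity of mod-$\ell$ motivic cohomology on henselian local schemes in the Dedekind-base setting.
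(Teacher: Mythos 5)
Your field-level computation is correct and is in substance the paper's argument: reduce to $H\Z/\ell$ via Lemma \ref{lemm:CQL-basics}, identify mod-$\ell$ motivic cohomology with truncated étale cohomology, and check that the fiber of $H\Z/\ell \to L_\et(H\Z/\ell)$ has homotopy only in Chow degrees $\le s-2$ (your bookkeeping $i \le w-2$, $-\cd_\ell-1 \le i \le -1$, $c(i,w)\le -i-4 \le s-2$ is right, and in fact handles the negative-weight range more carefully than the paper does). The ``in particular'' vanishing via the Nisnevich descent spectral sequence is also the paper's argument.

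However, there is a genuine gap in how you handle case (1). You run both cases through Remark \ref{rmk:CQL-local}, testing on \emph{all} Nisnevich stalks of $S$, and assert that their residue fields have $\cd_\ell \le s+1$ ``by assumption in (1)''. The hypothesis of (1) only bounds the cohomological dimension of the residue field $k$ at the \emph{closed} point of the henselian local scheme $S$; the residue fields at the other points can have strictly larger $\cd_\ell$. For instance, if $S$ is the henselization of $\mathbb{A}^2_{\CC}$ at a closed point, then $\cd_\ell(k)=0$ (so (1) holds with $s=-1$), but the fraction field of $S$ has $\cd_\ell = 2$, so the stalk at the generic point only gives Chow-degree vanishing in degrees $\ge 0$, not $\ge s-1=-2$. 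Thus your argument, as written, only establishes the statement under the hypothesis of (2) (a bound on $\pcd_\ell(S)$), which is genuinely weaker than (1). The repair is to invert the logical order, as the paper does: for henselian local $S$ the property $\CQL_s(\ell,S)$ concerns the bigraded homotopy groups over $S$ itself (equivalently, the closed-point stalk), and these are computed directly from Spitzweck's description $\map(\Sigma^{0,-w}\1,H\Z/\ell)(S)\wequi \tau_{\ge -w}\Gamma_\et(S,\mu_\ell^{\otimes w})$ together with Gabber's rigidity $\Gamma_\et(S,\mu_\ell^{\otimes w})\wequi\Gamma_\et(k,\mu_\ell^{\otimes w})$ \cite[Theorem 1]{gabber1994affine} --- no sheaf-level reduction and no control of the other stalks is needed. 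Case (2) then follows from case (1) by the stalkwise argument, since every Nisnevich stalk of $S$ is henselian local with residue field of $\cd_\ell \le \pcd_\ell(S) \le s+1$. (Relatedly, the ``Gabber--Suslin-type rigidity for mod-$\ell$ motivic cohomology'' you invoke is, over these bases, exactly the Spitzweck truncated-étale description plus Gabber rigidity, so no input beyond the paper's is needed --- but it must be applied to $S$ itself, not stalkwise, to get (1) at full strength.)
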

\begin{proof}
By Lemma \ref{lemm:CQL-basics}(4) it suffices to prove that $H\Z/\ell$ satisfies $\CQL_s(\ell)$.

(1) By definition (see \cite{spitzweck2012commutative}) and since $S$ is local, for $w \ge 0$ we have \[ \map(\Sigma^{0,-w}\1, H\Z/\ell) \wequi \tau_{\ge -w} \Gamma_\et(S, \mu_\ell^{\otimes w}), \] and $\map(\Sigma^{0,-w}\1, H\Z/\ell) = 0$ for $w < 0$.
It follows that if $F = \fib(H\Z/\ell \to L_\et H\Z/\ell)$ then \[ \map(\Sigma^{0,-w}\1, F) \wequi \Sigma^{-1} \tau_{<-w} \Gamma_\et(S, \mu_\ell^{\otimes w}) \text{ or } 0. \]
We have $\Gamma_\et(S, \mu_\ell^{\otimes *}) \wequi \Gamma_\et(k, \mu_\ell^{\otimes *})$ by rigidity \cite[Theorem 1]{gabber1994affine}.
It follows that if $w>s$ then $F=0$.
Also $F=0$ if $w < 0$.
Finally for any $w$ we have $F \in \ShvSp_{\le -w-2}$.
It follows that $\pi_{**} F$ is concentrated in degrees $(n,-w)$ with $n \le -w-2$ and $0 \le w \le s$, whence \[ c(n,-w) = n+2w \le w-2 \le s-2. \]
Thus $H\Z/\ell$ satisfies $\CQL_s(\ell)$ by Remark \ref{rmk:extensions}.

The vanishing of $\pi_{i,w} H\Z/\ell$ for $i < -\cd_\ell(k)$ is immediate from the fact that this group is either zero or coincides with $H^{-i}_\et(k, \mu_\ell^{\otimes w})$.

(2) By using (1), we see that $\ul{\pi}_{**}(F)|_{S_\Nis}$ is concentrated in Chow degrees $\le s-2$.
The first result thus follows from Remark \ref{rmk:CQL-local}, and the second follows from the same assertion for (1) by using e.g. the descent spectral sequence.
\end{proof}

\begin{exm}
Proposition \ref{prop:HZ-CQL}(2) applies with $s=1$ if $S$ is an open subset of $\Spec \scr O_K$, where $K$ is a number field without real embeddings.\NB{ref?}
\end{exm}

\subsection{The étale slice filtration} \label{subsec:etale-slice}
\begin{lem} \label{lemm:preserve-limit}
Let $\scr C$ be a presentable stable $\infty$-category and $G: \scr C \to \ShvSp$ an exact functor.
Suppose given a sequence of full subcategories $\dots \subset \scr C_i \subset \scr C_{i-1} \subset \dots \subset \scr C$ such that
\begin{enumerate}
\item $\scr C_i$ is closed under extensions,
\item if $\dots\to E^2 \to E^1 \to E^0 \in \scr C_i$ is a tower, then $\lim_i E^i \in \scr C_{i-1}$, and
\item $G(\scr C_i) \subset \ShvSp_{\ge d_i}$, with $\lim_i d_i = \infty$.
\end{enumerate}

Now let \[ \dots \to F^2 \to F^1 \to F^0 \to F^{-1} \to \dots \in \scr C \] be a tower.
Assume that $\fib(F^{i+1} \to F^i) \in \scr C_{e_i}$, with $\lim_i e_i = \infty$.
Then \[ G(\lim_i F^i) \wequi \lim_i G(F^i). \]
\end{lem}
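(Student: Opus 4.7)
The plan is to bound the error term and let it go to zero. Set $K^n := \fib(\lim_i F^i \to F^n)$, so that exactness of $G$ yields fiber sequences $G(K^n) \to G(\lim_i F^i) \to G(F^n)$; since limits of spectra preserve fiber sequences, taking $\lim_n$ produces the fiber sequence $\lim_n G(K^n) \to G(\lim_i F^i) \to \lim_n G(F^n)$. It therefore suffices to show that $\lim_n G(K^n) \wequi 0$.

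First I would introduce $m_n := \inf_{j \ge 0} e_{n+j}$, which tends to $\infty$ since $e_i \to \infty$. By induction on $k \ge 0$, using hypothesis (1) (closure under extensions) and the fiber sequence
\[ \fib(F^{n+k+1} \to F^{n+k}) \to \fib(F^{n+k+1} \to F^n) \to \fib(F^{n+k} \to F^n), \]
I would show that $\fib(F^{n+k} \to F^n) \in \scr C_{m_n}$ for every $k \ge 0$ (base case $k=1$ is the hypothesis on consecutive fibers, noting $e_n \ge m_n$). Since limits commute with fibers, we have $K^n \wequi \lim_k \fib(F^{n+k} \to F^n)$, and hypothesis (2) then gives $K^n \in \scr C_{m_n - 1}$; hypothesis (3) consequently places $G(K^n) \in \ShvSp_{\ge d_{m_n - 1}}$.

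Finally, since $d_{m_n - 1} \to \infty$, for any fixed integer $k$ we have $\pi_k G(K^n) = 0 = \pi_{k+1} G(K^n)$ for $n$ sufficiently large, so the Milnor $\lim^1$ sequence (applied to the tower $\{G(K^n)\}_n$ in $\ShvSp$) gives $\pi_k \lim_n G(K^n) = 0$ for all $k$, whence $\lim_n G(K^n) \wequi 0$. The argument is essentially formal; the only point worth care is that hypothesis (2) produces $K^n$ in $\scr C_{m_n - 1}$ rather than $\scr C_{m_n}$, but this harmless off-by-one is absorbed because $d_{m_n - 1}$ still tends to $\infty$, so no real obstacle arises beyond this bookkeeping.
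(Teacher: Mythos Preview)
Your proof is correct and follows essentially the same approach as the paper's: both reduce to showing that $\lim_n G(K^n) \wequi 0$ for $K^n = \fib(\lim_i F^i \to F^n)$, then use hypothesis~(1) inductively on the partial fibers $\fib(F^{n+k} \to F^n)$, apply hypothesis~(2) to pass to the limit, and conclude via hypothesis~(3). Your version is slightly more explicit (introducing $m_n$ and invoking the Milnor sequence), but the argument is the same.
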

\begin{proof}
Let $F' = \lim_i F^i$ and $F_i = \fib(F' \to F^i)$.
We have a fiber sequence \[ \lim_i G(F_i) \to G(F') \to \lim_i G(F^i), \] whence we need to prove that $\lim_i G(F_i) = 0$.
Put $F_i^n = \fib(F^n \to F^i)$, so that $F_i = \lim_n F_i^n$.
Let $N > 0$.
By (3), it suffices to show that $F_i \in \scr C_N$ for $i$ sufficiently large.
By (2), for this it is enough to show that $F_i^n \in \scr C_{N+1}$ for $i$ sufficiently large.
By (1), for this it suffices that $F_i^{i+1} \in \scr C_{N+1}$ for $i$ sufficiently large.
This holds by assumption.
\end{proof}

\begin{prop} \label{prop:etale-slice}
Let $S$ be essentially smooth over a Dedekind scheme, $1/\ell \in S$, $\pvcd_\ell(S) < \infty$.
Suppose $E \in \SH(S)$ such that
\begin{enumerate}
\item $s_n(E/\ell) = \bigoplus_j \Sigma^{e_j^{(n)},n} H\Z/\ell$, and
\item $\lim_{n \to \infty} \inf\{e_j^{(n')} \mid n' \ge n\} = \infty.$\NB{actually could replace $\inf\{e_j\}$ here by $\inf\{|e_j|\}$, is that useful?}
\end{enumerate}
Then \[ L_\et(\lim_n f^n E)_\ell^\comp \wequi \lim_n L_\et(f^n E)_\ell^\comp. \]
\end{prop}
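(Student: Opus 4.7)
The plan is to apply Lemma \ref{lemm:preserve-limit} bidegree-by-bidegree. Since an equivalence in $\SH(S)$ can be checked after applying $\map(\Sigma^{p,q}\1,\ph)$ for each $(p,q) \in \Z^2$, for each such $(p,q)$ I would invoke the lemma with $\scr C = \SH(S)$, tower $F^n = f^n E$, and exact functor $G_{p,q} := \map(\Sigma^{p,q}\1, L_\et(\ph)_\ell^\comp): \SH(S) \to \ShvSp$; the fibers of the tower are $\fib(f^{n+1}E \to f^n E) \wequi s_{n+1}E$.

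Take $\scr C_i := \SH(S)_{\ge i}$ in the homotopy $t$-structure. Extension-closure is immediate, and closure under tower limits up to a single shift comes from the Milnor $\lim^1$ sequence. For condition (3) of Lemma \ref{lemm:preserve-limit}, I would argue that under $\pvcd_\ell(S) < \infty$, the functor $L_\et(\ph)_\ell^\comp$ shifts the homotopy $t$-structure by at most a fixed constant $C = C(S)$: via Postnikov towers this reduces to the case of $H\Z/\ell$, where the bound follows from the connectivity statement of Proposition \ref{prop:HZ-CQL} (the $\ell=2$ case will require the refinement of \S\ref{subsec:prime-2}). By Morel's stable connectivity theorem, $X \in \scr C_i$ then gives $G_{p,q}(X) \in \ShvSp_{\ge i - C - p}$, so $d_i := i - C - p \to \infty$.

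The main step is condition (4): placing $s_{n+1}E$ in $\scr C_{e_n}$ with $e_n \to \infty$. Hypothesis (1) gives $s_n(E/\ell) \wequi \bigoplus_j \Sigma^{e_j^{(n)}, n}H\Z/\ell$, and because $s_n E \in \Sigma^{2n,n}\SH(S)^\veff$ while $\Sigma^{-1}H\Z/\ell$ is not effective, each $e_j^{(n)} \ge 2n$. Combined with hypothesis (2) of the proposition, this yields $N_n := \inf_j(e_j^{(n)} - n) \to \infty$, hence $s_n(E/\ell) \in \SH(S)_{\ge N_n}$. The Bockstein triangle $s_n E \xrightarrow{\ell} s_n E \to s_n E /\ell$, after passage to the $\ell$-completed category, lifts this to $s_n E \in \SH(S)_{\ge N_n - 1}$, placing $s_{n+1}E \in \scr C_{N_{n+1}-1}$ with $N_{n+1}-1 \to \infty$, and the lemma then applies.

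The hardest part will be pinning down condition (3): showing that $L_\et(\ph)_\ell^\comp$ shifts the $t$-structure by a single constant, uniformly on infinite direct sums. This requires combining Proposition \ref{prop:HZ-CQL} with Lemma \ref{lemm:complete-sums} so that the $\ell$-completion of $\bigoplus_j \Sigma^{e_j^{(n)}, n} L_\et(H\Z/\ell)$ retains the common connectivity bound of its summands.
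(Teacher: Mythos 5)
Your overall strategy (reduce to Lemma \ref{lemm:preserve-limit} applied to the slice tower) is the paper's, but the two substantive inputs you supply for it do not work, and the first is a genuine gap. You take $\scr C_i = \SH(S)_{\ge i}$ in the homotopy $t$-structure and then need the tower fibers $s_{n+1}(E)/\ell$ to become arbitrarily connective in that sense. Your argument for this rests on the claim that $e_j^{(n)} \ge 2n$ "by effectivity''; but hypothesis (1) only says the slices are sums of $\Sigma^{e_j^{(n)},n}H\Z/\ell$, and effectivity of slices imposes no such lower bound, since the effective subcategory is closed under simplicial desuspension. Indeed, in the intended applications the bound fails: in Corollary \ref{cor:CQL-HZ-slices} the first index is $2n-e_j^{(n)} \le 2n$, and for $E=\1$ (Example \ref{ex:CQL-sc-KGL}) it ranges down to $n$, so the fibers contain summands $\Sigma^{n,n}H\Z/\ell$ lying in the heart of the homotopy $t$-structure for every $n$ — their homotopy-$t$-structure connectivity never grows. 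Hypothesis (2) controls only the simplicial index, not the difference with the weight, so with your choice of $\scr C_i$ the fiber condition of Lemma \ref{lemm:preserve-limit} simply fails. This is exactly why the paper filters instead by uniform-in-weight vanishing, $\scr C_i = \{F : \pi_{p,w}(F/\ell)=0 \text{ for } p<i,\ \text{all } w\}$: then hypothesis (2) together with the last statement of Proposition \ref{prop:HZ-CQL} (vanishing of $\pi_{i,w}H\Z/\ell$ for $i$ below a bound independent of $w$, which uses finite $\cd_\ell$ after first passing to an étale covering to replace $\pvcd_\ell$ by $\pcd_\ell$) gives the required growth of connectivity of the fibers.

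Your verification of condition (3) is also not justified as stated. The claim that $L_\et(\ph)_\ell^\comp$ shifts the homotopy $t$-structure by a constant "via Postnikov towers reducing to $H\Z/\ell$'' does not go through: the layers of a homotopy-$t$-structure Postnikov tower are arbitrary homotopy modules, not sums of shifts of $H\Z/\ell$, so Proposition \ref{prop:HZ-CQL} does not apply to them. The paper's route is different and is tied to its choice of $\scr C_i$: after arranging (by an étale covering) a good $\tau$-self map mod $\ell$ and either $\ell$ odd or $\rho$ nilpotent, one has $L_\et(F/\ell) \wequi (F/\ell[\tau^{-1}])^{(+)}$, and inverting $\tau$ only shifts weights, so it preserves vanishing of $\pi_{p,w}$ for $p<i$ uniformly in $w$; this is what makes condition (3) hold. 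Finally, a smaller but real inaccuracy: equivalences in $\SH(S)$ are not detected by $\map(\Sigma^{p,q}\1,\ph)$ alone; one must test against $p_\#\Sigma^{p,q}\1_{S'}$ for all smooth $p\colon S'\to S$, which the paper handles by noting that smooth pullback commutes with $f^n$, $L_\et$ and $\ell$-completion and then replacing $S$ by $S'$.
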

\begin{proof}

If $p: S' \to S$ is smooth, then $p^*$ commutes with limits, colimits and effective covers and preserves generating étale equivalences and étale local objects.
It follows that $p^*$ commutes with $L_\et$, $f^n$ and $(\ph)_\ell^\comp$.
If $f$ is itself an étale covering, then $p^*$ is conservative on étale local objects.
It follows that we may replace $S$ by an étale covering and hence we may assume both that $\pcd_\ell(S) < \infty$ and that there is a good $\tau$-self map mod $\ell$ \cite[\S4.5]{bachmann-bott}.
Moreover we may assume that either $\ell$ is odd or $\rho$ is nilpotent.
It now follows from \cite[Theorem 7.4, \S2.2]{bachmann-bott} that for any $F \in \SH(S)$ we have \begin{equation}\label{eq:invert-tau-Let} L_\et(F/\ell) \wequi (F/\ell[\tau^{-1}])^{(+)}, \end{equation} where $(\ph)^{(+)}$ means passage to the $+$-part if $\ell$ is odd.

We seek to show that $L_\et(\lim_n f^n E)/\ell \to \lim_n L_\et(f^n E)/\ell$ is an equivalence.
It suffices to check this on $\map(p_\# \Sigma^{i,j} \1_{S'}, \ph)$ for all $p: S' \to S$ smooth.
As before since $p^*$ commutes with all our operations (and $\pcd_\ell(S') < \infty$, e.g. by \cite[Lemma 2.12(2)]{bachmann-SHet}), we may replace $S$ by $S'$, and hence it suffices to prove that we obtain an isomorphism on $\pi_{**}$.
For this we apply Lemma \ref{lemm:preserve-limit}, with $\scr C = \SH(S)$, $F^i = f^i(E)$ and $G = \map(\Sigma^{0,*}\1, L_\et(\ph)/\ell)$.
We let $\scr C_i \subset \SH(S)$ consist of those spectra such that $\pi_{p,w}(E/\ell) = 0$ for $p < i, w \in \Z$.
Property (1) is clear and (2) follows from the Milnor exact sequence.\NB{ref?}
Property (3) follows from \eqref{eq:invert-tau-Let}.

We thus need to establish the following: for $i \in \Z$ there exists $N$ such that for all $n>N$, $j<i$ and $w \in \Z$ we have $\pi_{j,w}(s_{n} E/\ell) = 0$.
By assumptions (1) and (2), for this it is enough to prove that there exists $M$ such that for all $i<M$ and $w \in \Z$ we have $\pi_{i,w}(H\Z/\ell) = 0$.
This follows from the last statement of Proposition \ref{prop:HZ-CQL}.
\end{proof}

\begin{cor} \label{cor:CQL-HZ-slices}
Let $S$ be essentially smooth over a Dedekind scheme, $1/\ell \in S$, $\pcd_\ell(S) < \infty$.
Suppose $E \in \SH(S)$ such that
\begin{enumerate}
\item $s_n(E/\ell) = \bigoplus_j \Sigma^{2n-e_j^{(n)},n} H\Z/\ell$, $e_j^{(n)} \ge 0$, and
\item $\lim_{n \to \infty} \inf\{2n'-e_j^{(n')} \mid n' \ge n\} = \infty.$
\end{enumerate}
If $H\Z/\ell$ satisfies $\CQL_s(\ell,S)$ then so does $\lim_n f^n E$.
\end{cor}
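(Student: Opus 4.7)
The plan is to combine Proposition \ref{prop:etale-slice} with Lemma \ref{lemm:CQL-basics}(4) and a Milnor-type argument. First, by Lemma \ref{lemm:CQL-basics}(4) it suffices to establish $\CQL_s(\ell,S)$ for $(\lim_n f^n E)/\ell \wequi \lim_n f^n(E/\ell)$; here I use that $\ph/\ell$ commutes with both limits and slice covers. Since $L_\et$ also commutes with $\ph/\ell$, the conclusion of Proposition \ref{prop:etale-slice} applied to $E$ (all of whose hypotheses are supplied by the corollary, noting that $\pcd_\ell(S) \ge \pvcd_\ell(S)$) reduces mod $\ell$ to the identification
\[
L_\et \lim_n f^n(E/\ell) \wequi \lim_n L_\et f^n(E/\ell).
\]
Writing $F_n := \fib(f^n(E/\ell) \to L_\et f^n(E/\ell))$, the fiber of the $\CQL$-comparison map for $\lim_n f^n(E/\ell)$ is therefore $\lim_n F_n$, and the Milnor $\lim/\lim^1$ sequence reduces the corollary to the uniform vanishing $\pi_{a,b}F_n = 0$ for all $n$ whenever $a - 2b \ge s-1$.

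To control $F_n$ I would first treat the slice layer $G_n := \fib(s_n(E/\ell) \to L_\et s_n(E/\ell))$. Hypothesis (1) expresses $s_n(E/\ell)$ as a direct sum of copies of $H\Z/\ell$ shifted by $\Sigma^{2n - e_j^{(n)}, n}$; each such shift alters the Chow degree by $-e_j^{(n)} \le 0$, so since $H\Z/\ell$ satisfies $\CQL_s(\ell,S)$ by hypothesis, so does each summand. Lemma \ref{lemm:CQL-basics}(2), applicable here under the finite-dimensionality assumptions on $S$, then propagates this to the sum. Applying the exact functor $\fib(\ph \to L_\et(\ph))$ to the fiber sequence $s_n(E/\ell) \to f^n(E/\ell) \to f^{n-1}(E/\ell)$ yields a fiber sequence $G_n \to F_n \to F_{n-1}$; the associated long exact sequence then produces injections $\pi_{a,b}F_n \hookrightarrow \pi_{a,b}F_{n-1}$ in the range $a - 2b \ge s-1$, since $\pi_{a,b}G_n = 0$ there.

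Finally, I would exhibit these injections as embedding into a vanishing sequential colimit. In $\SH(S)$, filtered colimits commute with $L_\et$ (a left adjoint) and with finite limits, so
\[
\colim_k F_{n-k} \wequi \fib\bigl(\colim_k f^{n-k}(E/\ell) \to L_\et \colim_k f^{n-k}(E/\ell)\bigr).
\]
Exhaustiveness of the slice filtration from below, i.e.\ $\colim_m f_m X \wequi X$ for $X \in \SH(S)$, gives $\colim_m f^m(E/\ell) \wequi 0$ and hence $\colim_k F_{n-k} \wequi 0$. The injections of the previous paragraph then embed $\pi_{a,b}F_n$ into zero, yielding the required uniform vanishing, after which the Milnor sequence concludes. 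The main technical subtlety will be the slice-filtration exhaustiveness from below, but in this setting it can alternatively be deduced directly from hypothesis (1) and the weight-vanishing $\pi_{i,j}H\Z/\ell = 0$ for $j > 0$: that forces $\pi_{a,b}s_p(E/\ell) = 0$ whenever $p < b$, whence by a descending slice-tower induction $\pi_{a,b}f^m(E/\ell) = 0$ once $m < b$, and so $\colim_m f^m(E/\ell)$ indeed vanishes.
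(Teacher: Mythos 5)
Your argument is in essence the paper's, reorganized: reduce mod $\ell$ and, via Proposition \ref{prop:etale-slice} and the Milnor sequence, to showing each $F_n$ has homotopy concentrated in Chow degrees $< s-1$; obtain $\CQL_s$ for each slice from hypothesis (1), the observation that $\Sigma^{2n-e,n}$ with $e \ge 0$ does not increase Chow degrees, and closure under sums (Lemma \ref{lemm:CQL-basics}(2)); then conclude by an exhaustion argument along the slice filtration. The paper exhausts $f^nE$ from below by $E_n^m = f_m f^n E$ and inducts along the resulting finite extensions, whereas you run down the cocover tower $F_n \to F_{n-1} \to \cdots$, using vanishing for the slice fibers $G_m$ to get injections on homotopy in Chow degrees $\ge s-1$ and then embedding into $\colim_k F_{n-k}$, which vanishes by exhaustiveness of the slice filtration. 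Both versions rest on the same two nonformal inputs: CQL for the slices, and commutation of the étale-localized mapping spectra with a filtered colimit.

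The second input is misjustified in your write-up. You assert $\colim_k F_{n-k} \wequi \fib(\colim_k f^{n-k}(E/\ell) \to L_\et \colim_k f^{n-k}(E/\ell))$ because ``filtered colimits commute with $L_\et$ (a left adjoint)''. As an endofunctor of $\SH(S)$, $L_\et$ is the localization followed by the inclusion of étale-local objects, and that inclusion is a right adjoint which does not preserve colimits; étale localization genuinely fails to commute with filtered colimits in general, and controlling this is exactly what the finiteness hypotheses are for. What you actually need (and all you need) is that $\map(\Sigma^{a,b}\1, L_\et(\ph)/\ell)$ preserves this sequential colimit, which holds because $\1/\ell$ is compact in $\SH_\et(S)_\ell^\comp$ under the standing hypotheses ($S$ noetherian of finite dimension with $\pcd_\ell(S)<\infty$, hence $\ell$-étale finite); this is the argument in the proof of Lemma \ref{lemm:CQL-basics}(2), and it is the step the paper's proof invokes explicitly. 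With that substitution your proof goes through. A smaller point: your closing ``alternative'' derivation of $\colim_m f^m(E/\ell) \wequi 0$ from hypothesis (1) is incomplete — the descending induction on $\pi_{a,b}f^m(E/\ell)$ has no base case and implicitly needs convergence of the slice filtration on $f^m(E/\ell)$, i.e. the very exhaustiveness you are trying to re-derive; just quote the standard fact $\colim_m f_m X \wequi X$, which the paper also uses (in the form $f^nE \wequi \colim_m f_m f^n E$).
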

\begin{proof}
Note that our assumptions are stronger than what is asked in Proposition \ref{prop:etale-slice}.

By Lemma \ref{lemm:CQL-basics}(4), Remark \ref{rmk:extensions} and Proposition \ref{prop:etale-slice}, it suffices to prove that \[ \lim_n \fib(f^n E/\ell \to L_\et f^n E /\ell) =: \lim_n F_n \] has homotopy groups concentrated in Chow degrees $<s-1$.
It is enough to prove this for $F_n$.
For $m \le n$ let $E_n^m = f_m f^n E$, so that $f^n E \wequi \colim_m E_n^m$.\NB{ref?}
As in the proof of Lemma \ref{lemm:CQL-basics}(2), the functor $\map(\Sigma^{p,q} \1, L_\et(\ph)/\ell)$ preserves colimits\NB{make this separate statement?}.
It follows that $F_n \wequi \colim_m F_n^m$, where $F_n^m = \fib(E_n^m/\ell \to L_\et E_n^m/\ell)$.
Thus it suffices to show that $F_n^m$ has homotopy groups concentrated in Chow degrees $<s-1$.
We prove this by induction on $m$, the case $n=m$ being trivial since $F_m^m=0$.
The cofiber sequence $E_n^m \to E_n^{m-1} \to s_{m-1}E$ shows that it suffices to prove that $s_{m-1}E/\ell$ satisfies $\CQL_s(\ell,S)$.
By assumption, $H\Z/\ell$ satisfies $\CQL_s(\ell,S)$, and hence so does $\Sigma^{2n-e,n}H\Z/\ell$ for any $e \ge 0, n \in \Z$.
By Lemma \ref{lemm:CQL-basics}(2), so does any sum thereof, and hence so does $s_{m-1}E/\ell$ by assumption (1).
\end{proof}

\begin{exm} \label{ex:CQL-sc-KGL}
Proposition \ref{prop:etale-slice} and Corollary \ref{cor:CQL-HZ-slices} apply whenever $s_n E$ is a sum of copies of $\Sigma^{2n,n} H\Z$ ($\ell$-adically), so for example to $E = \MGL$ or $E = \KGL$ (by Remark \ref{rmk:MGL-slices}).
They also apply to $E=\1$, by \cite[Theorem 2.12]{rondigs2016first} (and \cite[Theorem 5.1.23(a)]{GreenBook}).\NB{The last two references together show that $s_n \1$ is a sum of $\Sigma^{a,n}H\Z/?$, where $a$ ranges between $n$ and $2n$}
\end{exm}

\begin{exm} \label{exm:CQL-HZ-MGL}
In the situation of Corollary \ref{cor:CQL-HZ-slices}, we find that $\MGL$ satisfies $\CQL_s(\ell,S)$ as soon as $H\Z/\ell$ does.
This follows from Example \ref{ex:CQL-sc-KGL} and Lemma \ref{lem:MGL-sc} (slice completeness of $\MGL$).
\end{exm}

\subsection{Extensions for the prime $2$} \label{subsec:prime-2}
Throughout this subsection we assume that $1/2 \in S$.
Recall the $b$-topology and localization from \S\ref{subsec:b-top}.
We can use it to define the $b$-Chow--Quillen--Lichtenbaum property, by just replacing references to the étale topology in the definition of the Chow--Quillen--Lichtenbaum property by the $b$-topology.
In this subsection we will do two things.
Firstly we will show that for some spectra (those whose $C_2$-equivariant realizations are $2$-adically Borel), the $b$-Chow--Quillen--Lichtenbaum property is equivalent to the usual Chow--Quillen--Lichtenbaum property.
Secondly we will show that the $b$-Chow--Quillen--Lichtenbaum property holds for the same spectra as previously ($H\Z$, $\MGL$, etc.), replacing assumptions on $\cd_2$ by assumptions on $\vcd_2$.
Combining these two facts we learn that some spectra (e.g. $\KGL$ and $\MGL$) satisfy the Chow--Quillen--Lichtenbaum property as soon as a bound $\vcd_2$ (instead of $\cd_2$) holds.

\subsubsection{Comparison and $L_b$ and $L_\et$}
If $x$ is the spectrum of a real closed field, there is a canonical equivalence $\SH_b(x)_2^\comp \wequi \GenSp(BC_2)_2^\comp$ \cite[Theorem 4.20]{elmanto2019scheiderer}.
We write \[ r_{C_2}: \SH(x) \to \SH_b(x)_2^\comp \wequi \GenSp(BC_2)_2^\comp \] for the composite of $b$-localization and the above equivalence.
\begin{lem}
Let $1/2 \in S$, $\dim S < \infty$ and $E \in \ShvSp(S)$.
Then $L_b(E)_2^\comp \wequi L_\et(E)_2^\comp$ if and only if, for every morphism $f_x: x \to S$ where $x$ is the spectrum of a real closed field, the genuine $C_2$-sepectrum \[ r_{C_2}(f_x^* E)_2^\comp \in \GenSp(BC_2) \] is Borel.
\end{lem}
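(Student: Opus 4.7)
The plan is to reduce the claim to a pointwise statement over real closed points via the recollement structure for $L_b$, then identify the pointwise condition using the equivalence between motivic and genuine $C_2$-spectra.

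Starting from Lemma \ref{lemm:b-pullback-square}, which remains a pullback square after the smashing 2-completion, I obtain that $L_b(E)_2^\comp \to L_\et(E)_2^\comp$ is an equivalence iff $L_b(E)_2^\comp$ is étale-local in $\SH_b(S)_2^\comp$. Since both $L_b$ and $L_\et$ commute with pullback along any scheme morphism, this already implies the Borel condition at every $f_x: x \to S$ (direction $\Rightarrow$): indeed, pulling back yields $L_b(f_x^* E)_2^\comp \wequi L_\et(f_x^* E)_2^\comp$, which we will see is equivalent to $r_{C_2}(f_x^* E)_2^\comp$ being Borel. For the converse, I would verify étale-locality of $L_b(E)_2^\comp$ on stalks of the $b$-topos, which by $\dim S < \infty$ comprise the étale stalks at separably closed residue fields (where the $b$- and étale topologies agree, so the claim is automatic) and the real étale stalks at real closed residue fields (which are particular instances of the $f_x: x \to S$ in the hypothesis).

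It then remains to justify, for $x$ the spectrum of a real closed field, that $L_b(f_x^* E)_2^\comp \in \SH_b(x)_2^\comp$ is étale-local iff $r_{C_2}(f_x^* E)_2^\comp \in \GenSp(BC_2)_2^\comp$ is Borel. Here I would use the defining equivalence $\SH_b(x)_2^\comp \wequi \GenSp(BC_2)_2^\comp$ of \cite[Theorem 4.20]{elmanto2019scheiderer} together with $\SH_\et(x)_2^\comp \wequi \ShvSp^{BC_2}_2^\comp$ (Remark \ref{rmk:SHet-BG-finite}, since the absolute Galois group of $x$ is $C_2$). Under these, the Bousfield localization $L_\et$ together with its inclusion ought to be identified with the underlying Borel spectrum together with its Borel completion $F(EC_{2+},\ph)$; both adjunctions are the ``$\SH_\et$ versus Borel'' side of a common recollement whose complement is $\SH_\ret(x)_2^\comp \wequi \ShvSp_2^\comp$ (real étale $\wequi$ geometric $C_2$-fixed points). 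Granted this identification, étale-locality of $L_b(f_x^* E)_2^\comp$ translates precisely to the Borel condition on $r_{C_2}(f_x^* E)_2^\comp$.

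The main obstacle is this recollement matching: verifying that, under the Elmanto--Khan equivalence, $L_\et$ corresponds to the Borel completion functor and hence has the Borel subcategory as essential image. This should follow from the universal property of a recollement once the two complementary strata and the gluing data are matched, but it requires reconciling the two descriptions of the étale-local inclusion — one via étale hypercover sheafification, one via $F(EC_{2+},\ph)$; the comparison should come from tracing how $\rho$ (equivalently, the sign representation of $C_2$) acts on both sides. A minor secondary point is checking that $L_b$ and $L_\et$ really commute with pullback along morphisms from real closed point spectra, not only smooth ones, which is standard for Bousfield localizations arising from a morphism of sites.
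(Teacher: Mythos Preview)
Your approach is essentially the paper's: reduce to showing that $L_b(E)_2^\comp$ satisfies \'etale descent (the paper derives this directly from $L_\et L_b \simeq L_\et$, since the $b$-topology refines the \'etale one, rather than from the pullback square), and then defer to \cite[proof of Theorem~6.10]{elmanto2019scheiderer} for exactly the pointwise reduction to real closed fields and the Borel identification that you sketch. The recollement-matching you flag as the main obstacle is part of that reference's setup: the equivalence $\SH_b(x)_2^\comp \simeq \GenSp(BC_2)_2^\comp$ of \cite[Theorem~4.20]{elmanto2019scheiderer} is constructed compatibly with the two recollements, so \'etale-local objects on the motivic side correspond precisely to Borel-complete $C_2$-spectra.
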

\begin{proof}
Since the $b$-topology is finer than the étale topology, we have $L_\et E \wequi L_\et L_b E$.
It is thus sufficient (and necessary) to prove that $L_b(E)_2^\comp$ satisfies étale descent.
The result now follows by the exact same argument is in the proof of \cite[Theorem 6.10]{elmanto2019scheiderer}.
\end{proof}

\begin{exm} \label{ex:MUR-borel}
This result applies when $E = \MGL$, $E=\KGL$ or $E=\1$, in which case $r_{C_2} = \MU_\RR$, $\KU_\RR$ or $\1$, and the Borel property holds by \cite[Corollary 7.5]{MR2240234} \cite[Theorem 4.1]{hu-kriz-MUR} \cite{lins-theorem}\NB{more explicit reference for last?}.
\end{exm}

\subsubsection{$b$-motivic cohomology}
Recall from Lemma \ref{lemm:b-pullback-square} that there is a pullback square
\begin{equation} \label{eq:b-pullback-HZ2}
\begin{CD}
L_b H\Z/2 @>>> L_\et H\Z/2 \\
@VVV        @VVV \\
H\Z/2[\rho^{-1}] @>>> L_\et(H\Z/2)[\rho^{-1}].
\end{CD}
\end{equation}

\begin{lem} \label{lem:b-mot-first}
Let $S$ be essentially smooth over a Dedekind scheme.
\begin{enumerate}
\item We have \[ H^{**}_\et(S, \Z/2)[\rho^{-1}] \wequi H^*_\ret(S, \Z/2)[\rho^{\pm}, \tau^{\pm}] \] and \[ H^{**}(S, \Z/2)[\rho^{-1}] \wequi H^*_\ret(S, \Z/2)[\rho^{\pm}, \tau]. \]
\item If $H^*_\ret(S, \Z/2) = 0$ for $*>0$, then $\pi_{p,q} L_b H\Z/2 \wequi \pi_{p,q} L_\et H\Z/2$ for $p \ge q$.
\item If $H^*_\et(S[\sqrt{-1}], \Z/2) = 0$ for $*>d$, then $\pi_{p,q} L_b H\Z/2 \wequi \pi_{p,q} H\Z/2[\rho^{-1}]$ for $p <-d$.
\end{enumerate}
\end{lem}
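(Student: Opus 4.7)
The plan is to deduce all three statements from the pullback square \eqref{eq:b-pullback-HZ2}, combined with the description of $\rho$-inverted motivic (and étale) cohomology in terms of real étale cohomology.

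\textbf{Step (1): the $\rho$-inverted cohomology computations.}
I would invoke the equivalence $\SH(S)[\rho^{-1}] \wequi \SH_\ret(S)$ of \cite{real-and-etale-cohomology,elmanto2019scheiderer}, together with the fact that the image of $H\Z/2$ represents real étale cohomology with $\Z/2$-coefficients. Since $\Gm \wequi \1$ in $\SH_\ret(S)$ (as $-1$ is a square real étale locally), the bigrading collapses and gives a copy of $H^n_\ret(S, \Z/2)$ in each bidegree, with the formal variables $\rho \in \pi_{-1,-1}$ and $\tau \in \pi_{0,-1}$ encoding the bidegree translations. Inverting $\rho$ alone keeps $\tau$ non-invertible, yielding $H^*_\ret[\rho^\pm, \tau]$; after further étale localization the comparison $\mu_2^{\otimes n} \wequi \Z/2$ makes $\tau$ a unit, yielding the first displayed formula. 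A more concrete argument can be given via the $(\rho, \tau)$-adic Bott inversion framework of \cite[\S4]{bachmann-bott}.

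\textbf{Step (2): Chow degree $p \ge q$ assuming $H^{>0}_\ret(S,\Z/2) = 0$.}
By Lemma \ref{lemm:b-pullback-square}, the map $\pi_{p,q}L_b H\Z/2 \to \pi_{p,q}L_\et H\Z/2$ is an iso iff the fiber $F := \fib\bigl(H\Z/2[\rho^{-1}] \to L_\et(H\Z/2)[\rho^{-1}]\bigr)$ has vanishing $\pi_{p,q}$ and $\pi_{p-1,q}$. By (1), a monomial $h \cdot \rho^b \tau^a$ with $h \in H^n_\ret$ lives in cohomological bidegree $(n+b, a+b)$, i.e.\ motivic homotopy bidegree $(-n-b,-a-b)$. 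Under our hypothesis only $n = 0$ contributes, forcing $a = p - q$ and $b = -p$. For $p \ge q$ we have $a \ge 0$, so this class already exists in $[\rho^{-1}]$ before inverting $\tau$; hence the map $[\rho^{-1}] \to L_\et[\rho^{-1}]$ is an iso on $\pi_{p,q}$ and on $\pi_{p-1,q}$ (the latter since $p-1 < p$, and the case $p-1 < q$ is handled by the long exact sequence, where the cokernel vanishes because $\pi_{p,q}[\rho^{-1}]$ already surjects onto $\pi_{p,q}L_\et[\rho^{-1}]$). This shows $F$ vanishes in the relevant bidegrees.

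\textbf{Step (3): Chow degree $p < -d$ assuming $H^{>d}_\et(S[\sqrt{-1}],\Z/2) = 0$.}
Now I use the pullback square from the opposite direction: the map $\pi_{p,q}L_b H\Z/2 \to \pi_{p,q}H\Z/2[\rho^{-1}]$ is an iso iff the fiber $G := \fib\bigl(L_\et H\Z/2 \to L_\et(H\Z/2)[\rho^{-1}]\bigr)$ has vanishing $\pi_{p,q}$ and $\pi_{p-1,q}$. The hypothesis together with the Galois spectral sequence for the degree-two cover $S[\sqrt{-1}] \to S$ yields that the canonical map $H^n_\et(S, \mu_2^{\otimes q}) \to H^n_\ret(S, \Z/2)$ is an iso for $n > d$ (the real étale contribution carries the entire cohomology in this range). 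Combined with (1), this implies that $\pi_{p,q}L_\et H\Z/2$ is already $\rho$-periodic for $-p > d$, i.e.\ the map to $\pi_{p,q}L_\et[\rho^{-1}]$ is an iso. Arguing similarly on $\pi_{p-1,q}$, the fiber $G$ vanishes and the claim follows.

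The main obstacle is the careful bookkeeping of bidegrees against the two different completions ($\rho$-inversion versus étale localization) appearing in the pullback square; once (1) is in hand, (2) and (3) are largely dual instances of the same fiber-sequence manipulation, each exploiting one half of the square.
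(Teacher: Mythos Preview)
Your approach matches the paper's: all three parts run through the pullback square \eqref{eq:b-pullback-HZ2} together with the identification of $\rho$-inverted cohomology with real \'etale cohomology, and parts (2) and (3) are the two symmetric fiber-sequence arguments you describe. Two places where your exposition is looser than the paper's deserve attention.

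For (1), appealing to $\SH(S)[\rho^{-1}] \wequi \SH_\ret(S)$ only relocates the problem: you must still identify the image of $H\Z/2$ in $\SH_\ret(S)$, and this is not automatic (over a real closed field the image has $\pi_n = \Z/2$ for all $n \ge 0$, not only $n=0$, so it is not simply the constant sheaf $H\Z/2$). The paper sidesteps this by arguing directly on cohomology: for the \'etale formula it rewrites $[\rho^{-1}]$ as $[\tau^{\pm 1},(-1)^{-1}]$ with $(-1)=\tau^{-1}\rho\in H^1_\et(S,\Z/2)$ and then quotes Scheiderer's result \cite[Corollary 7.19]{real-and-etale-cohomology}; for the motivic formula it reduces Nisnevich-locally to henselian local $S$, where motivic cohomology is a truncation of \'etale cohomology, and reads the answer off by inspection.

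For (3), proving that the right vertical map is an isomorphism on $\pi_{p,q}$ for $p<-d$ does not by itself kill the fiber $G$ at the boundary: to get $\pi_{p,q}G=0$ when $p=-d-1$ you also need \emph{surjectivity} on $\pi_{p+1,q}$, i.e.\ at $p+1=-d$. The paper records this explicitly, extracting from the Gysin sequence for $S[\sqrt{-1}]/S$ that cup product with $(-1)$ on $H^*_\et(S,\Z/2)$ is surjective at $*=d$ and an isomorphism for $*>d$. Your Galois spectral sequence argument gives the same conclusion, but the boundary step should be stated.
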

\begin{proof}
(1) $H^{**}_\et(S, \Z/2)[\rho^{-1}] \wequi H^*_\et(S, \Z/2)[\tau^{\pm 1}, (-1)^{-1}]$, where $(-1) = \tau^{-1} \rho \in H^1_\et(S, \Z/2)$.
The first claim now follows from \cite[Corollary 7.19]{real-and-etale-cohomology}.
For the second claim, since both sides (upgrade to statements about sheaves of spectra which) satisfy Nisnevich descent, we may assume that $S$ is henselian local.
Then $H^{**}(S, \Z/2)$ is a truncation of $H^{**}_\et(S, \Z/2)$, and the claim follows by inspection.

(2) By the long exact sequence associated with \eqref{eq:b-pullback-HZ2}, it suffices to prove that for $p \ge q$ we have $\pi_{p,q} H\Z/2[\rho^{-1}] \wequi \pi_{p,q} L_\et(H\Z/2)[\rho^{-1}]$.
This follows from (1).

(3) Again using the long exact sequence, it suffices to prove that $\pi_{p,q} L_\et H\Z/2 \to \pi_{p,q} L_\et H\Z/2[\rho^{-1}]$ is an isomorphism for $p < -d$ and surjective for $p = -d$.
This will hold if $(-1): H^*_\et(S, \Z/2) \to H^{*+1}_\et(S, \Z/2)$ is surjective for $*=d$ and an isomorphism for $*>d$.
This follows from the Gysin sequence \[ \dots \to H^*_\et(S, \Z/2) \to H^*_\et(S[\sqrt{-1}], \Z/2) \to H^*_\et(S, \Z/2) \xrightarrow{(-1)} H^{*+1}_\et(S, \Z/2) \to \cdots . \]
(The Gysin sequence can be seen as arising from the Hochschild--Serre spectral sequence; see e.g. \cite[Proof of Lemma 7.2]{bachmann-bott}.)
\end{proof}

\begin{prop} \label{prop:HZ-bCQL-pre}
Let $S$ be essentially smooth over a Dedekind scheme and $1/2 \in S$.
Consider the fiber sequence \[ F \to H\Z/2 \to L_b H\Z/2. \]
Then $\pi_{p,q}F = 0$ in the following cases:
\begin{enumerate}
\item $S$ henselian local with residue field $k$ and $q \le -\vcd_2(k)$
\item $q \le -\pvcd_2(S)$
\item $S$ henselian local with residue field $k$ and $c(p,q) \ge \vcd_2(k) - 2$
\item $c(p,q) \ge \pvcd_2(S) - 2$
\end{enumerate}
\end{prop}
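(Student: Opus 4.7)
The plan is to analyze $F$ via the pullback square of Lemma \ref{lemm:b-pullback-square}. Taking fibers of the horizontal maps and using that $\rho$-inversion is smashing (hence commutes with $\fib$), one identifies $\fib(L_b H\Z/2 \to L_\et H\Z/2)$ with $F_\et[\rho^{-1}]$, where $F_\et := \fib(H\Z/2 \to L_\et H\Z/2)$. An octahedron applied to the factorization $H\Z/2 \to L_b H\Z/2 \to L_\et H\Z/2$ then yields a natural fiber sequence
\begin{equation*}
F \longrightarrow F_\et \longrightarrow F_\et[\rho^{-1}],
\end{equation*}
whose long exact sequence in $\pi_{**}$ will drive every case.

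Cases (2) and (4) reduce to (1) and (3) by Nisnevich descent (Remark \ref{rmk:CQL-local}): the homotopy sheaves of $F$ may be checked on henselian stalks, whose residue fields have virtual $2$-cohomological dimension at most $\pvcd_2(S)$. For case (3), I would apply Proposition \ref{prop:HZ-CQL}(1) to $k(\sqrt{-1})$ (which has $\cd_2 = \vcd_2(k)$), obtaining $\pi_{p,q}F_\et(k(\sqrt{-1})) = 0$ in Chow degrees $\ge \vcd_2(k) - 2$. The Gysin sequence
\begin{equation*}
\cdots \to H^n_\et(k, \Z/2) \xrightarrow{(-1)\cdot} H^{n+1}_\et(k, \Z/2) \to H^{n+1}_\et(k(\sqrt{-1}), \Z/2) \to \cdots
\end{equation*}
together with Lemma \ref{lem:b-mot-first}(1) transfers this vanishing to $F_\et$ over $k$ in the same Chow-degree range; the parallel argument for $F_\et[\rho^{-1}]$ (whose homotopy groups are governed by $H^*_\ret(k,\Z/2)[\rho^{\pm},\tau^{\pm}]$ versus $H^*_\ret(k,\Z/2)[\rho^{\pm},\tau]$) is analogous, and the fiber sequence above then yields $\pi_{p,q}F = 0$ in this range.

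Case (1) is the heart of the proof. I would show that for $q \le -\vcd_2(k)$ the map $F_\et \to F_\et[\rho^{-1}]$ is an isomorphism on $\pi_{p,q}$ for every $p$; by the long exact sequence of the fiber sequence above, this immediately gives $\pi_{p,q}F = 0$. Equivalently, one must show that the $\rho$-torsion of $F_\et$ and its $\rho$-completion defect both vanish in this $q$-range. Unwinding, the obstruction is controlled by the cokernel and kernel of $\rho^n$-multiplication on $H^*_\et(k,\Z/2)$, which by the Gysin sequence is exactly the part of $H^*_\et(k,\Z/2)$ ``invisible'' to $H^*_\et(k(\sqrt{-1}),\Z/2)$. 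For $q \le -\vcd_2(k)$, tracing $\tau \in \pi_{0,-1}$ through the identification $L_\et H\Z/2 \wequi H\Z/2[\tau^{-1}]$ forces the relevant cohomological degree of $k(\sqrt{-1})$ to exceed $\vcd_2(k) = \cd_2(k(\sqrt{-1}))$, giving the desired vanishing.

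The main obstacle is the bidegree bookkeeping in case (1): one must verify that the ``$\rho$-torsion annihilation'' range that the Gysin sequence produces really corresponds to $q \le -\vcd_2(k)$ (rather than to a condition on $p$ or on Chow degree), which involves carefully chasing $\tau \in \pi_{0,-1}(\1)$ and $\rho \in \pi_{-1,-1}(\1)$ through the identifications of Lemma \ref{lem:b-mot-first}. Cases (3) and (4) and the reductions of (2) and (4) should be essentially formal once this framework is in place.
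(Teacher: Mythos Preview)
Your fiber sequence $F \to F_\et \to F_\et[\rho^{-1}]$ is correct and gives a valid framework; your argument for case (1) goes through, and the reductions of (2) and (4) to (1) and (3) are the same as the paper's.

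However, your case (3) has a genuine gap. You claim that the Gysin sequence ``transfers the vanishing of $\pi_{p,q} F_\et$ from $k(\sqrt{-1})$ to $k$ in the same Chow-degree range''. This is false: take $k=\RR$ (so $\vcd_2(k)=1$) and $(p,q)=(-7,-5)$, which has $c(p,q)=3 \ge \vcd_2(k)-2=-1$. Then $\pi_{-7,-5} F_\et(\RR) = H^6_\et(\RR,\Z/2)=\Z/2 \ne 0$. The Gysin sequence only tells you that multiplication by $(-1)$ (equivalently $\rho$) is an isomorphism on $\pi_{**} F_\et(k)$ in the relevant cohomological range, not that the groups vanish. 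Thus neither $\pi_{p,q} F_\et$ nor $\pi_{p,q} F_\et[\rho^{-1}]$ vanishes individually in this range; what you actually need from your fiber sequence is that the \emph{map} between them is an isomorphism.

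The fix is short and is essentially already contained in your case (1): both $\pi_{p,q} F_\et$ and $\pi_{p,q} F_\et[\rho^{-1}]$ vanish for $p \ge q-1$, so it remains to treat $p \le q-2$. But $p \le q-2$ together with $c(p,q)=p-2q \ge \vcd_2(k)-2$ gives $q-2 \ge p \ge 2q+\vcd_2(k)-2$, whence $q \le -\vcd_2(k)$, which is exactly the hypothesis of case (1). So case (3) follows from case (1) once you know $\pi_{p,q}F=0$ for $p \ge q-1$.

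The paper takes a more direct route: rather than introducing $F_\et$, it computes $\pi_{p,q} L_b H\Z/2$ straight from the pullback square and Lemma \ref{lem:b-mot-first}, and compares with $\pi_{p,q} H\Z/2$. For (3) it first observes that $H\Z/2 \to L_b H\Z/2$ is always a monomorphism on $\pi_{**}$ (since it factors the map to $L_\et H\Z/2$), then performs the same bidegree manipulation as above to reduce to Lemma \ref{lem:b-mot-first}(3). Your $F_\et$-based approach and the paper's are equivalent in content; the paper's is slightly shorter because it avoids tracking two auxiliary objects.
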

\begin{proof}
(1) Since $\cd_\ret(S) = 0$ \cite[Proposition 19.2.1]{real-and-etale-cohomology}, Lemma \ref{lem:b-mot-first} implies that $\pi_{p,q} L_b H\Z/2 \wequi \pi_{p,q} L_\et H\Z/2$ for $p \ge q$, and $\pi_{p,q} L_b H\Z/2 \wequi 0$ for $p < \min\{q,-d\}$ where $d=-\vcd_2(k)$.
Thus if $q \le -d$ then the vanishing holds for $p<q$, and the result follows.

(2) Using the descent spectral sequence, it suffices to prove this for $S$ replaced by one of its henselizations.
Now we are in situation (1).

(3) Since $L_\et L_b H\Z/2 \wequi L_\et H\Z/2$, the map $\pi_{**} H\Z/2 \to \pi_{**} L_b H\Z/2$ is a monomorphism.
We thus need to prove that $\pi_{p,q} H\Z/2 \to \pi_{p,q} L_b H\Z/2$ is an isomorphism in Chow degrees $\ge \vcd_2(k) - 1$.
The map is an isomorphism whenever $p \ge q$ by Lemma \ref{lem:b-mot-first}(2); hence we need to prove that $\pi_{p,q} L_b H\Z/2 = 0$ whenever $p<q$ and $p-2q=c(p,q) \ge \vcd_2(k)-1$.
We can rewrite this is as $0 > p-q \ge q+\vcd_2(k)-1$, which implies that $q < -\vcd_2(k) + 1$ and hence $p < -\vcd_2(k)$.
The desired vanishing thus follows from Lemma \ref{lem:b-mot-first}(3).

(4) We have $\ul{\pi}_{p,q}(F)|_{S_\Nis} = 0$ for $c(p,q) \ge \pvcd_2(S) - 2$ by (3).
The result follows as in Remark \ref{rmk:CQL-local} (i.e. using a descent spectral sequence).
\end{proof}

\subsubsection{Slice filtrations}
\begin{prop} \label{prop:ret-slice}
Let $S$ be essentially smooth over a Dedekind scheme, $1/2 \in S$.
Suppose $E \in \SH(S)$ such that
\begin{enumerate}
\item $s_n(E/2) = \bigoplus_j \Sigma^{e_j^{(n)},n} H\Z/2$, and
\item $\lim_{n \to \infty} \inf\{e_j^{(n')}-n' \mid n' \ge n\} = \infty.$
\end{enumerate}
Then \[ (\lim_n f^n E)[\rho^{-1}]/2 \wequi \lim_n (f^n(E)[\rho^{-1}])/2. \]
If in addition $\pvcd_2(S) < \infty$, then this is also equivalent to \[ (\lim_n L_b(f^n E))[\rho^{-1}]/2. \]
\end{prop}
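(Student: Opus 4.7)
My plan is to apply Lemma \ref{lemm:preserve-limit} to the tower $F^n = f^n E$ in $\scr C = \SH(S)$, with exact functor $G(F) = \map(\Sigma^{p,q}\1, F[\rho^{-1}]/2) : \SH(S) \to \ShvSp$ for each fixed $(p,q) \in \Z^2$. I define the filtration
\[
\scr C_i = \{F \in \SH(S) : \pi_{p', q'}(F/2) = 0 \text{ for all } (p', q') \text{ with } p' - q' < i\}.
\]
This is visibly closed under extensions, and the Milnor exact sequence sends towers in $\scr C_i$ to limits in $\scr C_{i-1}$. Since $\rho \in \pi_{-1,-1}(\1)$ preserves the invariant $p' - q'$, one computes $\pi_k G(F) = \colim_K \pi_{p+k+K, \, q+K}(F/2)$, and every term of this colimit vanishes on $F \in \scr C_i$ once $p + k - q < i$; hence $G(\scr C_i) \subset \ShvSp_{\ge i - (p-q)}$ with $d_i \to \infty$.

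The main computation is that the slice fibers $s_n E$ lie in $\scr C_{e_n}$ with $e_n \to \infty$. By (1), $s_n(E/2) = \bigoplus_j \Sigma^{e_j^{(n)}, n} H\Z/2$, so $\pi_{p', q'}(s_n E/2) = \bigoplus_j \pi_{p' - e_j^{(n)}, \, q' - n}(H\Z/2)$. The motivic cohomology vanishing $\pi_{a,b}(H\Z/2) = H^{-a,-b}(S, \Z/2) = 0$ for $b > a$ or $b > 0$---which holds over essentially smooth schemes over Dedekind schemes by \cite{spitzweck2012commutative}---shows that each summand vanishes once $p' - q' < e_j^{(n)} - n$, so one may take $e_n = \min_j(e_j^{(n)} - n)$, which tends to $\infty$ by assumption (2). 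Lemma \ref{lemm:preserve-limit} thus yields $G(\lim_n F^n) \wequi \lim_n G(F^n)$, and a standard base-change argument over smooth $S'/S$ (as in the proof of Proposition \ref{prop:etale-slice}, using that $f^n$, $[\rho^{-1}]$, $/2$, and $\lim$ all commute with smooth pullback) upgrades this to the first equivalence.

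For the second statement, I apply the same strategy to the tower $L_b(f^n E)$, whose fibers are $L_b(s_n E)$ by exactness of $L_b$. Under $\pvcd_2(S) < \infty$, Proposition \ref{prop:HZ-bCQL-pre}(4) shows that the fiber of $H\Z/2 \to L_b H\Z/2$ vanishes in Chow degrees $\ge \pvcd_2(S) - 2$, while Corollary \ref{cor:Lb-mod-rho} gives $L_b(F)[\rho^{-1}] \wequi F[\rho^{-1}]$, identifying the $\rho$-inverted homotopy of $L_b(s_n E)$ with that of $s_n E$. Combined, these controls place each $L_b(s_n E)$ in a suitably adapted filtration with $e_n' \to \infty$, yielding $(\lim_n L_b f^n E)[\rho^{-1}]/2 \wequi \lim_n (L_b f^n E [\rho^{-1}])/2$; chaining with the first statement and Corollary \ref{cor:Lb-mod-rho} produces the equivalence with $(\lim_n f^n E)[\rho^{-1}]/2$. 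The main technical obstacle is precisely this second step: $b$-localization introduces extra classes in low Chow degree on the fibers, and one must verify that these are annihilated in the colimit defining $(\ph)[\rho^{-1}]$---this is exactly what the combination of Proposition \ref{prop:HZ-bCQL-pre} and Corollary \ref{cor:Lb-mod-rho} provides.
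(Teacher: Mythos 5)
Your first equivalence is essentially the paper's own argument: the same Lemma \ref{lemm:preserve-limit}, the same functor $G$, a (strengthened) version of the same filtration, and the same smooth base change reduction. The one slip there is the vanishing you quote: $H^{s,w}(S,\Z/2)=0$ for $s>w$ is false once $\dim S>0$ (already $H^{2,1}(S,\Z/2)$ contains $\Pic(S)/2$). What is true, and what the paper uses, is connectivity of $H\Z/2$ in the homotopy $t$-structure together with $\dim S<\infty$, giving $\pi_{p',q'}(H\Z/2)=0$ for $p'-q'<-\cd_{\Nis}(S)$; since you only need the membership indices to diverge, this dimension shift is harmless, but the statement should be corrected.

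The second equivalence is where you have a genuine gap. First, your all-weights filtration cannot contain the fibers $L_b(s_nE)$ with indices tending to infinity: unlike $H\Z/2$, the spectrum $L_bH\Z/2$ has nonzero homotopy in weights $q'>0$ at arbitrarily negative $p'-q'$ (e.g.\ if $\sqrt{-1}\in S$ then $L_b\wequi L_\et$ and $\pi_{0,q'}(L_\et H\Z/2)=H^0_\et(S,\Z/2)\ne 0$ for every $q'$). So the filtration must be restricted to nonpositive weights, which is all that $G=\map(\1,(\ph)[\rho^{-1}]/2)$ probes anyway and is exactly the paper's choice of $\scr C_i$. Second, and more seriously, the two inputs you cite do not supply the needed verification. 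Corollary \ref{cor:Lb-mod-rho} only identifies the terms $G(L_bf^nE)\wequi G(f^nE)$ of the tower; the whole issue is $G$ applied to the limit, and ``$G$-connectivity of the fibers'' cannot be fed into Lemma \ref{lemm:preserve-limit} directly because condition (2) there forces the $\scr C_i$ to be defined by vanishing of honest bigraded homotopy groups. As for Proposition \ref{prop:HZ-bCQL-pre}(4), it controls $\fib(H\Z/2\to L_bH\Z/2)$ only in Chow degrees $\ge \pvcd_2(S)-2$, while the bidegrees you must control are $(m-a-e_j^{(n)},\,-a-n)$ with $m<N$, $a\ge 0$: their Chow degree is $m+a+2n-e_j^{(n)}$, which (already for $E=\MGL$, where $e_j^{(n)}=2n$) equals $m+a$ and is arbitrarily negative, so (4) is silent exactly where the content lies. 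What does close the argument is a statement about $L_bH\Z/2$ itself in very negative stems, uniformly in nonpositive weights: the paper uses Lemma \ref{lem:b-mot-first}(2,3), namely that under $\pvcd_2(S)<\infty$ one has $\pi_{p,q}L_bH\Z/2\wequi\pi_{p,q}H\Z/2[\rho^{-1}]$ for $p$ sufficiently small, and the latter vanishes for $p-q\ll 0$; alternatively, since the weights probed are $\le -n\to-\infty$, Proposition \ref{prop:HZ-bCQL-pre}(2) (vanishing of the fiber in weights $q\le-\pvcd_2(S)$) would also suffice. The Chow-degree statement you chose does not.
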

\begin{proof}
As all our assumptions and operations being compatible with smooth base change, it suffices to prove that we get isomorphisms on $\pi_{**}$.
Our assumptions being stable under replacing $E$ by $\Sigma^{p,q} E$, we need only establish an isomorphism on $\pi_0$.
We apply Lemma \ref{lemm:preserve-limit} with $\scr C = \SH(S)$ and $G(F) = \map(\1, F[\rho^{-1}]/2)$.
Let $\scr C_i$ be the category of those spectra $F$ such that $\pi_{n-a,-a}(F/2) = 0$ for $a \ge 0$, $n < i$.
Conditions (1) and (2) are clear, and (3) holds because $\rho \in \pi_{-1,-1} \1$.

For the first equivalence we take $F^i = f^i E$.
We thus need to prove that given $N$, $s_n E \in \scr C_N$ for $n$ sufficiently large.
In other words \[ 0 = \pi_{m-a,-a} \Sigma^{e_j^{(n)},n} H\Z/2 = \pi_{m + (n-e_j^{(n)})-a-n,-a-n} H\Z/2 \] for $m<N,a \ge 0$ and $n$ sufficiently large.
Since $n-e_j^{(n)}$ becomes arbitrarily small by assumption and $\dim S < \infty$, this follows from the fact that $\ul{\pi}_{p+*,*} H\Z/2 = 0$ for $p<0$ and $* \in \Z$.

For the second equivalence we take $F^i = L_b f^i E$.
Since the real étale topology is finer than the $b$-topology, $L_b(F)[\rho^{-1}] \wequi F[\rho^{-1}]$.
We thus need to prove that given $N$, $L_b s_n E \in \scr C_N$ for $n$ sufficiently large.
In other words \[ 0 = \pi_{m-a,-a} \Sigma^{e_j^{(n)},n} L_b H\Z/2 = \pi_{m + (n-e_j^{(n)})-a-n,-a-n} L_b H\Z/2 \] for $m<N,a \ge 0$ and $n$ sufficiently large.
Since $\vcd_2(S) < \infty$, we get $\pi_{p,*} L_b H\Z/2 \wequi \pi_{p,*} H\Z/2[\rho^{-1}]$ for $p$ sufficiently small and $* \in \Z$ arbitrary, by Lemma \ref{lem:b-mot-first}(2).
Since $m + (n-e_j^{(n)})-a-n$ becomes arbitrarily small, we may replace $L_b H\Z/2$ by $H\Z/2[\rho^{-1}]$, and the desired vanishing follows as in the previous case.
\end{proof}

\begin{prop} \label{prop:b-slice}
Let $S$ be essentially smooth over a Dedekind scheme, $1/2 \in S$, $\pvcd_2(S) < \infty$.
Suppose $E \in \SH(S)$ such that
\begin{enumerate}
\item $s_n(E/2) = \bigoplus_j \Sigma^{e_j^{(n)},n} H\Z/2$, and
\item $\lim_{n \to \infty} \inf\{e_j^{(n')}-n' \mid n' \ge n\} = \infty.$
\end{enumerate}
Then \[ L_b(\lim_n f^n E)_2^\comp \wequi \lim_n L_b(f^n E)_2^\comp. \]
\end{prop}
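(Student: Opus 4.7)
The plan is to reduce the claim to the two existing slice-convergence results, Propositions \ref{prop:etale-slice} and \ref{prop:ret-slice}, by exploiting the pullback decomposition of $L_b$ supplied by Lemma \ref{lemm:b-pullback-square}. Applying that pullback square at $F = \lim_n f^n E$ and at each $F = f^n E$, and taking the limit over $n$ in the latter, produces two pullback squares in $\SH(S)$ connected by a natural comparison map. Since finite limits commute with cofiltered limits and since $(-)_2^\comp$ preserves limits (as a right adjoint), 2-completing both squares reduces the statement to proving that the following three comparison maps are equivalences after 2-completion:
\begin{enumerate}
\item[(i)] $L_\et(\lim_n f^n E) \to \lim_n L_\et(f^n E)$;
\item[(ii)] $L_\et(\lim_n f^n E)[\rho^{-1}] \to \lim_n L_\et(f^n E)[\rho^{-1}]$;
\item[(iii)] $(\lim_n f^n E)[\rho^{-1}] \to \lim_n (f^n E)[\rho^{-1}]$.
\end{enumerate}

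Statement (i) is exactly Proposition \ref{prop:etale-slice} at $\ell=2$. Statement (iii) follows from the first equivalence in Proposition \ref{prop:ret-slice}: the mod-$2$ equivalence stated there implies the 2-completed equivalence, because a map of spectra inducing an equivalence on $(-)/2$ induces an equivalence on $(-)_2^\comp$.

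The main obstacle is statement (ii), since inverting $\rho$ does not commute with either cofiltered limits or $2$-completion in general. My plan is to adapt the proof of Proposition \ref{prop:ret-slice} directly, applying Lemma \ref{lemm:preserve-limit} with $\scr C = \SH(S)$, $F^n = f^n E$, the same chain of subcategories $\scr C_i$ (spectra $F$ with $\pi_{n-a,-a}(F/2) = 0$ for all $a \ge 0$ and $n<i$), and the exact functor $G(F) = \map(\Sigma^{a,b}\1, L_\et(F)[\rho^{-1}]/2)$ for each fixed bidegree $(a,b)$; if $G$ preserves the limit $\lim_n f^n E$ for every such $(a,b)$, then the mod-$2$ version of (ii) is an equivalence, hence so is the 2-completed version. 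Conditions (1), (2) of Lemma \ref{lemm:preserve-limit} for this $\scr C_i$ are unchanged from the proof of Proposition \ref{prop:ret-slice}, so the real content is the connectivity hypothesis (3).

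The hard step is therefore the connectivity check. By Lemma \ref{lem:b-mot-first}(1) we have $\pi_{**} L_\et(H\Z/2)[\rho^{-1}] \wequi H^*_\ret(S, \Z/2)[\rho^{\pm}, \tau^{\pm}]$, so the homotopy of $L_\et(\Sigma^{e,n} H\Z/2)[\rho^{-1}]$ is concentrated in a Chow-range controlled by $\cd_\ret(S)$ and shifted by $(e,n)$. Under the assumption $\pvcd_2(S) < \infty$ together with $\dim S < \infty$, the quantity $\cd_\ret(S)$ is finite (stalkwise $\cd_\ret = 0$, since the henselization at each point has one or zero orderings, and a descent spectral sequence in the Nisnevich topology bounds $\cd_\ret(S)$ by $\dim S$, exactly as in the proof of Proposition \ref{prop:HZ-bCQL-pre}). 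Combined with the slice hypothesis $\lim_n \inf\{e_j^{(n')} - n' \mid n' \ge n\} = \infty$, this forces the relevant connectivities $d_i$ to tend to $\infty$, verifying condition (3) and completing the argument.
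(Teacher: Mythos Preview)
Your reduction to (i), (ii), (iii) via the pullback square of Lemma \ref{lemm:b-pullback-square} is valid, and (i) and (iii) are indeed the cited propositions. The gap is in your treatment of (ii). You propose to reuse the subcategories $\scr C_i$ from the proof of Proposition \ref{prop:ret-slice}, but condition (3) of Lemma \ref{lemm:preserve-limit} for your new functor $G(F)=\map(\Sigma^{a,b}\1,L_\et(F)[\rho^{-1}]/2)$ is not established: membership in $\scr C_i$ constrains $\pi_{**}(F/2)$, which says nothing about $\pi_{**}(L_\et(F)/2)$ for a general $F$. Your ``hard step'' paragraph checks connectivity of $G$ only on the individual slices $\Sigma^{e,n}H\Z/2$, whereas the proof of Lemma \ref{lemm:preserve-limit} applies condition (3) to the objects $F_i=\lim_m\fib(f^mE\to f^iE)$, which lie in $\scr C_N$ only through closure properties (1) and (2). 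The obvious repairs do not work either: defining $\scr C_i'$ via connectivity of $L_\et(\ph)[\rho^{-1}]/2$ destroys condition (2), since $L_\et$ does not commute with cofiltered limits; defining it via $\pi_{p,*}(F/2)=0$ for $p<i$ destroys the fiber hypothesis, since over a base with a real point $\pi_{p,*}(H\Z/2)$ is unbounded below in $p$.

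The paper avoids (ii) entirely by a different decomposition. Instead of the three-corner pullback square, it checks the desired map mod $2$ separately after killing and after inverting $\rho$, and then invokes Corollary \ref{cor:Lb-mod-rho}: one has $L_b(F)/(2,\rho)\simeq L_\et(F)/(2,\rho)$ and $L_b(F)/2[\rho^{-1}]\simeq F/2[\rho^{-1}]$, so the cross-term $L_\et(\ph)[\rho^{-1}]$ never appears. The $/\rho$ case then reduces to Proposition \ref{prop:etale-slice}, and the $[\rho^{-1}]$ case to Proposition \ref{prop:ret-slice}, using \emph{both} equivalences stated there (the second, which you did not invoke, is precisely what identifies $(\lim_n f^nE)[\rho^{-1}]/2$ with $(\lim_n L_b(f^nE))[\rho^{-1}]/2$).
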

\begin{proof}
We seek to show that $L_b(\lim_n f^n E)/2 \wequi \lim_n L_b(f^n E)/2$.
It suffices to prove this respectively after killing and inverting $\rho$.
Recall (Corollary \ref{cor:Lb-mod-rho}) that for any $F \in \SH(S)$ we have $L_b(F)/(2,\rho) \wequi L_\et(F)/(2,\rho)$ and $L_b(F)/2[\rho^{-1}] \wequi F/2[\rho^{-1}]$.
Consequently after killing $\rho$, the required equivalence is \[ L_\et(\lim_n f^n E)/(2,\rho) \wequi \lim_n L_\et(f^n E)/(2, \rho). \]
This holds by Proposition \ref{prop:etale-slice}.
For inverting $\rho$, we get \[ L_b(\lim_n f^n E)/2[\rho^{-1}] \wequi (\lim_n f^n E)/2[\rho^{-1}] \wequi (\lim_n L_b(f^n E))[\rho^{-1}]/2; \] here the second equivalence is by Proposition \ref{prop:ret-slice}.
\end{proof}

\subsubsection{Chow--Quillen--Lichtenbaum at $\ell=2$}
\begin{dfn}[$b$-Chow--Quillen--Lichtenbaum property]
Let $E \in \SH(S)$.
We say that $E$ satisfies $\bCQL_s(S)$ (or $\bCQL_s$ for short) if the map \[ E_2^\comp \to L_b(E)_2^\comp \] induces an isomorphism on $\pi_{i,j}$ in Chow degrees $\ge t$, and a monomorphism in Chow degree $t-1$.
\end{dfn}

\begin{lem} \label{lemm:bCQL-basics}
\begin{enumerate}
\item Spectra satisfying $\bCQL_s$ are closed under extensions.
\item If $\dim S < \infty$, then they are also closed under coproducts.
\item $E$ satisfies $\bCQL_s$ if and only if $E_\ell^\comp$ does.
\item If $E \in \SH(S)$ and $E/\ell$ satisfies $\bCQL_s$, then so do $E/\ell^n$ and $E$.
\end{enumerate}
\end{lem}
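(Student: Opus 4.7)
The proof plan is to mimic verbatim the proof of Lemma \ref{lemm:CQL-basics}, replacing every reference to étale localization by its $b$-topological counterpart. The only nontrivial point is to replace the inputs on compact generation of $\SH_\et(S)_\ell^\comp$ (which came from \cite[Corollary 5.7]{bachmann-SHet}) by the analogous inputs for $\SH_b(S)_2^\comp$, which we already have in Lemma \ref{lemm:b-top-coh-dim} and Corollary \ref{cor:Lb-colim}. Let me sketch each item in turn.

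For (1), write $F(E) = \fib(E_2^\comp \to L_b(E)_2^\comp)$. As in Remark \ref{rmk:extensions}, the property $\bCQL_s$ is equivalent to $\pi_{i,j} F(E) = 0$ whenever $c(i,j) \ge s-1$. Since $F$ is exact, a cofiber sequence $E' \to E \to E''$ in which $E',E''$ satisfy $\bCQL_s$ yields a cofiber sequence $F(E') \to F(E) \to F(E'')$ satisfying the same vanishing on $\pi_{**}$, hence $E$ does also.

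For (2), by the same reformulation and by Lemma \ref{lemm:complete-sums}, it suffices to show that the functor $\map(\Sigma^{i,j}\1, L_b(\ph)_2^\comp) \colon \SH(S) \to \ShvSp_2^\comp$ preserves colimits. It factors as the localization $\SH(S) \to \SH_b(S)_2^\comp$, which preserves colimits by Corollary \ref{cor:Lb-colim}, followed by $\map(\Sigma^{i,j}\1, \ph)$. The latter is right adjoint to the functor $\ShvSp_2^\comp \to \SH_b(S)_2^\comp$ induced by $\1 \mapsto \Sigma^{i,j}\1$; this functor preserves the desuspended-suspension-spectrum compact generators furnished by Lemma \ref{lemm:b-top-coh-dim} (which is where we will need, implicitly, a bound on $\pvcd_2(S)$ in addition to $\dim S < \infty$), and hence its right adjoint preserves colimits.

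For (3), as in the proof of Lemma \ref{lemm:CQL-basics}(3), observe that $F(E) = \lim_n F(E/\ell^n)$ because both $(\ph)_\ell^\comp$ and $L_b(\ph)_\ell^\comp$ commute with these countable limits (indeed $L_b$ preserves all limits as it is a right adjoint, and $(\ph)_\ell^\comp$ is computed as $\lim_n (\ph)/\ell^n$). Thus the vanishing condition on $F(E)$ depends only on the $E/\ell^n$, and is in particular unchanged upon replacing $E$ by $E_\ell^\comp$.

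For (4), if $E/\ell$ satisfies $\bCQL_s$ then, by (1) and the cofiber sequence $E/\ell^{n-1} \xrightarrow{\ell} E/\ell^n \to E/\ell$, induction on $n$ shows that $E/\ell^n$ does too. Passing to the limit using the identity $F(E) = \lim_n F(E/\ell^n)$ from (3), together with Remark \ref{rmk:extensions} (the relevant Chow-degree range is unaffected by a Milnor $\lim^1$ shift since we have vanishing, not merely a bound), we deduce the property for $E$. The main (and only real) obstacle is verifying the compact generation input in (2); everything else is formal manipulation of cofiber and tower constructions identical to the étale case.
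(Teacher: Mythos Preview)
Your proposal is correct and follows essentially the same approach as the paper, which simply says ``The proof of Lemma \ref{lemm:CQL-basics} goes through essentially unchanged. The only non-trivial input is that $L_b(\ph)_\ell^\comp$ preserves colimits, for which see Corollary \ref{cor:Lb-colim}.'' Your version is more explicit, and your observation that part (2) implicitly requires finite $\pvcd_2(S)$ (via Lemma \ref{lemm:b-top-coh-dim}) is accurate; this is a standing hypothesis throughout \S\ref{subsec:prime-2} even though it is not restated in the lemma.
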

\begin{proof}
The proof of Lemma \ref{lemm:CQL-basics} goes through essentially unchanged.
The only non-trivial input is that $L_b(\ph)_\ell^\comp$ preserves colimits, for which see Corollary \ref{cor:Lb-colim}.
\end{proof}

\begin{cor} \label{cor:HZ-bCQL}
Let $S$ be essentially smooth over a Dedekind scheme and $1/2 \in S$.
Then $H\Z$, $H\Z_2^\comp$ and $H\Z/2^n$ satisfy $\bCQL_s$ in the following cases:
\begin{enumerate}
\item $S$ henselian local with residue field $k$ and $\vcd_2(k) \le s+1$
\item $\pvcd_2(S) \le s+1$.
\end{enumerate}
\end{cor}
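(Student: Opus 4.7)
The strategy is parallel to the proof of Proposition \ref{prop:HZ-CQL}, but with the $b$-topology replacing the étale topology, and invoking Proposition \ref{prop:HZ-bCQL-pre} as the key input in place of the rigidity calculation used there. The reductions supplied by Lemma \ref{lemm:bCQL-basics} let us handle all three coefficient spectra at once.

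First, by Lemma \ref{lemm:bCQL-basics}(4) it suffices to prove $\bCQL_s$ for $H\Z/2$; the statement for $H\Z/2^n$ and $H\Z$ then follows by induction on $n$ and passage to the limit, using stability under extensions (part (1)). For $H\Z_2^\comp$ we appeal to Lemma \ref{lemm:bCQL-basics}(3), which tells us that the property is unchanged by $2$-adic completion.

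Now consider the fiber sequence $F \to H\Z/2 \to L_b H\Z/2$ of Proposition \ref{prop:HZ-bCQL-pre}. By Remark \ref{rmk:extensions} (applied to the $b$-analog, which goes through verbatim since $L_b H\Z/2 \wequi L_b(L_b H\Z/2)$), $H\Z/2$ satisfies $\bCQL_s$ if and only if $\pi_{p,q}(F) = 0$ for all $(p,q)$ with $c(p,q) \ge s-1$. In case (1), the hypothesis $\vcd_2(k) \le s+1$ gives $\vcd_2(k) - 2 \le s - 1$, so Proposition \ref{prop:HZ-bCQL-pre}(3) yields $\pi_{p,q}(F) = 0$ whenever $c(p,q) \ge s - 1$, as required. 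In case (2), the hypothesis $\pvcd_2(S) \le s+1$ similarly gives $\pvcd_2(S) - 2 \le s - 1$, and Proposition \ref{prop:HZ-bCQL-pre}(4) delivers the same vanishing.

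There is no genuine obstacle here: the entire content has been absorbed into Proposition \ref{prop:HZ-bCQL-pre}, which was where the analysis of the pullback square \eqref{eq:b-pullback-HZ2} and the comparison between $b$-motivic and real-étale cohomology took place. The only thing to verify is the numerical bookkeeping above, matching the threshold $\vcd_2(k) - 2$ (respectively $\pvcd_2(S) - 2$) for vanishing of the fiber's homotopy groups with the cutoff $s - 1$ in the definition of $\bCQL_s$.
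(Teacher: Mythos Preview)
Your proof is correct and follows essentially the same approach as the paper's: reduce to $H\Z/2$ via Lemma \ref{lemm:bCQL-basics}(3,4), then invoke Proposition \ref{prop:HZ-bCQL-pre}(3,4) and match the Chow-degree thresholds. The paper's version is terser (it simply says ``this is a reformulation of Proposition \ref{prop:HZ-bCQL-pre}''), but your expanded numerical check is exactly what that reformulation amounts to.
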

\begin{proof}
By Lemma \ref{lemm:bCQL-basics}(3,4) it suffices to prove the claim for $H\Z/2$.
This is a reformulation of Proposition \ref{prop:HZ-bCQL-pre}.
\end{proof}

\begin{cor} \label{cor:bCQL-HZ-slices}
Let $S$ be essentially smooth over a Dedekind scheme, $1/2 \in S$, $\pvcd_2(S) < \infty$.
Suppose $E \in \SH(S)$ such that
\begin{enumerate}
\item $s_n(E/2) = \bigoplus_j \Sigma^{2n-e_j^{(n)},n} H\Z/2$, $e_j^{(n)} \ge 0$, and
\item $\lim_{n \to \infty} \inf\{n'-e_j^{(n')} \mid n' \ge n\} = \infty.$
\end{enumerate}
If $H\Z/2$ satisfies $\bCQL_s(S)$ then so does $\lim_n f^n E$.
\end{cor}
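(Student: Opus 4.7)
The plan is to adapt the proof of Corollary \ref{cor:CQL-HZ-slices} essentially wholesale, substituting the $b$-topology inputs of this subsection for the étale inputs used there. All the needed ingredients are already in place: the $b$-analogs of the basic closure properties of $\CQL_s$ (Lemma \ref{lemm:bCQL-basics}), the $b$-analog of slice/limit commutation (Proposition \ref{prop:b-slice}), and the colimit-preservation of $L_b(\ph)_2^\comp$ (Corollary \ref{cor:Lb-colim}).

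First, by Lemma \ref{lemm:bCQL-basics}(4) I reduce to showing that $(\lim_n f^n E)/2$ satisfies $\bCQL_s(S)$. Applying Proposition \ref{prop:b-slice}, whose hypotheses coincide with those of the corollary, the canonical map $(\lim_n f^n E)/2 \to L_b(\lim_n f^n E)/2$ identifies with $\lim_n\left(f^n(E)/2 \to L_b(f^n E)/2\right)$. Using the evident $\bCQL_s$-analog of Remark \ref{rmk:extensions}, it therefore suffices to prove that $\lim_n F_n$ has homotopy groups concentrated in Chow degrees $<s-1$, where $F_n := \fib(f^n E/2 \to L_b(f^n E)/2)$; for this, in turn, it is enough to establish the same for each $F_n$ individually.

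Next I will mimic the inductive argument from the proof of Corollary \ref{cor:CQL-HZ-slices}. Writing $f^n E \wequi \colim_m E_n^m$ with $E_n^m = f_m f^n E$, Corollary \ref{cor:Lb-colim} lets me pull the colimit through $L_b(\ph)_2^\comp$, yielding $F_n \wequi \colim_m F_n^m$ where $F_n^m = \fib(E_n^m/2 \to L_b(E_n^m)/2)$. I induct on $m$, starting from $F_n^n = 0$; the cofiber sequence $E_n^m \to E_n^{m-1} \to s_{m-1} E$ reduces the inductive step to showing that $s_{m-1}(E)/2$ satisfies $\bCQL_s(S)$. By hypothesis (1) this slice is a sum of shifts $\Sigma^{2n-e,n} H\Z/2$ with $e \ge 0$; shifting by $(2n,n)$ leaves the Chow degree invariant, and the additional shift by $-e$ with $e \ge 0$ only lowers the Chow degree, so the hypothesis that $H\Z/2$ satisfies $\bCQL_s(S)$ propagates to each summand. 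Closure under sums and extensions (Lemma \ref{lemm:bCQL-basics}(1,2)) then closes the induction.

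The real work is already contained in Proposition \ref{prop:b-slice}, whose proof requires juggling the pullback square relating $L_b$, $L_\et$ and $[\rho^{-1}]$ via Proposition \ref{prop:ret-slice}; given this input, no genuinely new obstacle arises in the corollary itself, and the argument above is a routine adaptation of its étale counterpart.
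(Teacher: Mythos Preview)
Your proposal is correct and follows essentially the same approach as the paper, which simply notes that the proof of Corollary~\ref{cor:CQL-HZ-slices} goes through unchanged with the $b$-topology inputs (Lemma~\ref{lemm:bCQL-basics}, Proposition~\ref{prop:b-slice}, Corollary~\ref{cor:Lb-colim}) substituted for the \'etale ones. One minor quibble: the hypotheses of Proposition~\ref{prop:b-slice} do not literally coincide with those of the corollary but are implied by them after relabeling (as in the \'etale case, the corollary's assumptions are stronger); this does not affect your argument.
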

\begin{proof}
The proof of Corollary \ref{cor:CQL-HZ-slices} goes through essentially unchanged.
\end{proof}

\begin{cor} \label{cor:CQL-MGL}
Let $S$ be essentially smooth over a Dedekind scheme, $1/\ell \in S$.
Then $\MGL$ and $\KGL$ satisfy $\CQL_s(\ell, S)$, provided that $\pvcd_\ell(S) \le s+1$.
If $S$ is henselian local with residue field $k$, it also suffices that $\vcd_\ell(k) \le s+1$.
\end{cor}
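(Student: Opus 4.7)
\bigskip

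\noindent\textbf{Proof plan for Corollary \ref{cor:CQL-MGL}.}
The plan is to reduce both $\MGL$ and $\KGL$ to the known Chow--Quillen--Lichtenbaum behavior of $H\Z/\ell$, by exploiting their slice filtrations. First I would note that by Lemma \ref{lem:MGL-sc} both $\MGL$ and $\KGL$ are slice complete over the given base, so that $E \wequi \lim_n f^n E$; moreover by Remark \ref{rmk:MGL-slices} their slices (after inverting the primes not invertible on $S$, which is harmless since we will work with $E/\ell$) are of the form $\Sigma^{2n,n} H\Z \otimes L_n$ respectively $\Sigma^{2n,n} H\Z$. In particular, in the notation of Corollary \ref{cor:CQL-HZ-slices} / Corollary \ref{cor:bCQL-HZ-slices}, the slices of $E/\ell$ are sums of $\Sigma^{2n - e_j^{(n)}, n} H\Z/\ell$ with every $e_j^{(n)} = 0$, so hypotheses (1) and (2) of those corollaries are trivially verified.

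For the case of odd $\ell$, where $\vcd_\ell = \cd_\ell$, I would combine this with Proposition \ref{prop:HZ-CQL}: under either of the stated hypotheses ($S$ henselian local with $\cd_\ell(k) \le s+1$, or $\pcd_\ell(S) \le s+1$) the spectrum $H\Z/\ell$ satisfies $\CQL_s(\ell, S)$. Applying Corollary \ref{cor:CQL-HZ-slices} to $E \in \{\MGL, \KGL\}$ gives that $\lim_n f^n E = E$ satisfies $\CQL_s(\ell, S)$, as desired. This was essentially already observed for $\MGL$ in Example \ref{exm:CQL-HZ-MGL}; the argument for $\KGL$ is identical.

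For the case $\ell = 2$, the hypothesis involves $\vcd_2$ rather than $\cd_2$, and so one cannot directly feed Proposition \ref{prop:HZ-CQL} into the étale slice machine. Instead I would pass through the $b$-topology. By Corollary \ref{cor:HZ-bCQL}, under either of the stated hypotheses $H\Z/2$ satisfies $\bCQL_s(S)$. Combining with the slice analysis above and Corollary \ref{cor:bCQL-HZ-slices} yields that $\MGL$ and $\KGL$ satisfy $\bCQL_s(S)$, i.e. the map $E_2^\comp \to L_b(E)_2^\comp$ has the requisite range of isomorphism/monomorphism on $\pi_{**}$.

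The last step, which I expect to be the main obstacle, is to convert $\bCQL_s(S)$ into the genuine étale statement $\CQL_s(2,S)$. For this I would invoke the comparison lemma in \S\ref{subsec:prime-2}: $L_b(E)_2^\comp \wequi L_\et(E)_2^\comp$ as soon as for every point $x \to S$ valued in a real closed field, the genuine $C_2$-spectrum $r_{C_2}(E)_2^\comp$ is Borel. By Example \ref{ex:MUR-borel}, both $r_{C_2}(\MGL) = \MU_\RR$ and $r_{C_2}(\KGL) = \KU_\RR$ are $2$-adically Borel. Hence $L_b$ and $L_\et$ after $2$-completion agree on $\MGL$ and on $\KGL$, so $\bCQL_s(S)$ and $\CQL_s(2,S)$ coincide for these spectra, completing the argument.
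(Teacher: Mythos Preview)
Your proposal is correct and follows essentially the same route as the paper: for odd $\ell$ one combines Proposition \ref{prop:HZ-CQL} with the slice argument of Example \ref{exm:CQL-HZ-MGL} (i.e.\ Corollary \ref{cor:CQL-HZ-slices} together with Lemma \ref{lem:MGL-sc} and Remark \ref{rmk:MGL-slices}), and for $\ell=2$ one first establishes $\bCQL_s$ via Corollaries \ref{cor:HZ-bCQL} and \ref{cor:bCQL-HZ-slices} and then identifies $\bCQL_s$ with $\CQL_s$ for $\MGL$ and $\KGL$ using Example \ref{ex:MUR-borel}. The paper's proof cites exactly these ingredients in the same logical order.
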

\begin{proof}
If $\ell \ne 2$ this follows from Example \ref{exm:CQL-HZ-MGL} and Proposition \ref{prop:HZ-CQL}.
If $\ell = 2$ this follows from Example \ref{ex:MUR-borel} (saying that $\CQL_s = \bCQL_s$ for $\MGL, \KGL$), Lemma \ref{lem:MGL-sc} and Remark \ref{rmk:MGL-slices} (saying that these spectra are slice complete and determining their slices), and Corollaries \ref{cor:bCQL-HZ-slices} and \ref{cor:HZ-bCQL}.
\end{proof}

\subsection{Application: Hypercompleteness of motivic invariants} \label{subsec:SH-et-nh}
\newcommand{\nh}{\mathrm{nh}}
For once, we shall discuss non-hypercompleted topoi.
Denote by $\SH_\et^\nh(S)$ the localization of $\SH(S)$ at the (iterated $\P^1$-desuspensions of suspension spectra of) Čech nerves of étale coverings.
Recall that we write $\SH_\et(S)$ for the hypercompleted version.

\begin{lem}\label{lem:i*-Let}
Let $p: T \to S$ be an integral morphism of schemes.
Then $p_*: \PSh_\Sigma(\Sm_T) \to \PSh_\Sigma(\Sm_S)$ commutes with $L_\et^\nh$.
(The same is true for $L_\et$ if $p$ is finite.)

In the same situation $p_*: \SH(T) \to \SH(S)$ commutes with $L_\et^\nh$ (respectively $L_\et$).
\end{lem}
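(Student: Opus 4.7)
The plan is to establish the two standard sufficient conditions: $p_*$ preserves $L_\et^\nh$-local objects, and $p_*$ preserves $L_\et^\nh$-local equivalences. Together these imply that for $F \in \PSh_\Sigma(\Sm_T)$, the canonical map $p_* F \to p_* L_\et^\nh F$ is a local equivalence with local target, hence identifies $p_* L_\et^\nh F$ with $L_\et^\nh p_* F$.

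First I would dispose of the easy half, which works for any morphism $p$. Since $p^*$ is a left adjoint it preserves colimits, and since base change sends étale covers of $X \in \Sm_S$ to étale covers of $X \times_S T \in \Sm_T$, the functor $p^*$ sends the Čech nerve of any étale cover to a Čech nerve of an étale cover. This is exactly the generating class of $L_\et^\nh$-equivalences, so $p^*$ preserves $L_\et^\nh$-local equivalences. By adjunction, $p_*$ preserves $L_\et^\nh$-local objects. Equivalently, one checks directly that $(p_* F)(X) = F(X \times_S T)$ satisfies the Čech sheaf condition for any cover of $X$, since the base-changed cover of $X \times_S T$ witnesses it.

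The hard part is showing $p_*$ preserves $L_\et^\nh$-local equivalences, or equivalently (again by adjunction) that $p^*$ preserves $L_\et^\nh$-local objects. This is where integrality enters. My approach would be to write $T = \lim_\alpha T_\alpha$ as a cofiltered limit of finite $S$-schemes $p_\alpha: T_\alpha \to S$ (possible since $p$ is affine with integral fibers, hence pro-finite over $S$). Then any $Y \in \Sm_T$ of finite presentation, and any étale cover of $Y$, descends to some level $\alpha$ by the standard limit machinery of \cite[EGA IV \S 8]{}-type results. This presents $\PSh_\Sigma(\Sm_T)$ as a (suitable) filtered colimit of $\PSh_\Sigma(\Sm_{T_\alpha})$, and reduces the claim to the case of \emph{finite} $p$. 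In the finite case, $p^*$ is computed on a representable $X \in \Sm_S$ by $p^* X = X \times_S T$, which is a scheme and hence representable in $\Sm_T$, so $p^* X$ is automatically an étale sheaf; extending by sifted colimits in $\PSh_\Sigma$ and using that an étale sheaf structure can be checked on generators, one concludes $p^*$ preserves sheaves.

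For the parenthetical hypercompleted statement with $p$ finite, the same argument gives commutation with the plain étale sheafification. To upgrade to hypercompleted sheaves, one uses that finite morphisms have bounded étale cohomological dimension on their fibers, so $p_*$ is of finite cohomological amplitude and hence preserves Postnikov convergence; therefore it sends hypercomplete sheaves to hypercomplete sheaves. Integrality alone is not enough for this bound, explaining why the hypercomplete version demands finiteness.

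Finally, the stabilized statement for $p_*: \SH(T) \to \SH(S)$ follows from the unstable one. Both sides of the asserted identity are stable under $\P^1$-loops and shifts, and $p_*$ on $\SH$ is computed from the unstable $p_*$ on infinite loop spectra, while $L_\et^\nh$ on $\SH(S)$ is generated by the same Čech-nerve class, so the commutation on $\PSh_\Sigma$ propagates level-wise and through stabilization.
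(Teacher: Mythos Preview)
Your adjunction step in the second paragraph is the wrong way round, and this is a genuine gap. From $p^* \dashv p_*$ you correctly deduce that $p^*$ preserving local equivalences implies $p_*$ preserves local objects. But you then assert that $p_*$ preserving local equivalences is ``equivalently (again by adjunction)'' $p^*$ preserving local objects. That is false: the adjunction $p^* \dashv p_*$ says nothing of the sort. What you would need is a \emph{right} adjoint $p^!$ of $p_*$ (the right Kan extension along $(-)\times_S T$), and then the equivalent condition is that $p^!$ preserves local objects --- a completely different functor. Your subsequent argument (``$p^* X = X\times_S T$ is representable, hence a sheaf; extend by sifted colimits'') is about the wrong functor, and is in any case incomplete: \'etale sheaves are not closed under sifted colimits in $\PSh_\Sigma$, so knowing $p^*$ of representables are sheaves does not show $p^*$ preserves sheaves.

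The paper's argument is direct and does not pass through any such adjunction. Since $p_*$ is precomposition with $(-)\times_S T$, it preserves colimits, so it suffices to show $p_*$ sends each generating \v{C}ech equivalence $\check C(U)\to X$ (for $U\to X$ an \'etale cover in $\Sm_T$) to an \'etale-local equivalence in $\PSh_\Sigma(\Sm_S)$. Unwinding, this amounts to: for $Y\in\Sm_S$ and any map $Y_T\to X$, the induced \'etale cover $U\times_X Y_T \to Y_T$ is refined, \'etale-locally on $Y$, by the base change of an \'etale cover of $Y$. This cover-refinement statement is where integrality is actually used (write $T=\lim_\alpha T_\alpha$ with $T_\alpha/S$ finite, descend the cover to some $T_\alpha$, then use finiteness of $Y_{T_\alpha}\to Y$); it is the content of the cited \cite[Proposition~2.11]{norms}. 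Your reduction from integral to finite via limits is the right instinct, but it must feed into this refinement argument, not into a claim about $p^*$. For the hypercomplete statement with $p$ finite, the paper simply checks on stalks (finite pushforward commutes with stalks), which is more direct than your Postnikov/cohomological-dimension route.
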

\begin{proof}
Clearly $p_*$ preserves étale sheaves, so we need only prove that it preserves étale equivalences.
The same argument as in \cite[Proposition 2.11]{norms} works.
(The statement about $L_\et$ can be checked on stalks directly.)

Now consider the case of $\SH(\ph)$.
We model $\SH(T)$ as a localization of the category of $\P^1$-prespectra, that is, sequences $(E^0,E^1, \dots)$ with $E^i \in \PSh_\Sigma(\Sm_T)$ together with maps $E^i \to \Omega_{\P^1} E^{i+1}$.
The localization $\SH(T)$ is obtained by restricting to prespectra which are $\Omega$-spectra (i.e. $\Omega_{\P^1} E^{i+1} \wequi E^i$) and such that each $E^i$ is motivically local.
Note that on the level of prespectra, $p_*E = (p_*(E^i))^i$, and the functor $p_*$ preserves $\Omega$-spectra and Nisnevich/$\A^1$/étale local objects.
To prove what we want, it suffices to prove that $p_*$ at the level of prespectra preserves generating étale (hyper)equivalences.
(Again argue as in \cite[Proposition 2.11]{norms}.)
These are maps $(E^i)^i \to (F^i)^i$ such that each $E^i \to F^i$ is an étale (hyper)equivalence.
We have seen that $p_*$ preserves these in the first part of the argument.
\end{proof}

If $\scr P$ is a set of primes and $\scr C$ stable $\infty$-category, write $\scr C_{(\scr P)} \subset \scr C$ for the subcategory of $\scr P$-local objects, i.e. those objects on which each $p \not\in \scr P$ is invertible.
\begin{thm} \label{thm:SH-net-het}
Let $S$ be noetherian\NB{remove noetherian assumption somehow?}, finite dimensional, and assume that $\sup_{s \in S} \vcd_\scr{P}(s) < \infty$.
Then $\SH_\et^\nh(S)_{(\scr P)} \wequi \SH_\et(S)_{(\scr P)}$.
\end{thm}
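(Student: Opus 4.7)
The plan is to show that every étale Čech-local object of $\SH(S)_{(\scr P)}$ is automatically an étale hypersheaf, which is equivalent to the claimed equivalence of Bousfield localizations. Splitting via an arithmetic fracture square, it suffices to prove the analogous statement $\SH_\et^\nh(S)_\ell^\comp \wequi \SH_\et(S)_\ell^\comp$ for each $\ell \in \scr P$, with the rational part following from standard finite-dimensional descent arguments. My first move would be to adapt the rigidity theorem \cite[Theorem 6.6]{bachmann-SHet} to the non-hypercomplete setting: running the same arguments without invoking hypercompletion---the key inputs (mod-$\ell$ descent, $\A^1$-invariance, $\Gm$-invertibility after completion) do not themselves require hypercompletion---should yield an equivalence $\SH_\et^\nh(S)_\ell^\comp \wequi \ShvSp^\nh(S_\et)_\ell^\comp$. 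The question then reduces to showing that the small-site inclusion $\ShvSp(S_\et)_\ell^\comp \hookrightarrow \ShvSp^\nh(S_\et)_\ell^\comp$ is an equivalence, i.e.\ that the small $\ell$-completed étale topos of $S$ is hypercomplete.

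For odd $\ell$, this is the classical Clausen--Mathew-style result: $\vcd_\ell = \cd_\ell$ is bounded pointwise, and combined with $\dim S < \infty$ one obtains a global bound on $\ell$-étale cohomological dimension via the Nisnevich descent spectral sequence (see e.g.\ \cite[Lemma 2.12]{bachmann-SHet}). This guarantees that Postnikov towers of $\ell$-torsion sheaves converge, so every Čech-local spectrum is automatically hyper-local.

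The main obstacle is the case $\ell = 2$, where $\vcd_2$ may be finite while $\cd_2$ is not. My plan here is to replace the étale topology with the $b$-topology of \S\ref{subsec:b-top}, which by Lemma \ref{lemm:b-top-coh-dim} has bounded cohomological dimension under exactly the hypothesis $\pvcd_2(S) < \infty$. By the same Postnikov-convergence mechanism, the $b$-topology version of the small site is hypercomplete after $2$-completion. To translate this back to the étale statement, I would use Lemma \ref{lemm:b-pullback-square}: since $L_\et E$ sits inside the pullback square for $L_b E$ together with $E[\rho^{-1}]$ along $L_\et(E)[\rho^{-1}]$, it suffices to establish the non-hyper/hyper equivalence for each corner. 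On the $\rho$-inverted side, one reduces to the real étale topos, whose stalks (spectra of real closed fields) have $\cd_\ret = 0$, so Čech-equals-hyper there is immediate. Assembling these pieces through the pullback square gives the desired hypercompleteness of the non-hyper étale Čech-local category at $\ell = 2$. The most delicate technical step will be ensuring that a non-hypercomplete analog of the recollement of \cite[Example A.14]{elmanto2019scheiderer} (and the resulting pullback square) is available, so that the argument genuinely transfers along the comparison.
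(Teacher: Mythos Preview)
Your proposal has a genuine gap at the step you label ``adapt the rigidity theorem to the non-hypercomplete setting.'' You assert that the arguments of \cite[Theorem 6.6]{bachmann-SHet} go through without hypercompletion, but this is exactly the content of the theorem being proved. Indeed, the paper's corollary immediately following Theorem \ref{thm:SH-net-het} deduces the non-hypercomplete comparison $\ShvSp(S_\et^\nh)_\ell^\comp \to \SH_\et^\nh(S)_\ell^\comp$ \emph{from} the theorem, not as an input to it. The difficulty is that even when the small site $S_\et$ has bounded $\ell$-cohomological dimension (hence is hypercomplete), the big site $\Sm_S$ does not: smooth $S$-schemes have unbounded dimension, so unbounded étale cohomological dimension. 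Showing that $\Sigma^\infty_+ X$ for arbitrary $X \in \Sm_S$ lies in the subcategory generated by small-site sheaves requires, in the hypercomplete case, checking on stalks---precisely the step that is unavailable to you. Your sentence ``the key inputs \ldots\ do not themselves require hypercompletion'' hides the entire problem.

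The paper's route is quite different and does not pass through a non-hypercomplete rigidity statement. After an étale cover making all residue fields unorderable (so $\vcd_\ell = \cd_\ell$, disposing of the $\ell=2$ issue without any $b$-topology machinery), it shows directly that $L_\et^\nh \1_{(\scr P)}$ is an étale hypersheaf, which suffices since hyperlocalization is smashing. The argument is by noetherian induction via localization cofiber sequences $i_{U*}i_U^! \1 \to \1 \to j_{\eta*}j_\eta^* \1$ along a generic point $\eta$, using Lemma \ref{lem:i*-Let} to commute $i_*$ with $L_\et^\nh$. The key new idea, which your proposal lacks entirely, is the treatment of $j_{\eta*}j_\eta^*\1$: one writes its $\ell$-completed version as an $\Omega$-$\Gm$-spectrum and uses the $\CQL$ property of $\1$ (established earlier in \S\ref{sec:CQL}) to see that each level is bounded above on every small étale site, hence its \v{C}ech sheafification is automatically hypercomplete. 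This is where the specific results of the paper enter; an abstract finite-cohomological-dimension argument on the small site cannot substitute for it.
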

In other words, if $E \in \SH(S)_{(\scr P)}$ satisfies étale descent, it automatically satisfies hyperdescent.
\begin{proof}
Since the problem is étale local, we can pass to a covering where all residue fields are unorderable \cite[Example 2.14]{bachmann-SHet} and $S$ is qcqs (e.g. affine), so we may assume that every $X \in \Sm_S$ has finite $\scr P$-étale cohomological dimension \cite[Theorem 2.10]{bachmann-SHet}.
It follows that $\scr P$-local étale hypersheaves of spectra on $\Sm_S$ are closed under colimits in presheaves, and that étale hypersheafification is smashing both, for $\scr P$-local presheaves of spectra and in $\SH(S)_{(\scr P)}$.
The former claim is proved in \cite[Corollary 4.40]{clausen2019hyperdescent}, and the latter in \cite[Theorem 1.4]{bachmann-bott}\footnote{In this reference, the same statement is proved after completion at $(\ell, \rho)$. The completion at $\rho$ is vacuous because of our unorderability assumption \cite[Remark 2.5]{bachmann-bott}. The rational statement is well-known, see e.g. \cite[proof of Theorem 7.2]{bachmann-SHet}.}.
We will prove that $L_\et^{nh} \1_{(\scr P)}$ is an étale hypersheaf; since étale hyperlocalization is smashing this will establish the theorem.

Let $\eta \in S$ be a generic point.
Then $\{\eta\}$ is the intersection of all open subsets $U \subset S$ containing $\eta$.
Write $j_U: U \to S$ for the open immersion, $i_U: Z \to S$ for the complementary closed immersion, and $j_\eta: \eta \to S$ for the inclusion of the generic point.
By continuity \cite[Example 4.3.2]{triangulated-mixed-motives} we get\NB{This would be false a priori for $\SH_\et^\nh$.} \[ j_{\eta*}j_\eta^* \1_{(\scr P)} \wequi \colim_U j_{U*}j_U^* \1_{(\scr P)} \in \SH(S). \]
Consider the fiber sequence $F \to \1_{(\scr P)} \to j_{\eta*}j_\eta^*\1_{(\scr P)}$.
Then we have $F \wequi \colim_U i_{U*}i_U^! \1_{(\scr P)}$ by localization (see e.g. \cite[Proposition 2.3.3(3)]{triangulated-mixed-motives}, and hence by Lemma \ref{lem:i*-Let} we get \[ F \wequi \colim_U i_{U*}i_U^! \1_{(\scr P)} \stackrel{L_\et^\nh}{\wequi} \colim_U L_\et^\nh i_{U*}i_U^! \1_{(\scr P)} \wequi \colim_U i_{U*} L_\et^\nh i_U^! \1_{(\scr P)}. \]
By noetherian induction $i_{U*}L_\et^\nh i_U^!\1_{(\scr P)}$ is an étale hypersheaf and hence (étale hyperlocalization being smashing) so is $\colim_U i_{U*} L_\et^\nh i_U^!$.
We deduce that $L_\et^\nh F \wequi L_\et F$.
It remains to prove that $L_\et^\nh j_{\eta*}j_\eta^* \1_{(\scr P)} \wequi L_\et j_{\eta*}j_\eta^* \1_{(\scr P)}$.
Form the fiber sequences \[ G \to j_{\eta*}j_\eta^* \1_{(\scr P)} \to j_{\eta*} L_\et j_\eta^* \1_{(\scr P)} \quad\text{and}\quad K \to L_\et^\nh G \to L_\et G. \]
It suffices to prove that $K \wequi L_\et K$.\NB{Then in fact $K \wequi 0$, but that's not relevant.}
Let $\ell \in \scr P$.
We shall show that $K/\ell \wequi 0$.
Assuming this, we deduce that $K$ is a rational and hence a hypercomplete (see e.g. \cite[Proposition 2.31]{clausen2019hyperdescent}) étale sheaf, as needed.

Suppose first that $char(\eta) = \ell$.
Let $W \subset S$ be the closed subset on which $\ell=0$; in particular $\eta \in W$.
Then $j_{\eta*}$ factors through $i_{W*}$ and hence it is enough to prove that for $E \in \SH(W)$ we have $L_\et^\nh i_{W*} E/\ell \wequi 0$.
By Lemma \ref{lem:i*-Let} this is the same as $i_{W*}L_\et^\nh E/\ell$, which is zero by \cite[Corollary 1.4]{bachmann-linearity}.

Finally assume that $char(\eta) \ne \ell$.
Write $G=(G^0,G^1,\dots)$ as an $\Omega$-$\Gm$-spectrum (so each $G^i$ is a presheaf of spectra on $\Sm_S$).
For any $X \in \Sm_\eta$, the sphere $\1_X$ satisfies $\CQL_s(\ell, X)$ for some $s<\infty$ (depending only on $\dim X$).
This follows from $\ell$-adic slice-completeness of $\1$ (see e.g. \cite[Corollary 5.13]{bachmann-bott}; again completion at $\rho$ is unnecessary because of unorderability), Proposition \ref{prop:HZ-CQL} ($\CQL$ for $H\Z/\ell$) and Example \ref{ex:CQL-sc-KGL}, Corollary \ref{cor:CQL-HZ-slices} (deducing $\CQL$ for $\1$).
We deduce that for any $X \in \Sm_S$, the restriction of $G^i/\ell$ to $\Et_X$ is bounded above.
Note that $G \stackrel{L_\et^\nh}{\wequi} G'$, where \[ G'_i = \colim_j \Omega_{\Gm}^j L_{\A^1}^{pre}L_\et^{\nh,pre}G^{j+i}, \] $L_{\A^1}^{pre}$ denotes the $\A^1$-localization of spectral presheaves (i.e. the singular construction) and $L_\et^{nh,pre}$ denotes the étale localization of spectral presheaves.
Since $G^i/\ell$ is bounded above on every small étale topos, $L_\et^{\nh,pr}G^i/\ell$ is hypercomplete.
Since hypercomplete étale sheaves of spectra are closed under colimits in presheaves, $G'/\ell$ is actually an $\Omega$-$\Gm$-spectrum, motivically local, and étale hyperlocal.
In other words $G'/\ell \wequi L_\et^\nh G/\ell \wequi L_\et G/\ell$.

This concludes the proof.
\end{proof}

\begin{cor}
Let $S \in \Sch_{\Z[1/\ell]}$.
The functor $\ShvSp(S_\et^\nh)_\ell^\comp \to \SH_\et^\nh(S)_\ell^\comp$ is a localization, namely at hypercovers obtained by pullback from finite type schemes over $\Z[1/\ell]$.
\end{cor}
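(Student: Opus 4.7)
The plan is to reduce to the finite-type case, which is handled by Theorem~\ref{thm:SH-net-het}, via noetherian approximation.

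First, I would handle the case where $S$ is itself of finite type over $\Z[1/\ell]$. Such $S$ is noetherian, finite dimensional, and has $\sup_{s \in S}\vcd_\ell(s) < \infty$, so Theorem~\ref{thm:SH-net-het} applies with $\scr P = \{\ell\}$ and the hypercompletion $\SH_\et^\nh(S)_\ell^\comp \to \SH_\et(S)_\ell^\comp$ is already an equivalence. Combined with the rigidity equivalence $\SH_\et(S)_\ell^\comp \wequi \ShvSp(S_\et)_\ell^\comp$ of \cite[Theorem 6.6]{bachmann-SHet}, the functor in question identifies with $\ell$-adic hypercompletion $\ShvSp(S_\et^\nh)_\ell^\comp \to \ShvSp(S_\et)_\ell^\comp$, which is by definition the localization at all étale hypercovers of $S$. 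Since $S$ is itself of finite type, these are trivially of the desired form.

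Next, for arbitrary $S \in \Sch_{\Z[1/\ell]}$, I would write $S = \lim_i S_i$ as a cofiltered limit of finite type $\Z[1/\ell]$-schemes with affine transition maps and use continuity. Both $\ShvSp(\ph_\et^\nh)_\ell^\comp$ and $\SH_\et^\nh(\ph)_\ell^\comp$ should convert such limits into filtered colimits of presentable $\infty$-categories along left-adjoint pullback functors: the former by the classical continuity of the small étale topos, the latter by continuity of $\SH(\ph)$ (cf.~\cite[Example 4.3.2]{triangulated-mixed-motives}) combined with the fact that étale sheafification commutes with smooth pullback. Assembling the finite-type case colimit-wise, the functor on $S$ is identified with the filtered colimit of the finite-type localization functors, whose saturated class of local equivalences on $S$ consists precisely of pullbacks of hypercovers from the $S_i$, i.e., pullbacks of hypercovers from finite type $\Z[1/\ell]$-schemes.

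The main obstacle I anticipate is establishing continuity for $\SH_\et^\nh(\ph)_\ell^\comp$ cleanly, since $L_\et^\nh$ is not a priori smashing on $\SH(S)$ and $\ell$-completion does not commute with filtered colimits in general. I would circumvent this by invoking Theorem~\ref{thm:SH-net-het} on each $S_i$ first: this replaces $\SH_\et^\nh(S_i)_\ell^\comp$ by $\SH_\et(S_i)_\ell^\comp \wequi \ShvSp((S_i)_\et)_\ell^\comp$, a category for which continuity along cofiltered limits with affine transition maps is standard. Once this continuity is in place, the identification of the class of local equivalences on $S$ as pullbacks of hypercovers from finite type schemes is essentially formal.
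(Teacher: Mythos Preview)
Your overall strategy matches the paper's: reduce to finite type via continuity, then invoke Theorem~\ref{thm:SH-net-het} together with rigidity \cite[Theorem 6.6]{bachmann-SHet}. The finite-type paragraph is fine.

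The gap is in your circumvention of continuity for $\SH_\et^\nh(\ph)_\ell^\comp$. Replacing each $\SH_\et^\nh(S_i)_\ell^\comp$ by $\ShvSp((S_i)_\et)_\ell^\comp$ and invoking continuity of the latter tells you what $\colim_i \SH_\et^\nh(S_i)_\ell^\comp$ \emph{is} (namely $\ShvSp(S_\et)_\ell^\comp$), but it does not identify this colimit with $\SH_\et^\nh(S)_\ell^\comp$. That identification is precisely the continuity statement you were trying to avoid, and it is essentially equivalent to the corollary itself; so the argument is circular. Concretely: you have a comparison functor $\colim_i \SH_\et^\nh(S_i)_\ell^\comp \to \SH_\et^\nh(S)_\ell^\comp$, and nothing you have written shows it is an equivalence.

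The paper does not attempt an inline argument here; it simply cites \cite[\S3]{bachmann-SHet2} for the fact that both $\ShvSp((\ph)_\et^\nh)_\ell^\comp$ and $\SH_\et^\nh(\ph)_\ell^\comp$ are continuous (and satisfy Zariski descent) as functors $\Sch^\op \to \mathrm{Pr}^L$. That external input is exactly what is needed to close your gap, and once you have it the rest of your proposal goes through.
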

\begin{proof}
Since the functors \[ \ShvSp((\ph)_\et^\nh)_\ell^\comp, \SH_\et^\nh(\ph)_\ell^\comp: \Sch^\op \to Pr^L \] are continuous and satisfy Zariski descent \cite[\S3]{bachmann-SHet2}, we may assume that $S$ is of finite type over $\Z[1/\ell]$.
Thus we have reduced to Theorem \ref{thm:SH-net-het} and \cite[Theorem 6.6]{bachmann-SHet}.
\end{proof}

\section{Galois approximations} \label{sec:galois-approx}
\localtableofcontents

\bigskip
Let $S$ be a scheme.
The canonical symmetric monoidal functor $\ShvSp \to \SH(S)$ together with the invertible object $\Sigma^{0,1} \1 \in \SH(S)$ induces a canonical adjunction (see \S\ref{subsec:graded}) \[ e_\Ss: \ShvSp^\Ss \adj \SH(S): e_\Ss^*. \]
Our main aim in this section is to give, for certain $S$, a reconstruction of \[e_\Ss^*(\1_{\eta,\ell}^\comp) \in \CAlg(\ShvSp^\Ss) \] not involving $\SH(S)$.

The basic strategy for this is as follows.
Choose a ring spectrum $E \in \SH(S)$ such that $\Tot \CB(E)_\ell^\comp \wequi \1_{\ell,\eta}^\comp$.
We mainly have in mind $E = \MGL$ (or one of its variants, like an Adams summand), where the convergence is a standard result.
We intend to provide a reconstruction of $e_\Ss^*(\CB^\bullet(E)_\ell^\comp)$; totalizing this will fulfill our original aim.
To do this, we consider the map $\CB^\bullet(E)_\ell^\comp \to \CB^\bullet L_\et(E)_\ell^\comp$.
Using results from the previous section, we can often argue that the map has fiber with homotopy groups concentrated below a certain Chow degree.
Also we can sometimes show that $\CB^\bullet(E)_\ell^\comp$ has homotopy groups concentrated above a certain Chow degree.
This implies that we can, in good cases, recover $e_\Ss^* \CB^\bullet(E)_\ell^\comp)$ from $e_\Ss^* \CB^\bullet(L_\et E)_\ell^\comp)$ by a strategic truncation.
We thus need only produce a reconstruction of $e_\Ss^* \CB^\bullet L_\et(E)_\ell^\comp$, which we can do using results from the previous sections (in particular \S\ref{sec:etale-MGL}).

\subsubsection*{Organization}
We begin in \S\ref{subsec:chow-conn} by showing that, under appropriate assumptions on $S$, the homotopy groups of certain spectra are concentrated in non-negative Chow degrees.
We will use this to provide several approximations of $e_\Ss^*(\1_{\ell,\eta}^\comp)$ of increasing complexity, valid in increasing generality.
First, in \S\ref{subsec:coh-dim-1} we deal with the case of fields of cohomological dimension $\le 1$, where a simple truncation suffices.
Second, in \S\ref{subsec:coh-dim-2} we deal with cohomological dimension $\le 2$.
Here, in addition to performing a truncation, we also have to modify the lowest homotopy groups (where we know a priory that we must essentially get the Lazard ring, but the pure étale topology construction will produce something else).
Third, in \S\ref{subsec:recons-adams} we modify our strategy by replacing $\MGL$ by its Adams summand $e_0^{(\ell)} \MGL_{(\ell)}$, and employing a slightly different $t$-structure on $\ShvSp^\Ss$.
It turns out that this allows us to deal with fields of cohomological dimension $\le \ell-1$---an improvement over the previous construction as soon as $\ell \ge 5$.

In the final two subsections, namely \S\ref{subsec:cellular-reconstruction} and \S\ref{subsec:AT-reconstruction}, we use our reconstructions of $e_\Ss^*(\1_{\ell,\eta}^\comp)$ to write down \emph{Galois approximations}.
That is, by considering appropriate categories of modules, we provide reconstructions of entire subcategories of $\SH(S)_{\ell,\eta}^\comp$.
We also establish some basic functorialities, and for example show how to obtain a functor $\SH(\CC)_{\ell,\eta}^{\comp\cell} \to \SH(k)_{\ell,\eta}^\comp$ for any field $k$ containing all $\ell$-power roots of unity.

\subsubsection*{Standing notation}
We keep fixed the functor $e: \ShvSp^\Ss \to \SH(S)$ corresponding to $\Sigma^{0,1} \1$.
Lemma \ref{lem:graded-t} supplies us with a $t$-structure on $\ShvSp^\Ss$, coming from the function $\Ss \to \Z, n \mapsto 2n$.
Unless stated otherwise, this is the $t$-structure on $\ShvSp^\Ss$ that we use.

\subsection{Chow connectivity} \label{subsec:chow-conn}
\begin{lem} \label{lemm:chow-conn-permanence}
The category of spectra $E \in \SH(S)$ with $\pi_{**}(E)$ concentrated in Chow degrees $\ge 0$ is closed under colimits, extensions, $\Sigma^{2*,*}$, and completions.
\end{lem}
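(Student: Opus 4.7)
The plan is to verify the four closure properties separately, all by reduction to the long exact sequence of a fiber sequence and the vanishing of homotopy groups $\pi_{p,q}$ in Chow degrees $c(p,q) = p - 2q < 0$.

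First I would dispatch the easy half: $\Sigma^{2,1}$ sends bidegree $(p,q)$ to $(p+2,q+1)$, with Chow degree $(p+2) - 2(q+1) = p - 2q$, so suspension by $\Sigma^{2a,a}$ preserves the property tautologically. For extensions $E' \to E \to E''$, given $c(p,q) < 0$, the long exact sequence sandwiches $\pi_{p,q}(E)$ between $\pi_{p,q}(E')$ and $\pi_{p,q}(E'')$, both zero by hypothesis, so $\pi_{p,q}(E) = 0$.

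Next I would do colimits. Any colimit in the stable presentable $\infty$-category $\SH(S)$ is generated from filtered colimits, coproducts, and cofibers; cofibers are extensions, already handled. The generators $\Sigma^{p,q}\1$ are compact, so $\pi_{p,q}$ commutes with filtered colimits and arbitrary coproducts, and vanishing in Chow degree $< 0$ propagates through both operations.

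Finally comes completion, which is the subtle part. For $\ell$-adic completion, write $E_\ell^\comp = \lim_n E/\ell^n$. Each $E/\ell^n$ is an iterated extension of $E$, hence has $\pi_{**}$ concentrated in Chow degrees $\geq 0$ by the extension part. The Milnor sequence gives
\[
0 \to \limone_n \pi_{p+1,q}(E/\ell^n) \to \pi_{p,q}(E_\ell^\comp) \to \lim_n \pi_{p,q}(E/\ell^n) \to 0.
\]
For $c(p,q) < 0$, the $\lim$ term vanishes term-by-term. The $\limone$ term involves $\pi_{p+1,q}(E/\ell^n)$ in Chow degree $c(p,q)+1$; this is already $< 0$ (hence zero) except in the borderline case $c(p,q) = -1$. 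The main obstacle is this borderline, which I would resolve by noting that the universal coefficient sequence
\[
0 \to \pi_{p+1,q}(E)/\ell^n \to \pi_{p+1,q}(E/\ell^n) \to \pi_{p,q}(E)[\ell^n] \to 0
\]
degenerates to $\pi_{p+1,q}(E/\ell^n) \cong \pi_{p+1,q}(E)/\ell^n$ because $\pi_{p,q}(E) = 0$ when $c(p,q) = -1$; the transition maps of the resulting tower $A/\ell^{n+1} \twoheadrightarrow A/\ell^n$ (with $A = \pi_{p+1,q}(E)$) are surjective, so $\limone = 0$. This gives $\pi_{p,q}(E_\ell^\comp) = 0$ in all Chow degrees $< 0$ as required. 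Other flavours of completion (e.g.\ $\eta$-completion) either reduce to this case or are even easier, since $\eta$ has Chow degree $-1$ so the $\eta$-Bockstein argument is analogous.
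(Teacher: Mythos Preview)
Your argument for $\Sigma^{2*,*}$, extensions, colimits, and $\ell$-adic completion is correct and is exactly what the paper's one-line proof unpacks to. The paper phrases it more conceptually: the condition ``$\pi_{**}(E)$ concentrated in Chow degrees $\ge 0$'' is equivalent to $\map(\Sigma^{2n,n}\1, E) \in \ShvSp_{\ge 0}$ for all $n$; since $\Sigma^{2n,n}\1$ is compact, this mapping-spectrum functor commutes with colimits and with $(\ph)/\ell^m$, so everything reduces to the well-known closure properties of $\ShvSp_{\ge 0}$. Your Milnor-sequence computation is precisely the ``well-known'' part spelled out.

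One caution: your final sentence about $\eta$-completion is not right, and the paper does not claim it (``completions'' there means $\ell$-adic completions, as the reduction to $\ShvSp_{\ge 0}$ makes clear). The point is that $\eta$ has Chow degree $-1$, so $E/\eta^m$ sits in a cofiber sequence with $\Sigma^{m+1,m}E$, which has Chow connectivity $1-m$; for $m \ge 2$ the extension argument no longer gives Chow connectivity $\ge 0$ for $E/\eta^m$, and the Bockstein is not analogous to the integer case. Drop that remark and your proof is complete.
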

\begin{proof}
The claim about $\Sigma^{2*,*}$ is clear.
Since $\Sigma^{2n,n} \1 \in \SH(S)$ is compact, all other claims reduce to their analogs for $\ShvSp_{\ge 0}$, which are well-known.
\end{proof}

\begin{lem} \label{lemm:HZ-chow-conn}
Let $S$ be essentially smooth over a Dedekind scheme $D$.
Then $\pi_{**} H\Z_S$ is concentrated in non-negative Chow degrees.
\end{lem}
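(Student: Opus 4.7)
\emph{Proof plan.} The assertion is that $\pi_{p,q}(H\Z_S) = 0$ whenever the Chow degree $c(p,q) = p - 2q$ is strictly negative. I would translate this into a vanishing statement for motivic cohomology and then invoke a standard result. Unwinding the definitions,
\[ \pi_{p,q}(H\Z_S) = [\Sigma^{p,q}\1, H\Z_S]_{\SH(S)} \cong H^{-p,-q}(S, \Z), \]
where the right-hand side is Spitzweck's motivic cohomology \cite{spitzweck2012commutative}. The desired vanishing rephrases, with $a = -p$ and $b = -q$, as $H^{a,b}(S,\Z) = 0$ whenever $a > 2b$.

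For $S$ smooth over the Dedekind scheme $D$, Spitzweck's comparison theorem identifies this motivic cohomology with Bloch--Levine motivic cohomology, hence with Bloch's higher Chow groups: $H^{a,b}(S, \Z) \cong \mathrm{CH}^b(S, 2b-a)$. Since the cycle complex $z^b(S, \bullet)$ is placed in non-negative homological degrees, $\mathrm{CH}^b(S, n) = 0$ for $n < 0$; this gives the vanishing whenever $2b - a < 0$. In negative weight $b<0$ both sides vanish by convention, so all cases are handled.

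The essentially smooth case follows by writing $S = \lim_\alpha S_\alpha$ as an affine cofiltered limit of smooth $D$-schemes: continuity of $\SH(\ph)$ together with the fact that Spitzweck's $H\Z$ is stable under the transition pullbacks forces the bigraded groups $\pi_{p,q}(H\Z_S)$ to be filtered colimits of those for the $S_\alpha$, and the smooth case already handled applies. I do not expect any serious obstacle here; the argument amounts to repackaging standard facts about motivic cohomology. The only mild care needed is to invoke Spitzweck's comparison correctly over a Dedekind (rather than field) base and to carry out the continuity reduction cleanly in the essentially smooth case.
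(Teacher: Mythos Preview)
Your plan is on the right track and uses the same ingredient as the paper (Spitzweck's comparison with cycle complexes), but it glosses over the step where the real content lies. Spitzweck's Theorem 7.18 identifies $\map(\Sigma^{2n,n}\1, H\Z_S)$ with the \emph{Zariski hypercohomology} $\Gamma_{\Zar}(S, z_S^n)$ of the cycle complex, not directly with the naive higher Chow groups $\mathrm{CH}^n(S,\ast)$ defined as the homology of $z^n(S,\bullet)$. The passage from hypercohomology to higher Chow groups is exactly Zariski descent for the cycle complex on $S$. Over a field this is Bloch's localization; over a \emph{local} Dedekind base it is Levine's result \cite[Theorem 1.7]{levine2001techniques}. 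Over a global Dedekind base $D$ you cannot simply cite this, and your sentence ``Spitzweck's comparison theorem identifies this motivic cohomology with Bloch--Levine motivic cohomology, hence with Bloch's higher Chow groups'' is where the gap sits. Your continuity reduction to smooth $S$ does not help here, since the issue is the base $D$, not $S$.

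The paper's proof fills precisely this gap, and in a way that avoids needing full Zariski descent over global $D$. It pushes forward along $p:S\to D$, checks $p_* L_{\Zar} z_S^n \simeq L_{\Zar} p_* z_S^n$ stalkwise on $D$ (where Levine's local result applies), and then shows connectivity of $\Gamma_{\Zar}(D, L_{\Zar} p_* z_S^n)$ by exploiting that $D$ has Zariski cohomological dimension $\le 1$ together with the observation that $\pi_0(p_* z_S^n)$, the presheaf of codimension-$n$ cycles, is a flasque sheaf and hence has vanishing $H^1_{\Zar}$. So either locate a reference giving Zariski descent for $z_S^n$ over an arbitrary Dedekind base (in which case your argument goes through), or replace your direct appeal to $\mathrm{CH}^b(S,2b-a)$ with the hypercohomology description and supply an argument like the paper's for connectivity of global sections.
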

\begin{proof}
Let $p: S \to D$ be the structure morphism.
Write $z_S^n$ for the Bloch cycle complex of codimension $n$, viewed as a presheaf on $S_\Zar$.
Then $\map(\Sigma^{2n,n} \1, H\Z) \wequi \Gamma_\Zar(S, z_S^n)$ \cite[Theorem 7.18]{spitzweck2012commutative}.
It thus suffices to prove that $L_\Zar z_S^n$ is connective on global sections.
We claim that $p_* L_\Zar z_S^n \wequi L_\Zar p_* z_S^n$.
This may be checked on stalks, whence we may assume that $D$ is local; in this situation the claim follows from the fact that $z_S^n$ already satisfies Zariski descent \cite[Theorem 1.7 and following paragraph]{levine2001techniques}.
Since $z_S^n$ is sectionwise connective and $D$ has Zariski cohomological dimension $\le 1$, it thus suffices to prove that $H^1_\Zar(D, p_* (z_S^n)_0) = 0$.
By construction $(z_S^n)_0(U)$ is the group of codimension $n$ cycles on $U$.
It follows that the presheaf $(z_S^n)_0(\ph)$ is in fact a flasque Zariski sheaf.\NB{ref?}
It follows that the presheaf $p_* (z_S^n)_0$ is a flasque sheaf, and hence has vanishing cohomology as desired \cite[Tag 09SY]{stacks-project}.\NB{Flasque presheaves need \emph{not} have vanishing cohomology. E.g. consider constant presheaves...}
\end{proof}

\begin{cor} \label{cor:MGL-chow-conn}
If $\scr S$ is the set of primes not invertible in $S$, then $\pi_{**} \MGL_S[1/\scr S]$ is concentrated in non-negative Chow degrees.
The same holds for $\pi_{**} (\MGL_S)_\ell^\comp$, for any $\ell$ invertible on $\scr S$.
\end{cor}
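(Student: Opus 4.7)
The plan is to combine slice completeness of $\MGL_S[\scr S^{-1}]$ (Lemma \ref{lem:MGL-sc}) with closure properties of Chow-connectivity (Lemma \ref{lemm:chow-conn-permanence}). Writing $E = \MGL_S[\scr S^{-1}]$, Lemmas \ref{lem:MGL-sc} and \ref{lemm:sc-nondeg} (together with compactness of $\Sigma^{p,q}\1$) yield $\pi_{p,q}(E) \wequi \lim_n \pi_{p,q}(f^n E)$, so it suffices to show that each $f^n E$ has $\pi_{**}$ concentrated in non-negative Chow degrees.

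First I would analyze the slices. By Remark \ref{rmk:MGL-slices}, the slice $s_n E$ is a sum of copies of $\Sigma^{2n,n} H\Z[\scr S^{-1}]$. Since $c(p+2n,q+n) = c(p,q)$, the shift $\Sigma^{2n,n}$ preserves the Chow degree of homotopy groups, and $H\Z[\scr S^{-1}]$ has $\pi_{**}$ in non-negative Chow degrees by Lemma \ref{lemm:HZ-chow-conn}. By closure under colimits (Lemma \ref{lemm:chow-conn-permanence}), each slice $s_n E$ is therefore Chow-connective. Next, since $E$ is slice-connective (i.e.\ $E \wequi f_0 E$, by Lemma \ref{lem:MGL-sc}), the standard tower $f_{n+1}E \to f_n E \to \cdots \to f_0 E = E$ induces, upon taking cofibers of $f_{n+1}E \to E$, a finite filtration of $f^n E$ whose graded pieces are $s_0 E, s_1 E, \ldots, s_n E$. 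Closure under extensions (again Lemma \ref{lemm:chow-conn-permanence}) implies that $f^n E$ is Chow-connective, completing the first statement.

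For the $\ell$-adic assertion, observe that the hypothesis means every prime $p \in \scr S$ is coprime to $\ell$, and hence acts invertibly on any $\ell$-complete spectrum (since $p$ is a unit in $\Z/\ell^k$ for all $k$). Consequently $(\MGL_S)_\ell^\comp \wequi (\MGL_S[\scr S^{-1}])_\ell^\comp$, and the first part together with closure of Chow-connectivity under $\ell$-completion (Lemma \ref{lemm:chow-conn-permanence} once more) gives the conclusion. The only real bookkeeping subtlety is verifying that the filtration of $f^n E$ has the expected graded pieces; everything else is formal application of the cited lemmas.
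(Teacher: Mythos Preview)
Your proof is correct and follows essentially the same route as the paper: reduce via slice completeness (Lemmas \ref{lem:MGL-sc} and \ref{lemm:sc-nondeg}) to the Chow-connectivity of each $f^n E$, then obtain $f^n E$ as an iterated extension of the slices $s_i E$, which are Chow-connective by Remark \ref{rmk:MGL-slices} and Lemma \ref{lemm:HZ-chow-conn}. You are slightly more explicit than the paper in two places---spelling out the base case $f^{-1}E = 0$ (effectivity of $\MGL$, though this is really a standard fact about its Thom-spectrum construction rather than the literal content of Lemma \ref{lem:MGL-sc}) and the identification $(\MGL_S)_\ell^\comp \wequi (\MGL_S[\scr S^{-1}])_\ell^\comp$---but the argument is the same.
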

\begin{proof}
We need only prove the first statement, since it implies the second by Lemma \ref{lemm:chow-conn-permanence}.
By Lemmas \ref{lem:MGL-sc} and \ref{lemm:sc-nondeg}, it suffices to prove that $\pi_{**} f^i \MGL_S[1/\scr S]$ is concentrated in Chow degrees $\ge 0$.
Via the cofiber sequences $s_i \to f^{i+1} \to f^i$ and Lemma \ref{lemm:chow-conn-permanence} this reduces to proving that $\pi_{**} s_i \MGL_S[1/\scr S]$ is concentrated in Chow degrees $\ge 0$.
This follows from Remark \ref{rmk:MGL-slices} and Lemma \ref{lemm:HZ-chow-conn}.
\end{proof}

To exploit the above properties, we use the following construction.
\begin{dfn}
Denote by $\tau^c_{\ge 0} \SH(S)_\ell^\comp \subset \SH(S)_\ell^\comp$ the subcategory generated under colimits and extensions by the objects $\Sigma^{2n,n} \1_\ell^\comp$ for $n \in \Z$.
This defines a $t$-structure \cite[Proposition 1.4.4.11(2)]{HA}, and we denote the truncation functors by $\tau^c_{\ge 0}, \tau^c_{\le 0}$ and so on.
\end{dfn}

\begin{exm} \label{ex:MGL-cellular}
We have $\MGL_\ell^\comp \in \tau^c_{\ge 0}\SH(S)_\ell^\comp$ by \cite[Theorem A.1]{BKWX}.
\end{exm}
\begin{rmk}
The functor $\tau^c_{\ge 0}: \SH(S)_\ell^\comp \to \SH(S)_\ell^\comp$ is lax symmetric monoidal, $\tau^c_{\ge 0}\SH(S)_\ell^\comp$ being closed under tensor products.
\end{rmk}
\begin{rmk} \label{rmk:tauc-vanish-meaning}
For $E \in \SH(S)_\ell^\comp$, we have $\tau_{\ge 0}^c E = 0$ if and only if $\pi_{**} E$ is concentrated in Chow degrees $<0$.
(Necessity is obvious, and for sufficiency we need to see that $\Map(X, E) = 0$ for all $X \in \tau_{\ge 0}^c \SH(S)_\ell^\comp$.
The collection of such $X$ is closed under colimits and extensions and contains $\Sigma^{2*,*}\1_\ell^\comp$ by assumption, whence is all of $\tau_{\ge 0}^c \SH(S)_\ell^\comp$.)
\end{rmk}

\begin{lem} \label{lem:chow-recover}
Let $E \in \SH(S)_\ell^\comp$.
\begin{enumerate}
\item If $E$ satisfies $\CQL_0(\ell, S)$, then $\tau^c_{\ge 0} E \wequi \tau^c_{\ge 0}L_\et(E)_\ell^\comp$.
\item If $E$ satisfies $\CQL_s(\ell, S)$ then we have a pullback square
\begin{equation*}
\begin{CD}
\tau^c_{\ge 0} E @>>> \tau^c_{\ge 0} L_\et(E)_\ell^\comp \\
@VVV                     @VVV \\
\tau^c_{< s} \tau^c_{\ge 0} E @>>> \tau^c_{< s} \tau^c_{\ge 0} L_\et(E)_\ell^\comp.
\end{CD}
\end{equation*}
\item Let $A \in \Alg(\tau^c_{\ge 0}\SH(S)_\ell^\comp)$ have homotopy groups concentrated in Chow degrees $\ge 0$.
  Let $E \in \Mod_A(\SH(S)_\ell^\comp)$.
  Then $e_\Ss^*(\tau^c_{\ge n} E) \wequi \tau_{\ge n}^\Ss e_\Ss^* E$, and similarly for $\tau^c_{\le n}$.
\end{enumerate}
\end{lem}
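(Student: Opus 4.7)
The plan for all three parts exploits that $\tau_{\ge 0}^c$, being right adjoint to the inclusion $\tau_{\ge 0}^c \SH(S)_\ell^\comp \hookrightarrow \SH(S)_\ell^\comp$, preserves fibers, while $\tau_{\le n}^c$ is a left adjoint and preserves cofibers; by stability both preserve fiber sequences. Combining this with (shifts of) Remark \ref{rmk:tauc-vanish-meaning}---which identify $\scr C_{\le n}^c$ with the full subcategory of objects whose $\pi_{**}$ is concentrated in Chow degrees $\le n$---reduces each part to Chow-degree bookkeeping.

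For parts (1) and (2), I would put $F = \fib(E_\ell^\comp \to L_\et(E)_\ell^\comp)$. Remark \ref{rmk:extensions} together with the $\CQL_s$ hypothesis give that $\pi_{**} F$ is concentrated in Chow degrees $\le s-2$, so $F \in \scr C_{\le s-1}^c$. For (1), the case $s=0$ forces $F \in \scr C_{\le -1}^c$ and thus $\tau_{\ge 0}^c F = 0$; applying $\tau_{\ge 0}^c$ to the fiber sequence $F \to E_\ell^\comp \to L_\et(E)_\ell^\comp$ yields the claimed equivalence. For (2), the given square is a pullback iff the induced map on horizontal fibers is an equivalence. The fiber of the top row is $\tau_{\ge 0}^c F$ and that of the bottom row is $\tau_{<s}^c \tau_{\ge 0}^c F$. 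Since truncations in a $t$-structure commute and $\tau_{<s}^c F \wequi F$, one finds $\tau_{<s}^c \tau_{\ge 0}^c F \wequi \tau_{\ge 0}^c \tau_{<s}^c F \wequi \tau_{\ge 0}^c F$, completing the pullback argument.

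For part (3), I would apply the right adjoint $e_\Ss^*$ to the fiber sequence $\tau_{\ge n}^c E \to E \to \tau_{<n}^c E$, obtaining a fiber sequence of sphere-graded spectra. Two ingredients translate between the two $t$-structures: first, $(e_\Ss^* X)_m \wequi \map(\Sigma^{0,m}\1, X)$ has $\pi_k \wequi \pi_{k,m}(X)$; second, the graded $t$-structure of Lemma \ref{lem:graded-t} with $f(m) = 2m$ places $e_\Ss^* X$ in $(\ShvSp^\Ss)_{\ge n}$ iff $\pi_{**} X$ lies in Chow degrees $\ge n$ (and dually for $\le n$). By uniqueness of the $t$-structure decomposition, it will then suffice to check $e_\Ss^* \tau_{\ge n}^c E \in (\ShvSp^\Ss)_{\ge n}$ and $e_\Ss^* \tau_{<n}^c E \in (\ShvSp^\Ss)_{<n}$; the latter is immediate from $\tau_{<n}^c E \in \scr C_{<n}^c$.

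The hard part will be to show that $\pi_{**}(\tau_{\ge n}^c E)$ is actually concentrated in Chow degrees $\ge n$, because Chow-connectivity in the $t$-structure sense does not on its own control the Chow degrees of $\pi_{**}$---the $A$-module hypothesis is precisely what provides this control. I would invoke \cite[Proposition 2.2.1.1]{HA} to lift the Chow $t$-structure from $\SH(S)_\ell^\comp$ to $\Mod_A$; since the forgetful functor is $t$-exact, $\tau_{\ge n}^c E$ agrees with the truncation computed in $\Mod_A$. The lifted positive part $(\Mod_A)_{\ge 0}^c$ is generated under colimits, extensions, and positive spectral shifts by the free modules $\Sigma^{2m,m} A$; each has $\pi_{**}$ in Chow degrees $\ge 0$ because $\Sigma^{2m,m}$ preserves Chow degree, $\pi_{**} A$ is Chow-nonnegative by hypothesis, and positive spectral suspensions only shift Chow degrees up. Since Lemma \ref{lemm:chow-conn-permanence} shows the subcategory of objects with $\pi_{**}$ in Chow degrees $\ge 0$ is closed under colimits and extensions, all of $(\Mod_A)_{\ge 0}^c$ lies in it. The shifted statement for $\tau_{\ge n}^c E$ then follows by suspension, and the analogous claim for $\tau_{\le n}^c E$ comes from applying $e_\Ss^*$ to $\tau_{\ge n+1}^c E \to E \to \tau_{\le n}^c E$ and invoking the uniqueness of $t$-structure decomposition once more.
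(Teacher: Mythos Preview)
Your argument for part (3) is correct and essentially matches the paper's: both establish that the forgetful functors $\Mod_A(\SH(S)_\ell^\comp) \to \SH(S)_\ell^\comp$ and $\Mod_A(\SH(S)_\ell^\comp) \to (\ShvSp^\Ss)_\ell^\comp$ are $t$-exact, using that the free modules $\Sigma^{2m,m}A$ have $\pi_{**}$ concentrated in non-negative Chow degree.

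Parts (1) and (2), however, contain a genuine gap. The opening claim that $\tau_{\ge 0}^c$ and $\tau_{\le n}^c$ preserve fiber sequences ``by stability'' is false: while $\SH(S)_\ell^\comp$ is stable, the truncation functors factor through the non-stable subcategories $\scr C_{\ge 0}^c$ and $\scr C_{\le n}^c$, and as endofunctors of $\scr C$ they are neither left nor right exact. Concretely, in any $t$-structure the fiber sequence $\Sigma^{-1}X \to 0 \to X$ with $X$ in the heart is sent by $\tau_{\ge 0}$ to $0 \to 0 \to X$, which is not a fiber sequence. In particular $\tau_{\ge 0}^c F = 0$ alone does not force $\tau_{\ge 0}^c E \wequi \tau_{\ge 0}^c L_\et(E)_\ell^\comp$. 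There is also an off-by-one: from ``$\pi_{**}F$ concentrated in Chow degrees $\le s-2$'' Remark~\ref{rmk:tauc-vanish-meaning} gives $F \in \scr C_{\le s-2}^c$, not merely $\scr C_{\le s-1}^c$, and it is this sharper bound that is actually needed.

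The same issue invalidates your identification of the horizontal fibers in (2): the fiber in $\scr C$ of $\tau_{\ge 0}^c E \to \tau_{\ge 0}^c L_\et(E)_\ell^\comp$ is \emph{not} $\tau_{\ge 0}^c F$ in general (the counterexample above, with $E=0$ and $E'=X$, already shows this). The paper sidesteps the problem by taking \emph{vertical} fibers instead. These are $\tau_{\ge s}^c E$ and $\tau_{\ge s}^c L_\et(E)_\ell^\comp$, via the fiber sequence $\tau_{\ge s}^c \to \tau_{\ge 0}^c \to \tau_{<s}^c\tau_{\ge 0}^c$ applied to each (using $\tau_{\ge s}^c\tau_{\ge 0}^c = \tau_{\ge s}^c$ for $s \ge 0$), so the pullback claim reduces to $\tau_{\ge s}^c E \wequi \tau_{\ge s}^c L_\et(E)_\ell^\comp$. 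This follows from $F \in \scr C_{\le s-2}^c$, since then both $[X,F]$ and $[X,\Sigma F]$ vanish for every $X \in \scr C_{\ge s}^c$; part (1) is then the special case $s=0$.
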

\begin{proof}
(1) This is a special case of (2).

(2) Taking vertical fibers, we need to prove that $\tau^c_{\ge s} E \wequi \tau^c_{\ge s} L_\et(E)_\ell^\comp$.
For this it suffices to prove that if $F=\fib(E \to L_\et(E)_\ell^\comp)$, then $F \in \tau_{<s-1}^c \SH(S)_\ell^\comp$ (indeed then $\Sigma F \in \tau_{<s}^c \SH(S)_\ell^\comp$ and we conclude from the fiber sequence $\tau_{\ge s}^c E \to \tau_{\ge s}^c L_\et(E)_\ell^\comp \to \tau_{\ge s}^c F \in \tau_{\ge s}^c \SH(S)_\ell^\comp$).
By Remark \ref{rmk:tauc-vanish-meaning}, this exactly means that $F$ has homotopy groups concentrated in Chow degrees $<s-1$.
This is the definition of $\CQL_s(\ell,S)$.

(3) Consider the forgetful functors \[ \Mod_A(\SH(S)_\ell^\comp) \to \SH(S)_\ell^\comp \quad\text{and}\quad \Mod_A(\SH(S)_\ell^\comp) \to (\ShvSp^\Ss)_\ell^\comp \to \ShvSp^\Ss. \]
All these categories carry evident $t$-structures (generated by the spheres of non-negative Chow degree), and it will suffice to show that all the functors are $t$-exact.
For the last functor, this follows from the fact that $\ShvSp_\ell^\comp \to \ShvSp$ is $t$-exact.\NB{ref?}
For the two other functors, the result follows from the following general fact: 
Consider an adjunction \[ L: \scr C \adj \scr D: R \] of stable presentable categories.
Give $\scr C$ the $t$-structure generated by a set $\scr G$ of objects, and $\scr D$ the $t$-structure generated by $L\scr G$.
Then $L$ is right $t$-exact and (hence) $R$ is left $t$-exact.
In order for $R$ to be $t$-exact, it suffices that $R$ preserves colimits (which will hold as soon as $\scr C$ is compactly generated by a collection $\scr F$ of objects such that $L\scr F$ consists of compact objects) and $RL\scr G \subset \scr C_{\ge 0}$.
This criterion applies in our situation: in order for $\Mod_A(\SH(S)_\ell^\comp) \to \SH(S)_\ell^\comp$ to be $t$-exact we need $A \in \tau_{\ge 0}^c\SH(S)_\ell^\comp$, and in order for $\Mod_A(\SH(S)_\ell^\comp) \to (\ShvSp^\Ss)_\ell^\comp$ to be $t$-exact we need $A$ to have homotopy groups in Chow degrees $\ge 0$.
\end{proof}

\subsection{Cohomological dimension $\le 1$} \label{subsec:coh-dim-1}
Recall from \S\ref{subsec:graded} the $t$-structure on $\ShvSp^\Ss$.
\begin{lem} \label{lem:reconst-from-Let-cd1}
Let $E \in \SH(S)$.
Assume that $E$ satisfies $\CQL_0(\ell, S)$ and $\pi_{**}(E_\ell^\comp)$ is concentrated in Chow degrees $\ge 0$.
Then \[ e_\Ss^*(E_\ell^\comp) \wequi \tau_{\ge 0}^\Ss e_\Ss^*(L_\et(E)_\ell^\comp), \] via the canonical map.
\end{lem}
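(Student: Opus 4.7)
The plan is to reduce the equivalence to a direct check on bigraded homotopy groups, using the explicit description of $e_\Ss^*$ together with the $t$-structure on $\ShvSp^\Ss$. Since $e_\Ss$ sends the invertible generator of $\ShvSp^\Ss$ to $\Sigma^{0,1}\1$, the right adjoint $e_\Ss^*$ is given on weights by $e_\Ss^*(F)(n) = \map(\Sigma^{0,n}\1, F)$, so $\pi_k e_\Ss^*(F)(n) = \pi_{k,n}(F)$. The $t$-structure on $\ShvSp^\Ss$ from Lemma \ref{lem:graded-t}, with $f(n) = 2n$, declares an object $X$ connective precisely when $X(n) \in \ShvSp_{\ge 2n}$ for every $n$, and the truncation is computed levelwise as $(\tau_{\ge 0}^\Ss X)(n) = \tau_{\ge 2n}X(n)$. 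Consequently $\tau_{\ge 0}^\Ss e_\Ss^*(F)$ retains exactly the bigraded homotopy classes of $F$ in Chow degrees $\ge 0$.

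With these identifications, the Chow-connectivity hypothesis on $E_\ell^\comp$ translates precisely into the statement that $e_\Ss^* E_\ell^\comp$ is itself connective in the $\Ss$-graded $t$-structure: the vanishing of $\pi_{p,n}(E_\ell^\comp)$ for $p < 2n$ is the same as the vanishing of $\pi_k e_\Ss^*(E_\ell^\comp)(n)$ for $k < 2n$. Hence the canonical map $e_\Ss^* E_\ell^\comp \to e_\Ss^* L_\et(E)_\ell^\comp$ factors uniquely through the connective cover, producing the morphism
\[ e_\Ss^* E_\ell^\comp \to \tau_{\ge 0}^\Ss e_\Ss^* L_\et(E)_\ell^\comp \]
whose invertibility is the content of the lemma.

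To check the refined map is an equivalence, it suffices to inspect homotopy groups at each weight $n$. For $k \ge 2n$ both sides have $\pi_k$ equal to $\pi_{k,n}$ of the respective spectrum, and the comparison is induced by $E_\ell^\comp \to L_\et(E)_\ell^\comp$; since $(k,n)$ has Chow degree $k - 2n \ge 0$, the hypothesis $\CQL_0(\ell, S)$ makes this an isomorphism. For $k < 2n$ both sides vanish, the source by Chow-connectivity and the target by the truncation. There is really no obstacle beyond bookkeeping: the whole argument is the observation that the $t$-structure on $\ShvSp^\Ss$ and the Chow-degree stratification of $\pi_{**}$ are calibrated so that $\tau_{\ge 0}^\Ss e_\Ss^*$ picks out exactly the Chow-non-negative part, which is precisely the part on which $\CQL_0$ asserts an isomorphism.
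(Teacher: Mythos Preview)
Your argument is correct. You unwind $e_\Ss^*$ and $\tau_{\ge 0}^\Ss$ explicitly and then verify the equivalence by a direct check on $\pi_{k}(\ph)(n) = \pi_{k,n}$, using $\CQL_0$ for $k \ge 2n$ and the respective vanishing for $k < 2n$.

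The paper's proof is organized differently: it routes the comparison through the Chow $t$-structure $\tau^c_{\ge 0}$ on $\SH(S)_\ell^\comp$. Namely, since $e_\Ss$ sends $\tau_{\ge 0}^\Ss$-connective objects into $\tau^c_{\ge 0}\SH(S)_\ell^\comp$, one has $\tau_{\ge 0}^\Ss e_\Ss^* \wequi \tau_{\ge 0}^\Ss e_\Ss^* \tau_{\ge 0}^c$, and then the problem reduces to $\tau^c_{\ge 0}(E_\ell^\comp) \wequi \tau^c_{\ge 0}(L_\et(E)_\ell^\comp)$, which is exactly Lemma~\ref{lem:chow-recover}(1). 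Your approach is more elementary and self-contained for this lemma, since it avoids invoking the $\tau^c$-machinery; the paper's approach is more modular, isolating the step ``$\CQL_0$ implies $\tau^c_{\ge 0}$ agrees'' as a separate lemma that is reused (in its $\CQL_s$ form) in the $\cd \le 2$ and Adams-summand reconstructions.
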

\begin{proof}
Since $\pi_{**}(E_\ell^\comp)$ is concentrated in Chow degrees $\ge 0$ we have $e_\Ss^*(E_\ell^\comp) \wequi \tau^\Ss_{\ge 0}e_\Ss^*(E_\ell^\comp)$.
Since $\tau^\Ss_{\ge 0}e_\Ss^* \wequi \tau^\Ss_{\ge 0}e_\Ss^* \tau^c_{\ge 0}$ (note that $e_\Ss(\tau^\Ss_{\ge 0} \ShvSp^\Ss)_\ell^\comp \subset \tau^c_{\ge 0}\SH(S)_\ell^\comp$), it will suffice to show that $\tau_{\ge 0}^c(E_\ell^\comp) \wequi \tau_{\ge 0}^c(L_\et(E)_\ell^\comp).$
This holds by Lemma \ref{lem:chow-recover}(1).
\end{proof}

\begin{cor} \label{cor:reconst-1}
Suppose that $\dim S < \infty$, $\MGL$ satisfies $\CQL_0(\ell, S)$ and $\pi_{**}(\MGL_\ell^\comp)$ is concentrated in Chow degrees $\ge 0$.
Then \[ e_\Ss^*(\1_{\ell,\eta}^\comp) \wequi \Tot \tau^\Ss_{\ge 0} e_\Ss^* L_\et(\CB(\MGL))_\ell^\comp \in \CAlg(\ShvSp^\Ss). \]
The same is true with $\MGL$ replaced by any $E \in \CAlg(\SH(S))$ such that $\MGL_\ell^\comp \wequi E_\ell^\comp \in \Alg_{\scr E_0}(\SH(S))$.
\end{cor}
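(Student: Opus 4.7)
The natural starting point is the cobar-style resolution $\1_{\ell,\eta}^\comp \wequi \Tot\CB^\bullet(\MGL)_\ell^\comp \in \CAlg(\SH(S))$ recalled in the introduction from \cite[Theorem 2.3]{bachmann-topmod}. Since $e_\Ss^*$ is right adjoint to $e_\Ss$ it preserves all limits (in particular $\ell$-completions and totalizations), whence
\[ e_\Ss^*(\1_{\ell,\eta}^\comp) \wequi \Tot\bigl(e_\Ss^*\CB^\bullet(\MGL)_\ell^\comp\bigr) \in \CAlg(\ShvSp^\Ss). \]
It therefore suffices to produce a cosimplicial equivalence $e_\Ss^*(\CB^n(\MGL)_\ell^\comp) \wequi \tau_{\ge 0}^\Ss e_\Ss^*(L_\et\CB^n(\MGL))_\ell^\comp$ natural in $n$. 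The plan is to obtain this by applying Lemma \ref{lem:reconst-from-Let-cd1} to $E_n := \CB^n(\MGL) = \MGL^{\otimes(n+1)}$, for which two conditions must be verified for each $n$: (i) $\pi_{**}((E_n)_\ell^\comp)$ lies in Chow degrees $\ge 0$, and (ii) $E_n$ satisfies $\CQL_0(\ell,S)$.

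Condition (i) is the easier one. By Example \ref{ex:MGL-cellular} the $\ell$-completed spectrum $\MGL_\ell^\comp$ lies in $\tau_{\ge 0}^c\SH(S)_\ell^\comp$; since this subcategory is generated under colimits and extensions by the tensor-closed family $\{\Sigma^{2m,m}\1_\ell^\comp\}_{m \in \Z}$, it is itself closed under tensor products, whence $(E_n)_\ell^\comp \in \tau_{\ge 0}^c$. Combined with the hypothesis that $\pi_{**}\MGL_\ell^\comp$ is concentrated in Chow degrees $\ge 0$ and the permanence properties of Lemma \ref{lemm:chow-conn-permanence}, this yields (i).

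Condition (ii), $\CQL_0$ for tensor powers, is the main obstacle. Setting $M := \MGL_\ell^\comp$, $L := L_\et(\MGL)_\ell^\comp$ and $C := \cofib(M \to L)$, the assumed $\CQL_0$ for $\MGL$ together with (i) and Lemma \ref{lem:chow-recover}(1) give $M \wequi \tau_{\ge 0}^c L$, equivalently $C \in \tau_{<0}^c$. The plan is to analyse the cofiber of $M^{\otimes(n+1)} \to L^{\otimes(n+1)}$ via the iterated tower interpolating between the two endpoints, whose successive cofibers have the form $M^{\otimes k}\otimes C\otimes L^{\otimes j}$, and to show these all lie in $\tau_{<0}^c$ so that the total cofiber does as well; this identifies $M^{\otimes(n+1)}$ with $\tau_{\ge 0}^c L^{\otimes(n+1)}$. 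The identification of $L^{\otimes(n+1)}$ with $L_\et(E_n)_\ell^\comp$ additionally uses that $L_\et(\ph)_\ell^\comp$ is smashing, which is valid under the finite-dimensionality and the implicit $\ell$-étale-finiteness built into the $\CQL_0$ hypothesis. The delicate step---which is the main obstacle---is the estimate on $M^{\otimes k}\otimes C\otimes L^{\otimes j}$: it requires that tensoring an object of $\tau_{<0}^c$ with an arbitrary object of $\SH(S)_\ell^\comp$ preserves Chow-coconnectivity, equivalently that the Chow $t$-structure is right-compatible with the symmetric monoidal structure. Finally, for the $\scr E_0$-variant, observe that $\CB^\bullet(E)$ depends only on the unit of $E$, so the hypothesis $\MGL_\ell^\comp \wequi E_\ell^\comp \in \Alg_{\scr E_0}(\SH(S))$ yields a cosimplicial equivalence $\CB^\bullet(E)_\ell^\comp \wequi \CB^\bullet(\MGL)_\ell^\comp$, reducing the general case to the one just treated.
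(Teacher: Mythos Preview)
Your overall structure matches the paper's: use $\1_{\ell,\eta}^\comp \wequi \Tot \CB(\MGL)_\ell^\comp$, commute $e_\Ss^*$ past the totalization, and invoke Lemma~\ref{lem:reconst-from-Let-cd1} levelwise. The point on which the paper is terse, and where your proposal runs aground, is why $\MGL^{\otimes(n+1)}$ inherits both hypotheses of that lemma. The intended argument is not interpolation but the standard splitting
\[ \MGL \otimes \MGL \;\wequi\; \bigoplus_{i \ge 0} \Sigma^{2i,i}\MGL, \]
coming from the Thom isomorphism and the even cell structure of $\BGL$ (equivalently, $\MGL_{**}\MGL$ is free on generators in bidegrees $(2i,i)$). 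Iterating, $\MGL^{\otimes(n+1)}$ is a direct sum of shifts $\Sigma^{2m,m}\MGL$. Then (i) follows from the hypothesis on $\MGL$ via Lemma~\ref{lemm:chow-conn-permanence} (closure under $\Sigma^{2*,*}$, colimits, completions), and (ii) follows because $\CQL_0$ is invariant under $\Sigma^{2*,*}$ and stable under coproducts and completions (Lemma~\ref{lemm:CQL-basics}).

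Without the splitting both of your arguments have genuine gaps. For (i), membership in $\tau_{\ge 0}^c$ does not force $\pi_{**}$ into non-negative Chow degree: that implication would require $\pi_{**}(\1_\ell^\comp)$ itself to be Chow-nonnegative, which is not among the hypotheses, and Lemma~\ref{lemm:chow-conn-permanence} says nothing about tensor products. For (ii), the obstacle you isolate is real: there is no reason for $\tau_{<0}^c$ to be preserved under tensoring with $L^{\otimes j}$, since $L = L_\et(\MGL)_\ell^\comp$ typically has homotopy in negative Chow degree; the interpolation cannot be completed as written. Your treatment of the $\scr E_0$ variant is essentially right, though the precise statement is that only the underlying \emph{semi}-cosimplicial object (and hence the underlying spectrum of the $\Tot$) depends on the unit alone, which is exactly how the paper phrases it.
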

Here the truncation $\tau^\Ss_{\ge 0}$ is applied levelwise to the cosimplicial object $e_\Ss^* L_\et(\CB(\MGL))_\ell^\comp$.
\begin{proof}
We have \[ e_\Ss^*(\1_{\ell,\eta}^\comp) \wequi e_\Ss^* \Tot \CB(\MGL)_\ell^\comp \wequi \Tot\, e_\Ss^* \CB(\MGL)_\ell^\comp \wequi \Tot\, \tau_{\ge 0}^\Ss e_\Ss^* L_\et(\CB(\MGL))_\ell^\comp, \] where the first equivalence is since $\1_\eta^\comp \wequi \Tot \CB(\MGL)$ by \cite[Theorem 2.3]{bachmann-topmod}, the second because $e_\Ss^*$ is a right adjoint, and the third because $\MGL^{\wedge n}$ satisfies the assumptions of Lemma \ref{lem:reconst-from-Let-cd1}.

This argument remains valid when replacing $\MGL$ by $E$.
The main observation is that the map $\1_{\ell,\eta}^\comp \to \Tot \CB(E)_\ell^\comp$ is still an equivalence, since the spectrum underlying the totalization of the cobar construction of an $\scr E_\infty$-ring only depends on the underlying $\scr E_0$-ring structure (because the totalization of a cosimplicial object only depends on the underlying semi-cosimplicial object).
\end{proof}

Suppose that $1/\ell \in S$ and $S$ is locally $\ell$-étale finite.
Then we have an induced adjunction \[ L_\et(e_\Ss)_\ell^\comp: \ShvSp^\Ss \adj \SH_\et(S)_\ell^\comp \wequi \ShvSp(S_\et)_\ell^\comp: \Gamma_\et^\Ss(\ph). \]
Recall also the cyclotomic character \[ \pi: S_\et \xrightarrow{\tilde\pi} BG_\ell \to B\Z_\ell^\times \] from \S\ref{subsec:cyclo-char} and the spectrum $\MU_\ell \in \CAlg(\ShvSp(B_\et G_\ell)_\ell^\comp)$ from \S\ref{subsec:equivariant-j}.

\begin{thm} \label{thm:recons-1}
Suppose that $1/\ell \in S$, $\dim S < \infty$, $\MGL$ satisfies $\CQL_0(\ell, S)$ and $\pi_{**}(\MGL_\ell^\comp)$ is concentrated in Chow degrees $\ge 0$.
Then \[ e_\Ss^*(\1_{\ell,\eta}^\comp) \wequi \Tot \tau^\Ss_{\ge 0} \Gamma_\et^\Ss \tilde\pi^*(\CB(\MU_\ell))_\ell^\comp \in \CAlg(\ShvSp^\Ss). \]
\end{thm}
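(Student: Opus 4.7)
The plan is to chain together Corollary \ref{cor:reconst-1} with Theorem \ref{thm:etale-cobordism-weak}. The hypotheses of the corollary hold: $\MGL$ satisfies $\CQL_0(\ell, S)$ by assumption, and the Chow-connectivity of $\pi_{**}(\MGL_\ell^\comp)$ is Corollary \ref{cor:MGL-chow-conn}. Consequently
\[ e_\Ss^*(\1_{\ell,\eta}^\comp) \wequi \Tot\, \tau_{\ge 0}^\Ss\, e_\Ss^* L_\et(\CB(\MGL))_\ell^\comp \in \CAlg(\ShvSp^\Ss), \]
and it suffices to identify the right-hand side with $\Tot\, \tau_{\ge 0}^\Ss\, \Gamma_\et^\Ss\, \tilde\pi^*(\CB(\MU_\ell))_\ell^\comp$.

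First, observe that on étale-local, $\ell$-complete objects the functor $e_\Ss^* : \SH(S) \to \ShvSp^\Ss$ coincides with $\Gamma_\et^\Ss$: for $E \in \SH_\et(S)_\ell^\comp \hookrightarrow \SH(S)$ the universal property of localization supplies $\map_{\SH(S)}(\Sigma^{2n,n}\1, E) \wequi \map_{\SH_\et(S)_\ell^\comp}(\Sigma^{2n,n}\1, E)$. Thus it remains to exhibit an equivalence between $L_\et(\CB(\MGL))_\ell^\comp$ and $\tilde\pi^*(\CB(\MU_\ell))_\ell^\comp$ as (semi-)cosimplicial objects in $\SH_\et(S)_\ell^\comp$. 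Theorem \ref{thm:etale-cobordism-weak} furnishes an $\scr E_0$-algebra equivalence $L_\et(\MGL)_\ell^\comp \wequi \tilde\pi^* \MU_\ell$, and since both $L_\et(\ph)_\ell^\comp$ and $\tilde\pi^*$ are symmetric monoidal, applying them termwise to the cobar construction yields the required semi-cosimplicial equivalence---recall that the semi-cosimplicial structure of $\CB$ invokes only the units, i.e.\ the $\scr E_0$-structure. Applying $\Gamma_\et^\Ss$, then $\tau_{\ge 0}^\Ss$, and then $\Tot$ gives the equivalence on underlying spectra.

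The main subtlety I expect is the $\scr E_\infty$-compatibility of this identification, since Theorem \ref{thm:etale-cobordism-weak} produces only an $\scr E_0$-algebra equivalence in general. As in the final paragraph of the proof of Corollary \ref{cor:reconst-1}, the resolution is that $\Tot$ of a cosimplicial object is controlled by its underlying semi-cosimplicial diagram, so the spectrum-level equivalence is automatic; meanwhile, the $\scr E_\infty$-structures on both totalizations arise canonically from the $\scr E_\infty$-rings $\MGL$ and $\MU_\ell$, and both are identified with $\1_{\ell,\eta}^\comp$ as $\scr E_\infty$-rings under $\1$ via the canonical unit maps $\1 \to \MGL$ and $\1 \to \tilde\pi^*\MU_\ell$. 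These two augmentations match under the $\scr E_0$-equivalence, which pins down the comparison in $\CAlg(\ShvSp^\Ss)$.
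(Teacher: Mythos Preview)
Your argument is fine up to, and including, the identification of the two totalizations as \emph{spectra}. The real gap is the $\scr E_\infty$-upgrade in your last paragraph. Saying that ``both are identified with $\1_{\ell,\eta}^\comp$ as $\scr E_\infty$-rings under $\1$'' is circular: the equivalence $\Tot\,\tau_{\ge 0}^\Ss\,\Gamma_\et^\Ss\,\tilde\pi^*(\CB(\MU_\ell))_\ell^\comp \wequi e_\Ss^*(\1_{\ell,\eta}^\comp)$ is exactly what you are trying to prove. And in general, knowing that two $\scr E_\infty$-rings are equivalent as spectra and receive compatible unit maps does not pin down an $\scr E_\infty$-equivalence between them. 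The $\scr E_0$-equivalence $L_\et(\MGL)_\ell^\comp \wequi \tilde\pi^*\MU_\ell$ simply does not produce an $\scr E_\infty$-comparison of the cosimplicial objects $L_\et(\CB(\MGL))_\ell^\comp$ and $\tilde\pi^*(\CB(\MU_\ell))_\ell^\comp$.

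The paper's fix is precisely designed to bypass this: rather than applying Corollary \ref{cor:reconst-1} to $\MGL$ and then trying to match up $\scr E_\infty$-structures after the fact, one applies it to the $\scr E_\infty$-ring
\[ E := \tau^c_{\ge 0}\,\tilde\pi^*(\MU_\ell)_\ell^\comp \in \CAlg(\SH(S)_\ell^\comp), \]
using that $\tau^c_{\ge 0}$ is lax symmetric monoidal. By Lemma \ref{lem:chow-recover}(1) and the $\scr E_0$-equivalence from Theorem \ref{thm:etale-cobordism-weak}, one has $E \wequi \MGL_\ell^\comp$ as $\scr E_0$-rings, so the hypothesis of Corollary \ref{cor:reconst-1} is met. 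Crucially, the counit $E \to \tilde\pi^*(\MU_\ell)_\ell^\comp$ is a genuine $\scr E_\infty$-map which becomes an equivalence after $L_\et(\ph)_\ell^\comp$; hence $L_\et(\CB(E))_\ell^\comp \wequi \tilde\pi^*(\CB(\MU_\ell))_\ell^\comp$ in $\CAlg$, and the theorem follows. The point is that the ``last clause'' of Corollary \ref{cor:reconst-1} lets you trade $\MGL$ for any $E$ with the same underlying $\scr E_0$-ring, and a judicious choice of $E$ makes the $\scr E_\infty$-comparison automatic.

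(A minor aside: the Chow-connectivity of $\pi_{**}(\MGL_\ell^\comp)$ is a hypothesis of the theorem, not something you need to invoke Corollary \ref{cor:MGL-chow-conn} for.)
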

\begin{proof}
We put \[ E := \tau^c_{\ge 0} \tilde\pi^*(\MU_\ell)_\ell^\comp \in \CAlg(\SH(S)_\ell^\comp), \] where we view \[ \tilde\pi^*(\MU_\ell)_\ell^\comp \in \ShvSp(S_\et)_\ell^\comp \wequi \SH_\et(S)_\ell^\comp \subset \SH(S)_\ell^\comp. \]
We have \[ L_\et(\MGL)_\ell^\comp \wequi \tilde\pi^*(\MU_\ell)_\ell^\comp \in \Alg_{\scr E_0}(\SH_\et(S)_\ell^\comp), \] by Theorem \ref{thm:identify-MGL}.
It follows via Lemma \ref{lem:chow-recover}(1) and Example \ref{ex:MGL-cellular} that $E \wequi \MGL_\ell^\comp$ as $\scr E_0$-algebras.
Hence by Corollary \ref{cor:reconst-1} we find that \[ e_\Ss^*(\1_{\ell,\eta}^\comp) \wequi \Tot \tau^\Ss_{\ge 0} e_\Ss^* L_\et(\CB(E))_\ell^\comp \in \CAlg(\ShvSp^\Ss). \]
The map \[ E = \tau_{c \ge 0}\tilde\pi^*(\MU_\ell)_\ell^\comp \to \tilde\pi^*(\MU_\ell)_\ell^\comp \in \CAlg(\SH(S)_\ell^\comp) \] induces \[ L_\et(E)_\ell^\comp \to \tilde\pi^*(\MU_\ell)_\ell^\comp \in  \CAlg(\SH(S)_\ell^\comp). \]
This is an equivalence, since the underlying map of spectra is equivalent to $L_\et(\MGL)_\ell^\comp \to \tilde\pi^*(\MU_\ell)_\ell^\comp$.
It follows that $L_\et(\CB(E))_\ell^\comp \wequi \tilde\pi^*(\CB(\MU_\ell))_\ell^\comp$ as $\scr E_\infty$-rings, concluding the proof.
\end{proof}

\begin{exm}
The assumptions of Theorem \ref{thm:recons-1} regarding $\MGL$ are satisfied if $S$ is essentially smooth over a Dedekind scheme and $\pvcd_\ell(S) \le 1$.
This follows from Corollaries \ref{cor:MGL-chow-conn} and \ref{cor:CQL-MGL}.
\end{exm}

\subsection{Cohomological dimension $\le 2$} \label{subsec:coh-dim-2}
Recall from Proposition \ref{prop:cx-oriented-equiv} that there is a morphism of graded Hopf algebroids (whence graded cosimplicial rings) $\omega: \pi_{2*}\CB(\MU) \to \pi_{2*,*}\CB(\MU_\ell)$.
\begin{thm} \label{thm:recons-2}
Suppose that $1/\ell \in S$, $\dim S < \infty$, $\MGL$ satisfies $\CQL_1(\ell, S)$ and $\pi_{**}(\MGL_\ell^\comp)$ is concentrated in Chow degrees $\ge 0$.
Then there is a pullback square in $\CAlg(\ShvSp^\Ss)$
\begin{equation*}
\begin{CD}
e_\Ss^*(\1_{\ell,\eta}^\comp) @>>> \Tot \tau^\Ss_{\ge 0} \Gamma_\et^\Ss \tilde\pi^*(\CB(\MU_\ell))_\ell^\comp \\
@VVV                                  @VVV \\
\Tot \pi_{2*}\CB(\MU)_\ell^\comp @>{\omega}>> \Tot \tau^\Ss_{=0} \Gamma_\et^\Ss \tilde\pi^*(\CB(\MU_\ell))_\ell^\comp.
\end{CD}
\end{equation*}
\end{thm}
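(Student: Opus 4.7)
The plan is to imitate the proof of Theorem \ref{thm:recons-1}, but using Lemma \ref{lem:chow-recover}(2) (the pullback version) in place of Lemma \ref{lem:chow-recover}(1). Concretely, I will construct, for each $n \ge 0$, a natural (in the cosimplicial variable) pullback square in $\ShvSp^\Ss$
\[
\begin{CD}
e_\Ss^*(\MGL^{\otimes(n+1)})_\ell^\comp @>>> \tau^\Ss_{\ge 0}\Gamma_\et^\Ss \tilde\pi^*(\MU_\ell^{\otimes(n+1)})_\ell^\comp \\
@VVV @VVV \\
\pi_{2*}(\MU^{\otimes(n+1)})_\ell^\comp @>>> \tau^\Ss_{=0}\Gamma_\et^\Ss \tilde\pi^*(\MU_\ell^{\otimes(n+1)})_\ell^\comp
\end{CD}
\]
and then totalize, using that $e_\Ss^*$ is a right adjoint (so preserves $\Tot$ and pullbacks) and that $\Tot \CB(\MGL)_\ell^\comp \wequi \1_{\ell,\eta}^\comp$ by \cite[Theorem 2.3]{bachmann-topmod}.

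First I would verify that $\MGL^{\otimes(n+1)}_\ell^\comp$ satisfies $\CQL_1(\ell,S)$ for all $n$. The Künneth-style decomposition $\MGL \wedge \MGL \wequi \bigoplus_\alpha \Sigma^{2k_\alpha,k_\alpha}\MGL$ (the motivic analogue of Proposition \ref{prop:cx-oriented-equiv}(3), coming from the complex orientation) iterates to write $\MGL^{\otimes(n+1)}$ as a sum of shifts $\Sigma^{2k,k}\MGL$. Since $\Sigma^{2k,k}$ preserves Chow degrees, each summand satisfies $\CQL_1$, and Lemma \ref{lemm:CQL-basics}(2),(3) propagates the property through coproducts and $\ell$-completion (the finite-cohomological-dimension hypothesis there is implicit in the $\CQL_1$ assumption on $\MGL$). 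Chow-connectivity of $\MGL^{\otimes(n+1)}_\ell^\comp$ follows similarly from Corollary \ref{cor:MGL-chow-conn} via Lemma \ref{lemm:chow-conn-permanence}.

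With these two properties I would apply Lemma \ref{lem:chow-recover}(2) with $s=1$ to obtain a pullback square in $\SH(S)_\ell^\comp$
\[
\begin{CD}
\MGL^{\otimes(n+1)}_\ell^\comp @>>> \tau^c_{\ge 0} L_\et(\MGL^{\otimes(n+1)})_\ell^\comp \\
@VVV @VVV \\
\tau^c_{<1} \MGL^{\otimes(n+1)}_\ell^\comp @>>> \tau^c_{<1}\tau^c_{\ge 0} L_\et(\MGL^{\otimes(n+1)})_\ell^\comp,
\end{CD}
\]
where I have used Chow-connectivity to replace $\tau^c_{\ge 0}\MGL^{\otimes(n+1)}_\ell^\comp$ by $\MGL^{\otimes(n+1)}_\ell^\comp$. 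Applying $e_\Ss^*$ and invoking Lemma \ref{lem:chow-recover}(3) with $A=\MGL_\ell^\comp$ (which satisfies its hypotheses by Example \ref{ex:MGL-cellular} and Corollary \ref{cor:MGL-chow-conn}; note that $L_\et(\MGL^{\otimes(n+1)})_\ell^\comp$ is naturally an $\MGL_\ell^\comp$-module via the unit) converts $e_\Ss^*\tau^c_{\ge 0}$ to $\tau^\Ss_{\ge 0}e_\Ss^*$ and $e_\Ss^*\tau^c_{<1}$ to $\tau^\Ss_{=0}e_\Ss^*$ on Chow-connective objects. The top right becomes $\tau^\Ss_{\ge 0}\Gamma_\et^\Ss\tilde\pi^*(\MU_\ell^{\otimes(n+1)})_\ell^\comp$ by Theorem \ref{thm:etale-cobordism-weak}, and the bottom left is the discrete sphere-graded object $\pi_{2*,*}(\MGL^{\otimes(n+1)})_\ell^\comp$, which the oriented computation of Proposition \ref{prop:cx-oriented-equiv}(3) identifies with $\pi_{2*}(\MU^{\otimes(n+1)})_\ell^\comp$; the bottom arrow then matches the definition of $\omega$.

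The main obstacle is ensuring the entire construction lifts to $\CAlg(\ShvSp^\Ss)$, not merely $\ShvSp^\Ss$. The subtle point is that $\tau^c_{<1}$ is only colax symmetric monoidal, so the $\scr E_\infty$-structure on the bottom-left corner must be produced as the Postnikov-type quotient $\MGL^{\otimes(n+1)}_\ell^\comp \to \tau^c_{=0}\MGL^{\otimes(n+1)}_\ell^\comp$ of $\scr E_\infty$-rings, and its naturality across the cosimplicial direction (so that $\Tot$ yields a pullback in $\CAlg$) must be checked. A secondary, more routine issue is that Theorem \ref{thm:etale-cobordism-weak} only provides an $\scr E_0$-identification $L_\et(\MGL)_\ell^\comp \wequi \tilde\pi^*\MU_\ell$ in general; however, since pullbacks in $\CAlg$ are detected on underlying spectra, this is enough to conclude, with the $\scr E_\infty$-structures recorded at each individual vertex.
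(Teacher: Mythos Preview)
Your overall strategy (levelwise pullback squares via Lemma \ref{lem:chow-recover}(2), then apply $e_\Ss^*$ and totalize) is exactly the paper's, and your verification that $\MGL^{\otimes(n+1)}_\ell^\comp$ satisfies $\CQL_1$ and is Chow-connective is fine. The gap is precisely the issue you flag as ``secondary'' and ``routine'': it is neither. Theorem \ref{thm:etale-cobordism-weak} only gives $L_\et(\MGL)_\ell^\comp \wequi \tilde\pi^*\MU_\ell$ as $\scr E_0$-algebras, so you have no $\scr E_\infty$-map (let alone a map of cosimplicial $\scr E_\infty$-rings) from $\CB(\MGL)$ to $\tilde\pi^*\CB(\MU_\ell)$. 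Your proposed square therefore lives in $\CAlg(\ShvSp^\Ss)$ only with right-hand column $\Tot\tau^\Ss_{\ge 0}e_\Ss^* L_\et(\CB(\MGL))_\ell^\comp$, not $\Tot\tau^\Ss_{\ge 0}\Gamma_\et^\Ss\tilde\pi^*(\CB(\MU_\ell))_\ell^\comp$. Knowing that pullbacks in $\CAlg$ are detected on underlying spectra does not help: you need the square with the $\MU_\ell$ corners to \emph{exist} in $\CAlg$ before you can test whether it is cartesian, and ``$\scr E_\infty$-structures recorded at each individual vertex'' do not assemble into a commuting $\CAlg$-square without $\scr E_\infty$-maps between them.

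The paper's device to fix this is to \emph{define} an auxiliary $E \in \CAlg(\SH(S)_\ell^\comp)$ as the pullback of
\[
\tau^c_{=0}(\MGL_\ell^\comp) \longrightarrow \tau^c_{=0}\tilde\pi^*(\MU_\ell)_\ell^\comp \longleftarrow \tau^c_{\ge 0}\tilde\pi^*(\MU_\ell)_\ell^\comp
\]
in $\CAlg$, where the left map comes from the homotopy-ring equivalence of Corollary \ref{cor:htpy-ring-MUl}. By construction $E$ has genuine $\scr E_\infty$-maps to both $\tilde\pi^*(\MU_\ell)_\ell^\comp$ and $\tau^c_{=0}\MGL_\ell^\comp$, so $\CB(E) \to \tilde\pi^*\CB(\MU_\ell)_\ell^\comp$ is a map of cosimplicial $\scr E_\infty$-rings and the desired square lives in $\CAlg$. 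Lemma \ref{lem:chow-recover}(2) shows $E \wequi \MGL_\ell^\comp$ as $\scr E_0$-algebras, which is all that is needed to identify $\Tot\CB(E)_\ell^\comp \wequi \1_{\ell,\eta}^\comp$ (the totalization of a cobar construction depends only on the underlying $\scr E_0$-ring). One then applies $e_\Ss^*$ and Lemma \ref{lem:chow-recover}(3) with $A=E$ exactly as you propose. This is the missing idea in your argument.
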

\begin{proof}
Define $E \in \CAlg(\SH(S))$ by the pullback square
\begin{equation} \label{eq:E-fake-MGL}
\begin{CD}
E @>>> \tau^c_{\ge 0} \tilde\pi^*(\MU_\ell)_\ell^\comp \\
@VVV                        @VVV \\
\tau_{=0}^c(\MGL_\ell^\comp) @>>> \tau^c_{= 0} \tilde\pi^*(\MU_\ell)_\ell^\comp. \\
\end{CD}
\end{equation}
Here the bottom map arises from the equivalence of homotopy rings $L_\et(\MGL)_\ell^\comp \wequi \tilde\pi^*(\MU_\ell)_\ell^\comp$ from Corollary \ref{cor:htpy-ring-MUl}.
It follows from Lemma \ref{lem:chow-recover}(2) that $E \wequi \MGL_\ell^\comp$ as $\scr E_0$-algebras (in fact homotopy rings).
Thus as before by \cite[Theorem 2.3]{bachmann-topmod} we have $\1_{\ell,\eta}^\comp \wequi \Tot \CB(E)_\ell^\comp$.
We have a commutative square of cosimplicial commutative rings
\begin{equation*}
\begin{CD}
\CB(E) @>>> \tau^c_{\ge 0} \tilde\pi^*(\CB(\MU_\ell^\comp)) \\
@VVV                 @VVV \\
\tau^c_{=0} \CB(\MGL)_\ell^\comp @>{\omega'}>> \tau^c_{=0} \tilde\pi^*(\CB(\MU_\ell^\comp)).
\end{CD}
\end{equation*}
It follows from Lemma \ref{lem:chow-recover}(2) that this square is cartesian.
The desired result follows by totalizing, applying $e_\Ss^*$, using Lemma \ref{lem:chow-recover}(3) (with $A=E$) to commute $e_\Ss^*$ and truncation, and observing that $\omega'$ corresponds to $\omega$ by Proposition \ref{prop:cx-oriented-equiv}(3).
\end{proof}

\begin{exm}
The assumptions of Theorem \ref{thm:recons-2} regarding $\MGL$ are satisfied if $S$ is essentially smooth over a Dedekind scheme and $\pvcd_\ell(S) \le 2$, as follows from Corollaries \ref{cor:MGL-chow-conn} and \ref{cor:CQL-MGL}.
This is the case for example if $S = \Spec(\Z[1/\ell])$.
\end{exm}

\begin{dfn} \label{def:Prf}
Let $\Prf$ denote the category of profinite groupoids.
For a scheme $S$, we denote by $\Pi_{\le 1}(S_\et) \in \Prf_{/G_\ell}$ the étale fundamental groupoid of $S$ with its cyclotomic character.
\end{dfn}

Given $\rho: \scr X \to BG_\ell \in \Prf_{/BG_\ell}$, the invertible object $\1_\ell(1) \in \ShvSp(B_\et G_\ell)_\ell^\comp$ from Lemma \ref{lem:Zl-twisting} yield an invertible object in $\ShvSp_\et(\scr X)_\ell^\comp$, and hence a functor \[ e_{\Ss,\scr X}: \ShvSp^{\Ss} \to \ShvSp_\et(\scr X)_\ell^\comp. \]
\begin{cnstr}
We define a functor \[ R_{(\ph)}: \Prf_{/BG_\ell}^\op \to \CAlg(\ShvSp^\Ss) \] via the pullback square
\begin{equation*}
\begin{CD}
R_{\scr X} @>>> \Tot \tau^\Ss_{\ge 0} e_{\Ss,\scr X}^* \rho^* \CB(\MU_\ell^\comp)_\ell^\comp \\
@VVV                         @VVV \\
\Tot \pi_{2*}\CB(\MU)_\ell^\comp @>{\omega_\scr{X}}>> \Tot \tau^\Ss_{=0} e_{\Ss,\scr X}^* \rho^* \CB(\MU_\ell^\comp)_\ell^\comp,
\end{CD}
\end{equation*}
where $\omega_\scr{X}$ is induced by $\omega$.
\end{cnstr}

We know that $\MGL_\ell^\comp \in \tau_{\ge 0}^\ell \SH(S)_\ell^\comp$ and $e_\Ss^* \tau_{=0}^c \MGL_\ell^\comp \wequi \pi_{2*}\MU_\ell^\comp$.
We thus have a commutative square
\begin{equation*}
\begin{CD}
\MGL_\ell^\comp @>>> \tau_{\ge 0}^c L_\et(\MGL)_\ell^\comp \wequi \tau_{\ge 0}^c \tilde\pi^* \MU_\ell^\comp \\
@VVV @VVV \\
\tau_{=0}^c\MGL_\ell^\comp @>>> \tau_{=0}^c L_\et(\MGL)_\ell^\comp \wequi \tau_{=0}^c \tilde\pi^* \MU_\ell^\comp
\end{CD}
\end{equation*}
Applying $e_\Ss^*$ yields a commutative square (assuming that $\pi_{**} \MGL_\ell^\comp$ is concentrated in Chow degrees $\ge 0$)
\begin{equation*}
\begin{CD}
e_\Ss^(\MGL_\ell^\comp) @>>> \tau_{\ge 0}^\Ss  \Gamma_\et^\Ss \tilde\pi^*(\MU_\ell))_\ell^\comp \\
@VVV @VVV \\
\pi_{2*}(\MU)_\ell^\comp @>>> \tau_{=0}^\Ss  \Gamma_\et^\Ss \tilde\pi^*(\MU_\ell))_\ell^\comp.
\end{CD}
\end{equation*}
The same can be done for tensor powers of $\MGL$, and hence for the cobar construction of $\MGL$.
Since the totalization of the cobar construction is $\1_{\ell,\eta}^\comp$, we finally obtain a commutative square
\begin{equation*}
\begin{CD}
e_\Ss^*(\1_{\eta,\ell}^\comp) @>>> \Tot \tau^\Ss_{\ge 0} \Gamma_\et^\Ss \tilde\pi^*\CB(\MU_\ell))_\ell^\comp \\
@VVV                         @VVV \\
\Tot \pi_{2*}\CB(\MU)_\ell^\comp @>>> \Tot \tau^\Ss_{=0} \Gamma_\et^\Ss \tilde\pi^*\CB(\MU_\ell))_\ell^\comp.
\end{CD}
\end{equation*}

We can now construct our Galois approximation of $\1_{\eta,\ell}^\comp$.
\begin{cnstr} \label{def:galois-approx}
Let $\scr C \subset \Sch_{\Z[1/\ell]}$ denote the subcategory of schemes such that $\pi_{**} \MGL_\ell^\comp$ is concentrated in Chow degrees $\ge 0$.
Let $S \in \scr C$ and view $S_\et \in \Prf_{/BG_\ell}$.
We have $\Gamma_\et^\Ss \tilde\pi^*(\MU_\ell))_\ell^\comp \wequi  e_{\Ss,S_\et}^* \rho^* \MU_\ell^\comp$ and similar for the cobar constructions.
Consequently the above commutative square induces (by considering the cartesian gap) a morphism \[ e_\Ss^*(\1_{\eta,\ell}^\comp) \to R_{S_\et}. \]
This defines a natural transformation of functors \[ e_\Ss^*(\1_{(\ph),\eta,\ell}^\comp) \to R_{(\ph)_\et} \in \Fun(\scr C^\op, \CAlg(\ShvSp^\Ss)). \]
\end{cnstr}

\begin{rmk} \label{rmk:recons-G}
Suppose that $S$ satisfies the assumptions of Theorem \ref{thm:recons-2}.
Then the Galois approximation map $e_\Ss^*(\1_{S,\eta,\ell}^\comp) \to R_{S_\et}$ is an equivalence.
In fact, by construction, it identifies with the equivalence from Theorem \ref{thm:recons-2}.
\end{rmk}


\begin{exm}
Suppose in the situation of Remark \ref{rmk:recons-G}, we are additionally given a map $S_\et \to \scr X \in \Prf_{/BG_\ell}$.
Then we obtain a map \[ R_{\scr X} \to R_{S_\et} \wequi e_\Ss^*(\1_{S,\eta,\ell}^\comp). \]
Here are two special cases:
\begin{enumerate}
\item We may take $S = \Spec(\Z[1/\ell])$ and $\scr X = BG_\ell$.
\item We may take $S = \Spec(\Z[1/\ell, \mu_{\ell^\infty}])$ and $\scr X = *$.
  Since $\scr X = (\Spec \CC)_\et$, we can identify the source and write the comparison map as \[ e_\Ss^*((\1_\CC)_{\ell, \eta}^\comp) \to e_\Ss^*((\1_{\Z[1/\ell,\mu_{\ell^\infty}]})_{\ell,\eta}^\comp). \]
\end{enumerate}
\end{exm}

\subsection{Adams summands}
\label{subsec:recons-adams}
Let $d > 0$.
Define a function $f_d: \Z \to \Z$ by $f_d(n) = 2n + \epsilon_d(n)$, where \[ \epsilon_d(n) = \begin{cases} 0 & n \equiv 0 \pmod d \\ 1 & n \equiv -1 \pmod d \\ \dots \\ d-1 & n \equiv 1-d \pmod d \end{cases}. \]
\begin{lem} \label{lem:sublin}
For $a, b \in \Z$ we have $f_d(a) + f_d(b) \ge f_d(a+b)$.
\end{lem}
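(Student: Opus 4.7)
The plan is to reduce the claim to an inequality about the residue function $\epsilon_d$ alone, and then pin down the difference $\epsilon_d(a) + \epsilon_d(b) - \epsilon_d(a+b)$ using congruences and crude bounds.

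First I would reinterpret $\epsilon_d$: by inspecting the cases, $\epsilon_d(n)$ is the unique representative in $\{0, 1, \dots, d-1\}$ of $-n$ modulo $d$. Substituting the definition $f_d(n) = 2n + \epsilon_d(n)$, the claim $f_d(a) + f_d(b) \ge f_d(a+b)$ is equivalent to
\[ \epsilon_d(a) + \epsilon_d(b) \ge \epsilon_d(a+b). \]

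Next I would observe two things about $\delta := \epsilon_d(a) + \epsilon_d(b) - \epsilon_d(a+b)$. On the one hand, $\epsilon_d(a) + \epsilon_d(b) \equiv -(a+b) \equiv \epsilon_d(a+b) \pmod d$, so $\delta$ is a multiple of $d$. On the other hand, since $\epsilon_d(a) + \epsilon_d(b) \in \{0, 1, \dots, 2d-2\}$ and $\epsilon_d(a+b) \in \{0, 1, \dots, d-1\}$, one has $\delta \in \{-(d-1), -(d-2), \dots, 2d-2\}$. The only multiples of $d$ in this range are $0$ and $d$, so $\delta \in \{0, d\}$ and in particular $\delta \ge 0$, which is exactly what we need.

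This argument is essentially a one-paragraph check, so there is no real obstacle; the only mild subtlety is to translate the case-by-case definition of $\epsilon_d$ into the uniform description $\epsilon_d(n) \equiv -n \pmod d$ before computing.
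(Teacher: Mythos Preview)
Your proof is correct and essentially the same as the paper's. Both reduce to $\epsilon_d(a)+\epsilon_d(b)\ge\epsilon_d(a+b)$ and then observe that the difference is congruent to $0$ mod $d$ and lies in an interval forcing it to be $0$ or $d$; the paper phrases this by normalizing $a,b$ into $(-d,0]$ so that $\epsilon_d(a)=-a$, while you work directly with the congruence and bounds, but the content is identical.
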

\begin{proof}
Equivalently, we must show that $\epsilon_d(a) + \epsilon_d(b) \ge \epsilon_d(a+b)$.
Since $\epsilon_d(\ph)$ only depends on the congruence class modulo $d$, we may assume that $-d < a,b \le 0$.
In this case $\epsilon_d(a) = -a$ (and similarly for $b$) and hence either $\epsilon_d(a+b) = \epsilon_d(a) + \epsilon_d(b)$ for $\epsilon_d(a+b) = \epsilon_d(a)+\epsilon_d(b) - d$.
The result follows.
\end{proof}

Now fix an odd prime $\ell$ and set $d=\ell-1$.
Write $f: \Ss \to \Z$ for the composite $\Ss \to \pi_0(\Ss) \wequi \Z \xrightarrow{f_d} \Z$.
Applying the construction from Lemma \ref{lem:graded-t} we obtain a new $t$-structure $\tau_{\ge 0}^{\Ss,f}\ShvSp^{\Ss}$ on $\ShvSp^{\Ss}$, compatible with the symmetric monoidal structure.

Recall the Adams summand $e_0 \MGL_{(\ell)}$ from Definition \ref{def:adams-summands}.

\begin{lem} \label{lemm:fake-chow-conn}
Let $S=\Spec(k)$, where $k$ is a field of characteristic $\ne \ell$ and $\cd_\ell(k) \le \ell - 1$.\NB{Is this the expected bound?}
Let $E \in \SH(k)$ be one of the spectra $\Sigma^{2(\ell-1)n, (\ell-1)n} H\Z_{(\ell)}$ (for $n \in \Z$) or $(e_0 \MGL_{(\ell)})^{\otimes m}$ (for $m \ge 0$).
Then $e_\Ss^*E_\ell^\comp \wequi \tau_{\ge 0}^{\Ss,f}e_\Ss^* L_\et(E)_\ell^\comp$.
\end{lem}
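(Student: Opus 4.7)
The plan is to establish two complementary conditions on $E_\ell^\comp$ and the fiber $F = \fib(E_\ell^\comp \to L_\et(E)_\ell^\comp)$, and then deduce the equivalence from a long exact sequence argument. Explicitly, unwinding the definition of the graded $t$-structure of Lemma \ref{lem:graded-t} using $(e_\Ss^* X)_n$ has $\pi_k = \pi_{k,n}(X)$, the desired equivalence $e_\Ss^* E_\ell^\comp \simeq \tau_{\ge 0}^{\Ss,f} e_\Ss^* L_\et(E)_\ell^\comp$ is equivalent to:
\begin{enumerate}
\item[(A)] $\pi_{p,q}(E_\ell^\comp) \neq 0 \Rightarrow c(p,q) \ge \epsilon_d(q)$ (where $d = \ell-1$);
\item[(B)] $\pi_{p,q}(F) \neq 0 \Rightarrow c(p,q) \le \epsilon_d(q) - 2$.
\end{enumerate}
Indeed, (A) says $e_\Ss^* E_\ell^\comp$ already lies in $\tau_{\ge 0}^{\Ss,f}$, and (B) is exactly the estimate needed so that the long exact sequence attached to $F \to E_\ell^\comp \to L_\et(E)_\ell^\comp$ makes the map an isomorphism on $\pi_p$ in weight $n$ for all $p \ge f_d(n)$.

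For the case $E = \Sigma^{2a,a} H\Z_{(\ell)}$ with $a = dn$: condition (A) follows from the classical vanishing $\pi_{p,q}(H\Z_{(\ell)}) = 0$ outside $q \le 0,\ p \ge q$ (Lemma \ref{lemm:HZ-chow-conn} and the fact $H^{p,q}_{mot}(k,\Z)=0$ for $q<0$), together with the observation that a $(2a,a)$-shift preserves both the Chow degree and the residue class of $q$ modulo $d$; when $a-q$ is written $kd + r$ with $0 \le r < d$, one gets $\epsilon_d(q) = r$ and $c(p,q) \ge a-q = kd+r \ge \epsilon_d(q)$. Condition (B) uses the fine information extracted in the proof of Proposition \ref{prop:HZ-CQL}(1): applied with $\cd_\ell(k) \le d$, so $s = d-1$, the fiber $F_0 = \fib(H\Z_\ell^\comp \to L_\et H\Z_\ell^\comp)$ has $\pi_{p,q}(F_0) = 0$ unless $-q \in [0, d-1]$ and $p \le q-2$. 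Shifting and again using that $a \in d\Z$ so $\epsilon_d(q) = a - q$ in the relevant range, this becomes $c(p,q) \le \epsilon_d(q) - 2$.

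For the case $E = (e_0^{(\ell)}\MGL_{(\ell)})^{\otimes m}$: the strategy is to reduce to the previous case via the slice tower. By Lemma \ref{lem:MGL-sc} and Proposition \ref{prop:e0MGL-basics}(4), $E$ is slice complete ($E \simeq \lim_N f^N E$), and its slices are sums of $\Sigma^{2a,a}H\Z_{(\ell)}$ with $a$ a non-negative multiple of $d$. Conditions (A) and (B) are closed under arbitrary sums and under extensions (via the long exact sequence), so each $f^N E$ inherits them by induction along the slice tower from the already-treated Case~1. Proposition \ref{prop:etale-slice} applies with $e_j^{(n)} = 2n$, hence $L_\et(E)_\ell^\comp \simeq \lim_N L_\et(f^N E)_\ell^\comp$, so $F \simeq \lim_N F_{f^N E}$. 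Both (A) and (B) then pass to $E$ by the Milnor exact sequence: for (A) the required $\limone$-vanishing is exactly the content of Lemma \ref{lemm:sc-nondeg}; for (B), the shifted vanishing $\pi_{p+1,q}(F_{f^N E}) = 0$ follows from (B) at $(p+1,q)$ applied to each $f^N E$ since $c(p,q) \ge \epsilon_d(q)-1$ forces $c(p+1,q) \ge \epsilon_d(q)$.

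The main obstacle I expect is the passage to tensor powers in Case~2: one must identify the slices of $(e_0^{(\ell)}\MGL_{(\ell)})^{\otimes m}$ (polynomial in generators of weights in $d\Z_{\ge 0}$) and verify that this spectrum stays in the subcategory of Lemma \ref{lem:MGL-sc} where slice completeness holds, together with the careful bookkeeping of $\ell$-completion, sums and slice-tower limits (invoking Lemma \ref{lemm:complete-sums} and Proposition \ref{prop:etale-slice}) so that the two vanishing conditions propagate cleanly from the building blocks $\Sigma^{2a,a}H\Z_{(\ell)}$ all the way up to $E$ itself.
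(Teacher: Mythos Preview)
Your proposal is correct and follows essentially the same route as the paper: reformulate the claim as a pair of connectivity conditions (your (A) and (B)), observe these are closed under the relevant operations (shifts by $(2(\ell-1)n,(\ell-1)n)$, sums, extensions, suitable limits), and reduce via the slice tower to $H\Z_{(\ell)}$. The only notable differences are cosmetic: for the $H\Z$ case the paper compares the $f$-truncation with the Bloch--Kato truncation $g(n)=n$ rather than reading off explicit vanishing ranges as you do, and for your flagged obstacle on tensor powers the paper invokes Proposition~\ref{prop:e0MGL-basics}(5), which identifies $(e_0\MGL_{(\ell)})^{\otimes m}$ with a sum of $\Sigma^{2a,a}e_0\MGL_{(\ell)}$ for $a\in(\ell-1)\Z_{\ge 0}$, thereby reducing to $m=1$ and simultaneously settling the slice identification and membership in the subcategory of Lemma~\ref{lem:MGL-sc}.
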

\begin{proof}
The claim holds for $E$ if and only if $e_\Ss^*E_\ell^\comp \in \tau_{\ge 0}^{\Ss,f}\ShvSp^{\Ss}$ and \[ \fib(e_\Ss^*E_\ell^\comp \to e_\Ss^* L_\et(E)_\ell^\comp) \in \tau_{< 0}^{\Ss,f}\ShvSp^{\Ss}. \]
It follows that the class of such spectra is closed under extensions, $\Sigma^{2(\ell-1)n,(\ell-1)n}$, sums (use $\cd_\ell(k) < \infty$ and Lemma \ref{lemm:complete-sums}) and cofiltered limits with increasingly connected transition maps.
Using Proposition \ref{prop:e0MGL-basics}(4,5) we can assume $m=1$, and now Proposition \ref{prop:etale-slice} (convergence of the étale slice filtration) and Lemma \ref{lem:MGL-sc} (slice completeness of $\MGL$) reduce to $E=H\Z_{(\ell)}$.
In this case we know that $e_\Ss^*E_\ell^\comp \wequi \tau_{\ge 0}^{\Ss,g}e_\Ss^* L_\et(E)_\ell^\comp$, where $g(n)=n$.
Now observe the following facts:
\begin{enumerate}
\item $f(n) > 0$ for $n > 0$
\item $f(n) = g(n)$ for $-(\ell-2) \le n \le 0$
\item $f(n) \le -(\ell-2)$ for $n < -(\ell -2)$.
\end{enumerate}
Recall that $\pi_{i,j} e_\Ss^* L_\et(E)_\ell^\comp = H^{-i}_\et(k,\Z_\ell(-s))$.
(1) ensures that $\pi_{*,n} \tau_{\ge 0}^{\Ss,f}e_\Ss^* L_\et(E)_\ell^\comp = 0 (= \pi_{*,n} e_\Ss^* H\Z_\ell^\comp)$ for $n>0$.
(2) ensures that $\pi_{*,n} \tau_{\ge 0}^{\Ss,f}e_\Ss^* L_\et(E)_\ell^\comp \wequi \pi_{*,n} \tau_{\ge 0}^{\Ss,g}e_\Ss^* L_\et(E)_\ell^\comp$ for $-(\ell-2) \le n \le 0$.
Finally (3) ensures the same for $n<-(\ell-2)$ since $H^r_\et(k, \Z_\ell(s)) = 0$ for $r > \ell-2$, so both the $f$ and the $g$ truncation do not do anything in these weights.\NB{picture might be better proof...}
\end{proof}

\begin{thm}\label{thm:recons-3}
Let $\ell$ be odd, $S=\Spec(k)$, where $k$ is a field of characteristic $\ne \ell$ and $\cd_\ell(k) \le \ell - 1$.
Then \[ e_\Ss^*(\1_{\ell,\eta}^\comp) \wequi \Tot \tau^{\Ss,f}_{\ge 0} \Gamma_\et^\Ss \pi^*(\CB(e_0\MU_\ell))_\ell^\comp \in \CAlg(\ShvSp^\Ss). \]
\end{thm}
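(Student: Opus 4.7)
The plan is to adapt the argument of Theorem \ref{thm:recons-1} in two ways: replace $\MGL$ by the Adams summand $e_0 \MGL_{(\ell)}$ of Definition \ref{def:adams-summands}, and replace the standard Chow $t$-structure $\tau^\Ss_{\ge 0}$ by the modified $t$-structure $\tau^{\Ss,f}_{\ge 0}$ associated via Lemma \ref{lem:graded-t} to the subadditive function $f$ (which is valid as a source of a $t$-structure compatible with the symmetric monoidal structure thanks to Lemma \ref{lem:sublin}).

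First I would establish the convergence
\[ \1_{\ell,\eta}^\comp \wequi \Tot \CB(e_0\MGL_{(\ell)})_\ell^\comp. \]
The splitting $\MGL_{(\ell)} \wequi \bigoplus_{\alpha \in \Z/(\ell-1)} e_\alpha^{(\ell)} \MGL_{(\ell)}$ together with the known convergence $\1_{\ell,\eta}^\comp \wequi \Tot \CB(\MGL)_\ell^\comp$ from \cite[Theorem 2.3]{bachmann-topmod} (or, alternatively, a direct application of the convergence criterion of that theorem to the complex orientable $\scr E_\infty$-ring $e_0 \MGL_{(\ell)}$, whose homotopy ring $\Z_{(\ell)}[x_1,x_2,\dots]$ by Proposition \ref{prop:e0MGL-basics}(4) still detects the $(\ell,\eta)$-complete sphere) gives the desired totalization formula.

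Next, Lemma \ref{lemm:fake-chow-conn} applies levelwise to the tensor powers $(e_0 \MGL_{(\ell)})^{\otimes n}$ appearing in the cobar construction, yielding
\[ e_\Ss^* \bigl((e_0 \MGL_{(\ell)})^{\otimes n}\bigr)_\ell^\comp \wequi \tau^{\Ss,f}_{\ge 0}\, e_\Ss^* L_\et\bigl((e_0 \MGL_{(\ell)})^{\otimes n}\bigr)_\ell^\comp \]
naturally in the cosimplicial variable. By symmetric monoidality of $L_\et(\ph)_\ell^\comp$ and Proposition \ref{prop:e0-etale-reln} we can identify the right-hand side with $\tau^{\Ss,f}_{\ge 0}\, \Gamma_\et^\Ss \pi^*\bigl((e_0\MU_\ell)^{\otimes n}\bigr)_\ell^\comp$. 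Applying $\Tot$ and commuting it past $e_\Ss^*$ (a right adjoint) and $\tau^{\Ss,f}_{\ge 0}$ (a right adjoint between the relevant truncated subcategories, as in Lemma \ref{lem:chow-recover}(3) adapted to the present $t$-structure) produces the claimed equivalence.

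The main obstacle I anticipate is verifying Lemma \ref{lemm:fake-chow-conn} with the extra precision needed at the level of the symmetric monoidal cobar construction, and in particular ensuring that the cosimplicial identification of $\CB(e_0 \MGL_{(\ell)})_\ell^\comp$ with $\pi^*(\CB(e_0\MU_\ell))_\ell^\comp$ produced in Step 3 is compatible with taking totalizations. The latter is potentially subtle because Proposition \ref{prop:e0-etale-reln} was established only as an equivalence of unstructured spectra; one should either promote it to an equivalence of $\scr E_\infty$-rings (which is enough by tracking the splitting idempotent constructed in \S\ref{subsec:Adams-etale} through the monoidal structure of $\SH_\et(S)_\ell^\comp$) or, alternatively, replace the appeal to structured equivalences with a summand argument starting from Theorem \ref{thm:recons-1} applied to $\MGL_{(\ell)}$ and projecting onto the $e_0$-factor of the resulting cosimplicial ring.
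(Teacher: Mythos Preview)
Your overall strategy matches the paper's: use the Adams summand, the modified $t$-structure $\tau^{\Ss,f}_{\ge 0}$, and Lemma \ref{lemm:fake-chow-conn} levelwise on the cobar construction. You also correctly locate the crux: Proposition \ref{prop:e0-etale-reln} only gives an equivalence of $\scr E_0$-rings, so one cannot directly identify $L_\et\,\CB(e_0\MGL_{(\ell)})_\ell^\comp$ with $\pi^*\CB(e_0\MU_\ell)$ as \emph{cosimplicial} $\scr E_\infty$-rings.

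However, neither of your proposed fixes works as stated. Promoting Proposition \ref{prop:e0-etale-reln} to an $\scr E_\infty$-equivalence is not established in the paper; already for $\MGL$ itself, Theorem \ref{thm:etale-cobordism-weak} gives $\scr E_\infty$ only under restrictive hypotheses on $S$ when $\ell$ is odd. Your second suggestion, to project from Theorem \ref{thm:recons-1}, fails because that result requires $\CQL_0(\ell,S)$, hence $\cd_\ell(k)\le 1$, not $\le \ell-1$.

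The paper resolves this by the same device used in Theorems \ref{thm:recons-1} and \ref{thm:recons-2}: it constructs an auxiliary $\scr E_\infty$-ring $M\in\CAlg(\SH(\Z[1/\ell]))$ via the pullback square
\[
\begin{CD}
M @>>> \tau^c_{\ge 0}\,\pi^* e_0 \MU_\ell \\
@VVV @VVV \\
\tau^c_{=0}\,e_0\MGL_\ell^\comp @>>> \tau^c_{=0}\,\pi^* e_0 \MU_\ell.
\end{CD}
\]
Since $e_0\MGL$ (as a summand of $\MGL$) satisfies $\CQL_1(\ell,\Z[1/\ell])$, Lemma \ref{lem:chow-recover}(2) gives $M\wequi e_0\MGL_\ell^\comp$ as $\scr E_0$-rings; this suffices for convergence $\1_{\ell,\eta}^\comp\wequi\Tot\,\CB(M_k)_\ell^\comp$ after base change. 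The gain is that the top horizontal arrow yields an honest $\scr E_\infty$-map $M\to\pi^* e_0\MU_\ell$ which is an $\ell$-adic étale equivalence, so $L_\et\,\CB(M_k)_\ell^\comp\wequi\pi^*\CB(e_0\MU_\ell)$ as cosimplicial $\scr E_\infty$-rings. Lemma \ref{lemm:fake-chow-conn} (applied to $\CB^m(M_k)$, which agrees with $(e_0\MGL_\ell^\comp)^{\otimes m}$ as a spectrum) then finishes the argument.

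A minor remark: there is no step commuting $\Tot$ past $\tau^{\Ss,f}_{\ge 0}$. The identification of Lemma \ref{lemm:fake-chow-conn} is natural in $E$, so it upgrades to a map of cosimplicial objects with $\tau^{\Ss,f}_{\ge 0}$ applied levelwise; one then simply takes $\Tot$ of both sides.
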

\begin{proof}
Consider the following pullback square in $\CAlg(\SH(\Z[1/\ell]))$, defining the upper left hand corner
\begin{equation*}
\begin{CD}
M @>>> \tau^c_{\ge 0} \pi^* e_0 \MU_\ell \\
@VVV                     @VVV \\
\tau^c_{= 0} e_0\MGL_\ell^\comp @>>> \tau^c_{= 0} \pi^* e_0 \MU_\ell.
\end{CD}
\end{equation*}
Here the bottom map arises from applying $e_0$ to the bottom map in \eqref{eq:E-fake-MGL}.
Since $L_\et e_0 \MGL_\ell^\comp \wequi \pi^* e_0 \MU_\ell$ by Proposition \ref{prop:e0-etale-reln}, it follows from Lemma \ref{lem:chow-recover}(2) that $M \wequi e_0 \MGL_\ell^\comp$, as $\scr E_0$-rings.
Moreover we have a morphism \[ M \to \pi^* e_0 \MU_\ell \in \CAlg(\SH(\Z[1/\ell])) \] which is in fact an $\ell$-adic étale equivalence.
Base changing to $k$, we obtain morphisms of $\scr E_\infty$-rings \[ \1_k \to M_k \to \pi^* e_0 \MU_\ell. \]
Since $M_k \wequi (e_0 \MGL_k)_\ell^\comp$ as $\scr E_0$-rings, we have $\1_{\ell,\eta}^\comp \wequi \Tot \CB(M_k)_\ell^\comp$.
Applying $e_\Ss^*$ we get \[ e_\Ss^*(\1_{\ell,\eta}^\comp) \wequi \Tot e_\Ss^* \CB(M_k)_\ell^\comp \in \CAlg(\ShvSp^\Ss). \]
On the other hand by Lemma \ref{lemm:fake-chow-conn} we have \[ e_\Ss^* \CB^m(M_k)_\ell^\comp \wequi \tau_{\ge 0}^{\Ss,f}e^* L_\et(\CB^m(M_k))_\ell^\comp. \]
The result follows since $L_\et \CB(M_k)_\ell^\comp \wequi \pi^* \CB(e_0 \MU_\ell)$.
\end{proof}

\begin{rmk}
We could use the construction from Theorem \ref{thm:recons-3} to build Galois approximations as in Definition \ref{def:galois-approx}.
This would have the advantage of being an equivalence on more fields.
In the sequel we do not need this additional generality, so we forgo spelling it out.
\end{rmk}

\subsection{Reconstructing cellular subcategories}
\label{subsec:cellular-reconstruction}
For $\scr X \in \Prf_{/BG_\ell}$, define \[ \GalSp(\scr X)_{\ell,\eta}^{\comp\cell} = \Mod_{R_\scr{X}}(\ShvSp^{\Ss})_{\ell,\eta}^\comp. \]
\begin{cor} \label{corr:cell-recons}
Suppose that $1/\ell \in S$, $\dim S < \infty$, $\MGL$ satisfies $\CQL_1(\ell, S)$ and $\pi_{**}(\MGL_\ell^\comp)$ is concentrated in Chow degrees $\ge 0$.
Then there is a canonical equivalence \[ \SH(S)_{\ell,\eta}^{\comp\cell} \wequi \GalSp(\Pi_{\le 1}S_\et)_{\ell,\eta}^{\comp\cell}. \]
\end{cor}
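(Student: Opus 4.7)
The plan is to promote the ring-level equivalence from Remark \ref{rmk:recons-G} to an equivalence of module categories via a standard Schwede--Shipley/Lurie monadicity argument.

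First I would note that the symmetric monoidal adjunction
\[ e_\Ss: \ShvSp^\Ss \adj \SH(S) : e_\Ss^* \]
passes to $\ell,\eta$-completions, yielding a symmetric monoidal adjunction with target $\SH(S)_{\ell,\eta}^\comp$. The left adjoint sends the unit $\1$ of $\ShvSp^\Ss$ to $\1_{\ell,\eta}^\comp$, and more generally sends each grading shift of $\1$ to a bigraded sphere; hence the essential image of $e_\Ss$ (closed under colimits) is precisely $\SH(S)_{\ell,\eta}^{\comp\cell}$. Since the bigraded spheres are compact in $\SH(S)$ and generate the cellular subcategory, the restricted right adjoint $e_\Ss^*|_{\SH(S)_{\ell,\eta}^{\comp\cell}}$ is both colimit-preserving and conservative. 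By Lurie's monadicity theorem (or the Schwede--Shipley recognition for compactly generated stable categories), this gives an equivalence
\[ \SH(S)_{\ell,\eta}^{\comp\cell} \wequi \Mod_{e_\Ss^*(\1_{S,\ell,\eta}^\comp)}(\ShvSp^\Ss)_{\ell,\eta}^\comp \]
of $\scr E_\infty$-algebra module categories.

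Second, I would invoke Remark \ref{rmk:recons-G}: under the hypotheses of the present corollary---which are exactly those of Theorem \ref{thm:recons-2}---the Galois approximation morphism constructed in \S\ref{subsec:coh-dim-2} is an equivalence
\[ e_\Ss^*(\1_{S,\ell,\eta}^\comp) \xrightarrow{\wequi} R_{\Pi_{\le 1} S_\et} \]
of $\scr E_\infty$-algebras in $\ShvSp^\Ss$. Passing to module categories and completing then gives
\[ \Mod_{e_\Ss^*(\1_{S,\ell,\eta}^\comp)}(\ShvSp^\Ss)_{\ell,\eta}^\comp \wequi \Mod_{R_{\Pi_{\le 1} S_\et}}(\ShvSp^\Ss)_{\ell,\eta}^\comp = \GalSp(\Pi_{\le 1} S_\et)_{\ell,\eta}^{\comp\cell}. \]
Composing the two equivalences yields the claim.

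The main obstacle will be the first step. While the statement is intuitive, one must verify carefully that the monadicity hypothesis holds in the presence of the $\ell$- and $\eta$-completions: in particular that the bigraded spheres $\Sigma^{p,q}\1_{\ell,\eta}^\comp$ form a compact generating family of $\SH(S)_{\ell,\eta}^{\comp\cell}$ and that $e_\Ss^*$ commutes with the relevant colimits there. The $\ell$-completion is smashing and innocuous, but $\eta$-completion must be handled with some care; the cleanest route is probably to work before completion at the level of the adjunction $e_\Ss \dashv e_\Ss^*$ (where everything is compactly generated and the monadicity argument is immediate), and then to observe that both sides are local with respect to the same class of morphisms, so the equivalence descends.
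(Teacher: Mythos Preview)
Your proposal is correct and is exactly what the paper does; the paper's proof reads in its entirety ``Immediate from Theorem \ref{thm:recons-2},'' and you have simply made explicit the monadicity step identifying $\SH(S)_{\ell,\eta}^{\comp\cell}$ with $\Mod_{e_\Ss^*(\1_{\ell,\eta}^\comp)}(\ShvSp^\Ss)_{\ell,\eta}^\comp$, which the authors take for granted. Your caution about the completions is reasonable but not a genuine obstacle: the cellular subcategory of $\SH(S)_{\ell,\eta}^\comp$ is compactly generated by the $\Sigma^{p,q}\1/(\ell,\eta)$, so the monadicity argument goes through directly (or, as you note, one works before completion and then localizes).
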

\begin{proof}
Immediate from Theorem \ref{thm:recons-2}.
\end{proof}

\begin{cnstr}
Given $\scr X \in \Prf_{/G_\ell}$, define an object $\MGL_{\scr X} \in \CAlg(\ShvSp^{\Ss})_{R_{\scr X}/}$ by the pullback square
\begin{equation*}
\begin{CD}
\MGL_{\scr X} @>>> \tau^\Ss_{\ge 0} e_{\Ss,\scr X}^* \rho^* \MU_\ell^\comp \\
@VVV                         @VVV \\
\pi_{2*} \MU @>{\omega_\scr{X}}>> \tau^\Ss_{=0} e_{\Ss,\scr X}^* \rho^* \MU_\ell^\comp.
\end{CD}
\end{equation*}
\end{cnstr}
\begin{cor} \label{corr:cell-recons-MGL}
Suppose that $1/\ell \in S$, $\dim S < \infty$, $\MGL$ satisfies $\CQL_1(\ell, S)$ and $\pi_{**}(\MGL_\ell^\comp)$ is concentrated in Chow degrees $\ge 0$.
Then under the equivalence \[ \SH(S)_{\ell,\eta}^{\comp\cell} \wequi \GalSp(\Pi_{\le 1} S_\et)_{\ell,\eta}^{\comp\cell}, \] the object $\MGL_{S_\et}$ corresponds to $\MGL$ (as $\scr E_0$-algebras and as homotopy rings).
\end{cor}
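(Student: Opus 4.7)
The plan is to extract $\MGL_{S_\et}$ from a single-simplicial-level version of the pullback square constructed in the proof of Theorem \ref{thm:recons-2}, then match it with the defining square of $\MGL_{S_\et}$.

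First, I would recall the pullback square \eqref{eq:E-fake-MGL} from the proof of Theorem \ref{thm:recons-2}, defining $E \in \CAlg(\SH(S)_\ell^\comp)$ together with the key fact that $E \wequi \MGL_\ell^\comp$ as homotopy rings (whence as $\scr E_0$-algebras). Under the equivalence of Corollary \ref{corr:cell-recons}, the object $\MGL_\ell^\comp \in \SH(S)_{\ell,\eta}^{\comp\cell}$ corresponds to $e_\Ss^*(\MGL_\ell^\comp) \wequi e_\Ss^*(E) \in \GalSp(\Pi_{\le 1}S_\et)_{\ell,\eta}^{\comp\cell}$. Thus the task reduces to identifying $e_\Ss^*(E)$ with $\MGL_{S_\et}$ as homotopy rings.

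Second, I would apply $e_\Ss^*$ to the pullback square \eqref{eq:E-fake-MGL}. Since $e_\Ss^*$ preserves limits, the result is still a pullback square. Using Example \ref{ex:MGL-cellular} (so that $E$ lies in $\tau^c_{\ge 0}\SH(S)_\ell^\comp$) and Lemma \ref{lem:chow-recover}(3) applied to the algebra $A = E$ (whose homotopy is Chow non-negative because $E \wequi \MGL_\ell^\comp$), the functor $e_\Ss^*$ commutes with the truncations $\tau^c_{\ge 0}$ and $\tau^c_{= 0}$ on $E$-modules. This yields the pullback square
\begin{equation*}
\begin{CD}
e_\Ss^*(E) @>>> \tau^\Ss_{\ge 0} e_\Ss^* \tilde\pi^*(\MU_\ell)_\ell^\comp \\
@VVV                        @VVV \\
\tau_{=0}^\Ss e_\Ss^*(\MGL_\ell^\comp) @>>> \tau^\Ss_{=0} e_\Ss^* \tilde\pi^*(\MU_\ell)_\ell^\comp.
\end{CD}
\end{equation*}
Since $e_\Ss^*\tilde\pi^*\MU_\ell^\comp \wequi \Gamma_\et^\Ss \tilde\pi^*\MU_\ell^\comp \wequi e_{\Ss,S_\et}^*\rho^*\MU_\ell^\comp$ with $\rho = \tilde\pi$, the right column matches that in the definition of $\MGL_{S_\et}$.

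Third, I would identify the lower left corner $\tau_{=0}^\Ss e_\Ss^*(\MGL_\ell^\comp)$ with $\pi_{2*}\MU$ as a commutative ring, and identify the bottom map with $\omega_{S_\et}$. The ring identification follows from the slice calculation of $\MGL$ (Remark \ref{rmk:MGL-slices}), which gives $\pi_{2*,*}\MGL_\ell^\comp \wequi L_* \otimes \Z_\ell$; since we work $\ell$-completely and $L_*$ is polynomial, this matches $\pi_{2*}\MU$ (in the sense needed for the pullback square, i.e.\ after implicit $\ell$-completion built into the outer pullback). The map-identification follows from Proposition \ref{prop:cx-oriented-equiv}(3): the bottom of \eqref{eq:E-fake-MGL} is induced from the homotopy-ring equivalence $L_\et(\MGL)_\ell^\comp \wequi \tilde\pi^*\MU_\ell$ of Corollary \ref{cor:htpy-ring-MUl}, which is by construction the unique map of complex oriented homotopy ring spectra; $\omega_{S_\et}$ is the pullback of the same map $\omega$ classifying the formal group law of $\MU_\ell$.

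I expect the main obstacle to be checking that the two pullback squares really agree on the nose on the level of homotopy rings, in particular tracking the commutative ring structure on $\tau_{=0}^\Ss e_\Ss^*(\MGL_\ell^\comp)$ and verifying that the map into $\tau_{=0}^\Ss e_\Ss^* \tilde\pi^*\MU_\ell^\comp$ coincides with the $\omega_{S_\et}$ used to define $\MGL_{S_\et}$. The categorical shuffles (commuting $e_\Ss^*$ with limits and truncations) are routine given the preparatory lemmas; the content lies in identifying the orientation-classifying map in two different guises.
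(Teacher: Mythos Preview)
Your proposal is correct and follows the same approach as the paper, which compresses the entire argument into ``This is essentially Lemma \ref{lem:chow-recover}(2).'' You have correctly unpacked this: the pullback square \eqref{eq:E-fake-MGL} is exactly the instantiation of Lemma \ref{lem:chow-recover}(2) for $E=\MGL_\ell^\comp$, and applying $e_\Ss^*$ together with Lemma \ref{lem:chow-recover}(3) and Proposition \ref{prop:cx-oriented-equiv}(3) yields the defining square of $\MGL_{S_\et}$.
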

\begin{proof}
This is essentially Lemma \ref{lem:chow-recover}(2).
\end{proof}

\begin{exm} \label{exm:tate-orientable-universal}
\begin{enumerate}
\item Let $k_0$ be obtained from either $\F_p$ or $\QQ$ by adjoining all $\ell$-power roots of unity.
Let $k/k_0$ be any extension, that is, $k$ is an arbitrary Tate-orientable field.
Write $e \in \Prf_{/BG_\ell}$ for the groupoid $*$ with the trivial map to $BG_\ell$.
By construction there is a morphism $\pi: Spec(k_0)_\et \to e \in \Prf_{/BG_\ell}$.
We thus obtain a diagram \[ e: \GalSp(e)_{\ell,\eta}^{\comp\cell} \xrightarrow{\pi^*} \GalSp(\Pi_{\le 1} Spec(k_0)_\et)_{\ell,\eta}^{\comp\cell} \wequi \SH(k_0)_{\ell,\eta}^{\comp\cell} \to \SH(k)_{\ell,\eta}^{\comp\cell}. \]

\item The functoriality of $\MGL_{(\ph)}$ induces a map $e(\MGL_e) \to \MGL_k$.

\item Let $\bar k/k_0$ be a separably closed extension.
  Then we find that the composite \[ \GalSp(e)_{\ell,\eta}^{\comp\cell} \to \SH(\bar k)_{\ell,\eta}^{\comp\cell} \] is the equivalence from Corollary \ref{corr:cell-recons}.
  It sends $\MGL_e$ to $\MGL_k$, by Corollary \ref{corr:cell-recons-MGL}.
\end{enumerate}
\end{exm}

\begin{exm}
Let $\scr X \in \Prf_{/BG_\ell}$ denote the terminal object.
Using somewhat suggestive notation, we write \[ \GalSp(\F_1)_{\ell,\eta}^{\comp\cell} := \GalSp(\scr X)_{\ell,\eta}^{\comp\cell}. \]
We thus obtain a composite \[ \GalSp(\F_1)_{\ell,\eta}^{\comp\cell} \to \GalSp(\Pi_{\le 1} \Spec(\Z[1/\ell])_\et)_{\ell,\eta}^{\comp\cell} \wequi \SH(\Z[1/{\ell,\eta}])_{\ell,\eta}^{\comp\cell} \to \SH(S)_{\ell,\eta}^{\comp\cell}, \] for any scheme $S$ on which $\ell$ is invertible.
\end{exm}

\subsection{Reconstructing Artin--Tate subcategories}
\label{subsec:AT-reconstruction}
Let $\scr X \in \Prf_{/BG_\ell}$.
Write \[ \ShvSp_\Nis(\scr X) = \PSh_\Sigma(\Fin_{\scr X})[(S^1)^{-1}]. \]
We give $\ShvSp_\Nis(\scr X)^\Ss$ a $t$-structure $\tau_{\ge 0}^{f}$ by applying the $t$-structure from \S\ref{subsec:recons-adams} sectionwise.
(If $\ell=2$, then $\tau_{\ge 0}^{f} := \tau_{\ge 0}^c$.)
The invertible object $\pi^* \1_\ell(1) \in \ShvSp_\et(\scr X)_\ell^\comp$ from Lemma \ref{lem:Zl-twisting} together with the evident symmetric monoidal functor $\ShvSp_\Nis(\scr X) \to \ShvSp_\et(\scr X)_\ell^\comp$ induces an adjunction \[ e_{\Ss,AT,\scr X} \ShvSp_\Nis(\scr X)^\Ss \to \ShvSp_\et(\scr X)_\ell^\comp: e_{\Ss,AT,\scr X}^*. \]
\begin{cnstr}
We define an object \[ R_{\scr X}^{AT} \in  \CAlg(\ShvSp_\Nis(\scr X)^\Ss) \] via the pullback square
\begin{equation*}
\begin{CD}
R_{\scr X}^{AT} @>>> \Tot \tau^{\Ss,f}_{\ge 0} e_{\Ss,AT,\scr X}^* \rho^* \CB(e_0\MU_\ell^\comp)_\ell^\comp \\
@VVV                         @VVV \\
\Tot \pi_{2*}\CB(e_0\MU) @>{\omega_\scr{X}}>> \Tot \tau^{\Ss,f}_{=0} e_{\Ss,AT,\scr X}^* \rho^* \CB(e_0\MU_\ell^\comp)_\ell^\comp,
\end{CD}
\end{equation*}
where $\omega_\scr{X}$ is induced by $\omega$.

Removing the occurrences ``$\CB$'' from the above square, we can define another object $e_0 M_{\scr X}^{AT} \in \CAlg(\ShvSp_\Nis(\scr X)^\Ss)$, and a morphism $R_{\scr X}^{AT} \to e_0M_{\scr X}^{AT}$.
Similarly we can define $M_{\scr X}^{AT}$ by further leaving out the ``$e_0$'' part.
\end{cnstr}

\begin{dfn}
We set \[ \GalSp(\scr X)_{\eta,\ell}^{\comp AT} = \Mod_{R_{\scr X}^{AT}}(\ShvSp_\Nis(\scr X)^\Ss)_{\ell,\eta}^\comp. \]
Note that $M_{\scr X}^{AT}$ defines an object of $\CAlg(\GalSp(\scr X)_{\eta,\ell}^{\comp AT})$.
\end{dfn}

\begin{rmk} \label{rmk:decompleted}
It will be occasionally useful to consider decompleted versions of the above categories, i.e. \[ \GalSp(\scr X)^{AT} := \Mod_{R_{\scr X}^{AT}}(\ShvSp_\Nis(\scr X)^\Ss). \]
\end{rmk}

Now let $k$ be a field of characteristic $\ne \ell$.
\begin{prop} \label{prop:artin-tate-recons}
\begin{enumerate}
\item If $\cd_\ell(k) \le \ell -1$ then there is a fully faithful functor $e: \GalSp(\Pi_{\le 1} k_\et)_{\eta,\ell}^{\comp AT} \to \SH(k)_{\eta,\ell}^\comp$.
  Its essential image is the subcategory generated by Artin--Tate objects.
  It identifies $e_0 M_{\Pi_{\le 1} k_\et}$ and $e_0 \MGL_\ell^\comp$.
\item If $\cd_\ell(k) \le 1$ then $e$ identifies $M_{\Pi_{\le 1} k_\et}$ and $\MGL_\ell^\comp$.
\item More generally, the above fully faithful map factors through an equivalence \[ e: \GalSp(\Pi_{\le 1} k_\et)^{AT} \xrightarrow{\wequi} \Mod_{\1_{\ell,\eta}^\comp}(\SH(k)^{AT}). \]
\end{enumerate}
\end{prop}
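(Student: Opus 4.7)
The plan is to establish (3) in the stated generality and then derive (1) and (2) by specialising the equivalence from (3) under the cohomological-dimension hypotheses. For (3), I would first construct the functor $e$. The canonical functor $\Fin_{\Pi_{\le 1}k_\et}\to \Sch_k$, sending a finite discrete $G_k$-set to its associated finite étale $k$-scheme, together with the invertible object $\Sigma^{0,1}\1 \in \SH(k)$, extends via left Kan extension and the sphere-graded module structure (\S\ref{subsec:graded}) to a cocontinuous symmetric monoidal functor $\tilde e: \ShvSp_\Nis(\Fin_{\Pi_{\le 1}k_\et})^\Ss \to \SH(k)^{AT}$. Adapting Construction \ref{def:galois-approx} to the Artin--Tate setting (using the étale-sheaf enhancement already built into $R^{AT}$) then produces a natural morphism $\tilde e(R^{AT}_{\Pi_{\le 1}k_\et}) \to \1_{\ell,\eta}^\comp$ in $\CAlg(\SH(k)^{AT})$, and passage to modules yields the desired functor $e$.

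Next I would verify that $e$ is an equivalence onto $\Mod_{\1_{\ell,\eta}^\comp}(\SH(k)^{AT})$. Both categories are compactly generated by objects of the form $\Sigma^{p,q}\Sigma^\infty_+ \Spec(k')$ for $k'/k$ finite étale: on the right this is the definition of Artin--Tate, while on the left it follows from the Nisnevich-sheaf structure via Proposition \ref{prop:htpy-dim-0-PSigma}. Fully faithfulness thus reduces to computing mapping spectra between such generators. Using the projection formula (Lemma \ref{lemm:projection-formula}) on the motivic side, and the analogous base-change for $R^{AT}$ under the finite-étale-transfer structure on the Galois-approximation side, one reduces to the case where the second generator is the unit, computed over each finite étale $k'/k$. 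This case is a cellular reconstruction at the geometric point, combining the pullback defining $R^{AT}_{\Pi_{\le 1}(k')_\et}$ with Corollary \ref{corr:cell-recons} (in the form of Theorems \ref{thm:recons-2} or \ref{thm:recons-3}) and Proposition \ref{prop:cx-oriented-equiv} matching the Hopf algebroid structures.

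Finally I would deduce (1) and (2). If $\cd_\ell(k) \le \ell - 1$ then the same bound holds for every finite étale $k'/k$, and Theorem \ref{thm:recons-3} shows that the ring map $R^{AT}_{\Pi_{\le 1}k_\et} \to \1_{\ell,\eta}^\comp$ is sectionwise an $(\ell,\eta)$-completion; hence after $(\ell,\eta)$-completing both sides of (3), $\Mod_{\1_{\ell,\eta}^\comp}(\SH(k)^{AT})$ becomes the Artin--Tate subcategory of $\SH(k)_{\ell,\eta}^\comp$, yielding (1). The identification $e(e_0 M^{AT}_{\Pi_{\le 1}k_\et}) \wequi e_0 \MGL_\ell^\comp$ then follows from Corollary \ref{corr:cell-recons-MGL} applied pointwise, combined with the naturality of the Adams idempotent of Definition \ref{def:adams-summands} under finite étale base change. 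Part (2) is analogous: under $\cd_\ell(k) \le 1$, Theorem \ref{thm:recons-1} (or \ref{thm:recons-2}) identifies $M^{AT}_{\Pi_{\le 1}k_\et}$ directly with $\MGL_\ell^\comp$ rather than only its Adams summand, since the extra truncation term in the pullback defining $R^{AT}$ then contributes trivially. The main obstacle is the coherent assembly of the pointwise cellular reconstructions into a functor of Artin--Tate module categories: verifying that the pullback squares defining $R^{AT}$ and the truncations $\tau_{\ge 0}^{\Ss,f}$ commute with finite étale pullback on both sides in a manner compatible with the sphere grading, and that the resulting comparison is sectionwise rather than merely on stalks over $\Fin_{\Pi_{\le 1}k_\et}$.
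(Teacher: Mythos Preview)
Your proposal captures the core idea correctly: the argument reduces sectionwise (over each finite \'etale $k'/k$) to the cellular reconstruction already established in Theorems~\ref{thm:recons-2} and~\ref{thm:recons-3}, and the ``obstacle'' you identify---the coherent assembly of these pointwise identifications---is precisely what the paper packages under the phrase ``minor modifications of the previous arguments.''

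The paper, however, orders the three parts differently, and this makes the logic cleaner. Rather than proving~(3) first and deducing~(1) by $(\ell,\eta)$-completion, the paper proves~(1) and~(2) directly (by the sectionwise reduction you describe) and then obtains~(3) from~(1) in a single formal step: apply $\Ind((\ph)^\omega)$ to the equivalence of~(1). This works because the compact objects of the $(\ell,\eta)$-completed Artin--Tate category on either side are exactly the compact objects of the decompleted module categories appearing in~(3), so Ind-completing the equivalence on compacts yields~(3) immediately. In your direction, going from~(3) to~(1) requires checking that $(\ell,\eta)$-completing $\Mod_{\1_{\ell,\eta}^\comp}(\SH(k)^{AT})$ really produces the Artin--Tate subcategory of $\SH(k)_{\ell,\eta}^\comp$, which is true but is an extra step rather than a free consequence.

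One small misreading: you speak of establishing~(3) ``in the stated generality'' and then ``specialising under the cohomological-dimension hypotheses'' to get~(1). In fact~(3) is already under the hypothesis $\cd_\ell(k)\le\ell-1$ of~(1); the phrase ``More generally'' in the statement means that~(3) gives a sharper categorical statement under the same hypothesis, not that it holds for more fields. So there is no specialisation step---(1) and~(3) are equivalent formulations, and the paper's one-line passage via $\Ind((\ph)^\omega)$ exploits this.
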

\begin{proof}
(1, 2) Minor modifications of the previous arguments yield the desired comparison maps.
They recover the previous maps sectionwise, and thus are equivalences by previous results.

(3) Apply $\Ind((\ph)^{\omega})$ to the equivalence of (1).
\end{proof}

\begin{dfn} \label{def:AT-F1}
If $G \subset G_\ell$ is a subgroup, we obtain $BG \in \Prf_{/G_\ell}$.
Depending on $G$, we use special evocative names for the resulting category $\GalSp(BG)_{\eta,\ell}^{\comp AT}$:
\begin{itemize}
\item For $\ell$ odd, $G=\Z_\ell^\times$ we write $\GalSp(\F_1)_{\eta,\ell}^{\comp AT}$.
\item For $\ell$ odd, $G = \ell^{n-1} \Z_\ell \subset \Z_\ell^\times$ we write $\GalSp(\F_1[\mu_{\ell^n}])_{\eta,\ell}^{\comp AT}$.
\item For $\ell=2$, we use similar definitions in terms of the subgroup $\Z_2 \subset G_2$.
\end{itemize}
\end{dfn}

\begin{exm} \label{exm:AT-e}
Let $n \ge 1$ ($n \ge 2$ if $\ell=2$).
Let $S \in \Sch_{/\Z[1/\ell,\mu_{\ell^n}}$.
Since the cyclotomic character of $\Spec(\Z[1/\ell,\mu_{\ell^n}])$ factors through the appropriate subgroup from Definition \ref{def:AT-F1}, we can form the composite \[ \GalSp(\F_1[\mu_{\ell^n}])_{\ell,\eta}^{\comp AT} \to \GalSp(\Pi_{\le 1} \Spec(\Z[1/\ell,\mu_{\ell^n}]))_{\ell,\eta}^{\comp AT} \wequi \SH(\Z[1/\ell, \mu_{\ell^n}])_{\ell,\eta}^{\comp AT} \to \SH(k)_{\ell,\eta}^{\comp AT}. \]
Similarly for the decompleted version.
\end{exm}

\begin{rmk} \label{rmk:identify-SHF1}
Fix $\ell$.
Write $\hat\F_p$ for the maximal prime-to-$\ell$ extension of $\F_p$, so that $Gal(\hat\F_p) = \Z_\ell$.
Let $n \ge 1$ ($n \ge 2$ if $\ell = 2$) and assume that $\mu_{\ell^{n+1}} \not\subset \hat\F_p[\mu_{\ell^n}]$.
(For fixed $\ell, n$, such $p$ can always be found.\NB{specifics?})
It follows that the cyclotomic character of $Gal(\hat\F_p[\mu_{\ell^n}])$ acts non-trivially on $\mu_{\ell^{n+1}}$ and so must surject onto the subgroup $\ell^{n-1} \Z_\ell$ ($\ell^{n-2} \Z_\ell$ if $\ell=2$) of Definition \ref{def:AT-F1}.
Consequently the object $BG$ is isomorphic to $\Pi_{\le 1} \Spec(\F_p[\mu_{\ell^n}])_\et$ (any continuous surjection $\Z_\ell \to \Z_\ell$ being an isomorphism).
Thus by Proposition \ref{prop:artin-tate-recons} we learn that \[ \GalSp(\F_1[\mu_{\ell^n}])_{\eta,\ell}^{\comp AT} \wequi \SH(\hat\F_p[\mu_{\ell^n}])_{\eta,\ell}^{\comp AT}, \] and similarly for the decompleted version.
\end{rmk}

\section{Motivic stable stems} \label{sec:motivic-stable-stems}
\localtableofcontents

\bigskip

In this section we put it all together, and determine the $(p,\eta)$-completed motivic stable stems of many fields $k$.
Instead of directly studying the motivic Adams--Novikov spectral sequence as sketched in \S\ref{subsec:intro-sketch}, we proceed slightly indirectly.
Let us begin with the case where $\mu_{p^\infty} \subset k$.
Under this assumption, our Galois approximations from the previous section furnish a functor $e: \SH(\CC)_{p,\eta}^{\comp\cell} \to \SH(k)_{p,\eta}^\comp$ with a right adjoint $e^*$.
We shall first show that this functor sends $H\Z/p$ to $H\Z/p$.
From this we deduce, by a compact-rigid generation argument, that $e_*(E) \otimes H\Z/p \wequi e_*(E \otimes H\Z/p)$.
We want to understand $e_*(\1_{p,\eta}^\comp)$; in fact we shall show that this splits into a sum of $\Gm$-twists of the unit of $\SH(\CC)_{p,\eta}^\comp$.
The main idea is this: given a map \[ \alpha: \bigoplus_{s \in S} \Sigma^{n_s,n_s} \1_{p,\eta}^\comp \to e_*(\1_{p,\eta}^\comp), \] in order to prove that $\alpha$ is an equivalence, we need only check this after $\otimes H\Z/(p,\tau)$.
Now $\pi_{**} H\Z/(p,\tau)(\CC) \wequi \F_p$, concentrated in bidegree $(0,0)$, and more generally $\pi_{**}e_*(H\Z/(p,\tau)) \wequi K_*^M(k)/p$, concentrated in bidegrees of the form $(n,n)$ (and note that, by what we said previously, $e_*(\1_{p,\eta}^\comp) \otimes H\Z/(p,\tau) \wequi e_*(\1_{p,\eta}^\comp \otimes H\Z/(p,\tau)) \wequi e_*(H\Z/(p,\tau))$.
In particular, the map $\alpha$ will be an equivalence if and only if it corresponds to a lift of an $\F_p$-basis of $K_*^M(k)/p$, which exists (note that $\pi_0(\1)_*(k) = K_*^{MW}(k)$, which surjects onto $K_*^M(k)/p$).
Once this direct sum decomposition is established, the tensor product formula of Theorem \ref{thm:intro-intro} is an easy consequence (see Theorem \ref{thm:tensor-product-formula-easy}).

The situation gets much more delicate without the assumption that $\mu_{p^\infty} \subset k$.
We replace the functor $e$ from the above discussion by a new functor $e: \SH(\F_1[\mu_{p^n}])_{p,\eta}^{\comp AT} \to \SH(k)_{p,\eta}^\comp$.\footnote{In fact, for technical reasons we will not complete the categories and instead work with modules over the completed sphere, but we ignore this for now.}
It will still be the case that $e(H\Z/p) \wequi H\Z/p$.
An evident variant of the homotopy $t$-structure exists on $\SH(\F_1[\mu_{p^n}])_{p,\eta}^{\comp AT}$.
The heart consists, roughly speaking, or graded Mackey functors for $B_\et \Z_p$.
It is still the case that $e_*(H\Z/(p,\tau))$ lies in this heart, but it so no longer just a sum of twists of the unit.
In fact it is not even a sum of representables.
Instead we shall prove that it is \emph{flat}.
This implies a (slightly weaker) tensor product formula, by a more complicated argument (see Theorem \ref{thm:motivic-stems-hard}).
The proof of flatness itself involves a careful study of the $\Z_p$-action on $K_*^M(k(\mu_{p^\infty}))/p$.

\subsubsection*{Organization}
We begin in \S\ref{subsec:tate-orientable} by implementing the strategy outlined above for fields containing $\mu_{p^\infty}$, which we call \emph{Tate-orientable}.
The remaining subsections deal with the case where $k$ is not Tate-orientable.
In \S\ref{subsec:non-tate} we introduce our setup and reduce the tensor product formula to the flatness property for Milnor $K$-theory.
This flatness is established in \S\ref{subsec:flatness}, except at the prime $2$, where we need an additional lemma.
Its proof is the content of \S\ref{subsec:H90}.

\subsubsection*{Standing assumptions}
We fix a prime $p$.
All fields will be of characteristic $\ne p$.

\subsection{Tate-orientable case} \label{subsec:tate-orientable}
We call the field $k$ \emph{Tate-orientable} if it contains $\mu_{p^n}$ for all $n$.
In this section we assume $k$ Tate-orientable.
Example \ref{exm:tate-orientable-universal} supplies us (because of Tate-orientability) with a functor \[ e: \SH(\CC)_{p,\eta}^{\comp\cell} \to \SH(k)_{p,\eta}^\comp. \]
The following summarizes the facts about this functor and the source category we will use.
\begin{prop} \label{prop:tate-orient-summary}
\begin{enumerate}
\item We have equivalences of ring spectra $e(\MGL_p^\comp) \wequi \MGL_p^\comp$ and $e(H\Z_p^\comp) \wequi H\Z_p^\comp$.
\item The category $\SH(\CC)_{p,\eta}^{\comp\cell}$ has a $t$-structure with non-negative part generated under colimits and extensions by $\Sigma^{i,i} \1_{p,\eta}^\comp$.
  The heart is equivalent to the category of derived $(p,\eta)$-complete modules over the commutative graded ring $K^{MW}(\CC)_{p,\eta}^\comp$.
  The functor sends an object $E$ in the heart to the module $[\Sigma^{-*,-*} \1_{p,\eta}^\comp, E]$.
\item Formation of homotopy objects commutes with arbitrary sums.\NB{but not filtered colimits}
\end{enumerate}
\end{prop}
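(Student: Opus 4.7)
For part (1), I would unpack the construction of $e$ from Example \ref{exm:tate-orientable-universal}: it factors as the inverse of the Galois-approximation equivalence for $\CC$ (Corollary \ref{corr:cell-recons}, applicable since $\CC$ is separably closed so $\pvcd_p=0$), the map $\pi^*: \GalSp(e) \to \GalSp(\Pi_{\le 1}\Spec k_0)$ induced by the trivial cyclotomic character of $k_0 = \QQ(\mu_{p^\infty})$, the Galois-approximation equivalence for $k_0$ (Corollary \ref{corr:cell-recons} applies since $\pvcd_p(k_0)\le 2$, by Corollary \ref{cor:CQL-MGL}), and the base change $\SH(k_0) \to \SH(k)$. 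Corollary \ref{corr:cell-recons-MGL} identifies the image of $\MGL_e$ with $\MGL$ at each stage as $\scr E_0$-algebras and homotopy rings; base change preserves $\MGL$ on the nose. This yields $e(\MGL_p^\comp) \wequi \MGL_p^\comp$. For $H\Z_p^\comp$, since $e$ is symmetric monoidal and colimit-preserving, and since Hopkins--Morel--Hoyois \cite{hoyois2015algebraic} expresses $H\Z_p^\comp$ as $(\MGL/(x_1,x_2,\ldots))_p^\comp$ for polynomial generators of the Lazard ring over any field of characteristic $\ne p$, the MGL identification transports to the $H\Z$ identification.

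For part (2), the $t$-structure exists by \cite[Proposition 1.4.4.11]{HA} applied to the generating set. By Morel's computation \cite{morel2004motivic-pi0}, $\pi_{p,q}(\1)(\CC) = K^{MW}_{-q}(\CC)$ for $p=q$ and vanishes for $p<q$; hence $\pi_{p,q}(\Sigma^{i,i}\1_{p,\eta}^\comp)$ is concentrated on the diagonal, so the generators lie in the heart. Unpacking orthogonality, an object $E$ lies in the heart iff $\pi_{p,q}(E)$ is concentrated on $p=q$. The diagonal graded endomorphism ring of the generating family is $K^{MW}(\CC)_{p,\eta}^\comp$. A Schwede--Shipley style reconstruction (heart is a Grothendieck abelian category with a natural sphere-graded generating family whose endomorphism graded ring is $K^{MW}(\CC)_{p,\eta}^\comp$) then identifies the heart via $E \mapsto [\Sigma^{-*,-*}\1_{p,\eta}^\comp, E]$ with graded $K^{MW}(\CC)_{p,\eta}^\comp$-modules, and derived $(p,\eta)$-completeness is inherited from the ambient $(p,\eta)$-complete category.

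For part (3), I would reidentify the same $t$-structure as generated by the \emph{compact} objects $\{\Sigma^{i,i}\1/(p,\eta)\}_{i\in\Z}$. Each such object is compact in $\SH(\CC)_{p,\eta}^{\comp\cell}$ because it is $(p,\eta)$-nilpotent: its mapping spectrum out of any object depends only on the mod $(p,\eta)$ reduction, and the mod $(p,\eta)$ reduction of a $(p,\eta)$-completed sum agrees with the classical sum. To see this generating set yields the same $t$-structure as in (2), compare coconnective parts: the orthogonality $\pi_0\Map(\Sigma^{i,i}\1/(p,\eta),E)=0$ unwinds through the cofiber sequences defining $\1/p$ and $\1/\eta$ to vanishing of the $(p,\eta)$-reduction of $\pi_{i,i}(E)$, and derived Nakayama (valid in the $(p,\eta)$-complete setting) upgrades this to $\pi_{i,i}(E)=0$, recovering the description of $\SH_{\le-1}$ from (2). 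Compactness of the new generators then guarantees that $\tau_{\ge 0}$ preserves filtered colimits; since $\tau_{\le 0}$ is already a left adjoint, each $\pi_n^\heart$ preserves arbitrary sums. The main technical obstacle is the derived-Nakayama comparison of the two generating sets.
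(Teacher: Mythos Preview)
Your approach to (1) has a gap at the transfer step. Corollary \ref{corr:cell-recons-MGL} identifies $\MGL_{\scr X}$ with $\MGL$ only when $\scr X = \Pi_{\le 1}(S_\et)$ for a scheme $S$ satisfying the hypotheses; it says nothing about what the functor $\pi^*: \GalSp(e) \to \GalSp(\Pi_{\le 1}(k_0)_\et)$ does to $\MGL_e$. Example \ref{exm:tate-orientable-universal}(2) only supplies a \emph{map} $e(\MGL_e) \to \MGL_k$, not an equivalence. The paper fills this gap by a different route: both $e(\MGL)$ and $\MGL$ are built from $(2n,n)$-cells, hence bounded below, so in the $(p,\eta)$-complete setting it suffices to check the comparison after $\otimes H\Z/p$; one is then comparing pure Tate $H\Z/p$-modules, and that category is field-independent (Lemma \ref{lemm:conservative-realization}), reducing to the separably closed case where Example \ref{exm:tate-orientable-universal}(3) applies. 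Your $H\Z$ argument via Hopkins--Morel is fine and matches the paper.

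Your approach to (2) is close to the paper's, but the paper is more careful about completion: it first builds the $t$-structure on the uncompleted module category $\scr M = \Mod_{\1_{p,\eta}^\comp}(\SH(\CC)^\cell)$, identifies the heart there as graded $K^{MW}(\CC)_{p,\eta}^\comp$-modules, and then proves separately that $a$-completeness (for $a=p$ or $\eta$) is detected on homotopy objects, so the $t$-structure restricts to the $(p,\eta)$-complete subcategory. Working directly in the completed category as you do, the Schwede--Shipley identification is more delicate because the generators $\Sigma^{i,i}\1_{p,\eta}^\comp$ are not compact there.

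Your approach to (3) contains a genuine error: the $t$-structure generated by $\{\Sigma^{i,i}\1/(p,\eta)\}$ does \emph{not} coincide with the paper's $t$-structure. For $p$ odd (so $\eta = 0$ after completion and the heart is derived-$p$-complete graded $\Z_p$-modules), take $F \in T^\heart$ corresponding to the module $\Z_p$ concentrated in degree $0$. One computes $\Pi_0(\Sigma^{i,i}\1/(p,\eta))$ is a shift of $\F_p$, so $[\Sigma^{i,i}\1/(p,\eta), F] = \Hom_{T^\heart}(\F_p\langle i\rangle, \Z_p) = 0$; for $n \ge 1$ one has $[\Sigma^{n+i,i}\1/(p,\eta), F] = 0$ by the $t$-structure. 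Thus $F$ lies in your $T'_{\le -1}$ while being a nonzero object of $T^\heart$. The slogan ``$\pi_0\Map(\Sigma^{i,i}\1/(p,\eta),E)=0$ unwinds to vanishing of $\pi_{i,i}(E)/(p,\eta)$'' is false: that mapping group involves several off-diagonal homotopy groups via the cell structure of $\1/(p,\eta)$, and derived Nakayama does not bridge the gap. The paper instead proves directly that $T_{\le 0}$ is closed under completed sums: for $E_\alpha \in T_{\le 0}$ the fiber of $\bigoplus E_\alpha \to \prod E_\alpha$ lies in $T_{<0}$, and one shows its $(p,\eta)$-completion stays in $T_{\le 0}$ (trivially for $p$ odd; for $p=2$ via the short exact sequence $0 \to 2E_\alpha \to E_\alpha \to E_\alpha/2 \to 0$ to reduce to single-element completion).
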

\begin{proof}
(1) We have a canonical map $e(\MGL_{p,\eta}^\comp) \to \MGL_{p,\eta}^\comp$, which becomes an equivalence after base change to a separably closed extension of $k$ (see again Example \ref{exm:tate-orientable-universal}).
Both versions of $\MGL$ are built out of $(2n,n)$-cells for $n \ge 0$ (see e.g. \cite[Theorem A.1]{BKWX}).
They are in particular bounded below, and since we are working in an $(\eta,p)$-complete setting, to check $e(\MGL_{p,\eta}^\comp) \to \MGL_{p,\eta}^\comp$ is an equivalence it suffices to check the same after $\otimes H\Z/p$.
We are now left with a map of pure Tate $H\Z/p$-modules.
Since the category of these modules is independent of the field (see Lemma \ref{lemm:conservative-realization}), we are reduced to the case where $k$ is separably closed, which was already dealt with in Example \ref{exm:tate-orientable-universal}.

Note that $H\Z_{p,\eta}^\comp$ can be obtained from $\MGL_{p,\eta}^\comp$ by modding out generators \cite{hoyois2015algebraic}.
Since the equivalence $e(\MGL_{p,\eta}^\comp) \wequi \MGL_{p,\eta}^\comp$ respects the homotopy ring structures, we deduce that it also sends $H\Z_{p,\eta}^\comp$ to $H\Z_{p,\eta}^\comp$.

(2) Let $\scr M$ denote the subcategory of modules over $\1_{p,\eta}^\comp \in \SH(\CC)$ generated under colimits by $\Sigma^{i,j} \1_{p,\eta}^\comp$, so that $\scr M_{p,\eta}^\comp \wequi \SH(\CC)_{p,\eta}^{\comp\cell}$.
We give $\scr M$ the $t$-structure with non-negative part generated under colimits and extensions by $\Sigma^{i,i} \1_{p,\eta}^\comp$ for $i \in \Z$.
For $E \in \scr M$, write $\Pi_i(E)$ for the sequence of abelian groups $\pi_i(E)_*$.
Note that $\Pi_i(\1_{p,\eta}^\comp) = 0$ for $i<0$ and $\Pi_0(\1_{p,\eta}^\comp) \wequi K^{MW}(\CC)_{p,\eta}^\comp$.
Thus each $\Pi_i(E)$ is a graded module over $K^{MW}(\CC)_{p,\eta}^\comp$, and from now on we view $\Pi_i$ as taking values in this category.
Given $E \in \scr M$, we can inductively cone off copies of $\Sigma^{a,b} \1_{p,\eta}^\comp$ to build a cofiber sequence $C \to E \to E'$ with $\Pi_i(E') = 0$ for $i > 0$, $\Pi_i(E') \wequi \Pi_i(E)$ for $i \le 0$ and $C \in \scr M_{> 0}$.
The vanishing condition on $\Pi_i(E')$ implies that $E' \in \scr M_{\le 0}$.
We have thus shown that the $t$-structure on $\scr M$ is determined by the homotopy ``groups'' $\Pi_*(\ph)$.
We have the functor $\Pi_0: \scr M^\heart \to \Mod_{K^{MW}(\CC)_{p,\eta}^\comp}$.
It preserves colimits and its essential image contains the free modules, so the functor essentially surjective.
The functor has a right adjoint (for formal reasons), which we denote by $R$.
We get \[ [\Sigma^{i,i} \1_{p,\eta}^\comp,  R \Pi_0 E]_{\scr M} \wequi [(K^{MW}(\CC)_{p,\eta}^\comp)_{*-i}, \Pi_0 E]_{\Mod_{K^{MW}(\CC)_{p,\eta}^\comp}} \wequi \Pi_0(E)_{-i}, \] from which we deduce that $R \Pi_0 \wequi \id$.
It follows that $\scr M^\heart \wequi \Mod_{K^{MW}(\CC)_{p,\eta}^\comp}$.
We shall show that $M \in \scr M$ is $a$-complete (for some $a \in K_{|a|}^{MW}(\CC)_{p,\eta}^\comp$) if and only if each $\Pi_i(M)$ is derived $a$-complete.
This implies that the $t$-structure restricts to $a$-complete objects, and hence (by intersecting) also to $(a,b,...,c)$-complete objects, e.g. $(p,\eta)$-complete objects.

To establish (2), it thus remains to prove the claim.
We know that $M$ is $a$-complete if and only if\footnote{Observe that $L$ is the fiber of $M \to M_a^\comp$.} \[ L := \lim(M \xleftarrow{a} \Sigma^{|a|,|a|} M \xleftarrow{a} \dots) = 0. \]
The Milnor exact sequence supplies us with a short exact sequence \[ 0 \to L_{1,i+1} := \limone(\Pi_{i+1}(M) \xleftarrow{a} \dots) \to \Pi_i(L) \to L_{0,i} := \lim(\Pi_i(M) \xleftarrow{a} \dots) \to 0. \]
Altogether we learn that $M$ is derived $a$-complete if and only if $L_{1,i} = 0 = L_{0,i}$ for every $i$.
These are exactly the conditions defining derived $a$-completeness of $\Pi_i(M)$.

(3) We must prove that $(\SH(\CC)_{p,\eta}^{\comp\cell})_{\ge 0}$ and $(\SH(\CC)_{p,\eta}^{\comp\cell})_{\le 0}$ are closed under sums.
This is clear for $\ge 0$.
Thus let $\{E_\alpha\}_\alpha \in (\SH(\CC)_{p,\eta}^{\comp\cell})_{\le 0}$ and consider the fiber sequence \[ F \to \bigoplus_\alpha E_\alpha \to \prod_\alpha E_\alpha \in \scr M. \]
Note that $\prod_\alpha E_\alpha \in \scr M_{\le 0}$ and $F \in \scr M_{<0}$ (consider the homotopy groups).
We deduce that $F_p^\comp \in \scr M_{\le 0}$ (the usual formula for $p$-completion shows that $(\scr M_{<0})_p^\comp \subset \scr M_{\le 0}$), whence $(\bigoplus_\alpha E_\alpha)_p^\comp \in \scr M_{\le 0}$ (the product being already complete).
A similar argument applies to completion at $\eta$ instead.
Unfortunately the case of completion at both elements does not follow formally.
If $p$ is odd then $\eta=0$ (see Remark \ref{rmk:KMW-C-comp} below) and we are done.
Now suppose $p=2$.
It will suffice to consider the case where $E_\alpha \in \scr M^\heart$.
For each $\alpha$ we have a short exact sequence \[ 0 \to 2E_\alpha \to E_\alpha \to E_\alpha/2 \to 0 \in \scr M^\heart. \]
Since $2\eta = 0$, $2E_\alpha$ is $\eta$-torsion; also $E_\alpha/2$ is $2$-torsion.
It follows that \[ \left(\bigoplus_\alpha 2E_\alpha\right)_{2,\eta}^\comp \wequi \left(\bigoplus_\alpha 2E_\alpha\right)_{2}^\comp, \] and similarly for the sum of the $E_\alpha/2$.
All in all we obtain a cofiber sequence \[ \left(\bigoplus_\alpha 2E_\alpha\right)_2^\comp \to \left(\bigoplus_\alpha E_\alpha\right)_{2,\eta}^\comp \to \left(\bigoplus_\alpha E_\alpha/2\right)_\eta^\comp. \]
By assumption each $E_\alpha$ is $(2,\eta)$-complete, and it follows from (2) that images and quotients of derived $(2,\eta)$-complete objects in $\scr M^\heart$ are derived complete.
Thus $2E_\alpha$ is $2$-complete and $E_\alpha/2$ is $\eta$-complete, and hence the outer terms in the cofiber sequence are $\le 0$ by the case of completion at one element which we have already dealt with.
\end{proof}

\begin{rmk} \label{rmk:KMW-C-comp} \NB{justification?}
Note that if $p=2$ then \[ K_*^{MW}(\CC)_{2,\eta}^\comp \wequi \Z_2[\eta]/2\eta, \] whereas if $p$ is odd then \[ K_*^{MW}(\CC)_{p,\eta}^\comp \wequi \Z_p. \]
\end{rmk}

With this preparation out of the way, we can formulate the main result.
\begin{thm} \label{thm:tensor-product-formula-easy}
Let $k$ be a Tate-orientable field of characteristic $\ne p$.
There is a canonical isomorphism (in the category of derived complete modules over $K_*^{MW}(\CC)_{p,\eta}^\comp$) \[ \pi_{**}(\1_{p,\eta}^\comp)(k) \wequi K_*^{MW}(k)_{p,\eta}^\comp \hat\otimes \pi_{**}(\1_{p,\eta}^\comp)(\CC). \]
Moreover:
\begin{enumerate}
\item $K_*^{MW}(k)_{p,\eta}^\comp$ is a (graded) free module.
\item The derived completion $K_*^{MW}(k)_{p,\eta}^\comp$ coincides with the naive completion of $K_*^{MW}(k)$.
  The same holds for $K_*^M(k)$.
\end{enumerate}
\end{thm}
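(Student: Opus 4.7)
Following the sketch given at the start of this section, the argument proceeds by building a direct-sum decomposition of the pushforward $e_\ast(\1_{p,\eta}^\comp) \in \SH(\CC)_{p,\eta}^{\comp\cell}$ along diagonal Tate twists of the unit; the tensor product formula is then read off by mapping out of motivic spheres via Proposition \ref{prop:tate-orient-summary}(2). I would first dispose of the \emph{moreover} claims, since they feed into the main step. For (2), $K_n^M(k)/p^r$ is annihilated by $p^r$, so all $\lim^1$-obstructions in the derived $p$-completion tower vanish, and since $\eta=0$ on $K^M$ the $\eta$-completion is trivial; for $K^{MW}(k)$ one propagates this through the fundamental exact sequence $0\to I^{n+1}(k)\to K_n^{MW}(k)\to K_n^M(k)\to 0$, using that $\eta$ incarnates the inclusion $I^{n+1}\hookrightarrow I^n$. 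For (1), fix a homogeneous $\F_p$-basis $\{b_\alpha\in K_{n_\alpha}^M(k)/p\}_{\alpha\in A}$ (available by Bloch--Kato), lift each $b_\alpha$ to $\tilde b_\alpha\in K_{n_\alpha}^{MW}(k)$, and assemble these into a morphism
\[
\varphi: \bigoplus_{\alpha\in A}\, \Sigma^{n_\alpha}\, K_\ast^{MW}(\CC)_{p,\eta}^\comp \longrightarrow K_\ast^{MW}(k)_{p,\eta}^\comp.
\]
Both sides being derived $(p,\eta)$-complete by (2), $\varphi$ is an equivalence as soon as $\varphi/(p,\eta)$ is. Modulo $(p,\eta)$ the source is $\bigoplus_\alpha\Sigma^{n_\alpha}\F_p$; using $K^{MW}/\eta\wequi K^M$ the target becomes $K_\ast^M(k)/p$; by construction $\varphi$ sends the $\alpha$-generator to $b_\alpha$, so is an isomorphism.

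\textbf{Producing the splitting of $e_\ast(\1_{p,\eta}^\comp)$.} Each $\tilde b_\alpha\in\pi_{n_\alpha,n_\alpha}(\1)(k)$ corresponds, via the $e\dashv e_\ast$ adjunction and the invariance $e(\Sigma^{n,n}\1)\wequi\Sigma^{n,n}\1$, to a map $\Sigma^{n_\alpha,n_\alpha}\1_{p,\eta}^\comp \to e_\ast(\1_{p,\eta}^\comp)$. Summing (Proposition \ref{prop:tate-orient-summary}(3) ensures good $t$-structural behaviour of infinite direct sums) and completing produces
\[
\Phi: \Big(\bigoplus_\alpha \Sigma^{n_\alpha,n_\alpha}\1_{p,\eta}^\comp\Big)_{p,\eta}^\comp \longrightarrow e_\ast(\1_{p,\eta}^\comp).
\]
Both sides are bounded below and $(p,\eta)$-complete in the $t$-structure of Proposition \ref{prop:tate-orient-summary}(2), hence $\Phi$ is an equivalence iff $\Phi\otimes H\Z/p$ is. The cellular source is compact-rigidly generated and $e$ sends motivic spheres to compact objects of $\SH(k)_{p,\eta}^\comp$, so Lemma \ref{lemm:projection-formula} combined with $e(H\Z/p)\wequi H\Z/p$ (Proposition \ref{prop:tate-orient-summary}(1)) yields
\[
e_\ast(\1_{p,\eta}^\comp)\otimes H\Z/p \wequi e_\ast(H\Z/p).
\]
The residual problem is thus to check that $\bigoplus_\alpha \Sigma^{n_\alpha,n_\alpha}H\Z/p \to e_\ast(H\Z/p)$ is an equivalence in the cellular category. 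Both objects are cellular pure-Tate $H\Z/p$-modules, with bigraded homotopy of the form $H^{\ast\ast}(\CC,\F_p)^{\oplus A}$ on the source and motivic cohomology $H^{\ast\ast}(k,\F_p)$ on the target. After inverting $\tau$ both compute $\ell$-adic étale cohomology, which under Tate orientability identifies as a free module on $K_\ast^M(k)/p$ over $H^\ast_\et(\CC,\F_p)\wequi\F_p$; by construction our map is the basis comparison. The $\tau$-adic Bockstein, together with Bloch--Kato vanishing in negative weights, promotes this to the $\tau$-completed level.

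\textbf{Conclusion and main obstacle.} Applying $[\Sigma^{p,q}\1_{p,\eta}^\comp,\ph]$ to $\Phi$ and unfolding the adjunction yields $\pi_{\ast\ast}(\1_{p,\eta}^\comp)(k)$ as the derived $(p,\eta)$-completion of $\pi_{\ast\ast}(\1_{p,\eta}^\comp)(\CC)\otimes_{K_\ast^{MW}(\CC)_{p,\eta}^\comp}(\bigoplus_\alpha \Sigma^{n_\alpha}K_\ast^{MW}(\CC)_{p,\eta}^\comp)$, which by the freeness established in the first paragraph is precisely the completed tensor product with $K_\ast^{MW}(k)_{p,\eta}^\comp$. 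The step I expect to be hardest is the identification of $\pi_{\ast\ast}e_\ast(H\Z/p)$ as a free module over $H^{\ast\ast}(\CC,\F_p)$ on $K_\ast^M(k)/p$: the statement is transparent after inverting $\tau$ under Tate-orientability, but bridging $\tau$-inverted and $\tau$-complete information relies on the CQL-type bounds of Section \ref{sec:CQL} to control convergence of the $\tau$-adic tower. The arithmetic identification $K^{MW}/\eta\wequi K^M$ entering at $p=2$ is a second nontrivial input, not a formality.
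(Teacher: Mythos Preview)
Your overall strategy---build a direct-sum decomposition of $e_*(\1_{p,\eta}^\comp)$ via lifted basis elements of mod-$p$ Milnor $K$-theory, then read off the tensor formula---matches the paper's. The paper, however, makes one simplification that dissolves your ``main obstacle'' entirely: rather than tensoring with $H\Z/p$ and then managing $\tau$ via inversion and a $\tau$-adic tower, it tensors with $H\Z/(p,\tau)$. Since $\pi_{**}(H\Z/(p,\tau))(\CC)=\F_p$ is concentrated in bidegree $(0,0)$ and $\pi_0(H\Z/(p,\tau))\wequi \pi_0(\1_\CC)/(p,\eta)$, the comparison $\bigoplus_\alpha \Sigma^{n_\alpha,n_\alpha}\1 \to e_*(\1)$ is an equivalence iff it is so after $\otimes H\Z/(p,\tau)$, and this is now a map of graded $\F_p$-vector spaces sending generators to the chosen basis $\{b_\alpha\}$ of $K_*^M(k)/p$---an isomorphism by construction. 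No CQL input or $\tau$-adic convergence is needed. Two secondary structural differences: the paper deduces freeness (1) \emph{from} the splitting $\Phi$ rather than establishing it independently first; and for (2) it argues via the fiber product description $K_*^{MW}(k)\wequi K_*^M(k)\times_{k_*^M(k)}I^*(k)$ (using that $I^*(k)$ is $2$-torsion when $\sqrt{-1}\in k$) rather than your short exact sequence, though either route works.
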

\begin{proof}
For brevity we put $\1_k = (\1_k)_{p,\eta}^\comp$.
Lemma \ref{lemm:projection-formula} shows that that $e_*(\1_k) \otimes H\Z/(p,\tau) \wequi e_*(e^* H\Z/(p,\tau))$, which by Proposition \ref{prop:tate-orient-summary} is the same as $e_*(H\Z/(p,\tau))$.
Note that $e_*(\1_k), \1_{\CC}, H\Z/(p,\tau)$ are all connective.
Pick a set $I$ and for each $i \in I$ an integer $n_i$ as well as an element $a_i \in \pi_0(e_*(H\Z/(p,\tau)))_{-n_i}$.
We can arrange $I, n_\bullet, a_\bullet$ in such a way that these elements form a basis of the graded $\FF_p$-module $\pi_0(e_*(H\Z/(p,\tau)))$.
Since $e_* \1_k$ is connective, \[ \pi_0(e_* \1_k) \to \pi_0(e_* \1_k/(p,\eta)) \wequi \pi_0(e_* H\Z/(p,\tau)) \] is surjective and hence we can pick elements $\tilde a_i \in \pi_0(e_*(\1_k))_{-n_i}$ lifting the $a_i$.
Now consider the map \[ \bigoplus_I \Sigma^{n_i,n_i} \1_{\CC} \to e_*(\1_k) \] corresponding to the $\tilde a_i$.
By construction, this map becomes an equivalence after $\otimes H\Z/(p,\tau)$.\footnote{Recall that $\pi_{**} H\Z/p(\CC) \wequi \FF_p[\tau]$, so that $\pi_{**}(H\Z/(p,\tau))(\CC)$ is a single copy of $\FF_p$ in bidegree $(0,0)$.}
Since $\pi_0(H\Z/(p,\tau)) \wequi \pi_0(\1_{\CC})/(p,\eta)$, it follows that the map becomes an equivalence after modding out by $p$ and $\eta$, whence, is an equivalence.
This in particular proves (1).
The formula for $\pi_{**} e_* \1_k$ follows via Proposition \ref{prop:tate-orient-summary}(3).

It remains to establish (2).
We first treat the case of $K_*^M(k)$.
We can obtain the derived $p$-completion as $\pi_0(\lim_n H\Z/p^n)_*(k)$.
But $\pi_{**}H\Z/p^n(k) \wequi K_*^M(k)/p^n[\tau]$ and so we see that the inverse system on $\pi_1$ (and in fact every $\pi_i$) is Mittag--Leffler, so does not contribute to $\pi_0$.
Now we treat $K_*^{MW}(k)$.
If $p$ is odd then $K_*^{MW}(k)_{p,\eta}^\comp \wequi K_*^{M}(k)_p^\comp$, in both the derived and the naive sense, so we conclude.\NB{details?}
Now we treat the case $p=2$.
We first show that the naive and derived completion at $2$ agree.
In other words we want to show that $\lim^1_n K_*^{MW}(k)[2^n] = 0$.
From $K_*^{MW}(k) \wequi K_*^M(k) \times_{k_*^M(k)} I^*(k)$\NB{ref?} we obtain an exact sequence \[ 0 \to K_*^{MW}(k)[2^n] \to K_*^M(k)[2^n] \oplus I^*(k) \to k_*^M(k) \to 0 \] (note that $I^*(k)$ is $2$-torsion, $-1$ being a square in $k$\NB{ref?}) and hence $\lim^1_n K_*^{MW}(k)[2^n] \wequi \lim^1_n K_*^{M}(k)[2^n] = 0$ (by what we already did).
Write $L_*$ for the derived $2$-completion of $K_*^{MW}(k)$.
It remains to show that the derived and ordinary $\eta$-completion of $L_*$ agree.
But multiplication by $\eta$ is an isomorphism on $L_*$ for $*<0$ (since this is already the case on $K_*^{MW}$\NB{ref?}).
This implies that for every $i$, the inverse system $ker(\eta^n: L_i \to L_{i-n})_n$ is pro-null, and so has vanishing $\lim^1$, as needed.
\end{proof}

\subsection{Non-orientable case: overview} \label{subsec:non-tate}
From now on we no longer assume that $k$ is Tate orientable, in fact, we assume that $\mu_{p^\infty} \not\subset k$.
We however do assume that $\mu_p \subset k$, and $\mu_4 \subset k$ if $p=2$.
\begin{rmk}
Note that if $\mu_p \not\subset k$, then if $k' = k[\mu_p]$ then $k'/k$ has degree dividing $p-1$, and so coprime to $p$.
If $p$ is odd, from this one can deduce that \[ \pi_{**}(\1_{p,\eta}^\comp)(k) \to \pi_{**}(\1_{p,\eta}^\comp)(k') \] is a split injection, onto the summand fixed by the action of $Gal(k'/k)$.
Our assumption that $\mu_p \subset k$ is thus mostly harmless in this case.
\end{rmk}

\begin{rmk} \label{rmk:Zp-action-Fpt-module}
Let $k^t = k(\mu_{p^\infty})$.
Then $Gal(k^t/k) \wequi \Z_p$ and we shall need to work with $D(Gal(k^t/k), \F_p)$.
A continuous action of $\Z_p$ on an $\F_p$-vector space $V$ is the same as an endomorphism $s$ such that for every $x \in V$ we have $s^{p^r}(x) = x$ for $r$ sufficiently large.
Setting $t=s-1$ we thus get $t^{p^r}(x) = 0$; hence the action is the same as a continuous $\F_p\fpsr{t}$-module structure.
All in all this yields an equivalence \[ D(Gal(k^t/k), \F_p) \wequi D(\F_p\fpsr{t})_{t-tors}. \]
On the left hand side we have the permutation representations coming from the quotient $\Z/p^r$ of $\Z_p$ (corresponding to adjoining appropriate roots of unity to $k$).
The corresponding $\F_p\fpsr{t}$-module is $\F_p\fpsr{t}/t^{p^r}$.
\end{rmk}

Let $n \ge 1$.
We have the presentably symmetric monoidal $\infty$-category $\GalSp(\FF_1[\mu_{p^n}])^{\comp AT}_{\eta,p}$ from Definition \ref{def:AT-F1}.
We shall also need to use the decompleted version $\GalSp(\FF_1[\mu_{p^n}])^{AT}$ from Remark \ref{rmk:decompleted}, mainly because it affords a more well-behaved $t$-structure.
Let us summarize its salient properties.
\begin{prop}\label{prop:tate-nonorient-summary}
\begin{enumerate}
\item $\GalSp(\FF_1[\mu_{p^n}])^{AT}$ is compact-rigidly generated by objects $\Gmp{i} \otimes \1_{1^r}$, $r \ge 1$.
  The unit is $\1 = \1_{1^n}$.
\item We have $[\Sigma^i \1_{1^r}, \1_{1^s} \otimes \Gmp{j}] = 0$ for $i < 0$.
  The $t$-structure with non-negative part generated by $\1_{1^r} \otimes \Gmp{i}$ (for $r,i$ arbitrary) is detected on appropriate homotopy presheaves.
  The $t$-structure is compatible with the symmetric monoidal structure and filtered colimits.
\item There are objects $\MGL, H\Z \in \GalSp(\FF_1[\mu_{p^n}])^{AT}_{\ge 0}$ and elements $\eta \in \pi_0(\1)_{-1}$, $\tau \in \pi_1(H\Z/p)_{-1}$.
  We have $\pi_0(\MGL) \wequi \pi_0(H\Z) \wequi \pi_0(\1)/\eta$ and $\pi_0(H\Z/p) \wequi H\Z/(p,\tau)$.
\item Let $\scr C \subset hD(\FF_p\fpsr{t})_{t-tors}$ denote the full subcategory on finite sums of objects $\Sigma^i \FF_p\fpsr{t}/t^{p^r}$ for $r \ge 0$, $i \in \Z$.
  Then there is an equivalence \[ \Mod(H\Z/(p,\tau))^\heart \wequi \PSh_\Sigma(\scr C)_{\le 0} \] under which $\1_{1^r} \otimes \Gmp{i}$ corresponds to $\Sigma^i \FF_p\fpsr{t}/t^{p^{r-n}}$.
\end{enumerate}
\end{prop}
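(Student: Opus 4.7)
The plan is to unpack the definition of $\GalSp(\F_1[\mu_{p^n}])^{AT}$ as $\Mod_{R^{AT}_{BG}}(\ShvSp_\Nis(BG)^{\Ss})$, where $G \subset G_\ell$ is the subgroup from Definition \ref{def:AT-F1}, and deduce each assertion from the corresponding property of the ambient presheaf category. The ambient $\ShvSp_\Nis(BG)^{\Ss} = \PSh_\Sigma(\Fin_{BG})[(S^1)^{-1}]^{\Ss}$ is compact-rigidly generated by $\Sigma^\infty_+(G/H) \otimes \Gmp{i}$ for $H \subset G$ open and $i \in \Z$, and the Tate twist $\Gmp{i}$ is an invertible object by construction of the sphere-grading. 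Since $R^{AT}_{BG}$ is a totalization of compact objects obtained by Chow truncations of pulled-back finite cobar pieces, base change preserves compactness of these generators; identifying $\1_{1^r}$ with $R^{AT}_{BG} \otimes \Sigma^\infty_+(G/H_r)$ (where $H_r \subset G$ corresponds to the level-$r$ cyclotomic cover) yields (1). The $t$-structure in (2) comes from Lemma \ref{lem:graded-t} applied to the sphere-grading function; the subadditivity hypothesis is exactly Lemma \ref{lem:sublin}, so the $t$-structure is monoidal. Sectionwise detection on the compact generators, together with the formula for $R^{AT}_{BG}$ (which is bounded above by design through the Chow-truncated pullback square), delivers the vanishing $[\Sigma^i \1_{1^r}, \1_{1^s} \otimes \Gmp{j}] = 0$ for $i < 0$.

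For (3), the object $\MGL$ is by definition $M^{AT}_{BG}$ from \S\ref{subsec:AT-reconstruction}, and $H\Z$ is constructed by modding out the Lazard generators in the non-étale corner of the defining pullback square (using $\pi_{2*}\MU \to \pi_{2*}\MU / (x_1, x_2, \ldots) \wequi \Z$). The element $\eta \in \pi_0(\1)_{-1}$ is the image of topological $\eta$ under $\Ss \to R^{AT}_{BG}$; the Bott element $\tau \in \pi_1(H\Z/p)_{-1}$ is the class of the cyclotomic generator $\zeta_p$ under the cyclotomic character, which corresponds under Remark \ref{rmk:Zp-action-Fpt-module} to a canonical class in the derived $\F_p[\Z_p]$-module category. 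The identifications $\pi_0(\MGL) \wequi \pi_0(H\Z) \wequi \pi_0(\1)/\eta$ can be checked after applying the fully faithful embedding $e$ of Proposition \ref{prop:artin-tate-recons}(3) into the decompleted motivic category, where they reduce to the well-known motivic statements (i.e.\ $\ul\pi_{0,0}\MGL \wequi \ul\pi_{0,0} H\Z \wequi K^{MW}_0/\eta$).

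The main work is (4). Using Remark \ref{rmk:Zp-action-Fpt-module}, continuous discrete $\F_p[\Z_p]$-modules match with $t$-power-torsion $\F_p\fpsr{t}$-modules, and the representation $\F_p[\Z_p/p^{r-n}]$ corresponds to $\F_p\fpsr{t}/t^{p^{r-n}}$. The unit $H\Z/(p,\tau)$ has homotopy groups computable from the cyclotomic cohomology of $G$, which after killing $\tau$ collapses to $\F_p$ concentrated in bidegree $(0,0)$. One then verifies that $\1_{1^r} \otimes \Gmp{i}$, viewed as a module over $H\Z/(p,\tau)$ in the heart, corresponds to $\Sigma^i \F_p\fpsr{t}/t^{p^{r-n}}$, by computing $\pi_{**}((\1_{1^r}\otimes \Gmp{i})\otimes H\Z/(p,\tau))$ via the base-change spectral sequence and matching it to the cohomology of $H_r$ with appropriate Tate twists. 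Since the generators of the heart are exactly the $\1_{1^r}\otimes \Gmp{i}$, and since their mapping spaces realize the $\PSh_\Sigma$ envelope of the subcategory $\scr C \subset hD(\F_p\fpsr{t})_{t-\tor}$ (the $t$-structure being compatible with $\Sigma$), the universal property of $\PSh_\Sigma$ yields the equivalence $\Mod(H\Z/(p,\tau))^\heart \wequi \PSh_\Sigma(\scr C)_{\le 0}$.

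The hardest step will be the precise $\Ext$-computation in (4): we must show that the $\F_p$-linear mapping groups between generators $\1_{1^r}\otimes \Gmp{i}$ in the heart of $\Mod(H\Z/(p,\tau))$ agree, in all degrees, with those of $\F_p\fpsr{t}/t^{p^{r-n}}$ in $D(\F_p\fpsr{t})_{t-\tor}$, and in particular that the non-split extensions arising from the inclusions $t^k\F_p\fpsr{t}/t^{p^{r-n}} \hookrightarrow \F_p\fpsr{t}/t^{p^{r-n}}$ are exactly the extensions witnessed by the mod-$p$ cohomological coboundaries in $\GalSp(\F_1[\mu_{p^n}])^{AT}$. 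This requires careful bookkeeping with the Chow truncation in the pullback defining $R^{AT}_{BG}$, to make sure no spurious extensions are introduced by the motivic (as opposed to étale) part.
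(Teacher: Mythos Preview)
Your route diverges from the paper's, and has a real gap in (2). The paper's proof opens by invoking Remark~\ref{rmk:identify-SHF1}, which identifies $\GalSp(\F_1[\mu_{p^n}])^{AT}$ with $\Mod_{\1_{p,\eta}^\comp}(\SH(\F)^{AT})$ for a suitably chosen field $\F$ algebraic over some $\F_\ell$. With that identification in hand, (1)--(3) are immediate from standard motivic facts over a field (compact-rigid generation by finite \'etale schemes and Tate twists, Morel's connectivity theorem, the usual $\MGL$ and $H\Z$ and their $\pi_0$), and (4) reduces to recognizing $[\1_{1^r} \otimes \Gmp{i}, \Gmp{j} \otimes H\Z/(p,\tau)]$ as the \'etale cohomology group $H^{j-i}_\et(k,\F_p)$ for the field $k$ corresponding to $\1_{1^r}$, and then invoking $D(Gal(\F),\F_p) \simeq D(\F_p\fpsr{t})_{t\text{-tors}}$.

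By contrast, you try to argue directly inside $\Mod_{R^{AT}_{BG}}(\ShvSp_\Nis(BG)^\Ss)$. The connectivity statement in (2) does not follow from your reasoning: the vanishing $[\Sigma^i \1_{1^r}, \1_{1^s}\otimes\Gmp{j}]=0$ for $i<0$ is a \emph{lower} bound on the homotopy of $R^{AT}_{BG}$, not an upper one, so ``bounded above by design'' points the wrong direction; and since $R^{AT}_{BG}$ is built as a totalization of a pullback square, there is no a~priori connectivity at all. Extracting it directly from the defining square would amount to controlling the convergence of the $\MU_\ell$-based tower over $BG$ well enough to read off the correct vanishing range, which is exactly what the identification with $\SH(\F)$ lets you avoid. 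You do eventually reach for Proposition~\ref{prop:artin-tate-recons}(3) in part (3); that proposition, applied once to the specific $\F$ of Remark~\ref{rmk:identify-SHF1}, is precisely the paper's move and dispatches (1)--(3) simultaneously. Your plan for (4) is close in spirit to the paper's---match the hom-groups among generators with those in $D(\F_p\fpsr{t})_{t\text{-tors}}$---but the paper's computation is a one-line \'etale cohomology identification once the motivic model is in place, whereas your ``base-change spectral sequence'' remains to be made precise.
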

\begin{proof}
Recall from Remark \ref{rmk:identify-SHF1} that $\GalSp(\FF_1[\mu_{p^n}])^{AT} \wequi \Mod_{\1_{p,\eta}^\comp}(\SH(\F)^{AT})$, for a well-chosen field $\F$, algebraic over $\F_\ell$ for some appropriate $\ell$.
(1), (2) and (3) are clear from this (taking $\MGL_\ell^\comp \in \Mod_{\1_{p,\eta}^\comp}(\SH(\F)^{AT})$ for $\MGL$, and similarly for $H\Z$).

(4) We know that $\Mod(H\Z/(p,\tau))$ is compactly generated by the $\1_{1^r} \otimes \Gmp{i} \otimes H\Z/(p,\tau)$.
Write $\scr D \subset \Mod(H\Z/(p,\tau))$ for the closure under finite sums of this category, and $\PSh_\Sigma(\scr D, \ShvSp)$ for the associated category of spectral presheaves.
Noting that $\scr D$, viewed as a spectrally enriched category, is in fact discrete (i.e. for $X, Y \in \scr D$ and $i \ne 0$ we have $[X,\Sigma^i Y] = 0$) we see that the canonical functor $c: \PSh_\Sigma(\scr D, \ShvSp) \to \Mod(H\Z/(p,\tau)$ is an equivalence.
Since $c$ preserves the generators of the $t$-structure, it is right-$t$-exact.
Since the $t$-structures can be checked on homotopy presheaves, the right adjoint of $t$ is also right-$t$-exact.
It follows that $c$ is a $t$-exact equivalence and so in particular $\Mod(H\Z/(p,\tau))^\heart \wequi \PSh_\Sigma(\scr D)_{\le 0}$.
It remains to show that $\scr D \wequi \scr C$.
We know that \[ [\1_{1^r} \otimes \Gmp{i}, \Gmp{j} \otimes H\Z/(p,\tau)] \wequi H^{j-i}_\et(k, \F_p), \] where $k$ is the field extension of $\F$ corresponding to $\1_{1^r}$.
Since $D(Gal(\F), \F_p) \wequi D(\F_p\fpsr{t})_{t-tors}$, the result follows.
\end{proof}

Let $n$ be maximal such that $\mu_{p^n} \subset k$.
We have $1 \le n < \infty$ ($2 \le n$ if $p=2$).
In Example \ref{exm:AT-e} we have constructed a functor \[ e: \GalSp(\FF_1[\mu_{p^n}])^{\comp AT}_{p,\eta} \to \SH(k)_{p,\eta}^\comp. \]
\begin{prop}\label{prop:tate-nonorient-e-summary}
All objects are implicitly $p$-completed in the following:
\begin{enumerate}
\item We have $e(\1_{1^r}) \wequi \Sigma^\infty_+ Spec(k[\mu_{p^r}])$ as well as $e(\Gm) \wequi \Gm$.
\item We have $e(\MGL) \wequi \MGL$ and $e(H\Z) \wequi H\Z$.
\item Choose a Tate-orientation $k^t/k$, i.e. $k^t$ is obtained by adding all $p^r$-th roots of unity for all $r$.
  We get $Gal(k^t/k) \wequi \Z_p$ and $D(Gal(k^t/k), \F_p) \wequi D(\F_p\fpsr{t})_{t-tors}$.
  Consider the functor \[ F: D(Gal(k^t/k)) \wequi D(\F_p\fpsr{t})_{t-tors} \xrightarrow{y} \PSh_\Sigma(\scr C)_{\le 0} \wequi \Mod(H\Z/(p,\tau))^\heart, \] where $y$ denotes the restricted Yoneda functor.
  We have a canonical equivalence \[ F\left(\bigoplus_i \Sigma^i K_*^M(k^t)/p\right) \wequi e_*(H\Z/(p,\tau)). \]
\end{enumerate}
\end{prop}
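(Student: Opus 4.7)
The plan is to handle parts (1) and (2) by unwinding the construction of $e$ along the chain in Example \ref{exm:AT-e}, and to handle the main part (3) by evaluating both sides on the generators of the heart and invoking Bloch--Kato / Galois descent for Milnor $K$-theory.

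For part (1), I would trace $\1_{1^r}$ explicitly: by construction it is the suspension spectrum of the finite $G$-set corresponding to the subgroup of $G \subset G_\ell$ stabilizing $\mu_{p^r}$. Under the cyclotomic character $\pi$ this $G$-set is the finite étale $\Spec(\Z[1/p,\mu_{p^n}])$-scheme $\Spec(\Z[1/p,\mu_{p^r}])$; base changing along $\Spec(\Z[1/p,\mu_{p^n}]) \to \Spec(k)$ gives $\Spec(k[\mu_{p^r}])$, as desired. The statement $e(\Gm) \wequi \Gm$ follows because $e$ extends the étale-to-motivic functor along the invertible object $\pi^*\1_p(1)$, and under $\SH_\et(k)_p^\comp \wequi \ShvSp(k_\et)_p^\comp$ this invertible object is matched to $\Sigma^{0,1}\1_p^\comp = \Gm$. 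For part (2), I would repeat the template of Proposition \ref{prop:tate-orient-summary}(1): the AT analog of Corollary \ref{corr:cell-recons-MGL} over $\Spec(\Z[1/p,\mu_{p^n}])$ (which has $\pvcd_p \le 1$) identifies $M_\scr{X}$ with $\MGL_p^\comp$ as $\scr E_0$-algebras, base change yields a natural map $e(\MGL) \to \MGL$, and since both sides are $(2i,i)$-cellular with $i \ge 0$ (Example \ref{ex:MGL-cellular}), equivalence may be checked after $\otimes H\Z/p$ on pure Tate modules, reducing via Lemma \ref{lemm:conservative-realization} to the separably closed case already dealt with. The identification $e(H\Z) \wequi H\Z$ follows by modding out the Lazard-ring generators using the Hopkins--Morel equivalence \cite{hoyois2015algebraic}.

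For part (3), I would evaluate both sides on the compact generator $\1_{1^r} \otimes \Gmp{j}$, which under Proposition \ref{prop:tate-nonorient-summary}(4) corresponds to $\Sigma^j \F_p\fpsr{t}/t^{p^{r-n}} \in \scr C$. Using parts (1) and the tensor-hom adjunction, the LHS evaluates to
\[
[\Sigma^\infty_+\Spec(k[\mu_{p^r}]) \otimes \Gmp{j}, H\Z/(p,\tau)]_{\SH(k)} \;\cong\; K^M_j(k[\mu_{p^r}])/p,
\]
where the final identification combines the fact that $H\Z/(p,\tau)$ represents étale motivic cohomology mod $p$ with Voevodsky's Bloch--Kato/norm-residue theorem. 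The RHS, after translating back to $D(Gal(k^t/k))$ via $\F_p\fpsr{t}/t^{p^{r-n}} \leftrightarrow \F_p[\Z_p/p^{r-n}\Z_p]$ and the identification $Gal(k^t/k[\mu_{p^r}]) = p^{r-n}\Z_p$, evaluates to $(K^M_j(k^t)/p)^{Gal(k^t/k[\mu_{p^r}])}$. The equality of these two groups is Galois descent for mod-$p$ Milnor $K$-theory along the pro-$p$ Galois extension $k^t/k[\mu_{p^r}]$, an iterated application of Hilbert 90 in the form $H^1(C_p, K^M_*/p) = 0$.

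The main obstacle is promoting this isomorphism from a levelwise statement to an equivalence of functors on $\scr C$ — that is, checking that the $\F_p\fpsr{t}$-module structure transported from $K^M_*(k^t)/p$ matches the natural structure on $e_*(H\Z/(p,\tau))$ induced by the transition maps between the $\1_{1^r}$ as $r$ varies. This will require showing that, under the identification in part (1), the restriction/corestriction maps $\1_{1^r} \rightleftarrows \1_{1^{r+1}}$ realize, on mod-$p$ motivic cohomology, precisely the restriction and Galois-trace maps along $k[\mu_{p^r}] \subset k[\mu_{p^{r+1}}]$. Naturality of the Bloch--Kato isomorphism with respect to such maps, together with the fact that a topological generator $s$ of $Gal(k^t/k)$ corresponds to $1+t \in 1 + t\F_p\fpsr{t}$, will then upgrade the pointwise computation to the stated natural equivalence.
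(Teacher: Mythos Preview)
Your treatment of (1) is fine and matches the paper's ``by construction''. For (2) the paper argues differently: rather than invoking an AT reconstruction over $\Spec(\Z[1/p,\mu_{p^n}])$ (which is only gestured at in Example \ref{exm:AT-e}, and Proposition \ref{prop:artin-tate-recons} is stated for fields), it uses the field $\F$ of Remark \ref{rmk:identify-SHF1} to identify $\GalSp(\F_1[\mu_{p^n}])^{AT}$ with an honest motivic Artin--Tate category, then runs a commutative square with $\SH(\bar\F)$ and $\SH(k^t)$ to reduce to the Tate-orientable case. Your route would need the AT reconstruction over the ring, which is extra work.

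The real issue is in (3). Your evaluation of the right-hand side at $\Sigma^j \F_p\fpsr{t}/t^{p^{r-n}}$ is incomplete: the restricted Yoneda functor records homotopy classes of maps in $hD(\F_p\fpsr{t})$, and since $\F_p\fpsr{t}/t^m$ has projective dimension $1$, the value at $\Sigma^j$ picks up both an $\Ext^0$ term $(K^M_{-j}(k^t)/p)[t^m]$ and an $\Ext^1$ term $(K^M_{-j-1}(k^t)/p)/t^m$, not just the invariants. Correspondingly, on the left-hand side $K^M_{-j}(k[\mu_{p^r}])/p \cong H^{-j}_\et(k[\mu_{p^r}],\F_p)$ sits in a Hochschild--Serre extension with those same two pieces. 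So your ``Galois descent'' step (matching invariants to invariants) does not suffice, and your obstacle paragraph underestimates what needs to be checked.

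The paper sidesteps this entirely. Using the adjunction $f^*: D(Gal(k^t/k),\F_p) \rightleftarrows D(Gal(k),\F_p): f_*$ one has
\[ [\Sigma^j \F_p[Q/H], f_*(\F_p)]_{D(Q)} \;\cong\; [\Sigma^j \F_p[G/G'], \F_p]_{D(G)} \;\cong\; H^{-j}_\et(k[\mu_{p^r}],\F_p), \]
so the presheaf $e_*(H\Z/(p,\tau))$ is identified \emph{on the nose} with $F(f_*(\F_p))$, functorially and with no extension problem. Then one invokes that $\F_p\fpsr{t}$ has global dimension $1$, so $f_*(\F_p)$ splits (non-canonically) as the sum of shifts of its homologies $H^*_\et(k^t,\F_p) \cong K^M_*(k^t)/p$. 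This is both shorter and avoids the gap in your argument.
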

\begin{proof}
(1) This is true by construction.

(2) Recall the field $\F$ from the proof of Proposition \ref{prop:tate-nonorient-summary}.
Consider the commutative diagram
\begin{equation*}
\begin{CD}
\SH(\F)_{p,\eta}^\comp @<e'<< \GalSp(\F_1[\mu_{p^n}])_{p,\eta}^{\comp AT} @>e>> \SH(k)_{p,\eta}^\comp \\
@Vp^*VV                                @Vr^*VV                       @Vq^*VV \\
\SH(\bar\F)_{p,\eta}^\comp @<e'_\infty<< \GalSp(\F_1[\mu_{p^\infty}])_{p,\eta}^\comp \wequi \SH(\CC)_{p,\eta}^{\comp \cell} @>e_\infty>> \SH(k^t)_{p,\eta}^\comp. \\
\end{CD}
\end{equation*}
By Proposition \ref{prop:artin-tate-recons}, the functors $e',e'_\infty$ are fully faithful and identify $\MGL$ on both sides.
The functor $p^*$ identifies $\MGL$ on both sides; hence the same must be true for $r^*$.
Consequently $q^* e(\MGL) \wequi \MGL$ by Proposition \ref{prop:tate-orient-summary}(1).
One deduces that $e(\MGL) \wequi \MGL$ as in the proof of Proposition \ref{prop:tate-orient-summary}(1).

(3) As an object of $\PSh_\Sigma(\scr C)_{\le 0}$, $e_*(H\Z/(p,\tau))$ is characterized by having value at $\Sigma^i \F_p\fpsr{t}/t^{p^r}$ given by $K_{-i}^M(k[\mu_{p^{n+r}}])/p \wequi H^{-i}_\et(k[\mu_{p^{n+r}}], \F_p)$.
In other words using the adjunction \[ f^*: D(Gal(k^t/k), \F_p) \adj D(Gal(k), \F_p): f_*, \] $e_*(H\Z/(p,\tau))$ corresponds to the image of $f_*(\F_p)$.
Since $\F_p\fpsr{Gal(k^t/k)}\wequi \F_p\fpsr{t}$ has global dimension $1$, every object in $D(Gal(k^t/k), \F_p)$ splits into the sum of its homologies (see e.g. \cite[\S1.6]{MR2355772}).
For $f_*(\F_p)$ this is precisely $H^*_\et(k^t, \F_p) \wequi K_*^M(k^t)/p$ (with the action of $Gal(k^t/k)$).
This is the desired result.
\end{proof}

We call an object in the heart of a symmetric monoidal $t$-category \emph{flat} if tensoring with it is $t$-exact.
Perhaps the key result of this section is the following.
\begin{lem} \label{lemm:flatness}
The object $e_*(H\Z/(p,\tau)) \in \Mod(H\Z/(p,\tau))^\heart$ is flat.
\end{lem}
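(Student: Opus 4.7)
The strategy is to translate flatness into a structural statement about $K_*^M(k^t)/p$ as a continuous $\Z_p$-representation, which will in turn follow from Hilbert 90 for Milnor $K$-theory.

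By Proposition~\ref{prop:tate-nonorient-e-summary}(3), $e_*(H\Z/(p,\tau))$ is the image of $\bigoplus_i \Sigma^i K_*^M(k^t)/p$ under the functor $F$, which is essentially restricted Yoneda: $F(M)(X) = \Hom_{\F_p\fpsr{t}}(X, M)$ for $X \in \scr C$. Since each object of $\scr C$ is a finite direct sum of finite-dimensional cyclic modules $\Sigma^i \F_p\fpsr{t}/t^{p^r}$ and hence compact, $F$ preserves arbitrary direct sums. Thus $F$ sends a direct sum of cyclic modules $\F_p\fpsr{t}/t^{p^r}$ to a direct sum of representable objects in $\PSh_\Sigma(\scr C)_{\le 0}$, which are projective (hence flat) for the symmetric monoidal structure on $\Mod(H\Z/(p,\tau))^\heart$. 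The plan therefore reduces to the following structural claim: \emph{as a graded continuous $\F_p\fpsr{t}$-module, $K_*^M(k^t)/p$ is isomorphic to a direct sum of cyclic modules $\F_p\fpsr{t}/t^{p^r}$} (for varying $r \ge 0$ and internal degree).

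To produce the decomposition, I would use the ascending filtration $K_*^M(k(\mu_{p^{n+r}}))/p \hookrightarrow K_*^M(k^t)/p$, which by Bloch--Kato identifies the $r$-th stage with the $p^r\Z_p$-fixed part, and split off cyclic summands inductively in $r$. At stage $r$ this amounts to splitting the inclusion $K_*^M(k(\mu_{p^{n+r-1}}))/p \hookrightarrow K_*^M(k(\mu_{p^{n+r}}))/p$ as a map of $\F_p[\Z/p^r]$-modules; a complement is then annihilated by the subgroup $p^{r-1}\Z_p/p^r\Z_p$ and free over the residual $\Z/p$, so it splits into copies of $\F_p\fpsr{t}/t^{p^r}$. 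The obstruction to such a splitting is a Tate-cohomology class for the cyclic subgroup $\Z/p \subset \Z/p^r$ with coefficients in $K_*^M(k(\mu_{p^{n+r}}))/p$; its vanishing is Hilbert 90 for Milnor $K$-theory (Voevodsky's theorem, part of the Bloch--Kato conjecture). A straightforward compatibility check as $r$ varies allows the splittings to assemble into the desired decomposition of the colimit.

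The main obstacle is the prime $p = 2$: there the relevant Tate cohomology does not vanish for formal reasons, because of orderings and the quadratic features of the arithmetic of $k$ (of the same flavor that forced the switch from $\cd$ to $\vcd$ in \S\ref{subsec:prime-2}). This is addressed in the dedicated section \S\ref{subsec:H90}, which establishes the precise vanishing needed to rerun the inductive splitting argument above; with that input in hand, the proof concludes uniformly in $p$.
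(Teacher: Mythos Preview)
Your central reduction is too strong, and the inductive argument for it does not work. You aim to show that $K_*^M(k^t)/p$ is a direct sum of cyclic modules $\F_p\fpsr{t}/t^{p^r}$, but over $\F_p[\Z/p^r] \cong \F_p[u]/u^{p^r}$ one \emph{cannot} split off the $\Z/p$-fixed submodule (the $u^{p^{r-1}}$-torsion) as a direct summand unless the whole module is already $u^{p^{r-1}}$-torsion: every nonzero $\F_p[u]/u^{p^r}$-module has nonzero socle, so a complement with trivial $u^{p^{r-1}}$-torsion must vanish. Hilbert~90 yields $\hat H^{-1}=0$, not cohomological triviality, so it does not produce the splitting you want. (There are further issues: the restriction maps $K_n^M(k_{r-1})/p \to K_n^M(k_r)/p$ are not injective --- the Kummer class of the extension dies --- and the identification of the $r$-th stage with the $t^{p^r}$-torsion via Bloch--Kato ignores the Hochschild--Serre contribution from $H^1(\Z_p, K_{*-1}^M(k^t)/p)$.)

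The paper takes a different route that sidesteps all of this. Theorem~\ref{thm:KM-flatness} only proves that $K_*^M(k^t)/p$ sits in a (filtered colimit of split) extension
\[ 0 \longrightarrow \bigoplus_m P_m \longrightarrow K_*^M(k^t)/p \longrightarrow \bigoplus \F_p((t))/\F_p\fpsr{t} \longrightarrow 0 \]
with each $P_m$ free over $\F_p\fpsr{t}/t^{p^m}$; the divisible quotient is genuinely allowed and handled. Hilbert~90 enters only through Proposition~\ref{prop:torsion-orders}, which constrains the \emph{lengths} of the finite cyclic pieces to $p$-powers by showing that $t$-torsion divisible by $t^{p^n}$ is automatically divisible by $t^{p^{n+1}-1}$. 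Under $F$ this yields an exact sequence $0 \to A \to e_*(H\Z/(p,\tau)) \to B \to 0$ with $A$ a sum of twists of representables and $B$ a sum of twists of filtered colimits of representables (since $\F_p((t))/\F_p\fpsr{t} \wequi \colim_i \F_p\fpsr{t}/t^{p^i}$). Flatness of a representable $E$ is argued not from projectivity but from self-duality: $[Y, E\otimes X] \wequi [Y \otimes DE, X]$ with $DE \wequi E$ connective. Flatness then passes to filtered colimits, twists, and extensions.
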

The proof is postponed to the next subsection.

We exploit flatness as follows.
\begin{lem} \label{lemm:flatness-pi*}
Let $E \in \GalSp(\FF_1[\mu_{p^n}])^{AT}_{\ge 0}$ such that $E \otimes H\Z/(p,\tau) \in \Mod(H\Z/(p,\tau))^\heart$ is flat.
Let $X \in \GalSp(\FF_1[\mu_{p^n}])^{AT}$ such that every map $\Sigma^i \Gmp{j} \otimes \1_{1^r} \to X$ is annihilated by some power of both $p$ and $\eta$.
Then \[ \pi_i(X \otimes E)_* \wequi \pi_i(X)_* \otimes^\heart \pi_0(E)_* \in \GalSp(\FF_1[\mu_{p^n}])^{AT\heart}. \]
\end{lem}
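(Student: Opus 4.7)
The plan is to reduce the general statement to the special case where $X$ is a module over $H := H\Z/(p,\tau)$, in which the conclusion is a direct consequence of the flatness hypothesis on $F := E \otimes H$. Let $\alpha_X \colon \pi_i(X \otimes E)_* \to \pi_i(X)_* \otimes^\heart \pi_0(E)_*$ denote the natural comparison map induced by the truncation $E \to \pi_0(E)$ together with the heart tensor pairing. The goal is to show $\alpha_X$ is an isomorphism for every $X$ satisfying the nilpotence hypothesis.

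First I would establish that the class $\scr{X}$ of objects on which $\alpha_X$ is an iso is closed under bigraded shifts, retracts, fiber sequences, and filtered colimits. Both sides of $\alpha_X$ are right-exact in $X$ and commute with filtered colimits; this uses Proposition~\ref{prop:tate-nonorient-summary}(2) (compatibility of the $t$-structure with filtered colimits and with the symmetric monoidal structure) together with the assumption $E \in \GalSp(\F_1[\mu_{p^n}])^{AT}_{\ge 0}$. Note that the nilpotence hypothesis on $X$ is itself preserved under the same operations, since a map from a compact object into a filtered colimit factors through a finite stage, and extensions of maps annihilated by powers of $p$ and $\eta$ are annihilated by (larger) powers.

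Second, I would reduce to the case when $X$ is a module over $H$. The nilpotence hypothesis implies $X[1/p] = 0 = X[1/\eta]$, so iterated cofiber sequences along multiplication by $p$ and by $\eta$ realize $X$ as a filtered colimit of finite extensions built from $\1/(p^a,\eta^b)$-modules. By the closure from Step~1 it suffices to treat $X$ a module over $\1/(p,\eta)$. A final filtration using the $\tau$-Bockstein (Proposition~\ref{prop:tate-nonorient-summary}(3)) brings us from $\1/(p,\eta)$-modules to $H$-modules.

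Finally, with $X$ an $H$-module, one has $X \otimes E \wequi X \otimes_H (H \otimes E) = X \otimes_H F$; and by flatness of $F$ over $H$, $\pi_i(X \otimes_H F) \wequi \pi_i(X) \otimes_H F$ in $\Mod(H)^\heart$. The hypothesis that $E \otimes H$ is concentrated in degree $0$ yields $F \wequi H \otimes^\heart \pi_0(E)$, and the right-hand side rewrites as $\pi_i(X) \otimes^\heart \pi_0(E)$ in $\GalSp(\F_1[\mu_{p^n}])^{AT\heart}$, giving the equivalence. The main obstacle is the final $\tau$-reduction in Step~2: although $\pi_0(H) = \pi_0(\1/(p,\eta))$ agree in the heart, $\1/(p,\eta)$ and $H$ differ as spectra (their higher homotopy records the Bott element $\tau$), so passing between their module categories is not purely formal and requires a careful analysis of the interaction between $\tau$ and $\eta$-torsion modules in the decompleted Artin--Tate category.
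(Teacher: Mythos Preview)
Your overall strategy---reduce to $H\Z/(p,\tau)$-modules and invoke flatness---matches the paper's, and your Step~3 is exactly how the paper finishes. But Steps~1 and~2 have genuine gaps that the paper avoids by a different reduction.

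First, the closure of your class $\scr X$ under fiber sequences is not justified. Right-exactness of both sides in $X$ is not enough for a 2-out-of-3 property; you would need the functor $\pi_i(X)\otimes^\heart\pi_0(E)$ to produce long exact sequences, i.e.\ $\pi_0(E)$ to be flat in $\GalSp(\F_1[\mu_{p^n}])^{AT\heart}$. That is not assumed---only $E\otimes H\Z/(p,\tau)$ is flat, and only over $H\Z/(p,\tau)$. Relatedly, your construction of $\alpha_X$ via $E\to\pi_0(E)$ presupposes that $\pi_i(X\otimes\pi_0(E))\simeq\pi_i(X)\otimes^\heart\pi_0(E)$, which again needs flatness of $\pi_0(E)$.

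Second, your Step~2 tries to filter $X$ at the spectrum level into ``$\1/(p^a,\eta^b)$-modules'' and then into $H$-modules via a $\tau$-Bockstein. But $\1/(p,\eta)$ is not a ring object one can take modules over, and $\tau$ lives in $\pi_1(H\Z/p)$, not in the sphere, so there is no $\tau$-filtration available on a general $\1/(p,\eta)$-module. You correctly flag this as the main obstacle, and indeed it does not go through as written.

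The paper sidesteps both issues by proving a weaker statement first: tensoring with $E$ preserves \emph{coconnectivity} on torsion objects. This only requires closure of $\GalSp(\F_1[\mu_{p^n}])^{AT}_{\le 0}$ under filtered colimits and extensions (Proposition~\ref{prop:tate-nonorient-summary}(2)), not a 5-lemma. One reduces to $X$ in the heart; then the torsion hypothesis gives $X=\colim_{n,m}X[\eta^n,p^m]$ \emph{inside the abelian heart}, and successive short exact sequences reduce to $X$ with $p=\eta=0$ acting. Such an $X$ is automatically a module over $\pi_0(\1)/(p,\eta)\simeq H\Z/(p,\tau)$ (a ring in the heart), and your Step~3 applies. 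Once coconnectivity is preserved, the tensor formula drops out of the Postnikov cofiber sequences $X_{\ge i+1}\to X\to X_{\le i}$ and $X_{=i}\to X_{\le i}\to X_{\le i-1}$ without ever needing a globally defined comparison map.
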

\begin{proof}
Let us call objects such as $X$ ``torsion''.
By the description of the $t$-structure from Proposition \ref{prop:tate-nonorient-summary}(2), an object is torsion if and only if its homotopy presheaves are.
Since $E$ is connective, if $X$ is connective then also $E \otimes X$ is connective.
It will suffice to show that in addition, if $X$ is coconnective, then $E \otimes X$ is also coconnective.
(Indeed then tensoring the cofiber sequences $X_{\ge i+1} \to X \to X_{\le i}$ and $X_{=i} \to X_{\le i} \to X_{\le i-1}$ with $E$ and considering the long exact sequences shows that $\pi_i(X \otimes E) \wequi \pi_i(X_{\le i} \otimes E) \wequi \pi_i(X_{=i} \otimes E)$, and the latter is equivalent to $\pi_i(X) \otimes^\heart \pi_0(E)$ since $\Sigma^{-i} X_{=i}$ and $E$ are both connective.)
Since $\GalSp(\FF_1[\mu_{p^n}])^{AT}_{\le 0}$ is closed under filtered colimits (Proposition \ref{prop:tate-nonorient-summary}(2)) and extensions, it is enough to treat the case $X \in \GalSp(\FF_1[\mu_{p^n}])^{AT\heart}$.
We have $X = \colim_{n,m} X[\eta^n, p^m]$ (where the right hand term denotes the intersection of the kernels of multiplication by $\eta^n$ and $p^m$), so using closure under filtered colimits again it suffices to treat $X[\eta^n,p^m]$.
In the short exact sequence $0 \to X[\eta^{n-1},p^m] \to X[\eta^n, p^m] \to Q \to 0,$ $\eta$ acts by zero on $Q$.
Using closure under extensions, in this way we can reduce to $X$ on which both $p$ and $\eta$ act by zero.
In other words, $X$ is a module over $H\Z/(p,\tau)$ (Proposition \ref{prop:tate-nonorient-summary}(3)).
Now $X \otimes E \wequi X \otimes_{H\Z/(p,\tau)} (H\Z/(p, \tau) \otimes E)$.
This lies in $\Mod(H\Z/(p,\tau))^\heart$ by our flatness assumption.
\end{proof}

\begin{thm} \label{thm:motivic-stems-hard}
Let $n, k, p$ as above.
For $i, j > 0$ we have \[ \pi_i(e_*(\1_k/(p^i,\eta^j)))_* \wequi \pi_i(\1_{1^n}/(p^i,\eta^j))_* \otimes \pi_0(e_*((\1_k)_{p,\eta}^\comp)). \]
\end{thm}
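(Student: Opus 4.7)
The plan is to derive the identification as an essentially formal consequence of Lemma~\ref{lemm:flatness-pi*}, applied with $E = e_*((\1_k)_{p,\eta}^\comp) \in \GalSp(\FF_1[\mu_{p^n}])^{AT}$ and $X = \1/(p^i, \eta^j)$, together with two uses of the projection formula. The key nontrivial input is the flatness statement Lemma~\ref{lemm:flatness}, whose proof is postponed to the next subsection; modulo that, the present theorem reduces to verifying hypotheses.

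First I would check that $e_*((\1_k)_{p,\eta}^\comp) \in \GalSp(\FF_1[\mu_{p^n}])^{AT}_{\ge 0}$. By the homotopy-presheaf criterion of Proposition~\ref{prop:tate-nonorient-summary}(2), this amounts to showing $[\Sigma^a \Gmp{b} \otimes \1_{1^r}, e_*((\1_k)_{p,\eta}^\comp)] = 0$ for $a<0$, all $b,r$. By Proposition~\ref{prop:tate-nonorient-e-summary}(1) and adjunction, this group is $\pi_{a+b,b}((\1_k)_{p,\eta}^\comp)(k[\mu_{p^r}])$, which vanishes for $a<0$ by Morel's connectivity theorem (preserved under $(p,\eta)$-completion). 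Next, the object $X = \1/(p^i,\eta^j)$ is visibly torsion in the sense required by Lemma~\ref{lemm:flatness-pi*}: it is annihilated by $p^i$ and $\eta^j$, so the same is true of every map into it. Finally I need flatness of $E \otimes H\Z/(p,\tau) \in \Mod(H\Z/(p,\tau))^\heart$. Applying Lemma~\ref{lemm:projection-formula} (using that $H\Z/(p,\tau)$ is dualizable, or that $\GalSp^{AT}$ is compact-rigidly generated by Proposition~\ref{prop:tate-nonorient-summary}(1) and $e$ preserves $H\Z/(p,\tau)$ by Proposition~\ref{prop:tate-nonorient-e-summary}(2)) gives
\[ e_*((\1_k)_{p,\eta}^\comp) \otimes H\Z/(p,\tau) \wequi e_*\bigl(e(H\Z/(p,\tau)) \otimes (\1_k)_{p,\eta}^\comp\bigr) \wequi e_*(H\Z/(p,\tau)), \]
and the right-hand side is flat by Lemma~\ref{lemm:flatness}.

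With the hypotheses verified, Lemma~\ref{lemm:flatness-pi*} yields $\pi_i(X \otimes E)_* \wequi \pi_i(X)_* \otimes \pi_0(E)_*$. Applying the projection formula a second time to the dualizable $X$ rewrites the left-hand side as $\pi_i(e_*(\1_k/(p^i,\eta^j)))_*$, and noting that $\1 = \1_{1^n}$ by Proposition~\ref{prop:tate-nonorient-summary}(1) identifies $\pi_i(X)_*$ with $\pi_i(\1_{1^n}/(p^i,\eta^j))_*$. This gives the stated formula. The only genuinely hard step in this chain is the flatness of $e_*(H\Z/(p,\tau))$, which requires the analysis of the $\Z_p$-action on $K_*^M(k^t)/p$ summarised in Proposition~\ref{prop:tate-nonorient-e-summary}(3) and carried out in \S\ref{subsec:flatness}; everything here is bookkeeping with adjunctions, the projection formula and Morel's connectivity theorem.
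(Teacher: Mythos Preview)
The proposal is correct and matches the paper's own proof essentially step for step: apply Lemma~\ref{lemm:flatness-pi*} with $E = e_*((\1_k)_{p,\eta}^\comp)$ and $X = \1/(p^i,\eta^j)$, using the projection formula (via compact-rigid generation and $e(H\Z/(p,\tau)) \wequi H\Z/(p,\tau)$) to identify $E \otimes H\Z/(p,\tau)$ with $e_*(H\Z/(p,\tau))$ and invoke Lemma~\ref{lemm:flatness}. Your write-up is more explicit than the paper's in verifying the connectivity of $E$ (via Morel) and in spelling out the second application of the projection formula to pass from $X \otimes E$ to $e_*(\1_k/(p^i,\eta^j))$, but the argument is identical.
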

\begin{proof}
Combining Proposition \ref{prop:tate-nonorient-summary}(1) (compact-rigid generation), Proposition \ref{prop:tate-nonorient-e-summary}(2) ($e(H\Z)_p^\comp \wequi H\Z_p^\comp$) and Lemma \ref{lemm:projection-formula} (projection formula), we see that \[ e_*((\1_k)_{p,\eta}^\comp) \otimes H\Z/(p,\tau) \wequi e_*(H\Z/p). \]
This is flat by Lemma \ref{lemm:flatness}, and so by Lemma \ref{lemm:flatness-pi*} applied with $X = \1/(\eta^i,p^j)$ we obtain the desired result.
\end{proof}

\subsection{Flatness of Milnor $K$-theory} \label{subsec:flatness}
Let $k$ be a field of characteristic $\ne 2,p$.
Let $1\le n < \infty$ maximal with $\mu_{p^n} \subset k$.
Write $k^t/k$ for the field obtained by adding $\mu_{p^r}$ for all $r$.
Note $Gal(k^t/k) \wequi \Z_p$.
Using Remark \ref{rmk:Zp-action-Fpt-module}, we can view $K_i^M(k^t)/p$ as an $\FF_p\fpsr{t}$-module.

\begin{thm} \label{thm:KM-flatness}
For $m \ge 0$ there exists a free $\FF_p\fpsr{t}/t^{p^m}$-module $P_m$ together with a map $P_m \to M := K_i^M(k^t)/p$, such that the following hold.
Set $Q_m = \bigoplus_{m' \le m} P_{m'}$.
Then $Q_m \to M$ is split injective for every $m$.
Set $R_m = M/Q_m$.
Then $\colim_M R_m$ is isomorphic to a sum of modules of the form $\FF_p((t))/\FF_p\fpsr{t}$.
\end{thm}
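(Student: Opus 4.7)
My strategy is to reduce to Galois cohomology via the norm residue theorem, exploit freeness of the finite layers over the relevant group algebras (a form of Hilbert 90), and extract the $P_m$ by an inductive basis-extension, identifying the remaining colimit as a sum of Prüfer modules. More concretely, by Voevodsky's norm residue isomorphism (Bloch--Kato), I have a $Gal(k^t/k)$-equivariant identification $M = K_i^M(k^t)/p \wequi H^i_\et(k^t, \F_p)$; the Tate twist by $\mu_p^{\otimes i}$ is trivial since $\mu_{p^\infty} \subset k^t$. Setting $k_r = k(\mu_{p^{n+r}})$, I have $M = \colim_r M_r$ with $M_r := K_i^M(k_r)/p$ carrying a compatible action of $Gal(k_r/k) \cong \Z/p^r$, i.e.\ a module structure over $\F_p[\Z/p^r] \cong \F_p\fpsr{t}/t^{p^r}$.

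\textbf{Key technical input.} The crucial point will be that each $M_r$ is \emph{free} as an $\F_p[\Z/p^r]$-module. For a $p$-group acting on an $\F_p$-vector space, freeness is equivalent to projectivity and to vanishing of Tate cohomology $\hat H^*$. For cyclic $p$-groups this reduces by periodicity to vanishing of $\hat H^0$ and $\hat H^1$. Via Bloch--Kato, both translate into Hilbert 90 statements for Milnor $K$-theory in the cyclotomic tower, together with surjectivity of the norm on $K$-theory. For $p$ odd these are standard consequences of the norm residue program. For $p=2$ the statement is considerably more delicate owing to the presence of the sign character, and the required strengthening is the main content of \S\ref{subsec:H90}; this is the principal obstacle of the argument.

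\textbf{Construction and the Prüfer colimit.} Granting freeness of each $M_r$, I build $P_m$ and $Q_m$ inductively by basis-extension: start with an $\F_p$-basis of $M_0$ (giving $P_0$ free over $\F_p\fpsr{t}/t$), and at each stage extend a split free $\F_p[\Z/p^{m-1}]$-embedding $Q_{m-1} \hookrightarrow M_{m-1}$ to a split free $\F_p[\Z/p^m]$-embedding $Q_m \hookrightarrow M_m$, using freeness of $M_m$ together with the split inclusion $M_{m-1} \subset M_m$ (itself a consequence of cohomological triviality). The new generators at stage $m$ assemble into the free $\F_p\fpsr{t}/t^{p^m}$-module $P_m$. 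Since each $Q_m \hookrightarrow M_m$ is split as $\F_p[\Z/p^m]$-modules, it is split as $\F_p\fpsr{t}$-modules, hence $Q_m \hookrightarrow M$ is split injective as required. It remains to check that every element of $M/\bigcup_m Q_m$ is infinitely $t$-divisible: an element of $M_r$ not absorbed into any $Q_m$ must, in the $\F_p[\Z/p^s]$-free module $M_s$ for $s \gg r$, lie in $t^{p^s - p^r} M_s$ since it is annihilated by $t^{p^r}$, and these divisions persist in the quotient. By the structure theorem for modules over the discrete valuation ring $\F_p\fpsr{t}$, a divisible $t$-torsion module is injective, hence a direct sum of copies of the unique indecomposable injective $\F_p((t))/\F_p\fpsr{t}$, giving the required identification of $\colim_m R_m$.
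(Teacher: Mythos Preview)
Your key technical input—that each finite layer $M_r = K_i^M(k_r)/p$ is free as an $\F_p[\Z/p^r]$-module—is false, and this is the crux of the difficulty. Already for $i=0$ one has $M_r = \F_p$ with trivial action, and $\hat H^0(\Z/p^r,\F_p)=\F_p\ne 0$. For $i=1$ and $k=\QQ_p(\mu_p)$ with $p$ odd, local class field theory gives $\dim_{\F_p}(k_r^\times/(k_r^\times)^p)=(p-1)p^r+2$, which is not divisible by $p^r$ once $r\ge 1$; so $M_r$ cannot be free. The ``surjectivity of the norm'' you invoke likewise fails mod $p$: for a degree-$p$ Kummer extension the Bloch--Kato sequence identifies $\coker\big(N\colon K_n^M(E)\to K_n^M(F)\big)$ with a cup-product image that is generically nonzero. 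The Hilbert 90 statement that \emph{does} hold (Proposition~\ref{prop:H90-KM}) concerns \emph{integral} Milnor $K$-theory; reducing mod $p$ is exactly where things break.

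The paper therefore never asserts freeness of the finite layers. It works directly with the colimit $M$ as an $\F_p\fpsr{t}$-module and proves two independent facts: a purely algebraic structure theorem (Proposition~\ref{prop:Fpt-structure}) splitting any torsion $\F_p\fpsr{t}$-module as $\bigoplus_i F_i$ with $F_i$ free over $\F_p\fpsr{t}/t^i$, plus a divisible remainder; and an arithmetic constraint (Proposition~\ref{prop:torsion-orders}) that in $M$ every $t$-torsion element divisible by $t^{p^n}$ is already divisible by $t^{p^{n+1}-1}$. The latter forces $F_i=0$ unless $i$ is a power of $p$, which is exactly Theorem~\ref{thm:KM-flatness}. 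Its proof uses only exactness at a single spot of the Bloch--Kato sequence for each degree-$p$ step (and the $C_4$ refinement of \S\ref{subsec:H90} at $p=2$)—a strictly weaker input than cohomological triviality of the finite layers, and one that actually holds.
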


We first show how the flatness lemma from the previous subsection follows from this.
\begin{proof}[Proof of Lemma \ref{lemm:flatness}.]
The functor \[ D(\F_p\fpsr{t}) \to \Mod(H\Z/(p,\tau))^\heart \] from Proposition \ref{prop:tate-nonorient-e-summary}(3) is additive and preserves filtered colimits.
It hence sends filtered colimits of split exact sequences to exact sequences.
Theorem \ref{thm:KM-flatness} together with Proposition \ref{prop:tate-nonorient-e-summary}(3) thus translates into an exact sequence \[ 0 \to A \to e_*(H\Z/(p,\tau)) \to B \to 0 \in \Mod(H\Z/(p,\tau))^\heart. \]
Here $A$ is a sum of twists of representables (see Proposition \ref{prop:tate-nonorient-summary}(4)), and $B$ is a sum of twists of the object corresponding to $\FF_p((t))/\FF_p\fpsr{t}$.
This can be written as a filtered colimit of $\FF_p\fpsr{t}/t^{p^i}$, and hence corresponds to a filtered colimit of representables.
Flat objects are always stable under extensions (since the non-negative and non-positive parts of $t$-structures are), and in our case also under filtered colimits (same reason) and twists (twisting being a $t$-exact automorphism).
It thus remains to show that representable objects are flat.
Let $E \in \Mod(H\Z/(p,\tau))^\heart$ be representable.
Then $E$ is strongly dualizable and in fact self-dual.
Since the $t$-structure is compatible with the symmetric monoidal structure, it is clear that $E \otimes \Mod(H\Z/(p,\tau))_{\ge 0} \subset \Mod(H\Z/(p,\tau))_{\ge 0}$.
Now let $X \in \Mod(H\Z/(p,\tau))_{<0}$.
To show that $E \otimes X \in \Mod(H\Z/(p,\tau))_{<0}$ we must show that for every $Y \in \Mod(H\Z/(p,\tau))_{\ge 0}$ we have $[Y, E \otimes X] = 0$.
But this is the same as $[Y \otimes DE, X]$, which vanishes since $DE \in \Mod(H\Z/(p,\tau))_{\ge 0}$ and hence $Y \otimes DE \in \Mod(H\Z/(p,\tau))_{\ge 0}$.
\end{proof}

We now prove Theorem \ref{thm:KM-flatness}.
We begin by classifying torsion $\F_p\fpsr{t}$-modules.

\begin{lem} \label{lem:Pn-nonsense}
Say that an $\F_p\fpsr{t}$-module $M$ satisfies property $P_n$ if whenever $x \in M$ and $t^n x=0$, then $x=ty$ for some $y \in M$.\NB{I.e. $M[t^{n+1}] \xrightarrow{t} M[t^n]$ is surjective.}
\begin{enumerate}
\item Let $M$ be a $\F_p\fpsr{t}/t^{n+1}$-module.
  Then $M$ is free if and only if it satisfies $P_n$.
\item Suppose that $M$ satisfies $P_n$.
  Then so do $M[t^{n+1}]$ and $M/t^{n+1}$.
\item Suppose that $\alpha: M \to N$ is a morphism of free $\F_p\fpsr{t}/t^{n+1}$-modules.
  Then there are splittings $M = M_1 \oplus M_2$ and $N = N_1 \oplus N_2$ such that $\alpha$ maps $M_1$ isomorphically onto $N_1$, and $\alpha(M_2) \subset tN$.
\item A morphism of free $\F_p\fpsr{t}/t^{n+1}$-modules is an isomorphism if and only if it is so modulo $t$, if and only if it is on $t$-torsion.
\item Suppose that $M$ satisfies $P_n$.
  Then there is a decomposition $M \wequi F \oplus M'$, where $F$ is a free $\F_p\fpsr{t}/t^{n+1}$-module and $M'$ satisfies $P_{n+1}$.
\end{enumerate}
\end{lem}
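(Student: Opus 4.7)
The plan is to prove (1)--(5) in order; later parts depend on earlier ones. Throughout let $R = \F_p\fpsr{t}/t^{n+1}$. For (1), a free $R$-module $M = \bigoplus R$ satisfies $P_n$ by direct inspection since $R[t^n] = tR$. Conversely, given $M$ satisfying $P_n$, lift an $\F_p$-basis $\{\bar e_i\}$ of $M/tM$ to $\tilde e_i \in M$; the induced map $\bigoplus_i R\tilde e_i \to M$ is surjective by Nakayama (which applies since $R$ is local Artinian). Injectivity uses a ``smallest power of $t$'' trick: a nontrivial relation $\sum r_i \tilde e_i = 0$ rewrites as $t^j \sum s_i \tilde e_i = 0$ with $j \le n$ and some $s_{i_0}$ a unit, whence $P_n$ gives $\sum s_i \tilde e_i \in M[t^j] \subset M[t^n] \subset tM$, contradicting that its image in $M/tM$ is a nonzero combination of the $\bar e_i$. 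Part (2) is immediate: given $x \in M[t^{n+1}]$ with $t^n x = 0$, $P_n$ yields $x = ty$, and $t^{n+1} y = t^n x = 0$ places $y$ in $M[t^{n+1}]$; the quotient case is analogous.

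For (4), Nakayama reduces ``iso'' to ``iso mod $t$'', and the natural isomorphism $M/tM \xrightarrow{\cdot t^n} M[t]$ for free $R$-modules (coordinate-wise, $R/tR \xrightarrow{\sim} t^n R$) identifies this with ``iso on $t$-torsion''. For (3), decompose $M/tM = \ker(\bar\alpha) \oplus M_1'$ and lift via (1) to $M = M_1 \oplus M_2$; write $N = N_1 \oplus N_2$ analogously with $N_1$ lifting a complement of $\im(\bar\alpha)$. The composite $M_1 \to N \to N_1$ is iso mod $t$, hence iso by (4), and composing with a suitable automorphism of $N$ (shearing $N_1$ into $\alpha(M_1)$) yields the claimed splittings.

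The main content is (5). I set $V := M[t^{n+1}]/(M[t^{n+1}] \cap tM)$, which embeds into $M/tM$, choose an $\F_p$-basis $\{v_i\}$ of $V$, and lift each $v_i$ to $\tilde v_i \in M[t^{n+1}]$. Each $\tilde v_i$ has annihilator exactly $t^{n+1}$: a shorter annihilator would place $\tilde v_i \in M[t^n] \subset tM$ by $P_n$, contradicting that $\tilde v_i$ maps to the nonzero vector $v_i$ in $M/tM$. The same ``smallest power of $t$'' argument shows the $\tilde v_i$ are $R$-linearly independent, so $F := \bigoplus_i R\tilde v_i \hookrightarrow M$ is a free $R$-submodule. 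The key technical step is the weak purity $F \cap t^{n+1}M = 0$: given a putative nonzero $f = \sum r_i \tilde v_i \in t^{n+1}M$, extract the smallest power $j \le n$ of $t$ from the $r_i$ to write $f = t^j g$ with $\bar g \ne 0$ in $V$; combining $f = t^j g = t^{n+1} m$ yields $g \in M[t^j] + t^{n+1-j}M \subset tM$ by $P_n$, contradicting $\bar g \ne 0$ in $V \subset M/tM$.

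Weak purity and $t^{n+1}F = 0$ together show that $F \to M/t^{n+1}M$ is an $R$-module embedding. Because $R$ is a local Artinian Gorenstein ring (hence Noetherian self-injective), the free $R$-module $F$ is injective as an $R$-module, so this embedding splits; composing the retraction $M/t^{n+1}M \to F$ with the projection $M \to M/t^{n+1}M$ gives an $\F_p\fpsr{t}$-linear retraction $M \to F$ and a decomposition $M = F \oplus M'$. Finally, $M'$ satisfies $P_{n+1}$: if $y \in M'$ with $t^{n+1} y = 0$, then $y \in M[t^{n+1}] \cap M'$, which has zero image in $V$ (as $F$ already maps isomorphically onto $V$), so $y \in tM$; writing $y = tz = t(f + m'')$ with $f \in F$, $m'' \in M'$, uniqueness of the decomposition forces $tf = 0$ and $y = tm'' \in tM'$. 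The main obstacle is the weak purity step; from there the splitting is formal from the self-injectivity of $R$.
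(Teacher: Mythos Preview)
Your proof is correct, modulo a slip in (3): you write ``$N_1$ lifting a complement of $\im(\bar\alpha)$'', but for $M_1 \to N \to N_1$ to be an isomorphism mod $t$ you need $N_1$ to lift $\im(\bar\alpha)$ itself (equivalently, $N_2$ lifts a complement). With that fix the argument goes through.

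The genuine difference from the paper is in (5). The paper first uses (2) and (1) to see that $M[t^{n+1}]$ and $M/t^{n+1}M$ are free $R$-modules, then applies (3) to the canonical map $M[t^{n+1}] \to M/t^{n+1}M$: the resulting summand $M_1 \xrightarrow{\sim} N_1$ is the desired $F$, and the composite $M \to M/t^{n+1}M \to N_1 \cong F$ is the retraction. You instead build $F$ by hand (lifting a basis of $V$), prove weak purity $F \cap t^{n+1}M = 0$ by a valuation argument, and then invoke self-injectivity of the zero-dimensional Gorenstein local ring $R$ (together with Bass--Papp, since $R$ is Noetherian and $F$ may be of infinite rank) to split $F$ out of $M/t^{n+1}M$. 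Your route bypasses (2) and (3) entirely in the proof of (5), at the cost of importing a piece of commutative algebra the paper does not use; the paper stays self-contained but must feed (3) into (5). Your (4), via nilpotent Nakayama and the natural isomorphism $M/tM \xrightarrow{t^n} M[t]$ for free $R$-modules, is also more direct than the paper's filtration $F_iM = M[t^i]$.

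One phrase in your (5) deserves tightening: ``$F$ already maps isomorphically onto $V$'' should be read as saying that $F/tF \xrightarrow{\sim} V$ inside $M/tM = F/tF \oplus M'/tM'$, so that $V \cap (M'/tM') = 0$; this is what forces $y \in M'[t^{n+1}]$ to have zero image in $V$.
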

\begin{proof}
(1) Necessity is clear.
To prove sufficiency, suppose that $M$ satisfies $P_n$.
Note $(*)$ that it follows that $M[t^i] = t^{n+1-i}M$, for $1 \le i \le n$.
(If $t^i x = 0$ with $i\le n$ then $t^nx=0$ so $x=ty$, and $t^{i+1}y=0$---now repeat.)
Write $M = tM \oplus G$, a decomposition of $\F_p$-vector spaces.
Then $G \to M \xrightarrow{t^n} M$ is injective by condition $P_n$.
Since $tM = M[t^n]$ by $(*)$, we deduce $(a)$ that $G \to M \xrightarrow{t^n} t^nM$ is an isomorphism onto $t^nM = M[t]$ (the latter again by $(*)$).
Also $(b)$ by construction $G \to M/t$ is an isomorphism.
Now let $\alpha: F \to M$ be the morphism from a free module with generators in bijection with a basis of $G$.
Then $\alpha/t$ is an isomorphism by $(b)$, and $\alpha[t]$ is an isomorphism by $(a)$.
This implies that $\alpha$ is an isomorphism.\footnote{View $\alpha$ as a map in the derived category of $\F_p\fpsr{t}/t^{n+1}$-modules.
Then $\alpha$ is an equivalence if and only if $\alpha[t^{-1}]$ and $\alpha\sslash t$ are equivalences, where $\alpha \sslash t$ means the cofiber of multiplication by $t$.
The first map is between zero objects, and the second map is between objects with only two homotopy groups, given by $\alpha[t]$ and $\alpha/t$.}

(2) If $x \in M[t^{n+1}]$ satisfies $t^nx=0$, then $x=ty$ for some $y \in M$, and necessarily $y \in M[t^{n+1}]$.
Hence the first claim is clear.
For the second, let $\bar x \in M/t^{n+1}$ with $t^n \bar x = 0$.
Let $\bar x$ be the image of $x$, so that $t^n x = t^{n+1} y$ for some $y \in M$.
Then $t^n(x-ty) = 0$ and so $x-ty = tz$, whence $x = t(y+z)$.
This proves the second claim.

(3) Consider a splitting of $\F_p$-vector spaces $M = tM \oplus G_1 \oplus G_2$, where $G_1 \to N/t$ is injective and $\alpha(G_2) \subset tN$ (just write $M = tM \oplus G$, set $G_2 = ker(G \to N/t)$, and let $G_1$ be a complement of $G_2$ in $G$).
Next construct a splitting $N = tN \oplus \alpha(G_1) \oplus G_3$ (note that $\alpha(G_1) \cap tN = 0$ by construction).
Let $M_1$ be the free module with basis in bijection with a basis of $G_1$, $M_2$ the same with $G_2$, $N_1$ the same with $\alpha(G_1)$, and $N_2$ the same with $G_3$.
It follows from (4) that $M_1 \oplus M_2 \to M$ is an isomorphism and similarly for $N$.
This proves (3).

(4) Let $M$ be an $\F_p\fpsr{t}/t^{n+1}$-module.
Define a filtration by $F_iM = M[t^i]$ and write $G_i M = F_{i+1}M/F_{i}M$ for the associated graded.
Note that $F_0M = 0$ and $F_{n+1}M=M$.
This filtration is natural in $M$.
It follows that a map $\alpha$ is an isomorphism if and only if $G_i(\alpha)$ is, for $i=0, \dots, n$.
Multiplication by $t$ defines natural maps $F_{i+1}M \to F_iM$ and hence $[\cdot t]: G_{i+1}M \to G_i M$.
If $M$ is free then these maps $[\cdot t]$ are isomorphisms for $i = 0, \dots, n-1$ (just because the constructions are compatible with direct sums and the claim is evident for $M=\F_p\fpsr{t}/t^{n+1}$, where $G_i M \wequi \F_p\{[t^{n-i}]\}$).
It follows that for maps between free modules, $G_i(\alpha)$ is an isomorphism for all $i$ if and only if it is an isomorphism for any specific $i$.
Since $G_0 M$ is the $t$-torsion and $G_nM \wequi M/t^n$ ($M$ being free), the result follows.

(5) Both $M[t^{n+1}]$ and $M/t^{n+1}$ are $\F_p\fpsr{t}/t^{n+1}$-modules.
They satisfy $P_n$ by (2) and hence are free $\F_p\fpsr{t}/t^{n+1}$-modules by (1).
Applying (3) to the canonical map $M[t^{n+1}] \to M/t^{n+1}$ we may write $M[t^{n+1}] = M_1 \oplus M_2$, $M/t^{n+1} = N_1 \oplus N_2$ with $M_1 \xrightarrow{\wequi} N_1$ and the image of $M_2$ divisible by $t$.
Put $F = M_1$.
Then the composite $M \to M/t^{n+1} \to N_1 \wequi F$ splits the inclusion $F=M_1 \hookrightarrow M[t^{n+1}] \hookrightarrow M$, and so we get a decomposition $M \wequi F \oplus M'$.
If $x \in M'[t^{n+1}]$ then $x \in M[t^{n+1}]$.
Write $x = x_1 + x_2$, with $x_i \in M_i$.
Since $x \in M'$ we must have $x_1 = 0$, i.e. $x \in M_2$.
Thus the image of $x$ in $M/t^{n+1}$ is divisible by $t$, whence $x$ itself is divisible by $t$ as needed.
\end{proof}

\begin{prop} \label{prop:Fpt-structure}
Let $M$ be an $\F_p\fpsr{t}$-module.
For $i \ge 1$ there exit free $\F_p\fpsr{t}/t^i$-modules $F_i$ together with maps $\iota_i: F_i \to M$ and $\rho_i: M \to F_i$ such that $\rho_i \circ \iota_j = \delta_{ij}$, and moreover in the exact sequence \[ 0 \to \bigoplus_i F_i \to M \to M' \to 0 \] the quotient $M'$ satisfies $P_i$ for every $i$.
In particular if $M$ is torsion then $M'$ is divisible and so isomorphic to a sum of $\F_p((t))/\F_p\fpsr{t}$.
\end{prop}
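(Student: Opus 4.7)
The plan is to iterate Lemma~\ref{lem:Pn-nonsense}(5). Every $\F_p\fpsr{t}$-module trivially satisfies $P_0$ (since $M[t^0] = 0$), so the lemma produces $M \wequi F_1 \oplus M_1$ with $F_1$ free over $\F_p\fpsr{t}/t$ and $M_1$ satisfying $P_1$. Applying it again to $M_1$ yields $F_2$ and $M_2$, and continuing I obtain, for each $n \ge 1$, a direct-sum decomposition
\[ M \wequi F_1 \oplus F_2 \oplus \cdots \oplus F_n \oplus M_n \]
with $F_i$ free over $\F_p\fpsr{t}/t^i$ and $M_n$ satisfying $P_n$. The required maps are then read off from these nested splittings: $\iota_i \colon F_i \hookrightarrow M_{i-1} \hookrightarrow M$ and $\rho_i \colon M \twoheadrightarrow M_{i-1} \twoheadrightarrow F_i$. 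The identities $\rho_i \iota_j = \delta_{ij}$ are immediate --- for $j > i$, $\iota_j(F_j) \subseteq M_i$ is annihilated by $M_{i-1} \twoheadrightarrow F_i$; for $j < i$, $\iota_j(F_j)$ is annihilated by the projection $M_{j-1} \twoheadrightarrow M_j$ occurring in $\rho_i$.

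The main point will be to verify that $M' := M/K$, where $K = \bigoplus_i F_i \subseteq M$, satisfies $P_n$ for every $n$. Given $\bar{x} \in M'$ with $t^n \bar{x} = 0$, any lift $x \in M$ satisfies $t^n x \in K$, and because $K$ is a \emph{direct} sum we have $t^n x \in F_1 \oplus \cdots \oplus F_N$ for some $N \ge n$. Decomposing $x = f_1 + \cdots + f_N + x_N$ according to $M \wequi F_1 \oplus \cdots \oplus F_N \oplus M_N$ and using that multiplication by $t$ respects this splitting (each summand being an $\F_p\fpsr{t}$-submodule), the $M_N$-component satisfies $t^n x_N = 0$, and hence $t^N x_N = 0$. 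Property $P_N$ on $M_N$ then produces $y \in M_N$ with $x_N = ty$, so $x \equiv ty \pmod{K}$ and $\bar{x} = t \bar{y}$.

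For the last sentence, if $M$ is torsion then so is $M'$; combined with $P_n$ for all $n$, every $\bar{x} \in M'$ is of the form $t \bar{y}$, so $M'$ is $t$-divisible, and since $\F_p\fpsr{t}$ is a DVR with uniformizer $t$ it is divisible in the usual sense. The standard structure theorem for divisible torsion modules over a DVR (the analogue of the classification of divisible torsion abelian groups as sums of Prüfer modules) then exhibits $M'$ as a direct sum of copies of $\F_p((t))/\F_p\fpsr{t}$. Given Lemma~\ref{lem:Pn-nonsense}(5), no individual step is particularly difficult; the one subtle point is the finite-support observation in the middle paragraph, which is what lets the countable iteration be controlled by a single application of $P_N$.
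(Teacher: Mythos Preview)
Your proof is correct and follows essentially the same approach as the paper: iterate Lemma~\ref{lem:Pn-nonsense}(5) starting from the trivial $P_0$, and then show the quotient $M'$ satisfies every $P_n$. The only cosmetic difference is that the paper phrases the key step as $M' = \colim_i M'_i$ with $P_n$ preserved under filtered colimits (since $M'[t^{n+1}] \xrightarrow{t} M'[t^n]$ surjective is a condition stable under such colimits), whereas you unwind this element-wise via the finite-support observation $t^n x \in F_1 \oplus \cdots \oplus F_N$; these are the same argument. For the final sentence the paper supplies its own Lemma~\ref{lem:classify-divisible} rather than citing a standard structure theorem, but the content is identical.
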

\begin{proof}
Since any module satisfies $P_0$, we may inductively apply Lemma \ref{lem:Pn-nonsense}(5) to find decompositions \[ M \wequi F_1 \oplus F_2 \oplus \dots \oplus F_n \oplus M'_n, \] where $M'_n$ satisfies $P_n$.
This yields the desired retractions, and it is clear that $\bigoplus_i F_i \to M$ is injective.
We get $M' = \colim_i M'_i$.
Since $M'_i$ satisfies $P_n$ for $i \ge n$, and modules satisfying $P_n$ are stable under filtered colimits (filtered colimits being exact), it follows that $M'$ satisfies $P_n$ for any $n$.

If $M$ is torsion then so is $M'$, and $M' = \bigcup_i M'[t^i] = \bigcup_i t M'[t^{i+1}] = tM'$ as needed (the middle equality holds since $M'$ satisfies $P_i$ for all $i$).
The last claim follows from Lemma \ref{lem:classify-divisible} below.
\end{proof}

\begin{lem} \label{lem:classify-divisible} \NB{reference?}
Let $M$ be a divisible, torsion $\F_p\fpsr{t}$-module.
Then $M$ is isomorphic to a sum of modules of the form $\F_p((t))/\F_p\fpsr{t}$.
\end{lem}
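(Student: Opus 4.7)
The plan is to reduce this to the classification of indecomposable injective modules over the complete DVR $R := \F_p\fpsr{t}$, using the standard fact that over a Dedekind domain (equivalently here, over a PID) a module is divisible if and only if it is injective. Matlis's structure theorem for injective modules over a Noetherian ring then produces a decomposition of $M$ into a direct sum of injective hulls $E(R/\mathfrak{p})$. The primes of $R$ are just $(0)$ and $(t)$, and the corresponding indecomposable injectives are $\F_p((t))$ (which is torsion-free) and $\F_p((t))/\F_p\fpsr{t}$ (which is $t$-power torsion). Since $M$ is torsion, no summand of the first type can appear, which gives the claim.

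If one prefers a self-contained argument (avoiding citing Matlis), the plan is as follows. First I would put $V := M[t]$, an $\F_p$-vector space, and choose an $\F_p$-basis $\{e_i\}_{i \in I}$. Using divisibility, inductively choose for each $i$ a sequence $e_i^{(1)} = e_i, e_i^{(2)}, e_i^{(3)}, \ldots$ with $t e_i^{(n+1)} = e_i^{(n)}$. Let $N_i \subset M$ be the submodule generated by the $e_i^{(n)}$; by construction the assignment $[t^{-n}] \mapsto e_i^{(n)}$ extends to a surjection $\varphi_i : \F_p((t))/\F_p\fpsr{t} \twoheadrightarrow N_i$, and the equality $\mathrm{ann}(e_i^{(n)}) = (t^n)$ (which follows inductively from $e_i^{(1)} = e_i \neq 0$) shows $\varphi_i$ is an isomorphism. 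Assembling these gives a map $\varphi : \bigoplus_i \F_p((t))/\F_p\fpsr{t} \to M$ which I claim is an isomorphism.

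For injectivity, suppose $\sum_i x_i = 0$ with $x_i \in N_i$, not all zero, and take a counterexample minimizing $n := \max_i \mathrm{ord}(x_i)$ (where $\mathrm{ord}$ is the least $k$ with $t^k x_i = 0$). Applying $t^{n-1}$ produces a relation in $M[t] = \bigoplus_i \F_p e_i$, which by linear independence of the $e_i$ forces $t^{n-1} x_i = 0$ for all $i$; this contradicts minimality of $n$. For surjectivity, it suffices (by the torsion hypothesis $M = \bigcup_n M[t^n]$) to show $M[t^n] \subseteq \mathrm{im}(\varphi)$ by induction on $n$. The base $n = 1$ is the definition of the $e_i$; for the inductive step, given $m \in M[t^{n+1}]$, use the inductive hypothesis to write $tm = \varphi(y)$, and then lift $y$ under the surjection $\varphi|_{(\bigoplus_i \F_p((t))/\F_p\fpsr{t})[t^{n+1}]} \xrightarrow{t} \varphi((\bigoplus_i \F_p((t))/\F_p\fpsr{t})[t^n])$ (which is surjective because multiplication by $t$ on each $\F_p((t))/\F_p\fpsr{t}$ is surjective) to a $z$; then $m - \varphi(z) \in M[t]$ is in the image, hence so is $m$.

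The only non-routine step is setting up the compatible sequences $\{e_i^{(n)}\}_n$ coherently across all $i$; this requires no real choice beyond the axiom of choice applied once per $i$ (the divisibility of $M$ guarantees the inductive lifts exist), so no obstruction arises. Everything else is a direct manipulation with orders of torsion.
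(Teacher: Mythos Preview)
Your proposal is correct. Both of your approaches work: the Matlis-theoretic reduction is the slickest if one is willing to cite the structure theory of injectives over a Noetherian ring, and your direct argument via a basis of the socle $M[t]$ is clean and self-contained.

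The paper's proof is organized differently. It runs a Zorn's lemma argument on the poset of independent families of submodules isomorphic to $\F_p((t))/\F_p\fpsr{t}$, using that divisible modules over a PID are injective to split off any such family and see that the complement is again divisible and torsion; maximality then forces the complement to vanish, once one checks (as you also do) that any nonzero divisible torsion module contains a copy of $\F_p((t))/\F_p\fpsr{t}$. So the paper leans on the implication ``divisible $\Rightarrow$ injective'' to get the splitting step, whereas your direct argument bypasses this entirely by writing down the isomorphism in one shot from a basis of $M[t]$ and verifying injectivity and surjectivity by hand. Your route is slightly more elementary in that respect; the paper's route generalizes more readily to situations where one has injectivity but not an explicit handle on the socle.
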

\begin{proof}
Let $S$ be the set consisting of independent families of submodules of $M$ isomorphic to $\F_p((t))/\F_p\fpsr{t}$.
In other words an element of $S$ is a pair $(I, \alpha)$ of a set $I$ and an injection $\alpha: F_I := \bigoplus_I \F_p((t))/\F_p\fpsr{t} \to M$.
Order $S$ in such a way that $(I, \alpha) \le (I', \alpha')$ if there exists an injection $I \hookrightarrow I'$ making the evident triangle commute.
(Note that $I \to I'$ is unique if it exists.)
It is then clear that increasing chains in $S$ have upper bounds.
Suppose given an injection $F_I \to M$.
Since $\F_p\fpsr{t}$ is a PID, divisible modules are injective\NB{ref?}, and hence $M \wequi F_i \oplus M'$.
Note that $M'$ is still a divisible torsion module.
We shall now prove that there exists an injection $\F_p((t))/\F_p\fpsr{t} \to M$, provided that $M \ne 0$.
By the above remarks, this will at the same time prove that $S$ is non-empty, and that for a maximal element $(I, \alpha) \in S$, $\alpha$ is an isomorphism.
We will thus be done by Zorn's lemma.

Thus let $M$ be a non-zero, divisible, torsion module.
If $0 \ne x \in M$, then there exists a minimal $n$ with $t^n x = 0$.
We must have $n>0$ since $x \ne 0$.
Thus $0 \ne x_0 := t^{n-1} x \in M[t]$.
Choose inductively elements $x_i$ with $t x_{i+1} =x_i$.
The subset $M_0 = \F_p\{x_i|i=0,1,\dots\} \subset M$ is in fact an $\F_p\fpsr{t}$-submodule, and by inspection it is isomorphic to $\F_p((t))/\F_p\fpsr{t}$.
\end{proof}

The assertion in Theorem \ref{thm:KM-flatness} is thus that, when decomposing $K_*^M(k^t)/p$ as in Proposition \ref{prop:Fpt-structure}, summands of the form $\F_p\fpsr{t}/t^i$ appear only when $i$ is a power of $p$.
This is the statement of the next result.
\begin{prop} \label{prop:torsion-orders}
Viewing $K_*^M(k^t)/p$ as an $\F_p\fpsr{t}$-module, any $t$-torsion element divisible by $t^{p^n}$ is divisible by $t^{p^{n+1}-1}$.
\end{prop}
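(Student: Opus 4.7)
The plan is to analyze $M = K^M_*(k^t)/p$ as an $\F_p\fpsr{t}$-module via the tower of cyclotomic extensions $k = F_n \subset F_{n+1} \subset \cdots \subset k^t$, where $F_m := k(\mu_{p^m})$. Under our running hypotheses on $n$, the cyclotomic character identifies the closed subgroup $\overline{\langle s^{p^j}\rangle} \subset G = \Z_p$ with $\mathrm{Gal}(k^t/F_{n+j})$. Thus an element of $M$ is annihilated by $t^{p^j}$ (equivalently, fixed by $s^{p^j}$) precisely when it lies in the image $M^{(j)}$ of $K^M_*(F_{n+j})/p \to M$, and $M = \colim_j M^{(j)}$.

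The central input is the following power-series identity in $\F_p[\Z/p^j]$: since $(1+t)^{p^j} = 1+t^{p^j}$ in characteristic $p$, the norm element $\sum_{i=0}^{p^j-1} s^i$ simplifies as $((1+t)^{p^j}-1)/t = t^{p^j-1}$. Viewed as an operator on $M^{(j)}$, this same sum equals $\mathrm{res}_k^{k^t}\circ N_{F_{n+j}/k}$ (lift to $K_*^M(F_{n+j})/p$, take the norm to $K_*^M(k)/p$, then restrict back to $M$). Specializing to $j=n+1$ gives the key formula: on $M^{(n+1)}$, multiplication by $t^{p^{n+1}-1}$ factors through the norm from $F_{n+1}=k(\mu_{p^{n+1}})$ down to $k$. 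Hence, to prove $x \in t^{p^{n+1}-1}M$, it suffices to exhibit $x$ as $\mathrm{res}(N_{F_{n+1}/k}(w))$ for some $w \in K^M_*(F_{n+1})/p$.

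To apply this, given $y\in M$ with $x := t^{p^n}y$ being $t$-torsion, I would choose $j \ge n+1$ minimal with $y \in M^{(j)}$ (the case $j \le n$ is vacuous since $s^{p^n}$ then acts as the identity on $M^{(j)}$, giving $x=0$). Lifting to $\tilde y\in K^M_*(F_{n+j})/p$, the element $x$ is the restriction of $(s^{p^n}-1)\tilde y$; a short argument using that $x$ is torsion combined with the analogous relative-norm formula $\mathrm{res}\circ N_{F_{n+j}/F_{n+1}}=t^{p^j - p^{n+1}}$ on $M^{(j)}$ reduces to the case $j=n+1$, so we may assume $x \in M^{(n+1)}$.

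The main obstacle is the norm-image step. The strategy is to exploit the structure of $F_{n+1}/k$ as a cyclic Kummer extension of degree $p$, namely $F_{n+1}=k(\alpha)$ with $\alpha=\zeta_{p^{n+1}}$ satisfying $\alpha^p=\zeta_{p^n}\in k^\times$, together with the projection formula $N(c\cdot\mathrm{res}(b))=N(c)\cdot b$ for Milnor $K$-theory. The Galois generator acts by $\bar s(\alpha)=\alpha^{1+p^n u}$ for some $u \in \Z_p^\times$, by definition of the cyclotomic character. Combining this with the Bass--Tate description of $K^M_*(F_{n+1})/p$ as generated over $K^M_*(k)/p$ by symbols involving $\{\alpha\}$, one computes directly that $(s^{p^n}-1)\tilde y$ admits (possibly after modifying $\tilde y$ by an element in the kernel of restriction to some larger $F_{n+j'}$, which does not affect $x\in M$) an explicit expression as a norm from $F_{n+1}$, yielding the required divisibility. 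For odd $p$ this argument suffices; for $p=2$, the extra subtlety about the cyclotomic character and the failure of the naive norm identity is precisely what necessitates the refined Hilbert~90 result of \S\ref{subsec:H90}.
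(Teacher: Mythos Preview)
Your identification of the norm--element identity $\sum_{i=0}^{p^j-1} s^i = t^{p^j-1}$ in $\F_p[\Z/p^j]$ is correct and is indeed the algebraic heart of the matter; the paper uses exactly this. There is, however, a persistent confusion in your write-up between the fixed $n$ (maximal with $\mu_{p^n}\subset k$) and the variable exponent in the proposition, which must range over all integers $m\ge 0$ for the application to Theorem~\ref{thm:KM-flatness}. For instance, with your indexing $M^{(j)}$ is the image of $K^M_*(F_{n+j})/p$, so setting $j=n+1$ makes the relevant norm come from $F_{2n+1}$, not $F_{n+1}$; and your relative formula $\mathrm{res}\circ N_{F_{n+j}/F_{n+1}}=t^{p^j-p^{n+1}}$ should read $t^{p^j-p}$. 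These are fixable, but they obscure the real issue.

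The substantive gap is the ``norm-image step,'' which you correctly flag as the main obstacle but then propose to resolve by an explicit symbol computation via a ``Bass--Tate description of $K^M_*(F_{n+1})/p$ as generated over $K^M_*(k)/p$ by symbols involving $\{\alpha\}$.'' This does not work. Bass--Tate concerns the Milnor $K$-theory of rational function fields, not finite algebraic extensions; for a cyclic degree-$p$ Kummer extension $L=K(\alpha)$ there is no elementary presentation of $K^M_*(L)/p$ over $K^M_*(K)/p$ in terms of $\{\alpha\}$ alone (already $L^\times$ is not generated by $K^\times$ and $\alpha$), and knowing which elements of $K^M_d(L)/p$ lie in the image of $\mathrm{res}\circ N_{L/K}$ is precisely the content of Voevodsky's higher Hilbert~90. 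The paper's proof for odd $p$ invokes exactly this: the exact sequence $K^M_d(k_2)/p\xrightarrow{N}K^M_d(k_1)/p\xrightarrow{\cup[a]}K^M_{d+1}(k_1)/p\xrightarrow{\mathrm{res}}K^M_{d+1}(k_2)/p$ of \cite[Theorem~3.6]{haesemeyer2019norm}, reinterpreted (after splitting the derived pushforward along $\F_p\fpsr{u}$, which has global dimension~$1$) as exactness of $M[u^p]\xrightarrow{u^{p-1}}M[u]\to M/u$. Exactness at the middle term is literally the statement that every $u$-torsion element divisible by $u$ is divisible by $u^{p-1}$, and the proposition then follows by a short bootstrap with $u=t^{p^m}$. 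Your projection-formula and explicit-$\bar s(\alpha)$ manipulations can handle $K^M_1$ (this is classical Hilbert~90), but for $d\ge 2$ there is no way to avoid the Bloch--Kato input. The $p=2$ case requires even more: the degree-$p$ exact sequence is vacuous, and one needs the $C_4$-refinement of \S\ref{subsec:H90}, as you note.
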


\begin{proof}[Proof of Theorem \ref{thm:KM-flatness}.]
Immediate from Propositions \ref{prop:Fpt-structure} and \ref{prop:torsion-orders}.
\end{proof}

The proof of Proposition \ref{prop:torsion-orders} is substantially simpler if $k$ is odd.
We give it below.
The case $p=2$ is relegated to the next section.
\begin{proof}[Proof of Proposition \ref{prop:torsion-orders} for $p$ odd.]
Let $M_n$ denote the $\F_p\fpsr{u}$-module obtained from $K_*^M(k^t)/p$ by setting $u=t^{p^n}$.
We first claim that if $x \in M_n[u]$ is divisible by $u$, then $x$ is divisible by $u^{p-1}$.\NB{If $p=2$, this statement is vacuous...}

Assuming the claim, here is how to deduce the result.
Let $x \in M_0[t]$ be divisible by $t^{p^n+a}$ for some $0 \le a \le p^n-1$, i.e. $x = t^{p^n+a}y$.
Set $x' = t^{p^n}y = uy$, where $u=t^{p^n}$.
Then $ux' = t^{p^n+p^n}y = t^{p^n-a}x = 0$, since $a < p^n$.
The claim implies that $x' = u^{p-1}y'$, whence \[ x = t^a x' = t^{p^{n+1} - p^n + a}y'. \]
We have $p^{n+1}-p^n = p^n + (p^{n+1}-2p^n)$ with $p^{n+1}-2p^n > 0$ (since $p$ is odd), and so $x = t^{p^n + a'}y'$ for some $a'>a$.
Repeating this construction sufficiently many times, we may assume that $a=p^n-1$ and hence $p^{n+1} - p^n + a = p^{n+1}-1$ as desired.

It thus remains to establish the claim.
Recall that $\F_p\fpsr{t} = \F_p\fpsr{G}$, where $G \wequi \Z_p$ is topologically generated by $\sigma'$ and $t=\sigma'-1$.
Let $\sigma = \sigma'^{p^n}$ so that $u = \sigma - 1$.
Let $k_1$ be the fixed field of $\sigma$ and $k_2$ the fixed field of $\sigma^p$.
Observe that $k_2/k_1$ is a cyclic extension of degree $p$ with generator $\sigma$; in fact $k_2 = k_1(\zeta_2)$ where $\zeta_2^p = \zeta_1$, and $\zeta_1 \in k_1$ is a primitive $p^d$-th root of unity for $d$ maximal.
Let $M \in D(\F_p\fpsr{u})$ denote the object with $H^*M = K_*^M(k^t)/p$, i.e., $M$ is obtained by forming an injective resolution of $\Z/p$ in the category of $Gal(k_1)$-modules, taking $Gal(k^t)$-fixed points (thus comupting $H^*_\et(k^t, \Z/p) \wequi K_*^M(k^t)/p$), and viewing the result as having a $Gal(k^t/k_1)$-action.
Put $M(k_1) = \fib(u: M \to M)$ and $M(k_2) = \fib(u^p: M \to M)$, so that $H^*M(k_i) = K_*^M(k_i)/p$ (indeed taking the fiber of $u$ is taking fixed points for the topologically cyclic group $Gal(k^t/k_1)$, and so this computes $H^*_\et(k_1, \Z/p)$; similarly for $k_2$).
We thus have an exact sequence \cite[Theorem 3.6]{haesemeyer2019norm} \[ H^nM(k_2) \xrightarrow{N} H^n M(k_1) \xrightarrow{\partial} H^{n+1} M(k_1) \xrightarrow{R} H^{n+1} M(k_2). \]
Since $\F_p\fpsr{u}$ has global dimension $1$ (being regular local of dimension $1$), we get a splitting $M \wequi \bigoplus_i \Sigma^{-i} N_i$ (see e.g. \cite[\S1.6]{MR2355772}) where $N_i = K_i^M(k^t)/p$.
Note that now $H^0 N_i(k_1) = \ker(u: M_i \to M_i)$ and $H^1 N_i(k_1) = \coker(u: M_i \to M_i)$ and $H^* N_i(k_1) = 0$ else, and similarly for $N_i(k_2)$.
Passing to the summand of the exact sequence corresponding to $N_n$ we thus obtain \[ K_n^M(k^t)/p[u^p] \xrightarrow{N} K_n^M(k^t)/p[u] \xrightarrow{\partial} (K_n^M(k^t)/p)/u \xrightarrow{R} (K_n^M(k^t)/p)/u^p. \]
Chasing through the definitions\NB{...}, the map $N$ is multiplication by $u^{p-1}$ and $\partial$ is the canonical map from the kernel to the cokernel.
Exactness at the second spot is thus precisely the desired statement.
\end{proof}

\subsection{A version of Hilbert 90 for $p=2$} \label{subsec:H90}
\begin{prop}[Merkurjev] \label{prop:H90-KM} \NB{HW label this argument as due to Merkurjev, but don't give a specific reference.}
  Let $k$ be a field of characteristic not $\ell$ and $E/k$ a cyclic galois extension of degree $\ell^m$ with $\sigma$ a generator of the galois group.
  Then higher Hilbert 90 implies that the sequence
  \[ K_n^M(E) \xrightarrow{1-\sigma} K_n^M(E) \to K_n^M(k) \]
  is exact.
\end{prop}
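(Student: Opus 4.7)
The plan is a direct induction on $m$. The base case $m=1$ is precisely the statement of higher Hilbert 90 for Milnor $K$-theory applied to the cyclic extension $E/k$ of prime degree $\ell$, which is what the hypothesis supplies.

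For the inductive step with $m \ge 2$, I would work with the unique intermediate subfield $F \subset E$ having $[E:F] = \ell$ and $[F:k] = \ell^{m-1}$. Writing $G = \mathrm{Gal}(E/k) = \langle \sigma \rangle$, the relevant subgroup is $H = \langle \sigma^{\ell^{m-1}} \rangle$ of order $\ell$, so $F = E^H$; set $\tau := \sigma^{\ell^{m-1}}$, a generator of $\mathrm{Gal}(E/F)$, and let $\bar\sigma$ be the image of $\sigma$ generating $\mathrm{Gal}(F/k)$. Then $E/F$ is cyclic of prime degree $\ell$ (so the base case applies) and $F/k$ is cyclic of degree $\ell^{m-1}$ (so the inductive hypothesis applies).

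Given $x \in K_n^M(E)$ with $N_{E/k}(x) = 0$, transitivity of norms gives $N_{F/k}(N_{E/F}(x)) = 0$, so induction on $F/k$ produces $z \in K_n^M(F)$ with $N_{E/F}(x) = (1-\bar\sigma)(z)$. I would then invoke norm surjectivity to pick $w_0 \in K_n^M(E)$ with $N_{E/F}(w_0) = z$. Setting $x' := x - (1-\sigma)(w_0)$ gives $N_{E/F}(x') = (1-\bar\sigma)(z) - (1-\bar\sigma)N_{E/F}(w_0) = 0$, so the base case applied to $E/F$ supplies $w_1 \in K_n^M(E)$ with $x' = (1-\tau)(w_1) = (1-\sigma^{\ell^{m-1}})(w_1)$. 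The algebraic identity in $\mathbb{Z}[G]$
\[
1 - \sigma^{\ell^{m-1}} \;=\; (1-\sigma)\bigl(1 + \sigma + \sigma^2 + \cdots + \sigma^{\ell^{m-1}-1}\bigr)
\]
then converts this to $x' = (1-\sigma)\bigl(\sum_{i=0}^{\ell^{m-1}-1} \sigma^i(w_1)\bigr)$, whence $x = (1-\sigma)\bigl(w_0 + \sum_i \sigma^i(w_1)\bigr) \in (1-\sigma)K_n^M(E)$.

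The main obstacle is the lifting step producing $w_0$: this needs $N_{E/F} \colon K_n^M(E) \twoheadrightarrow K_n^M(F)$ to be surjective, i.e.\ the vanishing of $\hat H^{0}(\mathrm{Gal}(E/F), K_n^M(E))$. Since Tate cohomology of a cyclic group is $2$-periodic rather than $1$-periodic, this vanishing is a genuinely separate assertion from the usual Hilbert 90 ($\hat H^{-1} = 0$). I would therefore interpret ``higher Hilbert 90'' in the strong form asserting cohomological triviality of $K_n^M(E)$ as a $\mathrm{Gal}(E/F)$-module for all cyclic prime-degree extensions---equivalently, the conjunction of Hilbert 90 and norm surjectivity---which is the form customary in the Bloch--Kato / norm residue theorem framework in which Merkurjev's argument lives. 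With this interpretation the induction closes as above.
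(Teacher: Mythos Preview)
Your induction is correct, and your identification of norm surjectivity as the essential extra ingredient is accurate. The paper, however, takes a different route that avoids both the induction on $m$ and the separate appeal to norm surjectivity.

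The paper first reduces to the $\ell$-local case and then invokes the Norm residue isomorphism to translate everything into étale cohomology: via Shapiro's lemma one has $K_n^M(E)_{(\ell)} \cong H^n_\et(k, \Z_{(\ell)}[G](n))$, and the desired exactness becomes
\[ H^n_\et(k, \Z_{(\ell)}[G](n)) \xrightarrow{1-\sigma} H^n_\et(k, \Z_{(\ell)}[G](n)) \to H^n_\et(k, \Z_{(\ell)}(n)). \]
This is obtained by splicing the long exact cohomology sequences coming from the two short exact sequences of $G$-modules
\[ 0 \to I \to \Z[G] \to \Z \to 0 \qquad\text{and}\qquad 0 \to \Z \to \Z[G] \xrightarrow{\cdot(1-\sigma)} I \to 0, \]
where $I$ is the augmentation ideal. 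The only input needed is the single vanishing $H^{n+1}_\et(k, \Z_{(\ell)}(n)) = 0$, which is precisely what the paper means by ``higher Hilbert 90''. The argument is uniform in $m$ with no induction.

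Your approach is more elementary in that it stays entirely within Milnor $K$-theory and does not pass through étale cohomology; the cost is that you must assume norm surjectivity for degree-$\ell$ extensions as a separate hypothesis, which---as you correctly note---is not a formal consequence of the $\hat H^{-1}$-vanishing alone. The paper's cohomological rephrasing sidesteps this entirely: the $\Z[G]$-module manipulation needs only the one étale vanishing, regardless of $m$.
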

In \cite[Theorem 3.2]{haesemeyer2019norm}, the above result is stated and proved for $m=1$.
The same proof works for larger $m$, as we record now.
\begin{proof}
  After inverting $\ell$ the sequence is exact, so it suffices to handle the $\ell$-local case.
  Let $G = \langle \sigma \rangle$, then the Norm residue isomorphism lets us rewrite things in terms of $G$-modules. Our goal is now to prove that
  \[ H_{\et}^n(k, \Z_{(\ell)}[G](n)) \xrightarrow{1-\sigma} H_{\et}^n(k, \Z_{(\ell)}[G](n)) \to H_{\et}^n(k, \Z_{(\ell)}(n)) \]
  is exact.

Consider the following two short exact sequences of $G$-modules:
  \[ 0 \to I \to \Z[G] \to \Z \to 0, \text{ and} \]
  \[ 0 \to \Z \to \Z[G] \xrightarrow{\cdot(1-\sigma)} I \to 0. \]
  On cohomology this yields exact sequences
  \[ H_{\et}^n(k, I_{(\ell)}(n)) \to H_{\et}^n(k, \Z_{(\ell)}[G](n)) \to  H_{\et}^n(k, \Z_{(\ell)}(n)) \]
  \[ H_{\et}^n(k, \Z_{(\ell)}[G](n)) \to H_{\et}^n(k, I_{(\ell)}(n)) \to H_{\et}^{n+1}(k, \Z_{(\ell)}(n)) \stackrel{H90}{=} 0 \]
  Combining these sequences and noting that the map $\Z[G] \to I \to \Z[G]$ is $1 - \sigma$ finishes the proof.
\end{proof}

Using this proposition we can build a $C_4$ variant of the mod 2 version of Hilbert 90 for $K^M$.
\begin{lem} \label{lem:H90-cons}
  Let $k_2/k_1/k$ be a $C_4$ Galois extension of fields of characteristic not 2.
  Let $\sigma$ be a generator of $C_4$.
  Then, the sequence
  \[ K_n^M(k_1)/2 \oplus K^M_n(k_2)/2 \xrightarrow{(i, 1-\sigma)} K^M_n(k_2)/2 \to K^M_n(k)/2 \]
  is exact.  
\end{lem}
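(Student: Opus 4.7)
The plan is to derive Lemma~\ref{lem:H90-cons} essentially directly from Merkurjev's higher Hilbert~$90$ (Proposition~\ref{prop:H90-KM}), applied to the cyclic degree-$4$ extension $k_2/k$ with generator $\sigma$. First, I would identify the unlabelled second arrow as the norm $N_{k_2/k}\colon K^M_n(k_2)/2 \to K^M_n(k)/2$.

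Step one is to verify that the displayed sequence is a complex. On $(1-\sigma)y$ the norm vanishes because $N_{k_2/k}$ factors through the $C_4$-coinvariants, so $N(\sigma y) = N(y)$. On $i(x)$ for $x \in K^M_n(k_1)/2$, I would use transitivity of norms: $N_{k_2/k}\circ i = N_{k_1/k}\circ N_{k_2/k_1}\circ i$, and $N_{k_2/k_1}\circ i$ is multiplication by $[k_2:k_1]=2$, which is zero mod $2$.

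Step two is exactness at the middle term. Apply Proposition~\ref{prop:H90-KM} with $\ell=2$, $m=2$, $E=k_2$ (so that $k_2/k$ is cyclic of $\ell$-power degree as required); the proof of that proposition goes through verbatim mod $\ell$ thanks to the norm residue isomorphism. This yields the exact sequence
\[ K^M_n(k_2)/2 \xrightarrow{1-\sigma} K^M_n(k_2)/2 \xrightarrow{N_{k_2/k}} K^M_n(k)/2. \]
In particular $\ker(N_{k_2/k}) = (1-\sigma)K^M_n(k_2)/2$, which is already contained in the image of $(i,1-\sigma)$; combined with step one this gives the desired equality.

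There is no real obstacle here: the image of $1-\sigma$ alone already exhausts $\ker(N_{k_2/k})$, so the $i$-summand is redundant for the bare exactness assertion. Its role is presumably bookkeeping for the forthcoming application to the $p=2$ analogue of Proposition~\ref{prop:torsion-orders}, where elements restricted from $K^M_n(k_1)/2$ form a distinguished family of norm-trivial classes that one wishes to exhibit as lying in the image without having to factor them through $1-\sigma$.
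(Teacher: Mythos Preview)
Your argument has a genuine gap at the crucial step: the assertion that ``the proof of that proposition goes through verbatim mod $\ell$'' is false. The proof of Proposition~\ref{prop:H90-KM} hinges on the vanishing $H^{n+1}_\et(k,\Z_{(\ell)}(n))=0$ (higher Hilbert~90), and this fails with $\Z/\ell$ coefficients: $H^{n+1}_\et(k,\mu_\ell^{\otimes n})$ is typically nonzero. Concretely, if $x\in K^M_n(k_2)$ has $N_{k_2/k}(x)=2a$, the integral statement only tells you that $x\in(1-\sigma)K^M_n(k_2)$ once $N_{k_2/k}(x)=0$; there is no reason why $2a$ should lie in $2\cdot\mathrm{im}(N_{k_2/k})$, so you cannot adjust $x$ by an even element to kill the norm. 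Consequently your conclusion that the $i$-summand is redundant is unwarranted --- it is in fact essential.

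The paper's proof works precisely by repairing this: given $x\in K^M_n(k_2)/2$ with $N(x)\equiv 0$, one picks integral lifts and finds classes $v,y\in K^M_n(k_1)$ such that $\tilde x - i_2\tilde v - i_2\tilde y$ has \emph{integrally} vanishing norm $t_1t_2$. Only then does Proposition~\ref{prop:H90-KM} apply, yielding $\tilde z$ with $(1-\sigma)\tilde z = \tilde x - i_2(\tilde v+\tilde y)$, and reducing mod $2$ gives $x = i(v+y) + (1-\sigma)z$. The subtraction of the $i_2$-terms is exactly what produces the $K^M_n(k_1)/2$ contribution in the image; it is not bookkeeping but the substance of the argument.
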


\begin{proof}
  As usual we start by using the Norm-residue isomorphism theorem to translate this into a statement about étale cohomology.
  Over $\F_2$ the group $C_4$ is unipotent and there are exactly four non-trivial indecomposable $C_4$ representations which we will denote $F_1, F_2, F_3$ and $F_4$ according to their rank. Between these representations we have inclusion and restriction maps which we will denote by $i$ and $r$.
In fact $\F_2[C_4] = \F_2[\sigma]/(\sigma^4-1) = \F_2[\epsilon]/\epsilon^4$ where $\epsilon := 1-\sigma$; then $F_i = \F_2[\epsilon]/\epsilon^i$ and we can denote maps as being induced by appropriate powers of $\epsilon$.
  We will argue by considering the following diagram of $C_4$-modules (where the unlabelled maps are the canonical reductions)
  \begin{center}    
    \begin{tikzcd}
      F_2 \ar[dr, "\cdot \epsilon"] & & F_2 \ar[r] & F_1 \ar[d, "\cdot \epsilon"] \\
      F_4 \ar[r] & F_3 \ar[ur] \ar[r, "\cdot \epsilon"] & F_4 \ar[r] & F_2 \ar[r] & F_1.
    \end{tikzcd}
  \end{center}
  
  Suppose $x$ is a class in $H_{\et}^n(k, F_4(n))$ (viewed as being in the copy of $F_4$ in the middle)
  which maps to zero in $H_{\et}^n(k, F_1(n))$.
  What we wish to show is that there exists a pair of classes $(a,b)$ in $H_{\et}^n(k, F_2(n))$ and $H_{\et}^n(k, F_4(n))$ which hit $x$ under $(i_2, 1-\sigma)=(\epsilon^2, \epsilon)$.
  Using the condition that $x$ maps to zero we can pick a lifting of $x$ to $w \in H_{\et}^n(k, F_3(n))$.
  Let $v$ be the image of $w$ in the top $H_{\et}^n(k, F_2(n))$ (that is, the reduction of $w$).  
  In order to proceed further we will need an integral lift of part of the diagram above:

  \begin{center}    
    \begin{tikzcd}
      \Z_{(2)}[C_2] \ar[drr, "i_2"] & & \Z_{(2)}[C_2] \ar[r, "t_1"] & \Z_{(2)} \ar[d, "i_1"] \\
      \Z_{(2)}[C_4] \ar[rr, "1 - \sigma"] & & \Z_{(2)}[C_4] \ar[r, "t_2"] & \Z_{(2)}[C_2] \ar[r, "t_1"] & \Z_{(2)}
    \end{tikzcd}
  \end{center}
  
  This diagram lies over the previous one and as a consequence of higher Hilbert 90 (see \cite[Definition 1.5]{haesemeyer2019norm}), the vertical quotient maps between these two diagrams are surjective on $H_{\et}^n(k, \ph(n))$.    
  Pick lifts $\wt{x}$ and $\wt{v}$ of $x$ and $v$ to the integral diagram above.
  Then, since $t_2 \wt{x}  - i_1 t_1 \wt{v}$ is zero mod 2 we may pick a $\wt{y}$ such that $2 \wt{y} = t_2 \wt{x}  - i_1 t_1 \wt{v}.$
  Consider the class $\wt{x} - i_2 \wt{v} - i_2 \wt{y}$.
  We have
  \begin{align*}
    t_1t_2(\wt{x} - i_2 \wt{v} - i_2\wt{y})
    &= t_1t_2\wt{x}  - t_1t_2i_2\wt{v} - t_1t_2i_2\wt{y}
    = t_1t_2\wt{x}  - t_1 2\wt{v} - t_1 2\wt{y} \\
    &= t_1t_2\wt{x} - t_1 2\wt{v} - t_1(t_2\wt{x} - i_1t_1\wt{v}) 
    = - t_1 2\wt{v} + t_1i_1t_1\wt{v} \\
    &= - t_1 2\wt{v} + 2t_1  \wt{v} = 0
  \end{align*}
  Using the exactness statement from \Cref{prop:H90-KM} there exists a class $\wt{z} \in H^n_\et(\Z_{(2)}[C_4])$ such that
  \[ (1-\sigma)\wt{z} = \wt{x} - i_2\wt{v}  - i_2\wt{y}. \]
  Reducing back down we learn that the pair
  $(v + y, z)$ maps to $x$  providing the desired lift.
\end{proof}

\begin{proof}[Proof of Proposition \ref{prop:torsion-orders} for $p=2$.]
Let $M_n$ denote the $\F_p\fpsr{u}$-module obtained from $K_*^M(k^t)/p$ by setting $u=t^{p^n}$.
Note that in contrast to the situation when $p$ is odd, the case $n=0$ of the assertion is trivial.
Our initial claim is thus the case $n=1$: if $x \in M_n[u]$ is divisible by $u^2$, then $x$ is divisible by $u^{p^2-1}=u^3$.
To prove this we re-interpret the sequence of Lemma \ref{lem:H90-cons} as in the odd case, to deduce that we have an exact sequence \[ M_n[u^2] \oplus M_n[u^4] \xrightarrow{i,\cdot u} M_n[u^4] \xrightarrow{\cdot u^3} M_n[u]. \]
Thus any $u^3$-torsion element in $M_n[u^4]$ is of the form $a + u b$, where $u^2 a = 0$.
Hence if $x = u^2 x' \in M_n[u]$ then $u^3x' = 0$, so $x' = a + u b$, and thus $x = u^2 x' = u^2(a + u b) = u^3b$, as desired.

To deduce the general case from this we argue as follows.
Let $x \in M_0[t]$ be divisible by $t^{2^n}$, say $x = t^{2^n}y$.
Set $u = t^{2^{n-1}}$ so that $ux = 0$ and $x = u^2y$.
The claim thus yields $x = u^3y'$.
Set $z = ty'$, $x' = u^2z$.
Then $ux' = u^3z = tu^3y' = tx = 0$, so $x' = u^3 z'$ (by the claim again).
We obtain \[ x = u^3 y' = u^2 t^{2^{n-1}-1} z = t^{2^{n-1}-1}x' = t^{2^{n-1}-1} u^3 z' = t^{2^{n-1}-1 + 2^n + 2^{n-1}} z' = t^{2^{n+1}-1} z', \] as needed.
\end{proof}

\appendix

\bibliographystyle{alpha}
\bibliography{bibliography}

\end{document}